\numberwithin{equation}{section}
\newcommand{\embed}{\hookrightarrow}
\newtheorem{theorem}{Theorem}[section]
\newtheorem{lemma}[theorem]{Lemma}
\newtheorem{proposition}[theorem]{Proposition}
\newtheorem{assumption}[theorem]{Assumption}
\newtheorem{corollary}[theorem]{Corollary}
\newtheorem{definition}[theorem]{Definition}
\newtheorem{main result}{Main Result}
\newtheorem{remark}[theorem]{Remark}
\newcommand\adda[1]{{\color{blue} #1}}
\newcommand\coma[1]{{\color{red} {#1}}}
\newcommand\dela[1]{}
\def\m{m^{\prime}}
\def\u{u^{\prime}}
\def\l{\left}
\def\r{\right}
\def\b{\big}
\def\p{\prime}
\begin{document}
	
	\title[Stochastic control of the Landau-Lifshitz-Gilbert equation]
	{Stochastic control of the Landau-Lifshitz-Gilbert equation}

	\author[Z.~Brze\'zniak]{Zdzis{\l}aw Brze\'zniak}
	\address{Department of Mathematics\\
		The University of York\\
		Heslington, York YO10 5DD, UK} \email{zb500@york.ac.uk}
	\author[S. Gokhale]{Soham Gokhale}
	\address{School of Mathematics\\ Indian Institute of Science Education and Research Thiruvananthapuram\\ Trivandrum 695551, INDIA}	\email{gokhalesoham16@iisertvm.ac.in}
	\author[U.~Manna]{Utpal Manna}
	\address{School of Mathematics\\ Indian Institute of Science Education and Research Thiruvananthapuram\\ Trivandrum 695551, INDIA}
	\email{manna.utpal@iisertvm.ac.in}

	\keywords{Landau-Lifshitz-Gilbert equation, Ferromagnetism, Stochastic control}
	\subjclass{}
	\thanks{}
	\dela{\begin{abstract}
			The stochastic Landau-Lifshitz-Gilbert equation, perturbed by Gaussian noise is considered in dimension 1. Along with noise, a control is added to the effective field. Existence of a weak martingale solution for the resulting equation is shown using the classical Faedo-Galerkin approximation. Pathwise uniqueness is coupled with the theory of Yamada and Watanabe to give the existence of a unique strong solution. Moreover, the obtained solution is shown to satisfy the maximum regularity. The existence of an optimal control is then shown.
		\end{abstract}
	}
	\begin{abstract}
		We consider the stochastic Landau-Lifshitz-Gilbert equation in dimension 1. A control process is added to the effective field. We show the existence of a weak martingale solution for the resulting controlled equation. The proof uses the classical Faedo-Galerkin approximation, along with the Jakubowski's version of the Skorohod Theorem. We then show pathwise uniqueness for the obtained solution, which is then coupled with the theory of Yamada and Watanabe to give the existence of a unique strong solution. We then show, using some semigroup techniques that the obtained solution satisfies the maximum regularity. We then show the existence of an optimal control is then shown. A main ingredient of the proof is using the compact embedding of a space into itself, albeit with the weak topology. That is, we use the compact embedding of the space $L^2(0,T:L^2)$ into the space $L^2_w(0,T:L^2)$.
	\end{abstract}
	\date{\today}
	\maketitle
	\date{\today}
	\maketitle

	\section{Introduction}
	Magnetic storage devices are widely used to store, process data. A common example would be a hard disk that is used to store computer data. Such media are nowadays comprehensively used to store large amounts of data.  Another example is that of a ferromagnetic nanowire ($d = 1$) separating domains of almost uniform magnetization $m$. The speed of reading or writing depends upon the magnetization switching time. Hence, understanding magnetization processes and the corresponding mechanisms can help in an optimal design for the storage media and lead to faster and better devices.
	
	Magnetic devices are made up of several ferromagnetic particles. Each of them has the capacity to be magnetized in two directions. These can be hence used to store one bit of data each. Therefore it is beneficial to get more efficient switching of the magnetization states of the particles in order to get better data storage and processing. This can be controlled by adding an external control (field pulses)\dela{ $u$}.



	Weiss initiated the study of the theory of ferromagnetism, see \cite{brown1963micromagnetics} and references therein. Landau and Lifshitz \cite{landau1992theory} and Gilbert \cite{Gilbert} developed it further. Let $\mathcal{O}\subset \mathbb{R}$ be a bounded interval.
	For a temperature below the Curie temperature, the magnetization $m$ satisfies the Landau-Lifshitz-Gilbert (LLG) equation
	
	\begin{align*}
		\begin{cases}
			& \frac{\partial m}{\partial t} = \alpha_{\text{g}}(m\times H_{\text{eff}}) - \alpha_{\text{g}}\alpha_{\text{d}} m\times(m\times H_{\text{eff}})  \  \text{in}\ \mathcal{O}_T=(0,T)\times \mathcal{O},\\
			\nonumber & \frac{\partial m}{\partial \nu} =\, 0\ \text{on}\ \partial \mathcal{O}_T = [0,T] \times \partial \mathcal{O},\\
			\nonumber & m(0,\cdot) = m_0\  \text{on}\ \mathcal{O}.
		\end{cases}
	\end{align*}
	Here $\times$ denotes the vector product in $\mathbb{R}^3$. The constant $\alpha_{\text{g}}$ and $\alpha_{\text{d}}$ are the gyromagnetic ratio and the damping parameter respectively \cite{Cimrak_2008}. $H_{\text{eff}}$ denotes the effective field, which is described after a short remark and some simplifying assumptions.\\
	For ferromagnetic materials at temperature below the Curie temperature, the modulus of the magnetization remains constant. For simplicity, we assume the constant to be 1.

	For simplicity, let us assume that $\alpha_{\text{g}} = 1$ and denote the constant $\alpha_{\text{d}}$ by $\alpha$.\dela{ Note that $0 < \alpha        \ll 1$.} We do not use the smallness of the parameter $\alpha$ anywhere in the calculations. The resulting equation is
	
	\begin{align*}
		\frac{\partial m}{\partial t} = (m\times H_{\text{eff}})  - \alpha \, m\times(m\times H_{\text{eff}}) .
	\end{align*}

	\dela{ After suitably normalizing, the equation can be rewritten as
		\begin{align*}
			dm = (m\times H_{\text{eff}}) \, dt - \alpha_{\text{d}} m\times(m\times H_{\text{eff}}) \, dt.
		\end{align*}
		For simplicity of notation, we replace $\alpha_{\text{d}}$ by $\alpha$.
	}

	\dela{Try to write something as follows: For simplicity, let us assume that $\alpha_{\text{g}} = 1$ and denote the constant $\alpha_{\text{d}}$ by $\alpha$. Note that $0 < \alpha \, \ll 1$.}
	
	The effective field $H_{\text{eff}}=H_{\text{eff}}(m)= - D\mathcal{E}(m)$ is deduced from the Landau–Lifshitz energy $\mathcal{E}(m)$. It consists of three parts: the exchange energy, the anistropy energy and the magnetostatic energy \cite{Kruzik+Prohl}. Visintin \cite{Visintin_1985} studied the equation with contribution from all three energies to the total magnetic energy. For simplicity, we consider only the exchange energy. Similar simplifying assumptions can be seen in \cite{Michiel_EtAl_2001}, \cite{ZB+BG+TJ_Weak_3d_SLLGE} to name a few. DeSimone \cite{DeSimone_HysterisisSmallFerromagneticParticles}, Gioia and James \cite{Gioia1997micromagnetics} studied the LLGE (with exchange energy only) as a limiting case of the models that include the other types of energy. The exchange energy is given by
	
	\begin{equation*}
		\mathcal{E}(m)=\int_{\mathcal{O}}\frac{A_{\text{exc}}}{2}|\nabla m|_{\mathbb{R}^3}^2 \,dx,
	\end{equation*}
	where $A_{\text{exc}}$ is a material constant  \cite{Cimrak_2008}. In the calculations that follow, we assume that $A_{\text{exc}} = 1$.
	Therefore, we have the effective field $H_{\text{eff}} = \Delta m$. The resulting equation is then
	\begin{align}\label{deterministic equation 1.1}
		\begin{cases}
			&\frac{\partial m}{\partial t} = ( m \times \Delta m )  - \alpha \, m \times ( m \times \Delta m)  \  \text{in}\ \mathcal{O}_T=(0,T)\times \mathcal{O},\\
			& \frac{\partial m}{\partial \nu} =\, 0\ \text{on}\ \partial \mathcal{O}_T,\\
			& m(0,\cdot) = m_0\  \text{on}\ \mathcal{O}.
		\end{cases}
	\end{align}
	\dela{\textbf{A brief history of LLGE:} \\}
	\dela{The authors in \cite{Alouges+Soyeur_GlobalWeakSolutionsLLGE} establish necessary criterion for the existence of a global weak solution for the LLG equation. Fang and Li in \cite{Fang+Li_GlobalWeakSolutionLLGE3D} consider the equation in dimension 3 and study the existence of a global weak solution. On similar lines, \cite{Pu+Wang+Wang_GlobalLLG} shows the global existence and uniqueness of weak solutions. Melcher in \cite{Melcher_GlobalSolvabilityCauchyProblemLLGE} shows the existence and uniqueness of a mild solution for the Cauchy problem for the LLG equations. In \cite{Bartels+Prohl_ImplicitFiniteElementLLG}, the authors present an implicit finite element scheme for the LLG equation and show the convergence of the approximations to a weak solution, along with some numerical results.
		\dela{See also \cite{Ammari+Laurence_AsymptoticBehaviourThinFerromagnet}} \cite{Bartels+Joy+Prohl_NumericalAnalysisLLGE} shows convergence of a scheme for approximating weak solution of LLG equation. The authors also study (numerically) discrete finite time blow up in dimension 2. In \cite{Pu+Guo_FractionalLLGE}, the authors show the existence of global weak solutions to the periodic fractional LLG equation.
	}

	\dela{\textbf{Motivation for introducing Noise:}} The solutions of \eqref{deterministic equation 1.1} correspond to the equilibrium states of the ferromagnet. We aim to study the phase transition between equilibrium states that is induced by the thermal fluctuations in the effective field $H_{\text{eff}}$. In order to do this, we must modify the equation \eqref{deterministic equation 1.1} to include the random fluctuations affecting the dynamics of $m$. The study of analysis of noise induced transitions was started by Neel \cite{Neel_1946} and further studied in \cite{Brown_Thermal_Fluctuations}, \cite{Kamppeter+FranzEtAl_StochasticVortexDynamics}, among others. The introduction of a stochastic forcing term in \eqref{equation 1.1} is due to Brown, see \cite{brown1963micromagnetics} ,  \cite{Brown_Thermal_Fluctuations}.
	
	Following \cite{ZB+BG+TJ_Weak_3d_SLLGE}, we perturb the equation by adding Gaussian noise to the effective field. That is we replace $H_{\text{eff}}$ by $H_{\text{eff}} + \zeta$, where $\zeta$ (informally) denotes a white noise. The resulting equation is then
	
	\begin{equation}
		\frac{\partial m}{\partial t} = \l( m \times \l(\Delta m + \zeta \r) \r)  - \alpha \, m \times \l( m \times \l(\Delta m + \zeta \r)\r)
	\end{equation}

	The noise term in the resulting equation will then be of the form
	\dela{\begin{equation*}
			m \times h - \alpha \, m \times (m \times h).
		\end{equation*}
		for some given function $h : \mathcal{O} \to \mathbb{R}^3$.
	}
	
	\begin{equation*}
		m \times \zeta - \alpha \, m \times (m \times \zeta).
	\end{equation*}
	The noise needs to be invariant under the coordinate transformation. Hence it is suggested \cite{Kamppeter+FranzEtAl_StochasticVortexDynamics}, \cite{Brown_Thermal_Fluctuations} that we consider the noise in the Stratonovich form.

	Therefore the stochastic counterpart (stochastic LLG equation) of equation \eqref{deterministic equation 1.1} is
	
	\begin{align}\label{equation 1.1}
		\begin{cases}
			&dm = m\times H_{\text{eff}} \, dt - \alpha \, m\times(m\times H_{\text{eff}}) \, dt \\
			& \qquad \ + \bigl(m \times h - \alpha \, m \times (m \times h) \bigr) \circ \, dW(t)\ \text{in}\ \mathcal{O}_T=(0,T)\times \mathcal{O},\\
			& \frac{\partial m}{\partial \nu} =\, 0\ \text{on}\ \partial \mathcal{O}_T,\\
			& m(0,\cdot) = m_0\  \text{on}\ \mathcal{O}.
		\end{cases}
	\end{align}
	Here $h : \mathcal{O} \to \mathbb{R}^3$ is a given bounded function. $W$ is a real valued Wiener process on a given probability space $ \l( \Omega , \mathcal{F} , \mathbb{P} \r) $. Also, $\circ \,  dW$ denotes the Stratonovich differential.\\
	\dela{ See the article by Cimrak \cite{MR2430351}, $\alpha_1$ the gyromagnetic ration and $\alpha_2$ is the damping parameter. Note that $\alpha_2$ is not just the damping parameter. It is the product of the gyromagnetic ration $\alpha_1$ and the damping parameter. The article by Cimrak \cite{MR2430351} also mentions some saturation magnetization $M_s$, which is possibly 1. Confirm this. are constants.
		The system is driven by a Hilbert space valued $Q$-Wiener process $W$ (Real valued Wiener process should be sufficient as the general case has not been considered by us) and the integral is understood in the Stratonovich sense. The choice of Stratonovich integral  has been argued in \cite{Brown_Thermal_Fluctuations}.
		Write more about the existing results for existence of solutions, precisely \cite{ZB+BG+TJ_Weak_3d_SLLGE}, \cite{ZB+BG+TJ_LargeDeviations_LLGE}.	The orientation of the magnetization vector can vary from point to point. Exchange energy will attempt to make the magnetic moments in the immediately surrounding space lie parallel to one another. \adda{Give reference for this statement. Is this statement needed?}}
	
	The existence of a weak martingale solution to the stochastic LLG equation (with $\alpha \, = 0$) in dimension 1,2,3 has been shown in \cite{ZB+BG+TJ_Weak_3d_SLLGE}. Brze\'zniak, Goldys and Jegaraj in \cite{ZB+BG+TJ_LargeDeviations_LLGE} consider the problem for dimension 1 (and non-zero anisotropy) and show that existence of a weak martingale solution. They further show the existence of a strong solution with maximal regularity, along with establishing large deviations principle. Brze\'zniak and Li in \cite{ZB+Li_Weak_Solution_SLLGE_Anisotropy} show the existence of a weak solution for the stochastic LLG equation with multidimensional noise and non-zero anisotropy energy. The authors in \cite{Kohn+Reznikoff+Vanden_MagneticElementsLargeDeviation} use large deviation theory and stochastic resonance to study thermally activated phenomenon in micromagnetics. Perturbing the model by a multiplicative noise, Pu and Guo in \cite{Pu+Guo_2010} prove the existence of regular martingale solutions in dimension 2, followed by some finite time blow-up criterion. The work \cite{Guo+Xueke_SLLGE_Global} establishes the existence of a global weak solution to the stochastic LLG equation for any dimension $d>0$, also showing that for dimension 1, the associated Cauchy problem admits a unique global smooth solution. Brze\'zniak and Manna in \cite{ZB+UM_SLLGE_JumpNoise} discuss the stochastic LLG equation in dimension 3, which is driven by pure jump noise. They show the existence of a weak martingale solution, which satisfies the required constraint condition. See also \cite{Li_Thesis}.
	Brze\'zniak, Manna and Mukherjee in \cite{ZB+UM+DM_WongZakai} show the existence of a strong solution.  Manna, Mukherjee and Panda in \cite{UM+AAP+DM_SLLGE_WongZakai_AnisotropyEnergy} show the existence of a strong solution with non-zero anisotropy energy. The key ingredients for both the previous results are the Doss-Sussmann transform and the Wong-Zakai approximations.
	
	Some important numerical studies include, but are not limited to Banas, Brze\'zniak, Neklyudov and Prohl  \cite{banas2013stochastic}, \cite{Banas+ZB+Neklyudov+Prohl_ConergentFiniteElementSLLGE}, see also \cite{Banas+ZB+Prohl_2013_ComputationalStudiesSLLGE}, Goldys, Le and Tran in \cite{GB+Le+Tran_Finite_Element_Scheme_SLLGE}, Brze\'zniak, Grotowski and Le in  \cite{BG+Grotowski+Le_Weak_martingale_SLLGE_multidimensional_noise}\dela{, Alouges, de Bouard and Hocquet in \cite{Alouges+Bouard+Hocquet_SemiDiscreteSchemeSLLEquation}}.\\
	
	A natural question here is about what happens when the temperature is above the Curie temperature. In that case the model can replaced by the Landau-Lifshitz-Bloch (LLB) equation. The model was proposed by Geranin in \cite{garanin1997fokker} in 1997. The LLB equation essentially interpolates between the LLG equation at low temperatures and the Ginzburg-Landau theory of phase transitions. Le in \cite{LE_Deterministic_LLBE} shows the existence of weak solution to the LLB equation.
	The same author with Brze\'zniak and Goldys in \cite{ZB+BG+Le_SLLBE} show the existence and uniqueness of a solution for the stochastic LLB equation, along with the existence of invariant measures for dimensions 1 and 2. On similar lines, Jiang, Ju and Wang in \cite{Jiang+Ju+Wang_MartingaleWeakSolnSLLBE} showed the existence of a weak martingale solution to the stochastic LLB equation. In  \cite{Qiu+Tang_Wang_AsymptoticBehaviourSLLBE}, the authors establish the large deviation principle and the central limit theorem for the 1 dimensional stochastic Landau-Lifshitz-Bloch equation.

	\dela{The effective field $H_{\text{eff}}=H_{\text{eff}}(m)=D\mathcal{E}(m)$ is deduced from the Landau–Lifshitz energy $\mathcal{E}(m)=\mathcal{E}_u(m)$ for some given external field $u: \mathcal{O}_T\times\Omega\rightarrow\mathbb{R}^3$. It consists of three parts: the exchange energy, the anistropy energy and the magnetostatic energy, see \cite{Kruzik+Prohl}. For simplicity, we consider in this work only the exchange and	external field energies
		
		\begin{equation}
			\mathcal{E}(m)=\int_{\mathcal{O}}\frac{a}{2}|\nabla m|_{\mathbb{R}^3}^2-\l\langle m,u\r\rangle_{\mathbb{R}^3} \,dx,
		\end{equation}
	}

	\dela{Similar assumptions \adda{That only exchange energy can be considered has also been assumed in deterministic analogues ....} have been made in the deterministic cases (see for example \cite{MR1848770}). \adda{ The paper \cite{MR1848770} has dimension 3 and specifically mentions that while not considering the other forms of energy. Thus, we can say that the exchange energy is considered for simplicity and similar assumptions can be found in \cite{MR1848770}, \cite{ZB+BG+TJ_Weak_3d_SLLGE}, etc.}
	}

	\dela{What is $\l\langle m , u \r\rangle$?? Is it some kind of inner product?? It is the $\mathbb{R}^3$ inner product. Write it in the equality for $\mathcal{E}$. Write it as $\l\langle m , u \r\rangle_{\mathbb{R}^3}$.}

	As indicated earlier, our aim is to try and optimize the switching of magnetization by giving some external input. Following the works \cite{Dunst+Klein}, \cite{D+M+P+V}, we add the control to the effective field. We now describe the inclusion of the control process in the stochastic LLG equation. Let $u: \Omega \times [0,T] \times \mathcal{O} \rightarrow \mathbb{R}^3$ denote a control process. We denote by $\mathcal{E}_u$, the external field energy, see \cite{D+M+P+V} corresponding to the control process $u$, which is given by
	\begin{equation}
		\mathcal{E}_u(m) = - \int_{\mathcal{O}} \l\langle m , u \r\rangle \, dx.
	\end{equation}
	The energy now considered is the sum of the exchange energy and the external field energy. Therefore the resulting effective field is
	\begin{equation}
		H_{\text{eff}} = \Delta m + u + \zeta.
	\end{equation}	
	\dela{
		The equation corresponding to the above mentioned effective field is
		\begin{align*}
			\nonumber dm &= \bigg[ m\times \Delta m- \alpha \, m\times(m\times \Delta m) + m \times u - \alpha \, m \times (m \times u)  \bigg] \, dt \\
			& + \big( m \times h - \alpha \, m \times (m \times h) \big)  \circ \, dW(t).
		\end{align*}
		
		Try not to write the equation written above.
	}
	Summarizing, the equation considered in this paper is the following:
	\begin{align}\label{problem considered introduction equation}
		\begin{cases}
			& dm = \big[ m\times \Delta m- \alpha \, m\times(m\times \Delta m) + m \times u - \alpha \, m \times (m \times u)  \big] \, dt \\
			& \qquad + \big(m \times h - \alpha \, m \times (m \times h) \big) \circ \, dW(t) \dela{\ \text{in}\ \mathcal{O}_T=(0,T)\times \mathcal{O},},\ t\in [0,T],\\
			& \frac{\partial m}{\partial \nu} =\, 0,\ \text{on}\ \partial \mathcal{O}_T,\\
			& m(0,\cdot) = m_0\  \text{on}\ \mathcal{O}.
		\end{cases}
	\end{align}
	Here $h:\mathcal{O}\to\mathbb{R}^3$ is a bounded function and $W$ is a real valued Wiener process.\\	
	\dela{ We will be considering the above equation in the paper.\\ Try to talk about the existing literature on stochastic optimal control, see [30] in D+M+P+V. Try not to write this. Seems difficult.}
	Kr\"uzik and Prohl in \cite{Kruzik+Prohl} give an overview of some of the developments in analysis and numerics of ferromagnetism. The initial studies for deterministic optimal control of ferromagnetic dynamics were done in \cite{Agarwal+Carbou+Labbe+Prieur},\cite{Alouges+Beauchard}. Dunst, Klein, Prohl and Sch\"afer in \cite{Dunst+Klein} show the existence of an optimal control subject to a one dimensional Landau-Lifshitz-Gilbert equation. Dunst, Majee, Prohl and Vallet in \cite{D+M+P+V} consider the stochastic counterpart of the above problem. In \cite{D+M+P+V}, the authors show the existence of weak optimal control for \eqref{equation 1.1} along with the following cost functional \eqref{eqn D+M+P+V Cost Functional} (for arbitrary but fixed $p\geq 2
	$, $K > 0$)
	
	\begin{equation}\label{eqn D+M+P+V Cost Functional}
		J(\pi) = \mathbb{E}\l[ \int_{0}^{T} \left(|m(t) - \bar{m}(t)|_{L^2}^2 + |u(t)|_{H^{1}}^{2p}\right)dt + \Psi\bigl(m(T)\bigr)\r],
	\end{equation}
	subject to \eqref{equation 1.1} and
	\begin{equation}\label{eqn-1.3}
		\l| u(t) \r|_{L^2}^2 \leq K\text{ for a.a. } t\in[0,T], \mathbb{P}-a.s.
	\end{equation}

	Here $\bar{m}$ is a given desired state and $\Psi$ is a given Lipschitz continuous function on $L^2$. The existence has been shown for any $p\geq 2$ (as given above) in dimension $d = 1,2,3$. They use the smallness of the parameter $\alpha$ and hence do not consider the terms $m \times \l(m \times u\r)$ and $ m \times \l( m \times h \r)$.\dela{The paper \cite{D+M+P+V} shows the existence of a weak optimal control in dimension d=1,2,3. The existence and large deviations results are possibly in dimension 1.}

	We consider the following problem in this paper.\\
	\dela{Minimizing the cost functional is not the control problem. The control problem is the controlled equation. Try to write the controlled equation separately. The control problem is to solve the controlled equation subject to minimizing the cost functional over a prescribed set.}
	\textbf{Control problem:} Let $\bar{m}\in L^2(\Omega ; L^2(0,T; H^1))$ be a given desired state that takes values on the unit sphere $\mathcal{S}^2$. The terminal cost is given by the function $\Psi : \mathbb{S}^2 \to [0,\infty)$.
	For a fixed $0 < T < \infty$, our aim is to minimize the cost functional
	
	\begin{align}\label{definition of cost functional introduction}
		J(\pi) = \mathbb{E} \l[ \int_{0}^{T} \l( \l|m(t) - \bar{m}(t)\r|_{H^1}^2 + \l| u(t) \r|_{L^2}^2 \r) + \Psi(m(T))\r]
	\end{align}
	over the space of admissible solutions $\pi = \l(\Omega , \mathcal{F} , \l\{ \mathcal{F}_t \r\}_{t\in[0,T]} , \mathbb{P} , W , m , u \r)$ to the problem \eqref{problem considered introduction equation}. \\
	\textbf{Contribution of the paper:}
	\begin{itemize}
		\item The full equation has been considered. That is, we have considered the triple product term both in the drift term and the noise coefficient. Also, the control is added in the Landau-Lifshitz energy $\mathcal{E}$, and hence the effective field $H_{\text{eff}}$, making the control operator is non-linear (in $m$). The noise has been added to the effective field and the full form of noise (including the triple product term) has been considered. Similar to the previous argument, the noise coefficient is no longer linear. In a sense, the complete problem has been considered. Note that the only energy considered is the exchange energy. Anistropy and stray energy have not been considered.
		
		\item \dela{The cost functional contains the $H^1$ norm of the solution.
		}
		We believe that our cost functional is more natural than of Prohl \textit{et. al.}, because firstly the solution $m$ takes values in the $H^1$ space and not $L^2$, so controlling its $H^1$-norm is what matters.
		And secondly, the natural control set is the $L^2$-space and not $H^1$ and hence again controlling its $L^2$-norm is what matters. Moreover, if, as one would hope to prove in the future there exists a solution to our problem with the space-time white noise, then considering the control as an $L^2$-valued function is natural from  Large Deviations point of view. \\
		On the other hand Prohl \textit{et. al.} \cite{D+M+P+V} consider a constraint \eqref{eqn-1.3} on the control. We will return to a similar problem in a forthcoming paper.

		Prohl \textit{et. al.} in \cite{D+M+P+V} show the existence of an optimal relaxed control for the relaxed version of the problem. The control is constructed using compactness properties of random Young measures on a chosen Polish space. The work is inspired mainly from \cite{ZB+RS}, wherein the authors study an optimal relaxed control problem for a semilinear stochastic partial differential equation on a Banach space. In this work, we use the compact embedding of the space $L^2(0,T;L^2)$ into the space $L^2_w(0,T;L^2)$, which is the space $L^2(0,T;L^2)$ endowed with the weak topology.
	\end{itemize}

	\dela{
		\begin{remark}
			It is suggested (see for example \cite{Dunst+Klein}) that the control operator should be the tangent to the solution $m$. Let $a,b\in\mathbb{R}^3$ be non-zero vectors. Then $a,a \times b, a \times (a \times b)$ form an orthogonal basis for $\mathbb{R}^3$. In particular, the space of all vectors that are tangent to $a$ can be spanned by $a \times b, a \times (a \times b)$. For a control process $u$, it is therefore sufficient to consider the linear operator as a linear combination of $m \times u$ and $m \times (m \times u)$.
		\end{remark}
	}
	\textbf{Structure of the paper:} The paper is organized as follows. Before showing the existence of an optimal control, we show that the set of admissible solutions to the problem \eqref{problem considered} is non-empty. This is done in Theorem \ref{Theorem Existence of a weak solution}. It shows the existence of a weak martingale solution for the given problem. The proof uses the Faedo-Galerkin approximation, see Section \ref{Section Faedo Galerkin approximation}, followed by some compact embeddings in order to use the Jakubowski version of the Skorohod Theorem in Section \ref{Section Proof of existence of a solution}. The basic motivation for the proof is taken from \cite{Flandoli_Gatarek}, see also \cite{ZB+BG+TJ_Weak_3d_SLLGE}. Following the resuts in \cite{ZB+BG+TJ_LargeDeviations_LLGE}, we show the pathwise uniqueness for the obtained weak martingale solution in Section \ref{Section Pathwise uniqueness}. This combined with the theory of Yamada and Watanabe, see \cite{Ikeda+Watanabe} give the existence of a strong solution to the problem \eqref{problem considered}. This is followed by showing maximal regularity in Theorem \ref{Theorem Further regularity}. The proof uses the analytic properties of the semigroup generated by the operator $(-\Delta)$.
	
	The existence of an optimal control is then shown in Section \ref{Section Optimal control}. Therein we take a minimizing sequence of admissible solutions and then show that they converge to another admissible solution, which is a minimizer of the cost functional \eqref{definition of cost functional introduction}.

	\section{Notations and Preliminaries}\label{Section Notation}
	Let the domain $\mathcal{O}\subset \mathbb{R}$ be a bounded interval. We fix this domain throughout the paper. The letter $C$ is used for denoting a generic constant, whose value may change from line to line.
	$L^p$ denotes the space $L^p(\mathcal{O}:\mathbb{R}^3)$ and $W^{k,p}$ denotes the space $W^{k,p}(\mathcal{O} : \mathbb{R}^3)$. $\l\langle \cdot , \cdot \r\rangle$, $|\cdot|$ respectively denote the standard inner product and norm in $\mathbb{R}^3$.

	Let $v\in L^{\infty}$, $q\in[1,\infty]$.
	
	$G(v):L^{q}\rightarrow L^{q}$ is defined as follows
	\dela{\begin{equation}
			G(v)k:=v\times k-\alpha \, v\times(v\times k)\ \text{for}\ k\in L^q.
	\end{equation}}
	
	\begin{equation}\label{definition of G}
		G(v) : L^q \ni k \mapsto v \times k - \alpha \, v \times (v \times k) \in L^q.
	\end{equation}
	
	The following lemma shows that the above defined $G$ is a polynomial map. Moreover, the restriction of $G$ to the space $L^{\infty} \cap H^1$ is also a polynomial map.

	\dela{\adda{Check the definition properly. Is the statement really needed?? Is the definition written above not sufficient??}
		
		Fix $\adda{v??} h\in L^{\infty}\cap H^1$.
		$G\adda{(h)}: L^{\infty} \rightarrow L^{\infty}$ and $G\adda{(h)}: L^{\infty}\cap H^1 \rightarrow L^{\infty}\cap H^1$ given by
		\begin{equation}
			G(v) h :=v\times h-\alpha \, v\times(v\times h)
		\end{equation}
		is well defined. \adda{Why well defined??}
		
		\adda{	Fix $v\in L^{\infty}\cap H^1$.
			$G(v): L^{\infty} \rightarrow L^{\infty}$ and $G(v): L^{\infty}\cap H^1 \rightarrow L^{\infty}\cap H^1$ given by
			\begin{equation}
				G(v) h :=v\times h-\alpha \, v\times(v\times h)
			\end{equation}
			is well defined.
			
		}
	}

	\begin{lemma}\label{Lemma G is a polynomial map}
		Let $q\in[1,\infty]$. The map $G:L^{\infty}\rightarrow \mathcal{L}(L^q)$ is a polynomial map of degree 2. Hence it is of polynomial growth and is Lipschitz on balls, that is there exists a constant $C_0>0$ such that
		\begin{equation}\label{eqn-2.3}
			|G(v)k|_{L^q}\leq C_0(1+|v|_{L^{\infty}})|v|_{L^{\infty}}|k|_{L^q}.
		\end{equation}
		Thus,
		\begin{equation*}
			|G(v)|_{\mathcal{L}(L^q)}\leq C_0(1+|v|_{L^{\infty}})|v|_{L^{\infty}}.
		\end{equation*}
		Moreover for every $r>0$, there exists a constant $C_r>0$ such that for all $v_i\in L^{\infty}$ with $|v|_{L^{\infty}}\leq r$, we have
		\[
		|G(v_1)k-G(v_2)k|_{L^q}\leq C_r|k|_{L^q}|v_1-v_2|_{L^{\infty}},\ k\in L^q.
		\]
		Thus,
		\begin{equation*}
			|G(v_1)-G(v_2)|_{\mathcal{L}(L^q)}\leq C_r|v_1-v_2|_{L^{\infty}} ,\ \text{if}\ v_1,v_2\in L^{\infty}, |v_i|_{L^{\infty}}\leq r\ \text{for}\ i=1,2.
		\end{equation*}
		Finally, the restriction of the map $G$ to the space $H^1\cap L^{\infty}$ takes values in the space $\mathcal{L}(H^1\cap L^{\infty})$, i.e.
		\[
		G:H^1\cap L^{\infty}\rightarrow \mathcal{L}( H^1 \cap L^{\infty}).
		\]
		As such this map $G$ is also a polynomial of degree $2$ and hence of $C^\infty$ class and of quadratic growth.
		
	\end{lemma}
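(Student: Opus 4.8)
The plan is to exploit that $G$ is assembled entirely from the $\mathbb{R}^3$ cross product, which is bilinear and obeys the pointwise bound $|a\times b|\le |a|\,|b|$. First I would record the algebraic decomposition $G=G_1+G_2$, where $G_1(v)k=v\times k$ is linear in $v$ and $G_2(v)k=-\alpha\,v\times(v\times k)$ is the diagonal of the symmetric bilinear map $(v_1,v_2)\mapsto -\tfrac{\alpha}{2}\bigl[v_1\times(v_2\times k)+v_2\times(v_1\times k)\bigr]$, hence quadratic in $v$. This immediately exhibits $G$ as a polynomial map of degree $2$ from $L^\infty$ into $\mathcal{L}(L^q)$. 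Since every continuous polynomial map between Banach spaces is automatically of class $C^\infty$, the smoothness assertion will follow as soon as the boundedness (equivalently continuity) of $G_1$ and $G_2$ is established.

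For the growth bound I would argue pointwise: for a.e. $x\in\mathcal{O}$,
\begin{equation*}
|G(v)k(x)|\le |v(x)|\,|k(x)|+\alpha\,|v(x)|^2\,|k(x)|\le |v|_{L^\infty}\bigl(1+\alpha|v|_{L^\infty}\bigr)|k(x)|,
\end{equation*}
using $|v\times(v\times k)|\le |v|\,|v\times k|\le |v|^2|k|$. Taking $L^q$-norms in $x$ yields \eqref{eqn-2.3} with $C_0=\max(1,\alpha)$, and passing to the supremum over $|k|_{L^q}\le 1$ gives the operator-norm estimate.

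For the Lipschitz-on-balls estimate I would use the add-and-subtract identity
\begin{equation*}
v_1\times(v_1\times k)-v_2\times(v_2\times k)=v_1\times\bigl((v_1-v_2)\times k\bigr)+(v_1-v_2)\times(v_2\times k),
\end{equation*}
together with $G(v_1)k-G(v_2)k=(v_1-v_2)\times k-\alpha\bigl[v_1\times(v_1\times k)-v_2\times(v_2\times k)\bigr]$. Estimating each summand pointwise under the constraint $|v_i|\le r$ gives $|G(v_1)k-G(v_2)k|\le(1+2\alpha r)|v_1-v_2|\,|k|$ pointwise, whence the $L^q$-version follows with $C_r=1+2\alpha r$, and the operator-norm Lipschitz bound by taking the supremum over $|k|_{L^q}\le1$.

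Finally, for the restriction to $H^1\cap L^\infty$ I would differentiate via the Leibniz rule for the cross product, $\partial_x(v\times k)=\partial_x v\times k+v\times\partial_x k$, and apply it twice to the triple-product term. The key observation is that each resulting summand pairs an $L^2$-factor (a derivative) with $L^\infty$-factors, so the product lies in $L^2$; combined with the $L^\infty$-bound already obtained this shows $G(v)k\in H^1\cap L^\infty$ and yields the analogous quadratic bound for $|G(v)k|_{H^1\cap L^\infty}$, after which the polynomial and $C^\infty$ statements transfer verbatim from the $L^q$ case. I expect this last step to be the only genuinely delicate one, since it is precisely where the algebra structure of $H^1\cap L^\infty$ under pointwise products (available in dimension one) is invoked.
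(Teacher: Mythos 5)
Your proposal is correct and follows essentially the same route as the paper's proof: the decomposition $G=G_1-\alpha G_2$ with $G_2$ realized as the diagonal of a bounded bilinear map, the pointwise cross-product estimates for the growth and Lipschitz-on-balls bounds via the same telescoping identity, and the Leibniz rule paired with $L^2$/$L^\infty$ factor-splitting for the restriction to $H^1\cap L^{\infty}$. The only differences are cosmetic (you symmetrize the bilinear map and track explicit constants such as $C_r=1+2\alpha r$, which the paper leaves generic).
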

	\begin{proof}[Proof of Lemma \ref{Lemma G is a polynomial map}] Let us choose and fix  $q\in[1,\infty]$. \begin{itemize}
			\item[\textbf{Step 1}] The map $G:L^{\infty}\rightarrow \mathcal{L}(L^q)$ is a polynomial of degree $2$, because $G$ is a linear combination   of a linear map $G_1: L^{\infty} \ni v \mapsto \{ h \mapsto  v\times h \} \in  \mathcal{L}(L^q)$ and of a  homogenous polynomial of degree $2$, $G_2:  L^{\infty} \ni v \mapsto  \{ h \mapsto  \alpha \, v\times(v\times h)\} \in  \mathcal{L}(L^q)$. Note that $G_1$ is continuous because for
			$v\in L^{\infty},h\in L^q$, we have the following.
			\begin{align*}
				\l| G_1(v) h \r|_{L^{q}} & = \l| v \times h \r|_{L^{q}} \leq \l| v \r|_{L^{\infty}} \l| h \r|_{L^{q}}.
			\end{align*}
			We define the bilinear map corresponding to $G_2$ by
			\begin{equation*}
				\tilde{G}_2:  L^{\infty}\times L^{\infty} \ni (v_1,v_2) \mapsto  \{ h \mapsto  \alpha \, v_1\times(v_2\times h)\} \in  \mathcal{L}(L^q).
			\end{equation*}
			Note that $G_2$ is a continuous bilinear map because for $v_1,v_2\in L^{\infty},k\in L^q$ we have
			\begin{align*}
				|\tilde{G}_2(v_1,v_2)k|_{L^{q}}&
				\leq | v_1\times(v_2\times k)|_{L^{q}}\\
				&\leq | v_1|_{L^{\infty}} |(v_2\times k)|_{L^{q}}
				\leq |v_1|_{L^{\infty}}|v_2|_{L^{\infty}}|k|_{L^q}.
			\end{align*}
			\item[\textbf{Step 2}]
			So we proved that map $G:L^{\infty}\rightarrow \mathcal{L}(L^q)$ is a polynomial of degree $2$. It follows that $G$ is a $C^\infty$ function, Lipschitz on balls and of quadratic growth.
			Concerning the last claim, the inequality \eqref{eqn-2.3} is also a consequence of the above proof. 		
			\item[\textbf{Step 3}] We have already proved that map $G$ Lipschitz on balls but we can make this assertion more precise. \dela{\adda{The calculations below can be made easier and simpler by doing them separately for $G_1$ and $G_2$.}}
			Let $v_1,v_2\in L^{\infty}$ with $|v_i|_{L^{\infty}}\leq r$ for $i=1,2$ and $k\in L^q$.
			\begin{align*}
				|G_1(v_1)k - G_1(v_2)k|_{L^{q}}&=|v_1\times k - v_2\times k|_{L^{q}}\\
				& = |(v_1 - v_2)\times k|_{L^q}\\
				& \leq |v_1-v_2|_{L^{\infty}}|k|_{L^q}.
			\end{align*}
			The second term $G_2$ is also Lipschitz on balls can be shown as follows.
			

			Let $v_1,v_2\in L^{\infty}$ with $|v_i|_{L^{\infty}}\leq r$ for $i=1,2$ and $k\in L^q$.

			\begin{align*}
				|G_2(v_1)k - G_2(v_2)k|_{L^{q}}&=\alpha \, | v_1\times(v_1\times k) - \alpha \, v_2\times(v_2\times k)|_{L^{q}}\\
				& = \alpha|v_1\times(v_1\times k) - v_2\times(v_1\times k) + v_2\times(v_1\times k) - v_2\times(v_2\times k)|_{L^{q}}|_{L^{q}}\\
				& \leq |(v_1 - v_2) \times (v_1 \times k)|_{L^q} + |v_2 \times
				((v_1 - v_2) \times k)|_{L^q}\\
				& \leq |v_1 - v_2|_{L^{\infty}}|v_1|_{L^{\infty}}|k|_{L^q} + |v_1 - v_2|_{L^{\infty}}|v_2|_{L^{\infty}}|k|_{L^q}\\
				& \leq 2r |v_1 - v_2|_{L^{\infty}}|k|_{L^q}.
			\end{align*}
			
			Thus,
			\begin{equation*}
				|G(v_1)k - G(v_2)k|_{L^q} \leq C_r |v_1 - v_2|_{L^{\infty}}|k|_{L^q},
			\end{equation*}
			for some constant $C_r$ depending on $r$.
			
			
			\item[\textbf{Step 4}] To prove the last part one can deal separately with the maps $G_1$ and $G_2$. We begin with $G_1$.
			Let $v, h \in H^1 \cap L^{\infty}$.  By Step 1 we have
			\begin{align*}
				|G_1(v)h|_{L^\infty } \leq  |v|_{L^\infty }|h|_{L^\infty },
			\end{align*}
			and
			\begin{align*}
				|G_1(v)h|_{L^2 } \leq  |v|_{L^\infty }|h|_{L^2 }.
			\end{align*}
			Moreover,
			\begin{align*}
				|\nabla [G_1(v)h]|_{L^2} &= |\nabla (v \times h)|_{L^2}
				\leq |\nabla v \times h|_{L^2} + |v \times \nabla h|_{L^2} \\
				& \leq |\nabla v|_{L^{2}}|h|_{L^{\infty}} + |v|_{L^{\infty}}|\nabla h|_{L^2}. 		
			\end{align*}
			Summing up, we have proved that for some constant $C>0$
			\begin{align*}
				|G_1(v)h|_{L^\infty }+|G_1(v)h|_{H^1} &\leq C |v|_{L^\infty } \bigl( |h|_{L^\infty }+ |h|_{H^1 } \bigr)+  |v|_{H^1}|h|_{L^{\infty}}\\
				&\leq C \bigl( |v|_{L^\infty }+ |v|_{H^1}\bigr) \times \bigl( |h|_{L^\infty }+ |h|_{H^1 } \bigr).
			\end{align*}
			We used here that
			\begin{align*}
				|u|_{H^1}^2  := |u|_{L^2}^2 + |\nabla u|_{L^2}^2.
			\end{align*}
			
			In the same way one can prove that
			\begin{align*}
				|G_2(v)h|_{L^\infty }+|G_2(v)h|_{H^1}
				&\leq C \bigl( |v|^2_{L^\infty }+ |v|_{L^\infty }|v|_{H^1}\bigr)^2 \times C\bigl( |h|_{L^\infty }+ |h|_{H^1 } \bigr).
			\end{align*}
			
			
		\end{itemize}
		The proof is complete.
	\end{proof}
	We have proved above that $G$ is of $C^\infty$ class. We can calculate the Fr\'echet derivative $DG(v),v\in L^{\infty}$, of $G$ in the following way.  	
	\begin{proposition}\label{prop-derivative}
		Assume that $q \in [1,\infty]$.
		For $v,w\in L^{\infty}$ we have
		\begin{align}\label{eqn-derivative of DG}
			DG(v)(w) = \{ L^q \ni  h \mapsto  w \times h - \alpha \, \left[ v \times (w \times h) + w \times (v \times h) \right] \in L^q\} \in \mathcal{L}(L^q).
		\end{align}
		Moreover, the same holds when the spaces $L^{\infty}$ and $L^{q}$ are replaced by the $H^1 \cap L^{\infty}$ as in the last part of the previous Lemma.
	\end{proposition}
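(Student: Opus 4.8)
The plan is to exploit the decomposition $G = G_1 - G_2$ established in Lemma~\ref{Lemma G is a polynomial map}, since the Fr\'echet derivative of a polynomial map of degree $2$ is completely determined by its linear and its homogeneous quadratic parts. First I would record that $G_1 : L^{\infty}\to\mathcal{L}(L^q)$ is a bounded linear map, so it is its own derivative: $DG_1(v)(w) = G_1(w) = \{ h \mapsto w\times h\}$ for all $v,w\in L^{\infty}$. Next I would use that $G_2(v) = \tilde{G}_2(v,v)$, where $\tilde{G}_2$ is the bounded bilinear map introduced in the proof of the Lemma. For any continuous bilinear map $B$, the quadratic map $v\mapsto B(v,v)$ is Fr\'echet differentiable with derivative $w\mapsto B(v,w)+B(w,v)$; applying this, and being careful that $\tilde{G}_2$ is \emph{not} symmetric in its two arguments, gives
\[
DG_2(v)(w) = \tilde{G}_2(v,w) + \tilde{G}_2(w,v) = \alpha\,\{ h \mapsto v\times(w\times h) + w\times(v\times h)\}.
\]
Subtracting, $DG(v)(w) = DG_1(v)(w) - DG_2(v)(w)$ yields exactly \eqref{eqn-derivative of DG}.

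If one prefers to avoid quoting the abstract differentiation rule, I would instead verify the formula directly from the definition of the derivative. Expanding the iterated vector products one finds
\[
G(v+w) - G(v) = w\times h - \alpha\bigl[ v\times(w\times h) + w\times(v\times h) \bigr] - \alpha\, w\times(w\times h),
\]
so the candidate operator in \eqref{eqn-derivative of DG} is precisely the part linear in $w$, and the entire remainder is the single term $-\alpha\, w\times(w\times h)$. Since $|w\times(w\times h)|_{L^q}\leq |w|_{L^{\infty}}^2 |h|_{L^q}$, this remainder, viewed as an element of $\mathcal{L}(L^q)$, has operator norm at most $\alpha |w|_{L^{\infty}}^2 = o(|w|_{L^{\infty}})$ as $|w|_{L^{\infty}}\to 0$. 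This is the defining estimate for Fr\'echet differentiability, and it simultaneously confirms $DG(v)\in\mathcal{L}(L^{\infty},\mathcal{L}(L^q))$ with the claimed dependence on $v$.

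For the final assertion I would repeat the two computations verbatim, with $L^{\infty}$ and $L^q$ both replaced by $H^1\cap L^{\infty}$. The only extra ingredient is that $G_1$, $\tilde{G}_2$ and the remainder map all act continuously into $\mathcal{L}(H^1\cap L^{\infty})$; this is furnished by the product (Leibniz) estimates in Step 4 of the proof of Lemma~\ref{Lemma G is a polynomial map}, which control quantities such as $|v\times h|_{H^1}$ by products of the $H^1\cap L^{\infty}$-norms of $v$ and $h$. The algebraic identity is then unchanged, and the remainder $-\alpha\, w\times(w\times h)$ is again purely quadratic in $w$, now measured in the $\mathcal{L}(H^1\cap L^{\infty})$-norm.

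I do not expect a genuine obstacle here: the statement is a routine differentiation of a quadratic polynomial, already prepared for by the Lemma. The only points demanding care are (i) retaining both orderings $\tilde{G}_2(v,w)$ and $\tilde{G}_2(w,v)$, since the bilinear map is not symmetric, and (ii) checking the remainder estimate in the correct operator norm. In the $L^q$ case step (ii) is immediate from the pointwise vector-product bound, whereas in the $H^1\cap L^{\infty}$ case it rests on the product estimates established earlier.
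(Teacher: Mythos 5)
Your first argument is exactly the paper's proof: decompose $G = G_1 - G_2$ as in Lemma \ref{Lemma G is a polynomial map}, use that a bounded linear map is its own Fr\'echet derivative and that the derivative of $v \mapsto \tilde{G}_2(v,v)$ is $w \mapsto \tilde{G}_2(v,w) + \tilde{G}_2(w,v)$ (the paper cites Cartan, Proposition 2.4.2 and Theorem 2.4.3, for these two facts), subtract, and treat the $H^1 \cap L^{\infty}$ case as analogous. Your supplementary direct verification — that $G(v+w)-G(v)$ minus the candidate linear part is the single quadratic remainder $-\alpha\, w \times (w \times h)$, of operator norm $O(|w|_{L^{\infty}}^2) = o(|w|_{L^{\infty}})$ — is correct and is a nice self-contained substitute for the textbook citations, but it does not change the substance of the argument.
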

	
	\begin{proof}[Proof of Proposition \ref{prop-derivative}] We only consider the first part since the second is completely analogous. \\
		Since $G = G_1 - \alpha \, G_2$ by the proof of previous Lemma, it is sufficient to consider $G_1$ and $G_2$ separately. Since $G_1$  is bounded linear map, by Proposition 2.4.2,
		\cite{Cartan_Calculus},
		\[
		DG_1(v)(w)=G_1(w).
		\]
		Concerning $G_2$, we observed that $G_2(v)=\tilde{G}_2(v,v)$ for all $v \in L^\infty$, where $\tilde{G}_2$ is the corresponding continuous bilinear map. Hence, see Theorem 2.4.3, \cite{Cartan_Calculus},
		\[
		DG_2(v)(w)=\tilde{G}_2(v,w)+\tilde{G}_2(w,v).
		\]
		The result follows.
	\end{proof}

	\section{Statements of the Main results}\label{Section Statements of the main Results}
	
	\dela{Change section to Statements of the main results
	}
	\dela{Also change the name of Section \ref{Section Proof of existence of a solution} to Proof of Theorem \ref{Theorem Existence of a weak solution}: Proof of existence of a solution.}
	We first state some assumptions that will be required in the proof for Theorem \ref{Theorem Existence of a weak solution}.
	
	\begin{assumption}\label{Assumptions for existence of weak martingale solution}
		
		\begin{enumerate}
			\item\label{Assumptions for existence of weak martingale solution point 1} Let $\l( \Omega , \mathcal{F} , \mathbb{F} , \mathbb{P} \r)$ be a probability space which satisfies the usual hypotheses. That is,
			\begin{enumerate}
				\item $\mathbb{P}$ is complete on $(\Omega , \mathcal{F})$.
				\item For every $t\geq 0$, $\mathcal{F}_t$ contains every $(\mathcal{F} , \mathbb{P})$-null set.
				\item The filtration $\mathbb{F} = \{ \mathcal{F}_{t} \}_{t \in [0,T]}$ is right continuous.
			\end{enumerate}
			
			\item\label{Assumptions for existence of weak martingale solution point 2} $W$ is a real valued Wiener process defined on the above probability space with the filtration $\mathbb{F}$.
			
			\item\label{Assumptions for existence of weak martingale solution point 3} The given function $h$ is assumed to be in the space $H^1$, and is independent of time.

			\item\label{Assumptions for existence of weak martingale solution point 4} 
			A process $u$ is an $\mathbb{F}$- progressively measurable process such that
			the following inequality holds for each $p\geq 1$,
			\begin{equation}\label{assumption on u}
				K_p:=\mathbb{E} \l( \int_{0}^{T} \l| u (t) \r|_{L^2}^2 \, dt \r)^p < \infty.
			\end{equation}
			In particular, the  trajectories of $u$ take values in $L^2(0,T;L^2)$.
		\end{enumerate}
	\end{assumption}
	Define the following operator
	\begin{align*}
		\Delta : &H^1 \rightarrow (H^1)^\p \\
		&	v  \mapsto \Delta v,
	\end{align*}
	where for $w\in H^1,v\in H^1$, the linear map $\Delta v\in (H^1)^\p$ is defined by
	
	\begin{equation}\label{Interpretation eqn 1}
		\ _{(H^1)^\p}\l\langle \Delta v , w \r\rangle_{H^1} = - \l\langle \nabla v , \nabla w \r\rangle_{L^2}.
	\end{equation}
	
	\begin{equation*}
		\l( \Delta v \r) \l( w \r) : = - \l\langle \nabla v , \nabla w \r\rangle_{L^2}.
	\end{equation*}
	Since $v,w\in H^1$,
	\begin{align*}
		\l| 	\ _{(H^1)^\p}\l\langle \Delta v , w \r\rangle_{H^1} \r| &= \l| \l\langle \nabla v , \nabla w \r\rangle_{L^2} \r| \\
		& \leq \l|\nabla v\r|_{L^2}\l|\nabla w\r|_{L^2} \\
		& \leq \l|v\r|_{H^1}\l|w\r|_{H^1}<\infty.
	\end{align*}
	Hence for $v\in H^1$, $\Delta v$ as defined above is in $(H^1)^{\prime}$.
	
	Let $v_1,v_2, w\in H^1$, $a_1,a_2\in\mathbb{R}$.
	\begin{align*}
		\ _{(H^1)^\p}\l\langle \Delta \l( a_1v_1 + a_2v_2 \r) , w \r\rangle_{H^1} &= - \l\langle \nabla \l( a_1v_1 + a_2v_2 \r) , \nabla w \r\rangle_{L^2} \\
		& = - \l\langle \nabla  a_1v_1  , \nabla w \r\rangle_{L^2} + \l\langle \nabla  a_2v_2  , \nabla w \r\rangle_{L^2} \\
		& = a_1 \l\langle  \nabla  v_1  , \nabla w \r\rangle_{L^2} + a_2 \l\langle \nabla  v_2  , \nabla w \r\rangle_{L^2} \\
		& = a_1\ _{(H^1)^\p}\l\langle \Delta  v_1  , w \r\rangle_{H^1} + a_2\ _{(H^1)^\p}\l\langle \Delta  v_2  , w \r\rangle_{H^1}.
	\end{align*}
	Hence $\Delta$ is a linear operator from the space $H^1$ to its dual $(H^1)^\p$.\\
	Define an operator $A$ (Neumann Laplacian) on its domain $D(A)\subset L^2$ to $L^2$ as follows.
	\begin{align}\label{Definition of Neumann Laplacian}
		\begin{cases}
			D(A) &:= \l\{v\in H^2 : \nabla (v)(x) = 0, \ \text{for}\ x\in\partial \mathcal{O}\r\}, \\
			Av &:= -\Delta v, \ \text{for}\ v\in D(A).
		\end{cases}	
	\end{align}
	It is known that the operator $A$ is a self-adjoint operator in $L^2$. We define another operator $A_1$ by
	\begin{equation}\label{Definition of A_1}
		A_1 = I_{L^2} + A,
	\end{equation}
	where $I_{L^2}$ denotes the identity operator on $L^2$. It is also known that $\l(A_1\r)^{-1}$ is compact. Also,
	the space $D(A_1^{\frac{1}{2}})$ equipped with the graph norm coincides with the space $H^1$.
	
	For $v_1,v_2,v_3\in H^1$, we interpret terms $\Delta v_1$, $v_1 \times \Delta v_2$ and $v_1 \times (v_2 \times \Delta v_3)$ as elements of $(H^1)^\p$. Let $\phi\in H^1$.
	\begin{enumerate}
		\dela{	\item
			\begin{equation}
				\ _{(H^1)^\p}\l\langle \Delta v_1 , \phi \r\rangle_{H^1} = - \l\langle \nabla v_1 , \nabla \phi \r\rangle_{L^2}.
		\end{equation}}
		\item
		\begin{equation}\label{Interpretation eqn 2}
			\ _{(H^1)^\p}\l\langle v_2 \times \Delta v_1 , \phi \r\rangle_{H^1} = -\l\langle \nabla \l( \phi \times v_2 \r) , \nabla v_1 \r\rangle_{L^2}.
		\end{equation}
		\item
		\begin{equation}\label{Interpretation eqn 3}
			\ _{(H^1)^\p}\l\langle v_3 \times \l( v_2 \times \Delta v_1 \r) , \phi \r\rangle_{H^1} = -\l\langle \nabla \l(\l(
			\phi \times v_3  \r) \times v_2 \r) , \nabla v_1 \r\rangle_{L^2}.
		\end{equation}
	\end{enumerate}
	The above equalities can be obtained from the divergence theorem in case $v_1\in D(A)$.

	\dela{Here the function $h$ needs to be given. The regularity may be told later. But the function needs to be fixed.}

	\textbf{Note:} We require that the magnetization is saturated at each point $t\in[0,T]$, that is, the constraint condition \eqref{eqn-constraint condition} is satisfied by the process $m$.\dela{ We assume the constraint to be $1$ for simplicity.} For that, the initial data $m_0$, see \eqref{problem considered} should lie on the unit sphere $\mathbb{S}^2$. Towards that, we denote by $W^{1,2}(\mathcal{O}:\mathbb{S}^2)$ the space of all $v\in W^{1,2}(\mathcal{O}:\mathbb{R}^3)$ such that $\l| v(x) \r|_{\mathbb{R}^3} = 1$ for Leb.-a.a. $x\in\mathcal{O}$.\\
	We now recall the problem that will be considered (that is \eqref{problem considered introduction equation}).
	\begin{align}\label{problem considered}
		\begin{cases}
			& dm = \big[ m\times\Delta m + m\times u -  \alpha \, m\times\l(m\times u\r)   - \alpha \, m\times\l(m\times \Delta m\r)\big]dt\\
			& \qquad + G(m)\circ dW(t)\dela{\ \text{in}\ \mathcal{O}_T=(0,T)\times \mathcal{O}} , \ t \in [0,T],\\
			& \frac{\partial m}{\partial \nu} = 0,\  \text{on } \partial \mathcal{O}_T,\\
			& m(0,\cdot) = m_0\ \text{on}\ \mathcal{O}.
		\end{cases}
	\end{align}
	
	\dela{	\begin{align}
			\nonumber dm(t) &=\left[m(t) \times \Delta m(t) - \alpha \, m(t)\times\left(m(t)\times u(t)\right) - \alpha \, m(t) \times \left(m(t) \times \Delta m(t)\right) + m(t)\times u(t)\right] \, dt\\
			& + G(m(t)) \circ dW(t).
		\end{align}
	}
	Here 
	\begin{equation*}
		G(m) = m \times h - \alpha \, m \times (m \times h).
	\end{equation*}\\	
	Note that the stochastic term is understood in the Stratonovich sense. It can also be understood in the It\^o sense by adding a correction term, see, for example, \cite{Prato+Zabczyk}, \cite{Oksendal_Book}. The resulting equation is
	\begin{align}\label{definition of solution}
		\nonumber	dm(t) = & \bigg[m(t) \times \Delta m(t) - \alpha \, m(t) \times (m(t) \times \Delta m(t)) + m(t)\times u(t) - \alpha \, m(t)\times(m(t)\times u(t))   \\
		& + \frac{1}{2}\bigl[DG(m(t))\bigr]\big[G\big(m(t)\big)\big]\bigg] dt + G\big(m(t)\big) \,  dW(t).
	\end{align}

	\begin{definition}[Weak martingale solution]\label{Definition of Weak martingale solution}

		\dela{
			Let $T>0$ and $m_0\in W^{1,2}(\mathcal{O}: \mathbb{S}^2)$ be fixed. Given a stochastic process $u$,  a weak martingale solution of \eqref{problem considered} is a tuple
		}
		Assume $T>0$. Let the function $h$ and a control $u$ be given as in Assumption \ref{Assumptions for existence of weak martingale solution}. A weak martingale solution of \eqref{problem considered} is a tuple
		\begin{equation*}
			\pi^\p = (\Omega^\p, \mathcal{F}^\p, \mathbb{F}^\p, \mathbb{P}^\p, W^\p, m^\p, u^\p)
		\end{equation*} such that

		\begin{enumerate}
			\item $(\Omega^\p, \mathcal{F}^\p, \mathbb{P}^\p)$ is a probability space satisfying the usual hypotheses.
			$W^\p$ is a real valued $\mathbb{F}^\p$-adapted Wiener process.

			\item $ \m $ is $H^1$-valued progressively measurable process such that for $\mathbb{P}^\p$-a.s. $\omega \in \Omega^\p$,
			$\m(\omega, \cdot)\in C([0,T]; L^2)$.
			
			\item The process $\m$ satisfies the constraint condition. That is
			\begin{equation}\label{eqn-constraint condition}
				\l|\m(t, x)\r|_{\mathbb{R}^3} = 1,\ \text{for Leb. a.a. }x\in \mathcal{O},\ \text{for all }t\in[0,T],  \ \mathbb{P}^\p\text{-a.s.}
			\end{equation}

			\item \dela{The control process $\u$ is progressively measurable taking values in the space $L^{2}(0,T;L^2)$.} $\u$ is a control process satisfying the assumptions in Assumption \ref{Assumptions for existence of weak martingale solution}  and has the same law on the space $L^2(0,T ; L^2)$ as that of the process $u$.

			\item There exist constants $C_1,C_2>0$ such that for each $p\geq 1$,

			\begin{enumerate}
				\item\begin{equation}\label{bound using u 1}
					\mathbb{E}^{\prime}  \sup_{t\in [0,T]} \l| \m(t) \r|_{H^1}^{2p} \leq C_1 + KC_2,
				\end{equation}
				
				\item
				\begin{equation}\label{bound using u 2}
					\mathbb{E}^{\prime} \left(\int_{0}^{T} \l| \m(t) \times \Delta \m(t) \r|_{L^2}^2\, dt \right)^p  \leq C_1 + KC_2.
				\end{equation}
				Here $\mathbb{E}^\p$ denotes the expectation in $(\Omega^\p, \mathcal{F}^\p, \mathbb{P}^\p)$.
			\end{enumerate}

			\item The paths of $\m(\omega^{\prime})$
			are  continuous taking values in $X^{\beta}$ for any $\beta<\frac{1}{2}$ for $\mathbb{P}^{\prime}$-a.s. $\omega^{\prime}\in\Omega^{\prime}$. The space $X^{\beta}$ is the domain of the operator $D(A_1^\beta)$. More about this is given in Section \ref{Section Faedo Galerkin approximation}.
			
			\item \dela{Is $\phi \in$ $H^1(\mathcal{O})$ sufficient?? Is $\phi$ required to be a process??}
			
			For every $\phi \in$ $H^1(\mathcal{O})$ and every $t\in[0,T]$, the following equality holds $\mathbb{P}^\p$-a.s.
			
			\begin{align}
				\nonumber \l\langle \m(t) , \phi \r\rangle_{L^2} =& \l\langle m_0 , \phi \r\rangle_{L^2} +
				\int_{0}^{t} \l\langle \nabla \m(s) , \m(s) \times \nabla \phi \r\rangle_{L^2} \, ds \\
				\nonumber & + \int_{0}^{t} \l\langle \nabla \m(s) , \nabla (\phi \times \m(s)) \times \m(s) \r\rangle_{L^2} \,ds \\
				\nonumber &  + \int_{0}^{t} \l\langle   \m(s) \times \u(s) , \phi \r\rangle_{L^2} \,ds \\
				\nonumber & - \alpha \, \int_{0}^{t} \l\langle \m(s) \times ( \m(s)\times \u(s)) , \phi
				\r\rangle_{L^2}\, ds \\
				\nonumber & + \frac{1}{2} \int_{0}^{t} \l\langle \l[DG(m(s))\r]\big(G\big(m(s)\big)\big]  ,\phi \r\rangle_{L^2} \, ds \\
				& + \int_{0}^{t} \l\langle G(\m(s))  , \phi \r\rangle_{L^2} \circ dW^\p(s).
			\end{align}
		\end{enumerate}
	\end{definition}
	\dela{Write the equation in It\^o form. Weak formulation and Stratonovich integral should be written carefully.}
	Now we state the existence theorem for a weak martingale solution for the problem \eqref{problem considered}.
	
	\begin{theorem}[Existence of a weak martingale solution]\label{Theorem Existence of a weak solution}

		Let the assumptions in Assumption \ref{Assumptions for existence of weak martingale solution} hold. Let $u$ be a given control process satisfying Assumption \ref{Assumptions for existence of weak martingale solution}. Let the initial data $m_0$ be in $ W^{1,2}(\mathcal{O},\mathbb{S}^2)$. Then the problem \eqref{problem considered} admits a weak martingale solution
		\begin{equation*}
			\pi^\p = \l( \Omega^{\prime} , \mathcal{F}^\p , \mathbb{F}^\p , \mathbb{P}^\p , W^\p , \m , \u \r)
		\end{equation*}
		as in Definition \ref{Definition of Weak martingale solution}\dela{, with the obtained control process $u^\p$ having the same law as the process $u$}.

		\dela{
			
			Let $\l(\Omega , \mathcal{F} , \mathbb{P}\r)$ be a probability space satisfying the usual hypotheses. Let $p\geq 1$. Suppose that the law of the initial data $m_0$ is such that $m_0\in W^{1,2}(\mathcal{O},\mathbb{S}^2)$ (that is $m_0\in W^{1,2}(\mathcal{O})$ with $|m_0(x)|_{\mathbb{R}^3} = 1$ for Leb. a.a. $x \in \mathcal{O}$) $\mathbb{P}-a.s.$ Let $h\in H^1$ be given. Let $(\Omega, \mathcal{F}, \mathbb{F} , \mathbb{P})$ be a filtered probability space satisfying the usual hypotheses. Let $u$ be a given control process having values in the space $L^2(0,T;L^2)$ and satisfying

			\begin{equation}
				\mathbb{E}\left[\left(\int_{0}^{T} |u(t)|^{2}_{L^{2}}\,dt\right)^p\right] \leq K
			\end{equation}

			for a given constant $K$.

			Then the problem \eqref{problem considered} has a weak martingale solution \\
			$$(\Omega^{\prime}, \mathcal{F}^{\prime}, \mathbb{F}^{\prime}, \mathbb{P}^{\prime}, W^{\prime}, \m, u^{\prime}).$$ \\
			The processes $\m , u^{\prime}: \Omega^{\prime} \times \mathcal{O} \times [0,T]\rightarrow \mathbb{R}^{3}$ are such that

			\begin{enumerate}

				\item $u^{\prime}$ is a progressively measurable stochastic process such that

				\dela{	\begin{equation}
						\mathbb{E}^{\prime}|u^{\prime}|_{L^{2p}(0,T;L^2)}^{2p} = \mathbb{E^{\prime}}\left[\left(\int_{0}^{T} |u^{\prime}(t)|^{2p}_{L^{2}}\,dt\right)\right] \leq K.
					\end{equation}	
				}

				\adda{No need of writing this here. It has already been mentioned in the definition.}
				
				\begin{equation}\label{assumption on u_n}
					\mathbb{E}^{\prime}|u^{\prime}|_{L^{2}(0,T;L^2)}^{2p} = \mathbb{E^{\prime}}\left[\left(\int_{0}^{T} |u^{\prime}(t)|_{L^{2}}^2 \, dt\right) ^{p} \right] \leq K.
				\end{equation}

				
				\adda{Write this in the definition. Write both the bounds separately. Also write it in terms of $K$. The term in $\u$ need not be mentioned.}
				
				\dela{\item There exist constants $C_1,C_2>0$ such that
					\begin{equation}
						\mathbb{E}^{\prime} \left[ \sup_{t\in [0,T]} |\m(t)|_{H^1}^{2p} + \left(\int_{0}^{T}|\m(s) \times \Delta \m(s) |^2\, ds\right)^p \right] \leq C_1 + C_2\mathbb{E^{\prime}}\left[ \left(\int_{0}^{T} |u^{\prime}(t)|_{L^2}\, dt \right)^{p} \right].
				\end{equation}}
				
				\item Moreover, the paths of $\m(\omega^{\prime})$
				are  continuous taking values in $X^{\beta}$ for any $\beta<\frac{1}{2}$ for $\mathbb{P}^{\prime}$-a.s. $\omega^{\prime}\in\Omega^{\prime}$.

			\end{enumerate}
		}

	\end{theorem}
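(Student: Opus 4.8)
The plan is to follow the classical Faedo-Galerkin scheme coupled with stochastic compactness, as announced in the introduction, and to work throughout with the It\^o form \eqref{definition of solution} (the Stratonovich-to-It\^o correction $\tfrac12 DG(m)[G(m)]$ being treated as an ordinary drift term). First I would fix the orthonormal basis $\{e_k\}$ of $L^2$ consisting of eigenfunctions of $A_1 = I_{L^2}+A$, let $H_n = \mathrm{span}\{e_1,\dots,e_n\}$ and $\Pi_n$ the orthogonal projection onto $H_n$, and consider the finite-dimensional It\^o equation obtained by applying $\Pi_n$ to \eqref{definition of solution}, with $u$ replaced by $\Pi_n u$. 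Since $H_n$ is finite-dimensional and every nonlinear term is polynomial in the unknown---the drift is a polynomial, and $G$ is a degree-$2$ polynomial map satisfying the growth and Lipschitz-on-balls bounds of Lemma \ref{Lemma G is a polynomial map}---the coefficients are locally Lipschitz, which yields a unique local strong solution $m_n$; the a priori bounds below then upgrade it to a global one.

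The crucial step is to derive bounds on $m_n$ that are uniform in $n$. Applying the It\^o formula to $\tfrac12|m_n(t)|_{L^2}^2$ and to the energy $\tfrac12|\nabla m_n(t)|_{L^2}^2 = \tfrac12|A_1^{1/2}m_n(t)|_{L^2}^2 - \tfrac12|m_n(t)|_{L^2}^2$, I expect the algebraic structure of the LLG equation to produce the decisive cancellation: the gyroscopic term $m\times\Delta m$ is orthogonal to $\Delta m$, while the damping term $-\alpha\, m\times(m\times\Delta m)$ supplies the dissipation controlling $\int_0^T|m_n\times\Delta m_n|_{L^2}^2\,dt$. The control enters only through pairings $\langle\cdot,\Pi_n u\rangle$ and is absorbed by Cauchy-Schwarz and Young inequalities, which is exactly where a constant of the form $C_1+KC_2$ in \eqref{bound using u 1}-\eqref{bound using u 2} appears, with $K=K_p$ from \eqref{assumption on u}. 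Taking $p$-th powers, estimating the martingale part by the Burkholder-Davis-Gundy inequality and closing with Gronwall's lemma, I would obtain \eqref{bound using u 1} and \eqref{bound using u 2}. In dimension one the embedding $H^1\hookrightarrow L^\infty$ is continuous, so these also furnish a uniform $L^\infty$-in-space bound on $m_n$, which is essential for the limit passage.

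To prepare for tightness I would read the increments $m_n(t)-m_n(s)$ off the equation: the drift lies bounded in $(H^1)^\p$ (via the $H^1$ and $m_n\times\Delta m_n$ bounds) while the It\^o integral yields fractional-in-time H\"older regularity by BDG. Combining the uniform bound in $L^2(0,T;H^1)$ with this time regularity, an Aubin-Lions-type argument gives tightness of the laws of $m_n$ on $C([0,T];(H^1)^\p)\cap L^2(0,T;L^2)$ and, via interpolation, in $C([0,T];X^\beta)$ for $\beta<\tfrac12$. For the control, the bound \eqref{assumption on u} gives tightness of the laws of $\Pi_n u$ only in the weak topology, and this is precisely where the compact embedding $L^2(0,T;L^2)\hookrightarrow L^2_w(0,T;L^2)$ is used. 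Because this target space is not metrizable, I would invoke Jakubowski's version of the Skorohod theorem to obtain, on a new probability space, variables $(\tilde m_n,\tilde u_n,\tilde W_n)$ with the same laws such that $\tilde m_n\to\tilde m$ strongly in $C([0,T];L^2)\cap L^2(0,T;L^2)$, $\tilde u_n\to\tilde u$ weakly in $L^2(0,T;L^2)$, and $\tilde W_n\to\tilde W$, almost surely.

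Finally I would pass to the limit in the weak formulation. The gyroscopic and damping terms are handled by combining the strong convergence of $\tilde m_n$ (and, in 1D, of $\tilde m_n$ in $C([0,T];L^\infty)$) with the weak convergence of $\nabla\tilde m_n$ and of $\tilde m_n\times\Delta\tilde m_n$; the control products $\tilde m_n\times\tilde u_n$ and $\tilde m_n\times(\tilde m_n\times\tilde u_n)$ are identified by pairing the strong convergence of the polynomial coefficient in $\tilde m_n$ against the weak convergence of $\tilde u_n$ in $L^2_w(0,T;L^2)$, which also transfers the law of $u$ to $\tilde u$ as required. The correction $\tfrac12 DG(\tilde m_n)[G(\tilde m_n)]$ converges by continuity of $G$ and $DG$ (Proposition \ref{prop-derivative}), and the martingale term is identified by the usual martingale characterization against $\tilde W$. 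The constraint $|\tilde m(t,x)|=1$ is recovered in the limit: the Galerkin solutions do not preserve the modulus, since $\Pi_n$ destroys the pointwise orthogonality of each cross product to $m_n$, but applying It\^o's formula to $|m_n|^2$ shows that this defect vanishes as $n\to\infty$, whence the limit lies on $\mathbb{S}^2$ because $m_0$ does. I expect the main obstacle to be exactly this simultaneous passage to the limit in $\tilde m_n\times\Delta\tilde m_n$ and in the control products: one must argue that the strong convergence of $\tilde m_n$ in a topology strong enough (supplied in 1D by $H^1\hookrightarrow L^\infty$) upgrades the merely weak convergence of $\nabla\tilde m_n$ and of $\tilde u_n$ into convergence of the products, while retaining enough uniform integrability to carry the $p$-th moment bounds to the limit.
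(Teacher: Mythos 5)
Your proposal omits the single device on which the paper's proof actually hinges: the $L^\infty$-based cut-off (the function $\psi_n$ built from $\psi_0$, inserted into the triple-product control term, the It\^o--Stratonovich correction $\tfrac12 DG_n(m_n)[G_n(m_n)]$, and the noise coefficient $G_n(m_n)$ in the truncated Galerkin equation \eqref{definition of solution Faedo Galerkin approximation}). Without it, your claim that the uniform energy estimates close ``by Cauchy--Schwarz and Young'' fails. The reason is that the Galerkin projections destroy the sphere constraint, so at the approximate level the only control on $|m_n|_{L^\infty}$ is through $H^1\hookrightarrow L^\infty$ (or Agmon plus the conserved $L^2$ norm), i.e.\ $|m_n|_{L^\infty}\lesssim |m_n|_{H^1}$. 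Then in the It\^o expansion of $\tfrac12|\nabla m_n|_{L^2}^2$ the drift contains, e.g., the quadratic-variation term $\tfrac12|\nabla G_n(m_n)|_{L^2}^2$ and the pairing $\bigl\langle DG_n(m_n)[G_n(m_n)],\,\Delta m_n\bigr\rangle_{L^2}$, whose cubic pieces (such as $m_n\times\bigl((m_n\times(m_n\times h))\times h\bigr)$) produce contributions of order $|m_n|_{H^1}^3$ or $|m_n|_{H^1}^4$; similarly $\langle m_n\times(m_n\times u_n),\Delta m_n\rangle_{L^2}$ leaves a factor $|m_n|_{H^1}^2|u_n|_{L^2}^2$ that cannot be decoupled from $\sup_t|m_n|_{H^1}^2$ without already knowing higher moments. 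A Gronwall argument cannot close a differential inequality that is superlinear in the energy, so the a priori bounds \eqref{bound using u 1}--\eqref{bound using u 2} are not obtained this way, and with them collapses your local-to-global upgrade, the tightness argument, and the moment transfer to the limit. The paper's resolution (following \cite{ZB+BG+TJ_LargeDeviations_LLGE}) is precisely the truncation: with $\psi$ present, Lemma \ref{Technical lemma bound on DGn Gn} gives the \emph{linear} bound $|\langle \psi(w)^2[DG_n(w)](G_n(w)),\Delta w\rangle_{L^2}|\leq C(1+|w|_{H^1})|w|_{H^1}$ and $\psi(m_n)^2|m_n|_{L^{\infty}}^2\leq(|h|_{L^{\infty}}+2)^2$, the estimates close, and the cut-off is removed only \emph{after} the constraint condition \eqref{eqn-constraint condition} is verified for the limit, which forces $\psi(\m)\equiv 1$.

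A secondary inaccuracy: the constraint condition is not obtained by applying It\^o's formula to $|m_n|_{L^2}^2$ at the Galerkin level and letting a ``defect vanish.'' That computation only yields conservation of the \emph{integrated} quantity $|m_n(t)|_{L^2}^2=|m_n(0)|_{L^2}^2$ (the paper's bound \eqref{bound 1}), which says nothing pointwise, and localized test functions cannot be used there because $P_n$ does not commute with multiplication by $\phi\in C_c^\infty(\mathcal{O})$. The paper instead applies the infinite-dimensional It\^o formula of Pardoux to the \emph{limit} equation with the functional $v\mapsto\tfrac12\langle\phi v,v\rangle_{L^2}$, obtaining $\int_{\mathcal{O}}\phi(x)|\m(t,x)|^2dx=\int_{\mathcal{O}}\phi(x)|m_0(x)|^2dx$ for all $\phi$, hence $|\m(t,x)|_{\mathbb{R}^3}=1$ a.e.; this step also requires the regularity of $\m$ established in the limit, not properties of the approximations. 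The remainder of your outline (Jakubowski--Skorohod because of $L^2_w(0,T;L^2)$, identification of $\m\times\Delta\m$ and of the control products by pairing strong convergence of $\m_n$ against weak convergence of $\u_n$, martingale identification) does match the paper's route, but it is downstream of the estimates that the missing cut-off is needed to prove.
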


	The proof of Theorem \ref{Theorem Existence of a weak solution} has its motivation mainly from the papers \cite{ZB+BG+TJ_Weak_3d_SLLGE} and \cite{Flandoli_Gatarek}. The way to deal with the triple product term in the noise is similar to the work \cite{ZB+BG+TJ_LargeDeviations_LLGE}. The proof begins with the Faedo-Galerkin approximation (Section \ref{Section Faedo Galerkin approximation}), followed by obtaining uniform energy estimates. Then some compactness results, see \cite{Simon_Compact_Sets}, \cite{Simons}, etc. are used to show the tightness of the laws of the finite dimensional approximations on appropriate spaces. Prokhorov's theorem, followed by the Jakubowski version of the Skorohod Theorem are then applied (in Section \ref{Section Proof of existence of a solution}) to show the convergence of the sequence of approximates (possibly along a subsequence) and hence the existence of a weak martingale solution. An application of the It\^o Lemma in Section \ref{sec-The constraint condition section} yields the constraint condition \eqref{eqn-constraint condition}.
	
	As a corollary of the existence result (Theorem \ref{Theorem Existence of a weak solution}), we can prove that equation \eqref{problem considered} makes sense in the strong (P.D.E.) sense in $L^2$. This is formally stated in Corollary \ref{Strong form of weak martingale solution}.

	The next theorem states that the solutions to the problem \eqref{problem considered} are pathwise unique. Using this and the theory of Yamada and Watanabe, we also show the existence and uniqueness of a strong solution. The main result of the section is Theorem \ref{thm-pathwise uniqueness}.
	
	\begin{theorem}[Pathwise uniqueness]\label{thm-pathwise uniqueness}
		Let us assume that  process  $u$ is a  control process such that the Assumption \ref{assumption on u} holds. Let $(\Omega , \mathcal{F} , \mathbb{P} , W , m_1 , u)$ and $(\Omega , \mathcal{F} , \mathbb{P} , W , m_2 , u)$ be two weak martingale solutions to \eqref{problem considered} (with the same initial data $m_0$), corresponding to a given control process  $u$, as in Definition \ref{Definition of Weak martingale solution} and satisfying the properties stated in Theorem \ref{Theorem Existence of a weak solution}. Then
		
		\begin{equation*}
			m_1(t) = m_2(t)\ \mathbb{P}-a.s.
		\end{equation*}
		for each $t\in [0,T]$.
	\end{theorem}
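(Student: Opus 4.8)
The plan is to run a stochastic energy estimate on the difference $v:=m_1-m_2$ and to close it by a Gronwall argument once the parabolic dissipation has been isolated, following the scheme of \cite{ZB+BG+TJ_LargeDeviations_LLGE}. Since both solutions live on the same stochastic basis and are driven by the same Wiener process $W$ and the same control $u$, the difference $v$ carries no independent noise and starts from $v(0)=0$. Subtracting the two copies of the It\^o form \eqref{definition of solution}, the process $v$ solves an equation whose drift takes values in $(H^1)^\p$ and whose martingale part takes values in $L^2$. Using the Gelfand triple $H^1\embed L^2\embed(H^1)^\p$ together with the a priori bounds \eqref{bound using u 1}--\eqref{bound using u 2} (in particular $m_i\times\Delta m_i\in L^2(0,T;L^2)$ and $\sup_t|m_i(t)|_{H^1}<\infty$ almost surely), I would justify the It\^o formula for $t\mapsto|v(t)|_{L^2}^2$. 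Applying it and integrating by parts in the two occurrences of $\Delta$ produces the dissipative term $-2\alpha\int_0^t|\nabla v|_{L^2}^2\,ds$, which is the quantity I use to absorb the higher-order contributions.

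Next I would estimate every remaining term, relying heavily on the constraint $|m_i(t,x)|_{\mathbb{R}^3}=1$, which has three pointwise consequences that do most of the work. First, any term of the form $v\times(\cdot)$ tested against $v$ vanishes, which kills the transport contribution $\langle v,v\times u\rangle$, the term $\langle v,v\times(m_2\times\Delta m_2)\rangle$, and several others. Second, the algebraic identities $\langle v,m_1\rangle_{\mathbb{R}^3}=\tfrac12|v|^2$ and $|m_1\times m_2|\le|v|$ allow me to extract a factor $|v|$ from the genuinely nonlinear pieces. Third, all coefficients depending on $m_i$ alone are pointwise bounded. Combining these with the one dimensional Agmon inequality $|v|_{L^\infty}\le C|v|_{L^2}^{1/2}|\nabla v|_{L^2}^{1/2}+C|v|_{L^2}$, H\"older's inequality and Young's inequality, every term coming from the gyromagnetic drift $m\times\Delta m$, the dissipative drift $m\times(m\times\Delta m)$, and the control drift is bounded by $\epsilon|\nabla v|_{L^2}^2+C(s)|v|_{L^2}^2$, where $C(s)$ is a pathwise integrable weight assembled from $|\nabla m_i(s)|_{L^2}^4$, $|m_i(s)\times\Delta m_i(s)|_{L^2}^2$ and $|u(s)|_{L^2}^2$. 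For the stochastic part, a direct pointwise computation using $|m_i|=1$ and $h\in H^1\embed L^\infty$ gives $|G(m_1)-G(m_2)|_{L^2}\le C|v|_{L^2}$, so that both the quadratic variation term and the It\^o correction difference $\tfrac12\langle v,DG(m_1)[G(m_1)]-DG(m_2)[G(m_2)]\rangle_{L^2}$ (with $DG$ from Proposition \ref{prop-derivative}) are controlled by $C|v|_{L^2}^2$.

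The one term that does not close this way arises from expanding the dissipation through the constraint: it has the shape $\tfrac{\alpha}{2}\int_{\mathcal O}|v|^2\,\nabla v\cdot\nabla(m_1+m_2)\,dx$, and after Agmon and Young it leaves a contribution $C\,|v|_{L^2}\,|\nabla(m_1+m_2)|_{L^2}\,|\nabla v|_{L^2}^2$ carrying the full power $|\nabla v|_{L^2}^2$ with a coefficient that is not a priori small. This is the main obstacle, and it is exactly where the dimension being one and the smallness $v(0)=0$ are needed. I would overcome it by propagating that smallness: for $N\in\mathbb N$ set $\tau_N:=\inf\{t:|m_1(t)|_{H^1}\vee|m_2(t)|_{H^1}\ge N\}\wedge T$, and for $\kappa>0$ set $\rho_\kappa:=\inf\{t:|v(t)|_{L^2}^2\ge\kappa\}$. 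On $[0,\tau_N\wedge\rho_\kappa]$ the dangerous coefficient is at most $2CN\sqrt\kappa$, so choosing $\kappa=\kappa(N)$ with $2CN\sqrt\kappa\le\alpha$ lets me absorb it into the dissipation.

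With the dangerous term absorbed, on $[0,\tau_N\wedge\rho_\kappa]$ I obtain
\begin{equation*}
	|v(t\wedge\tau_N\wedge\rho_\kappa)|_{L^2}^2+\alpha\int_0^{t\wedge\tau_N\wedge\rho_\kappa}|\nabla v|_{L^2}^2\,ds\le\int_0^{t\wedge\tau_N\wedge\rho_\kappa}C(s)\,|v(s)|_{L^2}^2\,ds+M_t,
\end{equation*}
where $M_t=2\int_0^{t\wedge\tau_N\wedge\rho_\kappa}\langle v,G(m_1)-G(m_2)\rangle_{L^2}\,dW$. On the stopping interval the integrand of $M_t$ is bounded, so $M_t$ is a genuine martingale with $\mathbb E M_t=0$, and $C(s)\le C_N$ is deterministic there. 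Taking expectations, discarding the nonnegative dissipation, and setting $\phi(t):=\mathbb E|v(t\wedge\tau_N\wedge\rho_\kappa)|_{L^2}^2$ yields $\phi(t)\le C_N\int_0^t\phi(s)\,ds$ with $\phi(0)=0$, whence $\phi\equiv0$ by Gronwall. Thus $v\equiv0$ on $[0,\tau_N\wedge\rho_\kappa]$, which forces $|v|_{L^2}\equiv0<\sqrt\kappa$ there, so $\rho_\kappa\ge\tau_N$ almost surely and hence $v\equiv0$ on $[0,\tau_N]$. Finally, $\sup_{t\in[0,T]}|m_i(t)|_{H^1}<\infty$ almost surely by \eqref{bound using u 1} gives $\tau_N\uparrow T$ as $N\to\infty$, so $m_1(t)=m_2(t)$ for every $t\in[0,T]$, $\mathbb P$-almost surely. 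The two places demanding care are the rigorous justification of the It\^o formula at the available regularity and the handling of this single super-quadratic term, both of which exploit the one dimensional setting.
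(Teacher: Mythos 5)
Your overall strategy coincides with the paper's: use the constraint condition to rewrite $m_i\times(m_i\times\Delta m_i)=-\Delta m_i-|\nabla m_i|^2m_i$, apply the It\^o formula to $|v|_{L^2}^2$ for the difference $v=m_1-m_2$, absorb everything into the dissipation $-\alpha\int_0^t|\nabla v|_{L^2}^2\,ds$ via Agmon and Young, and control the noise terms through the Lipschitz-on-balls property of $G$ and $DG(G)$. However, there is a genuine gap at the final Gronwall step. Your weight $C(s)$ contains $|u(s)|_{L^2}^2$ (you list it explicitly), and the control $u$ is only assumed to satisfy the moment bound \eqref{assumption on u}; nothing makes $|u(s)|_{L^2}$ pathwise bounded, and your stopping time $\tau_N$ caps only $|m_i(t)|_{H^1}$. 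Hence the claim that ``$C(s)\le C_N$ is deterministic'' on $[0,\tau_N\wedge\rho_\kappa]$ is false, and you cannot take expectations and invoke the deterministic Gronwall lemma: the estimate $\mathbb{E}\int_0^{t\wedge\tau}C(s)|v(s)|_{L^2}^2\,ds\le C_N\int_0^t\mathbb{E}|v(s\wedge\tau)|_{L^2}^2\,ds$ has no justification when $C(s)$ is random and unbounded. The paper resolves exactly this difficulty with a stochastic Gronwall (exponential weight) argument: it multiplies by $e^{-\int_0^t\Phi_C(s)\,ds}$, where $\Phi_C$ is merely $\mathbb{P}$-a.s.\ integrable, applies the It\^o product rule (see \eqref{pathwise uniqueness exp intermediate inequality}), and then takes expectations using that the stochastic integral is a genuine martingale because the integrand is bounded thanks to the constraint condition \eqref{eqn-constraint condition}. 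Your proof becomes correct if you replace your Gronwall step by this device (your stopping times can be retained for the absorption step), or if you enlarge nothing and simply never need pointwise bounds on the weight.

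A second, less serious point: the ``dangerous'' term carrying the full power $|\nabla v|_{L^2}^2$ is an artifact of your own manipulation. You substitute $\langle m_2,v\rangle_{\mathbb{R}^3}=-\tfrac12|v|_{\mathbb{R}^3}^2$, gaining an extra factor of $v$ that must then be measured in $L^\infty$, costing half a derivative and producing $|\nabla v|_{L^2}^2$ with a non-small coefficient. The paper instead estimates the same term by $|\langle m_2,v\rangle_{\mathbb{R}^3}|\le|v|_{\mathbb{R}^3}$ pointwise, so that after Agmon's inequality the total power of $|\nabla v|_{L^2}$ is $3/2<2$, and Young's inequality with exponents $4$ and $4/3$ closes it, yielding precisely the weights $|\nabla m_i|_{L^2}^2+|\nabla m_i|_{L^2}^4$ in $\Phi_C$. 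With that simpler estimate no smallness propagation, no $\rho_\kappa$, and no $\tau_N$ are needed at all. Your localization argument is internally consistent (the hitting-time contradiction and $\tau_N\uparrow T$ are fine), but it is machinery invented to fix a problem that a sharper elementary estimate removes, and it still leaves the unbounded-weight gap described above.
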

	
	The existence of a unique strong solution to the problem \eqref{problem considered} is shown as a consequence, see Theorem \ref{thm-existence of a strong solution} to the above theorem. It follows from the pathwise uniqueness and the theory of Yamada and Watanabe, see \cite{Ikeda+Watanabe}.

	Section \ref{Section Further regularity} deals with the proof of Theorem \ref{Theorem Further regularity}. Here we show that the obtained solution takes values in $D(A_1)$. We first write the obtained equation in the mild form. Towards this, Corollary \ref{Corollary m times m times Delta m equals Delta m plus gradient m squared m} shows that whenever $m$ satisfies the constraint condition, we have the following equality $(H^1)^\p$.
	\begin{equation*}m \times (m \times \Delta m) = -\Delta m - |\nabla m|_{\mathbb{R}^3}^2m.
	\end{equation*}
	Therefore the equations \eqref{problem considered} and \eqref{problem considered for optimal control part} are equivalent. The proof mainly follows by using the ultracontractivity and the maximal regularity properties of the semigroup generated by the operator $A$.
	\begin{theorem}[Maximal regularity]\label{Theorem Further regularity}
		Let $\mathcal{O}\subset\mathbb{R}$ be bounded. Let the probability space and initial data, along with the given control process $u$ be as given in Theorem \ref{Theorem Existence of a weak solution} . Also assume that the process $u$ satisfies the assumption \eqref{assumption on u} for $p=2$. Then there exists a unique strong solution $m$ which satisfies the properties mentioned in Theorem \ref{Theorem Existence of a weak solution}. Moreover,  there exists a constant $C>0$ such that
		\begin{align}\label{Further regularity}
			\mathbb{E} \left( \int_{0}^{T} | \nabla m(t) |_{L^4}^4\, dt  + \int_{0}^{T} |A_1 m(t)|_{L^2}^2\, dt\right) \leq C.
		\end{align}
	\end{theorem}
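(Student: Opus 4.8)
The plan is to exploit the parabolic structure that is hidden in~\eqref{definition of solution} and becomes visible once the saturation constraint $|m|=1$ is used. By Corollary~\ref{Corollary m times m times Delta m equals Delta m plus gradient m squared m}, whenever $m$ satisfies the constraint condition we have $m\times(m\times\Delta m)=-\Delta m-|\nabla m|_{\mathbb{R}^3}^2\,m$ in $(H^1)^\p$; hence the drift term $-\alpha\,m\times(m\times\Delta m)$ equals $\alpha\Delta m+\alpha|\nabla m|^2 m=-\alpha A_1 m+\alpha m+\alpha|\nabla m|^2 m$. Writing $S(t)=e^{-\alpha t A_1}$ for the analytic, self-adjoint contraction semigroup generated by $-\alpha A_1$, equation~\eqref{definition of solution} can be recast in the mild form
\begin{equation*}
	m(t)=S(t)m_0+\int_0^t S(t-s)\,\Phi(s)\,ds+\int_0^t S(t-s)\,G(m(s))\,dW(s),
\end{equation*}
where $\Phi:=\alpha m+m\times\Delta m+\alpha|\nabla m|^2 m+m\times u-\alpha\,m\times(m\times u)+\tfrac12\,DG(m)\big[G(m)\big]$ gathers all of the non-parabolic drift contributions.

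I would then bound the three summands separately, the target being $\mathbb{E}\int_0^T|A_1 m(t)|_{L^2}^2\,dt\le C$. For the initial term the spectral identity $\int_0^\infty|A_1 e^{-\alpha t A_1}x|_{L^2}^2\,dt=\tfrac{1}{2\alpha}|A_1^{1/2}x|_{L^2}^2$ together with $m_0\in W^{1,2}=H^1=D(A_1^{1/2})$ gives $\int_0^T|A_1 S(t)m_0|_{L^2}^2\,dt\le C|m_0|_{H^1}^2$. For the Bochner convolution I would invoke the deterministic maximal $L^2$-regularity of the analytic semigroup $S$, namely $\mathbb{E}\int_0^T\big|A_1\int_0^t S(t-s)\Phi(s)\,ds\big|_{L^2}^2\,dt\le C\,\mathbb{E}\int_0^T|\Phi(s)|_{L^2}^2\,ds$. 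For the stochastic convolution I would use the stochastic maximal $L^2$-regularity available for the self-adjoint operator $A_1$ (equivalently, the factorization method combined with the ultracontractive bounds $\|S(t)\|_{\mathcal{L}(L^2)}\le1$ and $\|A_1^{1/2}S(t)\|_{\mathcal{L}(L^2)}\le Ct^{-1/2}$), which gains one full derivative: since Lemma~\ref{Lemma G is a polynomial map} (together with $|m|=1$) yields $G(m)\in H^1=D(A_1^{1/2})$ with $|G(m)|_{H^1}\le C\big(1+|m|_{H^1}^2\big)$, one gets $\mathbb{E}\int_0^T\big|A_1\int_0^t S(t-s)G(m(s))\,dW(s)\big|_{L^2}^2\,dt\le C\,\mathbb{E}\int_0^T|G(m(s))|_{H^1}^2\,ds$, which is finite by~\eqref{bound using u 1} with $p=2$.

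It then remains to control $\mathbb{E}\int_0^T|\Phi(s)|_{L^2}^2\,ds$. Using $|m|=1$, the terms $\alpha m$, $m\times u$, $\alpha\,m\times(m\times u)$ and $\tfrac12 DG(m)[G(m)]$ are bounded in $L^2$ by a constant, by $|u|_{L^2}$, and by polynomials in $|m|_{L^\infty}=1$ and $|m|_{H^1}$, so their contributions are finite by~\eqref{assumption on u} and~\eqref{bound using u 1}; the term $m\times\Delta m$ is square integrable by~\eqref{bound using u 2}. The only dangerous summand is $\alpha|\nabla m|^2 m$, since $\alpha^2\big||\nabla m|^2 m\big|_{L^2}^2=\alpha^2|\nabla m|_{L^4}^4$ is exactly the other quantity to be estimated. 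Here I would invoke the one-dimensional Gagliardo--Nirenberg inequality $|\nabla m|_{L^4}^4\le C|\nabla m|_{L^2}^3\,|\nabla m|_{H^1}+C|\nabla m|_{L^2}^4\le C|m|_{H^1}^3\,|A_1 m|_{L^2}+C|m|_{H^1}^4$, followed by Young's inequality, to write $\alpha^2|\nabla m|_{L^4}^4\le\varepsilon|A_1 m|_{L^2}^2+C_\varepsilon\big(|m|_{H^1}^6+|m|_{H^1}^4\big)$.

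The main obstacle is to close the estimate rigorously, because $\mathbb{E}\int_0^T|A_1 m|_{L^2}^2$ now appears on both sides through the $|\nabla m|^2 m$ term. Choosing $\varepsilon$ so small that $\varepsilon$ times the maximal-regularity constant is strictly less than $1$ would allow me to absorb the offending term into the left-hand side, the remainder being controlled by $C_\varepsilon T\,\mathbb{E}\sup_{t\in[0,T]}|m(t)|_{H^1}^6$, which is finite by~\eqref{bound using u 1} with $p=3$. To legitimise this absorption --- one may not subtract a possibly infinite quantity --- I would first establish the inequality for the process stopped at $\tau_N:=\inf\{t\ge0:\int_0^t|A_1 m(s)|_{L^2}^2\,ds\ge N\}\wedge T$ (or, equivalently, at the level of the Faedo--Galerkin approximants of Section~\ref{Section Faedo Galerkin approximation}, where every norm is a priori finite), derive a bound independent of $N$, and then pass to the limit $N\to\infty$ by monotone convergence. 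Once $\mathbb{E}\int_0^T|A_1 m(t)|_{L^2}^2\,dt\le C$ is secured, feeding it back into the Gagliardo--Nirenberg estimate yields $\mathbb{E}\int_0^T|\nabla m(t)|_{L^4}^4\,dt\le C$, which together establishes~\eqref{Further regularity}. The existence and uniqueness of the strong solution itself is inherited from Theorem~\ref{Theorem Existence of a weak solution}, Theorem~\ref{thm-pathwise uniqueness} and the Yamada--Watanabe theorem.
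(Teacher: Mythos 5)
Your strategy is genuinely different from the paper's and is the natural ``parabolic'' route: use Corollary \ref{Corollary m times m times Delta m equals Delta m plus gradient m squared m} to expose the term $\alpha\Delta m$, write the mild form with respect to $e^{-\alpha tA_1}$, apply deterministic and stochastic maximal $L^2$-regularity, estimate the quadratic term $|\nabla m|_{\mathbb{R}^3}^2m$ by the one-dimensional Gagliardo--Nirenberg inequality and Young's inequality, and absorb $\varepsilon\,\mathbb{E}\int_0^T|A_1m(t)|_{L^2}^2\,dt$ into the left-hand side. Everything up to the absorption is fine, but the absorption itself --- the step you correctly identify as the main obstacle --- is not legitimised by either of the two devices you propose, and this is a genuine gap. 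The stopping times $\tau_N=\inf\{t\ge0:\int_0^t|A_1m(s)|_{L^2}^2\,ds\ge N\}\wedge T$ presuppose exactly what is to be proved: unless one already knows that $m\in L^2_{\mathrm{loc}}([0,T);D(A_1))$ almost surely, the map $t\mapsto\int_0^t|A_1m(s)|_{L^2}^2\,ds$ may equal $+\infty$ for every $t>0$, in which case $\tau_N=0$ for all $N$, the stopped estimates are vacuous, and the monotone limit recovers nothing; stopping can truncate a finite but unbounded quantity, it cannot create regularity. The Faedo--Galerkin route fails for a different reason: the approximants $m_n$ of Section \ref{Section Faedo Galerkin approximation} do \emph{not} satisfy the constraint $|m_n(t,x)|_{\mathbb{R}^3}=1$ (this holds only for the limit, see Section \ref{sec-The constraint condition section}), so Corollary \ref{Corollary m times m times Delta m equals Delta m plus gradient m squared m} is unavailable at their level. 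Their equation contains $-\alpha P_n\bigl[m_n\times(m_n\times\Delta m_n)\bigr]$, not $\alpha\Delta m_n+\alpha|\nabla m_n|_{\mathbb{R}^3}^2m_n$, and pairing that term with $\Delta m_n$ yields only $\alpha|m_n\times\Delta m_n|_{L^2}^2$, which does not control $|\Delta m_n|_{L^2}^2$ without the constraint. Hence no uniform $H^2$ bound can be extracted for the approximants, and your a priori estimate never gets off the ground on any approximating sequence. (Projecting the \emph{limit} solution, i.e.\ working with $P_nm$, does not obviously repair this either: the left-hand side then carries $|A_1P_nm|_{L^2}^2$ while Gagliardo--Nirenberg produces $\varepsilon|A_1m|_{L^2}^2$ on the right, and the inequality $|A_1P_nm|_{L^2}\le|A_1m|_{L^2}$ goes the wrong way for absorption when the latter is not known to be finite.)

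For contrast, the paper's proof is built precisely to avoid this circularity, and it never applies maximal regularity to the full operator on the quadratic gradient term. Its Step 1 proves the \emph{subcritical} estimate $\mathbb{E}\int_0^T|A_1^{\delta}m(t)|_{L^2}^4\,dt<\infty$ for $\delta\in(5/8,3/4)$, where the dangerous term $g=|\nabla m|_{\mathbb{R}^3}^2m$ is handled using only $|g(s)|_{L^1}\le|m(s)|_{H^1}^2$ together with the $L^1\to L^2$ ultracontractive bound for the semigroup, at the price of a singularity $(t-s)^{-\delta-1/4}$ which is integrable exactly because $\delta<3/4$; this requires only moments of $\sup_t|m(t)|_{H^1}$, no $H^2$ information, so nothing needs to be absorbed. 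The embedding $X^{\delta}\hookrightarrow W^{1,4}$ for $\delta>5/8$ then gives the first half of \eqref{Further regularity}, and the second half follows not from maximal regularity but from the pointwise identities $|\Delta m|_{\mathbb{R}^3}^2=|m\times\Delta m|_{\mathbb{R}^3}^2+|\langle m,\Delta m\rangle_{\mathbb{R}^3}|^2$ and $\langle m,\Delta m\rangle_{\mathbb{R}^3}=-|\nabla m|_{\mathbb{R}^3}^2$, both valid because $|m|_{\mathbb{R}^3}=1$. To salvage your argument you would need an approximation or localisation that preserves both the parabolic structure and the constraint, or an independent local-in-time $H^2$ theory guaranteeing $\tau_N>0$ and $\tau_N\uparrow T$; as written, the proof does not close.
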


	Section \ref{Section Optimal control} shows that the problem \eqref{problem considered} admits an optimal control corresponding to the cost functional \eqref{definition of cost functional introduction}. Let $\mathcal{U}_{ad}(m_0,T)$ denote the space of all admissible solutions of the problem \eqref{problem considered} (to be detailed in Section \ref{Section Optimal control}). The idea for the proof is to show that the space $\mathcal{U}_{ad}(m_0,T)$ is non-empty. This gives us a minimizing sequence of admissible solutions. This sequence is then shown to converge (in a suitable sense) to an admissible solution, which is a minimizer of the cost functional \eqref{definition of cost functional}.

	\begin{definition}[Optimal control]\label{definition of optimal control}
		Let the law of the initial data $m_0$ be as in Theorem \ref{Theorem Existence of a weak solution}. Let $h\in H^1$ be fixed. An admissible solution of the problem \eqref{problem considered}
		$$\pi^\p = (\Omega^\p , \mathcal{F}^\p , \mathbb{F}^\p , \mathbb{P}^\p , W^\p , m^\p , u^\p) \in \mathcal{U}_{ad}(m_0,T)$$
		is said to be an optimal control for the problem \eqref{problem considered}\dela{\eqref{Optimal control problem}} if and only if
		\begin{equation}\label{eqn-?}
			J(\pi^\p) = \inf_{\pi \in \mathcal{U}_{ad}(m_0,T)} J(\pi),
		\end{equation}
		i.e.
		\begin{equation}
			J(\pi^\p) = \min_{\pi \in \mathcal{U}_{ad}(m_0,T)} J(\pi),
		\end{equation}
	\end{definition}
	The infimum in \eqref{eqn-?} easily exists. The main difficulty lies in showing the existence of a minimum.

	\textbf{Note:} Since we also show the existence of a strong solution, one can also consider the formulation of the control problem using strong solutions instead of weak martingale solutions. We choose to use strong martingale solutions (Definition \ref{Definition Strong martingale solution}) instead. Some more details are given in Remark \ref{Remark Strong optimal control}.

	\begin{theorem}[Existence of optimal control]\label{Theorem existence of optimal control}
		Let $\l( \Omega , \mathcal{F}, \mathbb{F} , \mathbb{P} \r)$ be a probability space satisfying the usual conditions, see Assumption \ref{Assumptions for existence of weak martingale solution}. Let $W$ be a real valued Wiener process on the space $\l( \Omega , \mathcal{F}, \mathbb{F} , \mathbb{P} \r)$. Let the initial data $m_0$ be as in Theorem \ref{Theorem Existence of a weak solution} and the function $h$ satisfy Assumption \ref{Assumptions for existence of weak martingale solution}. Let $u$ be a given control process satisfying \eqref{Assumptions for existence of weak martingale solution point 4} of Assumption \ref{Assumptions for existence of weak martingale solution}, in particular satisfying \eqref{assumption on u} for $p = 4$. Then there exists an optimal control for the problem \eqref{problem considered} according to Definition \ref{definition of optimal control}.
	\end{theorem}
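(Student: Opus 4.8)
The plan is to obtain the optimal control as the limit of a minimizing sequence of admissible solutions. First I would record that $\mathcal{U}_{ad}(m_0,T)\neq\emptyset$: by Theorem \ref{Theorem Existence of a weak solution} the given control $u$ (which satisfies \eqref{assumption on u} for $p=4$) produces at least one weak martingale solution, and by construction this solution satisfies the energy bounds \eqref{bound using u 1}--\eqref{bound using u 2} and the constraint \eqref{eqn-constraint condition}, hence is admissible. Since $\Psi\geq 0$ and the integrand in \eqref{definition of cost functional introduction} is nonnegative, $\Lambda:=\inf_{\pi\in\mathcal{U}_{ad}(m_0,T)}J(\pi)$ is a well-defined finite nonnegative number. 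I would then pick a minimizing sequence $\pi_n=(\Omega_n,\mathcal{F}_n,\mathbb{F}_n,\mathbb{P}_n,W_n,m_n,u_n)\in\mathcal{U}_{ad}(m_0,T)$ with $J(\pi_n)\to\Lambda$. Boundedness of $J(\pi_n)$ forces $\sup_n\mathbb{E}_n\int_0^T|u_n(t)|_{L^2}^2\,dt<\infty$, while admissibility supplies the uniform higher-moment control \eqref{assumption on u} with $p=4$; feeding these into the estimates of Theorem \ref{Theorem Existence of a weak solution} yields uniform bounds for $m_n$ in $L^{2p}(\Omega_n;C([0,T];H^1))$ and for $m_n\times\Delta m_n$ in $L^{2p}(\Omega_n;L^2(0,T;L^2))$, together with the saturation $|m_n(t,x)|_{\mathbb{R}^3}=1$.

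Next I would prove tightness of the laws of $(W_n,m_n,u_n)$ on a product space of the form $C([0,T];\mathbb{R})\times\mathcal{Z}\times L^2_w(0,T;L^2)$, where $\mathcal{Z}$ is the path space for $m$ used in the existence proof, so that tightness of the $m_n$-component follows exactly as in Section \ref{Section Proof of existence of a solution} via the compactness lemmas. The genuinely new point is the control component: the bound $\sup_n\mathbb{E}_n\int_0^T|u_n|_{L^2}^2<\infty$ does not give relative compactness in the norm topology of $L^2(0,T;L^2)$, but by Chebyshev's inequality the laws of $u_n$ concentrate, up to arbitrarily small mass, on balls of $L^2(0,T;L^2)$, and such balls are compact once that space carries the weak topology $L^2_w(0,T;L^2)$. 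This is exactly the compact embedding $L^2(0,T;L^2)\embed L^2_w(0,T;L^2)$ advertised in the introduction, and it yields tightness of the $u_n$-laws. Since $L^2_w(0,T;L^2)$ is not metrizable, I would invoke Prokhorov's theorem together with the Jakubowski version of the Skorohod theorem to obtain, on a new probability space $(\widehat\Omega,\widehat{\mathcal{F}},\widehat{\mathbb{P}})$, random variables $(\widehat W_n,\widehat m_n,\widehat u_n)$ with the same joint laws as $(W_n,m_n,u_n)$ and a limit $(\widehat W,\widehat m,\widehat u)$ such that, $\widehat{\mathbb{P}}$-a.s., $\widehat m_n\to\widehat m$ strongly in $C([0,T];L^2)\cap L^2(0,T;H^1)$ and $\widehat u_n\to\widehat u$ weakly in $L^2(0,T;L^2)$.

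The heart of the argument is to show that $\widehat\pi:=(\widehat\Omega,\widehat{\mathcal{F}},\widehat{\mathbb{F}},\widehat{\mathbb{P}},\widehat W,\widehat m,\widehat u)$ is again a weak martingale solution, hence lies in $\mathcal{U}_{ad}(m_0,T)$. The purely $m$-dependent drift terms, the It\^o correction term and the stochastic integral pass to the limit exactly as in the proof of Theorem \ref{Theorem Existence of a weak solution}. The hard part will be the control-dependent terms $\int_0^t\langle\widehat m_n(s)\times\widehat u_n(s),\phi\rangle\,ds$ and $\int_0^t\langle\widehat m_n(s)\times(\widehat m_n(s)\times\widehat u_n(s)),\phi\rangle\,ds$, in which $\widehat u_n$ converges only weakly. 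I would rewrite them, using the cyclic invariance of the scalar triple product, as $\int_0^t\langle\widehat u_n(s),\phi\times\widehat m_n(s)\rangle\,ds$ and $\int_0^t\langle\widehat u_n(s),(\phi\times\widehat m_n(s))\times\widehat m_n(s)\rangle\,ds$, i.e. as dualities pairing the weakly convergent $\widehat u_n$ against test functions built from $\widehat m_n$. Because $|\widehat m_n|=1$ a.e. and $\widehat m_n\to\widehat m$ in $L^2(0,T;L^2)$, these test functions, multiplied by $\mathbf{1}_{[0,t]}$, converge strongly in $L^2(0,T;L^2)$, so that the pairing of a strongly convergent sequence with a weakly convergent one converges; a uniform-integrability argument based on the $p=4$ moment bound then upgrades the a.s. convergence to convergence of the relevant expectations, and $\widehat\pi$ satisfies the weak formulation of Definition \ref{Definition of Weak martingale solution}. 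The moment bound and energy estimates for $\widehat\pi$ are inherited by weak lower semicontinuity, so $\widehat\pi$ is admissible.

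Finally I would establish $J(\widehat\pi)\leq\liminf_n J(\pi_n)$. The control cost is handled by weak lower semicontinuity: since $\widehat u_n\to\widehat u$ weakly in $L^2(0,T;L^2)$ and is bounded in $L^2(\widehat\Omega;L^2(0,T;L^2))$, convexity of $|\cdot|_{L^2(0,T;L^2)}^2$ and Fatou's lemma give $\widehat{\mathbb{E}}\int_0^T|\widehat u|_{L^2}^2\leq\liminf_n\mathbb{E}_n\int_0^T|u_n|_{L^2}^2$. For the tracking term, the uniform $H^1$-bound gives $\widehat m_n\to\widehat m$ weakly in $L^2(\widehat\Omega;L^2(0,T;H^1))$, whence weak lower semicontinuity of the $H^1$-norm controls $\widehat{\mathbb{E}}\int_0^T|\widehat m-\bar m|_{H^1}^2$. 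For the terminal cost, $\widehat m_n(T)\to\widehat m(T)$ in $L^2$ a.s.\ and the boundedness and continuity of $\Psi$ on the sphere yield convergence of $\widehat{\mathbb{E}}\,\Psi(\widehat m(T))$ by dominated convergence. Since the cost depends only on the joint law of $(m,u)$, we have $J(\pi_n)=J(\widehat\pi_n)$, so combining the three estimates gives $J(\widehat\pi)\leq\liminf_n J(\pi_n)=\Lambda$. As $\widehat\pi\in\mathcal{U}_{ad}(m_0,T)$ forces $J(\widehat\pi)\geq\Lambda$, I conclude $J(\widehat\pi)=\Lambda$, so $\widehat\pi$ is an optimal control. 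The main obstacle throughout is the control term under mere weak convergence, resolved by the strong-weak pairing made possible by the saturation constraint $|\widehat m_n|=1$.
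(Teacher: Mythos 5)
Your overall strategy --- minimizing sequence, uniform bounds, tightness with the weak topology $L^2_w(0,T;L^2)$ for the controls, Jakubowski's version of the Skorohod theorem, strong--weak pairing to pass to the limit in the control-dependent terms, and lower semicontinuity of the cost --- is the same as the paper's, and most individual steps (Banach--Alaoglu for the control laws, uniform integrability from the $p=4$ moments, identification of the $m\times u$ limits by pairing the weakly convergent controls against strongly convergent test functions, Fatou for the control cost, continuity of $\Psi$ for the terminal cost, and invariance of $J$ under equality of laws) correspond almost one to one to the paper's lemmas.

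The genuine gap concerns the admissibility class. In the paper, $\mathcal{U}_{ad}(m_0,T)$ consists of \emph{strong martingale solutions} in the sense of Definition \ref{Definition Strong martingale solution}: weak martingale solutions which in addition satisfy the maximal regularity bound \eqref{Further regularity bound definition of strong martingale solution}, with controls obeying Assumption \ref{Assumption admisibility criterion} for $p=4$. Your proof treats admissibility as ``weak martingale solution plus energy bounds,'' and this discrepancy bites in three places. First, non-emptiness of $\mathcal{U}_{ad}(m_0,T)$ requires Theorem \ref{Theorem Further regularity} in addition to Theorem \ref{Theorem Existence of a weak solution}. Second, to conclude that your limit $\widehat\pi$ is admissible you must show that $\widehat m$ itself satisfies \eqref{Further regularity bound definition of strong martingale solution}; this requires proving that the maximal-regularity estimate holds \emph{uniformly} along the minimizing sequence (the paper's Lemma \ref{Lemma Further regularity minimizing sequence}, which rests on the observation that the constant in Theorem \ref{Theorem Further regularity} depends only on $m_0$, $T$, $h$ and $K_p$), and then passing to the limit by lower semicontinuity of the $L^2(0,T;H^2)$ and $L^4(0,T;W^{1,4})$ norms (the paper's Lemma \ref{lem-bounds lemma for m prime minimizing sequence}); your weak-lower-semicontinuity remark covers the energy bounds but not these. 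Third, your claim that Skorohod yields a.s. convergence of $\widehat m_n$ strongly in $C([0,T];L^2)\cap L^2(0,T;H^1)$ does not follow from tightness on the path space of the existence proof, namely $L^4(0,T;L^4)\cap C([0,T];L^2)$; the paper obtains exactly this stronger convergence because the uniform $L^2(0,T;H^2)\cap W^{\gamma,p}(0,T;L^2)$ bounds coming from maximal regularity give tightness on the smaller space $L^2(0,T;H^1)\cap C([0,T];L^2)$ (Lemma \ref{tightness lemma minimizing sequence}). Your downstream arguments mostly avoid relying on this stronger convergence (you use weak $L^2(\widehat\Omega;L^2(0,T;H^1))$ convergence for the tracking term, which is legitimate and in fact slightly more economical than the paper's equality \eqref{Cost functional convergence eqn 1}), but as stated the convergence claim is unsupported, and without the uniform maximal regularity along the sequence the limit cannot be placed in $\mathcal{U}_{ad}(m_0,T)$ at all.
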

	

	\section{Faedo-Galerkin approximation}\label{Section Faedo Galerkin approximation}
	
	Let $A = -\Delta$ be the Neumann Laplacian operator defined in \eqref{Definition of Neumann Laplacian}.
	Let $\{e_i\}_{i\in\mathbb{N}}$ be an orthonormal basis of $L^2$ consisting of eigen functions of $A$ (for example refer to \cite{Evans} page 335 Theorem 1).
	
	Recall the operator $A_1 = I_{L^2} + A$ defined in \eqref{Definition of A_1}.	
	For $\beta \geq 0$, let us define the space
	\begin{equation}\label{definition of the space X beta}
		X^{\beta} = \text{dom}\l(A_1^{\beta}\r).
	\end{equation}
	Without the loss of generality, if $\mathcal{O} = (0,1)$, then it is known that
	
	\begin{align}
		X^{\beta} = \begin{cases}
			\l\{ v\in H^{2\beta}(0,1 ; \mathbb{R}^3) \, ;
			v^\p(1) = v^\p(0) = 0  \r\}, \text{ if } 2\beta > \frac{1}{2},\\
			H^{2\beta}, \text{ if } 2\beta \leq \frac{1}{2}. 
		\end{cases}       
	\end{align}

	Its dual space is denoted by $X^{-\beta}$.
	For $\beta = 0$, we have
	$$X^0 = L^2.$$
	
	\dela {Note that for $0\leq \beta < \frac{3}{4}$, $X^{\beta} = H^{2\beta}$. (See \cite{ZB+UM+DM_WongZakai})
	}

	Let $H_n$ denote the linear span of $\{e_1,\dots,e_n\}$.
	Let $P_n$ denote the orthogonal projection
	\begin{equation*}
		P_n:L^2\to H_n.
	\end{equation*}
	Since $e_i$ is an eigen function of the operator $A$ for each $i\in\mathbb{N}$, we can prove that $e_i\in D(A)$. Therefore, the space $H_n\subset D(A)$.

	The given control process $u$ induces the following measurable map, see, for example, Proposition 3.19, Section 3.7 in \cite{Prato+Zabczyk}
	\begin{equation*}
		u:\Omega \rightarrow L^2\l(0,T;L^2\r).
	\end{equation*}

	Define $u_n$ as the projection of $u$ under the projection $P_n$. That is for a.a. $t\in[0,T]$,
	\begin{equation*}
		u_n(t) := P_n\bigl(u(t)\bigr),\ \mathbb{P}-\text{a.s.}
	\end{equation*}
	
	\dela{
		Hence note that for $p\geq 1$
		\begin{align}\label{bound on projection u_n}
			\nonumber\mathbb{E}\left(\int_{0}^{T}|u_n(t)|_{L^2} dt\right)^{p}  &= \mathbb{E}\left(\int_{0}^{T}\left|P_n(u(t))\right|_{L^2} dt \right)^{p}  \\
			& \leq \mathbb{E}\left(\int_{0}^{T}|u|_{L^2} dt\right)^{p} \leq K.
		\end{align}
		Change the above according to the assumption \eqref{assumption on u}.
	}

	Hence note that for $p\geq 1$
	\begin{align}
		\nonumber\mathbb{E}\left(\int_{0}^{T}|u_n(t)|_{L^2}^{2} \,  dt\right)^p  &= \mathbb{E}\left(\int_{0}^{T}\left|P_n(u(t))\right|_{L^2}^{2} \, dt \right)^p  \\
		& \leq \mathbb{E}\left(\int_{0}^{T}|u(t)|_{L^2}^{2} \,  dt\right)^p \leq K_p.
	\end{align}

	\begin{remark}\label{Remark meaning of norms are equivalent on Hn}[Equivalence of norms on $H_n$]
		Let us recall that $H_n$ is a finite dimensional vector space. 	On this vector space we can consider many norms. 	
		For instance, the space $H_n$ is  a subspace of $L^2$. Hence we can endow the space $H_n$ with the norm inherited from the space $L^2$.
		This norm on $H_n$ will be denoted by $\vert \cdot \vert_{L^2}$.
		
		More generally, we can endow the space $H_n$ with the norm inherited from every  space $X$ provided $H_n$ is a subspace of $X$.
		This norm on $H_n$ will be denoted by $\vert \cdot \vert_{X}$.
		
		For instance, we can take $X$ to be $D(A)$ or any Sobolev space $H^{\theta,p}$ for $\theta \geq 0$ and $p\geq 2$. Since,
		$H^1=H^{1,2} \embed L^\infty $ and $H_n \embed H^1$, $H_n$ is also a subspace of the space $L^\infty$ and hence we can consider on $H_n$ the norm
		inherited from the space $L^\infty$.
		This norm on $H_n$ will be denoted by $\vert \cdot \vert_{L^\infty}$.
		
		Finally, let us point out that since all norms on a finite dimensional  vector space are equivalent, see e.g. Exercise 1 in section V.9 of the book \cite{Dieudonne_1969-vol-1},
		we infer that for any two norms $\vert \cdot \vert_{X}$ and $\vert \cdot \vert_{Y}$ on $H_n$ there exists a constant $C_n=C_n(X,Y)$ such that
		\[
		\frac{1}{C_n} \vert m \vert_{X} \leq \vert m \vert_{Y} \leq C_n \vert m \vert_{X},  \:\: \mbox{ for every } m \in H_n.
		\]
		In particular, this holds for the norms   $\vert \cdot \vert_{L^2}$ and  $\vert \cdot \vert_{L^\infty}$.

		Also, for $v\in H_n$,
		\begin{equation}
			v = \sum_{i=1}^{n} \l\langle v , e_i \r\rangle_{L^2} e_i.
		\end{equation}
		Let $\|\cdot\|$ denote some ($L^p$,$H^1$, etc.) norm on the space $H_n$. Then there exists a constant $C_n>0$ such that
		\begin{align*}
			\| v \|  &= \| \sum_{i=1}^{n} \l\langle v , e_i \r\rangle_{L^2} e_i \| \\
			\leq & \sum_{i=1}^{n} \l| \l\langle v , e_i \r\rangle_{L^2} \r| \| e_i \| \\
			\leq & \sum_{i=1}^{n} \l| v \r|_{L^2} \l| e_i \r|_{L^2} \| e_i \| \\
			\leq & C_n \l| v \r|_{L^2}.
		\end{align*}
		Also,
		\begin{align*}
			\Delta v = & \Delta \l( \sum_{i=1}^{n} \l\langle v , e_i \r\rangle_{L^2} e_i \r) \\
			= &   \sum_{i=1}^{n} \l\langle v , e_i \r\rangle_{L^2} \Delta e_i \\
			= &   \sum_{i=1}^{n} \l\langle v , e_i \r\rangle_{L^2} \l( - \lambda_i\r) e_i.
		\end{align*}
		Here $\lambda_i$ denote the eigen values corresponding to $e_i$. Here, by $\Delta$ we mean the operator $A$ defined in \eqref{Definition of Neumann Laplacian}. Therefore from the above equality, there exists a constant $C_n>0$ such that
		\begin{align*}
			\| \Delta v \| \leq &  \sum_{i=1}^{n} \l\langle v , e_i \r\rangle_{L^2} \l|  \lambda_i \r| \| e_i\| \\
			\leq & C_n \l| v \r|_{L^2}.
		\end{align*}
	\end{remark}

	\begin{lemma}\label{lemma Lipschitz continuity}
		\begin{enumerate}
			\item Consider a function $f:H_n \to H_n$. If $f$ is Lipschitz continuous on each closed, bounded ball $B\subset H_n$, then $f$ is locally Lipshitz continuous on $H_n$.
			\item For a locally Lipschitz continuous function $f:H_n \to H_n$, the image of a closed, bounded ball $B\subset H_n$ under $f$ is compact, and as a result, bounded in $H_n$.
		\end{enumerate}
	\end{lemma}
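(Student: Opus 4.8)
The plan is to treat the two assertions separately, both resting on elementary facts about the finite-dimensional normed space $H_n$.

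For the first part, I would simply unwind the definition of local Lipschitz continuity: $f$ is locally Lipschitz on $H_n$ provided every point $x\in H_n$ possesses an open neighbourhood on which $f$ is Lipschitz. Given $x\in H_n$, I would take the closed ball $\bar{B}(x,1)$ of radius one centred at $x$. This is a closed, bounded ball, so by hypothesis $f$ is Lipschitz on it. Since the open ball $B(x,1)$ is a neighbourhood of $x$ contained in $\bar{B}(x,1)$, and the restriction of a Lipschitz map to a subset remains Lipschitz with the same constant, $f$ is Lipschitz on a neighbourhood of $x$. As $x$ was arbitrary, $f$ is locally Lipschitz. No compactness is needed in this direction, as the hypothesis is in fact stronger than the conclusion.

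For the second part, the essential input is that $H_n$ is finite dimensional. Because all norms on $H_n$ are equivalent, as recorded in Remark \ref{Remark meaning of norms are equivalent on Hn}, the Heine--Borel theorem applies: every closed, bounded ball $B\subset H_n$ is compact. I would then observe that a locally Lipschitz function is continuous, since near each point it coincides with a Lipschitz map, and Lipschitz maps are continuous. Consequently the image $f(B)$ is the continuous image of a compact set, hence itself compact; and a compact subset of the normed space $H_n$ is automatically closed and bounded. This yields both claims in the second part.

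The two arguments are routine, and I do not expect any genuine obstacle. The only substantive ingredient is the finite dimensionality of $H_n$, which underlies the compactness of closed balls used in the second part (and, more broadly, the equivalence of norms invoked throughout this section). Accordingly I would keep the write-up short.
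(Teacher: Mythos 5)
Your proposal is correct. For part (2) your argument coincides with the paper's: local Lipschitz continuity gives continuity, closed balls in $H_n$ are compact by finite dimensionality (equivalence of norms plus Heine--Borel), and the continuous image of a compact set is compact, hence bounded. For part (1), however, you take a genuinely more elementary route. The paper argues via local compactness of $H_n$: it asserts that on a locally compact space, $f$ is locally Lipschitz if and only if it is Lipschitz on every compact subset, and then embeds an arbitrary compact set $K$ in a large closed ball to conclude. You instead verify the neighbourhood definition directly: around any $x\in H_n$ take the closed unit ball $\bar{B}(x,1)$, on which $f$ is Lipschitz by hypothesis, and restrict to the open ball $B(x,1)$. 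This bypasses the compact-set characterization entirely (one direction of which the paper asserts without proof), and, as you correctly observe, it shows that part (1) holds in an arbitrary normed space --- finite dimensionality plays no role in that implication, only in part (2) and in the converse direction that the paper's equivalence implicitly contains. Your version is shorter and more self-contained; the paper's version has the minor advantage of recording explicitly that Lipschitz-on-balls yields Lipschitz on every compact set, but nothing later in the paper requires that stronger form.
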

	
	\begin{proof}[Proof of Lemma \ref{lemma Lipschitz continuity}]
		\textbf{Proof of (1)}:\\
		The space $H_n$ is a finite dimensional vector space. Hence $H_n$ is locally compact. Therefore, $f$ is locally Lipschitz on $H_n$ if and only if $f$ is Lipschitz on all compact subsets of $H_n$. 
		For a given compact subset $K$ of $H_n$, we can choose a closed and bounded ball $B\subset H_n$ (which is again compact) large enough so that $K \subset B \subset H_n$. Therefore in order to show the local Lipschitz continuity of $f$ on $H_n$, it suffices to show that $f$ is Lipschitz continuous on all closed and bounded balls in $H_n$. This concludes the proof of (1).
		\\
		\textbf{Proof of (2):}\\
		That $f$ is locally Lipschitz continuous implies that $f$ is also continuous on $H_n$. As mentioned in the proof of (1) above, closed and bounded balls are compact in $H_n$. Therefore the image of $B$ under $f$ is a compact, and hence also a bounded subset of $H_n$.
		
	\end{proof}

	\begin{remark}\label{Remark on locally Lipschitz functions}[A Remark on locally Lipschitz functions on $H_n$]
		The aim of the following calculations is to show that if $f,g$ are locally Lipschitz continuous functions on the finite dimensional space $H_n$, then their product $h_1 := fg$ and composition $h_2 := f \circ g$ is also locally Lipshitz continuous on $H_n$. Using Lemma \ref{lemma Lipschitz continuity}, to show that the functions $h_1,h_2$ are locally Lipschitz continuous on $H_n$, it suffices to show that they are Lipschitz continuous on all closed and bounded balls $B\subset H_n$.
		Towards that, let the functions $f,g:H_n\to H_n$ be locally Lipschitz continuous on $H_n$. Fix an arbitrary closed and bounded ball $B\subset H_n$. Since $g$ is continuous and $B$ is compact, the set $g(B) \subset H_n$ is compact. Being locally Lipschitz continuous on $H_n$, both $f$ and $g$ are Lipschitz continuous on the ball $B$. Let $C_{f,B}$ and $C_{g,B}$ denote their respective Lipschitz constants. Similarly, let $C_{f,g(B)}$ denote the Lipschitz constant for the function $f$ on $g(B)$.
		\begin{enumerate}
			\item[\textbf{Claim 1:}] The map
			\begin{align*}
				h_1 := fg: H_n \ni v \mapsto f(v)g(v) \in H_n
			\end{align*}
			is Lipschitz on $B$.\\
			\textbf{Brief proof of Claim 1: \dela{\adda{of what?}}:} Let $v_1,v_2\in B$. Therefore
			\begin{align*}
				\l| f(v_1)g(v_1) - f(v_2)g(v_2) \r|_{L^2} \leq & \l| f(v_1)g(v_1) - f(v_2)g(v_1) \r|_{L^2} + \l| f(v_2)g(v_1) - f(v_2)g(v_2) \r|_{L^2} \\
				\leq & \l| \l[f(v_1) - f(v_2) \r]g(v_1)  \r|_{L^2} + \l| f(v_2)\l[g(v_1) - g(v_2)\r] \r|_{L^2} \\
				\leq & \l| f(v_1) - f(v_2)   \r|_{L^2} \l| g(v_1) \r|_{L^{\infty}}  + \l| f(v_2) \r|_{L^{\infty}} \l| g(v_1) - g(v_2) \r|_{L^2} \\
				\leq & C_{f,B} \l| g(v_1) \r|_{L^{\infty}}  \l| v_1 - v_2 \r|_{L^2} + C_{g,B} \l| f(v_2) \r|_{L^{\infty}}\l| v_1 - v_2 \r|_{L^2} \\
				\leq & C \l[ \l| f(v_2) \r|_{L^{\infty}} + \l| g(v_1) \r|_{L^{\infty}} \r] \l| v_1 - v_2 \r|_{L^2} \\
				\leq & C \l| v_1 - v_2 \r|_{L^2}.
			\end{align*}
			The constant $C$ here can depend on the ball $B$, the functions $f,g$ and $n$.
			\item[\textbf{Claim 2:}] The composition
			\begin{align*}
				h_2 := f\circ g: H_n \ni v \mapsto f\bigl( g(v) \bigr) \in H_n
			\end{align*}
			is Lipschitz on $B$.\\
			\textbf{Brief proof of Claim 2:} Let $v_1,v_2\in B$. Therefore
			\begin{align*}
				\l| \bigl(f\circ g\bigr)(v_1) - \bigl(f\circ g\bigr)(v_2) \r|_{L^2} = & \l| f\bigl( g(v_1) \bigr) - f\bigl( g(v_2) \bigr) \r|_{L^2} \\
				\leq & C_{f,g(B)} \l| g(v_1) - g(v_2) \r|_{L^2} \\
				\leq & C_{f,g(B)}C_{g,B} \l| v_1 - v_2 \r|_{L^2}.
			\end{align*}
			The constants here can depend on the ball $B$, the functions $f,g$ and $n$.
		\end{enumerate}
		Since the calculations above hold true for any such ball $B$, the product and the composition are locally Lipschitz
	\end{remark}	
	We define the following:
	\begin{align*}
		F_n^1:H_n&\ni m\mapsto P_n\l(m\times\Delta m\r)\in H_n,\\
		F_n^2:H_n&\ni m\mapsto P_n\l(m\times\l( m \times\Delta m\r)\r)\in H_n,\\
		F_n^3:H_n&\ni m\mapsto P_n\l(m\times u_n\r)\in H_n,\\
		F_n^4:H_n&\ni m\mapsto P_n \l( m \times (m \times u_n)\r)\in H_n,\\
		G_n:H_n&\ni m\mapsto P_n(G(m))\in H_n.
	\end{align*}
	
	\begin{lemma}\label{Lemma locally Lipschitz coefficients problem considered}
		The maps $F^i_n,i=1,2,3,4$ and $G_n$ defined above are locally Lipschitz on $H_n$.
	\end{lemma}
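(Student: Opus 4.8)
The plan is to reduce everything to Lipschitz continuity on balls. By part (1) of Lemma \ref{lemma Lipschitz continuity} it suffices to show that each of $F_n^1,\dots,F_n^4$ and $G_n$ is Lipschitz continuous on an arbitrary closed, bounded ball $B\subset H_n$. The two facts that make this routine on the \emph{finite dimensional} space $H_n$ are, first, that $P_n$ is an orthogonal projection on $L^2$ and hence a contraction (so applying $P_n$ never increases the $L^2$-distance), and second, the equivalence of norms on $H_n$ recorded in Remark \ref{Remark meaning of norms are equivalent on Hn}; in particular there is a constant $C_n$ with $|v|_{L^\infty}\le C_n|v|_{L^2}$ and $|\Delta v|_{L^2}\le C_n|v|_{L^2}$ for all $v\in H_n$. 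This last estimate is what tames the $\Delta m$ appearing in $F_n^1$ and $F_n^2$: in infinite dimensions $\Delta$ is unbounded, but on $H_n$ it is merely a bounded linear map.

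For the maps $F_n^1,F_n^2,F_n^4$ I would argue by the usual telescoping trick exploiting the (multi)linearity of the cross product. For instance, writing
\[
m_1\times\Delta m_1-m_2\times\Delta m_2=(m_1-m_2)\times\Delta m_1+m_2\times\Delta(m_1-m_2),
\]
taking $L^2$-norms, bounding each cross product pointwise by $|\,\cdot\,|_{L^\infty}|\,\cdot\,|_{L^2}$, and finally using the contraction property of $P_n$ together with the two inequalities above, gives $|F_n^1(m_1)-F_n^1(m_2)|_{L^2}\le C(n,B)\,|m_1-m_2|_{L^2}$ for $m_1,m_2\in B$. The triple product in $F_n^2$ and the quadratic term in $F_n^4$ are handled identically, inserting two intermediate terms instead of one; here $u_n$ is a fixed element, so its norm simply enters the constant. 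The map $F_n^3$ is in fact linear in $m$ for fixed $u_n$, hence globally Lipschitz with constant $\le C_n|u_n|_{L^2}$.

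For $G_n=P_n\circ G(\cdot)$ I would not redo the algebra but quote Lemma \ref{Lemma G is a polynomial map}: taking $k=h$ and $q=2$ in its Lipschitz-on-balls estimate yields, for $|m_i|_{L^\infty}\le r$, the bound $|G(m_1)-G(m_2)|_{L^2}\le C_r|h|_{L^2}|m_1-m_2|_{L^\infty}$. Since $m\in B$ forces $|m|_{L^\infty}\le C_n\,\mathrm{rad}(B)=:r$ by norm equivalence, and $P_n$ is a contraction, Lipschitz continuity of $G_n$ on $B$ follows. Alternatively, one can assemble all five maps abstractly from the building blocks $m\mapsto m$ and $m\mapsto\Delta m$ (both bounded linear on $H_n$), the bilinear cross product, and the contraction $P_n$, and appeal to Remark \ref{Remark on locally Lipschitz functions} on products and compositions of locally Lipschitz maps.

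I do not expect a genuine obstacle here: the content is entirely in the finite-dimensional reductions. The only point requiring a moment's care is recognizing that $\Delta$ restricted to $H_n$ is bounded---this is the step that would fail for the original infinite-dimensional equation and is precisely why the Faedo--Galerkin truncation is introduced---after which every estimate is a bounded multilinear estimate on $H_n$ with constants allowed to depend on $n$ and on the radius of $B$.
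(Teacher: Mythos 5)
Your proposal is correct and takes essentially the same approach as the paper: both arguments rest on the contraction property of $P_n$ and the equivalence of norms on the finite-dimensional space $H_n$ (Remark \ref{Remark meaning of norms are equivalent on Hn}), which renders $\Delta$ bounded and controls $L^\infty$ by $L^2$, and your telescoping identities are precisely the standard argument by which the paper's bounded bilinear (polynomial) maps are seen to be Lipschitz on balls. The only cosmetic difference is that for $G_n$ you invoke the Lipschitz-on-balls estimate of Lemma \ref{Lemma G is a polynomial map} with $k=h$, $q=2$ instead of re-deriving the linear-plus-quadratic decomposition as the paper does; both are valid.
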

	
	\begin{proof}[Proof of Lemma \ref{Lemma locally Lipschitz coefficients problem considered}]
		We show the proofs for $F_n^1$ and $G_n$. Others can be proved in a similar way.
		We start with the proof for $F_n^1$. Define a map $f: H_n \times H_n \to H_n$, given by
		\begin{align*}
			f(v,w) = P_n\l(v \times \Delta w\r).
		\end{align*}
		The map $f$ is bilinear. Moreover, by Remark \ref{Remark meaning of norms are equivalent on Hn} there exists a constant $C_n>0$ such that
		\begin{align*}
			\l| f(v,w) \r|_{L^2} & = \left| P_n\l(v \times \Delta w\r) \right|_{L^2} \\
			& \leq \left| v \times \Delta w \right|_{L^2} \\
			& \leq \left| v \right|_{L^4} \left| \Delta w \right|_{L^4} \\
			& \leq C_n \left| v \right|_{L^2} \left|  w \right|_{L^2}.
		\end{align*}
		Therefore the map
		$$H_n\ni v \mapsto f(v,v) \in H_n$$
		is a polynomial of degree 2 on $H_n$, and as a consequence, is locally Lipschitz. We observe that $F_n^1(v) = f(v,v)$. Therefore $F_n^1$ is locally Lipschitz.

		The proof for $G_n$ can be given as follows. Consider the map $f_1 : H_n \to H_n$, given by
		\begin{equation}
			f_1(v) = P_n(v \times h).
		\end{equation}
		\begin{align*}
			\l|f_1(v)\r|_{L^2} \leq & \l|P_n(v \times h)\r|_{L^2} \\
			\leq & \l| v \times h \r|_{L^2} \\
			\leq & \l| h \r|_{L^{\infty}} \l| v \r|_{L^2}.
		\end{align*}
		$h\in H^1 \hookrightarrow L^{\infty}$ implies that the map $f_1$ is a bounded linear map. As a consequence, $f_1$ is Lipschitz continuous on $H_n$.
		\dela{We observe the following. For $w \in L^2$, $P_n(w) = \sum_{i=1}^{n}\l\langle w , e_i \r\rangle_{L^2} e_i$.
			Therefore,
			\begin{align*}
				\l|f_1(v)\r|_{L^2} \leq & \sum_{i=1}^{n}\l| \l\langle v \times h , e_i \r\rangle_{L^2}\r| \l| e_i \r|_{H_n}\\
				\leq & \sum_{i=1}^{n}  \l| v \times h \r|_{L^2}  \l| e_i \r|_{L^2} \l| e_i \r|_{H_n}\\
				\leq & C  \l| v \times h \r|_{L^2} \\
				\leq & C \l|h\r|_{L^{\infty}} \l| v \r|_{L^2} \\
				\leq & C \l|h\r|_{H^1} \l| v \r|_{H_n}.
			\end{align*}
			Since $h\in H^1$, we can say that the map $f_1$ is a polynomial. Hence $f_1$ is locally Lipschitz.}
		
		Now, define the map $f_2:H_n\times H_n \to H_n$, given by
		\begin{align*}
			f_2(v,w) = P_n\bigl( v \times \l( w \times h \bigr) \r).
		\end{align*}
		Clearly, $f_2$ is a bilinear map. Further, by Remark \ref{Remark meaning of norms are equivalent on Hn}, there exists a constant $C_n>0$ such that
		\begin{align*}
			\l|f_2(v,w)\r|_{L^2} = & \l| v \times \l( w \times h \r) \r|_{L^2} \\
			\leq & \l| v \r|_{L^4} \l| w \r|_{L^4} \l| h \r|_{L^{\infty}} \\
			\leq & C_n \l| v \r|_{L^2} \l| w \r|_{L^2} \l| h \r|_{L^{\infty}}.
		\end{align*}
		Therefore $h\in H^1\hookrightarrow L^{\infty}$ implies that $f_2$ is bilinear bounded.\\
		The map
		$$H_n\ni v \mapsto f_2(v,v)\in H_n$$
		is therefore a homogeneous polynomial of degree 2 on $H_n$, and as a consequence, is locally Lipschitz.\\
		We now observe that $G_n(v)$ is a linear combination of $f_1(v)$ and $f_2(v,v)$. Therefore $G_n$ is locally Lipschitz on $H_n$.
		
		\dela{
			\textbf{Another way to show That $f_2$ is locally Lipschitz:}\\
			Let $v_1,v_2,w_1,w_2\in H_n$.
			\begin{align*}
				\l| f_2(v_1,w_1) - f_2(v_2,w_2) \r|_{L^2} = & \l| v_1 \times \l( w_1 \times h \r)  -  v_2 \times \l( w_2 \times h \r) \r|_{L^2} \\
				\leq & \l| \l(v_1 - v_2\r) \times \l( w_1 \times h \r)  \r|_{L^2} + \l| v_2 \times \bigl( \l(w_1 - w_2\r) \times h \bigr)  \r|_{L^2} \\
				\leq & 	 \l| v_1 - v_2 \r|_{L^4}  \l| w_1 \r|_{L^4} \l| h \r|_{L^{\infty}}  + \l| v_2 \r|_{L^4}  \l|w_1 - w_2\r|_{L^4} \l| h \r|_{L^{\infty}} \\
				\leq & C \bigl[ \l| v_1 - v_2 \r|_{L^2}  \l| w_1 \r|_{L^4} \l| h \r|_{L^{\infty}}  + \l| v_2 \r|_{L^4}  \l|w_1 - w_2\r|_{L^2} \l| h \r|_{L^{\infty}} \bigr]
			\end{align*}
			$h\in H^1\hookrightarrow L^{\infty}$ implies that the function $f_2$ is locally Lipschitz on $H_n$.
		}
		
	\end{proof}
	
	Let $\psi_0:\mathbb{R} \rightarrow [0,1]$ be a function of $C_c^{1}(\mathbb{R})$ class such that
	\begin{align}\label{definition of bump function}
		\psi_0(x) =
		\begin{cases}
			1\ \text{if}\ |x|\leq |h|_{L^{\infty}} + 1,\\
			0\ \text{if}\ |x|\geq |h|_{L^{\infty}} + 2.
		\end{cases}
	\end{align}
	
	Define a function $\psi_n:H_n\to\mathbb{R}$  the following formula.
	\begin{align}\label{definition of bump function psi n}
		\psi_n(v) = \psi_0\bigl(\l| v \r|_{L^{\infty}}\bigr) \, \psi_0\bigl(\l| P_n\l(  v \times h \r)  \r|_{L^{\infty}}\bigr) \, \psi_0\bigl(\l| P_n \l( v \times \l( v \times h \r) \r) \r|_{L^{\infty}}\bigr),\:\:v\in H_n.
	\end{align}
	
	\begin{lemma}\label{Lemma psi n is Lipschitz}
		The function $$\psi_n:H_n\ni v \mapsto \psi_0\bigl(\l| v \r|_{L^{\infty}}\bigr) \, \psi_0\bigl(\l| P_n\l(  v \times h \r)  \r|_{L^{\infty}}\bigr) \, \psi_0\bigl(\l| P_n \l( v \times \l( v \times h \r) \r) \r|_{L^{\infty}}\bigr) \in \mathbb{R}$$ is locally Lipschitz.
	\end{lemma}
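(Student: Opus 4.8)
The plan is to recognize $\psi_n$ as a product of three scalar functions, each of which is a composition of the globally Lipschitz bump function $\psi_0$ with a locally Lipschitz map from $H_n$ into $\mathbb{R}$, and then to invoke the stability of the class of locally Lipschitz functions under composition and products established in Remark \ref{Remark on locally Lipschitz functions}, together with Lemma \ref{lemma Lipschitz continuity}, which reduces local Lipschitz continuity on $H_n$ to Lipschitz continuity on each closed bounded ball.

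First I would record two elementary facts. Since $\psi_0 \in C_c^1(\mathbb{R})$, its derivative is continuous with compact support and hence bounded, so $\psi_0$ is globally Lipschitz on $\mathbb{R}$ with constant $L_0 := \sup_{x \in \mathbb{R}} |\psi_0'(x)| < \infty$; moreover $\psi_0$ takes values in $[0,1]$ and is therefore bounded. Next, the scalar norm map $N: H_n \ni v \mapsto |v|_{L^\infty} \in \mathbb{R}$ is $1$-Lipschitz with respect to $|\cdot|_{L^\infty}$ by the reverse triangle inequality, and since all norms on the finite-dimensional space $H_n$ are equivalent (Remark \ref{Remark meaning of norms are equivalent on Hn}), $N$ is globally Lipschitz on $H_n$ with respect to $|\cdot|_{L^2}$ as well.

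I would then treat the three inner maps separately. The first factor $v \mapsto \psi_0(|v|_{L^\infty})$ is the composition $\psi_0 \circ N$ of two globally Lipschitz maps, hence globally Lipschitz. For the second factor, the map $v \mapsto P_n(v \times h)$ is a bounded linear map on $H_n$ (exactly as for $f_1$ in the proof of Lemma \ref{Lemma locally Lipschitz coefficients problem considered}, using $h \in H^1 \embed L^\infty$ and norm equivalence), so post-composing with $N$ and then $\psi_0$ yields a Lipschitz scalar function. For the third factor, the map $v \mapsto P_n(v \times (v \times h))$ is a homogeneous polynomial of degree $2$ on $H_n$ (as for $f_2$ in the same lemma), hence locally Lipschitz; composing with the globally Lipschitz $N$ gives a locally Lipschitz scalar map, and since on any closed bounded ball $B$ the estimate $|\psi_0(g(v_1)) - \psi_0(g(v_2))| \le L_0\, |g(v_1) - g(v_2)|$ holds, post-composition with $\psi_0$ preserves Lipschitz continuity on $B$, hence local Lipschitz continuity on $H_n$.

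Finally I would conclude by forming products. Each of the three factors is bounded by $1$ and is locally Lipschitz, so on an arbitrary closed bounded ball $B$ the scalar analogue of Claim 1 in Remark \ref{Remark on locally Lipschitz functions} applies: for bounded functions $\varphi, \vartheta : B \to \mathbb{R}$ one has $|\varphi(v_1)\vartheta(v_1) - \varphi(v_2)\vartheta(v_2)| \le |\varphi(v_1)|\,|\vartheta(v_1)-\vartheta(v_2)| + |\vartheta(v_2)|\,|\varphi(v_1)-\varphi(v_2)|$, which is even simpler than in the vector-valued case because the $L^\infty$-bounds there are replaced by the uniform bound $1$. Hence the product of two such factors is Lipschitz on $B$, and iterating once more handles the triple product, giving that $\psi_n$ is Lipschitz on every closed bounded ball, i.e. locally Lipschitz on $H_n$ by Lemma \ref{lemma Lipschitz continuity}. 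I do not anticipate a genuine obstacle here: the only point requiring care is the bookkeeping of the various norms on $H_n$, which is entirely absorbed by the equivalence constants $C_n$ of Remark \ref{Remark meaning of norms are equivalent on Hn}, while the compact support of $\psi_0$ serves only to supply the global Lipschitz constant $L_0$ and the uniform bound used in the product step.
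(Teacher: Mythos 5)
Your proposal is correct and follows essentially the same route as the paper: the paper likewise decomposes $\psi_n$ using the bounded linear map $v \mapsto P_n(v \times h)$, the quadratic (bilinear-induced) map $v \mapsto P_n(v \times (v \times h))$, the $L^\infty$-norm map made Lipschitz via norm equivalence on $H_n$, and the globally Lipschitz, bounded $\psi_0$. The only cosmetic difference is the final step, where the paper packages the product of the three factors as a composition with the trilinear map $\beta(x_1,x_2,x_3)=x_1x_2x_3$, while you iterate the product rule for bounded Lipschitz functions on balls; these are equivalent arguments.
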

	\begin{proof}[Proof of Lemma \ref{Lemma psi n is Lipschitz}]
		We define the following auxiliary functions.
		\begin{align*}
			f_1&:H_n \ni v \mapsto P_n\l(v \times h\r) \in H_n\\
			f_2&:H_n \times H_n \ni (v,w) \mapsto P_n\bigl(v \times \l(w \times h\r)\bigr) \in H_n.\\
			f_3&:H_n \ni v \mapsto \l| v \r|_{L^{\infty}} \in \mathbb{R}\\
			\beta &: \mathbb{R}^3 \ni (x_1,x_2,x_3) \mapsto x_1 x_2 x_3 \in \mathbb{R}
		\end{align*}
		\dela{The maps $f_2,\tilde{\psi}$ are not really needed
			We observe that
			\[
			f_1(v) = \l( f_5(v) , f_5\circ f_3(v) , f_5\circ f_4(v) \r), \:\: v\in H_n.
			\]
		}
		
		The map $f_1$ is a linear map from $H_n$ to $H_n$. Moreover, for $v\in H_n$, since $P_n$ is orthonormal projection from  $L^2$ to $H_n$ and hence
		\textbf{a contraction} and $ H^1 \hookrightarrow L^{\infty}$.
		\begin{align*}
			\l| f_1(v) \r|_{L^2} = & \l| P_n(v \times h) \r|_{L^2} \leq   \l| v \times h \r|_{L^2} \\
			\leq & \l| v \r|_{L^2} \l| h \r|_{L^{\infty}}
			\leq  C \l| v \r|_{L^2} \l| h \r|_{H^1}.
		\end{align*}
		Therefore $f_1$ is a bounded linear map, and hence also Lipschitz continuous on $H_n$.\\
		Similarly we can treat  the map $f_2$. Clearly this map is bilinear. It is also well defined. To see this, let $v,w\in H_n$. Since $P_n$ is a contraction, by H\"older's inequality, Remark \ref{Remark meaning of norms are equivalent on Hn} and the continuous embedding $H^1\hookrightarrow L^{\infty}$ we have the following sequence of inequalities.
		\begin{align*}
			\l| f_2(v,w) \r|_{L^2} = & \l| P_n\bigl(v \times \l( w \times h \r)\bigr) \r|_{L^2} \\
			\leq & \l| v \times \l( w \times h \r) \r|_{L^2} \\
			\leq & \l| v \r|_{L^4} \l| w \r|_{L^4} \l|  h\r|_{L^{\infty}}\\
			\leq & C_n \l| v \r|_{L^2} \l| w \r|_{L^2} \l|  h\r|_{H^1}.
		\end{align*}
		$v,w\in H_n$ and $h\in H_1$ implies that the right hand side of the above inequality is finite. Therefore the map $$H_n  \ni v \mapsto f_2(v,v) \in H_n$$ is a homogeneous polynomial of degree 2 on $H_n$. Therefore it is analytic, and in particular is locally Lipschitz.\\		
		By Remark \ref{Remark meaning of norms are equivalent on Hn}, the map $f_3$ is well defined and  there exists a constant $C_n>0$ such that the following inequality holds.
		\begin{align*}
			\l|f_3(v)\r|_{\mathbb{R}} = \l| v \r|_{L^{\infty}} \leq C_n \l|v\r|_{L^2}, \:\: v\in H_n.
		\end{align*}
		Moreover, for $v_1,v_2\in H_n$, triangle inequality and also by  Remark \ref{Remark meaning of norms are equivalent on Hn},
		\begin{align*}
			\l| f_3(v_1) - f_3(v_2) \r|_{\mathbb{R}} =& \l| v_1 - v_2 \r|_{L^{\infty}}
			\leq    C_n \l|v_1 - v_2\r|_{L^2}.
		\end{align*}
		Therefore $f_3$ is Lipschitz continuous on $H_n$.\\
		The map $\beta$ is a trilinear map from $\mathbb{R}^3$ to $\mathbb{R}$, with
		\begin{equation*}
			\l| \beta (x_1,x_2,x_3) \r|_{\mathbb{R}} = \l|x_1\r|_{\mathbb{R}} \l|x_2\r|_{\mathbb{R}} \l|x_3\r|_{\mathbb{R}},  \ \text{for}\ (x_1,x_2,x_3)\in\mathbb{R}^3.
		\end{equation*}
		Therefore $\beta$ is 		
		\dela{continuous. Therefore we can conclude that $\beta$ is a polynomial of degree 3 and hence is} locally Lipschitz.\\
		\dela{$\beta$ is not a polynomial. You wrote "$\beta$ is a trilinear map". This is true. You get a polynomial by
			\[
			\mathbb{R} \ni x \mapsto \beta(x,x,x) \in \mathbb{R}.
			\]
			
		}

		\dela{\textbf{Not Required}Observe that the map $\tilde{\psi}$ can also be given by
			$$\tilde{\psi}(x_1,x_2,x_3) = \beta (\psi_0(x_1) , \psi_0(x_2) , \psi_0(x_3)), \ \text{for}\ (x_1,x_2,x_3)\in\mathbb{R}^3.$$
			Since by definition (see \eqref{definition of bump function}), $\psi_0$ is of $C_0^\infty$-class,  the function  $\tilde{\psi}$  is also
			of $C_0^\infty$-class. In particular, the Frechet derivative of the function  $\tilde{\psi}$ is bounded and hence  we can conclude that $\tilde{\psi}$ is globally  Lipschitz.
			Moreover, since the range of the function $\psi_0$ is compact and $\beta$ is continuous, the
			the range of the function $\tilde{\psi}$ is also compact and hence bounded. Thus the function $\tilde{\psi}$ is bounded.

			Since composition of bounded and Lipschitz functions is again bounded and Lipschitz, each component of $\tilde{\psi}$ is locally Lipschitz. Therefore $\tilde{\psi}$ is locally Lipschitz.\\}
		So far we have shown that the maps $f_1,f_2,f_3$ are locally Lipschitz.\dela{ Therefore, the composition of these functions, as given in the above expression for $\psi_n$ are also locally Lipschitz. \dela{Since this holds for each component of $\psi_n$, we can conclude that the map}}
		The map $\psi_0$ is assumed to be of $C_0^{\infty}$ class, and hence is bounded and locally Lipschitz. We can therefore conclude that the map $\tilde{\psi}_n$, given by
		\begin{equation*}
			\tilde{\psi}_n : H_n\ni v\mapsto \bigl( \l( \psi_0\circ f_3\r)(v) , \l( \psi_0\circ f_3\circ  f_1 \r) (v), \l( \psi_0\circ f_3\circ f_2 \r) (v,v)\bigr) \in \mathbb{R}^3
		\end{equation*}
		is locally Lipschitz.
		\dela{Now we show that the map $\psi_n$ is locally Lipschitz. }The map $\psi_n$ can be written as a composition of the functions described so far, as follows. $$\psi_n(v) = \l(\beta\circ\tilde{\psi}_n\r)(v) = \beta \bigl( \l( \psi_0\circ f_3\r)(v) , \l( \psi_0\circ f_3\circ  f_1 \r) (v), \l( \psi_0\circ f_3\circ f_2 \r) (v,v)\bigr).$$Hence $\psi_n$, as a composition of $\beta$ with  $\tilde{\psi}_n$, is locally Lipschitz.
		
	\end{proof}

	\dela{

		\coma{The Lemma can be removed. It is an alternate proof for the same result in Lemma \ref{Lemma psi n is Lipschitz}.}
		\begin{lemma}\label{Lemma locally Lipschitz coefficients truncated problem considered}
			The maps
			\begin{align*}
				\psi_n\: F_n^4 : H_n \ni v \mapsto \psi_n(v) F_n^4(v) \in H_n,
			\end{align*}
			\begin{align*}
				\psi_n\: \bigl[DG_n\bigr]\l(G_n\r) : H_n \ni v \mapsto \psi_n(v) \bigl[DG_n(v)\bigr]\l(G_n(v)\r) \in H_n
			\end{align*}
			are locally Lipschitz.
			\dela{The maps $\psi(\cdot)F_n^4(\cdot), \psi(\cdot)G_n(\cdot)$ and $\psi(\cdot)\l[DG_n(\cdot)\r]\l( G_n(\cdot)\r)$ are locally Lipschitz on $H_n$.
				
				\[
				\psi F_n^4: H_n\to H_n
				\]
				\[
				\l(\psi F_n^4\r)(v) = \psi(v) F_n^4(v),\ \text{for}\ v\in H_n.
				\]
			}
			
		\end{lemma}
		
		\begin{proof}[Proof of Lemma \ref{Lemma locally Lipschitz coefficients truncated problem considered}]
			The constant $C$ is used here to denote a generic constant that can depend on $n\in\mathbb{N}$. It's value may vary from line to line.
			We first show that the function $\psi_n$ is locally Lipschitz on $H_n$.
			
			Although $\psi_n$ depends on $n\in\mathbb{N}$, we use $\psi$ here instead of $\psi_n$ to simplify writing.		The function $\psi_0$ is a compactly supported smooth function. Therefore there exists a constant $C>0$ such that
			\begin{align*}
				\l|\psi_0(x) - \psi_0(y)\r| \leq C \l| x - y \r|,\ \forall x,y\in\mathbb{R}.
			\end{align*}
			Let $v_1,v_2\in H_n$. Therefore $v_1 - v_2\in H_n$.
			\begin{align*}
				\l| \psi_0(\l|v_1\r|_{L^{\infty}}) - \psi_0(\l|v_2\r|_{L^{\infty}}) \r|
				&\leq C\bigl| \l|v_1\r|_{L^{\infty}} - \l|v_1\r|_{L^{\infty}} \bigr| \\
				&\leq C \l| v_1 - v_2 \r|_{L^{\infty}}\ (\text{By triangle inequality})\\
				& = C \l|\sum_{i=1}^{n} \l\langle v_1 - v_2 , e_i \r\rangle_{L^2} e_i \r|_{L^{\infty}}\ (\text{Since}\ v_1-v_2\in H_n) \\
				& \leq C \sum_{i=1}^{n} \l| \l\langle v_1 - v_2 , e_i \r\rangle_{L^2} \r| \l| e_i \r|_{L^{\infty}} \\
				& \leq C \sum_{i=1}^{n} \l| v_1- v_2 \r|_{L^2} \l| e_i \r|_{L^2} \l| e_i \r|_{L^{\infty}} \\
				& \leq C \l| v_1- v_2 \r|_{L^2}.
			\end{align*}
			Similarly, since $H^1\hookrightarrow L^{\infty}$, we have the following sequence of inequalities.
			\dela{
				\begin{align*}
					\l| \psi_0(\l|P_n\l( v_1 \times h\r)\r|_{L^{\infty}}) - \psi(\l|P_n\l( v_2 \times h \r)\r|_{L^{\infty}}) \r|
					&\leq C\bigl| \l|P_n\l(v_1 \times h\r)\r|_{L^{\infty}} - \l|P_n\l( v_2 \times h \r)\r|_{L^{\infty}} \bigr| \\
					(\text{By triangle inequality})\ &\leq C \l| P_n \l(v_1 \times h\r) - P_n \l(v_2 \times h\r) \r|_{L^{\infty}} \\
					(\text{Since}\ L^{\infty}\hookrightarrow H^1) \ & \leq C \l| P_n \l(v_1 \times h\r) - P_n \l(v_2 \times h\r) \r|_{H^1}\\
					&\leq C \l| P_n \l(v_1 \times h - v_2 \times h\r) \r|_{H^1} \\
					(\text{$\{P_n\}_{n\in\mathbb{N}}$ is orthogonal basis in $H^1$}) \ &\leq C \l| v_1 \times h - v_2 \times h \r|_{H^1}\\
					&\leq C \l| \l(v_1 - v_2\r) \times h \r|_{H^1} \\
					& \leq C \l[\l| \nabla \l(v_1 - v_2\r) \times h \r|_{L^{2}} + \l|  \l(v_1 - v_2\r) \times \nabla h \r|_{L^{2}}\r] \\
					(\text{By H\"older's inequality})\ & \leq C \l[\l| \nabla \l(v_1 - v_2\r)\r|_{L^{2}} \l| h \r|_{L^{\infty}} + \l|  \l(v_1 - v_2\r) \r|_{L^{\infty}} \l| \nabla h \r|_{L^{2}}\r]\\
					(\text{Since}\ L^{\infty}\hookrightarrow H^1) \ & \leq C  \l[\l| \nabla \l(v_1 - v_2\r)\r|_{L^{2}} \l| h \r|_{H^1} + \l|  \l(v_1 - v_2\r) \r|_{L^{\infty}} \l| h \r|_{H^1}\r]\\
					& \leq C \l| h \r|_{H^1} \l|  \l(v_1 - v_2\r) \r|_{H^1} \\
					(\text{Since all norms on $H_n$ are equivalent})\ & \leq C(h) \l| v_1 - v_2 \r|_{H_n}.
				\end{align*}
			}
			\begin{align*}
				\l| \psi_0(\l|P_n\l( v_1 \times h\r)\r|_{L^{\infty}}) - \psi_0(\l|P_n\l( v_2 \times h \r)\r|_{L^{\infty}}) \r|
				&\leq C\bigl| \l|P_n\l(v_1 \times h\r)\r|_{L^{\infty}} - \l|P_n\l( v_2 \times h \r)\r|_{L^{\infty}} \bigr| \\
				(\text{By triangle inequality})\ &\leq C \l| P_n \l(v_1 \times h\r) - P_n \l(v_2 \times h\r) \r|_{L^{\infty}} \\
				& \leq C \l| P_n \l(v_1 \times h\r) - P_n \l(v_2 \times h\r) \r|_{H^1}\\
				&\leq C \l| P_n \l(v_1 \times h - v_2 \times h\r) \r|_{H^1} \\
				& \leq C \sum_{i=1}^{n}\l|\l\langle \l(v_1 - v_2\r) \times h , e_i \r\rangle_{L^2}\r| \l|e_i\r|_{e_i} \\
				& \leq C \sum_{i=1}^{n} \l| v_1 - v_2 \r|_{L^2} \l| h \r|_{L^{\infty}} \l| e_i \r|_{L^2}  \l|e_i\r|_{e_i} \\
				& \leq C \l| v_1 - v_2 \r|_{L^2} \sum_{i=1}^{n}  \l| h \r|_{L^{\infty}} \l| e_i \r|_{L^2}  \l|e_i\r|_{e_i} \\
				& \leq C \l| v_1 - v_2 \r|_{L^2}.
				\dela{
					(\text{$\{P_n\}_{n\in\mathbb{N}}$ is orthogonal basis in $H^1$}) \ &\leq C \l| v_1 \times h - v_2 \times h \r|_{H^1}\\
					&\leq C \l| \l(v_1 - v_2\r) \times h \r|_{H^1} \\
					& \leq C \l[\l| \nabla \l(v_1 - v_2\r) \times h \r|_{L^{2}} + \l|  \l(v_1 - v_2\r) \times \nabla h \r|_{L^{2}}\r] \\
					(\text{By H\"older's inequality})\ & \leq C \l[\l| \nabla \l(v_1 - v_2\r)\r|_{L^{2}} \l| h \r|_{L^{\infty}} + \l|  \l(v_1 - v_2\r) \r|_{L^{\infty}} \l| \nabla h \r|_{L^{2}}\r]\\
					(\text{Since}\ L^{\infty}\hookrightarrow H^1) \ & \leq C  \l[\l| \nabla \l(v_1 - v_2\r)\r|_{L^{2}} \l| h \r|_{H^1} + \l|  \l(v_1 - v_2\r) \r|_{L^{\infty}} \l| h \r|_{H^1}\r]\\
					& \leq C \l| h \r|_{H^1} \l|  \l(v_1 - v_2\r) \r|_{H^1} \\
					(\text{Since all norms on $H_n$ are equivalent})\ & \leq C(h) \l| v_1 - v_2 \r|_{H_n}
				}
			\end{align*}
			
			Arguing similarly, we can show that there exists a constant $C>0$ such that
			\begin{align*}
				\l| \psi_0(\l|P_n\bigl( v_1 \times \l(v_1 \times h\r)\bigr)\r|_{L^{\infty}}) - \psi(\l|P_n\bigl( v_2 \times \l(v_2 \times h\r)\bigr)\r|_{L^{\infty}}) \r|
				&\leq C \bigl[ \l| v_1 \r|_{L^{\infty}} + \l| v_2 \r|_{L^{\infty}} \bigr]\l| v_1 - v_2 \r|_{L^2}\\
				&\leq C \bigl[ \l| v_1 \r|_{L^{\infty}} + \l| v_2 \r|_{L^{\infty}} \bigr]\l| v_1 - v_2 \r|_{L^2}.
			\end{align*}
			Now,
			\begin{align*}
				&\psi(v_1) - \psi(v_2) =  \psi_0(\l| v_1 \r|_{L^{\infty}}) \, \psi_0(\l| P_n\l(  v_1 \times h \r)  \r|_{L^{\infty}}) \, \psi_0(\l| P_n \l( v_1 \times \l( v_1 \times h \r) \r) \r|_{L^{\infty}}) \\
				& - \psi_0(\l| v_2 \r|_{L^{\infty}}) \, \psi_0(\l| P_n\l(  v_2 \times h \r)  \r|_{L^{\infty}}) \, \psi_0(\l| P_n \l( v_2 \times \l( v_2 \times h \r) \r) \r|_{L^{\infty}}) \\
				= & \psi_0(\l| v_1 \r|_{L^{\infty}}) \, \psi_0(\l| P_n\l(  v_1 \times h \r)  \r|_{L^{\infty}}) \, \psi_0(\l| P_n \l( v_1 \times \l( v_1 \times h \r) \r) \r|_{L^{\infty}}) \\
				& - \psi_0(\l| v_2 \r|_{L^{\infty}}) \, \psi_0(\l| P_n\l(  v_1 \times h \r)  \r|_{L^{\infty}}) \, \psi_0(\l| P_n \l( v_1 \times \l( v_1 \times h \r) \r) \r|_{L^{\infty}}) \\
				& + \psi_0(\l| v_2 \r|_{L^{\infty}}) \, \psi_0(\l| P_n\l(  v_1 \times h \r)  \r|_{L^{\infty}}) \, \psi_0(\l| P_n \l( v_1 \times \l( v_1 \times h \r) \r) \r|_{L^{\infty}}) \\
				& - \psi_0(\l| v_2 \r|_{L^{\infty}}) \, \psi_0(\l| P_n\l(  v_2 \times h \r)  \r|_{L^{\infty}}) \, \psi_0(\l| P_n \l( v_1 \times \l( v_1 \times h \r) \r) \r|_{L^{\infty}}) \\
				& + \psi_0(\l| v_2 \r|_{L^{\infty}}) \, \psi_0(\l| P_n\l(  v_2 \times h \r)  \r|_{L^{\infty}}) \, \psi_0(\l| P_n \l( v_1 \times \l( v_1 \times h \r) \r) \r|_{L^{\infty}}) \\
				& - \psi_0(\l| v_2 \r|_{L^{\infty}}) \, \psi_0(\l| P_n\l(  v_2 \times h \r)  \r|_{L^{\infty}}) \, \psi_0(\l| P_n \l( v_2 \times \l( v_2 \times h \r) \r) \r|_{L^{\infty}}) \\
				= & \bigg[\psi_0(\l| v_1 \r|_{L^{\infty}}) - \psi_0(\l| v_2 \r|_{L^{\infty}})\bigg] \, \psi_0(\l| P_n\l(  v_1 \times h \r)  \r|_{L^{\infty}}) \, \psi_0(\l| P_n \l( v_1 \times \l( v_1 \times h \r) \r) \r|_{L^{\infty}}) \\
				& + \psi_0(\l| v_2 \r|_{L^{\infty}}) \, \bigg[\psi_0(\l| P_n\l(  v_1 \times h \r)  \r|_{L^{\infty}}) - \psi_0(\l| P_n\l(  v_2 \times h \r)  \r|_{L^{\infty}}) \bigg] \, \psi_0(\l| P_n \l( v_1 \times \l( v_1 \times h \r) \r) \r|_{L^{\infty}}) \\
				& + \psi_0(\l| v_2 \r|_{L^{\infty}}) \, \psi_0(\l| P_n\l(  v_2 \times h \r)  \r|_{L^{\infty}}) \, \bigg[ \psi_0(\l| P_n \l( v_1 \times \l( v_1 \times h \r) \r) \r|_{L^{\infty}}) - \psi_0(\l| P_n \l( v_2 \times \l( v_2 \times h \r) \r) \r|_{L^{\infty}})\bigg].
			\end{align*}
			We want to show locally Lipschitz property on $H_n$. It therfore suffices to show Lipschitz property on closed balls in $H_n$. In that direction, assume that for some $0<R<\infty$, we have  $\l|v_1\r|_{L^{\infty}},\l|v_2\r|_{L^{\infty}} \leq R$. By Remark \ref{Remark meaning of norms are equivalent on Hn}, this is equivalent to saying that $v_1,v_2$ lie in a closed ball in the space $H_n$.\dela{ By this we mean that since the space $H_n$ is embedded inside the space $L^{\infty}$, $v_1,v_2$ lie in a closed ball in the space $H_n$ endowed with the norm inherited from the space $L^{\infty}$.}
			
			\dela{
				We split the above calculation into three parts. For the first term, we have the following. Since $\psi_0$ is Lipschitz continuous and takes values in $[0,1]$,
				\begin{align*}
					&\l| \bigg[\psi_0(\l| v_1 \r|_{L^{\infty}}) - \psi_0(\l| v_2 \r|_{L^{\infty}})\bigg] \, \psi_0(\l| P_n\l(  v_1 \times h \r)  \r|_{L^{\infty}}) \, \psi_0(\l| P_n \l( v_1 \times \l( v_1 \times h \r) \r) \r|_{L^{\infty}}) \r| \\
					\leq & 	\l| \psi_0(\l| v_1 \r|_{L^{\infty}}) - \psi_0(\l| v_2 \r|_{L^{\infty}}) \r| \\
					\leq & C \bigl| \l| v_1 \r|_{L^{\infty}} - \l| v_2 \r|_{L^{\infty}} \bigr| \\
					\leq & C \l| v_1 - v_2 \r|_{L^{\infty}} \\
					\leq & CC_n \l| v_1 - v_2 \r|_{L^2}.
				\end{align*}
				By Lemma \ref{Lemma locally Lipschitz coefficients problem considered} and Lemma \ref{Lemma DGn Gn is polynomial map}, the maps $F_n^4,G_n,\l[DG\r]\l(G\r)$ are Lipschitz on balls. Therefore, the products $\psi(\cdot)F_n^4(\cdot), \psi(\cdot)G_n(\cdot), \psi(\cdot)\l[DG(\cdot)\r]\l(G(\cdot)\r)$ is also locally Lipschitz on $H_n$.\\
				The second term can be handled similarly. For the third term,
				\begin{align*}
					&\l| \psi_0(\l| v_2 \r|_{L^{\infty}}) \, \psi_0(\l| P_n\l(  v_2 \times h \r)  \r|_{L^{\infty}}) \, \bigg[ \psi_0(\l| P_n \l( v_1 \times \l( v_1 \times h \r) \r) \r|_{L^{\infty}}) - \psi_0(\l| P_n \l( v_2 \times \l( v_2 \times h \r) \r) \r|_{L^{\infty}})\bigg] \r| \\
					\leq & \bigl|  \psi_0(\l| P_n \l( v_1 \times \l( v_1 \times h \r) \r) \r|_{L^{\infty}}) - \psi_0(\l| P_n \l( v_2 \times \l( v_2 \times h \r) \r) \r|_{L^{\infty}}) \bigr| \\
					\leq & C \bigl| \l| P_n \l( v_1 \times \l( v_1 \times h \r) \r) \r|_{L^{\infty}} - \l| P_n \l( v_2 \times \l( v_2 \times h \r) \r) \r|_{L^{\infty}} \bigr| \\
					\leq & C  \l| P_n \l( v_1 \times \l( v_1 \times h \r) -  v_2 \times \l( v_2 \times h \r) \r) \r|_{L^{\infty}}  \\
					\leq & C  \l| P_n \l( v_1 \times \l( v_1 \times h \r) -  v_2 \times \l( v_1 \times h \r) \r) \r|_{L^{\infty}}
					+ C \l| P_n \l( v_2 \times \l( v_1 \times h \r) -  v_2 \times \l( v_2 \times h \r) \r) \r|_{L^{\infty}}\\
					\leq & C_n  \l| P_n \l( v_1 \times \l( v_1 \times h \r) -  v_2 \times \l( v_1 \times h \r) \r) \r|_{L^2}
					+ C_n \l| P_n \l( v_2 \times \l( v_1 \times h \r) -  v_2 \times \l( v_2 \times h \r) \r) \r|_{L^2}\\
					\leq & C_n  \l| v_1 \times \l( v_1 \times h \r) -  v_2 \times \l( v_1 \times h \r)  \r|_{L^2}
					+ C_n \l| v_2 \times \l( v_1 \times h \r) -  v_2 \times \l( v_2 \times h \r)  \r|_{L^2}\\
					= & C_n  \l| \l(v_1 - v_2\r) \times \l( v_1 \times h \r)   \r|_{L^2}
					+ C_n \l| v_2 \times \l( \l( v_1 - v_2\r) \times h \r)   \r|_{L^2}\\
					\leq & C_n   \l|v_1 - v_2\r|_{L^2}  \l| v_1\r|_{L^{\infty}} \l| h \r|_{L^{\infty}}
					+ C_n \l| v_2 \r|_{L^{\infty}}  \l| v_1 - v_2\r|_{L^2}  \l|h\r|_{L^{\infty}}   \\
					\leq & C_n C \l|v_1 - v_2\r|_{L^2} .
				\end{align*}
			}
			
			Then from the above arguments, there exists a constant $C>0$ (that may depend on $n,R$) such that
			\begin{align}
				\l| \psi(v_1) - \psi(v_2) \r| \leq C \l| v_1 - v_2 \r|_{L^2}.
			\end{align}
			Therefore the function $\psi:H_n\to\mathbb{R}$ is Lipschitz on balls, and hence locally Lipschitz.\\
			Observe that for $v\in H_n$, we have $\psi F_n^4(v) = \psi(v) F_n^4(v)$. By Lemma \ref{Lemma locally Lipschitz coefficients problem considered}, $F_n^4$ is locally Lipschitz\dela{ and by Lemma \ref{Lemma psi n is Lipschitz}, $\psi$ is locally Lipschitz}. Therefore by Remark \ref{Remark on locally Lipschitz functions}, their product is also locally Lipschitz.\\
			A similar argument using\dela{ Lemma \ref{Lemma DGn Gn is polynomial map} and } Lemma \ref{Lemma locally Lipschitz coefficients problem considered} implies that the second map is also locally Lipschitz, thus concluding the proof of the lemma.		
		\end{proof}

	}

	\dela{Lemma similar to Lemma \ref{Lemma psi n is Lipschitz}.
		\begin{lemma}\label{lem-aux-01 ZB}
			Assume  the function $h \in H^1$.\\
			Then the  following function
			\[
			f: H_n \ni m  \mapsto \psi(|m|_{L^{\infty}})\psi(|P_n(m \times h)|_{L^{\infty}}) \psi(|P_n(m \times (m \times h))|_{L^{\infty}})P_n(P_n(m \times (m \times h)) \times h) \in H_n
			\]
			is well defined and locally Lipschitz.
		\end{lemma}
		\begin{proof}[Proof of Lemma \ref{lem-aux-01 ZB}]
			Let us define two auxiliary functions
			\begin{align*}
				f_1&: H_n \ni m  \mapsto \bigl( |m|_{L^{\infty}}, |P_n(m \times h)|_{L^{\infty}}, |P_n(m \times (m \times h))|_{L^{\infty}}\bigr) \in \mathbb{R}^3\\
				\tilde{\psi}&: \mathbb{R}^3 \ni (y_1,y_2,y_3) \mapsto \psi(y_1)\psi(y_2)\psi(y_3) \in \mathbb{R},\\
				f_2&:= \tilde{\psi}\circ f_1: H_n \to \mathbb{R}\\
				f_3&: H_n \ni m  \mapsto P_n(P_n(m \times (m \times h)) \times h) \in H_n \\
				\beta&:\mathbb{R} \times H_n \ni (y,m) \mapsto ym \in H_n,
			\end{align*}
			and observe that
			\[
			f=\beta\circ(f_2,f_3).
			\]
			About $f_3$. We defined a bilinear map
			\[
			\tilde{f}_3: H_n \times H_n \ni (m_1,m_2)  \mapsto P_n(P_n(m_1 \times (m_2 \times h)) \times h) \in H_n \\
			\]
			Then, by ...
			\[
			\vert \tilde{f}_3 (m_1,m_2)\vert_{H_n} \leq .... \vert m_1\vert  \vert m_2\vert
			\]
			Thus $\tilde{f}_3$ is well defined. It is obviously bilinear and the above inequality implies that it is bilinear bounded. Since
			\[
			f_3(m)=\tilde{f}_3 (m,m), \:\: m \in H_n,
			\]
			we infer that $f_3$ is a polynomial and hence an analytic function. In particular, it is locally Lipschitz, i.e. in this context Lipschitz on balls.

		\end{proof}
	}

	\dela{
		\begin{align}
			dm_n(t) = \l[F_n^1(m_n) + F_n^2(m_n) + F_n^3(m_n) + \frac{1}{2}\l[DG_n(m_n)\r]\l(G_n(m_n)\r)\r] \, dt + G(m_n) \, dW(t)
		\end{align}
		with the initial condition $m_n(0) = P_n(m_0)$. For reference, see for example \cite{Prato+Zabczyk}, \cite{Ikeda+Watanabe}.\\
	}
	
	\begin{lemma}\label{Lemma DGn Gn is polynomial map}
		For each $n$, the map
		\begin{align*}
			\l[ DG_n \r]\l( G_n \r): H_n \ni v \mapsto \bigl[ DG_n (v) \bigr]\bigl( G_n (v) \bigr) \in H_n
		\end{align*}
		is locally Lipschitz.	
	\end{lemma}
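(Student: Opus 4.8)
The plan is to show that the map $v \mapsto [DG_n(v)](G_n(v))$ is a polynomial of degree at most $3$ on the finite-dimensional space $H_n$; since every polynomial map on a finite-dimensional normed space is of $C^\infty$ class and in particular locally Lipschitz, this is all that is needed.

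First I would identify $DG_n$ explicitly. Because $G_n = P_n \circ G$ and $P_n$ is a bounded linear operator, the chain rule together with Proposition \ref{prop-derivative} (taken with $q = 2$) gives, for $v,w \in H_n$,
\[
DG_n(v)(w) = P_n\big( DG(v)(w) \big) = P_n\big( w \times h - \alpha[ v \times (w \times h) + w \times (v \times h)] \big).
\]
Substituting $w = G_n(v)$, where $G_n(v) = P_n(v \times h) - \alpha\, P_n\big(v \times (v \times h)\big)$, expresses $[DG_n(v)](G_n(v))$ as a finite sum of terms, each of the form $P_n$ applied to an iterated cross product in which every factor is either $v$ or the fixed function $h$.

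Next I would observe that each such term is a bounded homogeneous multilinear form in $v$. Indeed $G_n$ is a polynomial of degree $2$ in $v$ (the restriction to $H_n$ of the degree-two map $G$ from Lemma \ref{Lemma G is a polynomial map}, followed by $P_n$), whereas $DG(v)(\cdot)$ is linear in its argument and of degree $1$ in $v$; hence the composition $DG(v)(G_n(v))$ has degree at most $1 + 2 = 3$ in $v$, and applying the bounded linear $P_n$ preserves this. Boundedness of each multilinear factor on $H_n$ follows from the pointwise estimate $|a \times b|_{\mathbb{R}^3} \le |a|_{\mathbb{R}^3}|b|_{\mathbb{R}^3}$, H\"older's inequality, the contraction property of $P_n$, and, crucially, the equivalence of the $L^2$, $L^4$ and $L^\infty$ norms on the finite-dimensional space $H_n$ from Remark \ref{Remark meaning of norms are equivalent on Hn}. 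For instance, one of the genuinely cubic terms (coming from the $-\alpha\, P_n(v \times (v \times h))$ part of $G_n(v)$ inserted into the $v \times (w \times h)$ slot) is controlled by
\[
\big| P_n\big( v \times (P_n(v \times (v \times h)) \times h) \big) \big|_{L^2} \le C_n\, |v|_{L^2}^3,
\]
with $C_n$ depending only on $n$ and the fixed function $h$; the remaining terms are handled identically.

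Finally, having written $v \mapsto [DG_n(v)](G_n(v))$ as a sum of bounded $k$-homogeneous forms with $k \le 3$ on $H_n$, I would conclude that the map is a polynomial, hence $C^\infty$, and in particular locally Lipschitz. The only point demanding attention is the bookkeeping in the substitution step, namely correctly expanding $DG(v)(G_n(v))$ and tracking the cross-product factors; since, however, the entire argument takes place inside the finite-dimensional space $H_n$, where norm equivalence renders every multilinear form automatically bounded, there is no real analytic obstacle. As an alternative to the explicit expansion, one could instead invoke Remark \ref{Remark on locally Lipschitz functions}: $G_n$ is locally Lipschitz by Lemma \ref{Lemma locally Lipschitz coefficients problem considered}, each monomial piece of $(v,w) \mapsto DG(v)(w)$ is locally Lipschitz on $H_n$, and local Lipschitz continuity is stable under products and compositions, which immediately yields the claim for $v \mapsto [DG_n(v)](G_n(v))$.
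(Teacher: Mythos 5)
Your proposal is correct, and its main line of argument is a genuinely different (though closely related) route from the paper's. The paper proves the lemma modularly: it treats $v \mapsto [DG_n(v)](G_n(v))$ as built from the two maps $G_n$ and $DG_n$, shows $G_n$ is locally Lipschitz by Lemma \ref{Lemma locally Lipschitz coefficients problem considered}, shows $v \mapsto DG_n(v)$ is locally Lipschitz by introducing the auxiliary maps $f_1(v)=P_n(v\times h)$, $f_2(v,w)=P_n\bigl(v\times(w\times h)\bigr)$, $f_3(v,w)=P_n\bigl(w\times(v\times h)\bigr)$ and identifying $DG_n(v)$ as a linear combination of $f_1(v)$, $f_2(v,v)$, $f_3(v,v)$ via Proposition \ref{prop-derivative}, and then concludes by the stability of local Lipschitz continuity under products and compositions (Remark \ref{Remark on locally Lipschitz functions}). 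You instead expand everything at once: using $DG_n(v)(w)=P_n\bigl(DG(v)(w)\bigr)$ (legitimate, since $P_n$ is bounded linear) and substituting $w=G_n(v)$, you exhibit the whole map as a single polynomial of degree at most $3$ on $H_n$, with each monomial a bounded multilinear form thanks to the pointwise cross-product estimate and the norm equivalence of Remark \ref{Remark meaning of norms are equivalent on Hn}; local Lipschitz continuity then follows from the general fact that polynomial maps are $C^\infty$, hence Lipschitz on balls. Both arguments rest on the same underlying finite-dimensional facts, and your expansion is essentially the paper's own formula \eqref{Stratonivich to Ito correction term} for the correction term, so nothing new needs to be computed; what your route buys is explicitness (a concrete cubic growth bound $C_n|v|_{L^2}^3$ that could be reused in the energy estimates), while the paper's route buys economy, since it never writes out the six terms and simply recycles the earlier lemma and remark. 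Your closing alternative paragraph is, in substance, exactly the paper's proof.
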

	\begin{proof}[Proof of Lemma \ref{Lemma DGn Gn is polynomial map}]
		We show the local Lipschitz continuity by first observing that the map $\l[ DG_n \r]\l( G_n \r)$ is a composition of two maps $DG_n$ and $G_n$, and then showing that both are locally Lipshitz in $H_n$. That $G_n$ is locally Lipschitz has been shown in the previous lemma (Lemma \ref{Lemma locally Lipschitz coefficients problem considered}).
		
		Define an auxiliary map $f_1: H_n\to H_n$, given by
		\begin{equation}
			f_1(v) = P_n\l( v \times h \r).
		\end{equation}
		Also define the maps $f_2,f_3: H_n  \times H_n \to H_n$, given by
		\begin{equation}
			f_2(v,w) = P_n\l(v \times \l( w \times h \r)\r)
		\end{equation}
		and
		\begin{equation}
			f_3(v,w) =  P_n\l(w  \times \l( v \times h \r)\r).
		\end{equation}
		Note that the map $f_2$ is similar to the map $f_2$ defined in Lemma \ref{Lemma locally Lipschitz coefficients problem considered} and is therefore locally Lipschitz on $H_n$. Although $f_3$ is not exactly the same as the map $f_2$, following the similar line of calculations we can show that $f_3$ is also locally Lipschitz on $H_n$.\\
		We now observe, see Proposition \ref{prop-derivative}, that $DG_n(v)$ is a linear combination of $f_1(v), f_2(v,v)$ and $f_3(v,v)$, and is therefore locally Lipschitz.\\
		By Remark \ref{Remark on locally Lipschitz functions}, the map $[DG](G)$, being a composition of two locally Lipschitz maps is locally Lipschitz.
		\dela{\textbf{Why the composition is locally Lipschitz:} Let $f,g$ be two locally Lipschitz maps. That is, $f,g$ are Lipschitz on balls. Since both $f,g$ are also continuous, they are bounded on closed and bounded balls. Therefore for a closed and bounded ball, we can consider the functions $f,g$ being Lipschitz continuous and bounded. Therefore their composition $f\circ g$ is also bounded and Lipschitz continuous on the ball.}
	\end{proof}

	\textbf{The Approximated Equation.}
	The approximated equation in $H_n$ is as follows. For $n\in\mathbb{N}$,
	\begin{align}\label{definition of solution Faedo Galerkin approximation 1}
		\nonumber m_n(t) =& P_n(m_0) + \int_{0}^{t} P_n \bigl( m_n(s) \times \Delta m_n(s) \bigr)\, ds - \alpha \, \int_{0}^{t} P_n \bigl[m_n(s) \times \bigl( m_n(s) \times \Delta m_n(s) \bigr)\bigr] ds\\
		\nonumber & + \int_{0}^{t} P_n \l( m_n(s) \times u_n(s) \r) ds - \alpha \, \int_{0}^{t} P_n \l[ m_n(s) \times \bigl( m_n(s) \times u_n(s) \bigr) \r]\, ds \\
		& + \frac{1}{2} \int_{0}^{t} \bigl[ DG_n \big( m_n(s) \big) \bigr] \big[ G_n \big( m_n(s) \big) \big] \, ds + \int_{0}^{t} G_n \big( m_n(s) \big) \, dW(s),\ t\in[0,T].
	\end{align}
	Let $s\in[0,T]$ and $n\in\mathbb{N}$. Using Proposition \ref{prop-derivative}, we can compute the correction term $DG(m_n)[G(m_n)]$ in \eqref{definition of solution Faedo Galerkin approximation 1} as given by the following. Note that we suppress the argument $(s)$ for brevity.
	
	
	\begin{align}\label{Stratonivich to Ito correction term}
		\nonumber	DG_n\big(m_n\big)\bigl(G_n(m_n)\bigr) =& P_n\big(P_n(m_n \times h) \times h\big) - \alpha \, P_n\bigg(P_n\big(m_n \times (m_n \times h)\big) \times h\bigg)\\
		\nonumber& - \alpha \, P_n(P_n\l(m_n \times h\r) \times (m_n \times h)) \\
		\nonumber & + P_n\bigg(m_n \times \big(P_n(m_n \times h) \times h\big)\bigg)\\
		\nonumber&-\alpha \, P_n\bigg(P_n\big(m_n \times (m_n(s) \times h)\big) \times (m_n \times h)\bigg)\\
		& - \alpha \, P_n\bigg(m_n \times \big(P_n(m_n \times \big(m_n \times h)\big)\times h\big)\bigg).
	\end{align}
	\textbf{The Truncated Approximated Equation:} Let $n\in\mathbb{N}$. We incorporate the cut-off $\psi_n$ defined in \eqref{definition of bump function psi n} into the equation \eqref{definition of solution Faedo Galerkin approximation 1}.\\
	\textbf{Note:} In the sections that follow, we will replace the notation $\psi_n$ with $\psi$ for brevity.
	
	\begin{align}\label{definition of solution Faedo Galerkin approximation}
		\nonumber m_n(t) = & P_n(m_0) + \int_{0}^{t} P_n \bigl( m_n(s) \times \Delta m_n(s) \bigr) \, ds - \alpha \, \int_{0}^{t} P_n \l[ m_n(s) \times \bigl( m_n(s) \times \Delta m_n(s) \bigr) \r] ds\\
		\nonumber & + \int_{0}^{t} P_n \bigl (m_n(s) \times u_n(s) \bigr) ds
		\ - \alpha \, \int_{0}^{t} \psi\big(m_n(s)\big) P_n \bigl[ m_n(s) \times \big( m_n(s) \times u_n(s) \big) \bigr] \, ds  \\
		\nonumber & + \frac{1}{2} \int_{0}^{t} \psi\big(m_n(s)\big)^2\bigl[ DG_n\big(m_n(s)\big) \bigr]\big[G_n\big(m_n(s)\big)\big]\, ds \\
		&	+ \int_{0}^{t} \psi\big(m_n(s)\big)G_n\big(m_n(s)\big)\, dW(s),\ t\in[0,T].
	\end{align}
	By Lemma \ref{Lemma locally Lipschitz coefficients problem considered}, Lemma \ref{Lemma psi n is Lipschitz}\dela{ Lemma \ref{Lemma locally Lipschitz coefficients truncated problem considered}} and Lemma \ref{Lemma DGn Gn is polynomial map}, we infer that the coefficients $F_n^i,\ i=1,\dots 4$ and $G_n$ are locally Lipschitz on $H_n$, for each $n\in\mathbb{N}$. Note that in order to prove that the solution to a stochastic differential equation is global, it is not sufficient that the coefficients are only locally Lipschitz.
	Global existence can be given by the one-sided linear growth, which in turn is given by the following inequality. For all $v \in H_n$, we have the following.
	\begin{equation*}
		\l\langle F_n^i (v) , v \r\rangle_{L^2} = 0 = \l\langle G_n (v) , v \r\rangle_{L^2}.
	\end{equation*}
	Combining local Lipschitz regularity and one sided linear growth with Theorem 10.6 in \cite{Chung+Williams_Introduction_To_Stochastic_Integration}, the problem \eqref{definition of solution Faedo Galerkin approximation} admits a unique solution.
	
	\dela{Proposition \ref{Proposition Global solution for FG approximation} outlines a proof for the existence. }

	\dela{
		\begin{align}
			\nonumber m_n(t) =& P_n(m_0) + \int_{0}^{t} P_n(m_n(s) \times \Delta m_n(s))\, ds - \alpha \, \int_{0}^{t} P_n (m_n(s) \times (m_n(s) \times \Delta m_n(s)))\, ds\\
			\nonumber  &+  \int_{0}^{t} P_n(m_n(s) \times u_n(s))\, ds \\
			\nonumber  &-  \alpha \, \int_{0}^{t} \psi(|m_n(s)|_{L^{\infty}}) P_n (m_n(s) \times(m_n(s) \times u_n(s)))  \, ds\\
			\nonumber  &+  \frac{1}{2} \bigg[ \int_{0}^{t} P_n(P_n(m_n(s) \times h) \times h) \, ds \\
			\nonumber &-   \alpha \, \int_{0}^{t} \psi(|m_n(s)|_{L^{\infty}})\psi(|P_n(m_n(s) \times h)|_{L^{\infty}}) \psi(|P_n(m_n(s) \times (m_n(s) \times h))|_{L^{\infty}}) \centerdot \\
			\nonumber &\quad P_n(P_n(m_n(s) \times (m_n(s) \times h)) \times h)  \, ds \\
			\nonumber &-   \alpha \, \int_{0}^{t} \psi(|m_n(s)|_{L^{\infty}})\, \psi(|P_n(m_n(s) \times h)|_{L^{\infty}}) \, \psi(|P_n(m_n(s) \times (m_n(s) \times h))|_{L^{\infty}}) \centerdot \\
			\nonumber &\quad P_n(P_n(m_n(s) \times h) \times (m_n(s) \times h)) ds \\
			\nonumber  &- \alpha \, \int_{0}^{t} P_n\bigl(m_n(s) \times (P_n(m_n(s) \times h) \times h)\bigr)  \, ds\\
			\nonumber &+   \alpha^2 \int_{0}^{t} \psi^2(|m_n(s)|_{L^{\infty}}) \psi(|P_n(m_n(s) \times h)|_{L^{\infty}})^2 \psi(|P_n(m_n(s) \times (m_n(s) \times h))|_{L^{\infty}})^2 \centerdot \\
			\nonumber &\quad P_n(P_n(m_n(s) \times (m_n(s) \times h)) \times (m_n(s) \times h))  \, ds\\
			\nonumber &+   \alpha^2 \int_{0}^{t} P_n(m_n(s) \times (P_n(m_n(s) \times (m_n(s) \times h))\times h)) \, ds \bigg] \\
			\nonumber &+   \int_{0}^{t}  P_n\l( m_n(s) \times h \r) \, dW(s)\\
			\nonumber  &-  \alpha \, \int_{0}^{t} \psi(|m_n(s)|_{L^{\infty}}) \psi(|P_n(m_n(s) \times h)|_{L^{\infty}}) \psi(|P_n(m_n(s) \times (m_n(s) \times h))|_{L^{\infty}}) \centerdot \\
			& \quad P_n \l( m_n(s) \times \l(m_n(s) \times h\r) \r)\, dW(s) .		
		\end{align}
		
	}

	\dela{
		
		We fix the following notation. For $s\in[0,T],n\in\mathbb{N}$, let
		\begin{align}\label{Definition of Vn}
			\nonumber  V_n(m(s)) =   &  \frac{1}{2} \bigg[  P_n(P_n(m_n(s) \times h) \times h)   \\
			\nonumber &-   \alpha \,  \psi(|m_n(s)|_{L^{\infty}})\psi(|P_n(m_n(s) \times h)|_{L^{\infty}}) \psi(|P_n(m_n(s) \times (m_n(s) \times h))|_{L^{\infty}}) \centerdot \\
			\nonumber &\quad P_n(P_n(m_n(s) \times (m_n(s) \times h)) \times h)   \\
			\nonumber &-   \alpha \,  \psi(|m_n(s)|_{L^{\infty}}) \psi(|P_n(m_n(s) \times h)|_{L^{\infty}}) \psi(|P_n(m_n(s) \times (m_n(s) \times h))|_{L^{\infty}}) \centerdot \\
			\nonumber &\quad P_n(P_n(m_n(s) \times h) \times (m_n(s) \times h))   \\
			\nonumber  &- \alpha \,  P_n(m_n(s) \times (P_n(m_n(s) \times h) \times h)) \\
			\nonumber &+   \alpha^2  \psi^2(|m_n(s)|_{L^{\infty}}) \psi(|P_n(m_n(s) \times h)|_{L^{\infty}})^2 \psi(|P_n(m_n(s) \times (m_n(s) \times h))|_{L^{\infty}})^2 \centerdot \\
			\nonumber &\quad P_n(P_n(m_n(s) \times (m_n(s) \times h)) \times (m_n(s) \times h))  \\
			&+   \alpha^2  P_n(m_n(s) \times (P_n(m_n(s) \times (m_n(s) \times h))\times h))   \bigg].
		\end{align}
		\adda{Replace $V_n$ by $\psi(m_n(s))^2 \l[DG(m_n(s))\r]\l( G(m_n(s)) \r)$}
		Here the $\centerdot$ represents the multiplication of two quantities on the left and right hand side of the dot. This is used since the term was not fitting on one line.\\
		Essentially, $V_n$ will be used to denote the correction term arising from the Stratonovich to It\^o conversion.

	}
	We now state a lemma that will be used in the calculations that follow.
	\begin{lemma}\label{Technical lemma bound on DGn Gn}
		\begin{enumerate}
			\item The following equality holds for all $w\in H_n$.
			\begin{equation}
				\l\langle \bigl[DG_n(w)\bigr]\bigl(G_n(w)\bigr) , w \r\rangle_{L^2} = - \l| G(w) \r|_{L^2}^2.
			\end{equation}
			
			\item There exists a constant $C>0$ such that for all $n\in\mathbb{N}$, the following inequality holds.
			\begin{equation}
				\l| \l\langle \psi(w)^2 \, \bigl[DG_n(w)\bigr]\bigl(G_n(w)\bigr) , \Delta w \r\rangle_{L^2} \r| \leq C \bigl( 1 + \l| w \r|_{H^1} \bigr)\l| v \r|_{H^1},\ w\in H_n.
			\end{equation}
		\end{enumerate}		
	\end{lemma}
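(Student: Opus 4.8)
I would treat the two parts separately, since part (1) is an exact algebraic/geometric identity while part (2) is an estimate that crucially exploits the cut-off $\psi$. Throughout, recall that in this setting $G(w)$ denotes the vector $G(w) = w\times h - \alpha\, w\times(w\times h)$, that $G_n(w)=P_n(G(w))$, and that by Proposition \ref{prop-derivative} the derivative acts as $[DG(w)](k) = k\times h - \alpha\bigl[w\times(k\times h) + k\times(w\times h)\bigr]$, so that $[DG_n(w)](k) = P_n\bigl([DG(w)](k)\bigr)$ for $k\in H_n$.

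For part (1), the starting point is the pointwise orthogonality $\langle G(v)(x), v(x)\rangle_{\mathbb{R}^3}=0$ for a.e. $x$, valid for every $v$, because both $v\times h$ and $v\times(v\times h)$ are perpendicular to $v$ in $\mathbb{R}^3$. Integrating gives $\langle G(v), v\rangle_{L^2}=0$ as a functional identity in $v$. Differentiating this identity in a direction $k$ yields $\langle [DG(v)](k), v\rangle_{L^2} + \langle G(v), k\rangle_{L^2}=0$, which can also be checked directly via the scalar triple product (the term $w\times(k\times h)$ drops out against $w$). I would then set $v=w$ and $k=G_n(w)$; since $w\in H_n$ and $P_n$ is self-adjoint, the outer projection in $[DG_n(w)](G_n(w))=P_n\bigl([DG(w)](G_n(w))\bigr)$ may be moved onto $w$ and discarded, giving $\langle [DG_n(w)](G_n(w)), w\rangle_{L^2} = -\langle G(w), P_n G(w)\rangle_{L^2} = -|G_n(w)|_{L^2}^2$, which is the claimed identity.

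For part (2), the first observation is that $\psi(w)^2\in[0,1]$ is a \emph{scalar} (it depends on $w$ only through $L^\infty$-norms), so it factors out of the inner product, and the estimate is trivial where $\psi(w)=0$. On the support of $\psi$ one has the uniform bounds $|w|_{L^\infty}\le |h|_{L^\infty}+2$ together with the analogous bounds for $P_n(w\times h)$ and $P_n(w\times(w\times h))$; this is exactly the $L^\infty$-control that tames the cubic structure of $[DG_n(w)](G_n(w))$. Since $w\in H_n\subset D(A)$ we have $\Delta w\in H_n$, so the outer $P_n$ can again be absorbed, reducing the quantity to $\langle [DG(w)](G_n(w)), \Delta w\rangle_{L^2}$. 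I would then integrate by parts using the Neumann boundary condition (identity \eqref{Interpretation eqn 1}, no boundary term), rewriting this as $-\langle \nabla\bigl([DG(w)](G_n(w))\bigr), \nabla w\rangle_{L^2}$, and estimate $|\nabla([DG(w)](G_n(w)))|_{L^2}\le C(1+|w|_{H^1})$ by the product rule, Hölder's inequality, the embedding $H^1\hookrightarrow L^\infty$, the cut-off $L^\infty$-bound on $w$, and the fact that $P_n$ is a contraction in both $L^2$ and the $\dot H^1$-seminorm (because it commutes with the Neumann Laplacian). Cauchy--Schwarz then gives $C(1+|w|_{H^1})|\nabla w|_{L^2}\le C(1+|w|_{H^1})|w|_{H^1}$, as required.

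The routine part is part (1); the main obstacle is the uniform-in-$n$ bound in part (2). The difficulty is twofold: one must propagate a derivative through the nonlinear, projection-laden expression $[DG_n(w)](G_n(w))$ while keeping all constants independent of $n$, which forbids using the $n$-dependent norm equivalence of Remark \ref{Remark meaning of norms are equivalent on Hn} and instead forces the use of the $H^1$-contractivity of $P_n$; and one must rely on the $L^\infty$-bound supplied by $\psi$ to prevent the genuinely cubic top-order contribution from producing a term that cannot be absorbed into the quadratic right-hand side. Once the integration by parts is set up so that exactly one gradient falls on $w$ and the remaining factors are controlled in $L^\infty$, the estimate closes.
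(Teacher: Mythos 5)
Your proposal is correct. For part (2) it is essentially the paper's own argument: absorb the outer projection using $P_n\Delta w=\Delta w$, integrate by parts via \eqref{Interpretation eqn 1}, split $\nabla\bigl([DG(w)](G_n(w))\bigr)$ by the product rule so that each resulting term carries exactly one gradient, and control every other factor in $L^\infty$ through the cut-off (which by construction bounds $|w|_{L^{\infty}}$, $|P_n(w\times h)|_{L^{\infty}}$ and $|P_n(w\times(w\times h))|_{L^{\infty}}$ by $|h|_{L^{\infty}}+2$), together with the $n$-independent contractivity of $P_n$ in $L^2$ and in $H^1$; the paper carries this out explicitly for the first term of \eqref{Stratonivich to Ito correction term} and declares the remaining terms analogous, exactly as you do, and your remark that one must avoid the $n$-dependent norm equivalences of Remark \ref{Remark meaning of norms are equivalent on Hn} and instead use $|P_n(v\times h)|_{H^1}\le|v\times h|_{H^1}$ is precisely the paper's step. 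The genuine difference is part (1): the paper gives no proof, only a citation of Corollary B.3 in \cite{ZB+BG+TJ_LargeDeviations_LLGE}, whereas you supply a short self-contained derivation by differentiating the orthogonality identity $\langle G(v),v\rangle_{L^2}=0$ to obtain $\langle[DG(v)](k),v\rangle_{L^2}=-\langle G(v),k\rangle_{L^2}$ and then inserting $k=G_n(w)$. Note also that your computation yields $-|G_n(w)|_{L^2}^2=-|P_nG(w)|_{L^2}^2$ rather than the $-|G(w)|_{L^2}^2$ printed in the statement; these differ in general, since $G(w)$ need not lie in $H_n$, and yours is the correct version: it is exactly what is used in the proof of Lemma \ref{bounds lemma 1 without p}, where $I_5$ must cancel the It\^o correction term $\frac{1}{2}\int_0^t\psi(m_n(s))^2|G_n(m_n(s))|_{L^2}^2\,ds$. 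So your argument in effect corrects a typo in the lemma (as does reading $|v|_{H^1}$ as $|w|_{H^1}$ in part (2)).
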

	\begin{proof}[Proof of Lemma \ref{Technical lemma bound on DGn Gn}]
		For a proof of (1), we refer the reader to Corollary B.3 in \cite{ZB+BG+TJ_LargeDeviations_LLGE}.\\
		\textbf{Proof of (2):} From the proof of Lemma \ref{Lemma DGn Gn is polynomial map}, we observe that the map $\l[DG\r]\l(G\r)$ is a sum of polynomial maps of degree 2 and 3. The equality in \eqref{Stratonivich to Ito correction term} gives the precise form of the term considered. The main idea of the proof is the following .
		\begin{equation}
			\l| \l\langle \psi(v)^2 \, \l[DG_n(v)\r]\bigl(G(v)\bigr) , \Delta v \r\rangle_{L^2} \r| =  	\l| \l\langle \psi(v)^2 \, \nabla\bigl[\l[DG_n(v)\r]\bigl(G(v)\bigr) \bigr] , \nabla v \r\rangle_{L^2} \r|
		\end{equation}
		By using the product rule for derivatives, the term $\nabla\bigl[\bigl[DG_n(v)\bigr]\bigl(G(v)\bigr) \bigr]$ can be split into terms of two types. The first type, wherein the derivative is on $v$ and the second type where the derivative is on the term $h$. For the first type, we use H\"older's inequality with $L^2$ norm on the gradient term and $L^{\infty}$ norm on the remaining term/s. For the second type, the $L^2$ norm is applied on one of the terms containing $v$, while all other terms get the $L^{\infty}$ norm. The cut-off function $\Psi$ ensures that the $L^{\infty}$ norm is taken care of. As an example, we show the calculations here for the first term from \eqref{Stratonivich to Ito correction term}. Rest follow suite. Let $w\in H^1$.
		\dela{
			\begin{align*}
				\l| \psi(v)^2 \l\langle \nabla P_n\l( P_n \l(v \times h\r) \times h \r) , \nabla v \r\rangle_{L^2} \r| = & \l| \psi(v)^2 \l\langle  P_n \nabla\l( P_n \l(v \times h\r) \times h \r) , \nabla v \r\rangle_{L^2} \r| \\
				= & \l| \psi(v)^2 \l\langle \nabla \l( P_n \l(v \times h\r) \times h \r) , \nabla v \r\rangle_{L^2} \r| \\
				\leq & \l| \psi(v)^2 \l\langle   P_n \l[\nabla\l(v \times h\r) \r] \times h  , \nabla v \r\rangle_{L^2} \r| \\
				&+ \l| \psi(v)^2 \l\langle  \l( P_n \l(v \times h\r) \times \nabla h \r) , \nabla v \r\rangle_{L^2} \r| \\
				\leq & \l| \psi(v)^2 \l\langle   P_n \l[\nabla v \times h \r] \times h  , \nabla v \r\rangle_{L^2} \r| \\
				& + \l| \psi(v)^2 \l\langle   P_n \l[v \times \nabla h \r] \times h  , \nabla v \r\rangle_{L^2} \r| \\
				& + \l| \psi(v)^2 \l\langle   P_n \l[\l(v \times h\r) \r] \times \nabla h  , \nabla v \r\rangle_{L^2} \r| \\
				(\text{By H\"older's inequality})\ \leq &  \psi(v)^2   \l| P_n \l[\nabla v \times h \r] \r|_{L^2}\l| h \r|_{L^{\infty}}  \l| \nabla v \r|_{L^2} \\
				& + \psi(v)^2    \l| P_n \l[\l(v \times \nabla h\r) \r] \r|_{L^{2}} \l|  h \r|_{L^{\infty}}  \l| \nabla v \r|_{L^2} \\
				& + \psi(v)^2    \l| P_n \l[\l(v \times h\r) \r] \r|_{L^{\infty}} \l| \nabla h \r|_{L^2}   \l| \nabla v \r|_{L^2} \\
				\leq & \psi(v)^2 \l|\nabla v\r|_{L^2}^2 \l|h\r|_{L^{\infty}}^2 \\
				& +  \psi(v)^2 \l|v\r|_{L^{\infty}} \l| \nabla h \r|_{L^{2}} \l|h\r|_{L^{\infty}} \l|\nabla v\r|_{L^2} \\
				& + \psi(v)^2    \l| P_n \l[\l(v \times h\r) \r] \r|_{L^{\infty}} \l| \nabla h \r|_{L^2}   \l| \nabla v \r|_{L^2}.
			\end{align*}
		}
		First we observe the following. There exists a constant $C>0$ such that
		\begin{align*}
			\l|w \times h\r|_{H^1} \leq C \l| w \r|_{H^1} \l| h \r|_{H^1}.
		\end{align*}
		Now, let $v\in H_n$. Since $H_n \subset H^1$, the above inequality also holds for $w$ replaced by $v$. In the following sequence of inequalities, we use $C$ to denote a generic constant that is positive and independent of $n\in\mathbb{N}$. The constant $C$ can depend on $\l| h \r|_{H^1}$, and the value of $C$ may change from line to line.
		\begin{align*}
			\l| \psi(v)^2 \l\langle  P_n\l( P_n \l(v \times h\r) \times h \r) , \Delta v \r\rangle_{L^2} \r|
			= & \l| \psi(v)^2 \l\langle   P_n \l(v \times h\r) \times h  , \Delta v \r\rangle_{L^2} \r| \\
			= &  \l| \psi(v)^2 \l\langle   \nabla\l( P_n \l(v \times h\r) \times h \r) , \nabla v \r\rangle_{L^2} \r| \\
			\leq & \l| \psi(v)^2 \l\langle   \nabla\bigl( P_n \l(v \times h\r) \bigr) \times h  , \nabla v \r\rangle_{L^2} \r| \\
			&+ \l| \psi(v)^2 \l\langle  P_n \l(v \times h\r) \times \nabla h  , \nabla v \r\rangle_{L^2} \r| \\
			\leq &  \psi(v)^2 \l| \nabla\bigl( P_n \l(v \times h\r) \bigr) \r|_{L^2}  \l| h \r|_{L^{\infty}} \l| \nabla v \r|_{L^2}  \\
			&+  \psi(v)^2   \l|P_n \l(v \times h\r)\r|_{L^{\infty}}  \l| \nabla h \r|_{L^2} \l| \nabla v  \r|_{L^2} \\
			\leq &  C\: \psi(v)^2  \l|  P_n \l(v \times h\r)  \r|_{H^1}  \l| h \r|_{H^1} \l| v \r|_{H^1}  \\
			&+ \psi(v)^2   \l|P_n \l(v \times h\r)\r|_{L^{\infty}}  \l| h \r|_{H^1} \l| \nabla v  \r|_{L^2} \\
			\leq &  C\: \psi(v)^2  \l| v \times h  \r|_{H^1}  \l| v \r|_{H^1}  \\
			&+  C\: \psi(v)^2   \l|P_n \l(v \times h\r)\r|_{L^{\infty}}   \l| \nabla v  \r|_{L^2} \\
			\leq &   C\: \psi(v)^2 \l| v \r|_{H^1}^2   +  \psi(v)^2   \l|P_n \l(v \times h\r)\r|_{L^{\infty}}   \l| v  \r|_{H^1}.
		\end{align*}		
		By the definition of the cut-off function $\psi_0$, we have
		\begin{equation}
			\psi(v)^2    \l| P_n \l[\l(v \times h\r) \r] \r|_{L^{\infty}} \leq \l| h \r|_{L^{\infty}} + 2.
		\end{equation}
		Therefore, combining the calculations given above, there exists a constant $C>0$ such that
		\begin{equation}
			\l| \psi(v)^2 \l\langle  P_n\l( P_n \l(v \times h\r) \times h \r) , \Delta v \r\rangle_{L^2} \r| \leq C \l( 1 + \l| v \r|_{H^1} \r) \l| v \r|_{H^1}.
		\end{equation}
	\end{proof}

	For $n\in\mathbb{N}$, let $m_n = \l( m_n(t) \r)_{t\in[0,T]}$ be the solution to the problem \eqref{definition of solution Faedo Galerkin approximation}. We now obtain some uniform energy estimates which will be used to show tightness of the laws of the processes $m_n$ on a suitable space.

	\begin{lemma}\label{bounds lemma 1 without p}
		We have the following bounds.
		\begin{enumerate}
			\item The following equality holds for each $n\in\mathbb{N}$,	\begin{equation}\label{bound 1}
				\l|m_n(t)\r|_{L^2}^2 = \l|m_n(0)\r|_{L^2}^2 \ \text{for each}\ t\in[0,T],\ \mathbb{P}-a.s.
			\end{equation}
			
			\item 	There exists a constant $C>0$ such that for all $n\in\mathbb{N}$, the following inequalities hold.
			\begin{enumerate}
				\item \begin{equation}\label{bound 2 without p}
					\mathbb{E}\l[\sup_{t\in[0,T]}\l|m_n(t)\r|_{H^1}^{2}\r]\leq C,
				\end{equation}
				\item \begin{equation}\label{bound 3 without p}
					\mathbb{E} \l[ \int_{0}^{T} \l| m_n(t) \times \Delta m_n(t) \r|_{L^2}^2 \, dt \r] \leq C.
				\end{equation}
			\end{enumerate}

		\end{enumerate}
	\end{lemma}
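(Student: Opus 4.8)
The plan is to apply the finite-dimensional Itô formula to suitable functionals of the $H_n$-valued process $m_n$ solving \eqref{definition of solution Faedo Galerkin approximation}, exploiting throughout the algebraic orthogonalities $\langle F_n^i(v),v\rangle_{L^2}=0=\langle G_n(v),v\rangle_{L^2}$ together with the structural identities in Lemma \ref{Technical lemma bound on DGn Gn}. For part (1), I would apply Itô's formula to $t\mapsto |m_n(t)|_{L^2}^2$. The stochastic integral disappears because $\langle m_n,\psi(m_n)G_n(m_n)\rangle_{L^2}=\psi(m_n)\langle m_n,G_n(m_n)\rangle_{L^2}=0$; every drift pairing $\langle m_n,F_n^i(m_n)\rangle_{L^2}$ vanishes since $a\times b\perp a$ pointwise; and the Stratonovich--Itô correction together with the Itô second-order term cancel by Lemma \ref{Technical lemma bound on DGn Gn}(1), as $\psi(m_n)^2\langle m_n,[DG_n(m_n)](G_n(m_n))\rangle_{L^2}+\psi(m_n)^2|G_n(m_n)|_{L^2}^2=0$. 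Hence $|m_n(t)|_{L^2}^2$ is constant in $t$, which is \eqref{bound 1}.

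For part (2) the central object is the energy $v\mapsto |\nabla v|_{L^2}^2=\langle Av,v\rangle_{L^2}$, to which I apply Itô's formula (legitimate since $m_n$ takes values in the finite-dimensional $H_n\subset D(A)$, so $\Delta m_n\in H_n$). Pairing the drift against $-\Delta m_n=Am_n$, the term $F_n^1$ contributes nothing because $\langle\Delta m_n,m_n\times\Delta m_n\rangle=0$, whereas $F_n^2$ produces the crucial dissipative term: using the pointwise identity $\langle m\times(m\times\Delta m),\Delta m\rangle=-|m\times\Delta m|^2$ one obtains $-2\alpha|m_n\times\Delta m_n|_{L^2}^2$. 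This negative term is retained on the left-hand side and is what will eventually yield \eqref{bound 3 without p}.

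The remaining terms must be dominated by a fraction of the dissipation plus lower-order energy. The control terms $F_n^3,F_n^4$ are rewritten via the triple-product identities as pairings against $m_n\times\Delta m_n$ and estimated by Young's inequality, absorbing a portion of the dissipation and leaving a contribution of the form $C(1+|m_n|_{H^1}^2)(1+|u_n|_{L^2}^2)$, where the one-dimensional embedding $H^1\hookrightarrow L^\infty$ controls $|m_n|_{L^\infty}$. The correction term is bounded directly by Lemma \ref{Technical lemma bound on DGn Gn}(2) by $C(1+|m_n|_{H^1})|m_n|_{H^1}$, and the Itô second-order term $\psi(m_n)^2|\nabla G_n(m_n)|_{L^2}^2$ by $C(1+|m_n|_{H^1}^2)$, the cut-off $\psi$ being essential here to keep the $H^1$-growth of the noise coefficient linear in $|m_n|_{H^1}$ (via the polynomial bounds of Lemma \ref{Lemma G is a polynomial map}). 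The martingale term is treated, after taking the supremum in time, by the Burkholder--Davis--Gundy inequality followed by Young's inequality, which transfers a small multiple of $\mathbb{E}\sup_t|m_n|_{H^1}^2$ to the left-hand side. Adding the conserved quantity from part (1) and using $|P_n u|_{L^2}\le|u|_{L^2}$ and $|P_n m_0|_{H^1}\le|m_0|_{H^1}$, I would reach, after a stopping-time localization justifying the manipulations, an estimate of the form $\mathbb{E}\sup_{s\le t}|m_n(s)|_{H^1}^2+\alpha\,\mathbb{E}\int_0^t|m_n\times\Delta m_n|_{L^2}^2\,ds\le C+C\,\mathbb{E}\int_0^t(1+|m_n(s)|_{H^1}^2)(1+|u_n(s)|_{L^2}^2)\,ds$, with $C$ independent of $n$.

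I would close the argument with a Gronwall estimate in which the random but integrable weight $1+|u_n(s)|_{L^2}^2$ plays the role of the Gronwall coefficient; Assumption \eqref{assumption on u} (here with $p=1$, with all higher moments also available) guarantees $\mathbb{E}\int_0^T|u|_{L^2}^2\,dt=K_1<\infty$, so the resulting constant is finite and uniform in $n$, giving \eqref{bound 2 without p} and then \eqref{bound 3 without p}. The main obstacle I anticipate is exactly the coupling between the control and the energy: the term $|m_n|_{H^1}^2|u_n|_{L^2}^2$ multiplies the supremum of the energy by a random coefficient, so a naive deterministic Gronwall after taking expectations is not directly available. I would resolve this either through a stochastic Gronwall lemma or by combining the stopping-time truncation with the integrating factor $\exp\!\big(-C\int_0^t(1+|u_n(s)|_{L^2}^2)\,ds\big)$ and then passing to the limit in the truncation level by Fatou's lemma. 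A secondary technical point is bookkeeping of the Young constants so that, after all absorptions of $|m_n\times\Delta m_n|_{L^2}^2$, a strictly positive multiple of this dissipative term survives on the left-hand side.
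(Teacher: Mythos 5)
Your part (1) and the overall skeleton of part (2) coincide with the paper's proof: the cancellation of the Stratonovich correction against the It\^o second-order term via Lemma \ref{Technical lemma bound on DGn Gn}(1), the extraction of the dissipative term $\alpha\int_0^t|m_n\times\Delta m_n|_{L^2}^2\,ds$ from the triple product, the BDG--Young treatment of the martingale after taking the supremum, and the derivation of \eqref{bound 3 without p} from the intermediate inequality are all exactly the paper's steps. The genuine gap is your treatment of the triple-product control term. You estimate it through $H^1\hookrightarrow L^\infty$, producing the coupled contribution $C\,|m_n|_{H^1}^2|u_n|_{L^2}^2$, and you correctly identify this coupling as the obstacle; but neither of your proposed repairs closes it under the standing hypotheses. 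Any Gronwall argument with the random coefficient $1+|u_n(s)|_{L^2}^2$ --- whether a stochastic Gronwall lemma or the integrating factor $\exp\l(-C\int_0^t(1+|u_n(s)|_{L^2}^2)\,ds\r)$ --- can be converted into an \emph{unweighted} moment bound only if $\int_0^T|u_n(s)|_{L^2}^2\,ds$ has exponential moments. Assumption \eqref{assumption on u} supplies only the polynomial moments $K_p$, so after taking expectations you are left with a bound on the exponentially weighted quantity, from which $\mathbb{E}\sup_{t\in[0,T]}|m_n(t)|_{H^1}^2\le C$ cannot be recovered (a weighted estimate of this kind does suffice in the pathwise-uniqueness proof of Theorem \ref{thm-pathwise uniqueness}, where the conclusion is $m=0$, but not here, where the uniform moment bound \emph{is} the assertion).

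The missing ingredient is that the cut-off $\psi$ was built into the truncated equation \eqref{definition of solution Faedo Galerkin approximation} in front of $m_n\times(m_n\times u_n)$ precisely to prevent this coupling: by \eqref{definition of bump function} one has $\psi\big(m_n(s)\big)^2|m_n(s)|_{L^\infty}^2\le\l(|h|_{L^\infty}+2\r)^2$, hence $\psi(m_n)^2|m_n\times u_n|_{L^2}^2\le\l(|h|_{L^\infty}+2\r)^2|u_n|_{L^2}^2$, and after Young's inequality the triple-product control term contributes only $\frac{\varepsilon}{2}\int_0^t|m_n\times\Delta m_n|_{L^2}^2\,ds+C(\varepsilon,h)\int_0^t|u_n(s)|_{L^2}^2\,ds$, with no factor of $|m_n|_{H^1}$ at all. (For the term $m_n\times u_n$ no $L^\infty$ bound is needed either: pair it as $\l\langle u_n , m_n\times\Delta m_n\r\rangle_{L^2}$ and apply Young directly.) The control then enters the final inequality only through $\int_0^T|u_n(s)|_{L^2}^2\,ds$, whose expectation is bounded by $K_1$ uniformly in $n$ since $|P_nu|_{L^2}\le|u|_{L^2}$; the Gronwall coefficient becomes deterministic, and the Fubini--Gronwall argument of the paper yields \eqref{bound 2 without p}, and then \eqref{bound 3 without p} by returning to the intermediate inequality. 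With this correction the remainder of your proposal goes through.
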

	
	\begin{proof}[Proof of Lemma \ref{bounds lemma 1 without p}]
		
		The bounds are obtained by applying the It\^o formula followed by using the Burkholder-Davis-Gundy inequality (Lemma \ref{BDG inequality upper bound}) for the terms with the stochastic integral. Then we apply the Gronwall lemma to obtain the required bounds. Similar ideas have been used in \cite{ZB+BG+TJ_LargeDeviations_LLGE}, see Lemma 3.3, \cite{ZB+BG+TJ_Weak_3d_SLLGE} among others.\\
		\textbf{Proof of the bound \eqref{bound 1}}:\\
		Let us choose and fix $n\in\mathbb{N}$. We define a function $\phi_1 : H_n\rightarrow \mathbb{R}$ by
		\begin{equation}\label{definition of phi 1}
			\phi_1(v) = \frac{1}{2}|v|^2_{L^2},\ \text{for}\ v\in H_n.
		\end{equation}
		Note that the It\^o formula applied is for finite dimensional (Eucildean spaces) and hence $H_n$ is required instead of $L^2$.		
		For $v,v_1,v_2\in H_n$, we have
		$$\phi_1^{\prime}(v)(v_2)=\l\langle v , v_2 \r\rangle_{L^2},$$ 
		and
		$$\phi_1^{\prime\prime}(v)(v_1 , v_2)=\l\langle v_1 , v_2 \r\rangle_{L^2}.$$
		Let $m_n = \l( m_n(t) \r)_{t\in[0,T]}$ be the solution of \eqref{definition of solution Faedo Galerkin approximation}. Applying the It\^o formula to $\phi_1$ gives us the following equation for all $t\in[0,T]$, $\mathbb{P}$-a.s.

		\dela{
			
			\begin{align}
				\nonumber \phi_1(m_n(t))  = & \phi_1(P_n(m_0)) + \int_{0}^{t} \l\langle P_n(m_n(s) \times \Delta m_n(s)) , m_n(s) \r\rangle_{L^2}\, ds \\
				\nonumber  &-  \alpha \, \int_{0}^{t} \l\langle P_n (m_n(s) \times (m_n(s) \times \Delta m_n(s)))    , m_n(s) \r\rangle_{L^2} \, ds\\
				\nonumber &+   \int_{0}^{t} \l\langle P_n(m_n(s) \times u_n(s))  ,  m_n(s)  \r\rangle_{L^2} \, ds \\
				\nonumber  &-  \alpha \, \int_{0}^{t} \psi(|m_n(s)|_{L^{\infty}}) \l\langle P_n (m_n(s) \times(m_n(s) \times u_n(s)))  ,  m_n(s)  \r\rangle_{L^2} ds \\
				\nonumber & + \int_{0}^{t} \l\langle \psi(m_n(s))^2 \, \l[DG_n(m_n(s))\r]\l(G_n(m_n(s))\r) , m_n(s) \r\rangle_{L^2} \, ds \\
				\dela{\nonumber &+   \frac{1}{2} \bigg[ \int_{0}^{t} \l\langle P_n(P_n(m_n(s) \times h) \times h)  ,  m_n(s)  \r\rangle_{L^2} ds \\
					\nonumber  &- \alpha \, \int_{0}^{t} \psi(|m_n(s)|_{L^{\infty}})\psi(|P_n(m_n(s) \times h)|_{L^{\infty}}) \psi(|P_n(m_n(s) \times (m_n(s) \times h))|_{L^{\infty}})\centerdot \\
					\nonumber &\quad\l\langle P_n(P_n(m_n(s) \times (m_n(s) \times h)) \times h)  ,  m_n(s)  \r\rangle_{L^2} ds \\
					\nonumber  &-  \alpha \, \int_{0}^{t} \psi(|m_n(s)|_{L^{\infty}}) \psi(|P_n(m_n(s) \times h)|_{L^{\infty}}) \psi(|P_n(m_n(s) \times (m_n(s) \times h))|_{L^{\infty}})\centerdot \\
					\nonumber &\quad\l\langle P_n(P_n(m_n(s) \times h) \times (m_n(s) \times h)) ,  m_n(s)  \r\rangle_{L^2} ds \\
					\nonumber  &- \alpha \, \int_{0}^{t} \l\langle P_n(m_n(s) \times (P_n(m_n(s) \times h) \times h))  ,  m_n(s)  \r\rangle_{L^2} ds\\
					\nonumber &+   \alpha^2 \int_{0}^{t} \psi^2(|m_n(s)|_{L^{\infty}}) \psi(|P_n(m_n(s) \times h)|_{L^{\infty}})^2 \psi(|P_n(m_n(s) \times (m_n(s) \times h))|_{L^{\infty}})^2 \centerdot \\
					\nonumber& \quad\l\langle P_n(P_n(m_n(s) \times (m_n(s) \times h)) \times (m_n(s) \times h)) ,  m_n(s)  \r\rangle_{L^2} ds\\
					\nonumber  &+  \alpha^2 \int_{0}^{t} \l\langle P_n(m_n(s) \times (P_n(m_n(s) \times (m_n(s) \times h))\times h)) ,  m_n(s)  \r\rangle_{L^2} ds \bigg]  \\}
				\nonumber  & + \int_{0}^{t} \l\langle P_n\l( m_n(s) \times h \r) ,  m_n(s)  \r\rangle_{L^2} \, dW(s)\\
				\nonumber  & + \int_{0}^{t} \psi(|m_n(s)|_{L^{\infty}}) \psi(|P_n(m_n(s) \times h)|_{L^{\infty}}) \psi(|P_n(m_n(s) \times (m_n(s) \times h))|_{L^{\infty}})\centerdot \\
				\nonumber &\quad \l\langle P_n\l( m_n(s) \times \l(m_n(s) \times h\r)\r) ,  m_n(s)  \r\rangle_{L^2}\, dW(s) \\
				\nonumber  &+  \frac{1}{2} \int_{0}^{t} \bigg| P_n\big(  m_n(s) \times h - \alpha \, \psi(|m_n(s)|_{L^{\infty}}) \psi(|P_n(m_n(s) \times h)|_{L^{\infty}}) \centerdot \\
				\nonumber& \quad\psi(|P_n\l(m_n(s) \times (m_n(s) \times h)\r)|_{L^{\infty}})
				P_n \l( m_n(s) \times \l( m_n(s) \times h \r) \r) \big) \bigg|_{L^2}^2 \, ds \\
				= &  \frac{1}{2}\l| P_n(m_0) \r|_{L^2}^2 + \sum_{i=1}^{8} C_i I_i(t).
			\end{align}	
		}
		
		\begin{align}\label{equation Ito formula applied to phi 1}
			\nonumber \phi_1(m_n(t))  = & \phi_1(P_n(m_0)) + \int_{0}^{t} \l\langle P_n\big(m_n(s) \times \Delta m_n(s)\big) , m_n(s) \r\rangle_{L^2}\, ds \\
			\nonumber  &-  \alpha \, \int_{0}^{t} \l\langle P_n \bigg(m_n(s) \times \big(m_n(s) \times \Delta m_n(s)\big)\bigg)    , m_n(s) \r\rangle_{L^2} \, ds\\
			\nonumber &+   \int_{0}^{t} \l\langle P_n\big(m_n(s) \times u_n(s)\big)  ,  m_n(s)  \r\rangle_{L^2} \, ds \\
			\nonumber  &-  \alpha \, \int_{0}^{t} \psi\big(m_n(s)\big) \l\langle P_n \bigg(m_n(s) \times\big(m_n(s) \times u_n(s)\big)\bigg)  ,  m_n(s)  \r\rangle_{L^2} ds \\
			\nonumber & + \frac{1}{2} \int_{0}^{t} \l\langle \psi\big(m_n(s)\big)^2 \, \bigl[DG_n\big(m_n(s)\big)\bigr]\bigl(G_n\big(m_n(s)\big)\bigr) , m_n(s) \r\rangle_{L^2} \, ds \\
			\dela{\nonumber &+   \frac{1}{2} \bigg[ \int_{0}^{t} \l\langle P_n(P_n(m_n(s) \times h) \times h)  ,  m_n(s)  \r\rangle_{L^2} ds \\
				\nonumber  &- \alpha \, \int_{0}^{t} \psi(|m_n(s)|_{L^{\infty}})\psi(|P_n(m_n(s) \times h)|_{L^{\infty}}) \psi(|P_n(m_n(s) \times (m_n(s) \times h))|_{L^{\infty}})\centerdot \\
				\nonumber &\quad\l\langle P_n(P_n(m_n(s) \times (m_n(s) \times h)) \times h)  ,  m_n(s)  \r\rangle_{L^2} ds \\
				\nonumber  &-  \alpha \, \int_{0}^{t} \psi(|m_n(s)|_{L^{\infty}}) \psi(|P_n(m_n(s) \times h)|_{L^{\infty}}) \psi(|P_n(m_n(s) \times (m_n(s) \times h))|_{L^{\infty}})\centerdot \\
				\nonumber &\quad\l\langle P_n(P_n(m_n(s) \times h) \times (m_n(s) \times h)) ,  m_n(s)  \r\rangle_{L^2} ds \\
				\nonumber  &- \alpha \, \int_{0}^{t} \l\langle P_n(m_n(s) \times (P_n(m_n(s) \times h) \times h))  ,  m_n(s)  \r\rangle_{L^2} ds\\
				\nonumber &+   \alpha^2 \int_{0}^{t} \psi^2(|m_n(s)|_{L^{\infty}}) \psi(|P_n(m_n(s) \times h)|_{L^{\infty}})^2 \psi(|P_n(m_n(s) \times (m_n(s) \times h))|_{L^{\infty}})^2 \centerdot \\
				\nonumber& \quad\l\langle P_n(P_n(m_n(s) \times (m_n(s) \times h)) \times (m_n(s) \times h)) ,  m_n(s)  \r\rangle_{L^2} ds\\
				\nonumber  &+  \alpha^2 \int_{0}^{t} \l\langle P_n(m_n(s) \times (P_n(m_n(s) \times (m_n(s) \times h))\times h)) ,  m_n(s)  \r\rangle_{L^2} ds \bigg]  \\}
			\nonumber & + \int_{0}^{t} \l\langle \psi\big(m_n(s)\big) G_n\big(m_n(s)\big) ,  m_n(s)  \r\rangle_{L^2} \, dW(s)\\
			\nonumber  &+  \frac{1}{2} \int_{0}^{t} \psi\big(m_n(s)\big)^2 \l| G_n\big(m_n(s)\big) \r|_{L^2}^2 \, ds \\
			\dela{
				\nonumber  & + \int_{0}^{t} \l\langle P_n\l( m_n(s) \times h \r) ,  m_n(s)  \r\rangle_{L^2} \, dW(s)\\
				\nonumber  & + \int_{0}^{t} \psi(m_n(s))  \l\langle P_n\l( m_n(s) \times \l(m_n(s) \times h\r)\r) ,  m_n(s)  \r\rangle_{L^2}\, dW(s) \\
				\nonumber  &+  \frac{1}{2} \int_{0}^{t} \bigg| P_n\big(  m_n(s) \times h - \alpha \, \psi(m_n(s))		P_n \l( m_n(s) \times \l( m_n(s) \times h \r) \r) \big) \bigg|_{L^2}^2 \, ds \\}
			= &  \frac{1}{2}\l| P_n(m_0) \r|_{L^2}^2 + \sum_{i=1}^{7} C_i I_i(t),\ t\in[0,T].
		\end{align}	
		Here $C_i, i=1,\dots,7$ are the constants accompanying the integrals.
		
		For each $n\in\mathbb{N}$, the projection operator is self-adjoint. That is for $v_1,v_2\in L^2$, the following holds.
		
		\begin{equation}\label{projection operator is self-adjoint}
			\l\langle P_n v_1 , v_2 \r\rangle _{L^2}  = \l\langle v_1 , P_n v_2 \r\rangle _{L^2}.
		\end{equation}
		Also, $P_n^2 = P_n$ along with the self-adjoint property implies that
		
		\begin{equation}\label{projection operator property 2}
			\l\langle P_n v_1 , P_n v_2 \r\rangle _{L^2}  =  \l\langle P_n^2 v_1 , v_2 \r\rangle _{L^2}  =   \l\langle P_n v_1 , v_2 \r\rangle _{L^2} .
		\end{equation}
		\dela{	We also have that
			\begin{align}
				\l\langle \Delta P_n v_1 , v_2 \r\rangle_{L^2} =  \l\langle P_n \Delta v_1 , v_2 \r\rangle_{L^2} ,\ \text{for}\ v_1,v_2\in H^1.
			\end{align}
		}
		The above mentioned properties will be frequently used in the calculations that follow.
		Another property of vectors that will be used frequently is the following:
		\begin{equation}
			\l\langle a , a \times b \r\rangle_{\mathbb{R}^3} = 0\ \text{for}\ a,b\in\mathbb{R}^3.
		\end{equation}		
			%
			%
			%
		Using the properties mentioned above, we can show that for $i=1, 2, 3, 4, 6$.
		\begin{equation}
			I_i(t) = 0.
		\end{equation}
		In particular, we observe that the terms $I_3,I_4$ with the control operator do not contribute to the calculations since they are $0$.
		We show the calculations for $I_1$. Rest follow suite. Let $v\in H_n$.
		\begin{align*}
			\l\langle P_n \l(v \times \Delta v\r) , v \r\rangle_{L^2} & = \l\langle v \times \Delta v , P_n v \r\rangle_{L^2} \\
			& = \l\langle v \times \Delta v , v \r\rangle_{L^2} \\
			&= \int_{\mathcal{O}} \l\langle v(x) \times \Delta v(x) , v(x) \r\rangle_{\mathbb{R}^3} \, dx = 0.
		\end{align*}
		Replacing $v$ in the above setup by $m_n(s)$ and then integrating gives us the desired result. Notice that we have used only the properties of the projection operator $P_n$ on $L^2$ and the $\mathbb{R}^3$ inner product properties. $\psi$ is a scalar valued function and hence does not contribute to the $\mathbb{R}^3$ inner product. Moreover, $\psi$ is not a function of the space variable, and hence can be taken out of the $L^2$ inner product as well. Therefore a similar result follows for $I_i,i=2,3,4,6$.\\		
		\textbf{Calculation for $I_5$ and $I_7$:}\\
		By equality (1) of Lemma \ref{Technical lemma bound on DGn Gn}, we have
		\begin{align*}
			I_5(t) = & \int_{0}^{t} \l\langle \psi\big(m_n(s)\big)^2 \, \l[DG_n\big(m_n(s)\big)\r]\l[G_n\big(m_n(s)\big)\r] , m_n(s) \r\rangle_{L^2} \, ds \\
			= & - \int_{0}^{t} \psi\big(m_n(s)\big)^2 \l| G_n\big(m_n(s)\big) \r|_{L^2}^2 \, ds = -I_7(t).
		\end{align*}
		Observe that in the equality \eqref{equation Ito formula applied to phi 1}, we have $C_5 = C_7 = \frac{1}{2}$. Note that the term $C_7 I_7(t)$ in \eqref{equation Ito formula applied to phi 1} is the term arising from the application of the It\^o formula. Therefore, using equality (1) in Lemma \ref{Technical lemma bound on DGn Gn}, we have the following.
		\begin{equation}
			C_5I_5(t) + C_7I_7(t) = 0.
		\end{equation}
		\dela{
			\textbf{Calculations for $I_1, I_2$.}\\
			For each $s\in[0,T]$ and $n\in\mathbb{N}$,
			\begin{align*}
				\l\langle m_n(s),P_n\b(m_n(s)\times\Delta m_n(s)\b) \r\rangle_{L^2}=& \l\langle m_n(s),m_n(s)\times \Delta m_n(s) \r\rangle_{L^2} = 0.
			\end{align*}
			Similarly,
			\begin{align*}
				\l\langle m_n(s), P_n\b(m_n(s)\times \l( m_n\times \Delta m_n(s) \r) \b) \r\rangle_{L^2} &= \l\langle m_n(s), m_n(s)\times \l( m_n\times \Delta m_n(s) \r) \r\rangle_{L^2} = 0.
			\end{align*}
			
			Therefore for each $t\in[0,T]$,
			\begin{align}
				I_1(t) = I_2(t) = 0.
			\end{align}
		}
		\dela{
			
			\textbf{Calculations for $I_1, I_2$.}\\
			Let $v\in H_n$. The operator $P_n$ is self adjoint on $L^2$. Therefore
			\begin{align*}
				\l\langle v , P_n\l( v \times \Delta v \r) \r\rangle_{L^2} = \l\langle P_n v ,  v \times \Delta v  \r\rangle_{L^2}.
			\end{align*}
			$v\in H_n$ implies that $P_n v = v$. Also, for $a,b\in\mathbb{R}^3$,
			\begin{equation*}
				\l\langle a , a \times b \r\rangle_{\mathbb{R}^3} = 0.
			\end{equation*}
			Therefore,
			\begin{align*}
				\l\langle P_n v ,  v \times \Delta v  \r\rangle_{L^2} & =  \l\langle  v ,  v \times \Delta v  \r\rangle_{L^2}\\
				& = \int_{\mathcal{O}}  \l\langle v(x) ,  v(x) \times \Delta v(x)  \r\rangle_{\mathbb{R}^3} \, dx = 0.
			\end{align*}
			Working similarly, we can show that
			\begin{align*}
				\l\langle v , P_n\bigl(v \times \l( v \times \Delta v \r) \bigr) \r\rangle_{L^2} = \l\langle P_n v , v \times \l( v \times \Delta v \r)  \r\rangle_{L^2} = 0.
			\end{align*}
			Therefore replacing $v$ by $m_n(s)$, for $s\in[0,T]$ and integrating gives
			\begin{equation}
				I_1(t) = I_2(t) = 0.
			\end{equation}
			
			\dela{
				\textbf{Calculations for $I_3,I_4$.}\\
				Similar to the above calculations, we have

				\begin{align*}
					\l\langle m_n(s) , P_n \l( m_n(s) \times u_n(s) \r) \r\rangle_{L^2} & = \l\langle P_n m_n(s) , m_n(s) \times u_n(s)  \r\rangle_{L^2} \\
					& = \l\langle m_n(s) , m_n(s) \times u_n(s) \r\rangle_{L^2} = 0.
				\end{align*}
				Similarly,
				\begin{align*}
					\l\langle m_n(s) , P_n \l( m_n(s) \times(m_n(s) \times u_n(s)) \r) \r\rangle_{L^2} = \l\langle m_n(s) , m_n(s) \times(m_n(s) \times u_n(s)) \r\rangle_{L^2} = 0.
				\end{align*}
				Therefore for each $t\in[0,T]$.
				\begin{align}
					I_3(t) = I_4(t) = 0.
				\end{align}
				
			}
			\textbf{Calculations for $I_3,I_4$.}\\
			Let $v_1,v_2\in H_n$. Then,
			\begin{align*}
				\l\langle v_1 , P_n\l( v_1 \times v_2 \r) \r\rangle_{L^2} & = \l\langle v_1 ,  v_1 \times v_2  \r\rangle_{L^2}\\
				& = \int_{\mathcal{O}} \l\langle v_1(x) ,  v_1(x) \times v_2(x)  \r\rangle_{L^2} \, dx = 0.
			\end{align*}
			
			\textbf{Calculations for $I_{11},I_{12},I_{13}$.}\\
			The terms $I_{11},I_{12}$ together constitute the noise coefficient. The term $I_{13}$ is the correction term that arises due to the application of the It\^o formula.\\
			For each $s\in[0,T]$,
			\begin{align*}
				\l\langle m_n(s) , P_n\l(m_n(s) \times h - \alpha \, m_n(s) \times \l( m_n(s) \times h \r)\r) \r\rangle_{L^2} = 0.
			\end{align*}
			Hence for each $t\in[0,T]$,
			\begin{align}
				I_{11}(t) = I_{12}(t) = 0.
			\end{align}
			Note that for each $s\in[0,T]$, $\psi$ is a constant (i.e. $\psi$ does not depend on the space variable). Therefore to simplify the presentation, the cut-off function has not been mentioned in the above calculation.

			Note that
			\begin{align*}
				\l\langle m_n(s) \times h , m_n(s) \times \l( m_n(s) \times h \r) \r\rangle_{L^2} = 0.
			\end{align*}
			Therefore
			\begin{align}
				\nonumber I_{13}(t) =& \int_{0}^{t}\l| m_n(s) \times h \r|_{L^2}^2 \, ds \\
				\nonumber& + \int_{0}^{t} \psi^2(|m_n(s)|_{L^{\infty}}) \psi^2(|P_n(m_n(s) \times h)|_{L^{\infty}}) \psi^2(|P_n(m_n(s) \times (m_n(s) \times h))|_{L^{\infty}})\centerdot \\
				& \quad\l| m_n(s) \times m \l( m_n(s) \times h \r) \r|_{L^2}^2 \, ds
			\end{align}
			\textbf{Calculation for $I_5$.}
			\begin{align*}
				\l\langle m_n(s) , P_n\b(P_n(m_n(s) \times h) \times h\b) \r\rangle_{L^2} &= \l\langle P_n(m_n(s)) , P_n\b(m_n(s) \times h\b) \times h \r\rangle_{L^2} \ \text{By}\ \eqref{projection operator is self-adjoint}\\
				& = \l\langle m_n(s) , P_n\b(m_n(s) \times h\b) \times h \r\rangle_{L^2} \ \text{Since}\ m_n(s)\in H_n\\
				& =  - \l\langle m_n(s) \times h , P_n\b(m_n(s) \times h\b) \r\rangle_{L^2} \\
				& =   - \l\langle P_n\b(m_n(s) \times h\b) , P_n\b(m_n(s) \times h\b) \r\rangle_{L^2} \ \text{By}\ \eqref{projection operator property 2} \\
				& = -\l|P_n\b(m_n(s) \times h\b)\r|_{L^2}^2.
			\end{align*}
			Therefore
			\begin{align*}
				I_5(t) = -\int_{0}^{t}\l|P_n\b(m_n(s) \times h\b)\r|_{L^2}^2 \, ds.
			\end{align*}
			This term (along with $I_9$) effectively cancels the correction term due to the It\^o formula.\\
			\textbf{Calculation for $I_6$.}\\
			By \eqref{projection operator is self-adjoint}, followed by \eqref{projection operator property 2}, we have
			\dela{\begin{align*}
					\l\langle m_n(s) &, P_n\l(P_n(m_n(s) \times (m_n(s) \times h)) \times h\r) \r\rangle_{L^2} \\
					&= \l\langle P_n(m_n(s)) , P_n(m_n(s) \times (m_n(s) \times h)) \times h \r\rangle_{L^2} \ \text{By}\ \eqref{projection operator is self-adjoint}\\
					& = \l\langle m_n(s) , P_n\l(m_n(s) \times (m_n(s) \times h)\r) \times h \r\rangle_{L^2} \ \text{By}\ m_n(s)\in H_n\\
					& = - \l\langle m_n(s) \times h , P_n\l(m_n(s) \times (m_n(s) \times h)\r) \r\rangle_{L^2} \\
					& = - \l\langle P_n\l(m_n(s) \times h\r) , P_n\l(m_n(s) \times \l(m_n(s) \times h\r)\r) \r\rangle_{L^2}.\ \text{By}\ \eqref{projection operator property 2}
				\end{align*}
				Therefore}
			\begin{align*}
				I_6(t) = - \int_{0}^{t} \l\langle P_n\l(m_n(s) \times h\r) , P_n\l(m_n(s) \times \l(m_n(s) \times h\r)\r) \r\rangle_{L^2} \, ds.
			\end{align*}
			\textbf{Calculation for $I_7$.}
			Similarly,
			\dela{\begin{align*}
					\l\langle m_n(s) ,& P_n(P_n(m_n(s) \times h) \times (m_n(s) \times h)) \r\rangle_{L^2} \\
					& = \l\langle m_n(s) , P_n(m_n(s) \times h) \times (m_n(s) \times h) \r\rangle_{L^2} \ \text{By}\ \eqref{projection operator is self-adjoint},\ m_n\in H_n \\
					& =  \l\langle m_n(s) \times (m_n(s) \times h) , P_n(m_n(s) \times h) \r\rangle_{L^2} \\
					& =  \l\langle P_n(m_n(s) \times (m_n(s) \times h)) , P_n(m_n(s) \times h) \r\rangle_{L^2}. \ \text{By}\ \eqref{projection operator property 2}
				\end{align*}
				Hence}
			\begin{align*}
				I_7(t) = \int_{0}^{t} \l\langle P_n(m_n(s) \times (m_n(s) \times h)) , P_n(m_n(s) \times h) \r\rangle_{L^2} \, ds.
			\end{align*}
			$C_6 = C_7$ together with the above calculations for $I_6$ and $I_7$ impliy that $I_6(t) + I_7(t) = 0$.\\
			Again by \eqref{projection operator is self-adjoint}, we have the following two sets of equalities.\\
			\textbf{Calculation for $I_8$}
			\dela{\begin{align*}
					\l\langle m_n(s) ,& P_n(m_n(s) \times (P_n(m_n(s) \times h) \times h)) \r\rangle_{L^2} \\
					& = \l\langle P_n(m_n(s)) , m_n(s) \times (P_n(m_n(s) \times h)) \r\rangle_{L^2}  \ \text{By}\ \eqref{projection operator is self-adjoint} \\
					& = \l\langle m_n(s) , m_n(s) \times (P_n(m_n(s) \times h) \times h) \r\rangle_{L^2} \ \text{Because}\ m_n(s) \in H_n \\
					& = 0.
			\end{align*}}
			\begin{align*}
				\l\langle m_n(s) ,& P_n(m_n(s) \times (P_n(m_n(s) \times h) \times h)) \r\rangle_{L^2} = 0.
			\end{align*}
			\textbf{Calculation for $I_9$.}
			\dela{\begin{align*}
					\l\langle m_n(s) ,& P_n(P_n(m_n(s) \times (m_n(s) \times h)) \times (m_n(s) \times h)) \r\rangle_{L^2} \\
					& = \l\langle m_n(s) , P_n(m_n(s) \times (m_n(s) \times h)) \times (m_n(s) \times h) \r\rangle_{L^2} \\
					& =  - \l\langle m_n(s) \times (m_n(s) \times h) , P_n(m_n(s) \times (m_n(s) \times h))  \r\rangle_{L^2} \\
					& = - \l\langle P_n(m_n(s) \times (m_n(s) \times h)) , P_n(m_n(s) \times (m_n(s) \times h))  \r\rangle_{L^2} \\
					& = - |P_n(m_n(s) \times (m_n(s) \times h))|_{L^2}^2.
			\end{align*}}
			\begin{align*}
				\l\langle m_n(s) ,& P_n(P_n(m_n(s) \times (m_n(s) \times h)) \times (m_n(s) \times h)) \r\rangle_{L^2}  = - |P_n(m_n(s) \times (m_n(s) \times h))|_{L^2}^2.
			\end{align*}\\
			\textbf{Calculation for $I_{10}$.}
			\dela{\begin{align*}
					\l\langle m_n(s) ,& P_n(m_n(s) \times P_n(m_n(s) \times (m_n(s)\times h)) \times h )  \r\rangle_{L^2} \\
					& = \l\langle P_n(m_n(s)) , m_n(s) \times P_n(m_n(s) \times (m_n(s)\times h)) \times h   \r\rangle_{L^2}\\
					& = \l\langle m_n(s) , m_n(s) \times P_n(m_n(s) \times (m_n(s)\times h)) \times h   \r\rangle_{L^2} = 0.
			\end{align*}}
			\begin{align*}
				& \l\langle m_n(s) , P_n(m_n(s) \times P_n(m_n(s) \times (m_n(s)\times h)) \times h )  \r\rangle_{L^2}  \\
				= & \l\langle m_n(s) , m_n(s) \times P_n(m_n(s) \times (m_n(s)\times h)) \times h   \r\rangle_{L^2} = 0.
			\end{align*}
			The exponents and coefficients of $\psi$ have been written so that some terms cancel each other once the It\^o Lemma is applied. Hence combining the above calculations with the equation \eqref{equation Ito formula applied to phi 1}, we get
			\dela{	\begin{equation*}
					\frac{1}{2}|m_n(s)|_{L^2} = \frac{1}{2}|m_n(0)|_{L^2}
				\end{equation*}
				That is
			}
			
		}
		Combining the above calculations with the equation \eqref{equation Ito formula applied to phi 1}, we get
		\begin{equation*}
			\frac{1}{2}|m_n(t)|_{L^2}^2 = \frac{1}{2}|m_n(0)|_{L^2}^2.
		\end{equation*}
		That is
		\begin{equation*}
			|m_n(t)|_{L^2}^2 = |m_n(0)|_{L^2}^2.
		\end{equation*}
		This holds for each $t\in[0,T]$ and $n\in\mathbb{N}$.\\
		This concludes the proof of the bound \eqref{bound 1}.
		Note that by the definition of $m_n(0)$ and the projection operator $P_n$,
		\begin{align}
			\nonumber	|m_n(0)|_{L^2} = |P_n(m(0))|_{L^2} \leq |m(0)|_{L^2}.
		\end{align}
		Thus, there exists a constant $C>0$ such that
		\dela{
			\begin{equation*}
				\sup_{t\in [0,T]} |m_n(t)|_{L^2} \leq|m(0)|_{L^2}.
			\end{equation*}
		}
		
		\begin{equation}\label{bound on m_n}
			\mathbb{E}\sup_{n\in\mathbb{N}}\sup_{t\in [0,T]} |m_n(s)|_{L^2}^2  \leq \mathbb{E} \l|m(0)\r|_{L^2}^2 \leq C.
		\end{equation}
		\textbf{Proof of bound \eqref{bound 2 without p}}:\\
		This can be shown by applying the It\^o formula to the function $\phi_2 : H_n\rightarrow \mathbb{R}$ defined by
		
		\begin{equation}\label{definition of phi 2}
			\phi_2(v)=\frac{1}{2}|\nabla v|_{L^2}^2\ \text{for}\ v\in H^1.
		\end{equation}		
		For each $v,v_1,v_2\in H_n$,
		\begin{align}
			\phi_2^\p(v_1)(v_2) = \l\langle \nabla v_1 , \nabla v_2 \r\rangle_{L^2} = \l\langle v_1 , -\Delta v_2  \r\rangle_{L^2} = \l\langle v_1 , A v_2 \r\rangle_{L^2}.
		\end{align}
		
		\begin{align*}
			\phi_2^{\prime \prime}(v)(v_1,v_2) = \l\langle \nabla v_1 , \nabla v_2 \r\rangle_{L^2}.
		\end{align*}		
		Application of the It\^o formula gives the following equation for all $t\in[0,T]$ $\mathbb{P}$-a.s.

		\begin{align}\label{equation Ito formula applied to phi 2}
			\nonumber \phi_2\big(m_n(t)\big) =\,& \phi_2\big(P_n(m_0)\big) + \int_{0}^{t} \l\langle P_n\big(m_n(s) \times \Delta m_n(s)\big) , (-\Delta ) m_n(s) \r\rangle_{L^2}\, ds \\
			\nonumber & -   \alpha \, \int_{0}^{t} \l\langle P_n \bigg(m_n(s) \times \big(m_n(s) \times \Delta m_n(s)\big)\bigg)    ,  (-\Delta ) m_n(s) \r\rangle_{L^2} \, ds\\
			\nonumber & +   \int_{0}^{t} \l\langle P_n\big(m_n(s) \times u_n(s)\big)  ,  (-\Delta )  m_n(s)  \r\rangle_{L^2} \, ds \\
			\nonumber  & -  \alpha \, \int_{0}^{t} \psi\big(|m_n(s)|_{L^{\infty}}\big) \l\langle P_n \bigg(m_n(s) \times \big(m_n(s) \times u_n(s)\big)\bigg)  ,  (-\Delta )  m_n(s)  \r\rangle_{L^2} ds \\
			\nonumber & + \frac{1}{2} \int_{0}^{t} \l\langle \psi\big(m_n(s)\big)^2 \l[DG_n\big(m_n(s)\big)\r]\l(G_n\big(m_n(s)\big)\r) , ( -\Delta )m_n(s) \r\rangle_{L^2} \, ds \\
			\dela{
				\nonumber & +   \frac{1}{2} \bigg[ \int_{0}^{t} \l\langle P_n(P_n(m_n(s) \times h) \times h)  ,  (-\Delta )  m_n(s)  \r\rangle_{L^2} ds \\
				\nonumber & -  \alpha \, \int_{0}^{t} \psi(|m_n(s)|_{L^{\infty}})\psi(|P_n(m_n(s) \times h)|_{L^{\infty}}) \psi(|P_n(m_n(s) \times (m_n(s) \times h))|_{L^{\infty}})\centerdot \\
				\nonumber &\quad \l\langle P_n(P_n(m_n(s) \times (m_n(s) \times h)) \times h)  ,  (-\Delta )  m_n(s)  \r\rangle_{L^2} ds \\
				\nonumber & -   \alpha \, \int_{0}^{t} \psi(|m_n(s)|_{L^{\infty}}) \psi(|P_n(m_n(s) \times h)|_{L^{\infty}}) \psi(|P_n(m_n(s) \times (m_n(s) \times h))|_{L^{\infty}})\centerdot \\
				\nonumber &\quad \l\langle P_n(P_n(m_n(s) \times h) \times (m_n(s) \times h)) ,  (-\Delta )   m_n(s)  \r\rangle_{L^2} ds \\
				\nonumber  & - \alpha \, \int_{0}^{t} \l\langle P_n(m_n(s) \times (P_n(m_n(s) \times h) \times h))  ,  (-\Delta )  m_n(s)  \r\rangle_{L^2} ds\\
				\nonumber & +   \alpha^2 \int_{0}^{t} \psi^2(|m_n(s)|_{L^{\infty}}) \psi(|P_n(m_n(s) \times h)|_{L^{\infty}})^2 \psi(|P_n(m_n(s) \times (m_n(s) \times h))|_{L^{\infty}})^2 \centerdot \\
				\nonumber&\quad \l\langle P_n(P_n(m_n(s) \times (m_n(s) \times h)) \times (m_n(s) \times h)) ,  (-\Delta )  m_n(s)  \r\rangle_{L^2} ds\\
				\nonumber & +   \alpha^2 \int_{0}^{t} \l\langle P_n(m_n(s) \times (P_n(m_n(s) \times (m_n(s) \times h))\times h)) ,  (-\Delta )  m_n(s)  \r\rangle_{L^2} ds \bigg] \\
				\nonumber & +   \int_{0}^{t} \psi(|m_n(s)|_{L^{\infty}}) \psi(|P_n(m_n(s) \times h)|_{L^{\infty}}) \psi(|P_n(m_n(s) \times (m_n(s) \times h))|_{L^{\infty}})\centerdot \\
				\nonumber &\quad\l\langle P_n\l( m_n(s) \times \l(m_n(s) \times h\r)\r) ,  (-\Delta )  m_n(s)  \r\rangle_{L^2}\, dW(s) \\
				\nonumber & +   \frac{1}{2} \int_{0}^{t} \bigg| \nabla P_n\bigg(  m_n(s) \times h  \\
				\nonumber & -   \alpha \, \psi(|m_n(s)|_{L^{\infty}}) \psi(|P_n(m_n(s) \times h)|_{L^{\infty}}) \psi(|P_n\l(m_n(s) \times (m_n(s) \times h)\r)|_{L^{\infty}}) \centerdot \\
				\nonumber & \quad P_n \l( m_n(s) \times \l( m_n(s) \times h \r) \r) \bigg) \bigg|_{L^2}^2 \, ds \\}
			\nonumber & +  \int_{0}^{t} \psi\big(m_n(s)\big)\l\langle G_n\big(m_n(s)\big) ,  (-\Delta )  m_n(s)  \r\rangle_{L^2} \, dW(s)\\
			\nonumber & + \frac{1}{2}\int_{0}^{t} \l| \nabla G_n\big(m_n(s)\big) \r|_{L^2}^2 \, ds\\
			= &  \frac{1}{2}\l| m_0 \r|_{L^2}^2 + \sum_{i=1}^{7} C_i J_i(t).		\end{align}	
		Here $C_i, i=1,\dots,7$ are the constants accompanying the integrals. We show induvidual calculations for the terms.\\	
		\textbf{Calculation for $J_1$.}
		\dela{
			\begin{align*}
				\l\langle \nabla m_n(s) , \nabla P_n(m_n(s) \times \Delta m_n(s)) \r\rangle_{L^2} &= - \l\langle \Delta m_n(s) , P_n(m_n(s) \times \Delta m_n(s)) \r\rangle_{L^2} \\
				& = - \l\langle \Delta m_n(s) , m_n(s) \times \Delta m_n(s) \r\rangle_{L^2} = 0.
			\end{align*}
		}
		Let $v\in H_n$. Then
		\begin{align*}
			\l\langle P_n \l( v \times \Delta v \r) , \Delta v \r\rangle_{L^2} &= \l\langle v \times \Delta v  , P_n \Delta v \r\rangle_{L^2} \\
			&= \l\langle v \times \Delta v  ,  \Delta v \r\rangle_{L^2}\\
			&= \int_{\mathcal{O}} \l\langle v(x) \times \Delta v(x)  ,  \Delta v(x) \r\rangle_{\mathbb{R}^3} \, dx = 0.
		\end{align*}
		Therefore replacing $v$ by $m_n(s)$ and integrating gives us the following.
		\begin{align}
			\int_{0}^{t} \l\langle   P_n\big(m_n(s) \times \Delta m_n(s)\big) , \l( -\Delta \r) m_n(s) \r\rangle_{L^2} \, ds =  0.
		\end{align}
		\dela{
			Therefore for each $t\in[0,T]$,
			\begin{align}
				J_1(t) = 0
			\end{align}
		}
		Working similar to the previous calculation, we have\\
		\textbf{Calculation for $J_2$.}
		\dela{\begin{align*}
				\l\langle \nabla m_n(s) &, \nabla P_n(m_n(s) \times (m_n(s) \times \Delta m_n(s))) \r\rangle_{L^2} \\
				&= - \l\langle \Delta m_n(s) , P_n(m_n(s) \times (m_n(s) \times \Delta m_n(s))) \r\rangle_{L^2} \\
				& = - \l\langle \Delta m_n(s) , m_n(s) \times (m_n(s) \times \Delta m_n(s)) \r\rangle_{L^2} \\
				& =   \l\langle m_n(s) \times \Delta m_n(s) ,  m_n(s) \times \Delta m_n(s) \r\rangle_{L^2}\\
				& = |m_n(s) \times \Delta m_n(s)|_{L^2}^2.
		\end{align*}}
		\begin{align*}
			\int_{0}^{t}& \l\langle P_n\bigg(m_n(s) \times \big(m_n(s) \times \Delta m_n(s)\big)\bigg) , \l( - \Delta \r) m_n(s) \r\rangle_{L^2} \, ds \\
			& =   \int_{0}^{t} \l\langle m_n(s) \times \Delta m_n(s) ,  m_n(s) \times \Delta m_n(s) \r\rangle_{L^2} \, ds\\
			& = \int_{0}^{t} |m_n(s) \times \Delta m_n(s)|_{L^2}^2 \, ds.
		\end{align*}
		\dela{
			Therefore for each $t\in[0,T]$,
			\begin{align*}
				J_2(t) = \int_{0}^{t} |m_n(s) \times \Delta m_n(s)|_{L^2}^2 \, ds.
			\end{align*}
		}
		Note that in the equation \eqref{definition of solution Faedo Galerkin approximation}, the coefficient $(-\alpha)$ is negative. This along with the above equality can enable us to take this term on the left hand side after applying the It\^o Lemma.\\
		\textbf{Calculation for $J_3$.}\\
		Let $\varepsilon>0$. For $t\in[0,T]$, we have the following by H\"older's inequality followed by Young's inequality.
		\dela{\begin{align*}
				|\l\langle \nabla m_n(s) , \nabla P_n(m_n(s) \times u_n(s)) \r\rangle_{L^2}| & = |- \l\langle \Delta m_n(s) , P_n(m_n(s) \times u_n(s)) \r\rangle_{L^2}| \\
				& = |- \l\langle \Delta m_n(s) , m_n(s) \times u_n(s) \r\rangle_{L^2}| \\
				& = | \l\langle m_n(s) \times \Delta m_n(s) , u_n(s) \r\rangle_{L^2}| \\
				& \leq \frac{\varepsilon}{2} | m_n(s) \times \Delta m_n(s)|_{L^2}^2 + \frac{C(\varepsilon)}{2} |u_n(s)|_{L^2}^2. \\
				& \text{(By Young's $\varepsilon$ inequality)}
			\end{align*}
			\begin{align*}
				|\l\langle \nabla m_n(s) , \nabla P_n(m_n(s) \times u_n(s)) \r\rangle_{L^2}| & \leq |- \l\langle \Delta m_n(s) , P_n(m_n(s) \times u_n(s)) \r\rangle_{L^2}|
			\end{align*}
			We choose $\varepsilon > 0$ later. The last inequality uses Young's $\varepsilon$ inequality.\\
			Thus for $t\in[0,T]$,}
		\dela{
			\begin{align*}
				J_3(t) = & \int_{0}^{t} \l|\l\langle P_n(m_n(s) \times u_n(s)) , \l( - \Delta \r) m_n(s)  \r\rangle_{L^2}\r| \, ds \\
				\nonumber = &\int_{0}^{t} \l|\l\langle \nabla m_n(s) , \nabla P_n(m_n(s) \times u_n(s)) \r\rangle_{L^2}\r| \, ds \\
				\leq & \frac{\varepsilon}{2} \int_{0}^{t} | m_n(s) \times \Delta m_n(s)|_{L^2}^2 \, ds + \frac{C(\varepsilon)}{2} \int_{0}^{t} |u_n(s)|_{L^2}^2 \, ds.
			\end{align*}	
		}
		\begin{align*}
			\l| J_3(t) \r| = & \l| \int_{0}^{t} \l\langle P_n\big(m_n(s) \times u_n(s)\big) , \l( - \Delta \r) m_n(s)  \r\rangle_{L^2} \, ds \r| \\
			\leq & \int_{0}^{t} \l|\l\langle P_n\big(m_n(s) \times u_n(s)\big) , \l( - \Delta \r) m_n(s)  \r\rangle_{L^2}\r| \, ds \\
			\nonumber =& \int_{0}^{t} \l|\l\langle m_n(s) \times u_n(s) , P_n \Delta  m_n(s)  \r\rangle_{L^2}\r| \, ds \\
			\nonumber =& \int_{0}^{t} \l|\l\langle m_n(s) \times u_n(s) ,  \Delta  m_n(s)  \r\rangle_{L^2}\r| \, ds \\
			\nonumber =& \int_{0}^{t} \l|\l\langle u_n(s) , m_n(s) \times  \Delta  m_n(s)  \r\rangle_{L^2}\r| \, ds \\
			\leq & \frac{\varepsilon}{2} \int_{0}^{t} | m_n(s) \times \Delta m_n(s)|_{L^2}^2 \, ds + \frac{C(\varepsilon)}{2} \int_{0}^{t} |u_n(s)|_{L^2}^2 \, ds.
		\end{align*}
		\textbf{Calculation for $J_4$.}\\
		Working similar to the above calculation, there exists a constant $C(\varepsilon)$ such that for each $t\in[0,T]$,
		
		\begin{align*}
			\dela{\int_{0}^{t} |\psi(|m_n(s)|_{L^{\infty}})\l\langle \l( - \Delta \r) m_n(s) &, \nabla P_n(m_n(s) \times (m_n(s) \times u_n(s))) \r\rangle_{L^2}| \, ds \\}
			& \int_{0}^{t} | \l\langle  P_n\bigg(m_n(s) \times \big(m_n(s) \times u_n(s)\big)\bigg) \psi\big(m_n(s)\big) , \Delta m_n(s) \r\rangle_{L^2}|  \, ds\\
			& \leq \int_{0}^{t} | \l\langle  m_n(s) \times \big(m_n(s) \times u_n(s)\big) \psi(m_n(s)) , \Delta m_n(s) \r\rangle_{L^2}| \, ds \\
			& \leq \int_{0}^{t} | \l\langle m_n(s) \times  \Delta m_n(s), (m_n(s) \times u_n(s))) \psi\big(m_n(s)\big) \r\rangle_{L^2}| \, ds \\
			&\leq  \frac{\varepsilon}{2} \frac{C(\varepsilon)}{2} \int_{0}^{t} | m_n(s) \times \Delta m_n(s)|_{L^2}^2 \, ds + \int_{0}^{t} \psi(m_n(s))^2 |m_n(s) \times u_n(s)|_{L^2}^2 \, ds \\
			& \leq \frac{\varepsilon}{2} \int_{0}^{t} | m_n(s) \times \Delta m_n(s)|_{L^2}^2 \, ds + \frac{C(\varepsilon)C(h)}{2} \int_{0}^{t} |u_n(s)|_{L^2}^2 \, ds.
		\end{align*}
		\dela{
			\begin{align*}
				|\psi(|m_n(s)|_{L^{\infty}})\l\langle \nabla m_n(s) &, \nabla P_n(m_n(s) \times (m_n(s) \times u_n(s))) \r\rangle_{L^2}| \\
				& \leq | \l\langle \Delta m_n(s), P_n(m_n(s) \times (m_n(s) \times u_n(s))) \psi(|m_n(s)|_{L^{\infty}}) \r\rangle_{L^2}| \\
				& \leq \frac{\varepsilon}{2} | m_n(s) \times \Delta m_n(s)|_{L^2}^2 + \frac{C(\varepsilon)C(h)}{2} |u_n(s)|_{L^2}^2.
			\end{align*}
		}
		The second last inequality follows from Young's inequality.
		We observe that
		\begin{equation*}
			\l|m_n(s) \times u_n(s)\r|_{L^2}^2 \leq \l|m_n(s)\r|_{L^{\infty}}^2 \l|u_n(s)\r|_{L^2}^2.
		\end{equation*}
		Also by the definition of the bump function (cut-off) $\psi$ in \eqref{definition of bump function}, we have
		\begin{equation*}
			\psi(m_n(s))^2 \l|m_n(s)\r|_{L^{\infty}}^2 \leq \l(\l|h\r|_{L^{\infty}} + 2\r)^2.
		\end{equation*}
		\dela{
			The second inequality follows from the above mentioned observations.
			Thus for $t\in[0,T]$,
			
			\begin{align*}
				J_4(t) = &\int_{0}^{t} |\l\langle \nabla m_n(s) , \nabla P_n(m_n(s) \times(m_n(s) \times u_n(s))) \r\rangle_{L^2}| ds \\
				\leq & \frac{\varepsilon}{2} \int_{0}^{t} | m_n(s) \times \Delta m_n(s)|_{L^2}^2 ds  \\
				+ & \frac{C(\varepsilon)C(h)}{2} \int_{0}^{t} |u_n(s)|_{L^2}^2 ds.
			\end{align*}
		}
		\textbf{Calculation for $J_5$.}\\
		\dela{
			Observe that the term $V_n$ is a polynomial of order 3 in $m_n$. The idea to estimate $J_5$ is the following. Let $v_1,v_2\in H_n$.
			\begin{equation*}
				\l\langle \l(-\Delta\r) v_1 , v_2 \r\rangle_{L^2} =  \l\langle \nabla v_1 , \nabla v_2 \r\rangle_{L^2} .
			\end{equation*}
			Therefore
			\begin{align*}
				\int_{0}^{t}  \l\langle V_n(s) , \l(-\Delta\r) m_n(s) \r\rangle_{L^2} \, ds = \int_{0}^{t} \l\langle \nabla V_n , \nabla m_n(s) \r\rangle_{L^2} \, ds.
			\end{align*}
			By using the product rule for derivatives, the term $\nabla V_n$ can be split into terms of two types. The first type, wherein the derivative is on $m_n$ and the second type where the derivative is on the term $h$. For the first type, we use H\"older's inequality with $L^2$ norm on the gradient term and $L^{\infty}$ norm on the remaining term/s. For the second type, the $L^2$ norm is applied on one of the terms containing $m_n$, while all other terms get the $L^{\infty}$ norm. The cut-off function $\Psi$ ensures that the $L^{\infty}$ norm is taken care of.
		}
		\dela{Since $h\in W^{1,\infty}$, there exists a constant $C>0$ which is independent of $n$ such that
			\begin{equation}
				\l|J_5(t)\r| \leq \int_{0}^{t} \l| \l\langle \nabla V_n(s) , \nabla m_n(s) \r\rangle_{L^2}\r| \, ds
				\leq C \int_{0}^{t} \l| m_n(s) \r|_{H^1}^2 \, ds
		\end{equation}}
		By Lemma \ref{Technical lemma bound on DGn Gn}, we have the following.
		\begin{align*}
			\l|J_5(t)\r| \leq &  \int_{0}^{t} \l| \l\langle \psi\big(m_n(s)\big)^2 \, \l[DG_n\big(m_n(s)\big)\r]\l[G_n\big(m_n(s)\big)\r] , \Delta m_n(s) \r\rangle_{L^2} \r| \, ds \\
			\leq & C \int_{0}^{t} \l( 1 + \l|  m_n(s) \r|_{H^1}\r)\l|  m_n(s) \r|_{H^1}  \, ds.
		\end{align*}
		Since $T<\infty$, the term on the right hand side of the above inequality can be replaced by square of $\l|  m_n(s) \r|_{H^1}$. That is, there exists a constants $C_1,C_2>0$ (which may depend on $T$, but not on $n\in\mathbb{N}$) such that
		\begin{align}
			\l|J_5(t)\r| \leq C_1 + C_2\int_{0}^{t} \l|  m_n(s) \r|_{H^1}^2  \, ds.
		\end{align}
		
		\dela{
			\textbf{Calculation for $J_5$.}\\
			By H\"older's inequality along with the continuous embedding $H^1\hookrightarrow L^{\infty}$, there exists a constant $C(h)>0$ such that for each $t\in[0,T]$,
			\dela{\begin{align*}
					|\l\langle \nabla m_n(s) ,& \nabla P_n(P_n(m_n(s) \times h) \times h) \r\rangle_{L^2}| \\
					&= 	|\l\langle \nabla m_n(s) , \nabla P_n(m_n(s) \times h) \times h \r\rangle_{L^2}| \\
					&\leq |\l\langle \nabla m_n(s) ,  \nabla P_n( m_n(s) \times h) \times h \r\rangle_{L^2}| \\
					&+ |\l\langle \nabla m_n(s) ,  P_n(m_n(s) \times  h) \times \nabla h \r\rangle_{L^2}| \\
					& \leq |\nabla m_n(s)|_{L^2} |\nabla P_n( m_n(s) \times h)|_{L^2} |h|_{L^{\infty}} \\
					&+ |\nabla m_n(s)|_{L^2} |P_n( m_n(s) \times h)|_{L^{\infty}} |\nabla h|_{L^2}\ \text{By H\"older's inequality}\\
					&\leq  |\nabla m_n(s)|_{L^2} |P_n \nabla ( m_n(s) \times h)|_{L^2} |h|_{L^{\infty}} \\
					&+ C |\nabla m_n(s)|_{L^2} |P_n( m_n(s) \times h)|_{H^1} |\nabla h|_{L^2} \ \text{By}\ H^1\hookrightarrow L^{\infty}\\
					&\leq  |\nabla m_n(s)|_{L^2} |\nabla ( m_n(s) \times h)|_{L^2} |h|_{L^{\infty}}
					(\text{By}\ \l|P_n \centerdot\r|_{L^2} \leq \l|\centerdot\r|_{L^2}) \\
					&+ C |\nabla m_n(s)|_{L^2} | m_n(s) \times h|_{H^1} |\nabla h|_{L^2}  \ \text{By}\ \l|P_n \centerdot\r|_{H^1} \leq \l|\centerdot\r|_{H^1} \\
					& \leq C(h)  |m_n(s)|_{H^1}^2.
				\end{align*}
				Thus for $t\in[0,T]$, we get}
			\begin{align*}
				J_5(t) = \int_{0}^{t} |\l\langle \nabla m_n(s) , \nabla P_n(P_n(m_n(s) \times h) \times h) \r\rangle_{L^2}| ds \leq C(h) \int_{0}^{t} |m_n(s)|_{H^1}^2 ds.
			\end{align*}
			Similarly, \\
			\textbf{Calculation for $J_6$.}
			\dela{\begin{align*}
					& |\l\langle \nabla m_n(s) , \psi(|m_n(s)|_{L^{\infty}}) \psi(\l|P_n(m_n(s) \times h)\r|_{L^{\infty}}) \psi(\l|P_n(m_n(s) \times (m_n(s) \times h))\r|_{L^{\infty}}) \centerdot \\
					& \nabla P_n(P_n(m_n(s) \times (m_n(s) \times h)) \times h) \r\rangle_{L^2} |  \\
					= & |\l\langle \nabla m_n(s) , \psi(|m_n(s)|_{L^{\infty}}) \psi(|P_n(m_n(s) \times h)|_{L^{\infty}}) \psi(|P_n(m_n(s) \times (m_n(s) \times h))|_{L^{\infty}})  \centerdot \\
					& \nabla (P_n(m_n(s) \times (m_n(s) \times h)) \times h) \r\rangle_{L^2}|\\
					\leq &  |\l\langle \nabla m_n(s) , \psi(|m_n(s)|_{L^{\infty}}) \psi(|P_n(m_n(s) \times h)|_{L^{\infty}}) \psi(|P_n(m_n(s) \times (m_n(s) \times h))|_{L^{\infty}}) \centerdot \\
					& \quad \nabla P_n(m_n(s) \times (m_n(s) \times h)) \times h \r\rangle_{L^2}|\\
					+ & |\l\langle \nabla m_n(s) , \psi(|m_n(s)|_{L^{\infty}}) \psi(|P_n(m_n(s) \times h)|_{L^{\infty}}) \psi(|P_n(m_n(s) \times (m_n(s) \times h))|_{L^{\infty}}) \centerdot \\
					& \quad P_n(m_n(s) \times (m_n(s) \times h)) \times \nabla h \r\rangle_{L^2}| \\
					\leq &  |\nabla m_n(s)|_{L^2}|\nabla P_n(m_n(s) \times (m_n(s) \times h))|_{L^2} \psi(|m_n(s)|_{L^{\infty}}) |h|_{L^{\infty}} \psi(|P_n(m_n(s) \times h)|_{L^{\infty}}) \centerdot \\
					&  \psi(|P_n(m_n(s) \times (m_n(s) \times h))|_{L^{\infty}}) \\
					+ &  |\nabla m_n(s)|_{L^2}||P_n(m_n(s) \times (m_n(s) \times h))|_{L^{\infty}}|\nabla h|_{L^2}  \psi(|m_n(s)|_{L^{\infty}}) \psi(|P_n(m_n(s) \times h)|_{L^{\infty}})  \centerdot \\
					& \psi(|P_n(m_n(s) \times (m_n(s) \times h))|_{L^{\infty}}) \\
					\leq &  |\nabla m_n(s)|_{L^2}^2|m_n(s)|_{L^{\infty}}|h|_{L^{\infty}}\psi(|m_n(s)|_{L^{\infty}}) \psi(|P_n(m_n(s) \times h)|_{L^{\infty}})\centerdot \\
					& \quad  \psi(|P_n(m_n(s) \times (m_n(s) \times h))|_{L^{\infty}})\\
					+ &  |\nabla m_n(s)|_{L^2}||P_n(m_n(s) \times (m_n(s) \times h))|_{L^{\infty}}|\nabla h|_{L^2} |h|_{L^{\infty}} \psi(|m_n(s)|_{L^{\infty}})\centerdot \\
					& \psi(|P_n(m_n(s) \times h)|_{L^{\infty}}) \psi(|P_n(m_n(s) \times (m_n(s) \times h))|_{L^{\infty}}) \\
					\leq & C(h)  |m_n(s)|_{H^1}^2.
				\end{align*}
				
			}
			\begin{align*}
				& |\l\langle \nabla m_n(s) , \psi(|m_n(s)|_{L^{\infty}}) \psi(\l|P_n(m_n(s) \times h)\r|_{L^{\infty}}) \psi(\l|P_n(m_n(s) \times (m_n(s) \times h))\r|_{L^{\infty}}) \centerdot \\
				& \nabla P_n(P_n(m_n(s) \times (m_n(s) \times h)) \times h) \r\rangle_{L^2} |  \\
				\leq & C(h)  |m_n(s)|_{H^1}^2.
			\end{align*}
			Here the $\centerdot$ in the first line represents multiplication of the two quantities on the first two lines. The reason for this is that the whole expression was not fitting in the same line.
			\dela{If the collection of equations does not fit on one page then it shifts to the next page and the preceeding equations are "expanded" so that it fills the entire previous page.}

			Hence for $t\in[0,T]$,
			\begin{align*}
				\int_{0}^{t} |\l\langle \nabla m_n(s) &, \psi(|m_n(s)|_{L^{\infty}}) \psi(|P_n(m_n(s) \times h)|_{L^{\infty}}) \psi(|P_n(m_n(s) \times (m_n(s) \times h))|_{L^{\infty}}) \centerdot \\
				& \nabla P_n(P_n(m_n(s) \times (m_n(s) \times h)) \times h) \r\rangle_{L^2}|\, ds \\
				&\leq CC(h) \int_{0}^{t} |m_n(s)|_{H^1}^2 ds.
			\end{align*}
			The calculations for $J_7$ can be done similarly.\\
			\textbf{Calculation for $J_8$.}\\
			Working on the lines of the calculations for $J_3,J_4$, we can show that for $\varepsilon>0$, there exists a constants $C(\varepsilon),C(h)$ such that
			\dela{\begin{align*}
					|\l\langle \nabla m_n(s) &, \nabla P_n(m_n(s) \times (P_n(m_n(s) \times h) \times h))\r\rangle_{L^2}| \\
					&= |\l\langle \nabla m_n(s) , \nabla m_n(s) \times (P_n(m_n(s) \times h) \times h) \r\rangle_{L^2}| \\
					& = |\l\langle \Delta m_n(s) ,  m_n(s) \times (P_n(m_n(s) \times h) \times h) \r\rangle_{L^2}| \\
					& = |\l\langle  m_n(s) \times \Delta m_n(s) ,  P_n(m_n(s) \times h) \times h \r\rangle_{L^2}| \\
					& \leq \frac{\varepsilon}{2} |m_n(s) \times \Delta m_n(s)|_{L^2}^2 + \frac{C(\varepsilon)}{2}|P_n(m_n(s) \times h) \times h|_{L^2}^2\\
					& \leq  \frac{\varepsilon}{2} |m_n(s) \times \Delta m_n(s)|_{L^2}^2 + \frac{C(\varepsilon)}{2}|P_n(m_n(s)\times h)|_{L^2}^2  |h|_{L^{\infty}}\\
					& \leq \frac{\varepsilon}{2} |m_n(s) \times \Delta m_n(s)|_{L^2}^2 + \frac{C(\varepsilon)}{2}|m_n(s)\times h|_{L^2}^2  |h|_{L^{\infty}}\ \text{Since}\ \l|P_n \centerdot\r|_{L^2} \leq \l|\centerdot\r|_{L^2}\\
					& \leq \frac{\varepsilon}{2} |m_n(s) \times \Delta m_n(s)|_{L^2}^2 + \frac{C(\varepsilon)}{2}|m_n(s)|_{L^2}^2  |h|^3_{L^{\infty}} \\
					& \leq \frac{\varepsilon}{2} |m_n(s) \times \Delta m_n(s)|_{L^2}^2 + \frac{C(\varepsilon)C(h)}{2}|m_n(s)|_{L^2}^2.
				\end{align*}
				Thus,
			}
			\begin{align*}
				\int_{0}^{t} |\l\langle \nabla m_n(s) &, \nabla P_n(m_n(s) \times (P_n(m_n(s) \times h) \times h))\r\rangle_{L^2}| ds \\
				&\leq \frac{\varepsilon}{2} \int_{0}^{t} |m_n(s) \times \Delta m_n(s)|_{L^2}^2 ds + \frac{C(\varepsilon)C(h)}{2} \int_{0}^{t} |m_n(s)|_{L^2}^2 ds.
			\end{align*}
			A combination of H\"older's inequality and Young's inequality along gives us the following set of inequalities (for $J_9,J_{10}$) for each $t\in[0,T]$.\\
			\textbf{Calculation for $J_9$.}
			\begin{align*}
				|\l\langle \nabla m_n(s) &, \psi(|m_n(s)|_{L^{\infty}}) \psi(|P_n(m_n(s) \times h)|_{L^{\infty}}) \centerdot \\
				& \nabla (P_n(P_n(m_n(s) \times (m_n(s) \times h))\times (m_n(s) \times h))) \r\rangle_{L^2}| \\
				= & |\l\langle \nabla m_n(s) , \psi(|m_n(s)|_{L^{\infty}}) \psi(|P_n(m_n(s) \times h)|_{L^{\infty}}) \centerdot \\
				&  \nabla (P_n(m_n(s) \times (m_n(s) \times h))\times (m_n(s) \times h)) \r\rangle_{L^2}| \\
				\leq &  |\l\langle \nabla m_n(s) , \psi(|m_n(s)|_{L^{\infty}}) \psi(|P_n(m_n(s) \times h)|_{L^{\infty}}) \centerdot \\
				&  \nabla P_n(m_n(s) \times (m_n(s) \times h))\times (m_n(s) \times h) \r\rangle_{L^2}| \\
				+&  |\l\langle \nabla m_n(s) , \psi(|m_n(s)|_{L^{\infty}}) \psi(|P_n(m_n(s) \times h)|_{L^{\infty}})  \centerdot \\
				& P_n(m_n(s) \times (m_n(s) \times h))\times  \nabla(m_n(s) \times h) \r\rangle_{L^2}|\\
				\leq &  |\nabla m_n(s)|_{L^2}  |\nabla P_n(m_n(s) \times (m_n(s) \times h))|_{L^2}  \centerdot \\
				& \psi(|m_n(s)|_{L^{\infty}}) \psi(|P_n(m_n(s) \times h)|_{L^{\infty}}) |m_n(s) \times h|_{L^{\infty}}\\
				+&  |\nabla m_n(s)|_{L^2} | P_n(m_n(s) \times (m_n(s) \times h))|_{L^{\infty}}  \centerdot \\
				& \psi(|m_n(s)|_{L^{\infty}}) \psi(|P_n(m_n(s) \times h)|_{L^{\infty}}) |\nabla (m_n(s) \times h)|_{L^2} \\
				\leq &  C(h) |m_n(s)|^2_{H^1}.
			\end{align*}
			Thus for $t\in[0,T]$,
			\begin{align*}
				J_9(t) & = \int_{0}^{t} |\l\langle \nabla m_n(s) , \psi(|m_n(s)|_{L^{\infty}}) \nabla (P_n(P_n(m_n(s) \times (m_n(s) \times h))\times (m_n(s) \times h))) \r\rangle_{L^2}| ds \\
				&\leq C(h) \int_{0}^{t} |m_n(s)|_{H^1}^2 ds.
			\end{align*}
			\textbf{Calculation for $J_{10}$.}
			\dela{\begin{align*}
					|\l\langle \nabla m_n(s) &, \psi(|m_n(s)|_{L^{\infty}}) \psi(|P_n(m_n(s) \times h)|_{L^{\infty}})\psi(|P_n(m_n(s) \times (m_n(s) \times h))|_{L^{\infty}})\centerdot \\
					&\quad \nabla P_n(m_n(s) \times P_n(m_n(s) \times (m_n(s) \times h)) \times h)  \r\rangle_{L^2}| \\
					& \leq 	|\l\langle \nabla m_n(s) , \psi(|m_n(s)|_{L^{\infty}}) \nabla (m_n(s) \times P_n(m_n(s) \times (m_n(s) \times h)) \times h) \r\rangle_{L^2}|\\
					& \leq |\l\langle \nabla m_n(s) , \psi(|m_n(s)|_{L^{\infty}}) \nabla m_n(s) \times P_n(m_n(s) \times (m_n(s) \times h)) \times h \r\rangle_{L^2}| \\
					&\quad + |\l\langle \nabla m_n(s) , \psi(|m_n(s)|_{L^{\infty}})  (m_n(s) \times \nabla P_n(m_n(s) \times (m_n(s) \times h)) \times h) \r\rangle_{L^2}| \\
					& \leq |\l\langle \nabla m_n(s) , \psi(|m_n(s)|_{L^{\infty}})  (m_n(s) \times \nabla P_n(m_n(s) \times (m_n(s) \times h)) \times h) \r\rangle_{L^2}| \\
					& \leq |\nabla m_n(s)|_{L^2} \psi(|m_n(s)|_{L^{\infty}}) |m_n(s)|_{L^{\infty}} |\nabla P_n(m_n(s) \times (m_n(s) \times h)) \times h|_{L^2} \\
					& \leq |\nabla m_n(s)|_{L^2} \psi(|m_n(s)|_{L^{\infty}}) |m_n(s)|_{L^{\infty}}\big[|m_n(s)|_{L^{\infty}} 2|\nabla m_n(s)|_{L^2}  |h|_{L^{\infty}} \\
					& \quad+ |m_n(s)|_{L^{\infty}}^2|\nabla h|_{L^2}\big] \\
					& \leq 2 |\nabla m_n(s)|^2_{L^2} \psi(|m_n(s)|_{L^{\infty}}) |m_n(s)|^2_{L^{\infty}} |h|_{L^{\infty}} \\
					&\quad + |\nabla m_n(s)|_{L^2} \psi(|m_n(s)|_{L^{\infty}}) |m_n(s)|_{L^{\infty}}^3|\nabla h|_{L^2} \\
					& \leq 2 |m_n(s)|^2_{H^1} \psi(|m_n(s)|_{L^{\infty}}) |m_n(s)|^2_{L^{\infty}} |h|_{L^{\infty}} \\
					&\quad+ |m_n(s)|_{H^1} C\l|m_n(s)\r|_{H^1} \psi(|m_n(s)|_{L^{\infty}}) |m_n(s)|_{L^{\infty}}^2|\nabla h|_{L^2} \\
					& \leq C(h)C |m_n(s)|_{H^1}^2.
				\end{align*}
				Here the $\centerdot$ in the first line represents multiplication of the two quantities on the first two lines. The reason for this is that the whole expression was not fitting in the same line.
				
				The second last inequality uses the following inequalities:
				\begin{align*}
					\l|\nabla m_n(s)\r|_{L^2} \leq \l|m_n(s)\r|_{H^1}.
				\end{align*}
				The embedding $H^1 \hookrightarrow L^{\infty}$ is continuous. Hence there exists a constant $C>0$ such that
				\begin{align*}
					\l|\cdot\r|_{L^{\infty}} \leq C \l|\cdot\r|_{H^1}.
				\end{align*}
				For obtaining the last inequality, we observe that by the definition \eqref{definition of bump function} of $\psi$,
				\begin{align*}
					\psi(|m_n(s)|_{L^{\infty}}) |m_n(s)|_{L^{\infty}}^2 \leq \l(\l| h \r|_{L^{\infty}} + 2\r)^2.
				\end{align*}
				Thus for any $t\in[0,T]$, we get
			}
			\begin{align}
				\nonumber J_{10}(t)& = \int_{0}^{t} |\l\langle \nabla m_n(s) , \psi(|m_n(s)|_{L^{\infty}}) \psi(|P_n(m_n(s) \times h)|_{L^{\infty}})\psi(|P_n(m_n(s) \times (m_n(s) \times h))|_{L^{\infty}}) \centerdot \\
				\nonumber & \quad \nabla P_n(m_n(s) \times P_n(m_n(s) \times (m_n(s) \times h)) \times h)  \r\rangle_{L^2}| \, ds \\
				& \leq  C(h) \int_{0}^{t} |m_n(s)|_{H^1}^2 \, ds .
			\end{align}
		}

		The integral $J_{7}$ can be bounded similarly. What remain now are the terms that constitute the noise.\\
		\textbf{Calculations related to the noise term $J_{6}$.}
		The idea for bounding these terms is to use the Burkholder-Davis-Gundy inequality. With that in view, we present some calculations that will be required when bounding the terms (more precisely their expectation)\dela{ $J_{11}$ and $J_{12}$}.
		
		The map $G_n$ can be expressed as a sum of two maps $G_n^1,G_n^2$ on $H_n$.
		\begin{equation}
			G_n^1(v) = P_n(v \times h),
		\end{equation}
		\begin{equation}
			G_n^2(v) = P_n\bigl(v \times \l( v \times h\r)\bigr).
		\end{equation}
		\begin{equation}
			G_n = G_n^1 - \alpha \, G_n^2
		\end{equation}		
		Now,
		\begin{align*}
			J_6(t) = & \int_{0}^{t} \psi(m_n(s))\l\langle G_n\big(m_n(s)\big), \l( - \Delta \r) m_n(s) \r\rangle_{L^2} \, dW(s) \\
			= & \int_{0}^{t} \psi\big(m_n(s)\big)\l\langle G_n^1\big(m_n(s)\big) - \alpha \, G_n^2\big(m_n(s)\big) , \l( - \Delta \r) m_n(s) \r\rangle_{L^2} \, dW(s) \\
			= & \int_{0}^{t} \psi\big(m_n(s)\big)\l\langle G_n^1\big(m_n(s)\big) , \l( - \Delta \r) m_n(s) \r\rangle_{L^2} \, dW(s) \\
			& - \alpha \,   \int_{0}^{t} \psi\big(m_n(s)\big)\l\langle  G_n^2\big(m_n(s)\big) , \l( - \Delta \r) m_n(s) \r\rangle_{L^2} \, dW(s) \\
			=& \int_{0}^{t} \psi\big(m_n(s)\big) \l\langle P_n \l(m_n(s) \times h \r) , \l( - \Delta \r) m_n(s) \r\rangle_{L^2} \, dW(s) \\
			& - \alpha \,   \int_{0}^{t} \psi\big(m_n(s)\big)\l\langle  P_n\bigl(m_n(s) \times \l(m_n(s) \times h \r)\bigr) , \l( - \Delta \r) m_n(s) \r\rangle_{L^2} \, dW(s) \\
			=& \int_{0}^{t} \psi\big(m_n(s)\big) \l\langle m_n(s) \times h  , \l( - \Delta \r) m_n(s) \r\rangle_{L^2} \, dW(s) \\
			& - \alpha \,   \int_{0}^{t} \psi\big(m_n(s)\big)\l\langle  m_n(s) \times \l(m_n(s) \times h \r) , \l( - \Delta \r) m_n(s) \r\rangle_{L^2} \, dW(s) \\
			=& \int_{0}^{t} \psi\big(m_n(s)\big) \l\langle \nabla\l( m_n(s) \times h\r)  , \nabla  m_n(s) \r\rangle_{L^2} \, dW(s) \\
			& - \alpha \,   \int_{0}^{t} \psi\big(m_n(s)\big)\l\langle  \nabla \l[m_n(s) \times \bigl(m_n(s) \times h \bigr) \r] , \nabla m_n(s) \r\rangle_{L^2} \, dW(s) .
		\end{align*}
		The plan now is to apply the Burkholder-Davis-Gundy inequality. Prior to that, we will establish the following estimates: By the product rule for derivatives, followed by the use of H\"older's inequality, we get a constant $C(h)>0$ such that for any $t\in[0,T]$,
		\dela{\begin{align*}
				|\l\langle \nabla m_n(s) , \nabla P_n(m_n(s) \times h) \r\rangle_{L^2}| & = |\l\langle \nabla m_n(s) , \nabla (m_n(s) \times h) \r\rangle_{L^2}| \ \text{By}\ \eqref{projection operator is self-adjoint} , \nabla P_n = P_n \nabla\\
				& \leq |\l\langle \nabla m_n(s) , \nabla m_n(s) \times h \r\rangle_{L^2}| + |\l\langle \nabla m_n(s) , m_n(s) \times \nabla h \r\rangle_{L^2}| \\
				& \leq |h|_{L^{\infty}} |\nabla m_n(s)|_{L^2}^2 + |\nabla m_n(s)|_{L^2}\l|m_n(s)\r|_{L^2}^2|\nabla h|_{L^2}^2 \\
				&\quad \text{(By H\"older's inequality)} \\
				&\leq C(h) |m_n(s)|_{H^1}^2 + C |m_n(s)|_{H^1}\ \text{By}\ \eqref{bound on m_n}
			\end{align*}
			Since $a \leq 1 + a^2$, for $a\in\mathbb{R}^+$, there exists constants $C,C^\p >0$ such that for $t\in[0,T]$,
			\begin{align*}
				C^\p \int_{0}^{t} |m_n(s)|_{H^1}^2 + |m_n(s)|_{H^1}\, ds \leq C\l(1 +  \int_{0}^{t} |m_n(s)|_{H^1}^2\, ds\r).
			\end{align*}
		}
		\dela{\begin{align*}
				|\l\langle \nabla m_n(s) , \nabla P_n(m_n(s) \times h) \r\rangle_{L^2}| \leq C(h) |m_n(s)|_{H^1}^2
			\end{align*}
			Hence
		}
		\begin{align*}
			\int_{0}^{t}  \bigl|\psi(m_n(s))\l\langle \nabla m_n(s) , \nabla (m_n(s) \times h) \r\rangle_{L^2}\bigr|^2\, ds  \leq C(h) \int_{0}^{t} |m_n(s)|_{H^1}^4\, ds.
		\end{align*}
		Similarly,
		\dela{\begin{align*}
				|\l\langle \nabla m_n(s) ,& \psi(|m_n(s)|_{L^{\infty}}) \psi(|P_n(m_n(s) \times h)|_{L^{\infty}})\psi(|P_n(m_n(s) \times (m_n(s) \times h))|_{L^{\infty}}) \centerdot \\
				&\nabla P_n(m_n(s) \times (m_n(s) \times h)) \r\rangle_{L^2}| \\
				\leq & |\l\langle \nabla m_n(s) , \psi(|m_n(s)|_{L^{\infty}}) \nabla (m_n(s) \times (m_n(s) \times h)) \r\rangle_{L^2}|\ ( \text{Since}\ \l|\psi(x)\r| \leq 1, x\in \mathbb{R}) \\
				\leq & |\l\langle \nabla m_n(s) , \psi(|m_n(s)|_{L^{\infty}}) \nabla m_n(s) \times (m_n(s) \times h) \r\rangle_{L^2}| \\
				&+ |\l\langle \nabla m_n(s) ,  \psi(|m_n(s)|_{L^{\infty}}) m_n(s) \times (\nabla m_n(s) \times h) \r\rangle_{L^2}| \\
				&+  \l|\l\langle \nabla m_n(s) ,  \psi(|m_n(s)|_{L^{\infty}}) m_n(s) \times (m_n(s) \times \nabla h) \r\rangle_{L^2}\r| \\
				\leq &  C(h)  |m_n(s)|_{H^1}^2.
			\end{align*}
			
			\begin{align*}
				|\psi(m_n(s)) \l\langle \nabla m_n(s) ,& \psi(|m_n(s)|_{L^{\infty}}) \psi(|P_n(m_n(s) \times h)|_{L^{\infty}})\psi(|P_n(m_n(s) \times (m_n(s) \times h))|_{L^{\infty}}) \centerdot \\
				&\nabla P_n(m_n(s) \times (m_n(s) \times h)) \r\rangle_{L^2}| \\
				\leq &  C(h)  |m_n(s)|_{H^1}^2.
			\end{align*}
		}
		The following inequality holds for $t\in[0,T]$.
		\begin{align*}
			\int_{0}^{t} | \psi\bigl(m_n(s)\bigr) \langle \nabla m_n(s) ,&  \nabla \bigl[m_n(s) \times \bigl(m_n(s) \times h\bigr) \bigr] \rangle_{L^2} |^2 \, ds \\
			& \leq C(h) \int_{0}^{t} \l| m_n(s)\r|_{H^1}^4 \, ds.
		\end{align*}
		The details for these calculations are similar to the proof of Lemma \ref{Technical lemma bound on DGn Gn}.\\
		Effectively, we have shown that there exists a constant $C>0$ such that
		\begin{align}
			\l|\int_{0}^{t} \l\langle \psi\big(m_n(s)\big) G_n\big(m_n(s)\big) , \Delta m_n(s) \r\rangle_{L^2}^2 \, ds\r| \leq C(h) \int_{0}^{t} | m_n(s)|_{H^1}^4 \, ds.
		\end{align}
		Let $\varepsilon>0$. By the Burkholder-Davis-Gundy inequality, see Lemma \ref{BDG inequality upper bound}, we deduce
		\begin{align*}
			&\mathbb{E}\sup_{t\in[0,T]} \l| \int_{0}^{t} \l\langle \psi\big(m_n(s)\big) G_n\big(m_n(s)\big) , \Delta m_n(s) \r\rangle_{L^2}^2 \, dW(s) \r| \\
			\leq & C \mathbb{E} \l(\int_{0}^{t} \l\langle \psi\big(m_n(s)\big) G_n\big(m_n(s)\big) , \Delta m_n(s) \r\rangle_{L^2}^2 \, ds\r)^\frac{1}{2} \\
			\leq & C(h) \mathbb{E} \l(\int_{0}^{T} |\nabla m_n(s)|_{H^1}^4 \, ds\r)^\frac{1}{2}\\
			\leq &  C(h) \mathbb{E} \l[ \int_{0}^{T} \l|m_n(s)\r|_{H^1}^2  \l|m_n(s)\r|_{H^1}^2 \, ds \r]^{\frac{1}{2}} \\
			\leq &  C(h) \mathbb{E}\l[ \l(\sup_{t\in [0,T]}\l|m_n(t)\r|_{H^1}^2\r)^{\frac{1}{2}}\l( \int_{0}^{T} \l|m_n(s)\r|_{H^1}^2   \, ds \r)^{\frac{1}{2}} \r]  (\text{By the Cauchy-Schwartz inequality}) \\
			\leq &  \frac{\varepsilon }{2}\mathbb{E} \l[ \sup_{t\in [0,T]} \l|m_n(t)\r|_{H^1}^{2}   \r] + \frac{C(\varepsilon)C(h)^2}{2} \mathbb{E} \l[ \int_{0}^{T} \l|m_n(s)\r|_{H^1}^{2}  \, ds \r]. \text{(By the Young's inequality)}
		\end{align*}
		\dela{
			\adda{Why is this required here??}Similarly,
			\begin{align*}
				\int_{0}^{t} \l\langle \nabla m_n(s) &, \psi(|m_n(s)|_{L^{\infty}}) \psi(|P_n(m_n(s) \times h)|_{L^{\infty}})\psi(|P_n(m_n(s) \times (m_n(s) \times h))|_{L^{\infty}}) \centerdot \\
				& \quad \nabla P_n(m_n(s) \times P_n(m_n(s) \times (m_n(s) \times h)) \times h)  \r\rangle_{L^2}^2 \, ds \\
				& \leq  C(h) \int_{0}^{t} |m_n(s)|_{H^1}^2 \, ds .
			\end{align*}
		}
		\dela{In particular,
			\begin{align}
				\nonumber \int_{0}^{T} \l\langle \nabla m_n(s) &, \psi(|m_n(s)|_{L^{\infty}}) \psi(|P_n(m_n(s) \times h)|_{L^{\infty}})\psi(|P_n(m_n(s) \times (m_n(s) \times h))|_{L^{\infty}}) \centerdot \\
				\nonumber & \quad \nabla P_n(m_n(s) \times P_n(m_n(s) \times (m_n(s) \times h)) \times h)  \r\rangle_{L^2}^2 \, ds \\
				& \leq  C(h) \int_{0}^{T} |m_n(s)|_{H^1}^4 \, ds .
			\end{align}
			
			By the Burkholder-Davis-Gundy inequality, ( see Lemma \ref{BDG inequality upper bound} ) followed by the Cauchy-Schwartz inequality, there exists a constant $C>0$ such that
			\begin{align*}
				&\mathbb{E}\sup_{t\in[0,T]}\bigg|\int_{0}^{t} \l\langle \nabla m_n(s) , \psi(m_n(s))  \nabla P_n(m_n(s) \times P_n(m_n(s) \times (m_n(s) \times h)) \times h)  \r\rangle_{L^2} \, dW(s) \bigg| \\
				\leq &  \mathbb{E} \bigg[\bigg(\int_{0}^{T} \l\langle \nabla m_n(s) , \psi(m_n(s)) \nabla P_n(m_n(s) \times P_n(m_n(s) \times (m_n(s) \times h)) \times h)  \r\rangle_{L^2}^2 \, ds \bigg)^{\frac{1}{2}}\bigg] \\
				\leq &  C \mathbb{E} \l[ \int_{0}^{T} \l|m_n(s)\r|_{H^1}^4  \, ds \r]^{\frac{1}{2}} (\text{By H\"older's inequality})  \\
				\leq &  C \mathbb{E} \l[ \int_{0}^{T} \l|m_n(s)\r|_{H^1}^2  \l|m_n(s)\r|_{H^1}^2 \, ds \r]^{\frac{1}{2}} \\
				\leq &  C \mathbb{E}\l[ \l(\sup_{t\in [0,T]}\l|m_n(t)\r|_{H^1}^2\r)^{\frac{1}{2}}\l( \int_{0}^{T} \l|m_n(s)\r|_{H^1}^2   \, ds \r)^{\frac{1}{2}} \r]  (\text{By Cauchy-Schwartz inequality}) \\
				\leq &  \frac{\varepsilon }{2}\mathbb{E} \l[ \sup_{t\in [0,T]} \l|m_n(t)\r|_{H^1}^{2}   \r] + \frac{C(\varepsilon)C^2}{2} \mathbb{E} \l[ \int_{0}^{T} \l|m_n(s)\r|_{H^1}^{2}  \, ds \r] \text{(By Young's inequality)} .
			\end{align*}
		}
		\dela{The last step follows by Young's inequality.}
		The first term on the right hand side of the above inequality has the coefficient $\frac{\varepsilon}{2}$. This $\varepsilon > 0$ is chosen later such that the coefficient on the left hand side remains positive.\\
		\dela{
			Similarly,
			\begin{align*}
				\mathbb{E}&\sup_{t\in[0,T]}\bigg|\int_{0}^{t} \l\langle \nabla m_n(s) , P_n(m_n(s) \times (m_n(s) \times h))  \r\rangle_{L^2} \, dW(s) \bigg| \\
				& \leq \mathbb{E} \bigg[\bigg(\int_{0}^{T} \l\langle \nabla m_n(s) , \nabla P_n(m_n(s) \times (m_n(s) \times h)) \r\rangle_{L^2}^2 \, ds \bigg)^{\frac{1}{2}}\bigg] \\
				& \leq C \mathbb{E} \l[ \int_{0}^{T} \l|m_n(s)\r|_{H^1}^4  \, ds \r]^{\frac{1}{2}} \\
				& \leq \frac{\varepsilon }{2}\mathbb{E} \l[ \sup_{t\in [0,T]} \l|m_n(t)\r|_{H^1}^{2}   \r] + \frac{C(\varepsilon )C^2}{2} \mathbb{E} \l[ \int_{0}^{T} \l|m_n(s)\r|_{H^1}^{2}  \, ds \r].
			\end{align*}
			The first term on the right hand side of the above inequalities will be taken to the left hand side when doing the final estimates.
		}
		\dela{
			Here and in the previous calculation, $\varepsilon > 0$ is chosen such that
			\begin{equation}\label{choice of epsilon inequality 1}
				\varepsilon < 1.
			\end{equation}
			Hence the first term on the right hand side of the above inequality, along with the similar term in the preceding calculation can be taken to the left hand side of the inequality once all the calculations are combined. The choice of $\varepsilon$ will ensure that the coefficient remains positive.
		}
		
		We combine all the above inequalities (except the calculations for $J_6$) with the equation \eqref{equation Ito formula applied to phi 2} to get
		\begin{align}\label{bound 2 intermediate inequality}
			\nonumber \l| \nabla m_n(t) \r|^2_{L^2} +& (\alpha \, - \varepsilon) \int_{0}^{t} |m_n(s) \times \Delta m_n(s)|_{L^2}^2\, ds \leq |m_n(0)|_{H^1}^2 + \frac{C(\varepsilon)}{2}[C(h) + 1]\int_{0}^{t}|u_n(s)|_{L^2}^2\, ds \\
			+ &  C(h)\int_{0}^{t}|m_n(s)|_{H^1}^2\, ds + C(h)\bigg|\int_{0}^{t}\l\langle \psi\big(m_n(s)\big)  G_n\big(m_n(s)\big) , \Delta m_n(s) \r\rangle_{L^2}\, dW(s)\bigg|.
		\end{align}
		Choose $\varepsilon$ small enough such that $\alpha \, - \varepsilon > 0$. For instance, $\varepsilon = \frac{\alpha}{2}$ works here. This implies that the second term on the left hand side of the above inequality is non-negative. Therefore we can remove that term, still keeping the inequality intact.\\
		We take $\sup_{t\in[0,T]}$ of both sides of the resulting inequality, followed by taking the expectation to get
		\dela{
			
			\begin{align*}
				\nonumber \sup_{t\in[0,T]}&\l| \nabla m_n(t) \r|^2_{L^2} + (\alpha \, - \frac{3\varepsilon}{2}) \sup_{t\in[0,T]}\int_{0}^{t} |m_n(s) \times \Delta m_n(s)|_{L^2}^2\, ds \\
				\leq & |m_n(0)|_{H^1}^2 + \frac{C(\varepsilon)}{2}[C(h) + 1]\int_{0}^{T}|u_n(s)|_{L^2}^2\, ds \\
				&+   C(h)  \int_{0}^{T}|m_n(s)|_{H^1}^2\, ds + C(h) \sup_{t\in[0,T]} \bigg|\int_{0}^{t}\l\langle \nabla\l( m_n(s) \times h\r) , \nabla m_n(s) \r\rangle_{L^2}\, dW(s)\bigg|\\
				&+   C(h) \sup_{t\in[0,T]} \bigg|\int_{0}^{t} \psi(|m_n(s)|_{L^{\infty}}) \psi(|P_n(m_n(s) \times h)|_{L^{\infty}})\psi(|P_n(m_n(s) \times (m_n(s) \times h))|_{L^{\infty}}) \centerdot \\
				&\l\langle \nabla\l( m_n(s) \times \l(m_n(s) \times h\r)\r) , \nabla m_n(s) \r\rangle_{L^2}\, dW(s)\bigg|.
			\end{align*}
			\dela{Add a constant to the right hand side??
				Here, choose $\varepsilon > 0$ such that \eqref{choice of epsilon inequality 1} holds and
				\begin{equation}\label{choice of epsilon inequality 2}
					\alpha \, - \frac{3\varepsilon}{2} > 0.
				\end{equation}  \adda{Verify the coefficient of $\varepsilon$.}
				Thus the term $(\alpha \, - \frac{3\varepsilon}{2}) \sup_{t\in[0,T]}\int_{0}^{t} |m_n(s) \times \Delta m_n(s)|_{L^2}^2\, ds$ is non-negative $\mathbb{P}$-a.s.
			}
			
			Hence
			\begin{align*}
				\nonumber &\sup_{t\in[0,T]}\l| \nabla m_n(t) \r|^2_{L^2} \leq   |m_n(0)|_{H^1}^2 + \frac{C(\varepsilon)}{2}[C(h) + 1]\int_{0}^{T}|u_n(s)|_{L^2}^2\, ds \\
				& +   C(h)  \int_{0}^{T}|m_n(s)|_{H^1}^2\, ds + C(h) \sup_{t\in[0,T]} \bigg|\int_{0}^{t}\l\langle \nabla\l( m_n(s) \times h\r) , \nabla m_n(s) \r\rangle_{L^2}\, dW(s)\bigg|\\
				& +   C(h) \sup_{t\in[0,T]} \bigg|\int_{0}^{t} \psi(|m_n(s)|_{L^{\infty}}) \psi(|P_n(m_n(s) \times h)|_{L^{\infty}}) \centerdot \\
				& \psi(|P_n(m_n(s) \times (m_n(s) \times h))|_{L^{\infty}}) \l\langle \nabla\l( m_n(s) \times \l(m_n(s) \times h\r)\r) , \nabla m_n(s) \r\rangle_{L^2}\, dW(s)\bigg|.
			\end{align*}
			Taking the expectation of both sides of the above inequality gives
			
		}
		\begin{align*}
			\nonumber &\mathbb{E}\sup_{t\in[0,T]}\l| \nabla m_n(t) \r|^2_{L^2} \leq  \mathbb{E}|m_n(0)|_{H^1}^2 + \frac{C(\varepsilon)}{2}[C(h) + 1]\mathbb{E}\int_{0}^{T}|u_n(s)|_{L^2}^2\, ds \\
			& + C(h) \mathbb{E} \int_{0}^{T}|m_n(s)|_{H^1}^2\, ds + C(h) \mathbb{E}\sup_{t\in[0,T]} \bigg|\int_{0}^{t}\l\langle \psi(m_n(s))G_n(m_n(s)) , \Delta m_n(s) \r\rangle_{L^2}\, dW(s)\bigg|.
			\dela{+ C(h) \mathbb{E}\sup_{t\in[0,T]} \bigg|\int_{0}^{t}\l\langle \nabla\l( m_n(s) \times h\r) , \nabla m_n(s) \r\rangle_{L^2}\, dW(s)\bigg|\\
				& +   C(h) \mathbb{E}\sup_{t\in[0,T]} \bigg|\int_{0}^{t} \psi\big(|m_n(s)|_{L^{\infty}}\big) \psi\big(|P_n(m_n(s) \times h)|_{L^{\infty}}\big) \centerdot \\
				&\quad \psi\big(|P_n\big(m_n(s) \times (m_n(s) \times h)\big)|_{L^{\infty}}\big) \l\langle \nabla \bigl( m_n(s) \times \l(m_n(s) \times h\r) \bigr) , \nabla m_n(s) \r\rangle_{L^2}\, dW(s)\bigg|}
		\end{align*}
		Combining all the constants into a suitable constant $C>0$ and replacing
		$\l|m_n(s)\r|_{H^1}$ (inside the integrals) by $\sup_{r\in[0,s]} \l|m_n(r)\r|_{H^1}$ gives
		\begin{align*}
			\mathbb{E}\sup_{t\in[0,T]}\l| \nabla m_n(t) \r|^2_{L^2} \leq \mathbb{E}|m_n(0)|_{H^1}^2 + C(K) + \varepsilon \mathbb{E}\sup_{t\in[0,T]}\l|  m_n(t) \r|^2_{H^1} + C \mathbb{E} \int_{0}^{T} \sup_{r\in[0,s]} \l|m_n(r)\r|_{H^1}^2 \, ds.	
		\end{align*}
		Here again, $\varepsilon = \min\{\frac{\alpha}{2} , \frac{1}{2}\}$ is chosen in order to keep the coefficient on the left hand side positive.
		We observe the following. For $v\in H^1$,
		\begin{align*}
			\l| v \r|_{H^1}^2 = \l|v\r|_{L^2}^2 + \l| \nabla v \r|_{L^2}^2.
		\end{align*}
		Therefore applying the bound \eqref{bound on m_n}, we replace the left hand side by the full $H^1$ norm by adding a constant (using Lemma \ref{bounds lemma 1} and finiteness of $T$) to the right hand side. The resulting inequality is
		\begin{align*}
			\l( 1 - \varepsilon \r)\mathbb{E}\sup_{t\in[0,T]}\l| m_n(t) \r|^2_{H^1} \leq \mathbb{E}|m_n(0)|_{H^1}^2 + C(K) + C \mathbb{E} \int_{0}^{T} \sup_{r\in[0,s]} \l|m_n(r)\r|_{H^1}^2 \, ds.	
		\end{align*}
		\dela{Here again, $\varepsilon = \min\{\frac{\alpha}{2} , \frac{1}{2}\}$ is chosen in order to keep the coefficient on the left hand side positive.}

		\dela{
			Thus,
			\begin{align*}
				\mathbb{E}\sup_{t\in[0,T]}\l| m_n(t) \r|^2_{H^1} &\leq \mathbb{E}\sup_{t\in[0,T]}\l| m_n(t) \r|^2_{L^2} + \mathbb{E}\sup_{t\in[0,T]}\l| \nabla m_n(t) \r|^2_{L^2} \\
				& \leq  \mathbb{E}\sup_{t\in[0,T]}\l| m_n(t) \r|^2_{L^2} + C(T,m_0) \\
				& \leq \mathbb{E}|m_n(0)|_{H^1}^2 + C(K) + C \mathbb{E} \int_{0}^{T} \sup_{r\in[0,s]} \l|m_n(r)\r|_{H^1} \, ds + C(T,m_0) .
			\end{align*}
			\adda{What is this for?}
		}

		Using Fubini's theorem and then applying the Gronwall Lemma, the assumption on the initial data $m_0$ implies that there exists a constant $C>0$ such that
		\begin{align}
			\mathbb{E}\sup_{t\in[0,T]}\l| m_n(t) \r|^2_{H^1} \leq C.
		\end{align}
		This concludes the proof of bound \eqref{bound 2 without p}.\\
		\textbf{Proof of bound \eqref{bound 3 without p}}:\\
		Going back to the inequality \eqref{bound 2 intermediate inequality} (with $\varepsilon = \min\{\frac{\alpha}{2} , \frac{1}{2}\}$), we observe that the first term on the right hand side is non-negative. Hence the term can be neglected without changing the inequality. We take the supremum over $[0,T]$, followed by the expectation of both sides. In particular, the bound \eqref{bound 2 without p} implies that there exists a constant $C>0$ such that
		\begin{equation}
			\mathbb{E} \int_{0}^{T} \l| m_n(t) \times \Delta m_n(t) \r|_{L^2}^2 \, dt \leq C.
		\end{equation}
		This concludes the proof of bound \eqref{bound 3 without p}, and hence the proof of Lemma \ref{bounds lemma 1 without p}.
	\end{proof}
	Having shown some energy estimates, we show that the $p$-th order moments for the approximations are also bounded.
	\begin{lemma}\label{bounds lemma 1}
		Assume $p\geq 1$. There exists a constant $C>0$ such that for all $n\in\mathbb{N}$, the following bounds hold
		\begin{equation}\label{bound 2}
			\mathbb{E}\l[\sup_{r\in[0,T]}|m_n(r)|_{H^1}^{2p}\r]\leq C_{p},
		\end{equation}
		\begin{align}\label{bound 3}
			\mathbb{E}\l[\int_{0}^{T} |m_n(s)\times \Delta m_n(s)|_{L^2}^2 \, ds \r]^p \leq C_{p},
		\end{align}
		\begin{align}\label{bound 4}
			\mathbb{E}\l[\l(\int_{0}^{T}|m_n(s)\times\l(m_n(s)\times\Delta m_n\l(s\r)\r)|_{L^{2}}^2  \, ds  \r)^{p} \r]\leq C_{2p},
		\end{align}
		\begin{equation}\label{bound 5}
			\mathbb{E}\l[\l(\int_{0}^{T}|m_n(s)\times u_n(s)|_{L^{2}}^2 \, ds \r)^{p} \r]\leq C_{2p},
		\end{equation}
		\begin{equation}\label{bound 6}
			\mathbb{E}\l[\l(\int_{0}^{t}|m_n(s) \times (m_n(s) \times u_n(s))|_{L^{2}}^2 \, ds\r)^{p} \r]\leq C_{4p}.
		\end{equation}
		
	\end{lemma}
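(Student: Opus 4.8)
The plan is to establish \eqref{bound 2} and \eqref{bound 3} first, by upgrading the pathwise energy inequality \eqref{bound 2 intermediate inequality} to the level of $p$-th moments, and then to deduce \eqref{bound 4}, \eqref{bound 5} and \eqref{bound 6} from these two together with H\"older's inequality. First I would revisit \eqref{bound 2 intermediate inequality} with $\varepsilon = \min\{\frac{\alpha}{2},\frac{1}{2}\}$, so that the coefficient $\alpha - \varepsilon$ of the dissipation term is strictly positive. Abbreviating $M_n(t) = \int_0^t \l\langle \psi\big(m_n(s)\big)G_n\big(m_n(s)\big), \Delta m_n(s)\r\rangle_{L^2}\,dW(s)$, this inequality reads, $\mathbb{P}$-a.s. for every $t\in[0,T]$,
\begin{equation*}
  |\nabla m_n(t)|_{L^2}^2 + (\alpha-\varepsilon)\int_0^t|m_n(s)\times\Delta m_n(s)|_{L^2}^2\,ds \leq |m_n(0)|_{H^1}^2 + C\int_0^t|u_n(s)|_{L^2}^2\,ds + C\int_0^t|m_n(s)|_{H^1}^2\,ds + C|M_n(t)|.
\end{equation*}

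Raising both sides to the power $p$ (using $(a_1+\dots+a_k)^p \leq C_{p,k}\sum_i a_i^p$ on the right, and $(a+b)^p \geq a^p + b^p$ for $a,b\geq 0$ on the left to retain both $|\nabla m_n(t)|_{L^2}^{2p}$ and the $p$-th power of the dissipation), then taking $\sup_{t\in[0,T]}$ and $\mathbb{E}$, I would estimate the four terms separately. The initial-data term is controlled by the assumption $m_0\in W^{1,2}(\mathcal{O},\mathbb{S}^2)$ and the fact that $P_n$ is a contraction; the control term obeys $\mathbb{E}\bigl(\int_0^T|u_n|_{L^2}^2\,ds\bigr)^p \leq \mathbb{E}\bigl(\int_0^T|u|_{L^2}^2\,ds\bigr)^p \leq K_p$ by \eqref{assumption on u}, again because $P_n$ is a contraction; and the $|m_n|_{H^1}^2$-integral becomes a Gronwall term via Jensen's inequality, $\bigl(\int_0^T|m_n|_{H^1}^2\,ds\bigr)^p \leq T^{p-1}\int_0^T \sup_{r\in[0,s]}|m_n(r)|_{H^1}^{2p}\,ds$.

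The key term is the martingale $M_n$. Applying the Burkholder-Davis-Gundy inequality (Lemma \ref{BDG inequality upper bound}) together with the quartic bound $\l\langle \psi\big(m_n\big)G_n\big(m_n\big), \Delta m_n\r\rangle_{L^2}^2 \leq C(h)|m_n|_{H^1}^4$ established in the proof of Lemma \ref{bounds lemma 1 without p}, I obtain
\begin{equation*}
  \mathbb{E}\sup_{t\in[0,T]}|M_n(t)|^p \leq C\,\mathbb{E}\left(\int_0^T|m_n(s)|_{H^1}^4\,ds\right)^{p/2} \leq C\,\mathbb{E}\left[\left(\sup_{t\in[0,T]}|m_n(t)|_{H^1}^{2}\right)^{p/2}\left(\int_0^T|m_n(s)|_{H^1}^2\,ds\right)^{p/2}\right],
\end{equation*}
after which Young's inequality splits the right-hand side into $\varepsilon'\,\mathbb{E}\sup_{t\in[0,T]}|m_n(t)|_{H^1}^{2p}$ plus a term of Gronwall type. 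Absorbing the $\varepsilon'$-term on the left (using the conservation $|m_n(t)|_{L^2}=|m_n(0)|_{L^2}$ from \eqref{bound 1} and \eqref{bound on m_n} to pass between $|\nabla m_n|_{L^2}^{2p}$ and $|m_n|_{H^1}^{2p}$) and then applying Fubini and the Gronwall lemma yields \eqref{bound 2}; keeping instead the dissipation term on the left-hand side gives \eqref{bound 3}.

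Finally, \eqref{bound 4}, \eqref{bound 5} and \eqref{bound 6} require no further stochastic analysis. Using $H^1\embed L^\infty$ one has, for instance, $|m_n(s)\times u_n(s)|_{L^2} \leq C|m_n(s)|_{H^1}|u_n(s)|_{L^2}$, so that $\bigl(\int_0^T|m_n\times u_n|_{L^2}^2\,ds\bigr)^p \leq C\sup_{t\in[0,T]}|m_n(t)|_{H^1}^{2p}\bigl(\int_0^T|u_n|_{L^2}^2\,ds\bigr)^p$; the Cauchy-Schwarz inequality in $\Omega$ combined with \eqref{bound 2} at level $2p$ and \eqref{assumption on u} at level $2p$ then gives \eqref{bound 5}, and this index doubling is exactly what produces the constant $C_{2p}$. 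The estimates \eqref{bound 4} and \eqref{bound 6} follow identically, bounding $|m_n\times(m_n\times\Delta m_n)|_{L^2}\leq C|m_n|_{H^1}|m_n\times\Delta m_n|_{L^2}$ and $|m_n\times(m_n\times u_n)|_{L^2}\leq C|m_n|_{H^1}^2|u_n|_{L^2}$ and then applying Cauchy-Schwarz with \eqref{bound 2} and \eqref{bound 3}. I expect the main obstacle to be the martingale estimate above: closing the induction hinges on the quartic integrand bound and a careful use of Young's inequality so that the emergent $\sup$-term can be absorbed on the left before Gronwall is invoked.
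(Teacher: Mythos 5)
Your proposal is correct and takes essentially the same route as the paper's own proof: the same starting pathwise inequality \eqref{bound 2 intermediate inequality} with $\varepsilon=\min\{\tfrac{\alpha}{2},\tfrac12\}$, raised to the $p$-th power via Jensen, the same Burkholder--Davis--Gundy/quartic-integrand/Cauchy--Schwarz/Young treatment of the stochastic integral with absorption of the $\sup$-term before Gronwall (using \eqref{bound 1} to pass from $|\nabla m_n|_{L^2}^{2p}$ to $|m_n|_{H^1}^{2p}$), and the same $H^1\hookrightarrow L^{\infty}$ plus Cauchy--Schwarz index-doubling for \eqref{bound 4}--\eqref{bound 6}. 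The only cosmetic deviation is in \eqref{bound 6}, where you estimate $|m_n\times(m_n\times u_n)|_{L^2}\leq C|m_n|_{H^1}^2|u_n|_{L^2}$ directly instead of routing through \eqref{bound 5} as the paper does; this changes only which moment of $u$ (and of $\sup_t|m_n(t)|_{H^1}$) the final constant depends on, and still yields the claimed bound.
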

	\dela{How the constants $C$ above depend on $K_p$ and which $p$? \\}
	The quantity written at the base of the constants $C$ represents the regularity of $u$ that is required for the bound with $p$ to hold on the left hand side. For instance in the bound \eqref{bound 5}, the constant $C$ depends on $K_{2p}$ and in the bound \eqref{bound 6} the constant depends on $K_{4p}$. In particular for $p=1$, we require the bound $K_4$.
	
	\begin{proof}[Proof of Lemma \ref{bounds lemma 1}]
		
		The proof uses the calculations from the previous lemma (Lemma \ref{bounds lemma 1 without p}). The idea is to again apply It\^o's formula and obtain \eqref{bounds lemma 1 intermediate equation}. The same bounds as before will be used, except for the stochastic integral term. We write some more calculations regarding the individual terms that are not given previously.\\
		\textbf{Proof of the bound \eqref{bound 2}}:\\
		Let $p\geq 1$. As before, let $\varepsilon>0$.
		\dela{
			\begin{align*}
				&\mathbb{E}\sup_{t\in[0,T]}\bigg|\int_{0}^{t} \l\langle \nabla m_n(s) , \psi(|m_n(s)|_{L^{\infty}}) \psi(|P_n(m_n(s) \times h)|_{L^{\infty}})\psi(|P_n(m_n(s) \times (m_n(s) \times h))|_{L^{\infty}}) \centerdot \\
				& \quad \nabla P_n(m_n(s) \times P_n(m_n(s) \times (m_n(s) \times h)) \times h)  \r\rangle_{L^2} \, dW(s) \bigg|^p \\
				&\leq  \mathbb{E} \bigg[\bigg(\int_{0}^{T} \l\langle \nabla m_n(s) , \psi(|m_n(s)|_{L^{\infty}}) \psi(|P_n(m_n(s) \times h)|_{L^{\infty}})\psi(|P_n(m_n(s) \times (m_n(s) \times h))|_{L^{\infty}}) \centerdot \\
				& \quad \nabla P_n(m_n(s) \times P_n(m_n(s) \times (m_n(s) \times h)) \times h)  \r\rangle_{L^2}^2 \, ds \bigg)^{\frac{p}{2}}\bigg] \\
				&\leq  C \mathbb{E} \l[ \int_{0}^{T} \l|m_n(s)\r|_{H^1}^4  \, ds \r]^{\frac{p}{2}} \\
				&\leq  C \mathbb{E} \l[ \int_{0}^{T} \l|m_n(s)\r|_{H^1}^2 \l|m_n(s)\r|_{H^1}^2    \, ds \r]^{\frac{p}{2}} \\
				&\leq  C \mathbb{E} \l[ \l(\sup_{t\in [0,T]} \l|m_n(s)\r|_{H^1}^2 \r)^{\frac{1}{2}}  \l( \int_{0}^{T} \l|m_n(s)\r|_{H^1}^2   \, ds \r)^{\frac{1}{2}} \r]^p\ \text{By Cauchy-Schwartz inequality}\\
				&\leq   \frac{C\varepsilon}{2}\mathbb{E} \l[ \sup_{t\in [0,T]} \l|m_n(s)\r|_{H^1}^2    \r] + \frac{C(\varepsilon)}{2} \mathbb{E} \l[ \int_{0}^{T} \l|m_n(s)\r|_{H^1}^2   \, ds\r]^p\ \text{By Young's inequality}\\
				&\leq  \frac{C\varepsilon}{2}\mathbb{E} \l[ \sup_{t\in [0,T]} \l|m_n(s)\r|_{H^1}^2    \r] + \frac{C(\varepsilon)}{2} \mathbb{E} \l[ \int_{0}^{T} \l|m_n(s)\r|_{H^1}^{2p}  \, ds \r].
			\end{align*}
		}
		We recall the following inequality established in the calculations for $J_6$ in the proof for \eqref{bound 2 without p} in the previous lemma.
		\begin{align}
			\l|\int_{0}^{t} \l\langle \psi\big(m_n(s)\big) G_n\big(m_n(s)\big) , \Delta m_n(s) \r\rangle_{L^2}^{2} \, ds\r| \leq C \int_{0}^{t} \l| m_n(s) \r|_{H^1}^{4} \, ds.
		\end{align}
		By Lemma \ref{BDG inequality upper bound}, followed by Cauchy-Schwartz inequality and Young's inequality and then Jensen's inequality, there exists constant $C,C(\varepsilon)$ (which can depend on $h$) such that
		\begin{align*}
			&\mathbb{E}\sup_{t\in[0,T]} \l| \int_{0}^{t} \l\langle \psi(m_n(s)) G_n\big(m_n(s)\big) , \Delta m_n(s) \r\rangle_{L^2} \, dW(s) \r|^{p} \\
			\leq & C \mathbb{E} \l(\int_{0}^{t} \l\langle \psi\big(m_n(s)\big) G_n\big(m_n(s)\big) , \Delta m_n(s) \r\rangle_{L^2}^2 \, ds\r)^\frac{p}{2} \\
			\leq & C \mathbb{E} \l(\int_{0}^{T} \l| m_n(s) \r|_{H^1}^4 \, ds\r)^\frac{p}{2}\\
			\leq &  C \mathbb{E} \l[ \int_{0}^{T} \l|m_n(s)\r|_{H^1}^2  \l|m_n(s)\r|_{H^1}^2 \, ds \r]^\frac{p}{2} \\
			\leq &  C \mathbb{E}\l[ \l(\sup_{t\in [0,T]}\l|m_n(t)\r|_{H^1}^2\r)^\frac{p}{2}
			\l( \int_{0}^{T} \l|m_n(s)\r|_{H^1}^2   \, ds \r)^\frac{p}{2} \r]  (\text{By Cauchy-Schwartz inequality}) \\
			\leq &  \frac{\varepsilon }{2}\mathbb{E} \l[ \sup_{t\in [0,T]} \l|m_n(t)\r|_{H^1}^{2p}   \r] + \frac{C(\varepsilon)C^2}{2} \mathbb{E} \l[ \int_{0}^{T} \l|m_n(s)\r|_{H^1}^{2p}  \, ds \r]. \text{(By Young's inequality)}
		\end{align*}
		\dela{\begin{align*}
				&\mathbb{E}\sup_{t\in[0,T]}\bigg|\int_{0}^{t} \l\langle \nabla m_n(s) , \psi(m_n(s)) \nabla G_n(m_n(s)) \r\rangle_{L^2} \, dW(s) \bigg|^p \\
				&\leq  \frac{\varepsilon}{2}\mathbb{E} \l[ \sup_{t\in [0,T]} \l|m_n(s)\r|_{H^1}^2    \r] + \frac{C(\varepsilon)}{2} \mathbb{E} \l[ \int_{0}^{T} \l|m_n(s)\r|_{H^1}^{2p}  \, ds \r].
			\end{align*}
		}
		\dela{The last inequality follows due to Jensen's inequality. $\varepsilon > 0$ is to be chosen later.\\}
		\dela{Omit this. Note Needed Similarly,
			\dela{
				\begin{align*}
					\mathbb{E}\sup_{t\in[0,T]}\bigg|\int_{0}^{t} \l\langle \nabla m_n(s) &, P_n(m_n(s) \times (m_n(s) \times h))  \r\rangle_{L^2} \, dW(s) \bigg|^p \\
					& \leq \mathbb{E} \bigg[\bigg(\int_{0}^{T} \l\langle \nabla m_n(s) , \nabla P_n(m_n(s) \times (m_n(s) \times h)) \r\rangle_{L^2}^2 \, ds \bigg)^{\frac{p}{2}}\bigg] \\
					& \leq C \mathbb{E} \l[ \int_{0}^{T} \l|m_n(s)\r|_{H^1}^4  \, ds \r]^{\frac{p}{2}} \\
					& \leq \frac{\varepsilon}{2}\mathbb{E} \l[ \sup_{t\in [0,T]} \l|m_n(s)\r|_{H^1}^2    \r] + \frac{\varepsilon}{2} \mathbb{E} \l[ \int_{0}^{T} \l|m_n(s)\r|_{H^1}^{2p}  \, ds \r].
				\end{align*}
			}
			\begin{align*}
				\mathbb{E}\sup_{t\in[0,T]}\bigg|\int_{0}^{t} \l\langle \nabla m_n(s) , P_n(m_n(s) \times (m_n(s) \times h))  \r\rangle_{L^2} \, dW(s) \bigg|^p
				\leq & \frac{\varepsilon}{2}\mathbb{E} \l[ \sup_{t\in [0,T]} \l|m_n(s)\r|_{H^1}^2    \r] \\
				& + \frac{C(\varepsilon)}{2} \mathbb{E} \l[ \int_{0}^{T} \l|m_n(s)\r|_{H^1}^{2p}  \, ds \r].
		\end{align*}}
		\dela{The last inequality follows due to Jensen's inequality.
			$\varepsilon > 0$ is to be chosen later.
			One of the factors to be considered in the choice of $\varepsilon$ is that
			\begin{equation}\label{choice of p epsilon 1}
				C^p \varepsilon < 1.
			\end{equation}
		}
		We recall the inequality \eqref{bound 2 intermediate inequality} and restate it here.
		\begin{align}
			\nonumber \l| \nabla m_n(t) \r|^2_{L^2} +& (\alpha \, - \varepsilon) \int_{0}^{t} |m_n(s) \times \Delta m_n(s)|_{L^2}^2\, ds \leq |m_n(0)|_{H^1}^2 + \frac{C(\varepsilon)}{2}[C(h) + 1]\int_{0}^{t}|u_n(s)|_{L^2}^2\, ds \\
			+&  C(h)\int_{0}^{t}|m_n(s)|_{H^1}^2\, ds + C(h)\bigg|\int_{0}^{t}\l\langle \psi\big(m_n(s)\big) G_n\big(m_n(s)\big) , \Delta m_n(s) \r\rangle_{L^2}\, dW(s)\bigg|.
		\end{align}
		Choose $\varepsilon > 0$ such that
		\begin{equation}\label{choice of p epsilon 2}
			\alpha \, - \varepsilon > 0.
		\end{equation}
		Therefore the second term on the left hand side of the resulting inequality is non-negative. Hence the inequality remains the same even if that term is neglected.\\
		We raise both sides of the inequality \eqref{bound 2 intermediate inequality} (after choosing $\varepsilon = \min\{\frac{1}{2},\frac{\alpha}{2}\}$) to power $p\geq 1$ and use Jensen's inequality (to bring the power $p$ inside the time integral) to get
		\begin{align*}
			\nonumber &\sup_{t\in[0,T]}\l| \nabla m_n(t) \r|^{2p}_{L^2} \leq C(p)\bigg[|m_n(0)|_{H^1}^{2p} + \frac{C(\varepsilon)^2}{2}[C(h) + 1]^{p} \l(\int_{0}^{T}|u_n(s)|_{L^2} \, ds\r)^{p} \\
			&+   C(h)  \int_{0}^{T}|m_n(s)|_{H^1}^{2p}\, ds
			+ C(h)\bigg|\int_{0}^{t}\l\langle \psi\big(m_n(s)\big)\nabla G_n\big(m_n(s)\big) , \nabla m_n(s) \r\rangle_{L^2}\, dW(s)\bigg|^p.
		\end{align*}
		In the steps that follow, the constant $C(p)$ is absorbed into the existing constants. We take the expectation of both sides to get
		\dela{
			\begin{align}
				\nonumber \mathbb{E}\sup_{t\in[0,T]}\l| \nabla m_n(t) \r|^{2p}_{L^2}  \leq & C(p)^p\mathbb{E}|m_n(0)|_{H^1}^{2p} + \frac{C(\varepsilon)^p}{2^p}[C(h) + 1]^p \mathbb{E} \l(\int_{0}^{T}|u_n(s)|_{L^2}^2\, ds \r)^{p} \\
				\nonumber   &+  C(h)^p \mathbb{E}  \int_{0}^{T}|m_n(s)|_{H^1}^{2p}\, ds \\
				\nonumber &+ C(h)^p \mathbb{E} \l(\sup_{t\in[0,T]} \int_{0}^{t}\l\langle \nabla\l( m_n(s) \times h\r) , \nabla m_n(s) \r\rangle_{L^2}\, dW(s) \r)^{p}\\
				\nonumber  &+  C(h)^p \mathbb{E} \sup_{t\in[0,T]} \large\bigg[\int_{0}^{t} \psi(|m_n(s)|_{L^{\infty}}) \psi(|P_n(m_n(s) \times (m_n(s) \times h))|_{L^{\infty}}) \centerdot \\
				\nonumber & \quad \psi(|P_n(m_n(s) \times h)|_{L^{\infty}})\l\langle \nabla\l( m_n(s) \times \l(m_n(s) \times h\r)\r) , \nabla m_n(s) \r\rangle_{L^2}\, dW(s)\bigg]^p \\
				\nonumber \leq &  \mathbb{E}|m_n(0)|_{H^1}^{2p} + \frac{C(\varepsilon)^p}{2^p}[C(h) + 1]^p \mathbb{E} \l(\int_{0}^{T}|u_n(s)|_{L^2}^2\, ds \r)^{p}\\
				\nonumber  &+  C(h)^p \mathbb{E} \int_{0}^{T}|m_n(s)|_{H^1}^{2p}\, ds + C(h) \mathbb{E}   \int_{0}^{T} \l|m_n(s)\r|_{H^1}^{2p}\, ds  \\
				\nonumber \leq &  \mathbb{E}|m_n(0)|_{H^1}^{2p} + \frac{C(\varepsilon)^p}{2}[C(h) + 1]^p C(K) \\
				\nonumber  &+ C(h)^p \mathbb{E} \int_{0}^{T}|m_n(s)|_{H^1}^{2p}\, ds + C(h) \mathbb{E}   \int_{0}^{T} \l|m_n(s)\r|_{H^1}^{2p}\, ds  \\
				\leq &  C + C(h) \mathbb{E} \int_{0}^{T}|m_n(s)|_{H^1}^{2p}\, ds + C(h) \mathbb{E}   \int_{0}^{T} \l|m_n(s)\r|_{H^1}^{2p}\, ds .\\
				\nonumber &\text{(Constants condensed into $C,C(h)$.)}
			\end{align}
		}
		\begin{align}
			\nonumber \mathbb{E}\sup_{t\in[0,T]}\l| \nabla m_n(t) \r|^{2p}_{L^2}  \leq & C(p)^p\mathbb{E}|m_n(0)|_{H^1}^{2p} + \frac{C(\varepsilon)^p}{2^p}[C(h) + 1]^p \mathbb{E} \l(\int_{0}^{T}|u_n(s)|_{L^2}^2\, ds \r)^{p} \\
			\nonumber   &+  C(h)^p \mathbb{E}  \int_{0}^{T}|m_n(s)|_{H^1}^{2p}\, ds \\
			\nonumber &+ C(h)^p \mathbb{E} \bigg|\int_{0}^{t}\l\langle \psi\big(m_n(s)\big)\nabla G_n\big(m_n(s)\big) , \nabla m_n(s) \r\rangle_{L^2}\, dW(s)\bigg|^{p}.
		\end{align}
		The inequalities established at the start of this proof enable us to write the following inequality.
		\begin{align}
			\mathbb{E}\sup_{t\in[0,T]}\l| \nabla m_n(t) \r|^{2p}_{L^2}  \leq & C(p)^p\mathbb{E}\l|m_n(0)\r|_{H^1}^{2p}  + C(h) \mathbb{E} \int_{0}^{T}|m_n(s)|_{H^1}^{2p}\, ds  .
		\end{align}
		The constants $C,C(h)$ may depend on $p, K, h, \varepsilon, m_0$ but not on $n$ and may vary from line to line.

		\dela{
			The following argument has been used in the previous calculations.
			Since $\{e_n\}_{n\in\mathbb{N}}$ is an orthogonal basis of $H^1$,
			\begin{align*}
				\l|m_n(0)\r|_{H^1}^2 = \l|P_n m(0)\r|_{H^1}^2 \leq \l|m_0\r|_{H^1}^2,\ \mathbb{P}\text{-a.s.}
			\end{align*}
			Hence
		}
		Since $m_n(0) = P_nm_0$, the following holds.
		\begin{align}
			\mathbb{E} \l|m_n(0)\r|_{H^1}^{2p} \leq \mathbb{E} \l|m_0\r|_{H^1}^{2p}.
		\end{align}
		By the assumptions on the initial data $m_0$, the right hand side of the above inequality is finite, thus allowing the first term on the right hand side of the previous inequality to be bounded by a constant.

		The idea now is to add the term $\mathbb{E}\sup_{t\in[0,T]}\l|m_n(t)\r|_{L^2}^{2p}$ to both sides of the inequality, as done in the proof of the bound \eqref{bound 2 without p} in Lemma \ref{bounds lemma 1 without p}. The left hand can therefore be replaced by $\mathbb{E}\sup_{t\in[0,T]}|m_n(t)|^{2p}_{H^1}$. On the right hand side, we use the bound \eqref{bound 1} to bound the added term by a constant.\\
		Hence the above inequality \dela{(for the choosing $\varepsilon$ small enough) }implies that there exists constants $C_1,C_2>0$ such that
		
		\begin{equation}\label{bounds lemma 1 intermediate equation}
			\mathbb{E}\sup_{t\in[0,T]}|m_n(t)|^{2p}_{H^1}  \leq C_1 + C_2 \mathbb{E} \int_{0}^{T}|m_n(s)|^{2p}_{H^1} \, ds \dela{+ C(h)\int_{0}^{T}|m_n(s)|_{H^1}^{2p} \, ds } .
		\end{equation}
		We now use the Grownall's inequality to get a constant $C_p>0$ such that
		
		\begin{equation}\label{bound on nabla m_n}
			\mathbb{E} \sup_{t\in [0,T]} |m_n(t)|_{H^1}^{2p} \leq C_p,\ n\in\mathbb{N}.
		\end{equation}
		This completes the proof of the bound \eqref{bound 2}.\\
		\textbf{Proof for the bound \eqref{bound 3}}:\\
		Consider the inequality \eqref{bound 2 intermediate inequality}. The first term on the left hand side is non-negative. Therefore the following inequality also holds. \dela{Taking $\sup_{t\in [0,T]}$ and then taking expectation of both the sides we get the bound \eqref{bound 3} by using the bound stated above.} 		
		\begin{align*}
			(\alpha \, - \varepsilon) \int_{0}^{t} |m_n(s) \times \Delta m_n(s)|_{L^2}^2 ds &\leq |m_n(0)|_{H^1}^2 + \frac{C(\varepsilon)}{2}[C(h) + 1]\int_{0}^{t}|u_n(s)|_{L^2}^2 ds \\
			&\quad+ C(h)\int_{0}^{t}|m_n(s)|_{H^1}^2 ds + C(h)\int_{0}^{t}|m_n(s)|_{H^1}^2 dW(s).
		\end{align*}
		We have chosen $\varepsilon \leq \frac{\alpha}{2}$. Multiplying by a suitable constant, raising the power of both sides to $p \geq 1$, followed by taking the expectation of both sides gives
		\begin{align*}
			&\mathbb{E} \int_{0}^{T} |m_n(s) \times \Delta m_n(s)|_{L^2}^{2p} ds\\
			&\leq C(p)\bigg[\mathbb{E}|m_n(0)|_{H^1}^{2p} + \frac{C(\varepsilon)^p}{2^p}[C(h) + 1]^pC(p)^p \mathbb{E} \left( \int_{0}^{T}|u_n(s)|_{L^2}^2 \, ds \right)^p  \\
			& \quad + C(h)^p \mathbb{E} \int_{0}^{T}|m_n(s)|_{H^1}^{2p} ds + C(h)^p \mathbb{E} \l( \int_{0}^{T}|m_n(s)|_{H^1}^{2} \, dW(s) \r)^{p}\bigg] \\
			& \leq C(p)\mathbb{E}|m_n(0)|_{H^1}^{2p} + C(\varepsilon , h , p , K)  + C(h , p , T) \mathbb{E} \sup_{s\in [0,T]}|m_n(s)|_{H^1}^{2p}.
		\end{align*}
		The inequality follows from the Jensen's inequality, combining the constants and using the\\
		Burkholder-Davis-Gundy inequality inequality. Also the constant $K$ arises due to $(4)$ in Assumption \eqref{assumption on u} on the control process $u$.
		
		Thus, there exists a constant $C_p>0$ such that
		\begin{align}\label{bound on m_n times Delta m_n}
			\mathbb{E} \l(\int_{0}^{T}|m_n(s) \times \Delta m_n(s) |_{L^2}^2ds\r)^p \leq C_p,\ n\in\mathbb{N}.
		\end{align}\\
		\textbf{Proof of the bound \eqref{bound 4}:}\\
		The proof is done using the bounds \eqref{bound on nabla m_n} and \eqref{bound on m_n times Delta m_n}, along with the continuous embedding $H^1 \hookrightarrow L^{\infty}$.
		\begin{align*}
			\int_{0}^{T} |m_n(s) \times (m_n(s) \times \Delta m_n(s))|_{L^2}^2 \, ds
			&\leq \int_{0}^{T} |m_n(s)|_{L^{\infty}}^2 |m_n(s) \times \Delta m_n(s)|_{L^2}^2 \, ds \\
			& \leq C \int_{0}^{T}|m_n(s)|_{H^1}^2 |m_n(s) \times \Delta m_n(s)|_{L^2}^2 \, ds \ \dela{\l(\text{By}\ H^1\hookrightarrow L^{\infty}\r)}\\
			& \leq C \sup_{r\in [0,T]} (|m_n(r)|_{H^1}^2) \int_{0}^{T} |m_n(s) \times \Delta m_n(s)|_{L^2}^2  \, ds.
		\end{align*}
		The proof follows by raising the power to $p$, taking the expectation of both sides and using the bounds \eqref{bound on nabla m_n} and \eqref{bound on m_n times Delta m_n}.		
		\dela{Similar to the previous steps, let $s\in[0,T]$ and $n\in\mathbb{N}$.
			\begin{align*}
				|m_n(s) \times (m_n(s) \times \Delta m_n(s))|_{L^2}^2 &\leq |m_n(s)|_{L^{\infty}}^2 |m_n(s) \times \Delta m_n(s)|_{L^2}^2 \\
				& \leq C |m_n(s)|_{H^1}^2 |m_n(s) \times \Delta m_n(s)|_{L^2}^2 \ \text{By}\ H^1\hookrightarrow L^{\infty}\\
				& \leq C |m_n(s) \times \Delta m_n(s)|_{L^2}^2 \sup_{s\in [0,T]} (|m_n(s)|_{H^1}^2).
			\end{align*}
			Hence by}By the above inequality followed by the Cauchy-Schwartz inequality, we get
		\begin{align*}
			&	\mathbb{E}\left( \int_{0}^{T} |m_n(s) \times (m_n(s) \times \Delta m_n(s))|_{L^2}^2\, ds \right)^{p} \\
			&\leq C  \mathbb{E} \left( \sup_{s\in [0,T]} (|m_n(s)|_{H^1}^2) \int_{0}^{T} |m_n(s) \times \Delta m_n(s)|_{L^2}^2 \, ds\right)^{p} \\
			& \leq C \l[ \mathbb{E} \left( \int_{0}^{T} |m_n(s) \times \Delta m_n(s)|_{L^2}^2\, ds\right)^{2p} \r]^{\frac{1}{2}}
			\l[ \mathbb{E} \sup_{s\in [0,T]} |m_n(s)|_{H^1}^{4p} \r]^{\frac{1}{2}}.
		\end{align*}
		The last inequality follows by the H\"older's inequality.
		Thus there exists a constant $C>0$ such that for each $n\in\mathbb{N}$, the following inequality holds:
		\begin{align}\label{bound on m_n times m_n times Delta m_n}
			\mathbb{E}\l[\l(\int_{0}^{T}|m_n(s)\times(m_n(s)\times\Delta m_n(s))|_{L^{2}}^2\r)^p\r]
			\leq C_{2p}.
		\end{align}
		This completes the proof of the bound \eqref{bound 4}.\\
		\textbf{Proof of the bounds \eqref{bound 5} and \eqref{bound 6}:}\\
		Let $s\in[0,T]$. There exists a constant $C>0$, independent of $n,s$ such that
		\begin{align*}
			|m_n(s) \times u_n(s)|_{L^2}^2 &\leq |m_n(s)|_{L^{\infty}}^2|u_n(s)|_{L^2}^2  \leq C|m_n(s)|_{H^1}^2 |u_n(s)|_{L^2}^2.
		\end{align*}
		The above inequality uses the following continuous embedding:
		$$H^1\hookrightarrow L^{\infty}.$$
		Therefore by H\"older's inequality, followed by Jensen's inequality we have
		\dela{\begin{align*}
				\mathbb{E} \l[ \int_{0}^{t} |m_n(s) \times u_n(s)|_{L^2}^2 \, ds \r]^{\frac{p}{2}} &\leq C \mathbb{E} \l[ \int_{0}^{t} |m_n(s)|_{H^1}^2 |u_n(s)|_{L^2}^2 \, ds \r]^{\frac{p}{2}} \\
				& \leq C \mathbb{E} \sup_{s\in[0,t]}|m_n(s)|_{H^1}^p \l( \int_{0}^{t}  |u_n(s)|_{L^2}^2 \, ds \r)^{\frac{p}{2}} \\
				& \leq C \l( \mathbb{E} \sup_{s\in[0,t]}|m_n(s)|_{H^1}^{2p} \r) \l[\mathbb{E}  \l( \int_{0}^{t}  |u_n(s)|_{L^2}^2 \, ds\r)^{\frac{p}{2}} \r]
		\end{align*}}
		
		\begin{align*}
			\mathbb{E} \l[ \int_{0}^{T} |m_n(s) \times u_n(s)|_{L^2}^2 \, ds \r]^{p} &\leq C \mathbb{E} \l[ \int_{0}^{T} |m_n(s)|_{H^1}^2 |u_n(s)|_{L^2}^2 \, ds \r]^{p} \\
			& \leq C \mathbb{E}\l[ \sup_{s\in[0,T]}|m_n(s)|_{H^1}^{2p} \l( \int_{0}^{T}  |u_n(s)|_{L^2}^2 \, ds \r)^{p}\r] \\
			(\text{By Cauchy-Schwartz inequality})\ & \leq C \l( \mathbb{E} \sup_{s\in[0,T]}|m_n(s)|_{H^1}^{4p} \r)^{\frac{1}{2}} \l[\mathbb{E}  \l( \int_{0}^{T}  |u_n(s)|_{L^2}^2 \, ds\r)^{2p} \r]^{\frac{1}{2}}.
		\end{align*}
		Thus by the bound \eqref{bound 2}, there exists a constant $C_{2p}>0$ such that
		\begin{equation}\label{bound on m_n times u_n}
			\mathbb{E}\l[\l(\int_{0}^{T}|m_n(s)\times u_n(s)|_{L^{2}}^2 \, ds\r)^{p}\r]\leq C_{2p},\ \forall n\in\mathbb{N}.
		\end{equation}
		
		For the bound \eqref{bound 6},
		\begin{align*}
			\l|m_n(s) \times ( m_n(s) \times u_n(s))\r|_{L^2}^2 &\leq \l|m_n(s)\r|_{L^{\infty}}^2 \l| m_n(s) \times u_n(s) \r|_{L^2}^2 \\
			& \leq C \l|m_n(s)\r|_{H^1}^2 \l| m_n(s) \times u_n(s) \r|_{L^2}^2 \ \text{By}\ H^1\hookrightarrow L^{\infty} \\
			& \leq C \l| m_n(s) \times u_n(s) \r|_{L^2}^2 \sup_{s\in[0,t]} \l|m_n(s)\r|_{H^1}^2.
		\end{align*}
		Hence by the H\"older's inequality,
		\begin{align*}
			\mathbb{E}& \l[ \int_{0}^{T} \l|m_n(s) \times ( m_n(s) \times u_n(s))\r|_{L^2}^2 \, ds\r]^{p} \\
			& \leq C \mathbb{E} \l[  \sup_{s\in[0,T]} \l|m_n(s)\r|_{H^1}^2   \int_{0}^{T}\l| m_n(s) \times u_n(s)\r|_{L^2}^2 \, ds \r]^{p} \\
			&\leq C \mathbb{E} \l[\sup_{s\in[0,t]} \l|m_n(s)\r|_{H^1}^{2p}  \l( \int_{0}^{t}\l| m_n(s) \times u_n(s)\r|_{L^2}^2 \, ds \r)^{p}\r] \\
			& \leq C \l( \mathbb{E}   \sup_{s\in[0,T]} \l|m_n(s)\r|_{H^1}^{4p} \r)^{\frac{1}{2}} \l( \mathbb{E} \l[ \int_{0}^{T}\l| m_n(s) \times u_n(s)\r|_{L^2}^2 \, ds \r]^{2p} \r)^{\frac{1}{2}} .
		\end{align*}		
		Therefore there exists a constant $C_{4p}>0$ such that
		\begin{equation}
			\mathbb{E} \l[ \int_{0}^{T} \l| m_n(s) \times \big( m_n(s) \times u_n(s) \big) \r|_{L^2}^2 \, ds\r]^{p} \leq C_{4p}.
		\end{equation}
		\dela{
			Hence by the bound \eqref{bound 2}, there exists a constant $C>0$ such that
			\begin{equation}\label{bound on mn times mn times un}
				\mathbb{E}\l[\l(\int_{0}^{T}|m_n(s) \times (m_n(s) \times u_n(s))|_{L^{2}}^2 \, ds\r)^{p}\r]\leq C,\ \forall n\in\mathbb{N}.
			\end{equation}
		}
		This concludes the proof of Lemma \ref{bounds lemma 1}.
	\end{proof}
	Having shown some uniform energy estimates, we now proceed to show some more uniform bounds for $m_n$.

	\begin{lemma}\label{bounds lemma}
		For each $\gamma\in (0,\frac{1}{2})$ and $p\geq 2$, then there exists a constant $C>0$ such that for all $n\in\mathbb{N}$, the following estimate holds.
		\begin{equation}\label{W^gamma,p bound on m_n}
			\mathbb{E}\l[|m_n|^2_{W^{\gamma,p}\l(0,T;L^2\r)}\r]\leq C.
		\end{equation}

	\end{lemma}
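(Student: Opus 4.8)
The plan is to decompose $m_n$ according to the approximate equation \eqref{definition of solution Faedo Galerkin approximation} into a constant-in-time part, a drift part and a stochastic part, and to estimate the $W^{\gamma,p}(0,T;L^2)$ norm of each separately. Write $m_n(t) = P_n(m_0) + D_n(t) + S_n(t)$, where
\[
D_n(t) = \int_0^t v_n(s)\, ds, \qquad S_n(t) = \int_0^t \psi\big(m_n(s)\big) G_n\big(m_n(s)\big)\, dW(s),
\]
and $v_n$ collects the five drift terms $P_n(m_n\times\Delta m_n)$, $\alpha \, P_n(m_n\times(m_n\times\Delta m_n))$, $P_n(m_n\times u_n)$, $\alpha \, \psi(m_n)P_n(m_n\times(m_n\times u_n))$ and $\tfrac12\psi(m_n)^2[DG_n(m_n)](G_n(m_n))$. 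Recalling the Gagliardo--Slobodeckij norm
\[
\|f\|_{W^{\gamma,p}(0,T;L^2)}^p = \int_0^T |f(t)|_{L^2}^p\, dt + \int_0^T\!\!\int_0^T \frac{|f(t)-f(s)|_{L^2}^p}{|t-s|^{1+\gamma p}}\, dt\, ds,
\]
and using $(a+b+c)^2\le 3(a^2+b^2+c^2)$, it suffices to bound $\mathbb{E}\|P_n(m_0)\|^2_{W^{\gamma,p}}$, $\mathbb{E}\|D_n\|^2_{W^{\gamma,p}}$ and $\mathbb{E}\|S_n\|^2_{W^{\gamma,p}}$ separately. The first is immediate: $P_n(m_0)$ is independent of $t$, so its seminorm vanishes and $\|P_n(m_0)\|_{W^{\gamma,p}} = T^{1/p}|P_n(m_0)|_{L^2} \le T^{1/p}|m_0|_{L^2}$.

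For the drift I would first show that $v_n$ is bounded in $L^2(\Omega; L^2(0,T;L^2))$ uniformly in $n$. Since $P_n$ is a contraction and $|\psi|\le 1$, the four cross-product terms are controlled in $\mathbb{E}\int_0^T|\cdot|_{L^2}^2\, ds$ by the bounds \eqref{bound 3}, \eqref{bound 4}, \eqref{bound 5} and \eqref{bound 6} with $p=1$, while the It\^o correction term is a polynomial of degree at most $3$ in $m_n$ by Lemma \ref{Lemma G is a polynomial map} and Proposition \ref{prop-derivative}, so that $|[DG_n(m_n)](G_n(m_n))|_{L^2}\le C(1+|m_n|_{H^1}^3)$ via $H^1\hookrightarrow L^\infty$, with finite $L^2(0,T;L^2)$-moment by \eqref{bound 2}. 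Hence $\mathbb{E}\int_0^T|v_n(s)|_{L^2}^2\, ds\le C$ uniformly in $n$. By the Cauchy--Schwarz inequality, for $0\le s\le t\le T$,
\[
|D_n(t)-D_n(s)|_{L^2} = \Big|\int_s^t v_n(r)\, dr\Big|_{L^2} \le |t-s|^{1/2}\Big(\int_0^T|v_n(r)|_{L^2}^2\, dr\Big)^{1/2},
\]
so $D_n$ is H\"older-$\tfrac12$ in time with constant $\|v_n\|_{L^2(0,T;L^2)}$. Substituting this into the Slobodeckij seminorm gives
\[
|D_n|_{W^{\gamma,p}}^p \le \|v_n\|_{L^2(0,T;L^2)}^p \int_0^T\!\!\int_0^T |t-s|^{\frac p2 - 1 - \gamma p}\, dt\, ds,
\]
and the last double integral is finite precisely because $\tfrac p2 - 1 - \gamma p > -1$ for $\gamma<\tfrac12$. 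Together with the analogous $L^p$-estimate this yields $\|D_n\|_{W^{\gamma,p}}\le C(\gamma,p,T)\|v_n\|_{L^2(0,T;L^2)}$, and taking squares and expectations gives $\mathbb{E}\|D_n\|_{W^{\gamma,p}}^2\le C$.

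For the stochastic part I would invoke the time-regularity estimate for It\^o integrals from \cite{Flandoli_Gatarek}: for $p\ge 2$ and $\gamma\in(0,\tfrac12)$ there is a constant $C$ with
\[
\mathbb{E}\,\Big\|\int_0^\cdot \psi\big(m_n(s)\big)G_n\big(m_n(s)\big)\, dW(s)\Big\|_{W^{\gamma,p}(0,T;L^2)}^p \le C\,\mathbb{E}\int_0^T \big|\psi\big(m_n(s)\big)G_n\big(m_n(s)\big)\big|_{L^2}^p\, ds.
\]
Since $|\psi|\le 1$ and, by Lemma \ref{Lemma G is a polynomial map} together with $H^1\hookrightarrow L^\infty$, $|G_n(m_n)|_{L^2}\le C(1+|m_n|_{H^1})|m_n|_{H^1}\le C(1+|m_n|_{H^1}^2)$, the right-hand side is bounded by $C\,T\,\mathbb{E}\sup_{t\in[0,T]}(1+|m_n(t)|_{H^1}^{2p})$, which is finite and uniform in $n$ by \eqref{bound 2}. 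Finally, Jensen's inequality (using $p\ge 2$) converts this $p$-th moment bound into $\mathbb{E}\|S_n\|_{W^{\gamma,p}}^2\le\big(\mathbb{E}\|S_n\|_{W^{\gamma,p}}^p\big)^{2/p}\le C$. Combining the three estimates yields \eqref{W^gamma,p bound on m_n}. The step demanding the most care is the stochastic one: one must verify the hypotheses of the Flandoli--Gatarek lemma and ensure that the threshold $\gamma<\tfrac12$ --- which appears both in the seminorm computation for $D_n$ and in the admissible range for the stochastic integral --- is respected throughout.
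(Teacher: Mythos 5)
Your proposal is correct and takes essentially the same route as the paper: the same decomposition of $m_n$ via \eqref{definition of solution Faedo Galerkin approximation}, the same reduction of the drift terms to a uniform bound on the integrands in $L^2(\Omega;L^2(0,T;L^2))$ via Lemma \ref{bounds lemma 1}, and the same Flandoli--Gatarek-type estimate (Lemma \ref{Lemma reference W alpha p bound for stochastic integral}) for the It\^o integral. The only difference is cosmetic: where the paper cites the continuous embedding $W^{1,2}(0,T;L^2)\hookrightarrow W^{\gamma,p}(0,T;L^2)$ (Corollary 18 of \cite{Simons}), you reprove it by hand through the H\"older-$\tfrac12$ modulus of continuity and a direct computation of the Slobodeckij seminorm, which is valid precisely because $\tfrac{p}{2}-1-\gamma p>-1$ when $\gamma<\tfrac12$.
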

	\begin{proof}[Proof of Lemma \ref{bounds lemma}]
		We show that each term on the right hand side of the approximate equation \eqref{definition of solution Faedo Galerkin approximation}  satisfy the above bound and hence the process $m$ satisfies the bound as well. \dela{The bounds established in the Theorem \ref{bounds lemma 1} along with the continuous embedding (By \cite{Simons} Corollary 18)}
		The bounds established in the Lemma \ref{bounds lemma 1} imply that each integrand (except the It\^o integrals) on the right hand side of \eqref{definition of solution Faedo Galerkin approximation} is uniformly bounded in the space $L^2(\Omega ; L^2(0,T;L^2))$, which implies that the integrals lie in the space $L^2(\Omega ; W^{1,2}(0,T;L^2))$. This combined with the continuous embedding (By Corollary 18, \cite{Simons})
		\begin{equation}\label{embedding W 1,2 into W gamma, p}
			W^{1,2}(0,T;L^2)\hookrightarrow W^{\gamma ,p}(0,T;L^2).
		\end{equation}
		concludes the inequality. What remain are the It\^o integrals.
		For those terms, we use Lemma \ref{Lemma reference W alpha p bound for stochastic integral}. Again, the bounds in Lemma \ref{bounds lemma 1} make sure that the required hypotheses are satisfied.
		\dela{\textbf{A change in notation:} In the proof for Lemma \ref{bounds lemma}, we use the following notation.
			\begin{align*}
				G_n(m_n(s)) =& P_n\b(m_n(s) \times h - \alpha \, \psi(|m_n(s)|_{L^{\infty}}) \psi(|P_n(m_n(s) \times h)|_{L^{\infty}}) \centerdot \\
				& \psi(|P_n(m_n(s) \times (m_n(s) \times h))|_{L^{\infty}}) m_n(s) \times \l(m_n(s \times h)\r)\b).
			\end{align*}
			The difference is that in the previous definition of $G_n$, the bump function $\psi$ was not involved. This notation is changed for convenience in writing and will only be restricted to the proof of Lemma \ref{bounds lemma}.
			
			The proof is mainly split into two steps. One step deals with the terms without the It\^o integral. The second step deals with the It\^o integral.\\
			\textbf{Step 1:}
			
			Define a function $f$ by
			\begin{equation*}
				f(t) := \int_{0}^{t} \l(m_n(s) \times \Delta m_n(s)\r) \, ds\ \mathbb{P}-\text{a.s.}
			\end{equation*}
			Then, by the bound \eqref{bound on m_n times Delta m_n}, there exists a constant $ C_1>0 $ such that
			\begin{align*}
				\mathbb{E} |f|_{L^2(0,T;L^2)}^2 &= \mathbb{E} \int_{0}^{T} \l| \int_{0}^{t} m_n(s) \times \Delta m_n(s) ds \r|_{L^2}^2 \,  dt \\
				& = \mathbb{E} \int_{0}^{T} \int_{\mathcal{O}} \l|\int_{0}^{t} m_n(s,x)\times \Delta m_n(s,x) \, ds\r|^2 dx dt \\
				& \leq \mathbb{E} \int_{0}^{T} \int_{\mathcal{O}} \int_{0}^{t} \l| m_n(s,x)\times \Delta m_n(s,x) \r|^2 \, ds \, dx \, dt \\
				& \leq \mathbb{E} \int_{0}^{T}  \int_{0}^{t} \int_{\mathcal{O}} \l| m_n(s,x)\times \Delta m_n(s,x) \r|^2 \, dx \, ds \, dt \\
				& \leq \mathbb{E} \int_{0}^{T} \int_{0}^{t} \l| m_n(s) \times \Delta m_n(s) \r|_{L^2}^2  \, ds \, dt \\
				& \leq \mathbb{E} \int_{0}^{T} \int_{0}^{T} \l| m_n(s) \times \Delta m_n(s) \r|_{L^2}^2  \, ds \, dt \\
				&\leq \int_{0}^{T} \mathbb{E} \int_{0}^{T} \l| m_n(s) \times \Delta m_n(s) \r |_{L^2}^2 \, ds \, dt \\
				& \leq \int_{0}^{T} C_1 \, dt \\
				& \leq TC_1.
			\end{align*}
			
			Note that $f$ is weakly differentiable on $[0,T]$ and
			\begin{equation*}
				\frac{d f}{dt}(t) = m_n(t) \times \Delta m_n(t),\ \text{for}\ t\in[0,T].
			\end{equation*}
			in the weak sense.

			Hence, by the bound \eqref{bound on m_n times Delta m_n}, there exists a constant $C_2$ such that
			\begin{align*}
				\mathbb{E} \left| \frac{d f(t)}{dt} \right|_{L^2(0,T;L^2)}^2 & = \mathbb{E} \left| \frac{d}{dt} \int_{0}^{t} m_n(s) \times \Delta m_n(s) \, ds \right|_{L^2(0,T;L^2)}^2 \\
				& = \mathbb{E} \int_{0}^{T} \left| \frac{d}{dt} \int_{0}^{t} m_n(s) \times \Delta m_n(s) ds \right|_{L^2}^2 \, dt \\
				& = \mathbb{E} \int_{0}^{T} \left| m_n(t) \times \Delta m_n(t)  \right|_{L^2}^2 \, dt \\
				&\leq C_2.
			\end{align*}
			Thus, combining the above two estimates, we get a constant $C$ such that
			\begin{equation}\label{W 1,2 bound on general function f}
				\mathbb{E}|f|^2_{W^{1,2}(0,T;L^2)} \leq C.
			\end{equation}
			Thus by the embedding \eqref{embedding W 1,2 into W gamma, p} and the above estimate \eqref{W 1,2 bound on general function f}, there exist constants $C_3,C>0$ such that
			\begin{equation}\label{W gamma, p bound on general function f}
				\mathbb{E}|f|_{W^{\gamma,p}(0,T;L^2)} \leq C_3 \mathbb{E}|f|_{W^{1,2}(0,T;L^2)} \leq C.
			\end{equation}
			Here we observe that to show the bound \eqref{W gamma, p bound on general function f}, the only property of the function $f$ that was used was that $f\in L^2(\Omega ; L^2(0,T ; L^2))$. Hence to show that each of the terms on the right hand side of the equation \eqref{definition of solution Faedo Galerkin approximation} (except the term with the It\^o integral) are uniformly bounded in $L^2(\Omega ; W^{\gamma , p}(0,T;(H^1)^\p) )$, it is sufficient to show that each of the integrands are uniformly bounded in the space $L^2(\Omega ; L^2(0,T ; L^2))$.
			
			The assumption on the initial data $m_0$ and the bounds obtained in Lemma \ref{bounds lemma 1} imply that the only terms that is left is the correction term $DG(m_n(s))(m_n(s))$. We now show that the term satisfies the required bounds.\\
			We recall that the term $DG(m_n(s))(m_n(s))$ is given by the following expression.
			\begin{align*}
				DG(m_n(s))(m_n(s)) =& \frac{1}{2}  P_n(P_n(m_n(s) \times h) \times h) \\
				&- \alpha
				\psi(|m_n(s)|_{L^{\infty}})\psi(|P_n(m_n(s) \times h)|_{L^{\infty}}) \psi(|P_n(m_n(s) \times (m_n(s) \times h))|_{L^{\infty}}) \centerdot \\
				& \quad P_n(P_n(m_n(s) \times (m_n(s) \times h)) \times h) \\
				& - \alpha \, \psi(|m_n(s)|_{L^{\infty}}) \psi(|P_n(m_n(s) \times h)|_{L^{\infty}}) \psi(|P_n(m_n(s) \times (m_n(s) \times h))|_{L^{\infty}})  \centerdot \\
				& \quad P_n(P_n(m_n(s) \times h) \times (m_n(s) \times h))  \\
				&- \alpha \,  P_n(m_n(s) \times (P_n(m_n(s) \times h) \times h)) \\
				& + \alpha^2  \psi^2(|m_n(s)|_{L^{\infty}})  \psi(|P_n(m_n(s) \times (m_n(s) \times h))|_{L^{\infty}})^2 \centerdot \\
				& \quad\psi(|P_n(m_n(s) \times h)|_{L^{\infty}})^2P_n(P_n(m_n(s) \times (m_n(s) \times h)) \times (m_n(s) \times h)) \\
				& + \alpha^2  P_n(m_n(s) \times (P_n(m_n(s) \times (m_n(s) \times h))\times h)) \\
				=& \sum_{i=1}^{6} C_i I_i(s).
			\end{align*}
			Owing to the triangle inequality, it is sufficient to show the bound for each $I_i, i=1,\dots, 6$.\\
			\textbf{Calculation for $I_1$.}
			
			Let $s\in[0,T]$ and $n\in\mathbb{N}$.
			\begin{align*}
				\l| P_n(P_n(m_n(s) \times h) \times h) \r|_{L^2} &\leq \l| P_n(m_n(s) \times h) \times h \r|_{L^2} \\
				& \leq \l| P_n(m_n(s) \times h) \r|_{L^2}\l| h \r|_{L^{\infty}} \\
				& \leq \l| m_n(s) \times h \r|_{L^2}\l| h \r|_{L^{\infty}} \\
				& \leq \l| m_n(s) \r|_{L^2}\l| h \r|^2_{L^{\infty}}.
			\end{align*}
			Hence there exists a constant $C>0$ such that
			\begin{align*}
				\mathbb{E}\int_{0}^{T} \l|I_3(t)\r|_{L^2}^2 \, dt = \mathbb{E} \int_{0}^{T} \l| P_n(P_n(m_n(s) \times h) \times h) \r|_{L^2}^2 \, ds & \leq \mathbb{E} \int_{0}^{T}\l| m_n(s) \r|^2_{L^2}\l| h \r|^4_{L^{\infty}} \, ds \\
				& \leq C(h) \mathbb{E}\sup_{s\in[0,T]} \l| m_n(s) \r|^2_{L^2} < C.
			\end{align*}
			The last inequality holds due to the bound \eqref{bound on m_n}.\\
			\textbf{Calculation for $I_2$.}
			Let $s\in[0,T]$ and $n\in\mathbb{N}$.
			\begin{align*}
				& | \psi(|m_n(s)|_{L^{\infty}})\psi(|P_n(m_n(s) \times h)|_{L^{\infty}}) \psi(|P_n(m_n(s) \times (m_n(s) \times h))|_{L^{\infty}}) \centerdot \\
				& \quad P_n(P_n(m_n(s) \times (m_n(s) \times h)) \times h) |_{L^2} \\
				& \leq \psi(|m_n(s)|_{L^{\infty}})\psi(|P_n(m_n(s) \times h)|_{L^{\infty}}) \psi(|P_n(m_n(s) \times (m_n(s) \times h))|_{L^{\infty}}) \centerdot\\
				& \quad \l| P_n(P_n(m_n(s) \times (m_n(s) \times h)) \times h) \r|_{L^2} \\
				& \leq \psi(|P_n(m_n(s) \times (m_n(s) \times h))|_{L^{\infty}}) \l| P_n(m_n(s) \times (m_n(s) \times h)) \times h \r|_{L^2} \\
				& \leq \psi(|P_n(m_n(s) \times (m_n(s) \times h))|_{L^{\infty}}) \l| P_n(m_n(s) \times (m_n(s) \times h))\r|_{L^{\infty}} \l| h \r|_{L^{\infty}} \\
				& \leq C(h).
			\end{align*}
			Hence there exists a constant $C>0$ such that
			\begin{align}
				\mathbb{E}\int_{0}^{T} \l|I_2(t)\r|_{L^2}^2\, dt \leq C.
			\end{align}
			\textbf{Calculation for $I_3$.}
			
			Let $s\in[0,T]$ and $n\in\mathbb{N}$.
			\begin{align*}
				&\big| \psi(|m_n(s)|_{L^{\infty}}) \psi(|P_n(m_n(s) \times h)|_{L^{\infty}}) \psi(|P_n(m_n(s) \times (m_n(s) \times h))|_{L^{\infty}})  \centerdot \\
				& \quad  P_n(P_n(m_n(s) \times h) \times (m_n(s) \times h)) \big|_{L^2} \\
				& \leq  \psi(|m_n(s)|_{L^{\infty}}) \psi(|P_n(m_n(s) \times h)|_{L^{\infty}}) \psi(|P_n(m_n(s) \times (m_n(s) \times h))|_{L^{\infty}})  \centerdot \\
				& \quad  \l| P_n(P_n(m_n(s) \times h) \times (m_n(s) \times h)) \r|_{L^2} \\
				& \leq  \psi(|m_n(s)|_{L^{\infty}}) \psi(|P_n(m_n(s) \times h)|_{L^{\infty}}) \psi(|P_n(m_n(s) \times (m_n(s) \times h))|_{L^{\infty}}) \centerdot \\
				& \quad  \l| P_n(m_n(s) \times h) \times (m_n(s) \times h) \r|_{L^2} \\
				& \leq \psi(|m_n(s)|_{L^{\infty}}) \l|P_n(m_n(s) \times h)\r|_{L^2} \psi(|P_n(m_n(s) \times h)|_{L^{\infty}}) \l| (m_n(s) \times h) \r|_{L^2} \\
				& \leq \psi(|m_n(s)|_{L^{\infty}}) \l| m_n(s) \times h \r|_{L^2} \psi(|P_n(m_n(s) \times h)|_{L^{\infty}}) \l| (m_n(s) \times h) \r|_{L^2} \\
				& \leq C \psi(|m_n(s)|_{L^{\infty}}) \l| m_n(s) \r|_{L^{\infty}} \l| h \r|_{L^{\infty}} \psi(|P_n(m_n(s) \times h)|_{L^{\infty}}) \l| (m_n(s) \times h) \r|_{L^{\infty}} \\
				&  \leq C(h).
			\end{align*}
			Hence there exists a constant $C>0$ such that
			\begin{align}
				\mathbb{E}\int_{0}^{T} \l|I_3(t)\r|_{L^2}^2 \, dt \leq C.
			\end{align}
			The term $I_4$ can be estimated similarly.\\
			\textbf{Calculation for $I_5$.}\\
			Let $s\in[0,T]$ and $n\in\mathbb{N}$.
			\begin{align*}
				&\b|\psi^2(|m_n(s)|_{L^{\infty}}) \psi(|P_n(m_n(s) \times h)|_{L^{\infty}})^2 \psi(|P_n(m_n(s) \times (m_n(s) \times h))|_{L^{\infty}})^2 \centerdot \\
				&\quad P_n(P_n(m_n(s) \times (m_n(s) \times h)) \times (m_n(s) \times h))\b|_{L^2} \\
				\leq& \psi^2(|m_n(s)|_{L^{\infty}}) \psi(|P_n(m_n(s) \times h)|_{L^{\infty}})^2 \psi(|P_n(m_n(s) \times (m_n(s) \times h))|_{L^{\infty}})^2 \centerdot \\
				& \quad \b|P_n(m_n(s) \times (m_n(s) \times h)) \times (m_n(s) \times h) \b|_{L^2} \\
				\leq& \psi^2(|m_n(s)|_{L^{\infty}})\l|(m_n(s) \times h) \r|_{L^{\infty}}\psi(|P_n(m_n(s) \times (m_n(s) \times h))|_{L^{\infty}})^2 \centerdot \\
				& \quad \b|P_n(m_n(s) \times (m_n(s) \times h)) \b|_{L^{\infty}} \\
				\leq& C(h).
			\end{align*}
			Hence there exists a constant $C>0$ such that
			\begin{align}
				\mathbb{E}\int_{0}^{T} \l|I_5(t)\r|_{L^2}^2 \, dt \leq C.
			\end{align}
			\textbf{Calculation for $I_6$.}
			
			Let $s\in[0,T]$ and $n\in\mathbb{N}$.
			\begin{align*}
				\l| P_n(m_n(s) \times (P_n(m_n(s) \times (m_n(s) \times h))\times h)) \r|_{L^2} &\leq \l| m_n(s) \times (P_n(m_n(s) \times (m_n(s) \times h))\times h) \r|_{L^2} \\
				& \leq  \l| m_n(s) \r|_{L^{\infty}} \l| P_n(m_n(s) \times (m_n(s) \times h)) \r|_{L^2}\l| h \r|_{L^{\infty}} \\
				& \leq  \l| m_n(s) \r|_{L^{\infty}} \l| m_n(s) \times (m_n(s) \times h) \r|_{L^2}\l| h \r|_{L^{\infty}} \\
				& \leq \l| m_n(s) \r|_{L^{\infty}}^2 \l| h \r|_{L^{\infty}}^2 \l| m_n(s)\r|_{L^2} \\
				& \leq C C(h) \l|m_n(0)\r|_{L^2} \l| m_n(s) \r|_{H^1}^2 \ \text{By}\ H^1\hookrightarrow L^{\infty}  \\
				& \leq C C(h) \l|m(0)\r|_{L^2} \sup_{s\in[0,T]} \l| m_n(s) \r|_{H^1}^2.
			\end{align*}
			Hence by the bounds \eqref{bound on m_n} and \eqref{bound on nabla m_n}, along with the assumption on the initial data $m(0)$, there exists a constant $C^\p >0$ such that
			\begin{align*}
				&\mathbb{E} \int_{0}^{T} \l| P_n(m_n(s) \times (P_n(m_n(s) \times (m_n(s) \times h))\times h)) \r|_{L^2} \\
				&\leq \l| m_n(s) \times (P_n(m_n(s) \times (m_n(s) \times h))\times h) \r|_{L^2}^2 \, ds \\ &
				\leq C^2 C(h)^2 \l|m(0)\r|_{L^2}^2\mathbb{E}  \sup_{s\in[0,T]} \l| m_n(s) \r|_{H^1}^4 \\
				& \leq  C^\p.
			\end{align*}
			That is
			\begin{align}
				\mathbb{E}\int_{0}^{T} \l|I_6(t)\r|_{L^2}^2 \, dt \leq C.
			\end{align}
			Combining the above calculations, we conclude that there exists a constant $C>0$ such that
			\begin{equation}
				\l| DG(m_n)(m_n) \r|_{L^2(\Omega ; L^2(0,T ; L^2))}^2 \leq C.
			\end{equation}
			This concludes the calculations for Step 1.\\
			\textbf{Step 2}
			
			Let us now consider the stochastic integral.
			
			The calculations presented here are for details. The Lemma \ref{Lemma reference W alpha p bound for stochastic integral}, see appendix of \cite{ZB+BG+TJ_Weak_3d_SLLGE} can be used to directly show the required result.

			Show that $G_n$(with the corresponding bump function) takes values in the required space.
			Define $F(t) := \int_{0}^{t} G_n(m_n(r)) dW(r)$ for $t\in[0,T]$. Let $p\geq 2$.

			
			\begin{align*}
				\mathbb{E}\l[\l|F(t) - F(s)\r|_{L^2}^p\r] &= \mathbb{E}\l[\l|\int_{s}^{t} G_n(m_n(r)) dW(r)\r|_{L^2}^p\r]\\
				& \leq \mathbb{E} \l[\sup_{q\in[s,t]}\l|\int_{s}^{q} G_n(m_n(r)) dW(r)\r|_{L^2}^p\r]\\
				& \leq \mathbb{E} \l[\l(\int_{s}^{t}|G_n(m_n(r))|_{L^2}\r)^p\r]\\
				& \leq \mathbb{E} \l[\l(\int_{s}^{t}|G_n(m_n(r))|_{L^2}^2\r)^{\frac{p}{2}}\r].
			\end{align*}
			Note that
			\begin{equation*}
				\int_{s\wedge t}^{s\vee t} \l|G_n(m_n(r))\r|_{L^2}^2dr = \l| \int_{0}^{t}\l|G_n(m_n(r))\r|_{L^2}^2 \, dr - \int_{0}^{s} \l|G_n(m_n(r))\r|_{L^2}^2 \, dr \r|.
			\end{equation*}.
			Here, $s \wedge t$ and $s \vee t$ denote $\min\{s,t\}$ and $\max\{s,t\}$ respectively.
			
			Now,
			\begin{align*}
				\mathbb{E} \int_{0}^{T}\int_{0}^{T} \frac{|F(t) - F(s)|_{L^2}^p}{|t-s|^{1+\gamma p}} dt ds &=  \int_{0}^{T}  \int_{0}^{T} \frac{\mathbb{E} [|F(t) - F(s)|_{L^2}^p] }{|t-s|^{1+\gamma p}} dt ds \\
				& \leq C_p \int_{0}^{T}  \int_{0}^{T} \frac{\mathbb{E} [(\int_{s \wedge t}^{s \vee t}|G_n(m_n(r))|_{L^2}^2 dr )^{\frac{p}{2}}]}{|t-s|^{1+\gamma p}} dt ds \\
				& = C_p\mathbb{E} \int_{0}^{T}  \int_{0}^{T} \frac{ (\int_{s \wedge t}^{s \vee t}|G_n(m_n(r))|_{L^2}^2 dr )^{\frac{p}{2}}}{|t-s|^{1+ (2\gamma) (\frac{p}{2})}} dt ds.
			\end{align*}
			In the above calculation, the second step results as follows:
			\begin{align*}
				\mathbb{E} \left[\left|\int_{s \wedge t}^{s \vee t}G_n(m_n(r)) \, dW(r) \right|_{L^2}^{p}\right] \leq
				\mathbb{E} \left[\sup_{ q \in [s\wedge t , s \vee t]}\left| \int_{s \wedge t}^{q}G_n(m_n(r)) \, dW(r) \right|_{L^2} ^{p}\right].
			\end{align*}
			Using the Burkholder-Davis-Gundy inequality, we get
			\begin{align*}
				\mathbb{E} \left[\sup_{ q \in [s\wedge t , s \vee t]}\left| \int_{s \wedge t}^{q}|G_n(m_n(r))|_{L^2}^2 \, dr \right| ^{\frac{p}{2}}\right] \leq C_p \mathbb{E} \left[\l(\int_{s \wedge t}^{s \vee t}|G_n(m_n(r))|_{L^2}^2 \, dr \r)^{\frac{p}{2}}\right].
			\end{align*}
			Here, we split the calculations into 2 parts: $p=2$ and $p>2$.\\
			\textbf{Part 1:} Let $p=2$.
			\begin{align*}
				&\int_{0}^{T}  \int_{0}^{T} \frac{ \mathbb{E} \l[ \l(\int_{s \wedge t}^{s \vee t}|G_n(m_n(r))|_{L^2}^2 \, dr \r)\r]}{|t-s|^{1+ 2\gamma}} \, dt \, ds	 \\
				& \leq  \int_{0}^{T}  \int_{0}^{T} \frac{ \mathbb{E} \l[ \l(\int_{0}^{T}|G_n(m_n(r))|_{L^2}^2 \, dr \r)\r]}{|t-s|^{1+ 2\gamma}} \, dt \, ds \\
				& \leq C\l(T\r) \mathbb{E} \l[ \l(\int_{0}^{T}|G_n(m_n(r))|_{L^2}^2 \, dr \r)\r] \int_{0}^{T}  \int_{0}^{T} \frac{ 1 }{|t-s|^{1+ 2\gamma}} \, dt \, ds.
			\end{align*}
			Now,
			\begin{align*}
				\int_{0}^{T} \int_{0}^{T} \frac{ 1 }{|t-s|^{1+ 2\gamma}} \, dt \, ds &=  \int_{0}^{T} \left( \lim_{\varepsilon \rightarrow 0} \int_{0}^{s-\varepsilon} \frac{ 1 }{(s-t)^{1+ 2\gamma}} \, dt \, ds + \int_{s + \varepsilon}^{T} \frac{ 1 }{(t-s)^{1+ 2\alpha}} dt  \right) \, ds \\
				& = \int_{0}^{T} \frac{1}{2\gamma} \left( \lim_{\varepsilon \rightarrow 0}\frac{-1}{\varepsilon^{2\gamma}} + \frac{1}{s^{2\gamma}} - \frac{1}{(T-s)^{2\gamma}} + \frac{1}{\varepsilon^{2\gamma}} \right) \, ds \\
				&= \int_{0}^{T} \frac{-1}{2\gamma}\left(\frac{1}{(T-s)^{2\gamma}} - \frac{1}{s^{2\gamma}}\right) \, ds .
			\end{align*}
			
			$\gamma < \frac{1}{2}$ implies that $2\gamma < 1$ and hence the right hand side of the above equation is finite.
			
			Thus,
			
			\begin{align*}
				\int_{0}^{T}  \int_{0}^{T} \frac{ \mathbb{E} \l[ \l(\int_{s \vee t}^{s \wedge t}\l|G_n(m_n(r))\r|_{L^2}^2 dr \r)\r]}{\l|t-s\r|^{1+ 2\gamma}} \, dt \, ds	 & \leq C\l(T,\gamma\r) \mathbb{E} \l[ \l(\int_{0}^{T}|G_n(m_n(r))|_{L^2}^2 \, dr \r)\r] \leq C.
			\end{align*}
			\textbf{Part 2:} Now let $p>2$. Define
			\begin{equation*}
				\mathbb{F}(t) := \int_{0}^{t}|G_n(m_n(r))|_{L^2}^2 \, dr \ \text{for}\ t\in[0,T].
			\end{equation*}
			\begin{align*}
				&	\mathbb{E} \int_{0}^{T}  \int_{s}^{T} \frac{ \l(\int_{s}^{t}\l|G_n(m_n(r))\r|_{L^2}^2 \, dr \r)^{\frac{p}{2}}}{\l|t-s\r|^{1+ (2\gamma) \l(\frac{p}{2}\r)}} \, dt \, ds \\
				&= \mathbb{E} \int_{0}^{T}  \int_{s}^{T} \frac{ \l(\int_{0}^{t}\l|G_n(m_n(r))\r|_{L^2}^2 \, dr - \int_{0}^{s}\l|G_n(m_n(r))\r|_{L^2}^2 dr\r)^{\frac{p}{2}}}{|t-s|^{1+ (2\gamma) \l(\frac{p}{2}\r)}} \, dt \, ds\\
				& \leq \mathbb{E} \l|\mathbb{F}\r|_{W^{2\gamma, \frac{p}{2}}(0,T;\mathbb{R})}^{\frac{p}{2}} \\
				& \leq \mathbb{E} \l|\mathbb{F}\r|_{W^{1, \frac{p}{2}}(0,T;\mathbb{R})}^{\frac{p}{2}} \\
				& \leq \mathbb{E} \left|\frac{d\mathbb{F}}{dt} \right|_{L^{\frac{p}{2}}(0,T;\mathbb{R})}^{\frac{p}{2}} + \mathbb{E} \l|\mathbb{F}\r|_{L^{\frac{p}{2}}(0,T;\mathbb{R})}^{\frac{p}{2}}.
			\end{align*}
			Note that as before, the $\frac{d\mathbb{F}}{dt}$ should be understood in the weak sense.
			\begin{align*}
				\mathbb{E} \left| \mathbb{F}\right|^{\frac{p}{2}}_{L^{\frac{p}{2}}(0,T;\mathbb{R})} &= \mathbb{E} \int_{0}^{T}|\int_{0}^{t} |G_n(m_n(r))|_{L^2}^2 dr|^{\frac{p}{2}} dt \\
				& \leq \mathbb{E} \int_{0}^{T}|\int_{0}^{T} |G_n(m_n(r))|_{L^2}^2 dr|^{\frac{p}{2}} dt \\
				& \leq \int_{0}^{T} C(h) dt \leq C.
			\end{align*}
			\begin{align*}
				\mathbb{E} \left|\frac{d \mathbb{F}}{dt}\right|_{L^{\frac{p}{2}}(0,T;\mathbb{R})}^{\frac{p}{2}}
				& \leq \mathbb{E} \left| |G_n(m_n(t))|_{L^2}^2 - |G_n(m_n(t))|_{L^2}^2   \right|_{L^{\frac{p}{2}}(0,T;\mathbb{R})}^{\frac{p}{2}} \\
				& \leq C.
			\end{align*}
			Thus, there exists a constant $C_1>0$ such that
			\begin{equation*}
				\mathbb{E} \int_{0}^{T}  \int_{s}^{T} \frac{ \l(\int_{s}^{t}|G_n(m_n(r))|_{L^2}^2 \, dr \r)^{\frac{p}{2}}}{|t-s|^{1+ (2\gamma) \l(\frac{p}{2}\r)}} \, dt \, ds \leq C_1.
			\end{equation*}
			Thus, there exists a constant $C>0$ such that
			\begin{equation*}
				\mathbb{E}\l|F\r|^{\frac{p}{2}}_{W^{\gamma,p}(0,T;L^2)} \leq C.
			\end{equation*}
			Thus, each term on the right hand side of the equality \eqref{definition of solution Faedo Galerkin approximation} satisfies the required bound. Hence there exists a constant $C>0$ such that

			\begin{equation}
				\mathbb{E}[|m_n|^2_{W^{\gamma,p}(0,T;L^2)}]\leq C.
			\end{equation}
			This concludes the proof of Lemma \ref{bounds lemma}.}

	\end{proof}
	We now recall a notation that will be used in the following sections.
	The space $L^2_w(0,T;L^2)$ denotes the space $L^2(0,T;L^2)$ endowed with the weak topology.

	\begin{lemma}\label{tightness lemma}
		Let $p\geq 1$ and $q\geq 2$. The set of laws $\l(\mathcal{L}(m_n,u_n)\r)_{n\in\mathbb{N}}$ is tight on the space $L^p(0,T;L^q)\cap C(0,T; L^2) \times L^2_w(0,T;L^2)$. Also, the law of the Wiener process $W$ is tight on the space $C([0,T] ; \mathbb{R})$.
		
	\end{lemma}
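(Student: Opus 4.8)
The plan is to treat each marginal separately: for each one I would exhibit a Banach space that injects \emph{compactly} into the relevant target space, show that the Galerkin sequence is bounded there in an $L^2(\Omega)$-sense, and then conclude tightness by a Chebyshev argument. Tightness of the joint law on the product then follows at once, since a product of compact sets is compact and the complement of a product is contained in the union of the complements of the factors.

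First I would fix the intermediate space for the sequence $(m_n)$. Combining the bound $\mathbb{E}\bigl[\sup_{t\in[0,T]}|m_n(t)|_{H^1}^{2}\bigr]\leq C$ from Lemma \ref{bounds lemma 1} with the fractional bound $\mathbb{E}|m_n|^2_{W^{\gamma,r}(0,T;L^2)}\leq C$ from Lemma \ref{bounds lemma}, the sequence $(m_n)$ is bounded in $L^2(\Omega;Y)$, where $Y:=L^\infty(0,T;H^1)\cap W^{\gamma,r}(0,T;L^2)$ and I choose the time-integrability exponent $r\geq\max(p,2)$ large enough that $\gamma r>1$; this is possible for any fixed $\gamma\in(0,\frac12)$ precisely because Lemma \ref{bounds lemma} is available for every $r\geq2$. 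In dimension one $H^1(\mathcal{O})\embed\embed L^q$ compactly for every $q\geq2$ (indeed $H^1\embed C(\overline{\mathcal{O}})$ is compact), so two applications of the compactness theorems of Simon, see \cite{Simon_Compact_Sets}, \cite{Simons}, yield the compact embeddings $Y\embed\embed L^p(0,T;L^q)$ (fractional Aubin--Lions--Simon lemma with $H^1\embed\embed L^q\embed L^2$) and $Y\embed\embed C([0,T];L^2)$ (using $\gamma r>1$, whence $W^{\gamma,r}(0,T;L^2)\embed C^{0,\gamma-1/r}([0,T];L^2)$, together with $H^1\embed\embed L^2$ and a vector-valued Arzel\`a--Ascoli argument). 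Hence $Y$ embeds compactly into the intersection $Z:=L^p(0,T;L^q)\cap C([0,T];L^2)$.

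Tightness of $(\mathcal{L}(m_n))$ is then immediate: for $R>0$ let $K_R$ be the closure in $Z$ of the ball $\{v : |v|_Y\leq R\}$, which is compact in $Z$ by the previous step, and by the Chebyshev inequality
\begin{equation*}
	\sup_{n}\mathbb{P}\bigl(m_n\notin K_R\bigr)\leq \sup_n\mathbb{P}\bigl(|m_n|_Y>R\bigr)\leq \frac{\sup_n\mathbb{E}|m_n|_Y^2}{R^2}\leq \frac{C}{R^2},
\end{equation*}
which tends to $0$ as $R\to\infty$. For the controls, point \eqref{Assumptions for existence of weak martingale solution point 4} of Assumption \ref{Assumptions for existence of weak martingale solution} together with the contraction property of $P_n$ gives $\sup_n\mathbb{E}\int_0^T|u_n(t)|_{L^2}^2\,dt\leq K_1<\infty$. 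Since $L^2(0,T;L^2)$ is a separable Hilbert space, its closed balls $B_R:=\{v:|v|_{L^2(0,T;L^2)}\leq R\}$ are compact and metrizable in the weak topology, hence are compact subsets of $L^2_w(0,T;L^2)$; the same Chebyshev estimate gives $\sup_n\mathbb{P}(u_n\notin B_R)\leq K_1/R^2\to0$, so $(\mathcal{L}(u_n))$ is tight on $L^2_w(0,T;L^2)$. Taking products of the compact sets above yields tightness of $(\mathcal{L}(m_n,u_n))$ on $Z\times L^2_w(0,T;L^2)$, and finally, the law of $W$ is independent of $n$ and $C([0,T];\mathbb{R})$ is Polish, so that single Borel measure is tight by Ulam's theorem.

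The genuinely routine ingredients here are the moment bounds (already established) and the Chebyshev estimates. The step demanding the most care is the choice of exponents making one fixed space $Y$ embed compactly into \emph{both} factors of the intersection simultaneously: securing compactness in $C([0,T];L^2)$ forces $\gamma r>1$ and hence the higher-integrability fractional bound of Lemma \ref{bounds lemma}, while the weak-topology tightness of $(u_n)$ rests on the standard but essential fact that bounded balls of the separable space $L^2(0,T;L^2)$ are metrizable and compact in the weak topology.
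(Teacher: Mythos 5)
Your proposal is correct, and its overall architecture -- uniform bounds in an intersection space of the type $L^\infty(0,T;H^1)\cap W^{\gamma,\cdot}(0,T;L^2)$, a compact embedding into $L^p(0,T;L^q)\cap C([0,T];L^2)$ via the Simon/Flandoli--Gatarek theorems, Chebyshev for the solution laws, Banach--Alaoglu for the controls, and tightness of the single law of $W$ on the Polish space $C([0,T];\mathbb{R})$ -- is the same as the paper's. There are, however, two genuine differences in execution. First, the paper routes the embedding into $L^p(0,T;L^q)$ through the fractional power spaces $X^\beta=D(A_1^\beta)$, invoking the compact embedding into $L^p(0,T;X^\beta)$ and then Henry's theorem to pass continuously from $X^\beta$ to $L^q$; you bypass this entirely by using the one-dimensional compactness $H^1\embed\embed C(\overline{\mathcal{O}})\embed L^q$ inside the Aubin--Lions--Simon lemma, which is simpler and uses the dimension-one hypothesis transparently. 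Second, and more substantively, you decouple the temporal exponent: you take $W^{\gamma,r}(0,T;L^2)$ with $r\geq\max(p,2)$ chosen so that $\gamma r>1$, which is legitimate because Lemma \ref{bounds lemma} holds for every exponent $\geq 2$. The paper instead takes $r=p$, so its compactness into $C([0,T];L^2)$ requires $\gamma p>1$ with $\gamma<\tfrac12$, i.e.\ implicitly $p>2$; this is inconsistent with the stated range $p\geq 1$ (and with Lemma \ref{bounds lemma} itself, which needs $p\geq 2$), and is harmless in the paper only because $p=q=4$ is fixed immediately afterwards. Your choice of exponents therefore proves the lemma in the generality in which it is stated, which the paper's own proof, read literally, does not.
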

	
	\begin{proof}
		First we show that the laws of $m_n$ are   concentrated inside a ball in the space $L^p(0,T;H^1)\cap W^{\gamma,p}(0,T ; L^2)$  .\dela{, which is compactly embedded into the space $L^p(0,T;L^q)\cap C(0,T; L^2)$. The compact embedding is justified later.}
		
		Let $R\geq 0$. By Chebyshev's inequality and the bounds established in  Lemma \ref{bounds lemma 1 without p} and Lemma \ref{bounds lemma}\dela{(in particular for $p=\infty$)}, there exists a constant $C>0$ such that
		\dela{	\begin{align*}
				\mathbb{P}\l( |m_n|_{L^p(0,T;H^1)\cap W^{\gamma,p}(0,T;L^2)} > R \r) &\leq 	\mathbb{P}\l(|m_n|^2_{L^p(0,T;H^1)} > \frac{R}{2}\r) + 	\mathbb{P}\l(|m_n|^2_{ W^{\gamma,p}(0,T;L^2)} > \frac{R}{2}\r) \\
				(\text{By Chebyshev's inequality})\ & \leq \frac{4}{R^2}\mathbb{E}\l|m_n\r|^2_{L^p(0,T;H^1)} + \frac{4}{R^2}\mathbb{E}\l|m_n\r|^2_{W^{\gamma,p}(0,T;L^2)} \\
				(\text{By Lemma \ref{bounds lemma 1 without p}, Lemma \ref{bounds lemma}})\ & \leq \frac{C}{R^2}.
			\end{align*}
		}
		\begin{align}\label{equation 1 tightness lemma}
			\mathbb{P}\l( |m_n|_{L^{\infty}(0,T;H^1)\cap W^{\gamma,p}(0,T;L^2)} > R \r) \nonumber&\leq 	\mathbb{P}\l(|m_n|_{L^{\infty}(0,T;H^1)} > \frac{R}{2}\r) + 	\mathbb{P}\l(|m_n|_{ W^{\gamma,p}(0,T;L^2)} > \frac{R}{2}\r) \\
			(\text{By Chebyshev's inequality})\ \nonumber& \leq \frac{4}{R^2}\mathbb{E}\l|m_n\r|^2_{L^{\infty}(0,T;H^1)} + \frac{4}{R^2}\mathbb{E}\l|m_n\r|^2_{W^{\gamma,p}(0,T;L^2)} \\
			(\text{By Lemma \ref{bounds lemma 1 without p}, Lemma \ref{bounds lemma}})\ & \leq \frac{C}{R^2}.
		\end{align}
		
		The right hand side of the above inequality, and hence the left hand side can be made as small as desired by choosing $R$ large enough.\\
		\dela{By Lemma \ref{compact embedding into space of continuous functions}, the following compact embedding holds for $\gamma p >1$.
			\begin{equation}
				W^{\gamma,p}(0,T;L^2) \hookrightarrow C(0,T; L^2).
		\end{equation}}		
		\dela{Refine this argument. Where does the $L^{\infty}(0,T;H^1)$ norm come into the picture? It is possibly required for the compact embedding into the space $C([0,T];L^2)$. But that should be visible explicitly.\\}
		For $ 0 \leq \beta < \frac{1}{2}$, by Lemma \ref{compact embedding of intersection 1}, the space $L^p(0,T;H^1)\cap W^{\gamma,p}(0,T;L^2)$ is compactly embedded into the space $L^p(0,T;X^{\beta})$.
		Hence the sequence of laws of $(m_n)$ is tight on the space $L^p(0,T;X^{\beta})$. For $q\geq 1$, we can find $\beta < \frac{1}{2}$ such that $X^{\beta}$ is continuously embedded into $L^q$, 
		see Theorem 1.6.1 in \cite{HenryGeometricTheoryofSemilinearParabolicEquations}, implying that the space $L^p(0,T; X^{\beta})$ is continuously embedded into the space $L^p(0,T;L^q)$. Since tightness of laws is preserved by continuous maps, the sequence of laws $(\mathcal{L}(m_n))_{n\in\mathbb{N}}$ is tight on the space $L^p(0,T;L^q)$.\\
		By combining the above argument with \dela{ Lemma \ref{compact embedding into space of continuous functions} }Lemma \ref{compact embedding of intersection 1} and Lemma \ref{compact embedding of intersection 2} in the Appendix \ref{Section Some embeddings}, the following compact embedding holds for $\gamma p >1$.
		\begin{equation}
			L^{\infty}(0,T;H^1) \cap W^{\gamma,p}(0,T;L^2) \hookrightarrow L^p(0,T; L^q) \cap C(0,T; L^2).
		\end{equation}
		Combining the above mentioned results, we can conclude that the sequence $(\mathcal{L}(m_n))_{n\in\mathbb{N}}$ is tight
		on the space $L^p(0,T;L^q)\cap C(0,T;L^2)$.\\
		\textbf{A brief justification for tightness:} Let
		\begin{equation}
			B_R:= \l\{v\in L^{\infty}(0,T;H^1)\cap W^{\gamma,p}(0,T;L^2): \l| v \r|_{L^{\infty}(0,T;H^1)\cap W^{\gamma,p}(0,T;L^2)} \leq R\r\}.
		\end{equation}
		Then for each $R\geq0$, $B_R$ is a closed ball in $L^{\infty}(0,T;H^1)\cap W^{\gamma,p}(0,T;L^2)$. Therefore $B_R$ has compact closure in $L^p(0,T;L^q)\cap C([0,T];L^2)$.\\
		For $n\in\mathbb{N}$,
		\begin{align*}
			\l(\mathcal{L}(m_n)\r)(B_R) = \mathbb{P}\l(\omega\in\Omega:\l|m_n(\omega)\r|_{L^{\infty}(0,T;H^1)\cap W^{\gamma,p}(0,T;L^2)} \leq R\r).
		\end{align*}
		The right hand side will be denoted by $\mathbb{P}(B_R)$.
		Let $\tilde{B}_R$ denote the closure of $B_R$ in $L^p(0,T;L^q)\cap C([0,T];L^2)$. Similarly, $$\l(\mathcal{L}(m_n)\r)(\tilde{B}_R) = \mathbb{P}\l(\omega\in \Omega: m_n(\omega)\in \tilde{B}\r).$$
		The right hand side will be denoted by $\mathbb{P}(\tilde{B}_R)$. Note that $\tilde{B}_R$ is a compact subset of $L^p(0,T;L^q) \cap C([0,T];L^2)$.
		By \eqref{equation 1 tightness lemma}, we have
		\begin{align*}
			1 - \mathbb{P}(\tilde{B}_R) \leq 1 - \mathbb{P}(B_R) \leq \frac{C}{R^2}.
		\end{align*}
		
		Let $\varepsilon$ be given. Choosing $R$ large enough in \eqref{equation 1 tightness lemma} $\l(R>\sqrt{\frac{C}{\varepsilon}}\r)$, we get
		\begin{align*}
			1 - \mathbb{P}(\tilde{B}_R) < \varepsilon,
		\end{align*}
		giving the required tightness of laws.  \\

		\dela{
			Let $\varepsilon>0$ be given. In order to show that the sequence of laws $\l(\mathcal{L}(m_n)\r)_{n\in\mathbb{N}}$ is tight on the space $L^p(0,T;L^q)\cap C(0,T; L^2)$, we must show that there exists a compact set $K_{\varepsilon}\subset L^p(0,T;L^q)\cap C(0,T; L^2)$ such that
			\begin{equation*}
				\inf_{n\in\mathbb{N}} \l(\mathcal{L}(m_n)\r)(K_{\varepsilon}) \geq 1 - \varepsilon.
			\end{equation*}
			Towards this, let $$B_R = \{v\in L^p(0,T;H^1)\cap W^{\gamma,p}(0,T;L^2) : \l| v \r|_{L^p(0,T;H^1)\cap W^{\gamma,p}(0,T;L^2)} \leq R\}.$$\\
			Choose $R$ large enough such that $\frac{C}{R^2} < \varepsilon.$ Fix this $R$ and consider the set $B_r$. Let $K_{\varepsilon}$ denote the closure of $B$ in $L^p(0,T;L^q)\cap C(0,T; L^2)$. The arguments that follow show that $B$ is a compact subset of $L^p(0,T;L^q)\cap C(0,T; L^2)$. Also,
			\begin{align*}
				\l(\mathcal{L}(m_n)\r) (K_{\varepsilon}) \geq 1 - \l(\mathcal{L}(m_n)\r)(B_R^c) \geq 1 - \varepsilon.
			\end{align*}
			For $ 0 \leq \beta < \frac{1}{2}$, by Lemma \ref{compact embedding of intersection 1}, the space $L^p(0,T;H^1)\cap W^{\gamma,p}(0,T;L^2)$ is compactly embedded into the space $L^p(0,T;X^{\beta})$.\\
			Hence the sequence of laws of $(m_n)_{n\in\mathbb{N}}$ is tight on the space $L^p(0,T;X^{\beta})$. For $q\geq 1$, we can find $\beta < \frac{1}{2}$ such that $X^{\beta}$ is continuously embedded into $L^q$. Hence the sequence of laws $(\mathcal{L}(m_n))_{n\in\mathbb{N}}$ is tight on the space $L^p(0,T;L^q)$.\\
			By Lemma \ref{compact embedding into space of continuous functions} in the Appendix, the following compact embedding holds for $\gamma p >1$.
			\begin{equation}
				W^{\gamma,p}(0,T;L^2) \hookrightarrow C(0,T; L^2).
			\end{equation}
			Combining the above mentioned results, we can conclude that the sequence $(\mathcal{L}(m_n))_{n\in\mathbb{N}}$ is tight
			on the space $L^p(0,T;L^q)\cap C(0,T;L^2)$. Hence the tightness follows.\\	
		}
		For showing the tightness of the laws of $u_n$, we observe that by Chebyshev's inequality and the assumption on $u$ there exists a constant $C>0$ such that,
		\begin{align*}
			\mathbb{P}(|u_n|_{L^2(0,T;L^2)} > R) &\leq \frac{1}{R^2}\mathbb{E}(|u_n|_{L^2(0,T;L^2)}^2) 
			\leq \frac{C}{R^2}.
		\end{align*}
		The right hand side of the above inequality, and hence the left hand side can be made as small as desired by choosing $R$ large enough.
		\dela{ Arguing similarly as done above, let $R$ be large enough so that $\frac{C}{R^2} < \varepsilon$. }
		By the Banach Alaoglu theorem, a closed ball in the space $L^2(0,T;L^2)$ has compact closure in the space $L^2_w(0,T;L^2)$.
		\dela{
			Hence the closure of
			\begin{equation*}
				B^\p_R = \{v\in L^2(0,T;L^2): \l| v \r|_{L^2(0,T;L^2)} \leq R\}
			\end{equation*}
			in the space $L^2_w(0,T;L^2)$ is compact. Let $K_{\varepsilon}$ be this closure. Hence
			\begin{align*}
				\l(\mathcal{L}(u_n)\r) (K_{\varepsilon}) \geq 1 - \l(\mathcal{L}(u_n)\r)(B_R^c) \geq 1 - \varepsilon.
			\end{align*}
		}
		Thus the sequence of laws $(\mathcal{L}(u_n))_{n\in\mathbb{N}}$ is tight on the space $L^2_w(0,T;L^2)$.\\
		$\mathcal{L}(W)$ is a probability measure on $C([0,T];\mathbb{R})$ and hence is tight on $C([0,T];\mathbb{R})$.
		This concludes the proof of Lemma \ref{tightness lemma}.
	\end{proof}
	In the sections that follow we choose and fix $p=q=4$.

	\section{Proof of Theorem \ref{Theorem Existence of a weak solution}: Proof of the existence of a weak martingale solution}\label{Section Proof of existence of a solution}
	
	We have so far shown the tightness of laws. Since the space $L^4(0,T;L^4)\cap C([0,T];L^2)\times C([0,T];\mathbb{R}) \times L^2_w(0,T;L^2)$ is not a metric space, we cannot proceed by applying the Prokhorov Theorem (Theorem II.6.7, \cite{Parthasarathy_1967_Book_ProbabilityMeasuresOnMetricSpaces_Book}) followed by the Skorohod Theorem (Theorem 6.7). We, instead, obtain convergence by using the Jakubowski version of the Skorohod Theorem.

	\begin{proposition}\label{Use of Skorohod theorem}
		There exists a sequence $(m_n^{\prime},W_n^{\prime},u^{\prime}_n)$ of $L^4(0,T;L^4)\cap C([0,T];L^2)\times C([0,T];\mathbb{R})\times L^2_w(0,T;L^2)$-valued random variables defined on a probability space $(\Omega^{\prime},\mathcal{F}^{\prime},\mathbb{P}^{\prime})$ such that for each $n\in\mathbb{N}$ $(m_n^{\prime},W_n^{\prime},u^{\prime}_n)$ and $(m_n,W,u_n)$ have the same laws on $L^4(0,T;L^4)\cap C([0,T];L^2)\times C([0,T];\mathbb{R})\times L^2_w(0,T;L^2)$. Further, there exists a random variable $(m^{\prime},W^{\prime},u^{\prime})$ defined on $(\Omega^{\prime},\mathcal{F}^{\prime},\mathbb{P}^{\prime})$ such that the law of $(m^{\prime},W^{\prime},u^{\prime})$ is equal to $\nu$ on $L^4(0,T;L^4)\cap C([0,T];L^2)\times C([0,T];\mathbb{R})\times L^2_w(0,T;L^2)$. Moreover, the following convergences hold $\mathbb{P}^{\prime}$-a.s. as $n$ goes to infinity.
		
		\begin{equation}
			m_n^{\prime}\rightarrow m^{\prime}\ \text{in}\ L^4(0,T;L^4)\cap C([0,T];L^2),
		\end{equation}
		
		\begin{equation}
			W_n^{\prime}\rightarrow W^{\prime}\ \text{in}\  C([0,T];\mathbb{R}),
		\end{equation}
		
		\begin{equation}\label{Equation weak convergence un prime to u prime pointwise}
			u_n^{\prime}\rightarrow u^{\prime}\ \text{in}\ L^2_w(0,T;L^2).
		\end{equation}
		
	\end{proposition}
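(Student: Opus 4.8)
The plan is to invoke the Jakubowski version of the Skorohod representation theorem, which extends the classical almost sure representation to topological spaces that need not be metrizable, provided the space carries a countable family of continuous real-valued functions separating its points. The product space $\mathcal{X} := \bigl(L^4(0,T;L^4)\cap C([0,T];L^2)\bigr) \times C([0,T];\mathbb{R}) \times L^2_w(0,T;L^2)$ is of exactly this type, and the tightness already established in Lemma \ref{tightness lemma} furnishes the hypothesis on the sequence of laws.

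First I would record that the joint family $\bigl(\mathcal{L}(m_n, W, u_n)\bigr)_{n\in\mathbb{N}}$ is tight on $\mathcal{X}$. Lemma \ref{tightness lemma} gives tightness of each of the three marginal families on its respective factor; since a finite product of tight families of laws is tight on the product space (a product of the corresponding compact sets is compact in the product topology), joint tightness on $\mathcal{X}$ follows immediately.

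Second, I would check that $\mathcal{X}$ lies within the scope of Jakubowski's theorem. The factors $L^4(0,T;L^4)\cap C([0,T];L^2)$ and $C([0,T];\mathbb{R})$ are separable metric (indeed Polish) spaces, so each automatically admits a countable separating family of continuous functions. The only non-metrizable factor is $L^2_w(0,T;L^2)$; here, because $L^2(0,T;L^2)$ is a separable Hilbert space, fixing an orthonormal basis $\{e_k\}_{k\in\mathbb{N}}$ the maps $v \mapsto \langle v, e_k\rangle_{L^2(0,T;L^2)}$ are continuous on $L^2_w(0,T;L^2)$ and separate points. Concatenating these three countable families yields a countable separating family of continuous functions on $\mathcal{X}$, as required. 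Applying the theorem then produces the probability space $(\Omega', \mathcal{F}', \mathbb{P}')$, the random variables $(m_n', W_n', u_n')$ with the same joint law on $\mathcal{X}$ as $(m_n, W, u_n)$ for each $n$, a limit variable $(m', W', u')$ whose law $\nu$ is the limit (along a subsequence we do not relabel) of the laws $\mathcal{L}(m_n, W, u_n)$, and the stated $\mathbb{P}'$-almost sure convergences in the three component topologies.

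The main obstacle is precisely the non-metrizability of the weak factor $L^2_w(0,T;L^2)$: the classical Skorohod theorem is unavailable, so the whole argument hinges on verifying Jakubowski's separation hypothesis for this factor, and on interpreting convergence there correctly. Almost sure convergence in $L^2_w(0,T;L^2)$ means only that $\langle u_n'(\cdot), \phi\rangle \to \langle u'(\cdot), \phi\rangle$ for every $\phi$, i.e. weak convergence in $L^2(0,T;L^2)$ as recorded in \eqref{Equation weak convergence un prime to u prime pointwise}, and not norm convergence; keeping this distinction in mind is what will matter when passing to the limit in the nonlinear terms later.
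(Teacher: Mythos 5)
Your proposal is correct and follows essentially the same route as the paper: both rest on Jakubowski's version of the Skorohod theorem, with the tightness from Lemma \ref{tightness lemma} and a verification that the non-metrizable factor $L^2_w(0,T;L^2)$ admits a countable point-separating family of continuous functionals. The only (immaterial) difference is that you build this family from an orthonormal basis of $L^2(0,T;L^2)$, whereas the paper uses pairings against a countable dense subset; your explicit remarks on joint tightness of the product laws and on the meaning of almost sure convergence in the weak topology are consistent with, and slightly more detailed than, the paper's argument.
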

	
	\begin{proof}
		This result follows from the Jakubowski version of the Skorohod Theorem, see Theorem 3.11 in \cite{ZB+EM}, or Theorem A.1 in \cite{ZB+MO}.

		\dela{The details mentioned following this line need not be mentioned in the paper. It is good to have details, but this can simply be referred to the paper \cite{ZB+EM}.}			
		For using the result, it is required that the space $\mathcal{X} = L^4(0,T;L^4)\cap C([0,T];L^2)\times C([0,T];\mathbb{R})\times L^2_w(0,T;L^2)$ satisfy certain hypothesis. That is, $\mathcal{X}$ must be a topological space such that there exists a sequence $\{f_n\}_{n\in\mathbb{N}}$ of continuous functions, $f_n:\mathcal{X}\to\mathbb{R}$, that separates the points of $\mathcal{X}$. A proof for this can be done similarly to the proof of Corollary 3.2 in \cite{ZB+EM}. For the sake of completion, we present some details here.\\			
		It is now sufficient to show that each space in the Cartesian product $\mathcal{X}$ satisfies the required hypothesis. $L^4(0,T;L^4)\cap C([0,T];L^2)$ and $C([0,T];\mathbb{R})$ are separable, complete metric spaces and hence satisfy the hypothesis. Regarding the space $L^2_w(0,T;L^2)$, let $\{w_n\}_{n\in\mathbb{N}}$ be a countable dense set in $L^2(0,T;L^2)$. Such a set exists since the space $L^2(0,T;L^2)$ is separable. Now consider the following functions for $n\in\mathbb{N}$.
		
		\begin{equation*}
			f_n(v) = \int_{0}^{T} \l\langle v(t) , w_n(t) \r\rangle_{L^2} \, dt.
		\end{equation*}

		We now claim that the sequence of functions defined above separates points of $L^2_w(0,T ; L^2).$ Towards that, let us choose and fix $v_1,v_2\in L^2_w(0,T;L^2)$ such that $f_n(v_1) = f_n(v_2)$ for each $n\in\mathbb{N}$. The set $\{w_n\}_{n\in\mathbb{N}}$ is dense in $L^2(0,T;L^2)$. Therefore $f_n(v_1) = f_n(v_2)$ for each $n\in\mathbb{N}$ implies that $v_1 = v_2$. Therefore if $v_1 \neq v_2$ then there exists at least one $n\in\mathbb{N}$ such that $f_n(v_1) \neq f_n(v_2)$. Hence the functions $f_n$ separate points.

	\end{proof}

	\begin{remark}
		As a consequence of Proposition \ref{Use of Skorohod theorem}, we have that the laws of $u^\p$ and $u$ are equal. To see this, first, recall that $u_n = P_n(u)$ and that $u_n\to u,\ \mathbb{P}$-a.s. in $L^2(0,T;L^2)$. Proposition \ref{Use of Skorohod theorem} implies that the processes $u_n$ and $u^\p$ have the same laws on the space $L^2_w(0,T;L^2)$. This, combined with the $\mathbb{P}^\p$-a.s. convergence of $u^\p_n$ to $u^\p$ gives the desired result.
	\end{remark}

	As a corollary of Proposition \ref{Use of Skorohod theorem}, we have the following.
	\begin{corollary}\label{Corollary m prime continuous in t with values in L2}
		\begin{align*}
			\m\in C([0,T] ; L^2),\ \mathbb{P}^{\prime}-a.s.
		\end{align*}
	\end{corollary}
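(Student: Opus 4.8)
The plan is to deduce the statement directly from the almost sure convergence provided by Proposition \ref{Use of Skorohod theorem}, using only that $C([0,T];L^2)$, equipped with the supremum norm $\|v\| = \sup_{t\in[0,T]}|v(t)|_{L^2}$, is a Banach space and hence complete, and that the norm of the intersection space $L^4(0,T;L^4)\cap C([0,T];L^2)$ dominates this supremum norm.

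First I would fix a set $\Omega_0'\subset\Omega'$ with $\mathbb{P}'(\Omega_0') = 1$ on which the convergence $m_n'\to m'$ in $L^4(0,T;L^4)\cap C([0,T];L^2)$ asserted by Proposition \ref{Use of Skorohod theorem} holds. Since the intersection norm controls the supremum norm, for every $\omega'\in\Omega_0'$ one obtains $\sup_{t\in[0,T]}|m_n'(\omega')(t) - m'(\omega')(t)|_{L^2}\to 0$ as $n\to\infty$; that is, the $L^2$-valued paths $m_n'(\omega')$ converge uniformly on $[0,T]$ to $m'(\omega')$.

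Next I would observe that each $m_n'$ has, by construction, the same law on the path space as the Galerkin approximation $m_n$, which solves the finite-dimensional equation \eqref{definition of solution Faedo Galerkin approximation} in $H_n$ and therefore has trajectories that are continuous in $H_n$; by the equivalence of norms on the finite-dimensional space $H_n$ (Remark \ref{Remark meaning of norms are equivalent on Hn}), these trajectories are also continuous as $L^2$-valued maps. Consequently, shrinking $\Omega_0'$ by a further $\mathbb{P}'$-null set if necessary, I may assume $m_n'(\omega')\in C([0,T];L^2)$ for every $n$ and every $\omega'\in\Omega_0'$. Because $C([0,T];L^2)$ is complete and the convergence above is uniform, a standard $\varepsilon/3$ argument shows that the limit $m'(\omega')$ is again continuous, so $m'(\omega')\in C([0,T];L^2)$ for every $\omega'\in\Omega_0'$, which is exactly the assertion.

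There is no substantive obstacle here; the argument is purely topological once the almost sure convergence in $C([0,T];L^2)$ is in hand. The only point requiring a little care is the measure-theoretic bookkeeping, namely ensuring that both the continuity of each $m_n'$ and the uniform convergence hold simultaneously on one common full-measure set, so that the preservation of continuity under uniform limits can be applied pathwise.
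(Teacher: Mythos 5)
Your proposal is correct and follows essentially the same route as the paper, whose entire proof is the one-line observation that the conclusion follows from the $\mathbb{P}^{\prime}$-a.s. convergence of $m_n^{\prime}$ to $m^{\prime}$ in $C([0,T];L^2)$ provided by Proposition \ref{Use of Skorohod theorem}. Your additional bookkeeping (continuity of each $m_n^{\prime}$ via equality of laws and the uniform-limit argument) just spells out details the paper leaves implicit; note that since the Jakubowski--Skorohod theorem already produces $m^{\prime}$ as a random variable with values in $L^4(0,T;L^4)\cap C([0,T];L^2)$, membership of its paths in $C([0,T];L^2)$ is in fact built into the construction.
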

	\begin{proof}[Proof of Corollary \ref{Corollary m prime continuous in t with values in L2}]
		The proof follows from the $\mathbb{P}^{\prime}$-a.s. convergence of the processes $\m_n$ to $\m$ in the space $C([0,T] ; L^2)$.
	\end{proof}

	\begin{remark}\label{Remark m prime, u prime progressively measurable}
		The processes $\m,\u$ obtained in Proposition \ref{Use of Skorohod theorem} are Borel measurable. Let the filtration $\mathbb{F}^{\prime} = \{\mathcal{F}_t^{\prime}\}_{t\in[0,T]}$ be defined by
		
		\begin{align*}
			\mathcal{F}_t^{\prime} = \sigma \l\{ \m(s) , \u(s) , W^{\prime}(s) : 0 \leq s \leq t\r\}.
		\end{align*}
		Hence $\m , \u$ are $\mathbb{F}^{\prime}$ adapted. Thus, the processes $\m$ and $\u$ have progressively measurable modifications, see Proposition 1.12, \cite{Karatzas+Steven_BrownianMotionStochacsticCalculus}. From now on, these progressively measurable modifications will be considered.
	\end{remark}

	\dela{
		
		Note that for each $n\in \mathbb{N}$ the following embeddings are continuous. $H_n\hookrightarrow L^4$, $H_n\hookrightarrow L^2$. Thus,
		\[
		C([0,T];H_n)\hookrightarrow L^4(0,T;L^4)\cap C([0,T];L^2).
		\] The identity mapping is thus continuous, injective.
		By Kuratowski's theorem, see Theorem 1.1, \cite{Vakhania_Probability_distributions_on_Banach_spaces}, the Borel subsets of $C([0,T];H_n)$ are Borel subsets of $L^4(0,T;L^4)\cap C([0,T];L^2)$. Hence $C([0,T];H_n)$ is a Borel subset of $L^4(0,T;L^4)\cap C([0,T];L^2)$.
		Thus, $\mathbb{P}\{m_n\in C([0,T];H_n)\}=1$ implies that $\mathbb{P}\{\m_n\in C([0,T];H_n)\}=1$. Hence we may assume that $m_n^{\prime}$ and that the laws of $m_n$ and $m_n^{\prime}$ are equal on $C([0,T];H_n)$.\\		
		Similar can be said about the laws of the processes $u_n$ and $u_n^{\prime}$ on the space $L^2(0,T;L^2)$.
	}
	\begin{remark}\label{same bounds remark}
		\dela{
			Note that for each $n\in \mathbb{N}$ the following embeddings are continuous. $H_n\hookrightarrow L^4$, $H_n\hookrightarrow L^2$. Thus,
			\[
			C([0,T];H_n)\hookrightarrow L^4(0,T;L^4)\cap C([0,T];L^2).
			\] The identity mapping is thus continuous, injective.
		}
		By the Kuratowski Theorem (Lemma \ref{Lemma Kuratowski}) see Theorem 1.1, \cite{Vakhania_Probability_distributions_on_Banach_spaces}, the Borel subsets of $C([0,T];H_n)$ are Borel subsets of $L^4(0,T;L^4)\cap C([0,T];L^2)$.\dela{
			Thus, $\mathbb{P}\{m_n\in C([0,T];H_n)\}=1$ implies that $\mathbb{P}\{\m_n\in C([0,T];H_n)\}=1$.} Hence we can assume that $m_n^{\prime}$ and that the laws of $m_n$ and $m_n^{\prime}$ are equal on $C([0,T];H_n)$.\\		
		Similar can be said about the laws of the processes $u_n$ and $u_n^{\prime}$ on the space $L^2(0,T;L^2)$.
	\end{remark}
	The processes $m_n^{\prime}$ satisfy the same estimates that are satisfied by $m_n$, $n\in\mathbb{N}$. In particular, we have the following proposition.
	\begin{proposition}\label{Proposition bounds on m_n prime}
		
		For each $p\geq 1$, there exists a constant $C>0$ such that for all $n\in\mathbb{N}$, the following bounds hold:
		\begin{equation}\label{bound on m_n prime}
			|\m_n(t)|_{L^2}^2 = |\m_n(0)|_{L^2}^2 \ \text{for each}\ t\in[0,T]\  \mathbb{P}-a.s.
		\end{equation}
		\begin{equation}\label{bound on nabla m_n prime}
			\mathbb{E}^{\prime}\l[\sup_{s\in[0,T]}| \m_n(s)|_{H^1}^{2p}\r]\leq C,
		\end{equation}
		\begin{align}\label{bound on mn prime times Delta mn prime}
			\mathbb{E}^{\prime}\l(\int_{0}^{T} |\m_n(s)\times \Delta \m_n(s)|_{L^2}^2    \,    ds \r)^p \leq C,
		\end{align}
		\begin{align}\label{bound on mn prime times mn prime times Delta mn prime}
			\mathbb{E}^{\prime}\l[\l(\int_{0}^{T}|\m_n(s)\times(\m_n(s)\times\Delta \m_n(s))|_{L^{2}}^2\r)^{p}\r]\leq C,
		\end{align}
		\begin{equation}\label{bound on mn prime times un prime}
			\mathbb{E}^{\prime}\l[\l(\int_{0}^{T}|\m_n(s)\times \u_n(s)|_{L^{2}}^2 \, ds\r)^{p} \, ds \r]\leq C,
		\end{equation}
		\begin{equation}\label{bound on mn prime times mn prime times un prime}
			\mathbb{E}^{\prime}\l[\l(\int_{0}^{T}|\m_n(s) \times (\m_n(s) \times \u_n(s))|_{L^{2}}^2 \, ds\r)^{p}\r]\leq C.
		\end{equation}

	\end{proposition}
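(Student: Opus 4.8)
The plan is to transfer each estimate from the Galerkin processes $m_n$ to their Skorohod copies $m_n^{\prime}$ by exploiting the equality of laws furnished by Proposition \ref{Use of Skorohod theorem}, together with the fact that everything takes place in the finite-dimensional space $H_n$. First I would recall from Remark \ref{same bounds remark} that, since $m_n$ has trajectories in $C([0,T];H_n)$ and, by the Kuratowski Theorem, this is a Borel subset of $L^4(0,T;L^4)\cap C([0,T];L^2)$, the laws of $m_n$ and $m_n^{\prime}$ coincide on $C([0,T];H_n)$; similarly $u_n^{\prime}$ may be assumed to take values in $H_n$ and to share the law of $u_n=P_n(u)$ on $L^2(0,T;L^2)$. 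Because Proposition \ref{Use of Skorohod theorem} gives equality of the joint laws of the triples $(m_n,W,u_n)$ and $(m_n^{\prime},W_n^{\prime},u_n^{\prime})$, the joint law of $(m_n^{\prime},u_n^{\prime})$ on $C([0,T];H_n)\times L^2(0,T;H_n)$ equals that of $(m_n,u_n)$.

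The key observation is that each quantity on the left-hand side of \eqref{bound on m_n prime}--\eqref{bound on mn prime times mn prime times un prime} is the expectation of a Borel measurable functional of the pair $(m_n,u_n)$ viewed as an element of this product space. Indeed, by Remark \ref{Remark meaning of norms are equivalent on Hn} all norms on $H_n$ are equivalent, so on $H_n$ the operations $v\mapsto \Delta v$, $v\mapsto v\times\Delta v$ and $(v,w)\mapsto v\times w$ are continuous, and the maps
\[
\phi \mapsto \sup_{t\in[0,T]}|\phi(t)|_{H^1}^{2p}, \qquad (\phi,\chi)\mapsto\Bigl(\int_0^T|\phi(s)\times\chi(s)|_{L^2}^2\,ds\Bigr)^p
\]
together with their analogues are Borel measurable (indeed lower semicontinuous or continuous) on $C([0,T];H_n)\times L^2(0,T;H_n)$. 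In particular, since $H_n$ is finite dimensional, the weak topology on the $u$-component plays no role here and introduces no measurability difficulty, because the weak and strong topologies coincide on $H_n$.

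Having identified each bound with the expectation of such a functional, I would invoke the equality of laws to conclude that the expectation computed under $\mathbb{P}^{\prime}$ for $(m_n^{\prime},u_n^{\prime})$ equals the corresponding expectation computed under $\mathbb{P}$ for $(m_n,u_n)$. The latter are bounded uniformly in $n$ by Lemma \ref{bounds lemma 1 without p} and Lemma \ref{bounds lemma 1}, with constants depending only on $p$, $h$, $T$, $m_0$ and the control bound $K$. This yields \eqref{bound on m_n prime} from \eqref{bound 1}, \eqref{bound on nabla m_n prime} from \eqref{bound 2}, \eqref{bound on mn prime times Delta mn prime} from \eqref{bound 3}, \eqref{bound on mn prime times mn prime times Delta mn prime} from \eqref{bound 4}, \eqref{bound on mn prime times un prime} from \eqref{bound 5}, and \eqref{bound on mn prime times mn prime times un prime} from \eqref{bound 6}.

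I do not anticipate a serious obstacle: the substance is the standard principle that integrals of measurable functionals depend only on the law, and the only point requiring genuine care is the measurability of these functionals with respect to the product topology carrying the weak topology on the control component. This is resolved, as indicated above, by confining attention to the finite-dimensional space $H_n$, on which all the relevant norms are equivalent and the weak and strong topologies agree, so that the functionals in question are continuous and the transfer of estimates is immediate.
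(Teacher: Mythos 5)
Your proposal is correct and follows essentially the paper's own route: the paper deduces this proposition precisely from Remark \ref{same bounds remark} (equality of the laws of $m_n$ and $m_n^{\prime}$ on $C([0,T];H_n)$ via the Kuratowski Theorem, and of $u_n$ and $u_n^{\prime}$ on $L^2(0,T;L^2)$) combined with the uniform estimates of Lemma \ref{bounds lemma 1 without p} and Lemma \ref{bounds lemma 1}, which is exactly your transfer-of-laws argument with the measurability of the relevant functionals spelled out. One small imprecision: the weak and norm topologies agree on the finite-dimensional state space $H_n$ but not on the path space of $H_n$-valued square-integrable functions, so the measurability in the control component is better justified by the fact that the Borel $\sigma$-algebras of $L^2_w(0,T;L^2)$ and $L^2(0,T;L^2)$ coincide, as the paper itself notes in Remark \ref{remark same bounds remark minimizing sequence}.
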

	\begin{proof}
		The proposition follows from Lemma \ref{bounds lemma 1} and Remark \ref{same bounds remark}.
		
		\dela{
			The map $$v\mapsto \l|v\r|_{H^1}$$ when extended to $L^2$ is lower semicontinuous, and hence measurable.(A proof for the statement is provided in a later section.) Thus the set $\{v\in L^2 : \l|v\r|_{H^1} \leq c\}$ is a Borel subset of $L^2$, for any constant $c\in\mathbb{R}$. Hence $\{v\in H_n : \l|v\r|_{H^1} \leq c\}$ is a Borel subset of $H_n$. Also, $\m_n \times \Delta \m_n$, $\m_n \times \l( \m_n \times \Delta \m_n \r)$ etc. are measurable functions of $m$. This along with Remark \ref{same bounds remark} and the bounds from the Lemma \ref{bounds lemma 1} proves the proposition.
		}
	\end{proof}

	By the bounds established in Proposition \ref{Proposition bounds on m_n prime} above, in particular for $p = 1$, there exists \dela{measurable }processes \dela{on $[0,T]\times \Omega^{\prime}$, }$\dela{Y,} \mathbb{Y}, \dela{Z,} \mathbb{Z} \in L^{2}(\Omega^{\prime}:L^2(0,T;L^2))$ such that
	
	\begin{equation}\label{equation weak convergence of mn prime times delta mn prime to Y}
		\m_n \times \Delta \m_n \rightarrow \mathbb{Y} \ \text{weakly in}\  L^{2}(\Omega^{\prime}:L^2(0,T;L^2)),
	\end{equation}
	
	\begin{equation}\label{equation weak convergence of mn prime times mn prime times delta mn prime to Z}
		\m_n \times (\m_n \times \Delta \m_n) \rightarrow \mathbb{Z} \ \text{weakly in}\  L^{2}(\Omega^{\prime}:L^2(0,T;L^2)),
	\end{equation}
	
	In Lemma \ref{Lemma convergencem mn times Laplacian mn and mn times mn times Laplacian mn} and Lemma \ref{Lemma convergencem mn times mn times Laplacian mn}, we show that for $\phi\in L^4\l( \Omega^\p ; L^4(0,T;H^1 )\r)$
	\begin{equation}
		\lim_{n\rightarrow\infty} \mathbb{E^{\prime}}\int_{0}^{T} \l\langle \m_n(s) \times \Delta \m_n(s) - \m(s) \times \Delta \m(s) ,  \phi(s) \r\rangle_{L^2} \, ds = 0,
	\end{equation}
	and
	\begin{equation}
		\lim_{n\rightarrow\infty} \mathbb{E^{\prime}}\int_{0}^{T} \l\langle \m_n(s) \times \bigl( \m_n(s) \times \Delta \m_n(s)\bigr) - \m(s) 
		\times \bigl( \m(s) \times \Delta \m(s) \bigr) , \phi(s) \r\rangle_{L^2} \, ds = 0.
	\end{equation}

	Since the space $L^4\l( \Omega^\p ; L^4(0,T;H^1 )\r)$ is dense in the space $L^2\l( \Omega^\p ; L^2(0,T;L^2 )\r)$, by  uniqueness of limits, we can conclude that
	\begin{equation}
		\mathbb{Y} = \m \times \Delta \m,
	\end{equation}
	and
	\begin{equation}
		\mathbb{Z} = \m \times \l( \m \times \Delta \m \r).
	\end{equation}

	\dela{We show that $Y$,$Z$, $\mathbb{Y}$ and $\mathbb{Z}$ are of the required form.\\
		\adda{Is this weak convergence required? Is it sufficient to refer the reader to \cite{ZB+BG+TJ_Weak_3d_SLLGE}, \cite{ZB+BG+TJ_LargeDeviations_LLGE}??}
		
		Following Lemma 4.5 and Lemma 4.8 in \cite{ZB+BG+TJ_Weak_3d_SLLGE}, the limits can be shown as required for the first two terms. Are the next two convergences really required? Possibly YES for the convergence of $M^\p_n$ to $M^\p$ part.}
	\dela{\textbf{Note:} All the convergences mentioned above can also be (and should be) done for the terms with projection operators. A point to be noted is that the limits ( after testing with $\phi\in H^1$) are the same for both the cases. Therefore it is sufficient to talk about the terms without the projection operators.\\}
	\dela{Prior to that we show some bounds on the process $\m$.}
	\begin{proof}[Structure of the remaining section]
		We briefly describe the contents of the reminder of the section. The broad aim is to show the convergence of each term on the right hand side of the approximated equation \eqref{definition of solution Faedo Galerkin approximation} to the corresponding term, effectively to show that the obtained limit process $\m$ satisfies the equation \eqref{problem considered}. Lemma \ref{Lemma extension of norms and lower semicontinuity} gives some bounds on the obtained limit processes $\m$ and $\u$. Lemma \ref{Lemma continuity of m prime in H1 weak and X beta} shows that the paths of the process $\m$ are continunous in $H^1$(with the weak topology) and in $X^{\beta}$, for $\beta < \frac{1}{2}$. Lemma \ref{Lemma sup in t L2 in space convergene of m_n prime} shows the convergence on $\m_n$ to $\m$ in the space of continuous functions on $L^2$. Lemma \ref{Lemma convergence of the bump function} shows some convergence results for the bump function $\psi$. Lemma \ref{Lemma convergence mn times un and mn times mn times un} shows the convergence of the terms containing the control process to the respective terms. Similarly, Lemma \ref{Lemma convergence of Gn} shows the convergence for the terms containing $G_n$. All these results are then collectively used to show that the process $\m$ is a weak martingale solution for the problem \eqref{problem considered}.
	\end{proof}

	\begin{lemma}\label{Lemma extension of norms and lower semicontinuity}
		We have the following bounds.
		\begin{enumerate}
			
			\item		\begin{equation}\label{equation bound on m prime}
				\sup_{0\leq t\leq T}\l|m^{\prime}(t)\r|_{L^2} \leq |m_0|_{L^2}\ \mathbb{P}^{\prime}-\text{a.s.}
			\end{equation}
			
			\item There exists a constant $C>0$ such that
			\begin{equation}\label{equation H1 bound on m prime}
				\mathbb{E}^{\prime}\sup_{0\leq t\leq T}\l|m^{\prime}(t)\r|^{2p}_{H^1} \leq C.
			\end{equation}

			\dela{\item
				\begin{equation}
					\mathbb{E}^{\prime} \int_{0}^{T} \l| \u(t) \r|^{2p}_{L^2} \leq K.
				\end{equation}
			}
			
			\item
			\begin{equation}
				\mathbb{E}^{\prime} \l(\int_{0}^{T} \l| \u(t) \r|^{2}_{L^2}\r)^{p} \leq K_p.
			\end{equation}

		\end{enumerate}
	\end{lemma}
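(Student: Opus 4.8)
All three bounds will be obtained by passing to the limit in the uniform estimates of Proposition \ref{Proposition bounds on m_n prime}, using the $\mathbb{P}^\p$-almost sure convergences furnished by Proposition \ref{Use of Skorohod theorem} together with (weak) lower semicontinuity of the relevant norms and Fatou's lemma. The recurring mechanism is: the approximations $\m_n$ (resp.\ $\u_n$) satisfy the desired bound uniformly in $n$, the norm in question is lower semicontinuous along the mode of convergence we have, and Fatou transfers the bound from $\liminf_n$ to the limit on $\Omega^\p$.

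For part (1), I would start from the conservation identity \eqref{bound on m_n prime}, namely $|\m_n(t)|_{L^2} = |\m_n(0)|_{L^2}$ for all $t$, $\mathbb{P}^\p$-a.s. Since $\m_n(0)$ has the same law as $P_n(m_0)$ and $m_0$ takes values in $\mathbb{S}^2$, the contraction property of $P_n$ gives $|\m_n(0)|_{L^2}\leq |m_0|_{L^2}$ almost surely. Because $\m_n\to\m$ in $C([0,T];L^2)$ $\mathbb{P}^\p$-a.s.\ (Proposition \ref{Use of Skorohod theorem}), we have $\sup_t\big||\m_n(t)|_{L^2}-|\m(t)|_{L^2}\big|\to 0$, so for every $t$,
\[
|\m(t)|_{L^2}=\lim_{n\to\infty}|\m_n(t)|_{L^2}\leq |m_0|_{L^2},
\]
and taking the supremum over $t\in[0,T]$ yields \eqref{equation bound on m prime}.

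For part (2), I would use that the functional $v\mapsto |v|_{H^1}$, extended to $L^2$ by $+\infty$ on $L^2\setminus H^1$, is lower semicontinuous for strong $L^2$-convergence. Fixing $t$ and exploiting $\m_n(t)\to\m(t)$ in $L^2$, this gives $|\m(t)|_{H^1}\leq \liminf_{n}|\m_n(t)|_{H^1}\leq \liminf_{n}\sup_{s\in[0,T]}|\m_n(s)|_{H^1}$; as the right-hand side is independent of $t$, taking the supremum over $t$ produces $\sup_{t}|\m(t)|_{H^1}\leq \liminf_{n}\sup_{s}|\m_n(s)|_{H^1}$. Raising to the power $2p$ and invoking Fatou's lemma together with \eqref{bound on nabla m_n prime} gives
\[
\mathbb{E}^\p\sup_{t\in[0,T]}|\m(t)|_{H^1}^{2p}\leq \liminf_{n\to\infty}\mathbb{E}^\p\sup_{s\in[0,T]}|\m_n(s)|_{H^1}^{2p}\leq C,
\]
which is \eqref{equation H1 bound on m prime}. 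For part (3), I would argue analogously using the weak convergence \eqref{Equation weak convergence un prime to u prime pointwise}: the norm $v\mapsto|v|_{L^2(0,T;L^2)}$ is convex and strongly continuous, hence weakly lower semicontinuous, so $|\u|_{L^2(0,T;L^2)}\leq\liminf_{n}|\u_n|_{L^2(0,T;L^2)}$ $\mathbb{P}^\p$-a.s. Raising to power $2p$, applying Fatou, and then using the equality of laws of $u_n$ and $\u_n$ (Remark \ref{same bounds remark}) with $u_n=P_n u$ and $|P_n u(t)|_{L^2}\leq|u(t)|_{L^2}$ bounds the limit by $\mathbb{E}\big(\int_0^T|u(t)|_{L^2}^2\,dt\big)^p=K_p$.

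The main obstacle is purely technical rather than conceptual: one must justify the two lower semicontinuity assertions carefully (that $|\cdot|_{H^1}$ is $L^2$-lower semicontinuous and that $|\cdot|_{L^2(0,T;L^2)}$ is weakly lower semicontinuous), verify the elementary interchange $\sup_t\liminf_n\leq\liminf_n\sup_t$, and confirm measurability of $\sup_{t}|\m(t)|_{H^1}$ so that Fatou's lemma applies on $(\Omega^\p,\mathcal{F}^\p,\mathbb{P}^\p)$. Measurability follows from the $C([0,T];L^2)$-continuity of $\m$ and the $L^2$-lower semicontinuity of the $H^1$-norm, which is presumably the content the lemma's title alludes to.
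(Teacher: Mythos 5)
Your proposal is correct and follows essentially the same route as the paper: uniform convergence in $C([0,T];L^2)$ plus the conservation bound for part (1), lower semicontinuity of the $H^1$-norm extended to $L^2$ combined with Fatou's lemma and the uniform estimate \eqref{bound on nabla m_n prime} for part (2), and weak lower semicontinuity of the $L^2(0,T;L^2)$-norm with Fatou and the equality of laws of $u_n$ and $\u$ for part (3). The only difference is that you spell out a few justifications the paper leaves implicit (the contraction property of $P_n$, convexity implying weak lower semicontinuity, and the measurability needed for Fatou), which is fine.
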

	\begin{proof}
		For the first inequality, recall that the process $\m_n$ converges to $\m$ in $C([0,T];L^2)$ $\mathbb{P}^{\prime}$-a.s. Hence $\mathbb{P}^{\prime}$-a.s.,
		\begin{align}
			\sup_{0\leq t\leq T} \l|\m(t)\r|_{L^2} \leq \liminf_{n\rightarrow\infty} \sup_{0\leq t\leq T} \l|\m_n(t)\r|_{L^2} \leq \l| m_0 \r|_{L^2}.
		\end{align}
		This concludes the proof of the first inequality \eqref{equation bound on m prime}.
		
		Moreover, by the Fatou Lemma, see \cite{Rudin_RCA}
		\dela{
			\begin{align}
				\mathbb{E}^{\prime} \sup_{0\leq t\leq T} \l|\m(t)\r|_{L^2} &\leq \liminf_{n\rightarrow\infty}  \mathbb{E}^\p \sup_{0\leq t\leq T} \l|\m_n(t)\r|_{L^2} \\
				& \leq \mathbb{E} \l|m_0\r|_{L^2}.
			\end{align}
		}
		\begin{align}
			\mathbb{E}^{\prime} \sup_{0\leq t\leq T} \l|\m(t)\r|_{L^2}^2 &\leq \liminf_{n\rightarrow\infty}  \mathbb{E}^\p \sup_{0\leq t\leq T} \l|\m_n(t)\r|_{L^2}^2 \leq C.
		\end{align}
		For the second inequality, extend the definition of the norm $|\cdot|_{H^1}$ to the domain $L^2$ as follows:
		
		\begin{align}
			|v|_{H^1}=\begin{cases}
				|v|_{H^1},&\ \text{if}\ v\in H^1, \\
				\infty,&\ \text{if}\ v\in L^2, v\notin H^1.
			\end{cases}
		\end{align}
		The extended maps are lower semicontinuous.
		\dela{
			We first show that the extended maps are lower semicontinuous.
			
			Let $r > 0 $ be arbitrary but fixed. Consider the set $B = \{v\in L^2 : |v|_{H^1} \leq r\}$. Our aim now is to show that $B$ is closed in $L^2$. Let $(v_n)_n\in\mathbb{N}$ be a sequence in $B$ such that $v_n \rightarrow v$ in $L^2$.
			$(v_n)_{n\in\mathbb{N}}\subset B$ implies that the sequence is bounded in $H^1$. Thus, it has a subsequence, again denoted by $(v_n)_{n\in \mathbb{N}}$, such that
			\begin{equation*}
				v_n \rightarrow v^{\prime}\ \text{weakly in}\ H^1
			\end{equation*}
			for some $v^{\prime}\in H^1$.
			Weak convergence in $H^1$ implies weak convergence in $L^2$. Thus, by uniqueness of limit, we have $v = v^{\prime}$. Hence $v\in H^1$.
			
			By the lower semicontinuity of $|\cdot|_{H^1}$ on the space $H^1$, we have
			\begin{align*}
				|v|_{H^1} \leq \liminf_{n\rightarrow\infty} |v_n|_{H^1} \leq r.
			\end{align*}
			Thus, $v\in B$. Hence $B$ is a closed subset of $L^2$. Thus, the extended map is lower semicontinuous.

			\dela{Also, the extended maps
				\[
				v\in C([0,T];(H^1)^{\prime})\mapsto \sup_{0\leq t\leq T}|v(t)|_{L^2}
				\]
				and
				\[
				v\in C([0,T];L^2)\mapsto \sup_{0\leq t\leq T}|v(t)|_{H^1}
				\]
				are lower semicontinuous. Therefore pointwise convergence of $m_n^{\prime}$ to $m^{\prime}$ in $C([0,T];L^2)$ and the uniform bound \eqref{bound on m_n prime} implies that $\mathbb{P}^{\prime}$-a.s.
				\begin{equation}\label{bound on m prime L2}
					\sup_{0\leq t\leq T}|m^{\prime}(t)|_{L^2} \leq \liminf_{n\rightarrow\infty} \sup_{0\leq t\leq T}|m^{\prime}_n(t)|_{L^2}\leq |m_0|_{L^2}
				\end{equation}
				
				and
				\begin{align}
					\sup_{0\leq t\leq T}|m^{\prime}(t)|_{H^1} & \leq \liminf_{n\rightarrow\infty} \sup_{0\leq t\leq T}|m_n^{\prime}(t)|_{H^1}.
				\end{align}
			}

			Similarly, the extended map
			\begin{equation}
				v\in C([0,T];L^2)\mapsto \sup_{0\leq t\leq T}|v(t)|_{H^1}
			\end{equation}
			is lower semicontinuous.
		}
		
		Therefore pointwise convergence of $m_n^{\prime}$ to $m^{\prime}$ in $C([0,T];L^2)$ implies that $\mathbb{P}^{\prime}$-a.s.

		\begin{align*}
			\sup_{0\leq t\leq T}|m^{\prime}(t)|_{H^1} & \leq \liminf_{n\rightarrow\infty} \sup_{0\leq t\leq T}|m_n^{\prime}(t)|_{H^1}.
		\end{align*}
		Thus by Fatou's Lemma followed by  \eqref{bound on nabla m_n prime}, there exists a constant $C>0$ such that
		\begin{align*}
			\mathbb{E}^{\prime}\sup_{0\leq t\leq T}|m^{\prime}(t)|^{2p}_{H^1} & \leq \mathbb{E^{\prime}}\liminf_{n\rightarrow\infty} \sup_{0\leq t\leq T}|m_n^{\prime}(t)|^{2p}_{H^1} \\
			&\leq  \liminf_{n\rightarrow\infty} \mathbb{E^{\prime}} \sup_{0\leq t\leq T}|m_n^{\prime}(t)|^{2p}_{H^1} \leq C.
		\end{align*}
		That is,
		\begin{align}\label{bound on m prime H1}
			\mathbb{E}^{\prime}\sup_{0\leq t\leq T}|m^{\prime}(t)|^{2p}_{H^1} \leq C.
		\end{align}
		\dela{ Here the second step follows by Fatou's lemma and the bounds \eqref{bound on nabla m_n prime} and \eqref{bound on m_n prime}. }
		
		\dela{
			The third inequality can be shown similarly. Extend the norm $\l|\cdot\r|_{L^{2p}(0,T;L^2)}$ to the space $\l|\cdot\r|_{L^{2}(0,T;L^2)}$ as follows.
			\begin{align*}
				\l| v \r|_{L^{2p}(0,T;L^2)} =
				\begin{cases}
					\l| v \r|_{L^{2p}(0,T;L^2)},&\ \text{if}\ v\in L^{2p}(0,T;L^2), \\
					\infty,&\ \text{if}\ v\in L^{2}(0,T;L^2),\ v\notin L^{2p}(0,T;L^2).
				\end{cases}
			\end{align*}
			As done previously, let $r \geq 0$ be fixed and define set $B=\{v\in L^{2}(0,T;L^2) : \l| v \r|_{L^{2p}(0,T;L^2)} \leq r\}$. Let $(v_n)_{n\in\mathbb{N}}$ be a sequence in $B$ such that $v_n \rightarrow v$ in $L^{2}(0,T;L^2)$ for some $v\in L^{2}(0,T;L^2)$. We show that $v\in B$. The sequence is in the set $B$ implies that there exists a subsequence, say $(v_{n_k})_{k\in\mathbb{N}}$ and some $v^{\prime}\in L^{2p}(0,T;L^2)$ such that $v_{n_k} \rightarrow v^{\prime}$ weakly in $L^{2p}(0,T;L^2)$. The continuous embedding $L^{2p}(0,T;L^2) \hookrightarrow L^{2}(0,T;L^2)$ implies that $v_{n_k}\rightarrow v$ in $L^2(0,T;L^2)$. Hence by the uniqueness of limits, $v = v^{\prime}$. Convergence of $v_{n_k}$ to $v$ implies that
			\begin{align*}
				\l| v \r|_{L^{2p}(0,T':L^2)} \leq \liminf_{k\rightarrow\infty} \l| v_{n_k}\r|_{L^{2p}(0,T;L^2)} \leq r.
			\end{align*}
			Hence $v\in B$. Thus the extended norm is lower semicontinuous.
			
			Hence,
			\begin{align*}
				\l| \u \r|_{L^{2p}(0,T;L^2)} \leq \liminf_{n\rightarrow\infty} \l| \u_n \r|_{L^{2p}(0,T;L^2)}.
			\end{align*}
			Therefore by Fatou's Lemma,
			\begin{align*}
				\mathbb{E}^{\prime} \l| \u \r|_{L^{2p}(0,T;L^2)}^{2p} &\leq \mathbb{E}^{\prime} \liminf_{n\rightarrow\infty} \l| \u_n \r|_{L^{2p}(0,T;L^2)}^{2p} \\
				& \leq \liminf_{n\rightarrow\infty} \mathbb{E}^{\prime} \l| \u_n \r|_{L^{2p}(0,T;L^2)}^{2p} \\
				& \leq K.
			\end{align*}	
			
		}

		The sequence $\u_n$ converges to $\u$ in $L^2(0,T;L^2)$ $\mathbb{P}^{\prime}$-a.s.
		Hence by the Fatou Lemma,
		\begin{align}
			\mathbb{E}^{\prime} \l|\u\r|_{L^2(0,T;L^2)}^{2p} \leq \liminf_{n\rightarrow\infty} \mathbb{E}^{\prime} \l|\u_n\r|_{L^2(0,T;L^2)}^{2p} \leq K_p.
		\end{align}
		This concludes the proof of the Lemma \eqref{Lemma extension of norms and lower semicontinuity}.

	\end{proof}
	

	\dela{
		We have that $\m_n \rightarrow \m$ in $L^4(0,T;L^4)$ $\mathbb{P}^{\prime}$-a.s. This convergence can be extended to convergence in $L^4(\Omega^{\prime}:L^4(0,T;L^4))$. This can be done as follows.
		\begin{align*}
			|\m_n|_{L^4(0,T;L^4)}  &\leq |\m_n - \m|_{L^4(0,T;L^4)}  + |\m|_{L^4(0,T;L^4)}
		\end{align*}
		Thus,
		\begin{align*}
			|\m_n|_{L^4(0,T;L^4)} - |\m|_{L^4(0,T;L^4)}  &\leq |\m_n - \m|_{L^4(0,T;L^4)}.
		\end{align*}
		Similarly,
		\begin{align*}
			|\m|_{L^4(0,T;L^4)} - |\m_n|_{L^4(0,T;L^4)}  &\leq |\m_n - \m|_{L^4(0,T;L^4)}.
		\end{align*}
		Thus,
		\begin{align*}
			\left| |\m|_{L^4(0,T;L^4)} - |\m_n|_{L^4(0,T;L^4)} \right|  &\leq |\m_n - \m|_{L^4(0,T;L^4)}.
		\end{align*}
		The right hand side of the above inequality goes to 0 as $n$ goes to infinity.
		
		Hence as $n$ goes to infinity, we have
		\begin{equation*}
			|\m_n|_{L^4(0,T;L^4)} \rightarrow |\m|_{L^4(0,T;L^4)}
		\end{equation*}
		$\mathbb{P}^{\prime}$-a.s.

		By Bounded convergence theorem, we get
		\begin{equation}
			\mathbb{E}^{\prime}\int_{0}^{T}|m_n^{\prime}(t)-m^{\prime}|_{L^4}^4dt\rightarrow 0
		\end{equation}
		as $n$ goes to infinity.
	}

	The uniform bound \eqref{bound on nabla m_n prime}, along with the continuous embedding $H^1 \hookrightarrow L^4$ implies that the sequence $\{\m_n\}_{n\in\mathbb{N}}$ is uniformly integrable in $L^4(\Omega^{\prime} ; L^4(0,T ; L^4))$. Hence by the Vitali Convergence Theorem \dela{\cite{BogachevBook_MeasureTheor_2007}}, see for example Section 6, Exercise 10 in \cite{Rudin_RCA},
	\begin{equation}\label{eqn convergence of L4L4 norm of m_n prime}
		\mathbb{E}^{\prime}\int_{0}^{T}|m_n^{\prime}(t)-m^{\prime}|_{L^4}^4dt\rightarrow 0.
	\end{equation}
	Note that
	\begin{equation}\label{bound on u_n prime}
		\mathbb{E^{\prime}} \l[\int_{0}^{T} |\u_n(t)|_{L^2}^2 dt\r]^4 \leq C,
	\end{equation}
	for some constant $C$ independent of $n$. \dela{Thus, there exists a subsequence, labelled by the same index, such that}
	
	Weak convergence of $\u_n$ to $\u$ pointwise (from \eqref{Equation weak convergence un prime to u prime pointwise}) implies that for any $\phi\in L^2(\Omega^\p ; L^2(0,T;L^2))$, we have
	\begin{equation}
		\int_{0}^{t}\l\langle \u_n(s,\omega^\p) - \u(s,\omega^\p), \phi \r\rangle_{L^2} \, ds \to 0,
	\end{equation}
	as $n$ tends to infinity.
	
	Using the bounds in \eqref{assumption on u} for $p = 4$, we can prove (see the proof of \eqref{eqn-1001} in Section \ref{Section Optimal control} for a similar calculation) that there exists a constant $C>0$ such that
	\begin{equation}
		\mathbb{E^{\prime}}\l| \int_{0}^{t}\l\langle \u_n(s) - \u(s), \phi \r\rangle_{L^2} \, ds \r|^{\frac{4}{3}} \leq C.
	\end{equation}
	In fact, the following holds for any $1 \leq q \leq \frac{4}{3}$
	\begin{equation}
		\mathbb{E^{\prime}}\l| \int_{0}^{t}\l\langle \u_n(s) - \u(s), \phi \r\rangle_{L^2} \, ds \r|^{q} \leq C.
	\end{equation}
	Therefore in particular for $q = 1$, we have
	\begin{equation}
		\mathbb{E^{\prime}}\l| \int_{0}^{t}\l\langle \u_n(s) - \u(s), \phi \r\rangle_{L^2} \, ds \r| \leq C,
	\end{equation}
	giving \eqref{weak convergence of u_n prime to u in L2L2L2}. Therefore, we have the following convergence as a result of the pointwise convergence of $\u_n$ to $\u$ and the Vitali Convergence Theorem.
	\begin{equation}\label{weak convergence of u_n prime to u in L2L2L2}
		\u_n \rightarrow \u\ \text{weakly in}\ L^2(\Omega^{\prime}: L^2(0,T ; L^2)),
	\end{equation}
	as $n$ goes to infinity.
	\dela{Although \eqref{weak convergence of u_n prime to u in L2L2L2} is correct, the justification is not exactly correct.
	}
	
	\dela{By the same logic used in the convergence \eqref{eqn convergence of L4L4 norm of m_n prime}, why is this convergence \eqref{weak convergence of u_n prime to u in L2L2L2} not strong? Answer: Because the tightness is shown with the weak topology on $H^1$ and hence the pointwise convergence will also be in the weak topology.
		Another Question is that why the limit is same? An answer can be: Let $f_n = \int_{0}^{T} \l\langle \u_n(s) , \phi(s) \r\rangle_{L^2} \, ds$, for $\phi\in L^2(0,T;L^2)$. Therefore $f_n\to f$ pointwise (for $\phi\in L^2(\Omega^\p:L^2(0,T;L^2))$). Also, $\sup_{n\in\mathbb{N}}\mathbb{E}^\p\l|f_n\r| \leq C$ implies uniform integrability. Hence the convergence follows by the Vitali Theorem.}

	\begin{lemma}\label{Lemma continuity of m prime in H1 weak and X beta}
		There exists an event $\Omega^{\prime\prime} \subset \Omega^{\prime}$ of full $\mathbb{P}^{\prime}$-measure such that
		for  every $\omega^{\prime}\in\Omega^{\prime\prime}$, the following assertions hold:
		\begin{enumerate}
			\item\label{item 1 Lemma continuity of m prime in H1 weak and X beta} The path of $\m(\omega^{\prime})$ is continuous in $H^1$ with the weak topology.
			
			\item\label{item 2 Lemma continuity of m prime in H1 weak and X beta} The path of $\m(\omega^{\prime})$ is continuous in $X^{\beta}$ (with the norm topology) for $\beta < \frac{1}{2}$.
		\end{enumerate}
	\end{lemma}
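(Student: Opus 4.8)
The plan is to work pathwise on a single full-measure event and to deduce both assertions from the two facts already available: that $\m(\omega^\p)\in C([0,T];L^2)$ and that $\m(\omega^\p)$ is bounded in $H^1$. First I would let $\Omega^{\prime\prime}$ be the intersection of the $\mathbb{P}^\p$-full events on which $\m\in C([0,T];L^2)$ (Corollary \ref{Corollary m prime continuous in t with values in L2}) and on which $\sup_{0\le t\le T}|\m(t)|_{H^1}<\infty$; the latter is of full measure because the bound \eqref{equation H1 bound on m prime} in Lemma \ref{Lemma extension of norms and lower semicontinuity} forces the extended (lower semicontinuous) $H^1$-norm to be finite $\mathbb{P}^\p$-a.s., and since this extended norm equals $+\infty$ off $H^1$, finiteness of the supremum already gives $\m(\omega^\p)(t)\in H^1$ for \emph{every} $t\in[0,T]$, not merely for a.a.\ $t$. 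Fix $\omega^\p\in\Omega^{\prime\prime}$ and write $M:=\sup_{t}|\m(\omega^\p)(t)|_{H^1}<\infty$.

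For (\ref{item 1 Lemma continuity of m prime in H1 weak and X beta}) I would use the classical fact that a path which is bounded in the reflexive space $H^1$ and continuous into the larger space $L^2$ is necessarily weakly continuous into $H^1$. Concretely, fix $t_0$ and a sequence $t_n\to t_0$. The family $\{\m(t_n)\}_n$ is bounded by $M$ in the reflexive space $H^1$, so every subsequence has a further subsequence converging weakly in $H^1$ to some $\xi$. Weak convergence in $H^1$ implies weak convergence in $L^2$, whereas the $L^2$-continuity gives $\m(t_n)\to\m(t_0)$ strongly, hence weakly, in $L^2$. Uniqueness of weak $L^2$-limits forces $\xi=\m(t_0)$, and the Urysohn subsequence principle then upgrades this to weak $H^1$-convergence of the full sequence. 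This yields continuity of $\m(\omega^\p)$ into $H^1$ endowed with the weak topology.

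For (\ref{item 2 Lemma continuity of m prime in H1 weak and X beta}) I would interpolate. Since $A_1$ is a nonnegative self-adjoint operator with $X^0=L^2$ and $X^{1/2}=H^1$, the moment inequality (H\"older applied to the spectral measure of $A_1$) gives a constant $C>0$ such that
\begin{equation*}
	|w|_{X^\beta}\le C\,|w|_{L^2}^{\,1-2\beta}\,|w|_{H^1}^{\,2\beta},\qquad w\in H^1,
\end{equation*}
for every $\beta<\tfrac12$. Applying this to $w=\m(t)-\m(s)$ and bounding the $H^1$-factor by $(2M)^{2\beta}$, I obtain
\begin{equation*}
	|\m(t)-\m(s)|_{X^\beta}\le C\,(2M)^{2\beta}\,|\m(t)-\m(s)|_{L^2}^{\,1-2\beta}.
\end{equation*}
As $s\to t$ the right-hand side tends to $0$ by the $L^2$-continuity of $\m(\omega^\p)$, which proves norm-continuity of the path in $X^\beta$.

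The routine parts are the interpolation estimate and the measurability bookkeeping defining $\Omega^{\prime\prime}$; the one point deserving care, and the main obstacle, is assertion (\ref{item 1 Lemma continuity of m prime in H1 weak and X beta}), where one must derive weak $H^1$-continuity from mere $L^2$-continuity together with the uniform $H^1$-bound. In particular one must first secure that $\m(\omega^\p)(t)$ lands in $H^1$ for \emph{every} $t$ (handled above via the lower-semicontinuous extension of the norm), so that the weak-compactness argument in the reflexive space $H^1$ can be run at each individual time.
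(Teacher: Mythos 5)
Your proof is correct, and it is worth recording how it relates to the paper's argument. For assertion (\ref{item 1 Lemma continuity of m prime in H1 weak and X beta}) your route is essentially the paper's: both arguments fix $t_n\to t$, use the uniform $H^1$-bound on the chosen full-measure event to extract a weakly $H^1$-convergent subsequence, identify the limit, and conclude by the subsequence principle; the only difference is the identification device --- you pass through weak $L^2$-limits via the strong $L^2$-continuity of the path, while the paper passes through strong convergence in $(H^1)^{\prime}$ via the compact embedding $H^1\hookrightarrow (H^1)^{\prime}$ and continuity of the path in $(H^1)^{\prime}$; these are interchangeable. For assertion (\ref{item 2 Lemma continuity of m prime in H1 weak and X beta}), however, your argument is genuinely different: the paper again runs the soft compactness argument, invoking the compact embedding $H^1\hookrightarrow X^{\beta}$ for $\beta<\frac{1}{2}$ (Lemma \ref{X gamma compact embedding}) and repeating the subsequence extraction, whereas you use the moment inequality for the self-adjoint operator $A_1\geq I$,
\begin{equation*}
	\l| w \r|_{X^{\beta}} \leq C \l| w \r|_{L^2}^{1-2\beta}\l| w \r|_{H^1}^{2\beta},\qquad w\in H^1,\ 0\leq\beta<\tfrac{1}{2},
\end{equation*}
which follows from H\"older's inequality applied to the spectral measure of $A_1$ together with the identification $\l|A_1^{1/2}\cdot\r|_{L^2}=\l|\cdot\r|_{H^1}$. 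Applied to $w=\m(t)-\m(s)$, this converts $L^2$-continuity plus the $H^1$-bound directly into norm-continuity in $X^{\beta}$. The paper's approach reuses the same compactness machinery as in part (\ref{item 1 Lemma continuity of m prime in H1 weak and X beta}) and requires no spectral calculus; yours is more quantitative --- it yields an explicit H\"older-type modulus of continuity of the path in $X^{\beta}$ in terms of its $L^2$-modulus, and avoids invoking compactness of $A_1^{-1}$ altogether. Your preliminary observation that finiteness of the (lower semicontinuous, extended) $H^1$-norm supremum forces $\m(\omega^{\prime})(t)\in H^1$ for \emph{every} $t$ is exactly the point the paper handles when defining $\Omega^{\prime\prime}$, so no gap arises there.
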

	\begin{proof}[Proof of Lemma \ref{Lemma continuity of m prime in H1 weak and X beta}]
		\dela{How the set $\Omega^{\prime\prime}$ is chosen?}
		The inequality \eqref{bound on m prime H1} holds in $L^2(\Omega^{\prime})$. Hence there exists a subset $\Omega^{\prime\prime}\in\mathcal{F}_0^\prime$ of full $\mathbb{P}^\p$-measure such that
		$$\sup_{t\in [0,T]}|\m(t)(\omega^{\prime})|_{H^1}< \infty,\ \forall \, \omega^{\prime} \in \Omega^{\prime\prime}.$$
		Hence the quantity $\sup_{t\in [0,T]}|\m(t)(\omega^{\prime})|_{H^1}$ is finite, on an event $\Omega^{\p\p}\subset\Omega^{\p}$ of full $\mathbb{P}^\p$-measure.		
		\dela{Fix $\omega^{\prime}\in \Omega^{\prime}$ such that
			\begin{equation*}
				\sup_{t\in [0,T]}|\m(t)(\omega^{\prime})|_{H^1} < \infty.
			\end{equation*}
		}
		Fix $\omega^{\prime}\in \Omega^{\prime\p}$.\\
		\textbf{Idea of the proof:} To show the continuity of the process $\m(\omega^{\prime})$ at $t\in[0,T]$, we consider a sequence $\l(t_n\r)_{n\in\mathbb{N}}$ in $ [0,T] $ that converges to $t$. We then show that the sequence $\l(\m(t_n)(\omega^{\prime})\r)_{n\in\mathbb{N}}$ converges in the appropriate topology to $\m(t)(\omega^{\prime})$ as $n$ goes to infinity.\\		
		\textbf{Proof of \eqref{item 1 Lemma continuity of m prime in H1 weak and X beta}:} Let $(t_n)_{n\in\mathbb{N}}$ be a sequence in $[0,T]$ such that 
		\begin{equation*}
			t_n\rightarrow t \in [0,T],
		\end{equation*}
		in $[0,T]$.
		The sequence $(\m(t_n)(\omega^{\prime}))_{n\in\mathbb{N}}$ is bounded in $H^1$ and hence has a weakly convergent subsequence, say $(\m(t_{n_k})(\omega^{\prime}))_{n\in\mathbb{N}}$ such that \begin{equation}
			\m(t_{n_k})(\omega^{\prime}) \rightarrow z
		\end{equation}\dela{$$\m(t_{n_k})(\omega^{\prime}) \rightarrow z$$} weakly in $H^1$ for some $z\in H^1$.\\
		The space $H^1$ is compactly embedded in the space $(H^1)^{\prime}$. Hence as $k$ goes to infinity (possibly along a subsequence),\dela{$$\m(t_{n_k})(\omega^{\prime}) \rightarrow z\ \text{in}\ (H^1)^{\prime}.$$}
		\begin{equation}
			\m(t_{n_k})(\omega^{\prime}) \rightarrow z\ \text{in}\ (H^1)^{\prime}.
		\end{equation}
		That $\m(\omega^{\prime})$ is continuous with values in $(H^1)^{\prime}$ implies that \dela{$$\m(t_{n_k})(\omega^{\prime}) \rightarrow \m(t)(\omega^{\prime})$$}
		\begin{equation}
			\m(t_{n_k})(\omega^{\prime}) \rightarrow \m(t)(\omega^{\prime})
		\end{equation} in $(H^1)^{\prime}$.\\
		Hence by the uniqueness of limit,
		\begin{equation*}
			z = \m(t)(\omega^{\prime}).
		\end{equation*}
		From the above argument we can conclude that every subsequence of $\m(t_{n})(\omega^{\prime})$ thus has a further subsequence which converges to the same limit $ \m(t)(\omega^{\prime}) $. Hence
		\begin{equation*}
			\m(t_{n})(\omega^{\prime}) \rightarrow \m(t)(\omega^{\prime})\ \text{weakly in}\ H^{1}.
		\end{equation*}
		The sequence of arguments can be repeated for any $t\in[0,T]$. Hence $\m(\omega^{\prime})$ is continuous in $H^1$ with the weak topology.\\
		\textbf{Sketch of a proof of \eqref{item 2 Lemma continuity of m prime in H1 weak and X beta}.} $H^1$ is compactly embedded in $X^{\beta}$ for $\beta < \frac{1}{2}$, see Lemma \ref{X gamma compact embedding} in Appendix \ref{Section Some embeddings}.
		Hence
		
		\begin{equation*}
			\m(t_{n_k})(\omega^{\prime}) \rightarrow \m(t)(\omega^{\prime})\ \text{in}\ X^{\beta}.
		\end{equation*}
		Replicating the above arguments, the continuity of $\m(\omega^{\prime})$ in $X^{\beta}$ can be shown.

	\end{proof}

	\begin{lemma}\label{Lemma sup in t L2 in space convergene of m_n prime}
		We have the following convergence.
		\begin{align*}
			\mathbb{E^{\prime}} \sup_{t\in [0,T]}|\m_n(t) - \m(t)|^8_{L^2} \rightarrow 0\ \text{as}\ n\rightarrow\infty.
		\end{align*}
	\end{lemma}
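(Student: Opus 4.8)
The plan is to upgrade the $\mathbb{P}'$-almost sure convergence already supplied by the Skorohod construction to convergence in expectation by means of a uniform pathwise domination. First I would recall from Proposition \ref{Use of Skorohod theorem} that $m_n^{\prime}\to m^{\prime}$ in $C([0,T];L^2)$ for $\mathbb{P}'$-almost every $\omega'$; unravelling the definition of the norm on $C([0,T];L^2)$, this is precisely the statement that
\[
\sup_{t\in[0,T]}\l|m_n^{\prime}(t)-m^{\prime}(t)\r|_{L^2}\longrightarrow 0\quad\mathbb{P}'\text{-a.s.},
\]
and hence $\sup_{t\in[0,T]}\l|m_n^{\prime}(t)-m^{\prime}(t)\r|_{L^2}^8\to 0$ $\mathbb{P}'$-a.s.\ as well. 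The only thing left is to justify passing the limit inside the expectation.

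The key observation is that the integrand is bounded by a deterministic constant, uniformly in $n$. Indeed, the bound \eqref{bound on m_n prime} in Proposition \ref{Proposition bounds on m_n prime} gives $\l|m_n^{\prime}(t)\r|_{L^2}=\l|m_n^{\prime}(0)\r|_{L^2}=\l|P_n m_0\r|_{L^2}\leq\l|m_0\r|_{L^2}$ for all $t\in[0,T]$, while the bound \eqref{equation bound on m prime} in Lemma \ref{Lemma extension of norms and lower semicontinuity} gives $\sup_{t\in[0,T]}\l|m^{\prime}(t)\r|_{L^2}\leq\l|m_0\r|_{L^2}$, both holding $\mathbb{P}'$-a.s. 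Since $m_0$ takes values in $\mathbb{S}^2$, one has $\l|m_0\r|_{L^2}^2=\int_{\mathcal{O}}\l|m_0(x)\r|_{\mathbb{R}^3}^2\,dx=\l|\mathcal{O}\r|$, a finite deterministic constant. Consequently, by the triangle inequality,
\[
\sup_{t\in[0,T]}\l|m_n^{\prime}(t)-m^{\prime}(t)\r|_{L^2}^8\leq\bigl(2\l|m_0\r|_{L^2}\bigr)^8=2^8\l|\mathcal{O}\r|^4=:C,
\]
for all $n\in\mathbb{N}$, $\mathbb{P}'$-a.s.

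With the a.s.\ convergence to $0$ and the uniform domination by the constant $C\in L^1(\Omega')$ in hand, the dominated convergence theorem immediately yields
\[
\mathbb{E}^{\prime}\sup_{t\in[0,T]}\l|m_n^{\prime}(t)-m^{\prime}(t)\r|_{L^2}^8\longrightarrow 0,
\]
which is the claim. There is essentially no serious obstacle here: the entire content is the pathwise $L^2$-bound $\sup_{t}\l|m_n^{\prime}(t)\r|_{L^2}\leq\l|m_0\r|_{L^2}$ coming from the preserved-norm identity \eqref{bound on m_n prime}, which makes the family $\{\sup_t|m_n^{\prime}-m^{\prime}|_{L^2}^8\}_n$ trivially uniformly integrable. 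If one preferred not to invoke the exact constant, an equally valid route would be to combine the higher-moment bounds \eqref{bound on nabla m_n prime} with the continuous embedding $H^1\hookrightarrow L^2$ to obtain $\sup_n\mathbb{E}^{\prime}\sup_{t\in[0,T]}\l|m_n^{\prime}(t)-m^{\prime}(t)\r|_{L^2}^{16}<\infty$; this gives uniform integrability of the eighth powers and lets one conclude via the Vitali convergence theorem instead.
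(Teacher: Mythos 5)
Your proof is correct, and it takes a genuinely different (and more elementary) route than the paper. The paper does not argue via a deterministic dominating constant: it first interpolates, using $|v|_{L^2}\leq |v|_{H^1}^{1/2}|v|_{(H^1)^{\prime}}^{1/2}$ from the Gelfand triple $H^1\hookrightarrow L^2\hookrightarrow (H^1)^{\prime}$, then applies Cauchy--Schwarz together with the uniform moment bounds \eqref{bound on m_n prime} and \eqref{bound on nabla m_n prime} to reduce the claim to the convergence of $\mathbb{E}^{\prime}\sup_{t\in[0,T]}|m_n^{\prime}(t)-m^{\prime}(t)|_{(H^1)^{\prime}}^{4}$, which is finally handled by the a.s.\ convergence and a Lebesgue-convergence argument whose dominating quantity involves $\sup_t|m_n^{\prime}(t)|_{H^1}^{8}$ and thus still depends on $n$ (so it is really a uniform-integrability/Vitali step, stated somewhat loosely). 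You instead exploit the pathwise norm-preservation identity \eqref{bound on m_n prime} together with \eqref{equation bound on m prime} and the constraint $|m_0(x)|_{\mathbb{R}^3}=1$ to dominate $\sup_t|m_n^{\prime}(t)-m^{\prime}(t)|_{L^2}^{8}$ by the fixed constant $2^{8}|\mathcal{O}|^{4}$, after which plain dominated convergence applied to the a.s.\ convergence in $C([0,T];L^2)$ from Proposition \ref{Use of Skorohod theorem} finishes the proof. What each approach buys: yours is shorter, avoids the dual-norm interpolation entirely, and produces an $n$-independent dominating constant, so no uniform-integrability discussion is needed; the paper's route, on the other hand, does not rely on the exact $L^2$-norm conservation and would survive in settings where only $H^1$ moment bounds and convergence in a weaker dual norm are available. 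Your closing alternative (sixteenth-moment bounds via \eqref{bound on nabla m_n prime} and $H^1\hookrightarrow L^2$, then Vitali) is also valid and is in spirit closest to what the paper actually does.
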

	\begin{proof}[Proof of Lemma \ref{Lemma sup in t L2 in space convergene of m_n prime}]
		We use the following inequality in the subsequent calculations. For $v\in H^1$,\dela{ A brief sketch of the proof is also given.}
		\begin{equation}\label{Inequality L^2 leq H1 H-1}
			|v|_{L^2} \leq |v|_{H^1}^{\frac{1}{2}} |v|_{(H^1)^{\prime}}^{\frac{1}{2}}
		\end{equation}
		
		\textbf{Sketch of a proof of the inequality \eqref{Inequality L^2 leq H1 H-1}.} Use the Gelfand triple $H^{1} \hookrightarrow L^2 \hookrightarrow (H^1)^{\prime}$ and hence $$\l\langle a , b \r\rangle_{L^2} = \ _{H^1}\l\langle a , b \r\rangle_{(H^1)^{\prime}}$$
		for $a\in H^1$ and $b\in L^2$.\\
		Thus
		$$| \l\langle a , b \r\rangle_{L^2} | = |\ _{H^1}\l\langle a , b \r\rangle_{(H^1)^{\prime}}| \leq |a|_{H^1}^{\frac{1}{2}} |b|_{(H^1)^{\prime}}^{\frac{1}{2}}.$$
		
		\begin{align*}
			\mathbb{E^{\prime}} \sup_{t\in [0,T]}|\m_n(t) - \m(t)|^8_{L^2} &\leq C \mathbb{E^{\prime}} \sup_{t\in [0,T]} \left[ |\m_n(t) - \m(t)|^4_{(H^1)^{\prime}} |\m_n(t) - \m(t)|^4_{H^{1}} \right]\\
			& \leq C \mathbb{E^{\prime}} \sup_{t\in [0,T]} \left[ (|\m_n(t)|_{H^1}^4 + |\m(t)|^4_{H^{1}}) |\m_n(t) - \m(t)|^4_{(H^1)^{\prime}} \right] \\
			& \leq C \left[ \left(\mathbb{E^{\prime}} \sup_{t\in [0,T]}|\m_n(t)|_{H^1}^8 + \mathbb{E^{\prime}} \sup_{t\in [0,T]}|\m(t)|_{H^1}^8\right)^{\frac{1}{2}}  \right]\centerdot \\
			& \quad\left(\mathbb{E^{\prime}} \sup_{t\in [0,T]}|\m_n(t) - \m(t)|^4_{(H^1)^{\prime}}\right)^{\frac{1}{2}} \\
			& \leq C \left(\mathbb{E^{\prime}} \sup_{t\in [0,T]}|\m_n(t) - \m(t)|^4_{(H^1)^{\prime}}\right)^\frac{1}{2}.
		\end{align*}
		The above calculation uses the inequality \eqref{Inequality L^2 leq H1 H-1} followed by Cauchy-Schwartz inequality and concluding with applying the bounds \eqref{bound on m_n prime} and  \eqref{bound on nabla m_n prime}.\\
		Moreover, there exists a constant $C>0$ such that $\mathbb{P}^\p$-a.s.
		\begin{align*}
			\sup_{t\in [0,T]}|\m_n(t) - \m(t)|^8_{(H^1)^{\prime}} &\leq \sup_{t\in [0,T]}|\m_n(t)|^8_{(H^1)^{\prime}}  + \sup_{t\in [0,T]}|\m_n(t)|^8_{(H^1)^{\prime}} \\
			& \leq C \l(\sup_{t\in [0,T]}|\m_n(t)|^8_{H^{1}}  + \sup_{t\in [0,T]}|\m_n(t)|^8_{H^{1}}\r).
		\end{align*}
		The last step along with the bound \eqref{bound on nabla m_n prime} gives us a bound for using the Lebesgue convergence theorem, thus concluding the proof.
	\end{proof}

	\begin{lemma}\label{Lemma convergence of the bump function}
		We have the following convergences.
		\begin{enumerate}
			\item \begin{equation*}
				\lim_{n\rightarrow\infty} \mathbb{E^{\prime}} \sup_{t\in [0,T]} \l|\psi_0\l(\l|\m_n(t)\r|_{L^{\infty}}\r) - \psi_0\l(\l|\m(t)\r|_{L^{\infty}}\r)\r|^4 = 0.
			\end{equation*}
			
			\item \begin{equation*}
				\lim_{n\rightarrow\infty} \mathbb{E^{\prime}} \sup_{t\in [0,T]} \l| \psi_0\bigl(\l|P_n\l(\m_n(t) \times h\r)\r|_{L^{\infty}}\bigr) -  \psi_0\l(\l|\m(t) \times h\r|_{L^{\infty}}\r)\r|^4 = 0.
			\end{equation*}
			
			\item \begin{equation*}
				\lim_{n\rightarrow\infty} \mathbb{E^{\prime}} \sup_{t\in [0,T]} \l|\psi_0\bigl(\l|P_n\bigl(\m_n(t) \times \l(\m_n(t) \times h\r)\bigr)\r|_{L^{\infty}}) - \psi_0\bigl(\l|\m(t) \times \l(\m(t) \times h\r)\r|_{L^{\infty}}\bigr)\r|^4 = 0.
			\end{equation*}
		\end{enumerate}
		In particular,
		\begin{equation}\label{equation convergence of the bump function}
			\lim_{n\rightarrow\infty} \mathbb{E^{\prime}} \sup_{t\in [0,T]} \l|\psi\big(\m_n(t)\big) - \psi\big(\m(t)\big)\r|^4 = 0.
		\end{equation}
	\end{lemma}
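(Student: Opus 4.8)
The plan is to reduce all three convergences to a single mechanism. Since $\psi_0 \in C_c^1(\mathbb{R})$ is globally Lipschitz, with constant $L$ say, and since $\bigl| \,|a|_{L^\infty} - |b|_{L^\infty}\, \bigr| \le |a-b|_{L^\infty}$, every difference of the form $\psi_0(|X_n|_{L^\infty}) - \psi_0(|X|_{L^\infty})$ is controlled by $L\,|X_n - X|_{L^\infty}$. Thus it suffices to show $\mathbb{E}'\sup_{t}|X_n(t) - X(t)|_{L^\infty}^4 \to 0$ for the three choices $X_n \in \{\m_n,\ P_n(\m_n \times h),\ P_n(\m_n \times (\m_n \times h))\}$ and their limits. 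The crucial tool will be the one-dimensional interpolation (Agmon/Gagliardo--Nirenberg) inequality
\[
|v|_{L^\infty} \le C\,|v|_{L^2}^{1/2}\,|v|_{H^1}^{1/2}, \qquad v \in H^1,
\]
which converts the required $L^\infty$-control into the strong $L^2$-convergence of $\m_n$ (Lemma \ref{Lemma sup in t L2 in space convergene of m_n prime}) together with the uniform $H^1$-bounds \eqref{bound on nabla m_n prime} and \eqref{equation H1 bound on m prime}. I shall also use repeatedly that $P_n$, being the spectral projection associated with the eigenbasis of the self-adjoint operator $A_1$, is a contraction both on $L^2$ and on $H^1 = D(A_1^{1/2})$.

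For assertion (1), I would combine the Lipschitz property with the interpolation inequality to get
\[
\sup_t \bigl|\psi_0(|\m_n(t)|_{L^\infty}) - \psi_0(|\m(t)|_{L^\infty})\bigr|^4 \le C \sup_t \Bigl( |\m_n(t)-\m(t)|_{L^2}^2\,|\m_n(t)-\m(t)|_{H^1}^2 \Bigr),
\]
then split the supremum of the product into the product of suprema and apply the Cauchy--Schwarz inequality in $\mathbb{E}'$ to obtain a bound
\[
C\Bigl(\mathbb{E}'\sup_t|\m_n(t)-\m(t)|_{L^2}^4\Bigr)^{1/2}\Bigl(\mathbb{E}'\sup_t(|\m_n(t)|_{H^1}+|\m(t)|_{H^1})^4\Bigr)^{1/2}.
\]
The second factor is bounded uniformly in $n$ by \eqref{bound on nabla m_n prime} and \eqref{equation H1 bound on m prime} (with $p=2$), while the first factor tends to $0$ by Lemma \ref{Lemma sup in t L2 in space convergene of m_n prime} (the fourth moment being controlled by the eighth via Cauchy--Schwarz on the probability space $\Omega'$).

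Assertions (2) and (3) follow the same scheme after the splitting
\[
P_n(\Phi(\m_n)) - \Phi(\m) = P_n\bigl(\Phi(\m_n) - \Phi(\m)\bigr) + (P_n - I)\Phi(\m),
\]
where $\Phi(v) = v\times h$ for (2) and $\Phi(v) = v\times(v\times h)$ for (3). For the first term I would interpolate and then use the $L^2$- and $H^1$-contractivity of $P_n$: the $L^2$-factor is estimated, via H\"older and $H^1 \hookrightarrow L^\infty$, by $C|\m_n - \m|_{L^2}(1+|\m_n|_{H^1}+|\m|_{H^1})|h|_{H^1}$, so it vanishes by Lemma \ref{Lemma sup in t L2 in space convergene of m_n prime}, while the $H^1$-factor is merely bounded using $|\Phi(v)|_{H^1}\le C(1+|v|_{H^1})|v|_{H^1}|h|_{H^1}$ (as in the proof of Lemma \ref{Technical lemma bound on DGn Gn}) and the uniform $H^1$-estimates. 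For the second, $\omega'$-dependent term $(P_n - I)\Phi(\m)$, I would again interpolate; the $H^1$-factor stays bounded by $2|\Phi(\m(t))|_{H^1}$, and for the $L^2$-factor I would exploit that, for fixed $\omega'$, the path $t\mapsto \m(t)(\omega')$ lies in $C([0,T];L^2)$ (Corollary \ref{Corollary m prime continuous in t with values in L2}), so its image, and hence $\{\Phi(\m(t)(\omega')):t\in[0,T]\}$, is compact in $L^2$; since $\sup_n|P_n|_{\mathcal{L}(L^2)}\le 1$ and $P_n \to I$ strongly, the convergence $P_n \to I$ is uniform on this compact set, giving $\sup_t|(P_n-I)\Phi(\m(t))|_{L^2}\to 0$ $\mathbb{P}'$-a.s. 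A uniform-in-$n$ dominating bound of the form $C\sup_t|\m(t)|_{H^1}^4|h|_{H^1}^4$, integrable by \eqref{equation H1 bound on m prime}, then lets me pass to the limit in $\mathbb{E}'$ by dominated convergence.

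Finally, the pointwise bound $\psi_0 \in [0,1]$ yields, by telescoping, $\bigl|\prod_{i=1}^3 a_i^{(n)} - \prod_{i=1}^3 a_i\bigr| \le \sum_{i=1}^3 |a_i^{(n)} - a_i|$ for factors in $[0,1]$, where $a_i^{(n)}, a_i$ are the three bump-factors; raising to the fourth power, taking $\sup_t$ and $\mathbb{E}'$, and invoking (1)--(3) gives \eqref{equation convergence of the bump function}. The main obstacle is precisely the appearance of the $L^\infty$-norm inside $\psi_0$: neither the strong $L^2$-convergence nor the weak $H^1$-convergence of $\m_n$ controls it directly, and the resolution rests on the one-dimensional interpolation inequality together with the contractivity of $P_n$ on both $L^2$ and $H^1$ and the $L^2$-compactness of the limit path for the $(P_n-I)$ terms.
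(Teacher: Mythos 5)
Your proof is correct and follows essentially the same route as the paper's: global Lipschitz continuity of $\psi_0$, the reverse triangle inequality for $L^\infty$-norms, the one-dimensional Agmon interpolation inequality $|v|_{L^\infty}\leq C|v|_{L^2}^{1/2}|v|_{H^1}^{1/2}$, H\"older/Cauchy--Schwarz on $\Omega^\prime$, the uniform $H^1$-moment bounds \eqref{bound on nabla m_n prime}, \eqref{equation H1 bound on m prime}, and the strong convergence of Lemma \ref{Lemma sup in t L2 in space convergene of m_n prime}. The only difference is one of completeness: the paper proves assertion (1) in detail and dismisses (2)--(3) as ``similar,'' whereas your splitting $P_n\Phi(\m_n)-\Phi(\m)=P_n\bigl(\Phi(\m_n)-\Phi(\m)\bigr)+(P_n-I)\Phi(\m)$, combined with the contractivity of $P_n$ on both $L^2$ and $H^1$ and the uniform convergence of $P_n\to I$ on the compact $L^2$-image of the path $t\mapsto\Phi(\m(t))$, supplies exactly the details needed to handle the projection appearing on only one side of those two assertions.
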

	\begin{proof}[Proof of Lemma \ref{Lemma convergence of the bump function}]
		We state a couple of results that will be used in the proof that follows.
		\begin{enumerate}
			\item Since, $\psi_0 \in C_c^{1}(D)$, the following holds for any $x,y\in \mathbb{R}$.
			\begin{align*}
				|\psi_0(x) - \psi_0(y)| \leq \sup_{z\in\mathbb{R}}|D\psi(z)| |x - y|.
			\end{align*}
			\dela{
				\textbf{Sketch of proof for the above statement (1):}
				Let $x,y\in\mathbb{R}$. Without the loss of generality, let $x\leq y$. The case $x \geq y$ can be shown similarly/ Then there exists $c\in[x,y]$ such that
				\begin{align*}
					(\psi(x) - \psi(y)) = D\psi(c)(x - y)
				\end{align*}
				Thus,
				\begin{align*}
					|\psi(x) - \psi(y)| &\leq \sup_{c\in[x,y]}D\psi(c)|x - y| \\
					& \leq \sup_{c\in\mathbb{R}}D\psi(c)|x - y|.
				\end{align*}
			}
			
			\item \dela{ Another inequality that will be used is as follows.} We refer the reader to Theorem 5.8, \cite{Adams+Fournier}, for the following inequality.
			\begin{equation}\label{Inequality L infinity leq L2 H1}
				|v|_{L^{\infty}} \leq C |v|_{L^2}^{\frac{1}{2}} |v|_{H^1}^{\frac{1}{2}} ,\  v\in H^1.
			\end{equation}
		\end{enumerate}
		We show the first convergence in Lemma \ref{Lemma convergence of the bump function}. The other two can be shown similarly.
		
		\begin{align*}
			\mathbb{E^{\prime}} \sup_{t\in [0,T]} \l|\psi_0\l(|\m_n(t)|_{L^{\infty}}\r) - \psi_0\bigl(\l|\m(t)\r|_{L^{\infty}}\bigr)\r|^4 &\leq C \mathbb{E^{\prime}} \sup_{t\in [0,T]} \l|\l|\m_n(t)\r|_{L^{\infty}} - \l|\m(t)\r|_{L^{\infty}}\r|^4 \\
			& \leq C \mathbb{E^{\prime}} \sup_{t\in [0,T]} \l|\m_n(t) - \m(t)\r|_{L^{\infty}}^4 \\
			& \leq C \mathbb{E^{\prime}} \sup_{t\in [0,T]} \l(\l|\m_n(t) - \m(t)\r|_{L^2}^2 \l|\m_n(t) - \m(t)\r|_{H^1}^2\r) \\
			& \leq C \mathbb{E^{\prime}} \sup_{t\in [0,T]} \l|\m_n(t) - \m(t) \r|_{L^2}^2 \l( \l|\m_n(t)\r|_{H^1}^2 + \l|\m(t)\r|_{H^1}^2\r) \\
			& \leq \left(\mathbb{E^{\prime}} \sup_{t\in [0,T]} |\m_n(t) - \m(t)|_{L^2}^4\right)^{\frac{1}{2}} \centerdot\\
			& \quad\left( \mathbb{E^{\prime}} \sup_{t\in [0,T]} \l( \l|\m_n(t)\r|_{H^1}^4 + \l|\m(t)\r|_{H^1}^4 \r) \right)^{\frac{1}{2}}.
		\end{align*}
		The above calculation uses triangle inequality in the first step followed by \eqref{Inequality L infinity leq L2 H1} and concluding with the H\"older inequality.
		By the bounds \eqref{bound on m_n prime}, \eqref{bound on nabla m_n prime} and Lemma \ref{Lemma sup in t L2 in space convergene of m_n prime}, the right hand side of the above inequality goes to 0 as $n$ goes to infinity.
	\end{proof}
	
	\dela{	By the bounds established in Proposition \ref{Proposition bounds on m_n prime}, there exists a measurable process $Y \in L^{2}(\Omega^{\prime}:L^2(0,T;L^2))$ such that
		
		\begin{equation}\label{convergence m_n times Delta m_n}
			m_n^{\prime}\times\Delta m_n^{\prime}\rightarrow Y \ \text{weakly in}\  L^{2}(\Omega^{\prime}:L^2(0,T;L^2)),
		\end{equation}
		The next lemma helps to identify the limit $Y$ with $\m \times \Delta \m$.
	}
	\begin{lemma}\label{Lemma convergencem mn times Laplacian mn and mn times mn times Laplacian mn}
		Let $\phi\in L^4(\Omega^\p ;  L^4(0,T ; H^1))$. Then
		\dela{Choose $\phi\in L^4(\Omega^p: L^4(0,T;W^{1,4}))$. See Lemma 4.5 in \cite{ZB+BG+TJ_Weak_3d_SLLGE}.}
		
		\begin{align}
			\lim_{n\rightarrow\infty} \mathbb{E^{\prime}} \int_{0}^{T} \l\langle \m_n(s) \times \Delta \m_n(s) , \phi(s) \r\rangle_{L^2}\, ds = 			\mathbb{E^{\prime}} \int_{0}^{T} \l\langle \m(s) \times \Delta \m(s) , \phi(s) \r\rangle_{L^2}\, ds.
		\end{align}

	\end{lemma}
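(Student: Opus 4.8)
The plan is to reduce the genuine $L^2$ pairing on the left (which is meaningful because, by the Kuratowski argument of Remark \ref{same bounds remark}, each $m_n^\p(s)$ still takes values in $H_n \subset D(A)$, so $\Delta m_n^\p(s) \in L^2$) to a pairing of gradients, and then to identify the right-hand side with the dual pairing $\ _{(H^1)^\p}\langle m^\p \times \Delta m^\p, \phi\rangle_{H^1}$. Concretely, applying \eqref{Interpretation eqn 2} with $v_1 = v_2 = m_n^\p(s)$, expanding $\nabla(\phi \times v) = \nabla\phi \times v + \phi \times \nabla v$, and using the pointwise identity $\langle a, a \times b\rangle_{\mathbb{R}^3} = 0$ (which kills the term $\langle \phi \times \nabla m_n^\p, \nabla m_n^\p\rangle_{L^2}$), I would rewrite
\begin{equation*}
\langle m_n^\p(s) \times \Delta m_n^\p(s), \phi(s)\rangle_{L^2} = -\langle \nabla\phi(s) \times m_n^\p(s), \nabla m_n^\p(s)\rangle_{L^2},
\end{equation*}
and likewise for $m^\p$. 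It then suffices to prove that $\mathbb{E}^\p\int_0^T \langle \nabla\phi \times m_n^\p, \nabla m_n^\p\rangle_{L^2}\,ds$ converges to $\mathbb{E}^\p\int_0^T \langle \nabla\phi \times m^\p, \nabla m^\p\rangle_{L^2}\,ds$.

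Splitting the integrand as
\begin{equation*}
\langle \nabla\phi \times (m_n^\p - m^\p), \nabla m_n^\p\rangle_{L^2} + \langle \nabla\phi \times m^\p, \nabla m_n^\p - \nabla m^\p\rangle_{L^2} =: \mathrm{I}_n + \mathrm{II}_n,
\end{equation*}
I would treat the two terms by different mechanisms. For $\mathrm{II}_n$, I first check that $\nabla\phi \times m^\p \in L^2(\Omega^\p; L^2(0,T; L^2))$: indeed $|\nabla\phi \times m^\p|_{L^2} \le |m^\p|_{L^\infty}|\nabla\phi|_{L^2}$, and the moment bound \eqref{equation H1 bound on m prime} together with the embedding $H^1 \hookrightarrow L^\infty$, paired with $\phi \in L^4(\Omega^\p; L^4(0,T;H^1))$, make the relevant expectation finite. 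Since the uniform bound \eqref{bound on nabla m_n prime} forces $\nabla m_n^\p \rightharpoonup \nabla m^\p$ weakly in $L^2(\Omega^\p; L^2(0,T;L^2))$ (the limit being identified through the strong $C([0,T];L^2)$ convergence already available and a subsequence argument), the term $\mathbb{E}^\p\int_0^T \mathrm{II}_n\,ds$ tends to $0$.

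The main obstacle is $\mathrm{I}_n$, where both factors vary with $n$ and $\nabla\phi$ is only $L^2$ in space, so the $L^4(L^4)$-strong convergence of $m_n^\p$ cannot be played directly against $\nabla\phi$. The device I would use is the one-dimensional interpolation inequality \eqref{Inequality L infinity leq L2 H1}, $|v|_{L^\infty} \le C|v|_{L^2}^{1/2}|v|_{H^1}^{1/2}$, to write
\begin{equation*}
|\mathrm{I}_n| \le |\nabla\phi|_{L^2}\,|m_n^\p - m^\p|_{L^\infty}\,|\nabla m_n^\p|_{L^2} \le C|\nabla\phi|_{L^2}\,|m_n^\p - m^\p|_{L^2}^{1/2}\bigl(|m_n^\p|_{H^1}^{1/2} + |m^\p|_{H^1}^{1/2}\bigr)|m_n^\p|_{H^1}.
\end{equation*}
After bounding $|m_n^\p - m^\p|_{L^2}^{1/2}$ by $(\sup_{t}|m_n^\p(t) - m^\p(t)|_{L^2})^{1/2}$ and pulling it outside the time integral, I would close with Cauchy--Schwarz in $\omega^\p$: the factor $(\mathbb{E}^\p\sup_t |m_n^\p - m^\p|_{L^2})^{1/2}$ tends to $0$ by Lemma \ref{Lemma sup in t L2 in space convergene of m_n prime}, while the remaining factor is bounded uniformly in $n$ by repeated applications of Hölder's inequality in $t$ and $\omega^\p$, using that \eqref{bound on nabla m_n prime} and \eqref{equation H1 bound on m prime} hold for every $p \ge 1$ and that $\phi \in L^4(\Omega^\p; L^4(0,T;H^1))$. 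Hence $\mathbb{E}^\p\int_0^T \mathrm{I}_n\,ds \to 0$, which together with the analysis of $\mathrm{II}_n$ completes the proof; as a by-product this identifies the weak limit $\mathbb{Y}$ in \eqref{equation weak convergence of mn prime times delta mn prime to Y} with $m^\p \times \Delta m^\p$.
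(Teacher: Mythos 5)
Your proposal is correct and follows essentially the same route as the paper's proof: the same conversion of $\langle \m_n \times \Delta \m_n, \phi\rangle_{L^2}$ to a gradient pairing via \eqref{Interpretation eqn 2} and the identity $\langle a, a\times b\rangle_{\mathbb{R}^3}=0$, the same add-and-subtract decomposition into your $\mathrm{I}_n$ and $\mathrm{II}_n$, the same use of Agmon's inequality \eqref{Inequality L infinity leq L2 H1} together with Lemma \ref{Lemma sup in t L2 in space convergene of m_n prime} and the uniform bounds for $\mathrm{I}_n$, and the same weak-convergence argument (with the identical integrability check on $\nabla\phi \times \m$) for $\mathrm{II}_n$. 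The only differences are cosmetic (you convert to gradients before splitting, and you pull the $\sup$-in-time factor out before applying Cauchy--Schwarz in $\omega^\p$, whereas the paper applies H\"older in time first), and your opening remark that the limit in the weak convergence is identified by a subsequence argument is, if anything, slightly more careful than the paper's phrasing.
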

	\begin{proof}[Proof of Lemma \ref{Lemma convergencem mn times Laplacian mn and mn times mn times Laplacian mn}]
		

		\dela{By the uniform bounds \eqref{bound on m_n prime} and \eqref{bound on nabla m_n prime}, there exists a subsequence of $(\m_n)_{n\in\mathbb{N}}$ (denoted by the same sequence) which converges  to $\m$ weakly in .
			
			some element of $L^2(\Omega^{\prime}:L^2(0,T);L^2)$. That it converges to $\m$ can be shown by the following argument and the uniqueness of limit.
			By Proposition \ref{Use of Skorohod theorem} for $ v \in L^2(\Omega^{\prime}:L^2(0,T);L^2) $,
			\begin{equation}
				\mathbb{E^{\prime}}\int_{0}^{T} | \l\langle  \m_n -  \m , v \r\rangle_{L^2} |\, ds \to 0 \ \text{as}\ n\rightarrow \infty.
			\end{equation}
			
			We observe that
			\begin{align}
				\mathbb{E^{\prime}}\int_{0}^{T} \l| \l\langle \nabla \m_n - \nabla \m , \phi \r\rangle_{L^2} \r|\, ds &=  \mathbb{E^{\prime}}\int_{0}^{T} \l| \l\langle \m_n - \m, \nabla \phi \r\rangle_{L^2}\r|\, ds
			\end{align}
			The assumption $\phi\in H^1$, it implies that $\nabla \phi \in L^2$. The right hand side of the above equality thus goes to 0 as $n$ goes to infinity. Hence by the uniqueness of limits and the bound \eqref{bound on m prime H1} implies that the limit is indeed $\nabla \m$.
		}
		
		By the uniform bounds \eqref{bound on m_n prime} and \eqref{bound on nabla m_n prime}, there exists a subsequence of $(\m_n)_{n\in\mathbb{N}}$ (denoted by the same sequence) such that
		\begin{equation}
			\nabla \m_n \rightarrow \nabla \m \ \text{weakly in}\ L^2\bigl(\Omega^{\prime} ; (L^2(0,T) ; L^2)\bigr),
		\end{equation}
		as $n$ goes to infinity.\\
		Now, the use of H\"older's inequality and Agmon's inequality gives us the following set of inequalities.
		\dela{
			\begin{align*}
				\mathbb{E^{\prime}} \int_{0}^{T} & |\l\langle \m_n(s) \times \Delta \m_n(s) , \phi \r\rangle_{L^2}   - 	  \l\langle \m(s) \times \Delta \m(s) , \phi \r\rangle_{L^2}|\, ds \\
				\leq& \mathbb{E^{\prime}} \int_{0}^{T} |\l\langle \m_n(s) \times \Delta \m_n(s) , \phi \r\rangle_{L^2} - \l\langle \m(s) \times \Delta \m_n(s) , \phi \r\rangle_{L^2} |\, ds \\
				& + \mathbb{E^{\prime}} \int_{0}^{T} |\l\langle \m(s) \times \Delta \m_n(s) , \phi \r\rangle_{L^2}  - \l\langle \m(s) \times \Delta \m(s) , \phi \r\rangle_{L^2}|\, ds \\
				=& \mathbb{E^{\prime}} \int_{0}^{T} |\l\langle \nabla \m_n , \nabla \phi \times \m_n(s) \r\rangle_{L^2}  - \l\langle \nabla \m_n(s) , \nabla \phi \times \m(s) \r\rangle_{L^2}|\, ds \\
				&+ \mathbb{E^{\prime}} \int_{0}^{T} |\l\langle \nabla \m_n(s) , \nabla \phi \times \m(s) \r\rangle_{L^2} - \l\langle \nabla \m , \nabla \phi \times \m \r\rangle_{L^2}|\, ds \\
				=&  \mathbb{E^{\prime}} \int_{0}^{T} |\l\langle \nabla \m_n(s) , \nabla \phi \times (\m_n(s) - \m(s)) \r\rangle_{L^2}|\, ds \\
				&+ \mathbb{E^{\prime}} \int_{0}^{T} |\l\langle \nabla (\m_n(s) - \m(s)) , \nabla \phi \times \m(s) \r\rangle_{L^2}|\, ds \\	
				\leq&  \mathbb{E^{\prime}} \int_{0}^{T} |\nabla \m_n(s)|_{L^2} |\nabla \phi|_{L^2} |\m_n(s) - \m(s)|_{L^{\infty}} ds \\
				&+ \mathbb{E^{\prime}} \int_{0}^{T} |\l\langle \nabla (\m_n(s) - \m(s)) , \nabla \phi \times \m(s) \r\rangle_{L^2}|\, ds \\
				\leq&  C \mathbb{E^{\prime}} \sup_{t\in [0,T]}  |\m_n(s)|_{H^1} \int_{0}^{T}   |\m_n(s) - \m(s)|_{H^1}^{\frac{1}{2}}  |\m_n(s) - \m(s)|_{L^2}^{\frac{1}{2}}\, ds \\
				&+ \mathbb{E^{\prime}} \int_{0}^{T} |\l\langle \nabla (\m_n(s) - \m(s)) , \nabla \phi \times \m(s) \r\rangle_{L^2}| \\
				\leq&  C \mathbb{E^{\prime}} \sup_{t\in [0,T]}  |\m_n(s)|_{H^1} \int_{0}^{T}   (|\m_n(s)|_{H^1} + \m(s)|_{H^1})^{\frac{1}{2}}  |\m_n(s) - \m(s)|_{L^2}^{\frac{1}{2}}\, ds \\
				&+ \mathbb{E^{\prime}} \int_{0}^{T} |\l\langle \nabla (\m_n(s) - \m(s)) , \nabla \phi \times \m(s) \r\rangle_{L^2}|\, ds \\
				\leq &  C \mathbb{E^{\prime}} \sup_{t\in [0,T]}  |\m_n(s)|_{H^1} \sup_{s\in[0,T]} (|\m_n(s)|_{H^1} + \m(s)|_{H^1})^{\frac{1}{2}} \int_{0}^{T}     |\m_n(s) - \m(s)|_{L^2}^{\frac{1}{2}}\, ds \\
				&+ \mathbb{E^{\prime}} \int_{0}^{T} |\l\langle \nabla (\m_n(s) - \m(s)) , \nabla \phi \times \m(s) \r\rangle_{L^2}|\, ds.
			\end{align*}
		}
		
		\begin{align*}
			\bigg|\mathbb{E^{\prime}} \int_{0}^{T} & \l\langle \m_n(s) \times \Delta \m_n(s) , \phi(s) \r\rangle_{L^2}   - 	  \l\langle \m(s) \times \Delta \m(s) , \phi(s) \r\rangle_{L^2}\, ds\bigg| \\
			\leq& \bigg|\mathbb{E^{\prime}} \int_{0}^{T} \l\langle \m_n(s) \times \Delta \m_n(s) , \phi(s) \r\rangle_{L^2} - \l\langle \m(s) \times \Delta \m_n(s) , \phi(s) \r\rangle_{L^2} \, ds\bigg| \\
			& + \bigg|\mathbb{E^{\prime}} \int_{0}^{T} \l\langle \m(s) \times \Delta \m_n(s) , \phi(s) \r\rangle_{L^2}  - \l\langle \m(s) \times \Delta \m(s) , \phi(s) \r\rangle_{L^2}\, ds \bigg|\\
			=& \bigg|\mathbb{E^{\prime}} \int_{0}^{T} \l\langle \nabla \m_n , \nabla \phi(s) \times \m_n(s) \r\rangle_{L^2}  - \l\langle \nabla \m_n(s) , \nabla \phi(s) \times \m(s) \r\rangle_{L^2}\, ds \bigg|  \\
			&+ \bigg|\mathbb{E^{\prime}} \int_{0}^{T} \l\langle \nabla \m_n(s) , \nabla \phi(s) \times \m(s) \r\rangle_{L^2} - \l\langle \nabla \m , \nabla \phi(s) \times \m \r\rangle_{L^2}\, ds \bigg|\\
			=&  \bigg|\mathbb{E^{\prime}} \int_{0}^{T} \l\langle \nabla \m_n(s) , \nabla \phi(s) \times (\m_n(s) - \m(s)) \r\rangle_{L^2}\, ds \bigg|  \\
			&+ \bigg| \mathbb{E^{\prime}} \int_{0}^{T} \l\langle \nabla (\m_n(s) - \m(s)) , \nabla \phi(s) \times \m(s) \r\rangle_{L^2}|\, ds \bigg| \\	
			\leq&  \mathbb{E^{\prime}} \int_{0}^{T} |\nabla \m_n(s)|_{L^2} |\nabla \phi(s)|_{L^2} |\m_n(s) - \m(s)|_{L^{\infty}} ds \\
			&+ \bigg|\mathbb{E^{\prime}} \int_{0}^{T} \l\langle \nabla (\m_n(s) - \m(s)) , \nabla \phi(s) \times \m(s) \r\rangle_{L^2}\, ds \bigg| \\
			\leq&  C \mathbb{E^{\prime}} \sup_{t\in [0,T]}  |\m_n(s)|_{H^1} \int_{0}^{T}   |\m_n(s) - \m(s)|_{H^1}^{\frac{1}{2}}  |\m_n(s) - \m(s)|_{L^2}^{\frac{1}{2}} \l| \phi(s) \r|_{H^1} \, ds \\
			&+ \bigg|\mathbb{E^{\prime}} \int_{0}^{T} \l\langle \nabla (\m_n(s) - \m(s)) , \nabla \phi \times \m(s) \r\rangle_{L^2} \, ds \bigg| \\
			\leq&  C \mathbb{E^{\prime}} \sup_{t\in [0,T]}  |\m_n(s)|_{H^1} \int_{0}^{T}   (|\m_n(s)|_{H^1} + \m(s)|_{H^1})^{\frac{1}{2}}  |\m_n(s) - \m(s)|_{L^2}^{\frac{1}{2}} \l| \phi(s) \r|_{H^1} \, ds \\
			&+ \bigg| \mathbb{E^{\prime}} \int_{0}^{T} \l\langle \nabla (\m_n(s) - \m(s)) , \nabla \phi \times \m(s) \r\rangle_{L^2}\, ds \bigg| \\
			\leq &  C \mathbb{E^{\prime}} \sup_{t\in [0,T]}  |\m_n(s)|_{H^1} \sup_{s\in[0,T]} \l(|\m_n(s)|_{H^1} + \m(s)|_{H^1}\r)^{\frac{1}{2}} \l( \int_{0}^{T} \l| \phi(s) \r|_{H^1}^2 \, ds \r)^{\frac{1}{2}} \\
			& \quad \centerdot \l( \int_{0}^{T}     |\m_n(s) - \m(s)|_{L^2}\, ds  \r)^{\frac{1}{2}}\\
			&+ \bigg| \mathbb{E^{\prime}} \int_{0}^{T} \l\langle \nabla (\m_n(s) - \m(s)) , \nabla \phi \times \m(s) \r\rangle_{L^2}\, ds \bigg| .
		\end{align*}
		The bounds \eqref{bound on m_n prime} and \eqref{bound on nabla m_n prime} along with the convergence of $\m_n$ imply that the first term in the above inequality goes to $ 0 $ as $n$ goes to $\infty$.
		
		Due to the continuous embedding $H^1\hookrightarrow L^{\infty}$, there exists a constant $C>0$ such that
		\begin{align*}
			\mathbb{E}^\p\int_{0}^{T}\l|\nabla \phi(s) \times \m(s)\r|_{L^2}^2\, ds &\leq \mathbb{E}^\p\int_{0}^{T} \l|\nabla \phi(s)\r|_{L^2}^2\l|\m(s)\r|_{L^{\infty}}^2 \, ds \\
			& \leq C\mathbb{E}^\p\l[ \l(\sup_{t\in[0,T]}|\m(s)|_{H^1}^2\r)\int_{0}^{T}|\nabla \phi(s)|_{L^2}^2 \, ds \r]\\
			\leq & C\l[\mathbb{E}^\p \l(\sup_{t\in[0,T]}|\m(s)|_{H^1}^4\r)\r]^{\frac{1}{2}}
			\l[\mathbb{E}^\p\l(\int_{0}^{T}|\nabla \phi(s)|_{L^2}^2 \, ds\r)^2 \r]^{\frac{1}{2}}< \infty.
		\end{align*}
		
		The above inequality along with the bound on $ |\m|_{H^1} $ implies that the second term also goes to $0$ as $n$ goes to $\infty$.
		The right hand side of the above inequality goes to $0$ as $n$ goes to $\infty$, thus concluding the proof.

	\end{proof}
	
	\dela{Since $\phi\in L^4(\Omega^\p: L^4(0,T;H^1)) \hookrightarrow L^2(\Omega^\p: L^2(0,T;L^2))$, by \eqref{equation weak convergence of mn prime times delta mn prime to Y} we have
		\begin{equation}
			\lim_{n\rightarrow\infty} \mathbb{E^{\prime}} \int_{0}^{T} \l\langle \m_n(s) \times \Delta \m_n(s) , \phi(s) \r\rangle_{L^2}\, ds = 			\mathbb{E^{\prime}} \int_{0}^{T} \l\langle Y(s) , \phi(s) \r\rangle_{L^2}\, ds.
		\end{equation}
		By uniqueness of limits, we have $Y = \m \times \Delta m$. That $\m \times \Delta \m \in L^2(\Omega^\p: L^2(0,T;L^2))$ can be shown as in the proof of Lemma 6, 
	}
	
	\dela{Ideally the terms on the left hand side should also have a projection operator along with them. But the limits with and without the projection operator are same can be shown. Write a comment about it and try to give some reference, say \cite{ZB+BG+TJ_Weak_3d_SLLGE},\cite{LE_Deterministic_LLBE}.
		Try to write and justify briefly that
		\begin{equation}
			\lim_{n\rightarrow\infty}\mathbb{E^{\prime}} \int_{0}^{T} \l\langle P_n \l( \m_n(s) \times \u_n(s) \r), \phi \r\rangle_{L^2}\, ds  =  \lim_{n\rightarrow\infty}\mathbb{E^{\prime}} \int_{0}^{T} \l\langle \m_n(s) \times \u_n(s), \phi \r\rangle_{L^2}\, ds
		\end{equation}
	}
	\dela{Note that the bound on $\m$ here is for $p = 2$. Make sure that the initial data is regular enough. Either consider the data to be deterministic, or assume it to be $L^p(\Omega)$ for a sufficiently large $p$ ($p = 4$ suffices). Consider deterministic initial data?? \cite{ZB+BG+TJ_Weak_3d_SLLGE} considers deterministic initial data in $H^1$. \cite{D+M+P+V} also consider deterministic initial data in $W^{1,2}(D,\mathbb{S}^2)$.}
	\begin{lemma}\label{Lemma convergencem mn times mn times Laplacian mn}
		Let \dela{$\phi\in H^1$} $\phi\in L^4(\Omega^\p ; L^4(0,T ; H^1))$\dela{ $\phi\in L^4(\Omega^\p: H^1)$}. Then
		\item \begin{align*}
			\lim_{n\rightarrow\infty} &\mathbb{E^{\prime}} \int_{0}^{T}   \l\langle m_n(s) \times (\m_n(s) \times  \Delta \m_n(s)) , \phi \r\rangle_{L^2}\, ds \\
			& = \mathbb{E^{\prime}} \int_{0}^{T}  \l\langle \m(s) \times (\m(s) \times \Delta \m(s)) , \phi \r\rangle_{L^2}\, ds.
		\end{align*}
	\end{lemma}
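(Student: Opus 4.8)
The plan is to mimic the structure of the proof of the previous lemma (Lemma \ref{Lemma convergencem mn times Laplacian mn and mn times mn times Laplacian mn}), exploiting the weak formulation \eqref{Interpretation eqn 3} of the triple product term. First I would rewrite the bilinear pairing using integration by parts: for $\phi\in H^1$, the term $\langle m_n(s)\times(m_n(s)\times\Delta m_n(s)),\phi\rangle_{L^2}$ equals $-\langle\nabla m_n(s),\nabla((\phi\times m_n(s))\times m_n(s))\rangle_{L^2}$, moving the Laplacian off $m_n$ and onto a smooth test function. The same is done for the limiting term with $\m$. This converts a problematic second-order object into an expression involving only $\nabla m_n$, $\nabla\phi$, and products of $m_n$, which is exactly the setting in which the established convergences $\nabla\m_n\to\nabla\m$ weakly in $L^2(\Omega';L^2(0,T;L^2))$, the strong convergence \eqref{eqn convergence of L4L4 norm of m_n prime} of $\m_n\to\m$ in $L^4(\Omega';L^4(0,T;L^4))$, and the uniform bounds \eqref{bound on m_n prime}, \eqref{bound on nabla m_n prime} can be brought to bear.

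The second step is the standard telescoping. I would write the difference
\begin{align*}
&\mathbb{E}'\int_0^T\langle m_n\times(m_n\times\Delta m_n),\phi\rangle_{L^2}\,ds - \mathbb{E}'\int_0^T\langle\m\times(\m\times\Delta\m),\phi\rangle_{L^2}\,ds
\end{align*}
as a sum of several terms, in each of which only one factor of $m_n$ is replaced by $\m$ at a time (there are three occurrences of the magnetization besides the gradient, so I expect three or four intermediate differences). After the integration by parts, each intermediate difference contains either a factor $(\m_n-\m)$ appearing algebraically — to be estimated using the strong $L^4(0,T;L^4)$ convergence together with Agmon's inequality \eqref{Inequality L infinity leq L2 H1} and H\"older's inequality — or a factor $\nabla(\m_n-\m)$ paired against a fixed $L^2(\Omega';L^2(0,T;L^2))$ function built from $\nabla\phi$ and $\m$, which vanishes by the weak convergence of the gradients. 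For the latter it is essential to check that the test function $\nabla((\nabla\phi\times\m)\times\m)$-type expression indeed lies in $L^2(\Omega';L^2(0,T;L^2))$; this follows from $\phi\in L^4(\Omega';L^4(0,T;H^1))$, the embedding $H^1\embed L^\infty$, and the uniform bound on $\sup_t|\m|_{H^1}$, via a Cauchy-Schwarz splitting exactly as in the display at the end of the previous lemma's proof.

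The main obstacle, and the reason the hypothesis $\phi\in L^4(\Omega';L^4(0,T;H^1))$ (rather than merely $\phi\in H^1$) is needed, is controlling the extra factor of $m_n$ that distinguishes this lemma from Lemma \ref{Lemma convergencem mn times Laplacian mn and mn times mn times Laplacian mn}. The cubic nonlinearity in the magnetization forces products of three $L^\infty$/$H^1$ norms of $m_n$ against one gradient and the test function, so the integrability budget is tighter; I would spend the $L^4$ integrability of $\phi$ and the fourth-moment bounds \eqref{bound on nabla m_n prime} (valid for every $p\geq 1$) to absorb these factors, using H\"older with the conjugate exponents distributed so that the $(\m_n-\m)$ factor carries the $L^4(\Omega';L^4(0,T;L^4))$ convergence while the remaining $\sup_t|m_n|_{H^1}$ factors are bounded uniformly in $n$. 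Once every intermediate term is shown to tend to zero, the triangle inequality yields the claimed convergence, completing the proof.
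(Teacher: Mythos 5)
Your opening move --- integrating by parts via \eqref{Interpretation eqn 3} to get
\begin{equation*}
\l\langle \m_n \times \bigl(\m_n\times\Delta\m_n\bigr),\phi \r\rangle_{L^2} \;=\; -\l\langle \nabla\bigl((\phi\times\m_n)\times\m_n\bigr),\nabla\m_n \r\rangle_{L^2}
\end{equation*}
--- is where the argument breaks down. Expanding the gradient gives three terms: $\l\langle(\nabla\phi\times\m_n)\times\m_n,\nabla\m_n\r\rangle_{L^2}$ (harmless, linear in $\nabla\m_n$), $\l\langle(\phi\times\m_n)\times\nabla\m_n,\nabla\m_n\r\rangle_{L^2}$ (which in fact vanishes pointwise, since $(Y\times a)\cdot a=0$), and $\l\langle(\phi\times\nabla\m_n)\times\m_n,\nabla\m_n\r\rangle_{L^2}$, which is \emph{quadratic} in $\nabla\m_n$. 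For this last term your claimed dichotomy --- every intermediate difference carries either an algebraic factor $(\m_n-\m)$ or a factor $\nabla(\m_n-\m)$ paired against a \emph{fixed} $L^2(\Omega^{\prime};L^2(0,T;L^2))$ function --- cannot be arranged: writing $g_n=\nabla(\m_n-\m)$, any telescoping leaves behind a contribution of the form $\l\langle(\phi\times g_n)\times\m,\,g_n\r\rangle_{L^2}$, quadratic in $g_n$. Weak convergence $g_n\rightharpoonup 0$ in $L^2(\Omega^{\prime};L^2(0,T;L^2))$ gives no control of such a term (pointwise it equals $(\m\cdot\phi)|g_n|^2-(\m\cdot g_n)(\phi\cdot g_n)$, which survives weak limits); one would need strong convergence of the gradients, i.e.\ compactness in $L^2(0,T;H^1)$, which the tightness of Lemma \ref{tightness lemma} does not provide. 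A smaller slip: the object you propose to test against, $\nabla\bigl((\nabla\phi\times\m)\times\m\bigr)$, involves second derivatives of $\phi$ and is not defined for $\phi$ taking values only in $H^1$.

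The paper's proof avoids this entirely by never splitting $\m_n\times\Delta\m_n$ into gradient pieces: the product is treated as a single weakly convergent object. The difference is decomposed into just two terms. The first, $\l\langle(\m_n-\m)\times(\m_n\times\Delta\m_n),\phi\r\rangle_{L^2}$, is estimated by the generalized H\"older inequality $|v_1v_2v_3|_{L^1}\leq|v_1|_{L^4}|v_2|_{L^4}|v_3|_{L^2}$ together with the uniform bound \eqref{bound on mn prime times Delta mn prime} and the strong convergence \eqref{eqn convergence of L4L4 norm of m_n prime} of $\m_n\to\m$ in $L^4(\Omega^{\prime};L^4(0,T;L^4))$. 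In the second, the outer factor $\m\times$ is moved onto the test function, producing $\l\langle \m_n\times\Delta\m_n-\m\times\Delta\m,\;\m\times\phi\r\rangle_{L^2}$; after checking that $\m\times\phi$ is a fixed element of $L^2(\Omega^{\prime};L^2(0,T;L^2))$ (indeed of $L^2(\Omega^{\prime};L^2(0,T;H^1))$, using $H^1\embed L^{\infty}$ and the moment bounds on $\sup_t|\m|_{H^1}$), this vanishes by the weak convergence \eqref{equation weak convergence of mn prime times delta mn prime to Y}, whose limit was identified through Lemma \ref{Lemma convergencem mn times Laplacian mn and mn times mn times Laplacian mn}. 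In short: the integration-by-parts route is the right tool for the previous lemma, where only one gradient of $\m_n$ appears, but the triple product must be handled at the level of the product $\m_n\times\Delta\m_n$, whose weak convergence is exactly what the preceding lemma supplies.
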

	\begin{proof}[Proof of Lemma \ref{Lemma convergencem mn times mn times Laplacian mn}]
		
		\dela{
			\begin{align*}
				\mathbb{E^{\prime}}   & \int_{0}^{T} \b| \psi_0(m_n(s))\l\langle m_n(s) \times (\m_n(s) \times  \Delta \m_n(s)) , \phi \r\rangle_{L^2} ds \\
				&\quad- \psi(m(s))\l\langle \m(s) \times (\m(s) \times \Delta \m(s)) , \phi \r\rangle_{L^2} \b|\, ds  \\
				& \leq \mathbb{E^{\prime}}   \int_{0}^{T} | (\psi(m_n(s)) - \psi(m(s))) \l\langle m_n(s) \times (\m_n(s) \times  \Delta \m_n(s)) , \phi \r\rangle_{L^2} |\, ds  \\
				& \quad+ \mathbb{E^{\prime}}   \int_{0}^{T} \psi(m(s)) |\l\langle (\m_n(s) - \m(s)) \times (\m_n(s) \times  \Delta \m_n(s)) , \phi \r\rangle_{L^2}\, ds | \\
				&  \quad + \mathbb{E^{\prime}}   \int_{0}^{T} \psi(m(s)) |\l\langle \m(s) \times (\m_n(s) \times  \Delta \m_n(s) - \m(s) \times  \Delta \m(s)) , \phi \r\rangle_{L^2}|\, ds.
			\end{align*}
			Convergence of the first term on the right hand side of the above inequality follows from the previous Lemma \ref{Lemma convergence of the bump function}.\\
			The second term goes to 0 as $n$ goes to infinity as follows.
		}
		\dela{Some bound ensures weak convergence of the triple product to some term in square integrable processes. Then considering the test function to be possibly a simple function, we go on to show that the convergence is indeed to a term of the required form. It is necessary to show that the limit triple product term is square integrable, which can be shown by the available estimates.}
		By the triangle inequality, we have
		\begin{align}\label{eqn intermediate 1 convergencem mn times mn times Laplacian mn}
			\nonumber &\bigg|	\mathbb{E^{\prime}}   \int_{0}^{T}  \l\langle m_n(s) \times (\m_n(s) \times  \Delta \m_n(s)) , \phi \r\rangle_{L^2} ds - \l\langle \m(s) \times (\m(s) \times \Delta \m(s)) , \phi \r\rangle_{L^2} \, ds\bigg|  \\
			\nonumber \leq &  \bigg|\mathbb{E^{\prime}}   \int_{0}^{T}  \l\langle (\m_n(s) - \m(s)) \times (\m_n(s) \times  \Delta \m_n(s)) , \phi \r\rangle_{L^2}\, ds \bigg| \\
			&  + \bigg|\mathbb{E^{\prime}}   \int_{0}^{T}  \l\langle \m(s) \times (\m_n(s) \times  \Delta \m_n(s) - \m(s) \times  \Delta \m(s)) , \phi \r\rangle_{L^2}\, ds\bigg|.
		\end{align}
		The first term of \eqref{eqn intermediate 1 convergencem mn times mn times Laplacian mn} goes to 0 as $n$ goes to infinity as follows.
		\dela{
			\begin{align*}
				& \bigg|\mathbb{E^{\prime}}   \int_{0}^{T}  \l\langle (\m_n(s) - \m(s)) \times (\m_n(s) \times  \Delta \m_n(s)) , \phi \r\rangle_{L^2}\, ds \bigg| \\
				&\leq \mathbb{E^{\prime}}   \int_{0}^{T}  |\l\langle (\m_n(s) - \m(s)) \times (\m_n(s) \times  \Delta \m_n(s)) , \phi \r\rangle_{L^2}|\, ds \\
				&\leq \mathbb{E^{\prime}}   \int_{0}^{T}  |\m_n(s) - \m(s)|_{L^{\infty}}|\m_n(s) \times  \Delta \m_n(s)|_{L^2}|\phi|_{L^{2}}\, ds \\
				& \leq C \,\mathbb{E^{\prime}}   \int_{0}^{T}  |\m_n(s) - \m(s)|_{L^{2}} ^{\frac{1}{2}}|\m_n(s) - \m(s)|_{H^1}^{\frac{1}{2}} |\m_n(s) \times  \Delta \m_n(s)|_{L^2} |\phi(s)|_{L^{2}} \, ds  \\
				&\leq C\left( \mathbb{E^{\prime}} \sup_{s\in [0,T]} (|\m_n(s)|_{H^1} + |\m(s)|_{H^1})^2 \right)^{\frac{1}{4}} \left( \mathbb{E^{\prime}} \l(\int_{0}^{T} |\m_n(s) - \m(s)|^2_{L^{2}} ds \r)^2 \right)^{\frac{1}{4}} \centerdot \\
				& \adda{\quad \left( \mathbb{E^{\prime}} \sup_{t\in [0,T]} |\m_n(s) \times \Delta\m(s)|_{L^2}^2 \right)^{\frac{1}{4}} \l( \mathbb{E^{\prime}} \l(\int_{0}^{T}|\phi|_{L^{2}}^4 \, ds \r)^{\frac{1}{4}} \r)^{\frac{1}{4}}}\\
				& \leq C {E^{\prime}} \sup_{t\in [0,T]} |\m_n(s) - \m(s)|_{L^2}^4.
			\end{align*}
		}

		\begin{align}\label{eqn intermediate 2 convergencem mn times mn times Laplacian mn}
			\nonumber & \bigg|\mathbb{E^{\prime}}   \int_{0}^{T}  \l\langle (\m_n(s) - \m(s)) \times (\m_n(s) \times  \Delta \m_n(s)) , \phi(s) \r\rangle_{L^2}\, ds \bigg| \\
			\nonumber &\leq \mathbb{E^{\prime}}   \int_{0}^{T}  |\l\langle (\m_n(s) - \m(s)) \times (\m_n(s) \times  \Delta \m_n(s)) , \phi(s) \r\rangle_{L^2}|\, ds \\
			\nonumber & \leq \l( \mathbb{E^{\prime}} \int_{0}^{T} \l| \m_n(s) - \m(s)) \r|_{L^4}^4 \, ds \r)^{\frac{1}{4}}
			\l( \mathbb{E^{\prime}} \int_{0}^{T} \l| \phi(s) \r|_{L^4}^4 \, ds  \r)^{\frac{1}{4}}
			\l( \mathbb{E^{\prime}} \int_{0}^{T} \l| \m_n(s) \times  \Delta \m_n(s) \r|_{L^2}^2 \, ds \r)^{\frac{1}{2}} \\
			\dela{&\leq \mathbb{E^{\prime}}   \int_{0}^{T}  |\m_n(s) - \m(s)|_{L^{\infty}}|\m_n(s) \times  \Delta \m_n(s)|_{L^2}|\phi|_{L^{2}}\, ds \\
				\nonumber & \leq C \,\mathbb{E^{\prime}}   \int_{0}^{T}  |\m_n(s) - \m(s)|_{L^{2}} ^{\frac{1}{2}}|\m_n(s) - \m(s)|_{H^1}^{\frac{1}{2}} |\m_n(s) \times  \Delta \m_n(s)|_{L^2} |\phi(s)|_{L^{2}} \, ds  \\
				\nonumber &\leq C\left( \mathbb{E^{\prime}} \sup_{s\in [0,T]} (|\m_n(s)|_{H^1} + |\m(s)|_{H^1})^2 \right)^{\frac{1}{4}} \left( \mathbb{E^{\prime}} \l(\int_{0}^{T} |\m_n(s) - \m(s)|^2_{L^{2}} ds \r)^2 \right)^{\frac{1}{4}} \centerdot \\
				\nonumber & \adda{\quad \left( \mathbb{E^{\prime}} \sup_{t\in [0,T]} |\m_n(s) \times \Delta\m(s)|_{L^2}^2 \right)^{\frac{1}{4}} \l( \mathbb{E^{\prime}} \l(\int_{0}^{T}|\phi|_{L^{2}}^4 \, ds \r)^{\frac{1}{4}} \r)^{\frac{1}{4}}}\\}
			& \leq C \l( \mathbb{E}^{\prime} \int_{0}^{T} |\m_n(s) - \m(s)|_{L^4}^4 \, ds \r)^{\frac{1}{4}}.
		\end{align}

		By the convergence in 
		\eqref{eqn convergence of L4L4 norm of m_n prime} the right hand side of the above inequality \eqref{eqn intermediate 2 convergencem mn times mn times Laplacian mn} goes to 0 as $n$ goes to infinity.
		
		The above bound uses the inequality \eqref{Inequality L infinity leq L2 H1} followed by the use of the generalized H\"older inequality. More precisely,
		\begin{equation*}
			|v_1v_2v_3|_{L^1} \leq |v_1|_{L^4} |v_2|_{L^4} |v_3|_{L^2}\ \text{for}\ v_1,v_2\in L^4, v_3\in L^2.
		\end{equation*}
		
		For the second term, we have the following.
		\begin{align}\label{eqn intermediate 3 convergencem mn times mn times Laplacian mn}
			\nonumber &\bigg|\mathbb{E^{\prime}}   \int_{0}^{T}  \l\langle \m(s) \times (\m_n(s) \times  \Delta \m_n(s) - \m(s) \times  \Delta \m(s)) , \phi(s) \r\rangle_{L^2}\, ds\bigg|\\
			= & \bigg|\mathbb{E^{\prime}}   \int_{0}^{T}  \l\langle  (\m_n(s) \times  \Delta \m_n(s) - \m(s) \times  \Delta \m(s)) , \m(s) \times \phi(s) \r\rangle_{L^2}\, ds\bigg|.
		\end{align}
		For $v_1,v_2\in H^1$, using the continuous embedding $H^1\hookrightarrow L^{\infty}$, we can show that there exists a constant $C>0$ such that
		\begin{equation}
			\l| v_1 v_2 \r|_{H^1} \leq C \l| v_1 \r|_{H^1} \l| v_2 \r|_{H^1}.
		\end{equation}
		Therefore,
		\begin{align*}
			\mathbb{E}^\p\int_{0}^{T}\l| \m(s) \times \phi(s) \r|_{H^1}^2 \, ds 
			& \leq C \,  \mathbb{E}^\p \int_{0}^{T}\l| \m(s) \r|_{H^1}^2 \l| \phi(s) \r|_{H^1}^2 \, ds \\
			& \leq  C \,  \mathbb{E}^\p \l[ \sup_{s\in[0,T]} \l| \m(s) \r|_{H^1}^2 \int_{0}^{T} \l| \phi(s) \r|_{H^1}^2 \, ds \r] \\
			& \leq \l( \mathbb{E}^\p   \sup_{s\in[0,T]} \l|\m(s)\r|_{H^1}^4 \r)^{\frac{1}{2}}
			\l( \mathbb{E}^\p   \int_{0}^{T} \l| \phi(s) \r|_{H^1}^4 \, ds \r)^{\frac{1}{2}} \\
			& < \infty.
		\end{align*}
		The finiteness of the right hand side is due to the bound \eqref{bound on m prime H1} and the assumption on $\phi$.
		For this calculation, letting $\psi = \m \times \phi$,  the right hand side of \eqref{eqn intermediate 3 convergencem mn times mn times Laplacian mn} goes to 0 following the weak convergence \eqref{equation weak convergence of mn prime times delta mn prime to Y} of $\m_n \times \Delta \m_n$. This concludes the proof of \dela{Lemma \ref{Lemma convergencem mn times Laplacian mn and mn times mn times Laplacian mn}}Lemma \ref{Lemma convergencem mn times mn times Laplacian mn}.
	\end{proof}
	\dela{By using the bound \eqref{bound on m prime H1}, we can show that $\m \times \l( \m \times \Delta \m \r) \in L^2(\Omega^\p: L^2(0,T;L^2))$.}
	\begin{lemma}\label{Lemma convergence mn times un and mn times mn times un}
		For $\phi\in\dela{ H^1}L^4(\Omega^\p ; H^1)$, the following convergences hold.	
		
		\begin{equation*}
			\lim_{n\rightarrow\infty}\mathbb{E^{\prime}} \int_{0}^{T} \l\langle \m_n(s) \times \u_n(s), \phi \r\rangle_{L^2}\, ds = 		\mathbb{E^{\prime}} \int_{0}^{T} \l\langle \m(s) \times \u(s), \phi \r\rangle_{L^2}\, ds		
		\end{equation*}
		
		\begin{align*}
			&\lim_{n\rightarrow\infty} \mathbb{E^{\prime}}\int_{0}^{T} \psi(|\m_n(s)|_{L^{\infty}}) \l\langle \m_n(s) \times (\m_n(s) \times \u_n(s)) , \phi \r\rangle_{L^2}\, ds  \\
			& \quad 	=   \mathbb{E^{\prime}}\int_{0}^{T} \psi(|\m(s)|_{L^{\infty}})\l\langle \m(s) \times (\m(s) \times \u(s)) , \phi \r\rangle_{L^2}\, ds.
		\end{align*}
	\end{lemma}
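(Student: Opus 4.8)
The plan is to exploit the antisymmetry of the cross product to isolate the weakly convergent factor $\u_n$ and then combine the weak convergence \eqref{weak convergence of u_n prime to u in L2L2L2} of $\u_n$ in $L^2(\Omega^\p;L^2(0,T;L^2))$ with the strong convergence \eqref{eqn convergence of L4L4 norm of m_n prime} of $\m_n$ in $L^4(\Omega^\p;L^4(0,T;L^4))$. The elementary identity $\l\langle a\times b,c\r\rangle_{\mathbb{R}^3}=\l\langle b,c\times a\r\rangle_{\mathbb{R}^3}$, integrated over $\mathcal{O}$, gives for the first convergence
\[
\l\langle \m_n(s)\times\u_n(s),\phi\r\rangle_{L^2}=\l\langle \u_n(s),\phi\times\m_n(s)\r\rangle_{L^2},
\]
so the obstruction of having a product of two objects known only to converge weakly is avoided. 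First I would write the difference as
\begin{align*}
\l\langle \u_n,\phi\times\m_n\r\rangle_{L^2}-\l\langle \u,\phi\times\m\r\rangle_{L^2}
=\l\langle \u_n,\phi\times(\m_n-\m)\r\rangle_{L^2}+\l\langle \u_n-\u,\phi\times\m\r\rangle_{L^2}.
\end{align*}
For the first term, using $\phi\in L^4(\Omega^\p;H^1)\embed L^4(\Omega^\p;L^\infty)$, the Cauchy--Schwarz inequality in time and a generalized H\"older inequality in $\Omega^\p$ with exponents $(4,4,2)$ applied to the factors $|\phi|_{L^\infty}$, $\bigl(\int_0^T|\u_n|_{L^2}^2\bigr)^{1/2}$ and $\bigl(\int_0^T|\m_n-\m|_{L^2}^2\bigr)^{1/2}$, the first two factors are bounded uniformly in $n$ by \eqref{bound on u_n prime} and the assumption on $\phi$, while the third tends to $0$ by \eqref{eqn convergence of L4L4 norm of m_n prime}. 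For the second term it suffices to check that $\phi\times\m\in L^2(\Omega^\p;L^2(0,T;L^2))$, which follows from $\sup_{s}|\m(s)|_{L^2}\le|m_0|_{L^2}$ (Lemma \ref{Lemma extension of norms and lower semicontinuity}) together with $\phi\in L^4(\Omega^\p;L^\infty)$; then it vanishes by \eqref{weak convergence of u_n prime to u in L2L2L2}.

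For the second convergence I would apply the same identity twice to obtain
\[
\l\langle \m_n(s)\times(\m_n(s)\times\u_n(s)),\phi\r\rangle_{L^2}=\l\langle \u_n(s),(\phi\times\m_n(s))\times\m_n(s)\r\rangle_{L^2},
\]
and likewise for the limit. I would then decompose the integrand difference into three pieces: $(I)$ the factor $\bigl[\psi(|\m_n|_{L^\infty})-\psi(|\m|_{L^\infty})\bigr]\l\langle \u_n,(\phi\times\m_n)\times\m_n\r\rangle_{L^2}$; $(II)$ the coefficient difference $\psi(|\m|_{L^\infty})\l\langle \u_n,(\phi\times\m_n)\times\m_n-(\phi\times\m)\times\m\r\rangle_{L^2}$; and $(III)$ the weak-limit piece $\psi(|\m|_{L^\infty})\l\langle \u_n-\u,(\phi\times\m)\times\m\r\rangle_{L^2}$. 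Term $(III)$ vanishes by \eqref{weak convergence of u_n prime to u in L2L2L2}, the cut-off $\psi_0$ ensuring via \eqref{definition of bump function} that $\psi(|\m|_{L^\infty})|\m|_{L^\infty}^2\le(|h|_{L^\infty}+2)^2$, so the test function lies in $L^2(\Omega^\p;L^2(0,T;L^2))$. Term $(I)$ is controlled by a four-factor H\"older inequality in $\Omega^\p$ (exponents $4,4,4,4$) applied to $\sup_t|\psi(|\m_n|_{L^\infty})-\psi(|\m|_{L^\infty})|$, $\sup_t|\m_n|_{H^1}^2$, $|\phi|_{L^2}$ and $\bigl(\int_0^T|\u_n|_{L^2}^2\bigr)^{1/2}$; the first factor tends to $0$ by Lemma \ref{Lemma convergence of the bump function}(1), and the remaining three are bounded uniformly by \eqref{bound on nabla m_n prime} with $p=4$, the hypothesis $\phi\in L^4(\Omega^\p;H^1)$ and \eqref{bound on u_n prime}. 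Term $(II)$ I would treat by telescoping $(\phi\times\m_n)\times\m_n-(\phi\times\m)\times\m=(\phi\times(\m_n-\m))\times\m_n+(\phi\times\m)\times(\m_n-\m)$, estimating each summand in $L^2$ using $H^1\embed L^\infty$ and $H^1\embed L^4$, so that a factor $|\m_n-\m|_{L^4}$ appears; H\"older in $\Omega^\p$ then leaves $\bigl(\mathbb{E}^\p\int_0^T|\m_n-\m|_{L^4}^4\bigr)^{1/4}\to0$ by \eqref{eqn convergence of L4L4 norm of m_n prime}, the other factors being uniformly bounded via \eqref{bound on nabla m_n prime} and \eqref{bound on u_n prime}.

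The conceptual heart, and the only genuinely delicate point, is the passage to the limit in the product of the merely weakly convergent $\u_n$ with the quadratic expression in $\m_n$; this is exactly what the triple-product rearrangement resolves, reducing everything to a single weak limit tested against a fixed $L^2$-function plus strongly vanishing coefficient errors. The remaining difficulty is purely technical bookkeeping: keeping track of the integrability exponents so that the uniform moment bounds of Proposition \ref{Proposition bounds on m_n prime} and the $L^4$-convergence \eqref{eqn convergence of L4L4 norm of m_n prime} combine correctly, and verifying that the cut-off together with Lemma \ref{Lemma convergence of the bump function} lets the scalar factor $\psi(|\m_n|_{L^\infty})$ be replaced by $\psi(|\m|_{L^\infty})$ in the limit.
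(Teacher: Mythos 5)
Your proposal is correct and takes essentially the same approach as the paper: the same three-way splitting (cut-off difference, telescoped product difference, weak-limit term), with the $\m_n-\m$ pieces controlled by the strong convergence \eqref{eqn convergence of L4L4 norm of m_n prime} together with the uniform moment bounds and H\"older's inequality, and the $\u_n-\u$ piece by the weak convergence \eqref{weak convergence of u_n prime to u in L2L2L2} tested against a fixed $L^2$ function whose integrability the cut-off guarantees. Your preliminary rearrangement $\langle \m_n\times(\m_n\times\u_n),\phi\rangle_{L^2}=\langle \u_n,(\phi\times\m_n)\times\m_n\rangle_{L^2}$ is only cosmetic: the paper performs the identical rearrangement inside its estimates (pairing $\m_n-\m$ against $(\m_n\times\u_n)\times\phi$ in the $L^{4}$--$L^{4/3}$ duality), so your terms (I)--(III) coincide with the paper's decomposition under that identity.
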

	\begin{proof}[Proof of Lemma \ref{Lemma convergence mn times un and mn times mn times un}]
		We prove the second convergence. The first one can be shown similarly.	
		\begin{align}\label{equation intermediate 1 convergence of mn prime times mn times un prime}
			\nonumber &\mathbb{E^{\prime}}\int_{0}^{T} \psi(|\m_n(s)|_{L^{\infty}}) \l\langle \m_n(s) \times (\m_n(s) \times \u_n(s)) , \phi \r\rangle_{L^2}\, ds \\
			\nonumber & \quad - \mathbb{E^{\prime}}\int_{0}^{T} \psi(|\m(s)|_{L^{\infty}}) \l\langle \m(s) \times (\m(s) \times \u(s)) , \phi \r\rangle_{L^2}\, ds \\
			\nonumber & =  \mathbb{E^{\prime}}\int_{0}^{T} \bigl[ \psi(\m_n(s)) - \psi\bigl(\m(s)\bigr) \bigr] \l\langle \m_n(s) \times (\m_n(s) \times \u_n(s)) , \phi \r\rangle_{L^2}\, ds \\
			& \quad + \mathbb{E^{\prime}}\int_{0}^{T} \psi(\m(s)) \l\langle \l[ \m_n(s) \times \bigl(\m_n(s) \times \u_n(s)\bigr) - \m(s) \times \bigl(\m(s) \times \u(s)\bigr) \r] , \phi \r\rangle_{L^2}\, ds.
		\end{align}
		Combining Lemma \ref{Lemma convergence of the bump function} and the bound \eqref{bound on mn prime times mn prime times un prime} in Proposition \ref{Proposition bounds on m_n prime}, the first term on the right hand side of the equality \eqref{equation intermediate 1 convergence of mn prime times mn times un prime} goes to $0$ as $n$ goes to infinity.\dela{That the first term on the right hand side goes to zero as $n$ goes to infinity can be shown as done in the previous lemma. The main idea is to use the regularity of the cut-off function $\psi$.} For the second term, we have the following.
		\begin{align}\label{equation intermediate 2 convergence of mn prime times mn times un prime}
			\nonumber & \bigg|  \mathbb{E^{\prime}} \int_{0}^{T} \psi(\m(s)) \l\langle \m_n(s) \times (\m_n(s) \times \u_n(s))  -  \m(s) \times (\m(s) \times \u(s)) , \phi \r\rangle_{L^2} \, ds \bigg| \\
			\nonumber \leq &  \mathbb{E^{\prime}} \int_{0}^{T} | \psi(\m(s)) \l\langle (\m_n(s) - \m(s)) \times (\m_n(s) \times \u_n(s)) , \phi \r\rangle_{L^2}|\, ds \\
			\nonumber & + \mathbb{E^{\prime}} \int_{0}^{T} | \psi(\m(s))\l\langle \m(s) \times ((\m_n(s)  - \m(s)) \times \u_n(s)) , \phi \r\rangle_{L^2}|\, ds \\
			& + \l| \mathbb{E^{\prime}} \int_{0}^{T} \psi(\m(s))\l\langle \m(s) \times (\m(s) \times (\u_n(s) - \u(s))) , \phi \r\rangle_{L^2}\, ds \r| .
		\end{align}
		\textbf{Claim:} All the three terms on the right hand side of the above inequality go to $0$ as $n$ goes to infinity. We use the assumption on $\phi$ along with the fact that the space $H^1$ is continuously embedded into the space $L^{\infty}$. By \eqref{eqn convergence of L4L4 norm of m_n prime}, the sequence $\m_n$ converges to $\m$ in $L^4\l( \Omega^\p ; L^4\l(0,T;L^4\r)\r)$. Hence for the first term on the right hand side of \eqref{equation intermediate 2 convergence of mn prime times mn times un prime}, it is sufficient to show that $\l(\m_n \times \u_n\r) \times \phi \in L^{\frac{4}{3}}\l(\Omega^\p ; L^{\frac{4}{3}}\l(0,T;L^{\frac{4}{3}}\r)\r)$.
		
		Note that
		\begin{align*}
			& \mathbb{E}^{\prime}\int_{0}^{T} |\l( \m_n(s) \times \u_n(s) \r) \times \phi|_{L^{\frac{4}{3}}}^{\frac{4}{3}}\ ds \\
			&\leq \mathbb{E}^{\prime}\int_{0}^{T} |\m_n(s)|_{L^4}^{\frac{4}{3}} |u_n(s)|_{L^2}^{\frac{4}{3}} |\phi|_{L^{\infty}}^{\frac{4}{3}} \ ds \\
			& \leq C \mathbb{E}^{\prime} |\phi|_{H^1}^{\frac{4}{3}}\int_{0}^{T} |\m_n(s)|_{H^1}^{\frac{4}{3}} |\u_n(s)|_{L^2}^{\frac{4}{3}}\ ds \ ( \text{Since}\ H^1\hookrightarrow L^{\infty} \hookrightarrow L^4 ) \\
			& \leq C \l(\mathbb{E}^\p \l|\phi\r|_{H^1}^4\r)^{\frac{1}{3}}\l( \mathbb{E}^{\prime} \int_{0}^{T}\l|\m_n(s)\r|_{H^1}^4\, ds \r)^{\frac{1}{3}} \l( \mathbb{E}^{\prime} \l( \int_{0}^{T}\l|\u_n(s)\r|_{L^2}^2\, ds \r)^2 \r)^{\frac{1}{3}}.
		\end{align*}
		The right hand side of the above inequality is finite by the bounds \eqref{equation H1 bound on m prime} and \eqref{bound on u_n prime}. The second term follows similarly.\\		
		The third term goes to zero due to the cut-off function and the weak convergence \eqref{weak convergence of u_n prime to u in L2L2L2}.
		Hence all the three terms on the right hand side of the inequality \eqref{equation intermediate 2 convergence of mn prime times mn times un prime} go to $0$ as $n$ goes to infinity and the claim holds.
	\end{proof}
	The following proposition proves the convergence of the terms corresponding to $G_n(\m_n)$.

	\begin{lemma}\label{Lemma convergence of Gn}
		\begin{align*}
			\lim_{n\rightarrow\infty} \mathbb{E}^\p \sup_{s\in[0,T]} \l| G_n(\m_n(s)) - G(\m(s))\r|_{L^2}^2 = 0.
		\end{align*}
		
		\dela{ For $\phi\in H^1$, the following convergences hold as $n$ goes to infinity. We have the following convergences.
			\begin{enumerate}
				\item \begin{equation*}
					\mathbb{E}^{\prime}\sup_{s\in [0,T]} |P_n( \m_n(s) \times h) - \m(s) \times h|_{L^2}^4 \rightarrow 0
				\end{equation*}
				
				\item \begin{equation*}
					\mathbb{E}^{\prime}\sup_{s\in [0,T]} |\psi(|\m_n(s)|_{L^{\infty}})P_n(\m_n(s) \times (\m_n(s) \times h)) - \psi(|\m(s)|_{L^{\infty}})\m(s) \times (\m(s) \times h)|_{L^2}^2  \rightarrow 0
				\end{equation*}
			\end{enumerate}
		}
		\dela{
			\textbf{Note:} It would have been enough to show the convergence in the weak sense as shown for some other terms. But the convergence in $L^2(\Omega;(H^1)^{\prime})$ is required for the convergence of $M_n^{\prime}(t)$ to $M^{\prime}$ in Lemma \ref{convergence of Mn prime to M prime}.
			\adda{Why is the mentioned strong convergence required? Possibly for showing the convergence of the Stochastic integral.}
		}
	\end{lemma}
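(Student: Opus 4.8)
The plan is to decompose the difference into a ``Lipschitz'' part and a ``projection-error'' part and to treat them by two different mechanisms. Writing $G_n(\m_n)=P_n\bigl(G(\m_n)\bigr)$ and inserting $P_n\bigl(G(\m)\bigr)$, we have for each $s\in[0,T]$
\begin{equation}
G_n(\m_n(s))-G(\m(s)) = P_n\bigl[G(\m_n(s))-G(\m(s))\bigr] + \bigl[P_n G(\m(s))-G(\m(s))\bigr].
\end{equation}
Since $P_n$ is an orthogonal projection, hence a contraction on $L^2$, the first summand is controlled in $L^2$ by $|G(\m_n(s))-G(\m(s))|_{L^2}$, while the second is exactly the error committed by replacing the identity with $P_n$.

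For the first part I would first record the elementary Lipschitz estimate obtained by expanding the triple products exactly as in the proof of Lemma \ref{Lemma G is a polynomial map} (using $v_1\times(v_1\times h)-v_2\times(v_2\times h)=(v_1-v_2)\times(v_1\times h)+v_2\times((v_1-v_2)\times h)$), namely that there is $C>0$ with
\begin{equation}
|G(v_1)-G(v_2)|_{L^2}\leq C\,|h|_{L^\infty}\bigl(1+|v_1|_{L^\infty}+|v_2|_{L^\infty}\bigr)\,|v_1-v_2|_{L^2},\quad v_1,v_2\in H^1.
\end{equation}
Using the embedding $H^1\hookrightarrow L^\infty$ to replace the $L^\infty$ norms by $H^1$ norms, then taking the supremum over $s$, the expectation, and the Cauchy--Schwarz inequality, I obtain
\begin{equation}
\mathbb{E}^\p\sup_{s\in[0,T]}|G(\m_n(s))-G(\m(s))|_{L^2}^2 \leq C\Bigl(\mathbb{E}^\p\sup_{s\in[0,T]}\bigl(1+|\m_n(s)|_{H^1}+|\m(s)|_{H^1}\bigr)^4\Bigr)^{\frac12}\Bigl(\mathbb{E}^\p\sup_{s\in[0,T]}|\m_n(s)-\m(s)|_{L^2}^4\Bigr)^{\frac12}.
\end{equation}
The first factor is bounded uniformly in $n$ by \eqref{bound on nabla m_n prime} and \eqref{equation H1 bound on m prime}, while the second tends to $0$ by Lemma \ref{Lemma sup in t L2 in space convergene of m_n prime} (which even gives the eighth-power convergence, so the $L^4$ version follows by Jensen). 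Hence this contribution vanishes in the limit.

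The projection-error part is the main obstacle, because it involves the supremum over $t$ of $P_n G(\m(s))-G(\m(s))$, whereas strong convergence $P_n\to I$ holds only pointwise on $L^2$. The idea is to exploit compactness of the path. Using the Lipschitz estimate above together with $\m\in C([0,T];L^2)$ (Corollary \ref{Corollary m prime continuous in t with values in L2}) and the $\mathbb{P}^\p$-a.s.\ finiteness of $\sup_{s}|\m(s)|_{H^1}$ coming from \eqref{equation H1 bound on m prime}, the map $s\mapsto G(\m(s))$ is, $\mathbb{P}^\p$-a.s., continuous from $[0,T]$ into $L^2$; therefore its range $K(\omega^\p)=\{\,G(\m(s,\omega^\p)):s\in[0,T]\,\}$ is a compact subset of $L^2$. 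Because $\{P_n\}$ is a uniformly bounded family of operators converging strongly to the identity, the convergence $P_n\to I$ is uniform on compact sets, so that $\sup_{s\in[0,T]}|P_n G(\m(s))-G(\m(s))|_{L^2}\to 0$ $\mathbb{P}^\p$-a.s. To upgrade this almost-sure convergence to convergence of the expected squared supremum, I would invoke dominated convergence: by the growth bound \eqref{eqn-2.3} applied with $k=h$ one has
\begin{equation}
\sup_{s\in[0,T]}|P_nG(\m(s))-G(\m(s))|_{L^2}^2\leq 4\sup_{s\in[0,T]}|G(\m(s))|_{L^2}^2\leq C\bigl(1+\sup_{s\in[0,T]}|\m(s)|_{H^1}\bigr)^4,
\end{equation}
which is integrable by \eqref{equation H1 bound on m prime}. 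Combining the two parts through the triangle inequality then yields the claim.
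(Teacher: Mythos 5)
Your proof is correct, and it follows the same basic route the paper intends: split $G_n(\m_n(s))-G(\m(s))$ into the contraction of a Lipschitz difference, $P_n\bigl[G(\m_n(s))-G(\m(s))\bigr]$, plus the projection error $P_nG(\m(s))-G(\m(s))$; kill the first term using the Lipschitz-on-balls structure of $G$, the embedding $H^1\hookrightarrow L^\infty$, Cauchy--Schwarz, the uniform moment bounds \eqref{bound on nabla m_n prime} and \eqref{equation H1 bound on m prime}, and Lemma \ref{Lemma sup in t L2 in space convergene of m_n prime}; kill the second by a pointwise-convergence-plus-domination argument. Where you genuinely add value is the projection error: the paper's printed proof consists only of a citation of Lemma \ref{Lemma sup in t L2 in space convergene of m_n prime} and Lemma \ref{Lemma convergence of the bump function}, and neither of these controls $\sup_{s\in[0,T]}|P_nG(\m(s))-G(\m(s))|_{L^2}$ (indeed Lemma \ref{Lemma convergence of the bump function}, which concerns the cut-offs $\psi$, is not needed at all here, since no $\psi$ appears in the statement). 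Your mechanism --- a.s.\ continuity of $s\mapsto G(\m(s))$ in $L^2$, hence compactness of the path range, hence uniform convergence of $P_n$ to the identity on that range because $\|P_n\|\leq 1$ and $P_n\to I$ strongly, followed by dominated convergence via $|P_nG(\m(s))-G(\m(s))|_{L^2}\leq 2|G(\m(s))|_{L^2}$ and the bound \eqref{eqn-2.3} --- is exactly the missing ingredient; an earlier (removed) version of the paper's argument used the equivalent device of uniform continuity of $s\mapsto \m(s)\times(\m(s)\times h)$ on $[0,T]$ together with dominated convergence. So your write-up is both correct and more complete than the proof the paper actually prints.
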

	\begin{proof}[Proof of Lemma \ref{Lemma convergence of Gn}]
		The proof follows from Lemma \ref{Lemma sup in t L2 in space convergene of m_n prime} and Lemma \ref{Lemma convergence of the bump function}.
		\dela{
			We prove the second convergence. The first follows similarly.
			Even though the terms contain the cut-off function, we do not mention it here for brevity. It can be handled as done previously, by adding and subtracting suitable terms, followed by using the H\"older's inequality and the convergence of $\m_n$ to $\m$.
			
			similarly.\\\begin{align*}
				|P_n(\m_n(s) &\times (\m_n(s) \times h)) - \m(s) \times (\m(s) \times h)|_{L^2} \\
				\leq& |P_n(\m_n(s) \times (\m_n(s) \times h)) - P_n(\m(s) \times (\m(s) \times h))|_{L^2} \\
				& + |P_n(\m(s) \times (\m(s) \times h)) - \m(s) \times (\m(s) \times h)|_{L^2} \\
				\leq& |P_n(\m_n(s) \times (\m_n(s) \times h) - \m(s) \times (\m(s) \times h))|_{L^2} \\
				& + |P_n(\m(s) \times (\m(s) \times h)) - \m(s) \times (\m(s) \times h)|_{L^2} \\
				\leq& |\m_n(s) \times (\m_n(s) \times h) - \m(s) \times (\m(s) \times h)|_{L^2} \\
				& + |P_n(\m(s) \times (\m(s) \times h)) - \m(s) \times (\m(s) \times h)|_{L^2} \\
				\leq&  |(\m_n(s) - \m(s)) \times (\m_n(s) \times h) |_{L^2} \\
				& + |\m(s) \times ((\m_n(s)- \m(s)) \times h) |_{L^2} \\
				& + |P_n(\m(s) \times (\m(s) \times h)) - \m(s) \times (\m(s) \times h)|_{L^2} \\
			\end{align*}
			\dela{
				Let $s\in[0,T]$ and $n\in\mathbb{N}$ be fixed.
				
				\begin{align*}
					|P_n(\m_n(s) &\times (\m_n(s) \times h)) - \m(s) \times (\m(s) \times h)|_{L^2} \\
					\leq& |P_n(\m_n(s) \times (\m_n(s) \times h)) - P_n(\m(s) \times (\m(s) \times h))|_{L^2} \\
					& + |P_n(\m(s) \times (\m(s) \times h)) - \m(s) \times (\m(s) \times h)|_{L^2} \\
					\leq& |P_n(\m_n(s) \times (\m_n(s) \times h) - \m(s) \times (\m(s) \times h))|_{L^2} \\
					& + |P_n(\m(s) \times (\m(s) \times h)) - \m(s) \times (\m(s) \times h)|_{L^2} \\
					\leq& |\m_n(s) \times (\m_n(s) \times h) - \m(s) \times (\m(s) \times h)|_{L^2} \\
					& + |P_n(\m(s) \times (\m(s) \times h)) - \m(s) \times (\m(s) \times h)|_{L^2} \\
					\leq&  |(\m_n(s) - \m(s)) \times (\m_n(s) \times h) |_{L^2} \\
					& + |\m(s) \times ((\m_n(s)- \m(s)) \times h) |_{L^2} \\
					& + |P_n(\m(s) \times (\m(s) \times h)) - \m(s) \times (\m(s) \times h)|_{L^2} \\
				\end{align*}
				
				Thus,	
			}
			\begin{align*}
				\mathbb{E^{\prime}} & \sup_{s\in[0,T]} |P_n(\m_n(s) \times (\m_n(s) \times h)) - \m(s) \times (\m(s) \times h)|_{L^2}^4 \\
				\leq&  	\mathbb{E^{\prime}} \sup_{s\in[0,T]} |\m_n(s) \times (\m_n(s) \times h) - \m(s) \times (\m(s) \times h)|_{L^2}^4 \\
				& + 	\mathbb{E^{\prime}} \sup_{s\in[0,T]} |P_n(\m(s) \times (\m(s) \times h)) - \m(s) \times (\m(s) \times h)|_{L^2}^4 \\
				& + 	\mathbb{E^{\prime}} \sup_{s\in[0,T]}  |\m(s) \times ((\m_n(s)- \m(s)) \times h) |_{L^2}^4 \\
				& + 	\mathbb{E^{\prime}} \sup_{s\in[0,T]} |P_n(\m(s) \times (\m(s) \times h)) - \m(s) \times (\m(s) \times h)|_{L^2}^4
			\end{align*}
			Using the Lemma \ref{Lemma sup in t L2 in space convergene of m_n prime}, the first three terms go to 0 as $n$ goes to infinity.
			
			For the fourth term,
			$$\lim_{n\rightarrow\infty}|P_n(\m(s) \times (\m(s) \times h)) - (\m(s) \times (\m(s) \times h))|_{L^2} = 0$$ for each $s\in[0,T]$ $\mathbb{P}^{\prime}$-a.s.\\
			\textbf{Claim:} $\m \times (\m \times h)$ is uniformly continuous on $[0,T]$ taking values in $L^2$ gives a uniform bound.
			
			The fourth term thus goes to 0 as a consequence of the above argument and the dominated convergence theorem.
			The lemma follows by the above arguments combined with Lemma \ref{Lemma convergence of the bump function}. This concludes the proof of the Lemma \ref{Lemma convergence of Gn}.\\
			\textbf{Proof for the claim that $\m \times (\m \times h)$ is uniformly continuous taking values in $L^2$:}
			$\m$ is uniformly continuous taking values in $L^2$ since it is continuous on a compact domain $[0,T]$.
			
			Now let $(t_n)_{n\in\mathbb{N}}$ be a sequence in $[0,T]$ that converges to $t$ in $[0,T]$. The bound in \eqref{bound on m prime H1} implies that $$\sup_{s\in [0,T]}|\m(s,\omega^{\prime})| < \infty.$$ Note here that this holds only for $ \mathbb{P}^{\prime} $ a.s. $\omega^{\prime}\in\Omega^{\prime}$. Fix this $\omega^{\prime}$.
			\begin{align*}
				|\m(t_n) \times (\m(t_n) \times h) - \m(t) \times ( \m(t)\times h)|_{L^2} \leq& |(\m(t_n) - \m(t)) \times (\m(t_n) \times h)|_{L^2} \\
				& + |\m(t) \times ((\m(t_n) - \m(t)) \times h)|_{L^2} \\
				\leq& |\m(t_n) - \m(t)|_{L^2}|\m(t_n)|_{L^{\infty}}|h|_{L^{\infty}} \\
				& + |\m(t_n) - \m(t)|_{L^2}|\m(t)|_{L^{\infty}}|h|_{L^{\infty}} \\
				\leq& C |\m(t_n) - \m(t)|_{L^2}|\m(t_n)|_{H^1}|h|_{L^{\infty}} \\&\quad (\text{Since}\ H^1\hookrightarrow L^{\infty}) \\
				& + C |\m(t_n) - \m(t)|_{L^2}|\m(t)|_{H^1}|h|_{L^{\infty}} \\
				\leq& C C(h) |\m(t_n) - \m(t)|_{L^2}.
			\end{align*}
			The right hand side of the above inequality goes to $0$ as $n$ goes to infinity. Thus we have shown the continuity of $\m\times (\m \times h)$.
			Compactness of $[0,T]$ concludes the uniform continuity.
		}
	\end{proof}

	
	Define the following $L^2$-valued random variables $\{M_n(t)\}_{t\in[0,T]}$ and $\{M_n^{\prime}(t)\}_{t\in[0,T]}$ on $(\Omega, \mathbb{F}, \mathbb{P})$ and $(\Omega^{\prime}, \mathbb{F}^{\prime}, \mathbb{P}^{\prime})$, respectively by
	\begin{align}\label{Definition of M_n}
		\nonumber M_n(t) := & m_n(t) -m_n(0) - \int_{0}^{t} \bigg[ F_n^1(m_n(s)) - \alpha \, F_n^2(m_n(s)) + F_n^3(m_n(s)) \\
		&+ \frac{1}{2}\psi\bigl(m_n(s)\bigr)^2 \l[DG\bigl(m_n(s)\bigr)\r]\l[G_n\bigl(m_n(s)\bigr)\r] \bigg] \, ds,
	\end{align}
	\dela{
		\begin{align}
			\nonumber M_n(t) &:= m_n(t) -m_n(0) - \int_{0}^{t} \bigg[ F_n^1(m_n(s)) - \alpha \, F_n^2(m_n(s)) + F_n^3(m_n(s)) + \frac{1}{2}\l[DG(m_n(s))\r]\l(G_n(m_n(s))\r) \bigg] \, ds.
			\nonumber - &  P_n(P_n(m_n(s) \times h) \times h) \\
			\nonumber + & \alpha \,  \psi(|m_n(s)|_{L^{\infty}})\psi(|P_n(m_n(s) \times h)|_{L^{\infty}}) \psi(|P_n(m_n(s) \times (m_n(s) \times h))|_{L^{\infty}}) \centerdot \\
			\nonumber & P_n(P_n(m_n(s) \times (m_n(s) \times h)) \times h) \\	\nonumber + &  \alpha \,  \psi(|m_n(s)|_{L^{\infty}}) \psi(|P_n(m_n(s) \times h)|_{L^{\infty}}) \psi(|P_n(m_n(s) \times (m_n(s) \times h))|_{L^{\infty}})  \centerdot \\
			\nonumber & P_n(P_n(m_n(s) \times h) \times (m_n(s) \times h))  \\
			\nonumber + & \alpha \, P_n(m_n(s) \times (P_n(m_n(s) \times h) \times h)) \\
			\nonumber - &  \alpha^2  \psi^2(|m_n(s)|_{L^{\infty}}) \psi(|P_n(m_n(s) \times h)|_{L^{\infty}})^2 \psi(|P_n(m_n(s) \times (m_n(s) \times h))|_{L^{\infty}})^2\centerdot \\
			\nonumber & P_n(P_n(m_n(s) \times (m_n(s) \times h)) \times (m_n(s) \times h)) \\
			- &  \alpha^2 P_n(m_n(s) \times (P_n(m_n(s) \times (m_n(s) \times h))\times h)) \bigg] \ ds
		\end{align}
	}
	and
	\dela{
		\begin{align}
			\nonumber M^{\prime}_n(t) &:= \m_n(t) -\m_n(0) - \int_{0}^{t}[F_n^1(\m_n(s)) - \alpha \, F_n^2(\m_n(s)) + F_n^3(\m_n(s))\\
			\nonumber & - P_n(P_n(\m_n(s) \times h) \times h) \\
			\nonumber &+ \alpha \,  \psi(|\m_n(s)|_{L^{\infty}})\psi(|P_n(\m_n(s) \times h)|_{L^{\infty}}) \psi(|P_n(\m_n(s) \times (\m_n(s) \times h))|_{L^{\infty}}) \centerdot \\
			\nonumber & P_n(P_n(\m_n(s) \times (\m_n(s) \times h)) \times h) \\	\nonumber & + \alpha \,  \psi(|\m_n(s)|_{L^{\infty}}) \psi(|P_n(\m_n(s) \times h)|_{L^{\infty}}) \psi(|P_n(\m_n(s) \times (\m_n(s) \times h))|_{L^{\infty}})  \centerdot \\
			\nonumber & P_n(P_n(\m_n(s) \times h) \times (\m_n(s) \times h))  \\
			\nonumber &+ \alpha \, P_n(\m_n(s) \times (P_n(\m_n(s) \times h) \times h)) \\
			\nonumber & - \alpha^2  \psi^2(|\m_n(s)|_{L^{\infty}}) \psi(|P_n(\m_n(s) \times h)|_{L^{\infty}})^2 \psi(|P_n(\m_n(s) \times (\m_n(s) \times h))|_{L^{\infty}})^2\centerdot \\
			\nonumber & P_n(P_n(\m_n(s) \times (\m_n(s) \times h)) \times (\m_n(s) \times h)) \\
			& - \alpha^2 P_n(\m_n(s) \times (P_n(\m_n(s) \times (\m_n(s) \times h))\times h)) ]\ ds.
		\end{align}
	}
	\begin{align}\label{Definition of M_n^{prime}}
		\nonumber M^{\prime}_n(t) := & \m_n(t) -\m_n(0) - \int_{0}^{t}[F_n^1(\m_n(s)) - \alpha \, F_n^2(\m_n(s)) + F_n^3(\m_n(s)) \\
		& + \frac{1}{2}\psi\bigl(\m_n(s)\bigr)^2\l[DG\bigl(\m_n(s)\bigr)\r] \l[G_n\bigl(\m_n(s)\bigr)\r] \bigg] \, ds,	
	\end{align}
	The aim here is to show that for each $t\in[0,T]$, $M^{\prime}_n(t)$ converges in some sense to $M^{\prime}(t)$, where $M^{\prime}(t)$ is defined as
	
	\dela{
		\begin{align}
			\nonumber &M^{\prime}(t) := \m(t) - \m_0 - \int_{0}^{t} [\m(s) \times \Delta \m(s) - \alpha \, \m(s) \times (\m(s) \times \Delta \m(s)) + \m(s)\times \u(s)   \\
			\nonumber& - \alpha \, \m(s) \times (\m(s) \times \u(s)) \\
			\nonumber & - (\m(s) \times h) \times h \\
			\nonumber &+ \alpha \,  \psi(|\m(s)|_{L^{\infty}})\psi(|P(\m(s) \times h)|_{L^{\infty}}) \psi(|\m(s) \times (\m(s) \times h)|_{L^{\infty}})\m(s) \times (\m(s) \times h) \times h \\
			\nonumber & + \alpha \,  \psi(|\m(s)|_{L^{\infty}}) \psi(|(\m(s) \times h)|_{L^{\infty}}) \psi(|(\m(s) \times (\m(s) \times h))|_{L^{\infty}}) (\m(s) \times h) \times (\m(s) \times h)  \\
			\nonumber &+ \alpha \, (\m(s) \times ((\m(s) \times h) \times h)) \\
			\nonumber & - \alpha^2  \psi^2(|\m(s)|_{L^{\infty}}) \psi(|(\m(s) \times h)|_{L^{\infty}})^2 \psi(|(\m(s) \times (\m(s) \times h))|_{L^{\infty}})^2 \centerdot \\
			\nonumber&\quad (\m(s) \times (\m(s) \times h)) \times (\m(s) \times h) \\
			& - \alpha^2 (m(s) \times ((\m(s) \times (\m(s) \times h))\times h)) ]\ ds.
		\end{align}
	}
	\begin{align}\label{definition of M prime}
		\nonumber &M^{\prime}(t) := \m(t) - \m_0 - \int_{0}^{t} \bigg[ \m(s) \times \Delta \m(s) - \alpha \, \m(s) \times (\m(s) \times \Delta \m(s)) + \m(s)\times \u(s)   \\
		& - \alpha \,\psi(\m(s)) \m(s) \times \bigl(\m(s) \times \u(s)\bigr) +	\frac{1}{2}\psi\bigl(\m(s)\bigr)^2 \l[DG\bigl(\m_n(s)\bigr)\r] \bigl[G\bigl(\m_n(s)\bigr)\bigr]\biggr] \, ds.
	\end{align}
	The main contents of the remainder of this section will be as follows:
	\begin{enumerate}
		\item Showing the convergence of $M^{\prime}_n(t)$ to $M^{\prime}(t)$ in some sense (Lemma \ref{convergence of Mn prime to M prime}).
		
		\item Showing that the process $W^{\prime}$, obtained as a limit of Wiener processes $W_n^{\prime}$ is a Wiener process (Lemma \ref{Lemma W prime is a Wiener process}).
		
		\item Showing that the limit $M^{\prime}$ is indeed an It\^o's integral (with respect to the process $W^{\prime}$) as required. This will be done in two steps: first we prove Lemma \ref{convergence of M_n prime to Ito integral}, which shows that $M^\p_n$ converges to the required stochastic integral and then comparing this with Lemma \ref{convergence of Mn prime to M prime} gives us the required result.
	\end{enumerate}
	\dela{Question: Note that the above calculations are for $\phi\in H^1$ and not for $\phi\in L^2(\Omega^\p:L^2(0,T;H^1))$. Therefore this cannot be compared to the weak convergence of the corresponding terms.
		
		Also, the terms in $M^\p_n$ have projection operators acting on them. Try to give a brief justification as to why the convergence shown initially is sufficient.}
	\begin{lemma}\label{convergence of Mn prime to M prime}
		For $\phi\in\dela{ H^1}L^4(\Omega ; H^1)$,
		and $t\in[0,T]$
		\begin{equation*}
			\mathbb{E^{\prime}} \l\langle M_n^{\prime}(t) , \phi \r\rangle_{L^2} \rightarrow \mathbb{E^{\prime}} \l\langle M^{\prime}(t) , \phi \r\rangle_{L^2}\ \text{as}\ n \to \infty.
		\end{equation*}
	\end{lemma}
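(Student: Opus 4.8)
The plan is to subtract the definitions of $M_n'(t)$ and $M'(t)$ (the latter being \eqref{definition of M prime}), pair the difference with $\phi$, take the expectation, and show that each of the finitely many resulting contributions tends to zero. Thus I would write
\begin{equation*}
	\mathbb{E}'\langle M_n'(t) - M'(t), \phi\rangle_{L^2}
\end{equation*}
as the sum of the two boundary contributions $\mathbb{E}'\langle m_n'(t) - m'(t), \phi\rangle_{L^2}$ and $-\mathbb{E}'\langle m_n'(0) - m_0, \phi\rangle_{L^2}$, together with the time integrals of the differences of the individual drift coefficients. The boundary terms are the easiest: since $m_n'(0) = P_n m_0 \to m_0$ in $H^1$ and $m_n' \to m'$ in $C([0,T];L^2)$ $\mathbb{P}'$-a.s. by Lemma \ref{Lemma sup in t L2 in space convergene of m_n prime}, the uniform moment bounds of Proposition \ref{Proposition bounds on m_n prime} together with the Cauchy--Schwarz inequality and uniform integrability show both tend to zero.

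For the integral terms the main device is that every coefficient $F_n^i$ carries the orthogonal (hence self-adjoint) projection $P_n$, so that the projection can be transferred onto $\phi$. For the leading term,
\begin{equation*}
	\mathbb{E}'\int_0^t \langle P_n(m_n' \times \Delta m_n'), \phi\rangle_{L^2}\, ds = \mathbb{E}'\int_0^t \langle m_n' \times \Delta m_n', P_n\phi\rangle_{L^2}\, ds,
\end{equation*}
and I would split $P_n\phi = \phi + (P_n\phi - \phi)$. The piece containing $\phi$ converges to $\mathbb{E}'\int_0^t\langle m' \times \Delta m', \phi\rangle_{L^2}\, ds$ upon applying Lemma \ref{Lemma convergencem mn times Laplacian mn and mn times mn times Laplacian mn} to the time-localised test function $\widetilde\phi(s) := \phi\,\mathbf{1}_{[0,t]}(s)$, which lies in $L^4(\Omega';L^4(0,T;H^1))$; the error piece is controlled by the uniform $L^2(\Omega';L^2(0,T;L^2))$ bound on $m_n' \times \Delta m_n'$ from Proposition \ref{Proposition bounds on m_n prime} together with $P_n\phi \to \phi$ in $H^1$, and hence in $L^2$. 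The triple-product term $m_n' \times (m_n' \times \Delta m_n')$ is treated identically via Lemma \ref{Lemma convergencem mn times mn times Laplacian mn}, while the control terms $m_n' \times u_n'$ and $\psi(m_n')\,m_n' \times (m_n' \times u_n')$ are treated via Lemma \ref{Lemma convergence mn times un and mn times mn times un}.

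The one genuinely nonlinear remaining contribution is the It\^o correction $\tfrac12\psi(m_n')^2[DG_n(m_n')](G_n(m_n'))$. Here I would first replace $\psi(m_n')$ by $\psi(m')$ using Lemma \ref{Lemma convergence of the bump function}, paying for the difference with the uniform bounds on the associated polynomial of degree three in $m_n'$, and then use that $G_n(m_n') \to G(m')$ in $\sup_{s}L^2$ by Lemma \ref{Lemma convergence of Gn}, together with the analogous $L^2$-convergence of $[DG_n(m_n')]$ read off from the explicit expansion \eqref{Stratonivich to Ito correction term}, the convergence of Lemma \ref{Lemma sup in t L2 in space convergene of m_n prime}, and the boundedness of $h$. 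Again every projection is shifted onto $\phi$ and the resulting $P_n\phi - \phi$ errors are absorbed by the uniform bounds of Proposition \ref{Proposition bounds on m_n prime}.

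The step I expect to be the main obstacle is precisely this correction term: unlike the other coefficients it is a sum of several quadratic and cubic expressions in $m_n'$, each wrapped in nested projections, so establishing its $L^2$-convergence while simultaneously transferring all the projections onto $\phi$ and controlling the differences of the cut-off $\psi$ requires the most bookkeeping. Once it is dealt with, matching the resulting limit against \eqref{definition of M prime} and invoking the uniqueness-of-limits argument already used to identify $\mathbb{Y}$ and $\mathbb{Z}$ completes the proof.
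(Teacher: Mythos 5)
Your proposal is correct and follows essentially the same route as the paper: both arguments proceed term by term, invoke the same battery of lemmata (Lemmas \ref{Lemma sup in t L2 in space convergene of m_n prime}, \ref{Lemma convergence of the bump function}, \ref{Lemma convergencem mn times Laplacian mn and mn times mn times Laplacian mn}, \ref{Lemma convergencem mn times mn times Laplacian mn}, \ref{Lemma convergence mn times un and mn times mn times un}, \ref{Lemma convergence of Gn}), and both single out the Stratonovich--It\^o correction term as the only piece requiring detailed work. The one technical difference is that you handle the projections by moving $P_n$ onto $\phi$ and splitting $P_n\phi = \phi + (P_n\phi - \phi)$, whereas the paper estimates the correction-term integrands directly in the $(H^1)^{\prime}$-norm, using that $P_n$ is an $L^2$-contraction together with the pointwise convergence $P_n v \to v$ and dominated convergence; both devices are sound, and yours has the merit of making explicit the projection bookkeeping that the paper leaves largely implicit for the terms $P_n(m_n^{\prime} \times \Delta m_n^{\prime})$ and $P_n\bigl(m_n^{\prime} \times (m_n^{\prime} \times \Delta m_n^{\prime})\bigr)$.
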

	\begin{proof}[Proof of Lemma \ref{convergence of Mn prime to M prime}]
		
		We show the convergence of the terms individually. The previously stated lemmata, viz. Lemma \ref{Lemma sup in t L2 in space convergene of m_n prime}, Lemma \ref{Lemma convergence of the bump function}, Lemma \ref{Lemma convergencem mn times Laplacian mn and mn times mn times Laplacian mn}, Lemma \ref{Lemma convergence mn times un and mn times mn times un}, Lemma \ref{Lemma convergencem mn times mn times Laplacian mn}, Lemma \ref{Lemma convergence of Gn} show the convergence of some of the terms. The terms that remain are the ones corresponding to the Stratonovich to It\^o correction term. The convergence follows from the convergence described in Lemma \ref{Lemma sup in t L2 in space convergene of m_n prime} and Lemma \ref{Lemma convergence of the bump function}. \dela{ The convergence of these can be shown as follows.}

		We show the calculations for one term. Rest of the terms follow similarly.\\
		\textbf{Claim:}
		\begin{align*}
			\lim_{n\rightarrow\infty}& \mathbb{E^{\prime}}\int_{0}^{T}\bigg[\big|\psi^2(\m_n(s)) P_n\bigl(P_n(\m_n(s) \times (\m_n(s) \times h)) \times (\m_n(s) \times h)\bigr)\\
			& - \psi^2\bigl(\m(s)\bigr) \m(s) \times (\m(s) \times h) \times (\m(s) \times h)\big|_{(H^1)^{\prime}}^2 \bigg] \, ds = 0.
		\end{align*}
		Let $v_1,v_2,w_1,w_2\in H_n$. Then
		\begin{align*}
			\l| \psi(v_1)w_1 - \psi(v_1)w_1 \r|_{L^2} \leq \l| \l[\psi(v_1) - \psi(v_2)\r]w_1  \r|_{L^2} + \l| \psi(v_2)\l[w_1 - w_2\r]  \r|_{L^2}.
		\end{align*}
		The convergence in the claim can be seen into two parts, one with the convergence for the cut-off and one with the convergence for the remaining term. For the convergence of the cut-off function, we have Lemma \ref{Lemma convergence of the bump function}. We therefore continue with the remaining part. Note that the function $\psi$ need not be written here since it takes values in $[0,1]$ and hence does not affect the inequalities.\\
		The convergence can be split up into the following parts.
		\begin{align*}
			&|P_n(P_n(\m_n(s) \times (\m_n(s) \times h)) \times (\m_n(s) \times h)) - \m(s) \times (\m(s) \times h) \times (\m(s) \times h) |_{(H^1)^{\prime}} \\
			\leq & |P_n(P_n(\m_n(s) \times (\m_n(s) \times h)) \times (\m_n(s) \times h)) - P_n(\m(s) \times (\m(s) \times h) \times (\m(s) \times h)) |_{(H^1)^{\prime}} \\
			& + |P_n(\m(s) \times (\m(s) \times h) \times (\m(s) \times h)) - \m(s) \times (\m(s) \times h) \times (\m(s) \times h)|_{(H^1)^{\prime}} \\
			\leq & |P_n(P_n(\m_n(s) \times (\m_n(s) \times h)) \times (\m_n(s) \times h) - \m(s) \times (\m(s) \times h) \times (\m(s) \times h)) |_{(H^1)^{\prime}} \\
			& + |P_n(\m(s) \times (\m(s) \times h) \times (\m(s) \times h)) - \m(s) \times (\m(s) \times h) \times (\m(s) \times h)|_{(H^1)^{\prime}} \\
			\leq & |P_n(\m_n(s) \times (\m_n(s) \times h)) \times (\m_n(s) \times h) - \m(s) \times (\m(s) \times h) \times (\m(s) \times h) |_{(H^1)^{\prime}} \\
			& + |P_n(\m(s) \times (\m(s) \times h) \times (\m(s) \times h)) - \m(s) \times (\m(s) \times h) \times (\m(s) \times h)|_{(H^1)^{\prime}}.
		\end{align*}
		Thus,
		\begin{align*}
			\mathbb{E^{\prime}}&\int_{0}^{T}  |P_n(\m_n(s) \times (\m_n(s) \times h)) \times (\m_n(s) \times h) - \m(s) \times (\m(s) \times h) \times (\m(s) \times h) |_{(H^1)^{\prime}} ds \\
			\leq 	& \mathbb{E^{\prime}} \int_{0}^{T}  |P_n(\m_n(s) \times (\m_n(s) \times h)) \times (\m_n(s) \times h) - (\m(s) \times (\m(s) \times h)) \times (\m_n(s) \times h) \\
			& +  (\m(s) \times (\m(s) \times h)) \times (\m_n(s) \times h)  - \m(s) \times (\m(s) \times h) \times (\m(s) \times h) |_{(H^1)^{\prime}} ds \\
			\leq & \mathbb{E^{\prime}} \int_{0}^{T}  |(P_n(\m_n(s) \times (\m_n(s) \times h))  - (\m(s) \times (\m(s) \times h)) )\times (\m_n(s) \times h) |_{(H^1)^{\prime}} ds \\
			& + \mathbb{E^{\prime}} \int_{0}^{T}  |(\m(s) \times (\m(s) \times h)) \times (\m_n(s) \times h)  - \m(s) \times (\m(s) \times h) \times (\m(s) \times h)|_{(H^1)^{\prime}}
		\end{align*}
		Using the following inequality
		\begin{equation}\label{Inequality H -1 leq L2 L2 L infinity}
			|v_1v_2v_3|_{(H^1)^{\prime}} \leq C |v_1v_2v_3|_{L^1} \leq C |v_1|_{L^2}|v_2|_{L^2}|v_3|_{L^{\infty}},
		\end{equation}
		(for $v_1,v_2\in L^2$ and $v_3\in L^{\infty}$) we observe that for $s\in[0,T]$ and $n\in\mathbb{N}$,
		\begin{align*}
			&	|(P_n(\m_n(s) \times (\m_n(s) \times h)) - (\m(s) \times (\m(s) \times h)) )\times (\m_n(s) \times h) |_{(H^1)^{\prime}} \\
			&\leq |P_n(\m_n(s) \times (\m_n(s) \times h))  - (\m(s) \times (\m(s) \times h)) |_{L^2} |\m_n(s)|_{L^2}|h|_{L^{\infty}} \\
			& \leq C(h)\sup_{s\in[0,T]}|P_n(\m_n(s) \times (\m_n(s) \times h))  - (\m(s) \times (\m(s) \times h)) |_{L^2} \sup_{s\in[0,T]}|\m_n(s)|_{L^2}.
		\end{align*}
		The right hand side of the above inequality goes to $0$ as $n$ goes to infinity. This follows from the argument mentioned next along with the use of Lebesgue dominated convergence theorem, which is again justified in the following steps.
		
		Using the fact that $P_n$ is a projection operator on $L^2$ and the H\"older inequality, we get
		\begin{align*}
			\mathbb{E^{\prime}}\sup_{s\in[0,T]}|P_n(\m_n(s) \times (\m_n(s) \times h))|_{L^2} & \leq \mathbb{E^{\prime}}\sup_{s\in[0,T]}|\m_n(s) \times (\m_n(s) \times h)|_{L^2} \\
			& \leq \mathbb{E^{\prime}}\sup_{s\in[0,T]}|\m_n(s)|_{L^2} |\m_n(s)|_{L^{\infty}} |h|_{L^{\infty}} \\
			& \leq C \mathbb{E^{\prime}}\sup_{s\in[0,T]}|\m_n(s)|_{L^2} |\m_n(s)|_{H^1} |h|_{L^{\infty}} \\
			& \leq C |h|_{L^{\infty}}|m(0)|_{L^2}\mathbb{E^{\prime}} \sup_{s\in[0,T]}|\m_n(s)|_{H^1}.
		\end{align*}
		This along with the bound \eqref{bound on nabla m_n prime} give us a uniform bound for using the Lebesgue Dominated Convergence Theorem.
		\begin{align*}
			&|(\m(s) \times (\m(s) \times h)) \times (\m_n(s) \times h - \m(s) \times h)|_{(H^1)^{\prime}} \\
			&\leq |(\m(s) \times (\m(s) \times h))|_{L^2} |\m_n(s) - \m(s)|_{L^2} |h|_{L^{\infty}} \\
			& \leq \sup_{s\in [0,T]}|\m(s)|_{L^2} \sup_{s\in [0,T]}|\m(s)|_{L^{\infty}} \sup_{s\in[0,T]} |\m_n(s) - \m(s)|_{L^2} |h|_{L^{\infty}} \\
			& \leq C \sup_{s\in [0,T]}|\m(s)|_{L^2} \sup_{s\in [0,T]}|\m(s)|_{H^1} \sup_{s\in[0,T]} |\m_n(s) - \m(s)|_{L^2} |h|_{L^{\infty}} \\
			& \leq C C(h)|m_0|_{L^2}\sup_{s\in[0,T]} |\m_n(s) - \m(s)|_{L^2}.
		\end{align*}
		Thus,
		\begin{align*}
			\mathbb{E}^{\prime} &|(\m(s) \times (\m(s) \times h)) \times (\m_n(s) \times h - \m(s) \times h)|_{(H^1)^{\prime}}^2 \\
			&\leq  CC(h) |m_0|^2_{L^2}\mathbb{E^{\prime}}\sup_{s\in[0,T]} |\m_n(s) - \m(s)|_{L^2}^2.		
		\end{align*}
		The right hand side of the above inequality goes to $0$ by Lemma \ref{Lemma sup in t L2 in space convergene of m_n prime}.
		Hence
		
		\begin{align*}
			\lim_{n\rightarrow\infty}\mathbb{E^{\prime}} \l|(P_n(\m_n(s) \times (\m_n(s) \times h)) - (\m(s) \times (\m(s) \times h)) )\times (\m_n(s) \times h) \r|_{(H^1)^{\prime}} \, ds = 0
		\end{align*}
		and
		\begin{align*}
			\lim_{n\rightarrow\infty}\mathbb{E^{\prime}} \int_{0}^{T} |(\m(s) \times (\m(s) \times h)) \times (\m_n(s) \times h - \m(s) \times h)|_{(H^1)^{\prime}} \, ds = 0.
		\end{align*}
		Concerning the remaining term, the calculations can be done as follows.
		For $s\in[0,T]$,
		
		$$\lim_{n\rightarrow\infty}|P_n(\m(s) \times (\m(s) \times h)) - \m(s) \times (\m(s) \times h)| = 0.$$ The above pointwise convergence and the uniform bound
		\begin{align*}
			\mathbb{E^{\prime}}\int_{0}^{T}|\m(s) \times (\m(s) \times h)|_{(H^1)^{\prime}}\, ds \leq C(h) \mathbb{E^{\prime}} |m_0|_{L^2}^2
		\end{align*}
		together with the Lebesgue Dominated Convergence Theorem gives
		\begin{align*}
			\lim_{n\rightarrow\infty} &\mathbb{E^{\prime}} \int_{0}^{T} \l|P_n \bigl(\m(s) \times \l(\m(s) \times h\r) \times \bigl(\m(s) \times h\bigr) \bigr) - \m(s) \times \bigl(\m(s) \times h\bigr) \times \bigl(\m(s) \times h \bigr) \r|_{(H^1)^{\prime}} \,ds \\
			& = 0.
		\end{align*}
		Combining the above calculations with the Lemma \ref{Lemma convergence of the bump function} justifies the claim.

	\end{proof}
	We now show that the driving process $W^{\prime}$ is a Wiener process.
	
	\dela{That the driving process $W^{\prime}$ is a Wiener process can be shown following the proof of Lemma 5.2 in \cite{ZB+BG+TJ_Weak_3d_SLLGE}.}
	\begin{lemma}\label{Lemma W prime is a Wiener process}
		\dela{We proof follows on the lines of the proof of Lemma 5.2 in \cite{ZB+BG+TJ_Weak_3d_SLLGE}.} The process $W^{\prime}$ is a Wiener process on the space $(\Omega^{\prime}, \mathcal{F}^{\prime}, \mathbb{P}^{\prime})$.
		Also, $W_n^{\prime}(t) - W_n^{\prime}(s)$ is independent of the $\sigma$- algebra generated by $\m_n(r), \u(r), W_n^{\prime}(r)$ for $0\leq r\leq s <t$.
	\end{lemma}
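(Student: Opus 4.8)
The plan is to deduce both assertions from the equality of laws furnished by Proposition \ref{Use of Skorohod theorem}, together with a characteristic-function argument that survives the passage to the limit. First I would record two facts at the level of the approximations. On one hand, for each fixed $n$ the process $W_n^\p$ is itself a real-valued Wiener process, since being a Wiener process is a property of the law on $C([0,T];\mathbb{R})$ and, by Proposition \ref{Use of Skorohod theorem}, the law of $W_n^\p$ coincides with that of $W$, which is a Wiener process by Assumption \ref{Assumptions for existence of weak martingale solution}; in particular $W_n^\p(0)=0$ $\mathbb{P}^\p$-a.s. and $W_n^\p(t)-W_n^\p(s)\sim N(0,t-s)$. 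On the other hand, on the original probability space $W$ is an $\mathbb{F}$-Wiener process while $m_n$ and $u_n$ are $\mathbb{F}$-adapted, so $W(t)-W(s)$ is independent of $\sigma\{m_n(r),u_n(r),W(r):0\le r\le s\}$; since the joint law of $(\m_n,W_n^\p,\u_n)$ on the path space equals that of $(m_n,W,u_n)$, and independence of an increment from the $\sigma$-algebra generated by the pasts of the processes is a statement about this joint law, the same independence transfers to $(\m_n,W_n^\p,\u_n)$, giving the pre-limit form of the second assertion.

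Combining these two facts, for every $\xi\in\mathbb{R}$ and every bounded continuous functional $\gamma$ of the restrictions of the paths of $(\m_n,\u_n,W_n^\p)$ to $[0,s]$, I would obtain by Gaussianity and independence of the increment
\begin{equation*}
\mathbb{E}^\p\!\left[e^{i\xi(W_n^\p(t)-W_n^\p(s))}\,\gamma_n\right]=e^{-\frac12\xi^2(t-s)}\,\mathbb{E}^\p\!\left[\gamma_n\right],
\end{equation*}
where $\gamma_n$ denotes $\gamma$ evaluated along $(\m_n,\u_n,W_n^\p)$. Letting $n\to\infty$, the $\mathbb{P}^\p$-a.s. convergences $W_n^\p\to W^\p$ in $C([0,T];\mathbb{R})$, $\m_n\to\m$ in $C([0,T];L^2)$ and $\u_n\to\u$ in $L^2_w(0,T;L^2)$ give $e^{i\xi(W_n^\p(t)-W_n^\p(s))}\gamma_n\to e^{i\xi(W^\p(t)-W^\p(s))}\gamma$ $\mathbb{P}^\p$-a.s., and since all integrands are bounded by $\sup|\gamma|$, the bounded convergence theorem yields
\begin{equation*}
\mathbb{E}^\p\!\left[e^{i\xi(W^\p(t)-W^\p(s))}\,\gamma\right]=e^{-\frac12\xi^2(t-s)}\,\mathbb{E}^\p\!\left[\gamma\right].
\end{equation*}

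Choosing $\gamma\equiv1$ shows that $W^\p(t)-W^\p(s)\sim N(0,t-s)$; combined with the path continuity and the identity $W^\p(0)=0$, both inherited from the uniform convergence $W_n^\p\to W^\p$, this proves that $W^\p$ is a Wiener process. Allowing $\gamma$ to range over the admissible functionals then shows that $W^\p(t)-W^\p(s)$ is independent of $\mathcal{F}_s^\p=\sigma\{\m(r),\u(r),W^\p(r):0\le r\le s\}$, which is the second assertion in the limit.

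The step I expect to be the main obstacle is technical rather than conceptual: because the control component lives in $L^2_w(0,T;L^2)$ with the weak topology, I must ensure that the test functionals $\gamma$ are genuinely continuous for this topology—for example, built from the separating functionals $v\mapsto\int_0^s\langle v(r),w(r)\rangle_{L^2}\,dr$ used in the proof of Proposition \ref{Use of Skorohod theorem}—and that the class of admissible $\gamma$ is rich enough to generate $\mathcal{F}_s^\p$, so that the displayed identity, valid for all such $\gamma$, really does encode independence with respect to the full $\sigma$-algebra. Some care is likewise needed to make the restriction of the $L^2_w$-valued path to $[0,s]$ precise when forming $\gamma_n$ and its limit.
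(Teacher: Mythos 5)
Your proposal is correct and follows essentially the same route as the paper: establish via the equality of laws from Proposition \ref{Use of Skorohod theorem} that each $W_n^\p$ is a Wiener process whose increments are independent of the past of $(\m_n,\u_n,W_n^\p)$, then pass to the limit using the $\mathbb{P}^\p$-a.s.\ convergences and bounded (dominated) convergence, with characteristic functions giving Gaussianity. The only real difference is that the paper tests against finite-dimensional cylinder functions — products of bounded continuous functions evaluated at finitely many times $\m_n(r_j)$, $\u_n(r_j)$, $W_n^\p(s_j)$ — which matches the definition of $\mathcal{F}_s^\p$ as the $\sigma$-algebra generated by point evaluations and thereby sidesteps the weak-topology/richness concern you flag at the end; your merged identity $\mathbb{E}^\p\bigl[e^{i\xi(W^\p(t)-W^\p(s))}\gamma\bigr]=e^{-\frac12\xi^2(t-s)}\mathbb{E}^\p[\gamma]$ handles independence and Gaussianity in one step, which the paper does in two.
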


	\begin{proof}[Proof of Lemma \ref{Lemma W prime is a Wiener process}]
		
		$W^{\prime}_n$ converges to $W^{\prime}$ in $C([0,T];\mathbb{R})$ $\mathbb{P}$-a.s. Hence, $W^{\prime}\in C([0,T];\mathbb{R})$ $\mathbb{P}$-a.s. That is, $W^{\prime}$ thus has almost surely continuous trajectories.	
		We proceed as follows: First show that $W_n^{\prime}$ is a Wiener process on $(\Omega^{\prime} , \mathcal{F}^{\prime}
		, \mathbb{P}^{\prime})$ for each $n\in\mathbb{N}$. Recall that the processes $W_n^\p$ and $W$ have the same laws on the space $C([0,T];\mathbb{R})$.

		Let $\phi_i , \zeta_i$, $i=1,\dots,k$ be  continuous and bounded real valued functions on $(H^1)^{\prime}$.
		
		Let $\psi,\psi_i$, $i=1,\dots,k$ be  continuous and bounded real valued functions on $\mathbb{R}$. Let $0< r_1<\dots<r_k\leq s\leq t$, $0< s_1<\dots<s_k\leq s\leq t$.
		
		Now for each $n\in\mathbb{N}$

		\begin{align*}
			\mathbb{E}^{\prime} &\l[ \prod_{j=1}^{k}\phi_j\big(\m_n(r_j)\big)\prod_{j=1}^{k}\zeta_j\big(\u_n(r_j)\big)\prod_{j=1}^{k}\psi_j\big(W_n^{\prime}(s_j)\big) \psi\big(W_n^{\prime}(t) - W_n^{\prime}(s)\big) \r] \\
			&= \mathbb{E} \l[ \prod_{j=1}^{k}\phi_j\big(m_n(r_j)\big)\prod_{j=1}^{k}\zeta_j\big(u_n(r_j)\big)\prod_{j=1}^{k}\psi_j\big(W(s_j)\big) \psi\big(W(t) - W(s)\big) \r] \\
			& =  \mathbb{E} \l[ \prod_{j=1}^{k}\phi_j\big(m_n(r_j)\big)\prod_{j=1}^{k}\zeta_j\big(u_n(r_j)\big)\prod_{j=1}^{k}\psi_j\big(W(s_j)\big)\r] \mathbb{E}\l[\psi\big(W(t) - W(s)\big) \r] \\
			& = \mathbb{E}^{\prime} \l[ \prod_{j=1}^{k}\phi_j\big(\m_n(r_j)\big)\prod_{j=1}^{k}\zeta_j\big(\u_n(r_j)\big)\prod_{j=1}^{k}\psi_j\big(W_n^{\prime}(s_j)\big)\r] \mathbb{E}^{\prime} \l[\psi\big(W_n^{\prime}(t) - W_n^{\prime}(s)\big)\r].
		\end{align*}
		Thus, $W_n^{\prime}(t) - W_n^{\prime}(s)$ is independent of the $\sigma$- algebra generated by $\m_n(r), \u_n(r), W_n^{\prime}(r)$ for $r\leq s$.
		
		Taking the limit as $n$ goes to infinity, we get

		\begin{align*}
			\lim_{n\rightarrow\infty} \mathbb{E}^{\prime} & \l[ \prod_{j=1}^{k} \phi_j \big( \m_n(r_j)\big) \prod_{j=1}^{k} \zeta_j \big( \u_n(r_j)\big) \prod_{j=1}^{k} \psi_j \big( W_n^{\prime}(s_j) \big) \psi \big( W_n^{\prime}(t) - W_n^{\prime}(s) \big) \r] \\
			&	= \lim_{n\rightarrow\infty}  \mathbb{E}^{\prime} \l[ \prod_{j=1}^{k} \phi_j \big( \m_n(r_j) \big) \prod_{j=1}^{k} \zeta_j \big( \u_n(r_j) \big) \prod_{j=1}^{k} \psi_j \big( W_n^{\prime}(s_j) \big) \r] \mathbb{E}^{\prime} \l[ \psi \big( W_n^{\prime}(t) - W_n^{\prime}(s) \big) \r].
		\end{align*}
		By Lebesgue dominated convergence theorem, we have

		\begin{align*}
			\mathbb{E}^{\prime} &\l[ \prod_{j=1}^{k}\phi_j \bigl(\m(r_j)\bigr)\prod_{j=1}^{k}\zeta_j(\u \big( r_j ) \big) \prod_{j=1}^{k} \psi_j \big( W^{\prime} (s_j) \big) \psi\big( W^{\prime}(t) - W^{\prime}(s)\big) \r] \\
			&	= \mathbb{E}^{\prime} \large[ \prod_{j=1}^{k} \phi_j \big( \m(r_j) \big) \prod_{j=1}^{k} \zeta_j \big( \u(r_j) \big) \prod_{j=1}^{k} \psi_j \big( W^{\prime}(s_j) \big) \large] \mathbb{E}^{\prime} \l[ \psi \big( W^{\prime}(t) - W^{\prime}(s) \big) \r].
		\end{align*}
		Thus, $W^{\prime}(t) - W^{\prime}(s)$ is independent of the $\sigma$- algebra generated by $\m(r), \u(r) , W^{\prime}(r)$ for $r\leq s \leq t$.\\
		Now, let $k\in\mathbb{N}$, $s_0 = 0 < s_1 < \dots < s_k \leq T$. For $(t_1,\dots t_k)\in\mathbb{R}^k$. Then for each $n\in\mathbb{N}$, we have
		
		\begin{align*}
			\mathbb{E}^{\prime}\l[e^{i\sum_{j=1}^{k}t_j \big( W^{\prime}_n(s_j) - W^{\prime}_n(s_{j-1}) \big)} \r] &=  \mathbb{E} \l[ e^{ i\sum_{j=1}^{k}t_j \bigl( W(s_j) - W(s_{j-1}) \bigr) } \r] \\
			& =  e^{ - \frac{1}{2} \sum_{j=1}^{k} t_j^2 \big( s_j - s_{j-1} \big) }.
		\end{align*}
		Thus
		\begin{align*}
			\lim_{n\rightarrow\infty} \mathbb{E}^{\prime}\l[ e^{ i \sum_{j=1}^{k} t_j \big( W^{\prime}_n(s_j) - W^{\prime}_n(s_{j-1}) \big) } \r] = \lim_{n\rightarrow\infty} e^{ - \frac{1}{2} \sum_{j=1}^{k} t_j^2 ( s_j-s_{j-1} ) }
		\end{align*}
		and by the Lebesgue dominated convergence theorem,
		\begin{align*}
			\mathbb{E}^{\prime} \l[ e^{ i \sum_{j=1}^{k} t_j \big( W^{\prime}(s_j) - W^{\prime}(s_{j-1}) \big) } \r] =
			e^{ - \frac{1}{2} \sum_{j=1}^{k} t_j^2 ( s_j-s_{j-1} )}.
		\end{align*}
		Hence, the increments are normally distributed.
	\end{proof}

	\begin{lemma}\label{convergence of M_n prime to Ito integral}
		
		For each $t\in[0,T]$,
		$M_n^{\prime}(t)$ converges to $\int_{0}^{t} \psi\l(\m(s)\r) G\big(\m(s)\big) \, dW^{\prime}(s)$ in $L^2(\Omega^{\prime} ; (H^1)^{\prime})$. In particular,
		\begin{equation}
			M^\p(t) =  \int_{0}^{t} \psi(\m(s)) G(\m(s)) \, dW^\p(s), \ \mathbb{P}^\p-a.s.
		\end{equation}
	\end{lemma}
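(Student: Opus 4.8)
The plan is to first recognise that $M_n^\p(t)$ is itself a stochastic integral against $W_n^\p$, and then to pass to the limit in that integral, splitting the error into an integrand part and an integrator part.

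\textbf{Step 1 (identification of $M_n^\p$ as a stochastic integral).} On the original probability space the Galerkin identity \eqref{definition of solution Faedo Galerkin approximation}, rewritten via the definition \eqref{Definition of M_n} of $M_n$, gives
\begin{equation*}
	M_n(t) = \int_{0}^{t} \psi\big(m_n(s)\big) G_n\big(m_n(s)\big)\, dW(s), \quad \mathbb{P}\text{-a.s.}
\end{equation*}
Since $(m_n,W,u_n)$ and $(\m_n,W_n^\p,\u_n)$ have the same law on $C([0,T];H_n)\times C([0,T];\mathbb{R})\times L^2(0,T;L^2)$ by Remark \ref{same bounds remark}, and since the Itô integral of an adapted integrand is a measurable functional of the pair (path of the integrand, path of the driver) obtained as an $L^2$-limit of Riemann sums along the natural filtration, this representation transfers verbatim to the primed space:
\begin{equation*}
	M_n^\p(t) = \int_{0}^{t} \psi\big(\m_n(s)\big) G_n\big(\m_n(s)\big)\, dW_n^\p(s), \quad \mathbb{P}^\p\text{-a.s.}
\end{equation*}
Here one uses that $W_n^\p$ is a Wiener process with respect to the filtration generated by $(\m_n,\u_n,W_n^\p)$, which is established in (the proof of) Lemma \ref{Lemma W prime is a Wiener process}.

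\textbf{Step 2 (passage to the limit).} I would write the difference as $I_n^1(t)+I_n^2(t)$, where
\begin{equation*}
	I_n^1(t) = \int_{0}^{t} \big[ \psi(\m_n(s)) G_n(\m_n(s)) - \psi(\m(s)) G(\m(s)) \big]\, dW_n^\p(s)
\end{equation*}
and $I_n^2(t)$ collects the same fixed integrand $\psi(\m)G(\m)$ integrated against $W_n^\p$ minus against $W^\p$. For $I_n^1$ the Itô isometry, valid since $W_n^\p$ is a standard real Wiener process, yields
\begin{equation*}
	\mathbb{E}^\p |I_n^1(t)|_{L^2}^2 = \mathbb{E}^\p \int_{0}^{t} \big| \psi(\m_n(s)) G_n(\m_n(s)) - \psi(\m(s)) G(\m(s)) \big|_{L^2}^2\, ds,
\end{equation*}
and the right-hand side tends to $0$ by Lemma \ref{Lemma convergence of Gn} combined with Lemma \ref{Lemma convergence of the bump function}; together with the continuous embedding $L^2 \hookrightarrow (H^1)^\p$ this gives $I_n^1(t)\to 0$ in $L^2(\Omega^\p;(H^1)^\p)$. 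The term $I_n^2$ requires passing to the limit in the integrator. I would approximate the fixed integrand $\psi(\m)G(\m)$ by piecewise-constant adapted step processes $\Phi_k$ in $L^2(\Omega^\p;L^2(0,T;L^2))$; for a step integrand the integral is a finite sum $\sum_j \Phi_k(s_{j-1})\big(W_n^\p(s_j)-W_n^\p(s_{j-1})\big)$, which converges as $n\to\infty$ directly from the $\mathbb{P}^\p$-a.s. convergence $W_n^\p\to W^\p$ in $C([0,T];\mathbb{R})$ of Proposition \ref{Use of Skorohod theorem}, while the error between $\Phi_k$ and $\psi(\m)G(\m)$ is bounded uniformly in $n$ by the Itô isometry and the bound \eqref{equation H1 bound on m prime}. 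Letting first $n\to\infty$ and then $k\to\infty$ shows $I_n^2(t)\to 0$ in $L^2(\Omega^\p;L^2)$.

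\textbf{Main obstacle and conclusion.} The delicate point is the integrator term $I_n^2$: one must ensure the approximating step processes are adapted to a filtration for which \emph{every} $W_n^\p$ and the limit $W^\p$ are Wiener processes, so that the elementary convergence of the finite sums is legitimate and the isometry estimates controlling the approximation error are simultaneously available. Granting this, $I_n^1(t)+I_n^2(t)\to 0$ in $L^2(\Omega^\p;(H^1)^\p)$, which is precisely the asserted convergence
\begin{equation*}
	M_n^\p(t) \longrightarrow \int_{0}^{t} \psi(\m(s)) G(\m(s))\, dW^\p(s) \quad \text{in } L^2(\Omega^\p;(H^1)^\p).
\end{equation*}
Finally, this strong limit also identifies the weak limit of $M_n^\p(t)$ tested against elements of $L^2(\Omega^\p;H^1)$; comparing with Lemma \ref{convergence of Mn prime to M prime}, which identifies that weak limit as $M^\p(t)$, and invoking uniqueness of limits gives $M^\p(t) = \int_{0}^{t} \psi(\m(s)) G(\m(s))\, dW^\p(s)$, $\mathbb{P}^\p$-a.s.
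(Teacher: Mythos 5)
Your proposal is correct and follows essentially the same route as the paper's proof: the identification $M_n^\p(t)=\int_0^t\psi(\m_n(s))G_n(\m_n(s))\,dW_n^\p(s)$ via equality of laws and Riemann-sum approximation (which the paper carries out explicitly with a partition $\mathcal{P}_k$ and a Vitali-convergence sub-claim), followed by splitting the limit into an integrand error (It\^o isometry plus Lemma \ref{Lemma convergence of Gn} and Lemma \ref{Lemma convergence of the bump function}) and an integrator error (finite-sum approximation plus the $\mathbb{P}^\p$-a.s.\ convergence $W_n^\p\to W^\p$), and concluding by comparing with the weak limit from Lemma \ref{convergence of Mn prime to M prime} and uniqueness of limits. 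The only difference is one of presentation: you compress the paper's explicit discretization argument in Step 1 into an appeal to the stochastic integral being a measurable functional of integrand and driver, which is the same idea.
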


	\begin{proof}[Idea of proof of Lemma \ref{convergence of M_n prime to Ito integral}]
		We first give a brief idea of the proof in mainly two steps. We then go on to justify the steps.
		\begin{enumerate}
			\item Let us choose and fix $t\in[0,T]$ and $n\in\mathbb{N}$. We show that
			\begin{equation}
				M_n^\p(t) = \int_{0}^{t} \psi(\m_n(s)) G \big( \m_n(s) \big) \, dW_n^{\prime}(s), \ \mathbb{P}^\p-a.s.
			\end{equation}
			\item Again, Let us choose and fix $t\in[0,T]$. Then, using step (1) we show that $M_n^\p(t)$ converges to $\int_{0}^{t} \psi(\m(s)) G(\m(s)) \, dW^\p(s)$ as $n\to\infty$ in $L^2(\Omega^\p ; (H^1)^\p)$, and hence, in particular, weakly in $L^{\frac{4}{3}}(\Omega^\p ; (H^1)^\p)$.
		\end{enumerate}
		From Lemma \ref{convergence of Mn prime to M prime}, we know that $M_n^\p(t)$ converges to $M^\p(t)$ weakly in $L^{\frac{4}{3}}(\Omega^\p ; (H^1)^\p)$. Combining this convergence with the convergence from step (2), we have,
		\begin{equation}
			M^\p(t) =  \int_{0}^{t} \psi(\m(s)) G(\m(s)) \, dW^\p(s), \ \mathbb{P}^\p-a.s.
		\end{equation}
	\end{proof}
	\begin{proof}[Proof of Lemma \ref{convergence of M_n prime to Ito integral}]
		\textbf{Proof of Step 1:}	Let $k,n\in\mathbb{N}$, and let $t\in[0,T]$.\\ 
		Let $\mathcal{P}_k := \l\{ s_j^k: s_j^k = \frac{jT}{k},j=0,\dots,k \r\}$ be a partition of $[0,T]$.\\
		\textbf{Claim}: \begin{equation}
			M_n^{\prime}(t) = \int_{0}^{t} \psi(\m_n(s)) G \big( \m_n(s) \big) \, dW_n^{\prime}(s).
		\end{equation}
		For $n\in\mathbb{N}$ and $t\in[0,T]$, consider the following random variables.
		
		\begin{align}\label{eqn rv1}
			M_n(t) - \sum_{j=0}^{k-1} \psi(m_n(s_j^k)) G_n\bigl(m_n(s_j^k)\bigr) \bigl(W(s_{j+1}^k \wedge t\bigr) - W(s_{j}^k \wedge t) \bigr),
		\end{align}
		and
		\begin{align}\label{eqn rv2}
			M_n^{\prime}(t) - \sum_{j=0}^{k-1} \psi(\m_n(s)) G_n \bigl( \m_n(s_j^k) \bigr) \bigl( W_n^{\prime}(s_{j+1}^k \wedge t) - W_n^{\prime}(s_{j}^k \wedge t) \bigr).
		\end{align}
		\textbf{Sub-claim:} For each $t\in[0,T]$ and $n\in\mathbb{N}$, we have the following convergence. The random variable
		\begin{align}
			\nonumber & \sum_{j=0}^{k-1} \psi(m_n(s_j^k \wedge t )) G_n\bigl(m_n(s_j^k \wedge t )\bigr) \bigl(W(s_{j+1}^k\bigr) - W(s_{j}^k) \bigr) \\
			& = \int_{0}^{t}  \chi_{[s^k_j , s^k_{j+1})}(s) \psi(m_n(s_j^k \wedge t)) G_n(m_n(s_j^k \wedge t)) \, dW(s),
		\end{align}
		converges to the random variable
		\begin{equation}
			\int_{0}^{t} \psi(m_n(s)) G_n(m_n(s)) \, dW(s),
		\end{equation}
		in the space $L^2(\Omega;L^2)$ as $k\to\infty$. By the equality in \eqref{Definition of M_n}, we, therefore, have the first variable to be $0$ (in the limit as $k\to\infty$) $\mathbb{P}^\p$-a.s.
		
		\begin{proof}[\textbf{Proof of the sub-claim.}] Firstly, for any $f\in C([0,T];H_n)$, we have the following
			\begin{equation}
				\lim_{k\to\infty} \int_{0}^{t} \l| \chi_{[s^k_j , s^k_{j+1})}(s) f( s_j^k \wedge t ) - f(s) \r|_{L^2}^2 \, ds = 0.
			\end{equation}
			Now, observe that $\psi(m_n) G_n(m_N) \in C([0,T];H_n)$. Therefore for $f(\cdot) = \psi(\cdot) G_n(m_n(\cdot)) \in C([0,T];H_n)$, we have
			\begin{equation}
				\lim_{k\to\infty} \int_{0}^{t} \l| \chi_{[s^k_j , s^k_{j+1})}(s) \psi(m_n(s_j^k \wedge t)) G_n(m_n(s_j^k \wedge t)) - \psi(m_n(s))G(m_n(s)) \r|_{L^2}^2 \, ds = 0,\ \mathbb{P}-a.s.
			\end{equation}
			Moreover, by Lemma \ref{bounds lemma 1}, there exists a constant $C$ independent of $k$ such that
			\begin{align}
				& \mathbb{E} \l[  \int_{0}^{t} \l| \chi_{(s^k_j , s^k_{j+1}]}(s) \psi(m_n(s_j^k \wedge t)) G_n(m_n(s_j^k \wedge t)) - \psi(m_n(s))G(m_n(s)) \r|_{L^2}^2 \, ds \r]^2 \\
				\leq &  4 \mathbb{E} \int_{0}^{t} \l| \chi_{[s^k_j , s^k_{j+1})}(s) \psi(m_n(s_j^k \wedge t)) G_n(m_n(s_j^k \wedge t)) \r|_{L^2}^4 \, ds 
				+ 4 \mathbb{E} \int_{0}^{t} \l|  \psi(m_n(s))G(m_n(s)) \r|_{L^2}^4 \, ds \leq C. 
			\end{align}
			Therefore by the Vitali Convergence Theorem, we have the following convergence.
			\begin{equation}
				\lim_{k\to\infty} \mathbb{E} \int_{0}^{t} \l| \chi_{[s^k_j , s^k_{j+1})}(s) \psi(m_n(s_j^k \wedge t)) G_n(m_n(s_j^k \wedge t)) - \psi(m_n(s))G(m_n(s)) \r|_{L^2}^2 \, ds = 0.
			\end{equation}
			In order to prove the claim, we consider the following difference. By the It\^o isometry, we have
			\begin{align}
				\nonumber & \mathbb{E} \l| \int_{0}^{t}  \chi_{[s^k_j , s^k_{j+1})}(s) \psi(m_n(s_j^k \wedge t)) G_n(m_n(s_j^k \wedge t)) - \psi(m_n(s)) G_n(m_n(s))  \, dW(s) \r|_{L^2}^2 \\ 
				& = \mathbb{E} \int_{0}^{t} \l| \chi_{[s^k_j , s^k_{j+1})}(s) \psi(m_n(s_j^k \wedge t)) G_n(m_n(s_j^k \wedge t)) - \psi(m_n(s)) G_n(m_n(s)) \r|_{L^2}^2 \, ds .
			\end{align}
			The right hand side, and hence the left hand side of the above inequality converges to $0$ as $k\to\infty$. This completes the proof of the sub-claim.
		\end{proof}
		
		Note that the two random variables in \eqref{eqn rv1} and \eqref{eqn rv2} are obtained by applying measurable transformations to $m_n,\m_n,W_n^{\prime}$ and $W$ and hence have the same distributions.
		Strong convergence of $M_n(t)$ implies convergence of the corresponding laws. Since the random variables in \eqref{eqn rv1} and \eqref{eqn rv2} have the same laws, the laws of $M_n^\p(t)$ also converge to the law of some random variable, the law of which is the same as that of the law of the limit of $M_n(t)$.
		But since $M_n(t) - \int_{0}^{t} \psi(m_n(s)) G_n(m_n(s)) \, dW(s) = 0,\ \mathbb{P}$-a.s. (because $m_n$ is a solution to \eqref{definition of solution Faedo Galerkin approximation}), we have
		\begin{align}
			\lim_{k\to\infty} \l[ M_n^\p(t) -  \int_{0}^{t} \chi_{[s^k_j , s^k_{j+1})}(s) \psi(m_n(s_j^k \wedge t)) G_n(m_n(s_j^k \wedge t)) \, dW_n^\p(s) \r] = 0,\ \mathbb{P}^\p-a.s.
		\end{align}
		Thus,
		
		\begin{equation*}
			M_n^{\prime}(t) = \int_{0}^{t} \psi(\m_n(s)) G\big(\m_n(s)\big) \, dW_n^{\prime}(s),\ \mathbb{P}^\p-a.s.
		\end{equation*}
		Hence the claim is shown. This concludes step 1.\\  
		\textbf{Proof of Step 2:}
		In the second step, we have to show the convergence of $M_n^\p(t)$ to the stochastic integral $\int_{0}^{t} \psi(\m(s)) G(\m(s)) \, dW^\p(s)$ as $n\to\infty$. In step 1, we have shown that $M_n^\p(t) = \int_{0}^{t} \psi(\m_n(s)) G(\m_n(s)) \, dW_n^\p(s), \mathbb{P}^\p$-a.s.
		
		Now, some standard adding and subtracting, along with the triangle inequality, gives us the following inequality.

		\begin{align}
			\nonumber \mathbb{E^{\prime}}& \l| \int_{0}^{t}  \psi(\m_n(s)) G_n\big(\m_n(s)\big)\, dW_n^{\prime}(s)  - \int_{0}^{t} \psi(\m(s)) G\big( \m(s\dela{_j^k})\big)\, dW^{\prime}(s) \r|_{(H^1)^{\prime}}^2   \\
			\nonumber \leq & \mathbb{E^{\prime}}  \l[ \l| \int_{0}^{t} \psi(\m_n(s)) G_n\big(\m_n(s)\big)\, dW_n^{\prime}(s) - \int_{0}^{t} \psi(\m(s)) G_n\big(\m_n(s)\big)\, dW_n^{\prime}(s)  \r|^2_{(H^1)^{\prime}}\r] \\
			\nonumber & + \mathbb{E^{\prime}}  \l[ \l| \int_{0}^{t} \psi(\m(s)) G_n\big(\m_n(s)\big)\, dW_n^{\prime}(s) - \int_{0}^{t} \psi(\m(s)) G\big(\m(s)\big)\, dW_n^{\prime}(s)  \r|^2_{(H^1)^{\prime}}\r] \\
			&+ \mathbb{E^{\prime}} \l[ \l| \int_{0}^{t} \psi(\m(s)) G\big(\m(s)\big)\, dW_n^{\prime}(s) - \int_{0}^{t} \psi(\m(s)) G\big(\m(s)\big) \, dW^{\prime}(s)  \r|^2_{(H^1)^{\prime}} \r].
		\end{align}
		The first term on the right hand side converges to $0$ as $n\to\infty$. This follows from using the convergences in Lemma \ref{Lemma convergence of the bump function} and some standard arguments.
		
		For the second term, note that since $L^2\hookrightarrow (H^1)^\p$, there exists a constant $C>0$ such that
		\begin{align*}
			&\mathbb{E^{\prime}}  \l[ \l| \int_{0}^{t} \psi(\m(s))G_n\big(\m_n(s)\big) \, dW_n^{\prime}(s) - \int_{0}^{t} \psi(\m(s)) G\big(\m(s)\big) \, dW_n^{\prime}(s)  \r|^2_{(H^1)^{\prime}} \r] \\
			= & \mathbb{E^{\prime}}  \l[ \l| \int_{0}^{t}  \psi(\m(s)) \l[ G_n\big(\m_n(s)\big) - G\big(\m(s)\big) \r] \, dW_n^{\prime}(s)  \r|^2_{(H^1)^{\prime}}\r] \\
			\leq &  C\mathbb{E^{\prime}}  \l[ \l| \int_{0}^{t} \psi(\m(s)) \l[ G_n\big(\m_n(s)\big) - G\big(\m(s)\big) \r] \, dW_n^{\prime}(s)  \r|^2_{L^2}\r] \\
			\leq & C \mathbb{E^{\prime}}  \l[  \int_{0}^{t}  \l|\l[ G_n\big(\m_n(s)\big) - G\big(\m(s)\big)\r] \r|^2_{L^2} \, ds  \r]. \\
		\end{align*}
		In the last inequality, we have used the fact that $\psi \leq 1$, along with the It\^o isometry.
		By the convergence in Lemma \ref{Lemma convergence of Gn}, the right hand side converges to 0 as $n \to \infty$. In particular, for every $\varepsilon>0$, we can choose $N_{\varepsilon}$ large enough so that the first term is bounded by $\frac{\varepsilon}{4}$ for each $n\geq N_{\varepsilon}$.
		For the third term, we approximate the integrals by finite sums.
		\begin{align*}
			&\mathbb{E^{\prime}} \l[ \l| \int_{0}^{t} \psi(\m(s)) G(\m(s))\, dW_n^{\prime}(s) - \int_{0}^{t} \psi(\m(s)) G(\m(s))\, dW^{\prime}(s)  \r|^2_{(H^1)^{\prime}} \r] \\
			\leq & \mathbb{E^{\prime}} \l[ \l| \int_{0}^{t} \psi(\m(s)) G(\m(s))\, dW_n^{\prime}(s) - \sum_{j=0}^{k-1} \psi(\m(s_j^k)) G(\m(s_j^k))\, \l(W_n^{\prime}(s_{j+1}^k) - W_n^{\prime}(s_{j+1}^k)\r)  \r|^2_{(H^1)^{\prime}} \r] \\
			& + \mathbb{E^{\prime}} \bigg[ \bigg|  \sum_{j=0}^{k-1} \psi(\m(s_j^k)) 
			G(\m(s_j^k))\, \l(W_n^{\prime}(s_{j+1}^k) - W_n^{\prime}(s_{j+1}^k)\r) \\
			& \quad - \sum_{j=0}^{k-1} \psi(\m(s_j^k)) G(\m(s_j^k))\, \l(W^{\prime}(s_{j+1}^k) - W^{\prime}(s_{j+1}^k)\r)  \bigg|^2_{(H^1)^{\prime}} \bigg] \\
			& + \mathbb{E^{\prime}} \l[ \l|  \sum_{j=0}^{k-1} \psi(\m(s_j^k)) G(\m(s_j^k))\, \l(W^{\prime}(s_{j+1}^k) - W^{\prime}(s_{j+1}^k)\r) - \int_{0}^{t} \psi(\m(s)) G(\m(s))\, dW^{\prime}(s)  \r|^2_{(H^1)^{\prime}} \r].
		\end{align*}
		Since the mentioned sums approximate the corresponding It\^o integrals in  $L^2(\Omega^\p ; (H^1)^\p)$, the first and the third term converge to 0 as $k\to\infty$. Convergence of the processes $W_n^\p$ to $W$ along with uniform integrability implies that the second term goes to 0 as $n$ goes to infinity.
		
		Combining the convergences concludes step 2, and hence the proof of the lemma.
		
		\dela{
			\begin{align*}
				\mathbb{E} \l|\sum_{j=0}^{k-1}\l[ W(s^k_{j+1}) - W(s^k_j) \r] \r|
				\leq & \sum_{j=0}^{k-1} \mathbb{E} \l|\l[ W(s^k_{j+1}) - W(s^k_j) \r] \r| \\
				\leq & C \sum_{j=0}^{k-1} \l(\mathbb{E} \l| W(s^k_{j+1}) - W(s^k_j)\r|^2 \r)^{\frac{1}{2}} \\
				\leq & C \sum_{j=0}^{k-1} \l| s_{j+1^k}  - s_j^k \r|^{\frac{1}{2}} < \infty.
			\end{align*}	
		}
		\dela{
			Let $\varepsilon > 0$ be given. Then there exists a partition $\mathcal{P}_k$ such that for every $n\in\mathbb{N}$\adda{Why uniform in $n$?}
			\begin{equation*}
				\mathbb{E^{\prime}}  \l[ \l| \int_{0}^{t} G(\m_n(s))\, dW_n^{\prime}(s) - \sum_{j=0}^{k-1} \int_{0}^{t} \chi_{(s^k_j , s^k_{j+1}]}(s)G(\m_n(s_j^k))\, dW_n^{\prime}(s) |^2_{(H^1)^{\prime}} \r|_{(H^1)^{\prime}}^2 \r] < \frac{\varepsilon}{4}	
			\end{equation*}
			and
			\begin{equation*}
				\mathbb{E^{\prime}} \l[ \l| \sum_{j=0}^{k-1} \int_{0}^{t} \chi_{(s^k_j , s^k_{j+1}]}(s)G(\m(s_j^k))\, dW^{\prime}(s) -
				\int_{0}^{t} G(\m(s))\, dW^{\prime}(s) \r|_{(H^1)^{\prime}}^2 \r] < \frac{\varepsilon}{4}.
			\end{equation*}
			
			Corresponding to the $\varepsilon$ mentioned above, there exists $N\in \mathbb{N}$ such that for each $n\geq N$
			\begin{equation*}
				\mathbb{E^{\prime}} \l[ \l| \sum_{j=0}^{k-1} \int_{0}^{t} \chi_{(s^k_j , s^k_{j+1}]}(s)G_n(\m_n(s_j^k))\,  dW_n^{\prime}(s) - \sum_{j=0}^{k-1} \int_{0}^{t} \chi_{(s^k_j , s^k_{j+1}]}(s)G(\m(s_j^k))\, dW_n^{\prime}(s) \r|^2_{(H^1)^{\prime}} \r] \leq \frac{\varepsilon}{4}
			\end{equation*}
			and
			\begin{equation*}
				\mathbb{E^{\prime}} \left[ \left| \sum_{j=0}^{k-1} \int_{0}^{t} \chi_{(s^k_j , s^k_{j+1}]}(s)G(\m(s_j^k)) \, dW_n^{\prime}(s) -
				\sum_{j=0}^{k-1} \int_{0}^{t} \chi_{(s^k_j , s^k_{j+1}]}(s)G(\m(s_j^k))\, dW^{\prime}(s) \right|^2_{(H^1)^{\prime}} \right] < \frac{\varepsilon}{4}.
			\end{equation*}
			Thus,
			\begin{equation*}
				\lim_{n\rightarrow\infty} \mathbb{E^{\prime}} \left|\int_{0}^{t}  G_n(\m_n(s)) dW_n^{\prime}(s) - \int_{0}^{t} G(\m(s))   dW^{\prime}(s)\right|^2_{(H^1)^{\prime}} = 0.
			\end{equation*}
		}
	\end{proof}

	\section{Continuation of the proof of Theorem \ref{Theorem Existence of a weak solution}: verification of the  constraint condition}\label{sec-The constraint condition section}
	After showing the existence of a solution to the equation, we now have to show that the obtained process $m$ satisfies the constraint condition \eqref{eqn-constraint condition}.	
	We use the It\^o formula version from the paper of Pardoux \cite{Pardoux_1979}, Theorem 1.2.\\
	For $t\in[0,T]$, consider the equation in $(H^1)^\p$
	\begin{align*}
		\nonumber	\m(t) = & \ \m_0 + \int_{0}^{t} \bigg[ \m(s) \times \Delta \m(s) - \alpha \, \m(s) \times \bigl(\m(s) \times \Delta \m(s) \bigr) + \m(s)\times u^{\prime}(s)   \\
		& - \alpha \, \psi(\m(s)) \m(s) \times \bigl(\m(s) \times \u(s) \bigr) + \frac{1}{2} \psi(\m(s))^2\l[ DG \bigl( \m(s) \bigr) \r] \bigl[ G\bigl(\m(s)\bigr)\bigr] \bigg]\, ds \\
		& + \int_{0}^{t} \psi(\m(s)) G\bigl(\m(s)\bigr)\, dW^{\prime}(s).
	\end{align*}
	Let $M^2(0,T ; L^2)$ be the space of all $L^2$ valued processes $v$ that satisfy
	\begin{equation*}
		\mathbb{E^{\prime}}\l[\int_{0}^{T} |v(t)|^2_{L^2}\,dt\r] < \infty.
	\end{equation*}
	That is, $M^2(0,T;L^2) = L^2(\Omega^\p;L^2(0,T;L^2))$.
	Similarly define $M^2(0,T;H^1)$, $M^2(0,T;(H^1)^{\prime})$.
	(For details see Section 1.3 in \cite{Pardoux_1979}).
	
	Let $\phi\in C_c^{\infty}(\mathcal{O})$, with $\phi$ taking values in $\mathbb{R}^+$.
	Define $\phi_4:L^2\rightarrow \mathbb{R}$ by
	\begin{equation*}
		\phi_4(v) = \frac{1}{2} \l\langle \phi v , v \r\rangle_{L^2}.
	\end{equation*}
	This can be written as
	\begin{align*}
		\phi_4(v) = \frac{1}{2} \int_{\mathcal{O}} \phi(x) \langle v(x) , v(x) \rangle_{\mathbb{R}^3} dx.
	\end{align*}
	First, we present the Fr\'echet derivatives $\phi_4^{\p},\phi_4^{\p\p}$ of $\phi_4$.
	Let $v_i\in L^2,i=1,2,3$. Then we have
	\begin{equation*}
		\phi_4^\p(v_1) (v_2) = \l\langle \phi v_1 , v_2 \r\rangle_{L^2}.
	\end{equation*}
	Similarly,
	\begin{equation*}
		\phi_4^{\p\p}(v_1)(v_2,v_3) = \l\langle \phi v_2 , v_3 \r\rangle_{L^2}.
	\end{equation*}

	Using the bound \eqref{bound on m prime H1} and the assumption on the initial data $m_0$, one can show that the following hold.
	\begin{enumerate}
		\item
		\begin{equation*}
			\m \in L^2(0,T ; H^1);
		\end{equation*}
		\item
		\begin{equation*}
			m_0^\p \in H^1;
		\end{equation*}
		\item
		\begin{equation*}
			\m \times \Delta \m \in M^2(0,T ; (H^1)^{\prime});
		\end{equation*}
		
		\item
		\begin{equation*}
			\m \times (\m \times \Delta \m) \in M^2(0,T ; (H^1)^{\prime});
		\end{equation*}
		\item
		\begin{equation*}
			m \times u^{\prime} \in M^2(0,T ; (H^1)^{\prime});
		\end{equation*}
		\item
		\begin{equation*}
			\m \times (\m \times \u) \in M^2(0,T ; (H^1)^{\prime});
		\end{equation*}
		
		\item
		\begin{equation*}
			\l[DG(\m)\r]\big(G(\m)\big) \in M^2(0,T ; (H^1)^{\prime});
		\end{equation*}
		\item
		\begin{equation*}
			G(\m) \in M^2(0,T ; L^2).
		\end{equation*}
	\end{enumerate}
	Thus, the It\^o formula can be applied to the function $\phi_4$ defined above.

	The calculations that follow are similar to the ones in the proof of Lemma \ref{bounds lemma 1}. On applying the It\^o formula, Theorem 1.2,  \cite{Pardoux_1979}, we get the next stated inequality because the terms that previously had the bump (cut-off) function cancel with the correction term arising because of the It\^o formula and the other terms are $0$, except for the terms stated in the following equation.
	For a similar computation, see Remark 3.2 in \cite{ZB+BG+TJ_LargeDeviations_LLGE}.
	An application of the It\^o formula thus yields
	
	\begin{align}\label{eqn for constraint condition eqn 1}
		\nonumber	\phi_4(\m(t)) = &  \phi_4(m_0) + \int_{0}^{t} \l\langle  \m(s) \times \Delta \m(s) , \m(s) \r\rangle_{L^2} \, ds \\
		\nonumber & - \alpha \, \int_{0}^{t} \l\langle \m(s) \times \bigl(\m(s) \times \Delta \m(s) \bigr) , \m(s) \r\rangle_{H^1} \, ds \\
		\nonumber & + \int_{0}^{t} \l\langle \m(s)\times u^{\prime}(s) , \m(s) \r\rangle_{H^1} \, ds \\
		\nonumber & - \alpha \, \int_{0}^{t} \l\langle \psi(\m(s)) \m(s) \times \bigl(\m(s) \times \u(s) \bigr) , \m(s) \r\rangle_{H^1} \, ds\\
		\nonumber & + \frac{1}{2} \int_{0}^{t} \l\langle \psi^2(\m(s)) \l[ DG \bigl( \m(s) \bigr) \r] \bigl[ G\bigl(\m(s)\bigr)\bigr]  , \m(s) \r\rangle_{L^2}\, ds \\
		\nonumber  & + \frac{1}{2} \int_{0}^{t} \psi^2(\m(s))\l[\phi_4^{\p\p}(\m(s))\r]\l\langle \big(G(\m(s)) , G(\m(s))\big) \r\rangle_{L^2} \, ds \\
		\nonumber & + \int_{0}^{t} \l\langle \psi(\m(s)) G\bigl(\m(s)\bigr) , \m(s) \r\rangle_{L^2} \, dW^{\prime}(s) \\
		= & \phi_4(m_0) + \sum_{i = 1}^{7} I_i(t).
	\end{align}
	Our first observation for the integrals on the right hand side of \eqref{eqn for constraint condition eqn 1} is that
	\begin{equation}
		I_i(t) = 0, \ \text{for}\ i = 1, 2, 3, 4,\ \text{and}\  7.
	\end{equation}
	We give a brief justification for the following. We mainly use the fact that for vectors $a,b\in\mathbb{R}^3$, we have
	\begin{equation*}
		\l\langle a \times b , a \r\rangle_{\mathbb{R}^3} = 0.
	\end{equation*}
	For any $p\geq1$, the above equality gives
	\begin{equation}
		\ _{L^{p^\p}}\l\langle a \times b , a \r\rangle_{L^p} = 0,
	\end{equation}
	with $\ _{L^{p^\p}}\langle\, \cdot , \cdot \rangle_{L^p}$ denoting the $L^p$ duality pairing.\\
	Observe that not all the inner products on the right hand side of \eqref{eqn For constraint condition} are the $L^2$ inner products. To use the above equality, we replace the $(H^1)^\p-H^1$ duality pairing by $L^p$ duality pairing for some convenient $p$.
	To see this, first, note that the space $H^1$ is compactly embedded into the spaces $L^4$ and $L^6$. Therefore, the $(H^1)^\p - H^1$ duality pairing can be appropriately replaced by the $(H^1)^\p - H^1$ duality pairing can be replaced by the $L^{\frac{4}{3}} - L^4$ (for $I_2, I_3$) and $L^{\frac{6}{5}} - L^6$ (for $I_4$) duality pairings.

	For the triple product term $m \times \l( \m \times \Delta \m \r)$ (inside the integral $I_2$), note that 
	$$ \l| \m \times \l( \m \times \Delta \m \r) \r|_{L^{\frac{4}{3}}} \leq C \l| \m \r|_{L^4} \l| \m \times \Delta \m \r|_{L^2}.$$
	Similar can be said about $\m \times \u$ for $I_3$.
	
	For $ \m \times \l( \m \times \u \r) $ (inside the integral $I_4$), note that $$\l| \m \times \l( \m \times \u \r) \r|_{L^{\frac{6}{5}}} \leq C \l| \m \r|_{L^6}^2 \l| \u \r|_{L^2}.$$
	
	Now, the terms that remain are $I_5,I_6$.
	Note that
	\begin{equation*}
		\l[\phi_4^{\p\p}(\m)\r](\l(G(\m) , G(\m)\r)) = \l\langle G(\m) , G(\m) \r\rangle_{L^2} = \l| G(\m) \r|_{L^2}^2.
	\end{equation*}
	Moreover, the following equality holds from Lemma B.2 in \cite{ZB+BG+TJ_LargeDeviations_LLGE}.
	\begin{equation*}
		\l\langle \l[DG(\m)\r]\big(G(\m)\big) , \m \r\rangle_{L^2} = - \l| G(\m) \r|_{L^2}^2.
	\end{equation*}
	Therefore,
	\begin{equation*}
		I_6(t) + I_7(t) = 0,\ \forall t\in[0,T].
	\end{equation*} 
	Hence, the equality \eqref{eqn for constraint condition eqn 1} is now 
	\begin{align*}
		\phi_4\big(\m(t)\big) = \phi_4(m_0),
	\end{align*}
	for each $t\in[0,T]$.
	That is
	\begin{equation}\label{eqn For constraint condition}
		\int_{\mathcal{O}}\phi(x)|\m(t,x)|_{\mathbb{R}^3}^2\, dx = \int_{\mathcal{O}}\phi(x)|m_0(x)|_{\mathbb{R}^3}^2\, dx.
	\end{equation}
	Now, the equality \eqref{eqn For constraint condition} holds for all $\phi\in C_c^{\infty}(\mathcal{O})$.
	Hence, we have the following
	\begin{equation}
		|\m(t,x)|_{\mathbb{R}^3}^2 = |m_0(x)|_{\mathbb{R}^3}^2 = 1, \ \text{Leb.a.a.} \  x\in \mathcal{O} \ \text{for all}\  t\in[0,T] \ \mathbb{P}^\p-\text{a.s.}
	\end{equation}
	Thus, the constraint condition \eqref{eqn-constraint condition} is satisfied.
	
	\begin{remark}
		Now that the constraint condition has been satisfied, we observe that the cut-off $\psi$ only takes the value $1$, and hence can be removed from the equation. This completes the proof of existence of a weak martingale solution to the problem \eqref{problem considered}, as per Definition \ref{Definition of Weak martingale solution}.
	\end{remark}
	
	\section{Proof of Theorems \ref{thm-pathwise uniqueness} and \ref{thm-existence of a strong solution} about the pathwise uniqueness and the existence of a unique strong solution}
	\label{Section Pathwise uniqueness}

	For this section, let us fix a probability space $\l(\Omega , \mathcal{F} , \mathbb{P}\r)$ and a Wiener process $W$ on this space, as in Definition \ref{Definition of Weak martingale solution}. The existence theorem (Theorem \ref{Theorem Existence of a weak solution}) states that the process $m$ satisfies the equation \eqref{problem considered} with the help of a test function. The following result, which is a corollary of Theorem \ref{Theorem Existence of a weak solution}, states that the equation also makes sense in the strong (PDE) form.

	\begin{corollary}\label{Strong form of weak martingale solution}
		Let us assume that the process $u$ is a control process such that \eqref{assumption on u} holds. Let $\l( \Omega , \mathcal{F} , \mathbb{P} , W , m , u \r)$ be a weak martingale solution of \eqref{problem considered} corresponding to the control process $u$, satisfying the properties stated in Theorem \ref{Theorem Existence of a weak solution}. Then the following equation is satisfied in the strong (PDE) sense in the space $L^2$ for each  $t\in[0,T]$.
		
		\begin{align}
			\nonumber	m(t) &= \int_{0}^{t}m(s) \times \Delta m(s)\, ds - \alpha \, \int_{0}^{t} m(s)\times(m(s)\times u(s))\, ds - \alpha \, \int_{0}^{t} m(s) \times \l(m(s) \times \Delta m(s)\r)\, ds \\
			& + \int_{0}^{t} m(s)\times u(s)\, ds  + \frac{1}{2}\int_{0}^{t} \l[DG\l(m(s)\r)\r]\l[G\big(m\l(s\r)\big)\r]\,  ds + \int_{0}^{t} G\big(m(s)\big)\, dW(s), \mathbb{P}-a.s.
		\end{align}
		
	\end{corollary}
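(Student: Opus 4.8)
The plan is to upgrade the weak identity of Definition \ref{Definition of Weak martingale solution} to an equality in $L^2$ by showing that, thanks to the regularity already established in Theorem \ref{Theorem Existence of a weak solution}, every integrand on the right-hand side is in fact $L^2$-valued rather than merely $(H^1)^\p$-valued. Once this is known, each time integral is a genuine Bochner integral in $L^2$, the stochastic integral is a genuine $L^2$-valued It\^o integral, and the $(H^1)^\p$--$H^1$ duality pairings appearing in the weak formulation collapse onto honest $L^2$ inner products. The identity in $L^2$ then follows by testing against a dense family in $L^2$ and using uniqueness of limits.

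First I would record the regularity we are allowed to use. By the constraint condition \eqref{eqn-constraint condition} we have $|m(t,x)|_{\mathbb{R}^3}=1$, hence $\l|m(t)\r|_{L^\infty}=1$ for all $t$, $\mathbb{P}$-a.s.; moreover $m\in L^\infty(0,T;H^1)$ by \eqref{bound using u 1}. The bound \eqref{bound using u 2} gives $m\times\Delta m\in L^2(0,T;L^2)$, $\mathbb{P}$-a.s. Combining these, the products $m\times(m\times\Delta m)$, $m\times u$ and $m\times(m\times u)$ all lie in $L^2(0,T;L^2)$, $\mathbb{P}$-a.s., since each is the pointwise product of the bounded factor $m$ with an $L^2$ factor (using $u\in L^2(0,T;L^2)$ from Assumption \ref{Assumptions for existence of weak martingale solution}). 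Finally, Lemma \ref{Lemma G is a polynomial map} shows that $G(m)$ and $\l[DG(m)\r]\l(G(m)\r)$ are of polynomial (quadratic, resp. cubic) growth in $m$ in the $L^2$ norm, so that both belong to $L^2$ with the integrability furnished by \eqref{bound using u 1}. Thus every drift integrand defines an $L^2$-valued process whose time integral is a well-defined $L^2$-valued Bochner integral, while the moment bounds make $\int_0^t G(m(s))\,dW(s)$ a bona fide $L^2$-valued It\^o integral; this is the It\^o form \eqref{definition of solution}.

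Next I would define the $L^2$-valued process $\tilde m(t)$ to be the entire right-hand side of the asserted equation and test it against an arbitrary $\phi\in H^1$. Because each integrand is in $L^2$, we may replace $\ _{(H^1)^\p}\l\langle\,\cdot\,,\phi\r\rangle_{H^1}$ by $\l\langle\,\cdot\,,\phi\r\rangle_{L^2}$, and via the integration-by-parts identities \eqref{Interpretation eqn 2} and \eqref{Interpretation eqn 3} the terms $\l\langle m\times\Delta m,\phi\r\rangle$ and $\l\langle m\times(m\times\Delta m),\phi\r\rangle$ take exactly the gradient form appearing in the weak formulation of Definition \ref{Definition of Weak martingale solution} (with the cut-off removed, the constraint now being satisfied). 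Comparing $\l\langle\tilde m(t),\phi\r\rangle_{L^2}$ with that weak identity yields $\l\langle\tilde m(t)-m(t),\phi\r\rangle_{L^2}=0$ for every $\phi\in H^1$ and every $t\in[0,T]$, $\mathbb{P}$-a.s. Since $H^1$ is dense in $L^2$, this forces $\tilde m(t)=m(t)$ in $L^2$, which is precisely the claimed strong (PDE) form.

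The main obstacle is the careful bookkeeping of the integrability and moment bounds needed to guarantee that the triple-product drift terms and the Stratonovich correction term $\l[DG(m)\r]\l(G(m)\r)$ are Bochner-integrable in $L^2$ over $[0,T]$, and that the duality pairings may legitimately be replaced by $L^2$ inner products on these elements. Once these membership statements are verified, the identification of $\tilde m$ with $m$ is essentially immediate from the density argument.
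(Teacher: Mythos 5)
Your proposal is correct and takes essentially the same route as the paper: the paper's own proof consists precisely of verifying, via the constraint condition \eqref{eqn-constraint condition}, the bounds from Theorem \ref{Theorem Existence of a weak solution}, and the polynomial/locally Lipschitz structure of $G$ and $[DG](G)$, that every integrand lies in $L^2(\Omega;L^2(0,T;L^2))$, and then treats the upgrade from the weak formulation to the $L^2$ identity as immediate. Your additional step of testing the right-hand side against $\phi\in H^1$ and invoking density of $H^1$ in $L^2$ simply makes explicit what the paper leaves implicit, so the two arguments coincide in substance.
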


	\begin{proof}[Proof of Corollary \ref{Strong form of weak martingale solution}]

		The proof of the above corollary follows once we note that each of the integrands of the equality lies in the space $L^2\l(\Omega ; L^2\l( 0,T; L^2 \r)\r)$. This can be verified by using the bounds established in the previous Section \ref{sec-The constraint condition section}, Lemma \ref{Proposition bounds on m_n prime} and Lemma \ref{Lemma extension of norms and lower semicontinuity}.

		By Theorem \ref{Theorem Existence of a weak solution}, the process $m\times \Delta m$ lies in the space $L^2\l(\Omega ; L^2\l( 0,T; L^2 \r)\r)$ $ \mathbb{P} $-a.s. By the constraint condition \eqref{eqn-constraint condition}
		\begin{align}
			\nonumber \mathbb{E}\int_{0}^{T}\l|m(t) \times (m(t) \times \Delta m(t))\r|_{L^2}^2\, dt
			\nonumber & \leq  C \mathbb{E}\int_{0}^{T}\l|m(t)\r|_{L^{\infty}}^2 \l| m(t) \times \Delta m(t)\r|_{L^2}^2\, dt \\
			& = \mathbb{E}\int_{0}^{T}\l|m(t) \times \Delta m(t)\r|_{L^2}^2\, dt <\infty.
		\end{align}
		Hence the process $m\times(m\times\Delta m)$ also lies in the space $L^2\l(\Omega ; L^2\l(0,T; L^2\r) \r)$ $\mathbb{P}$-a.s.
		Arguing similarly, we say that by the constraint condition \eqref{eqn-constraint condition} and part $(4)$ in the Assumption \ref{assumption on u} on the process $u$,
		\begin{align}
			\nonumber \mathbb{E}\int_{0}^{T}\l|m(t) \times u(t)\r|_{L^2}^2\, dt & \leq   \mathbb{E} \int_{0}^{T}\l|m(t)\r|^2_{L^{\infty}}\l|u(t)\r|_{L^2}^2\, dt \\
			& = \mathbb{E} \int_{0}^{T}\l|u(t)\r|_{L^2}^2\, dt < \infty.
		\end{align}
		Again from the constraint condition \eqref{eqn-constraint condition} and the above inequality,
		\begin{align}
			\nonumber \mathbb{E}\int_{0}^{T}\l|m(t) \times (m(t) \times u(t))\r|_{L^2}^2\, dt
			\nonumber& \leq  C \mathbb{E}\int_{0}^{T}\l|m(t)\r|^2_{L^{\infty}}\l|m(t) \times u(t)\r|_{L^2}^2\, dt \\
			\nonumber& =  C \mathbb{E}\int_{0}^{T}\l|m(t) \times u(t)\r|_{L^2}^2\, dt \ \text{By}\ \eqref{eqn-constraint condition} \\
			& <\infty.
		\end{align}		
		We recall that $$G(m) = m \times h - \alpha \, m \times ( m\times h ).$$ It is thus sufficient to verify the above inequality for the two terms individually. We also recall that $h$ is assumed to be in $H^1$. The continuous embedding $H^1\hookrightarrow L^{\infty}$ implies that there exists a constant $C>0$ such that
		\begin{equation*}
			\l|h\r|_{L^{\infty}} \leq C \l|h\r|_{H^1} < \infty.
		\end{equation*}
		Thus,
		\begin{align}
			\nonumber \mathbb{E}\int_{0}^{T}\l|m(t) \times h\r|_{L^2}^2\, dt
			\nonumber & \leq  \mathbb{E}\int_{0}^{T} \l|m(t)\r|_{L^2}^2 \l| h \r|_{L^{\infty}}^2\, dt \\
			& \leq  T\l| h \r|_{L^{\infty}}^2 \mathbb{E} \sup_{t\in [0,T]}\l|m(t)\r|_{L^2}^2 < \infty.
		\end{align}
		The right hand side of the last inequality is finite because of the constraint condition\dela{ and the continuous embedding $L^{\infty} \hookrightarrow L^2$}. Similarly,
		\begin{align}
			\mathbb{E}\int_{0}^{T}\l|m(t) \times (m(t) \times h)\r|_{L^2}^2\, dt
			& \leq  \mathbb{E}\int_{0}^{T} \l|m(t)\r|_{L^{\infty}} \l|m(t) \times h\r|_{L^2}^2\, dt  < \infty.
		\end{align}
		The right hand side of the above inequality is finite by the constraint condition \eqref{eqn-constraint condition} and the assumption on $h$.
		Hence $G(m)$ takes values in the space $L^2\l( 0,T; L^2 \r)$ $ \mathbb{P} $-a.s. What remains is to verify the bounds for the correction term, that is to show that the term
		$\l(DG(m)\r)\l(G(m)\r)$ also lies in the space $L^2\l( \Omega ; L^2 \l( 0,T; L^2 \r) \r)$, $\mathbb{P}$-a.s.

		Recall that Proposition \ref{prop-derivative} shows that the correction term is locally Lipschitz. Also, by the definition of the term $\l[DG(m)\r]\l(G(m)\r)$, we have
		\begin{equation*}
			\l[DG(0)\r]\l(G(0)\r) = 0.
		\end{equation*}
		The constraint condition \eqref{eqn-constraint condition} implies that the process $m$ takes values in the unit ball in the space $L^{\infty}$.
		Hence there exists a constant $C>0$ such that
		\begin{align*}
			\l| DG \big( m(t) \big) \big[ G \big(m(t) \big) \big] \r|_{L^2} \leq C \l| m(t) \r|_{L^2}.
		\end{align*}
		Hence
		\begin{align*}
			\mathbb{E} \int_{0}^{T} \l| DG\big(m(t)\big)\big[G\big(m(t)\big)\big] \r|_{L^2}^2 \, dt \leq C \mathbb{E} \int_{0}^{T} \l| m(t) \r|_{L^2}^2 \, dt < \infty.
		\end{align*}
		The right hand side of the last inequality is finite by Theorem \eqref{Theorem Existence of a weak solution}.
		This concludes the proof of Corollary \ref{Strong form of weak martingale solution}.		
	\end{proof}
	Before we start the proof of the Theorem \ref{thm-pathwise uniqueness}, we state a proposition, followed by a corollary that will be used for the proof.
	\begin{proposition}\label{Proposition m times m times Delta m equals Delta m plus gradient m squared m}
		Let $v\in H^1$. Further assume that
		\begin{equation}\label{Intermediate eqn 1 Proposition m times m times Delta m equals Delta m plus gradient m squared m}
			|v(x)|_{\mathbb{R}^3} = 1\ \text{for Leb. a.a.}\ x\in D.
		\end{equation}
		Then the following equality holds in $(H^{1})^{\p}$.
		\begin{align}\label{Intermediate eqn 2 Proposition m times m times Delta m equals Delta m plus gradient m squared m}
			v \times ( v \times \Delta v ) = - \Delta v - |\nabla v|_{\mathbb{R}^3}^2 v.
		\end{align}
	\end{proposition}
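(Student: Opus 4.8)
The plan is to verify the identity \eqref{Intermediate eqn 2 Proposition m times m times Delta m equals Delta m plus gradient m squared m} in its weak form, that is, after pairing both sides with an arbitrary $\phi\in H^1$ and invoking the duality interpretations \eqref{Interpretation eqn 1} and \eqref{Interpretation eqn 3} from Section \ref{Section Statements of the main Results}. Since $\Delta v$ only lives in $(H^1)^{\p}$ when $v\in H^1$, I would never manipulate it pointwise; instead each occurrence of $\Delta v$ gets transferred onto the test function through the $L^2$ pairing of gradients. The single analytic input that drives the whole computation is a consequence of the saturation constraint \eqref{Intermediate eqn 1 Proposition m times m times Delta m equals Delta m plus gradient m squared m}: because each component of $v$ lies in $H^1(\mathcal{O})\embed L^{\infty}$ (we are in dimension one), the scalar function $|v|_{\mathbb{R}^3}^2$ belongs to $W^{1,1}$ with weak derivative $2\langle \nabla v,v\rangle_{\mathbb{R}^3}$; as $|v|_{\mathbb{R}^3}^2\equiv 1$ is constant, this gives
\begin{equation*}
\langle \nabla v,v\rangle_{\mathbb{R}^3}=0 \quad\text{for Leb.\ a.a.\ } x\in\mathcal{O}.
\end{equation*}

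Next I would fix $\phi\in H^1$ and rewrite the left-hand side. By \eqref{Interpretation eqn 3} with $v_1=v_2=v_3=v$,
\begin{equation*}
{}_{(H^1)^{\p}}\langle v\times(v\times\Delta v),\phi\rangle_{H^1}=-\langle \nabla[(\phi\times v)\times v],\nabla v\rangle_{L^2}.
\end{equation*}
The pointwise triple-product (BAC--CAB) identity gives $(\phi\times v)\times v=v\langle v,\phi\rangle_{\mathbb{R}^3}-\phi\,|v|_{\mathbb{R}^3}^2$, which by the constraint equals $v\langle v,\phi\rangle_{\mathbb{R}^3}-\phi$. Substituting, and using \eqref{Interpretation eqn 1} in the form $\langle\nabla\phi,\nabla v\rangle_{L^2}=-{}_{(H^1)^{\p}}\langle\Delta v,\phi\rangle_{H^1}$, the contribution of the term $-\phi$ matches exactly the $-\Delta v$ contribution on the right-hand side. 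Hence the proof reduces to establishing
\begin{equation*}
\langle \nabla[v\langle v,\phi\rangle_{\mathbb{R}^3}],\nabla v\rangle_{L^2}=\int_{\mathcal{O}}|\nabla v|_{\mathbb{R}^3}^2\,\langle v,\phi\rangle_{\mathbb{R}^3}\,dx.
\end{equation*}

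I would then expand the gradient on the left by the product rule (all products are legitimate since $H^1\embed L^{\infty}$ in one dimension), producing three terms: one equal to the integrand $|\nabla v|_{\mathbb{R}^3}^2\langle v,\phi\rangle_{\mathbb{R}^3}$, and two carrying the factor $\langle v,\nabla v\rangle_{\mathbb{R}^3}$. The constraint consequence recorded above annihilates the latter two, leaving precisely the desired integral. Finally I would check that $\phi\mapsto\int_{\mathcal{O}}|\nabla v|_{\mathbb{R}^3}^2\langle v,\phi\rangle_{\mathbb{R}^3}\,dx$ is bounded on $H^1$, since it is dominated by $\l| v\r|_{L^{\infty}}\l|\phi\r|_{L^{\infty}}\l|\nabla v\r|_{L^2}^2\le C\l| v\r|_{H^1}\l|\nabla v\r|_{L^2}^2\l|\phi\r|_{H^1}$; thus $|\nabla v|_{\mathbb{R}^3}^2v$ genuinely defines an element of $(H^1)^{\p}$ and the identity \eqref{Intermediate eqn 2 Proposition m times m times Delta m equals Delta m plus gradient m squared m} holds there.

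The main obstacle is one of rigour rather than of idea: justifying the chain rule for $\partial_x|v|_{\mathbb{R}^3}^2$ and the product-rule differentiation of $v\langle v,\phi\rangle_{\mathbb{R}^3}$ for $H^1$ vector fields, together with confirming that every product that appears sits in the space required for its pairing to make sense. The one-dimensionality of $\mathcal{O}$, through the embedding $H^1\embed L^{\infty}$, renders all of these manipulations routine, so no genuine analytic difficulty should arise.
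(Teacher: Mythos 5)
Your proposal is correct and follows essentially the same route as the paper's own proof: both interpret the triple product weakly through \eqref{Interpretation eqn 3}, invoke the vector triple-product (BAC--CAB) identity together with the two constraint consequences $|v|_{\mathbb{R}^3}^2=1$ and $\langle v,\nabla v\rangle_{\mathbb{R}^3}=0$, and then identify the surviving terms with $-\Delta v$ and $-|\nabla v|_{\mathbb{R}^3}^2 v$ via \eqref{Interpretation eqn 1}. The only difference is cosmetic: you apply the algebraic identity to $(\phi\times v)\times v$ and use the constraint \emph{before} differentiating, whereas the paper expands $\nabla(\phi\times v)$ first and applies the identity afterwards; the ingredients and the conclusion are identical.
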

	\begin{proof}[Proof of Proposition \ref{Proposition m times m times Delta m equals Delta m plus gradient m squared m}]
		\dela{
			The proof follows from the following property: For $a,b,c\in\mathbb{R}^3$,
			\begin{equation}
				a \times \l( b \times c \r) = b \l(a.c\r) - c\l(a.b\r).
			\end{equation}
		}
		
		We begin by verifying that each side of equality
		\eqref{Intermediate eqn 2 Proposition m times m times Delta m equals Delta m plus gradient m squared m} belongs to the space $(H^1)^\prime$.
		By the equality in \eqref{Interpretation eqn 1}, we now show that $\Delta v$ takes values in the space $\l(H^1\r)^{\prime}$.
		Let $\phi\in H^1$. Then
		\begin{align*}
			\l|\ _{\l(H^1\r)^{\prime}}\l\langle \Delta v , \phi \r\rangle_{H^{1}} \r| &= \l|- \l\langle \nabla v , \nabla \phi \r\rangle_{L^2}\r| \\
			&= \l| \l\langle \nabla v , \nabla \phi \r\rangle_{L^2}\r| \\
			& \leq \l| \nabla v \r|_{L^2} \l|\nabla \phi \r|_{L^2}.\dela{\ (\text{By Cauchy-Schwartz inequality})}
		\end{align*}
		The assumptions on $v$ and $\phi$ imply that the right hand side, and hence the left hand side of the above inequality, is finite.\\
		The second term on the right hand side of \eqref{Intermediate eqn 2 Proposition m times m times Delta m equals Delta m plus gradient m squared m} is interpreted as follows:
		\begin{align}\label{Interpretation of nabla v R 3 squared v}
			\ _{(H^1)^{\prime}}\l\langle \l|\nabla v\r|_{\mathbb{R}^3}^2 v , \phi \r\rangle_{H^1} = \int_{\mathcal{O}} \l|\nabla v(x)\r|_{\mathbb{R}^3}^2 \l\langle v(x) , \phi(x) \r\rangle_{\mathbb{R}^3}\, dx.
		\end{align}
		To show that the right hand side of the above equality makes sense, we observe that $\phi\in H^1$ implies that $\phi\in L^{\infty}$\dela{ (due to the Sobolev embedding $H^1\hookrightarrow L^{\infty}$)}. This along with the equality \eqref{Intermediate eqn 1 Proposition m times m times Delta m equals Delta m plus gradient m squared m}
		\begin{align*}
			\l|\int_{\mathcal{O}} \l|\nabla v(x)\r|_{\mathbb{R}^3}^2 \l\langle v(x) , \phi(x) \r\rangle_{\mathbb{R}^3}\, dx\r| \leq C \int_{\mathcal{O}} \l|\nabla v(x)\r|_{\mathbb{R}^3}^2 \, dx.
		\end{align*}
		The right hand side of the above inequality is finite since $v\in H^1$.
		The left hand side of the equality \eqref{Intermediate eqn 2 Proposition m times m times Delta m equals Delta m plus gradient m squared m} is in $(H^1)^{{\prime}}$ by the way the triple product is understood in \eqref{Interpretation eqn 3}.\\
		Hence both the terms on the right hand side of the equality \eqref{Intermediate eqn 2 Proposition m times m times Delta m equals Delta m plus gradient m squared m} belong to the space $\l(H^1\r)^{\prime}$. We now proceed to show the equality.
			%
		%
		Let $\phi\in H^1$.
		The proof uses the following identity in $\mathbb{R}^3$:
		\begin{equation}
			a \times (b \times c) = b \l\langle a , c\r\rangle_{\mathbb{R}^3} - c \l\langle a , b \r\rangle_{\mathbb{R}^3},\ a,b,c\in\mathbb{R}^3.
		\end{equation}
		By \eqref{Interpretation eqn 3}, we have
		\begin{align*}
			\ _{(H^1)^{\prime}} \l\langle v \times ( v \times \Delta v ) , \phi \r\rangle_{H^{1}} &=  \l\langle v \times \nabla (\phi \times v) , \nabla v \r\rangle_{L^2} \\
			& =  \l\langle v \times (\nabla \phi \times v) , \nabla v \r\rangle_{L^2} +  \l\langle v \times  (\phi \times \nabla v) , \nabla v \r\rangle_{L^2} \\
			& =  \l\langle \nabla \phi |v|_{\mathbb{R}^3}^2  - v \l\langle v , \nabla \phi \r\rangle_{\mathbb{R}^3} , \nabla v \r\rangle_{L^2} +  \l\langle \l\langle \nabla v , v \r\rangle_{\mathbb{R}^3}\phi - \nabla v \l\langle \phi , v \r\rangle_{L^2} , \nabla v \r\rangle_{L^2} \\
			& =  \l\langle \nabla \phi |v|_{\mathbb{R}^3}^2 , \nabla v \r\rangle_{L^2} -  \l\langle \nabla v \l\langle \phi , v \r\rangle_{\mathbb{R}^3} , \nabla v 	\r\rangle_{L^2} \ (\text{By}\ \eqref{dot product in R3 of v and nabla v is 0})\\
			& =  \l\langle \nabla \phi  , \nabla v \r\rangle_{L^2} -  \l\langle \nabla v \l\langle \phi , v \r\rangle_{\mathbb{R}^3} , \nabla v \r\rangle_{L^2}.\ (\text{By}\ \eqref{Intermediate eqn 1 Proposition m times m times Delta m equals Delta m plus gradient m squared m})
		\end{align*}
		In view of the equalities \eqref{Interpretation eqn 1} and \eqref{Interpretation of nabla v R 3 squared v}, the right hand side of the above equality equals $$-\Delta v - \l|\nabla v\r|_{\mathbb{R}^3}^2 v$$ in $(H^1)^\p$.

		The following equality has been used in the calculations above:
		\begin{align}\label{dot product in R3 of v and nabla v is 0}
			\l\langle v , \nabla v \r\rangle_{\mathbb{R}^3} &= \frac{1}{2}\nabla |v|_{\mathbb{R}^3}^2 = 0.
		\end{align}
		The right hand side of the above equality is $0$ since by \eqref{Intermediate eqn 1 Proposition m times m times Delta m equals Delta m plus gradient m squared m}, $\l|v\r|_{\mathbb{R}^3}^2$ is constant.
		
		Hence
		\begin{align*}
			v \times ( v \times \Delta v ) = - \Delta v - |\nabla v|_{\mathbb{R}^3}^2 v.
		\end{align*}
		This concludes the proof of Proposition \ref{Proposition m times m times Delta m equals Delta m plus gradient m squared m}.		
	\end{proof}
	We have the following result as a corollary of the above proposition.
	
	\begin{corollary}\label{Corollary m times m times Delta m equals Delta m plus gradient m squared m}
		
		Let $\l( \Omega , \mathcal{F} , \mathbb{P} , W , m , u \r)$ be a weak martingale solution of \eqref{problem considered} corresponding to the control process $u$, as in Corollary \ref{Strong form of weak martingale solution}. Then the following equality holds in $\l(H^1\r)^{\prime}$ for every $ t\in[0,T] $
		\begin{align*}
			m(t) \times (m(t) \times \Delta m(t)) = - \Delta m(t) - |\nabla m(t)|_{\mathbb{R}^3}^2 m(t),\ \mathbb{P}-a.s.
		\end{align*}
	\end{corollary}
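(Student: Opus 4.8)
The plan is to apply Proposition \ref{Proposition m times m times Delta m equals Delta m plus gradient m squared m} pathwise, for each fixed $t\in[0,T]$, with $v=m(t)$. That proposition already establishes the desired identity in $(H^1)^{\prime}$ for \emph{any} deterministic $v\in H^1$ satisfying the saturation constraint $|v(x)|_{\mathbb{R}^3}=1$ for Leb.-a.a. $x\in\mathcal{O}$. Hence the entire content of the corollary reduces to checking that, almost surely, $m(t)$ meets these two hypotheses; there is no new analytic estimate to perform.

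First I would recall that a weak martingale solution is $H^1$-valued. By Definition \ref{Definition of Weak martingale solution} together with the regularity asserted in Theorem \ref{Theorem Existence of a weak solution} (in particular $\mathbb{E}\sup_{t\in[0,T]}|m(t)|_{H^1}^{2p}<\infty$), there is an event $\Omega_1\subset\Omega$ of full $\mathbb{P}$-measure on which $m(t)\in H^1$ for every $t\in[0,T]$. Second, the constraint condition \eqref{eqn-constraint condition}, which was verified in Section \ref{sec-The constraint condition section}, provides an event $\Omega_2\subset\Omega$ of full $\mathbb{P}$-measure on which $|m(t,x)|_{\mathbb{R}^3}=1$ for Leb.-a.a. $x\in\mathcal{O}$ and for \emph{all} $t\in[0,T]$. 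This is exactly the hypothesis \eqref{Intermediate eqn 1 Proposition m times m times Delta m equals Delta m plus gradient m squared m} required by the proposition.

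Then I would fix $t\in[0,T]$ and work on $\Omega_1\cap\Omega_2$, which still has full $\mathbb{P}$-measure. For $\omega\in\Omega_1\cap\Omega_2$ the function $m(t)(\omega)$ is an element of $H^1$ satisfying the saturation constraint, so Proposition \ref{Proposition m times m times Delta m equals Delta m plus gradient m squared m} applies verbatim with $v=m(t)(\omega)$ and yields, in $(H^1)^{\prime}$,
\begin{equation*}
	m(t)\times\bigl(m(t)\times\Delta m(t)\bigr) = -\Delta m(t) - |\nabla m(t)|_{\mathbb{R}^3}^2\, m(t).
\end{equation*}
Since $t\in[0,T]$ was arbitrary, this gives the stated equality for every $t$, $\mathbb{P}$-a.s.

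I do not expect any genuine obstacle here, as the heavy lifting was already carried out in the proposition and in the derivation of the constraint condition. The only point that warrants explicit mention is that the $H^1$-regularity and the saturation condition must hold \emph{simultaneously} and for \emph{all} $t$ on a single full-measure event, so that the pointwise-in-$\omega$ invocation of Proposition \ref{Proposition m times m times Delta m equals Delta m plus gradient m squared m} is legitimate for each fixed $t$. This is guaranteed precisely because both the $H^1$-bound of Theorem \ref{Theorem Existence of a weak solution} and the constraint \eqref{eqn-constraint condition} are asserted for all $t\in[0,T]$ $\mathbb{P}$-a.s., rather than merely for almost every $t$.
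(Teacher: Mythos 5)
Your proposal is correct and follows essentially the same route as the paper: verify that $m(t)$ satisfies the two hypotheses of Proposition \ref{Proposition m times m times Delta m equals Delta m plus gradient m squared m} ($H^1$-regularity via Theorem \ref{Theorem Existence of a weak solution} and the saturation constraint \eqref{eqn-constraint condition}, both holding for all $t\in[0,T]$ on a full-measure event), and then apply the proposition pathwise for each fixed $t$. Your added care in working on the intersection of the two full-measure events is a minor sharpening of the same argument, not a different approach.
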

	\begin{proof}[Proof of Corollary \ref{Corollary m times m times Delta m equals Delta m plus gradient m squared m}]
		To prove this corollary, it is sufficient to show that the process $m$ satisfies the assumptions in Proposition \ref{Proposition m times m times Delta m equals Delta m plus gradient m squared m}. Theorem \ref{Theorem Existence of a weak solution} implies that, in particular, for each $t\in[0,T]$, $m(t)\in H^1$, $\mathbb{P}$-a.s. Also, the constraint condition \eqref{eqn-constraint condition} implies that $\l|m(t,x)\r|_{\mathbb{R}^3} = 1$, Leb-a.a. $x\in D$ for all $t\in[0,T]$,  $\mathbb{P}$-a.s. Hence the corollary follows by applying Proposition \ref{Proposition m times m times Delta m equals Delta m plus gradient m squared m} to $m(t)$ for each $t\in[0,T]$.
	\end{proof}
	Using the above mentioned corollary, we proceed to prove the pathwise uniqueness.
	
	\begin{proof}[Proof of Theorem \ref{thm-pathwise uniqueness}]
		Let us choose and fix a control process  $u$ satisfying Assumption \ref{assumption on u} and  two weak martingale solutions $(\Omega , \mathcal{F} , \mathbb{P} , W , m_1 , u)$ and $(\Omega , \mathcal{F} , \mathbb{P} , W , m_2 , u)$ corresponding to $u$  as in Definition \ref{Definition of Weak martingale solution} and satisfying the properties stated in Theorem \ref{Theorem Existence of a weak solution}.\\
		Let us first observe that in view of Corollary \ref{Corollary m times m times Delta m equals Delta m plus gradient m squared m}, for each $i=1,2$, the following identity holds in $(H^1)^\prime$:
		
		\begin{align}\label{eqn_m_i-strong}
			\nonumber	m_i(t) =& \,\alpha \, \int_{0}^{t}  \Delta m_i(s) \, ds   + \alpha \, \int_{0}^{t}  |\nabla m_i(s)|_{\mathbb{R}^3}^2 m_i(s) \, ds \\
			\nonumber& + \int_{0}^{t}m_i(s) \times \Delta m_i(s)\, ds + \int_{0}^{t} m_i(s)\times u(s)\, ds - \alpha \, \int_{0}^{t} m_i(s)\times(m_i(s)\times u(s))\, ds
			\\
			& + \frac{1}{2}\int_{0}^{t} \l[DG\bigl(m_i(s)\bigr)\r]\l[G\big(m_i\l(s\r)\big)\r]\,  ds + \int_{0}^{t} G\big(m_i(s)\big) \, dW(s),
		\end{align}
		for all $t\in[0,T]$, $\mathbb{P}$-a.s.
		\dela{The above equality makes sense because of Corollary \ref{Strong form of weak martingale solution} and the following calculations.\\}
		The above equation is same as the equation in Corollary \ref{Strong form of weak martingale solution}, except that the triple product term is expressed as a sum of two terms. The equality holds in $(H^{1})^\p$ and hence it should not make a difference to the equation. It is thus sufficient to show that individually both the integrands lie in the space $L^2\l(0,T;(H^{1})^\p\r)$.
		
		Following the arguments in Proposition
		\ref{Proposition m times m times Delta m equals Delta m plus gradient m squared m}, we can prove that for $v\in L^2(0,T;H^1)$ and $t\in[0,T]$,
		\begin{align*}
			\int_{0}^{t} \ _{(H^1)^{\prime}}\l\langle \Delta m_i(s) , v \r\rangle_{H^1}\, ds = -\int_{0}^{t} \l\langle \nabla m_i(s) , \nabla v(s) \r\rangle_{L^2}\, ds.
		\end{align*}
		Thus by the Cauchy-Schwartz inequality,
		\begin{align*}
			\l| \int_{0}^{t} \l\langle \nabla m_i(s) , \nabla v(s) \r\rangle_{L^2}\, ds \r| \leq \l( \int_{0}^{t} \l| m_i(s) \r|_{L^2}^2 \, ds \r)^{\frac{1}{2}} \l( \int_{0}^{t} \l| v(s) \r|_{L^2}^2 \, ds \r)^{\frac{1}{2}} < \infty.
		\end{align*}
		The  right hand side of the above inequality is finite because of the assumptions on $m_i$ and $v$.
		
		We now show that the remaining (second) term also takes values in the space $(H^{1})^\p$.
		
		\begin{align*}
			\int_{0}^{T}\l| \l|\nabla m(t)\r|_{\mathbb{R}^3}^2m_i(t)\r|_{(H^1)^{\prime}}^2\, dt &\leq \int_{0}^{T} \l|\nabla m_i(t)\r|_{L^2}^2 \l|\nabla m_i(t)\r|_{L^2}^2 \l| m_i(t)\r|^2_{L^{\infty}}\, dt \\
			& \leq \l(\int_{0}^{T} \l|\nabla m_i(t)\r|_{L^2}^4\, dt\r)^{\frac{1}{2}} \l(\int_{0}^{T} \l|\nabla m_i(t)\r|_{L^2}^4\, dt\r)^{\frac{1}{2}} \\
			\dela{	&\quad\text{(By Cauchy-Schwartz inequality)}\\
			}& = \int_{0}^{T} \l|\nabla m_i(t)\r|_{L^2}^4\, dt \\
			& \leq C T \sup_{t\in [0,T]} \l|\nabla m_i(t)\r|_{L^2}^4.
		\end{align*}
		Hence
		\begin{align*}
			\mathbb{E} \l[ 	\int_{0}^{T}\l| \l|\nabla m(t)\r|_{\mathbb{R}^3}^2m(t)\r|_{\l(H^1\r)^{\prime}}^2\, dt  \r] \leq C\mathbb{E} \l[ \sup_{t\in [0,T]} \l|\nabla m(t)\r|_{L^2}^4 \r] <\infty.
		\end{align*}
		The last inequality follows from the Theorem \ref{Theorem Existence of a weak solution}. This justifies the writing of equation \eqref{eqn_m_i-strong}.\\
		Define a process $m$ by
		\begin{equation*}
			m(t) = m_1(t) - m_2(t)\ \text{for}\ t\in[0,T].
		\end{equation*}
		We now consider the equation \eqref{eqn_m_i-strong} satisfied by each $m_i$ for $i=1,2$. To get the equation satisfied by the process $m$, take the difference of \eqref{eqn_m_i-strong} for $i=1$ and $i=2$. We then simplify it to get the equality in \eqref{pathwise uniqueness initial equality}.
		\dela{
			\begin{align}
				\nonumber &m(t) = \alpha \, \int_{0}^{t} \Delta m(s) \, ds + \alpha \, \int_{0}^{t} |\nabla m_1(s)|_{\mathbb{R}^3}^2 m(s) \, ds \\
				\nonumber & + \alpha \, \int_{0}^{t} (|\nabla m_1(s)|_{\mathbb{R}^3} - |\nabla m_2(s)|_{\mathbb{R}^3})(|\nabla m_1(s)_{\mathbb{R}^3}| + |\nabla m_2(s)_{\mathbb{R}^3}|)m_2(s) \, ds \\
				\nonumber & + \int_{0}^{t} m(s) \times \Delta m_1(s) \, ds + \int_{0}^{t} m_2(s) \times \Delta m(s) \, ds + \frac{1}{2}\int_{0}^{t}((m(s) \times h) \times h) \, ds  + \int_{0}^{t} m(s) \times u(s) \, ds\\
				\nonumber & + \int_{0}^{t} (m(s) \times h) \, ds - \alpha \, \bigg[ \int_{0}^{t} m(s) \times (m_1(s) \times u(s)) \, ds + \int_{0}^{t} m_2(s) \times (m(s) \times u(s)) \, ds\bigg] \\
				\nonumber & - \frac{1}{2}\alpha \,  \bigg[\int_{0}^{t} (m(s) \times (m_1(s)\times h)) \times h \, ds + \int_{0}^{t} (m_2(s) \times (m(s)\times h)) \times h \, ds \bigg] \\
				\nonumber & -\frac{1}{2}\alpha \, \bigg[ \int_{0}^{t} m(s) \times ((m_1(s) \times	 h) \times h) \, ds + \int_{0}^{t} m_2(s) \times ((m(s) \times	h) \times h) \, ds \bigg] \\
				\nonumber & + \frac{1}{2}\alpha^2 \bigg[ \int_{0}^{t} (m(s) \times (m_1(s) \times h)) \times (m_1(s) \times h) \, ds \\
				\nonumber &  + \int_{0}^{t} (m_2(s) \times (m(s) \times h)) \times (m_1(s) \times h) \, ds + \int_{0}^{t} (m_2(s) \times (m_2(s) \times h)) \times (m(s) \times h) \, ds \bigg] \\
				\nonumber & + \frac{1}{2} \alpha^2 \bigg[ \int_{0}^{t} m(s) \times ((m_1(s) \times (m_1(s) \times h)) \times h) \, ds  \\
				\nonumber &  + \int_{0}^{t} m_2(s) \times ((m(s) \times (m_1(s) \times h)) \times h) \, ds + \int_{0}^{t} m_2(s) \times ((m_2(s) \times (m(s) \times h)) \times h) \bigg]\,ds \\
				& + \int_{0}^{t} (m(s) \times h) \, dW(s) - \alpha \, \bigg[ \int_{0}^{t} m(s) \times (m_1(s) \times h) \, dW(s) + \int_{0}^{t} m_2(s) \times (m(s) \times h) \, dW(s) \bigg]
			\end{align}
		}
		
		\begin{align}\label{pathwise uniqueness initial equality}
			\nonumber &m(t) = \alpha \, \int_{0}^{t} \Delta m(s) \, ds + \alpha \, \int_{0}^{t} |\nabla m_1(s)|_{\mathbb{R}^3}^2 m(s) \, ds \\
			\dela{\nonumber & + \alpha \, \int_{0}^{t} (|\nabla m_1(s)|_{\mathbb{R}^3} - |\nabla m_2(s)|_{\mathbb{R}^3})(|\nabla m_1(s) |_{\mathbb{R}^3} + |\nabla m_2(s)_{\mathbb{R}^3}|)m_2(s) \, ds \\}
			\nonumber & + \alpha \, \int_{0}^{t} \l\langle \big(\nabla m_1(s) - \nabla m_2(s) \big) , \big( \nabla m_1(s)  + \nabla m_2(s) \big) \r\rangle_{\mathbb{R}^3} m_2(s) \, ds \\
			\nonumber & + \int_{0}^{t} m(s) \times \Delta m_1(s) \, ds + \int_{0}^{t} m_2(s) \times \Delta m(s) \, ds  + \int_{0}^{t} m(s) \times u(s) \, ds\\
			\nonumber &  - \alpha \, \bigg[ \int_{0}^{t} m(s) \times (m_1(s) \times u(s)) \, ds + \int_{0}^{t} m_2(s) \times \big(m(s) \times u(s)\big) \, ds \bigg] + \int_{0}^{t} V_n(s) \, ds \\
			& + \int_{0}^{t} (m(s) \times h) \, dW(s) - \alpha \, \bigg[ \int_{0}^{t} m(s) \times \big(m_1(s) \times h \big) \, dW(s) + \int_{0}^{t} m_2(s) \times \big(m(s) \times h \big) \, dW(s) \bigg],
		\end{align}
		where
		\begin{align}
			\nonumber & \int_{0}^{t} V_n(s) \, ds =     \int_{0}^{t} (m(s) \times h) \, ds + \frac{1}{2}\int_{0}^{t}((m(s) \times h) \times h) \, ds
			- \frac{1}{2}\alpha \,  \bigg[\int_{0}^{t} (m(s) \times (m_1(s)\times h)) \times h \, ds \\
			\nonumber & + \int_{0}^{t} (m_2(s) \times (m(s)\times h)) \times h \, ds \bigg] \\
			\nonumber & -\frac{1}{2}\alpha \, \bigg[ \int_{0}^{t} m(s) \times ((m_1(s) \times	 h) \times h) \, ds + \int_{0}^{t} m_2(s) \times ((m(s) \times	h) \times h) \, ds \bigg] \\
			\nonumber & + \frac{1}{2}\alpha^2 \bigg[ \int_{0}^{t} (m(s) \times (m_1(s) \times h)) \times (m_1(s) \times h) \, ds \\
			\nonumber &  + \int_{0}^{t} (m_2(s) \times (m(s) \times h)) \times (m_1(s) \times h) \, ds + \int_{0}^{t} (m_2(s) \times (m_2(s) \times h)) \times (m(s) \times h) \, ds \bigg] \\
			\nonumber & + \frac{1}{2} \alpha^2 \bigg[ \int_{0}^{t} m(s) \times ((m_1(s) \times (m_1(s) \times h)) \times h) \, ds  \\
			\nonumber &  + \int_{0}^{t} m_2(s) \times ((m(s) \times (m_1(s) \times h)) \times h) \, ds + \int_{0}^{t} m_2(s) \times ((m_2(s) \times (m(s) \times h)) \times h) \bigg]\,ds \\
		\end{align}
		
		\dela{Is the detailed derivation of this equation required? The main idea is to add and subtract terms and then simplify.}
		
		For convenience of notation, let us write equation \eqref{pathwise uniqueness initial equality} as
		\begin{align}\label{pathwise uniqueness equation short form}
			m(t) = \sum_{i=1}^{9}\int_{0}^{t} C_i\,z_i(s)\,ds + \sum_{i=10}^{12} \int_{0}^{t} C_i\,z_i(s)\,dW(s).
		\end{align}
		Here $C_i,\ i=1,\dots,12$ are constants accompanying the integrals.
		\dela{Note that $z_i, i=10,...,21$ is the simplification of the term $\l\langle DG(m_1(s)) - DG(m_2(s)) , m(s) \r\rangle_{L^2}$.\\}

		Consider the function $\phi_5: L^2 \to \mathbb{R}$ defined by 
		\begin{equation*}
			v\mapsto \frac{1}{2} |v|_{L^2}^2.
		\end{equation*}
		Consider the process $m$ defined above. We apply the It\^o formula \cite{Pardoux_1979} to $\phi_5$. That the integrands on the right hand side of the equation \eqref{pathwise uniqueness equation short form} satisfy the conditions mentioned in \cite{Pardoux_1979} can be verified as done in  section \ref{sec-The constraint condition section}.

		Applying the It\^o formula gives us the following equation:
		\begin{align}\label{equation Ito formula Pathwise uniqueness}
			\nonumber \frac{1}{2}\l| m(t) \r|_{L^2}^2 =& \frac{1}{2}\l| m(0) \r|_{L^2}^2 + \sum_{i=1}^{9}\int_{0}^{t} C_i \l\langle z_i(s) , m(s) \r\rangle_{L^2} \, ds + \sum_{i=10}^{12}\int_{0}^{t} C_i \l\langle z_i(s) , m(s) \r\rangle_{L^2} \, dW(s) \\
			& + \frac{1}{2}\int_{0}^{t} \l| G\big(m(s)\big)\r|_{L^2}^2 \, ds,
		\end{align}
		for all $t\in[0,T]$ $\mathbb{P}$-a.s.
		Let us denote the last term on the right hand side of the above equality by $Z_{13}$.
		Note that since $m_1$ and $m_2$ have the same initial data, $m(0) = 0$, $\mathbb{P}=a.s.$
		For the sake of simplicity, we write some calculations separately and then combining them gives the desired result.\\
		\textbf{Calculation for $z_1$.}\\
		For each $t\in[0,T]$, the following equality holds $\mathbb{P}^{\prime}$-a.s., see \eqref{Interpretation eqn 1}
		\begin{align*}
			\int_{0}^{t} \l\langle \Delta m(s) , m(s) \r\rangle_{(H^1)^{\prime}}\, ds = - \int_{0}^{t} \l| \nabla m(s) \r|^2\, ds.
		\end{align*}
		The negative sign here implies that this term goes to the left hand side of the equality with a positive coefficient and hence can be used to balance the other $\int_{0}^{t} |\nabla m(s)|_{L^2}^2\, ds$ terms coming from some of the other estimates.\\
		\textbf{Calculations for the terms $z_2$ and $z_3$.}\\
		\dela{
			We briefly descrobe how we get those terms. We add and subtract the term $\l|m_1(s)\r|_{\mathbb{R}^3}^2m_2(s)$ on the left part of the inner product to get the following equality.
			
			\begin{align*}
				\l\langle \l| \nabla m_1(s) \r|_{\mathbb{R}^3}^2 m_1(s) - \l| \nabla m_2(s) \r|_{\mathbb{R}^3}^2 m_2(s), m(s) \r\rangle_{L^2} =&  \l\langle \l| \nabla m_1(s) \r|_{\mathbb{R}^3}^2 m_1(s) - \l|m_1(s)\r|_{\mathbb{R}^3}^2m_2(s) , m(s) \r\rangle_{L^2} \\
				& + \l\langle \l|m_1(s)\r|_{\mathbb{R}^3}^2m_2(s)\l| \nabla m_2(s) \r|_{\mathbb{R}^3}^2 m_2(s), m(s) \r\rangle_{L^2} \\
				=&  \l\langle \l| \nabla m_1(s) \r|_{\mathbb{R}^3}^2 (m_1(s) - m_2(s)) , m(s) \r\rangle_{L^2}  \\
				& + \l\langle \l( \nabla m_1(s) + \nabla m_2(s) \r) (\nabla m_1(s)\\
				& - \nabla m_2(s)) m_1(s) , m(s) \r\rangle_{L^2} \\
				=& \l\langle \l| \nabla m_1(s) \r|_{\mathbb{R}^3}^2 m(s) , m(s) \r\rangle_{L^2} \\
				& + \l\langle \l( \nabla m_1(s) + \nabla m_2(s) \r)  \nabla m(s) m_1(s) , m(s) \r\rangle_{L^2}.
			\end{align*}
		}
		The bound on the terms is calculated below.\dela{ Let $s\in[0,T]$.}
		\dela{
			\begin{align*}
				\l\langle \l| \nabla m_1(s) \r|_{\mathbb{R}^3}^2 m(s) , m(s) \r\rangle_{L^2} &\leq C\l| \nabla m_1 \r|_{L^2}^2 \l| m \r|_{L^{\infty}}^2 \\
				& \leq C\l| \nabla m_1 \r|_{L^2}^2 \l| m \r|_{L^2}\l| m \r|_{H^{1}} \\
				& \leq C \l| \nabla m_1 \r|_{L^2}^2 \l| m \r|_{L^2} \l[ \l| m \r|_{L^2} + \l| \nabla m \r|_{L^2} \r] \\
				& \leq C \l| \nabla m_1 \r|_{L^2}^2 \l| m \r|_{L^2}^2  + C \l| \nabla m_1 \r|_{L^2}^2 \l| m \r|_{L^2}\l| \nabla m \r|_{L^2} \\
				& \leq C \l| \nabla m_1 \r|_{L^2}^2 \l| m \r|_{L^2}^2 + C^2\frac{C(\varepsilon)}{2} \l| \nabla m_1 \r|_{L^2}^4\l| m(s) \r|_{L^2}^2 + \frac{\varepsilon}{2}\l| \nabla m(s) \r|_{L^2}^2.
			\end{align*}
		}
		By H\"older's inequality,
		\begin{align*}
			\int_{0}^{t} \l\langle \l| \nabla m_1(s) \r|_{\mathbb{R}^3}^2 m(s) , m(s) \r\rangle_{L^2} \, ds &\leq C \int_{0}^{t}  \l| \nabla m_1 \r|_{L^2}^2 \l| m \r|_{L^{\infty}}^2  \, ds \\
			\text{(By Agmon's inequality)} \ & \leq C \int_{0}^{t} \l| \nabla m_1 \r|_{L^2}^2 \l| m \r|_{L^2}\l| m \r|_{H^{1}} \, ds \\
			& \leq C \int_{0}^{t} \l| \nabla m_1 \r|_{L^2}^2 \l| m \r|_{L^2} \l[ \l| m \r|_{L^2} + \l| \nabla m \r|_{L^2} \r] \, ds \\
			& \leq C \int_{0}^{t} \l| \nabla m_1 \r|_{L^2}^2 \l| m \r|_{L^2}^2 \, ds  + C \int_{0}^{t} \l| \nabla m_1 \r|_{L^2}^2 \l| m \r|_{L^2}\l| \nabla m \r|_{L^2} \, ds \\
			\text{(By Young's inequality)} \ & \leq C \int_{0}^{t} \l| \nabla m_1 \r|_{L^2}^2 \l| m \r|_{L^2}^2 \, ds + C^2\frac{C(\varepsilon)}{2} \int_{0}^{t} \l| \nabla m_1 \r|_{L^2}^4\l| m(s) \r|_{L^2}^2 \, ds\\
			&\quad + \frac{\varepsilon}{2}\int_{0}^{t} \l| \nabla m(s) \r|_{L^2}^2 \, ds.
		\end{align*}
		Here $ \varepsilon > 0 $ will be chosen later. The above sequence of inequalities uses the inequality \eqref{Inequality L infinity leq L2 H1} along with Young's inequality.

		\begin{align*}
			\int_{0}^{t} \l\langle \l\langle \nabla m_1(s) , \nabla m(s) \r\rangle_{\mathbb{R}^3} m_2(s)  , m(s) \r\rangle_{L^2} \, ds  & \leq 	 \int_{0}^{t} \l|\nabla m_1(s)\r|_{L^2} \l|m_2(s)\r|_{L^{\infty}} \l| \nabla m(s) \r|_{L^2} \l| m(s) \r|_{L^{\infty}} \, ds  \\
			\text{(Since $\l|m_2(s)\r|_{L^{\infty}} = 1$, Agmon's inequality)}& \leq C \int_{0}^{t} \l|\nabla m_1(s)\r|_{L^2} \l| \nabla m(s) \r|_{L^2} \l| m(s) \r|_{L^2}^{\frac{1}{2}} \l| m(s) \r|_{H^1}^{\frac{1}{2}} \, ds  \\
			& \leq C \int_{0}^{t} \l|\nabla m_1(s)\r|_{L^2} \l| \nabla m(s) \r|_{L^2} \l| m(s) \r|_{L^2}^{\frac{1}{2}} \bigg[ \l| m(s) \r|_{L^2}^{\frac{1}{2}} \\
			& \qquad + \l| \nabla m(s) \r|_{L^2}^{\frac{1}{2}} \bigg] \, ds  \\
			& \leq C \int_{0}^{t} \l|\nabla m_1(s)\r|_{L^2} \l| \nabla m(s) \r|_{L^2} \l| m(s) \r|_{L^2} \, ds \\
			&\quad + C  \int_{0}^{t} \l|\nabla m_1(s)\r|_{L^2} \l| m(s) \r|_{L^2}^{\frac{1}{2}} \l| \nabla m(s) \r|_{L^2}^{\frac{3}{2}} \, ds  \\
			(\text{By Young's inequality for}\ p=q=2) & \leq C\frac{C(\varepsilon)}{2}  \int_{0}^{t} \l|\nabla m_1(s)\r|_{L^2}^2 \l|m(s)\r|_{L^2}^2  \, ds \\
			& \quad	+ \frac{\varepsilon}{2} \int_{0}^{t} \l|\nabla m(s)\r|_{L^2}^2 \, ds \\
			(\text{By Young's inequality for}\ p=4, q=\frac{4}{3}) & \quad + C^4\frac{C(\varepsilon)}{4} \int_{0}^{t} \l| \nabla m_1(s)\r|_{L^2}^4\l|m(s)\r|_{L^2}^2 \, ds  \\
			& \quad + \frac{3\varepsilon}{4}  \int_{0}^{t}  \l|\nabla m(s)\r|_{L^2}^2 \, ds  \\
			& = C(\varepsilon)   \int_{0}^{t} \l|m(s)\r|_{L^2}^2 \l[ \l|\nabla m_1(s)\r|_{L^2}^2 + \l|\nabla m_1(s)\r|_{L^2}^4 \r] \, ds \\
			& \quad + \frac{5\varepsilon}{4}  \int_{0}^{t}  \l|\nabla m(s)\r|_{L^2}^2 \, ds .
		\end{align*}		
		Similarly,
		\begin{align*}
			\int_{0}^{t} \bigl\langle \l\langle \nabla m_2(s) , \nabla m(s) \r\rangle_{\mathbb{R}^3} m_2(s)  , m(s) \bigr\rangle_{L^2} \, ds \leq & C(\varepsilon) \int_{0}^{t}   \l|m(s)\r|_{L^2}^2 \l[ \l|\nabla m_2(s)\r|_{L^2}^2 + \l|\nabla m_2(s)\r|_{L^2}^4 \r] \,ds \\
			& + \frac{5\varepsilon}{4} \int_{0}^{t} \l|\nabla m(s)\r|_{L^2}^2 \, ds.
		\end{align*}
		\textbf{Note:} All the constants have been condensed into $C(\varepsilon)$.\\
		Hence
		\begin{align*}
			&\int_{0}^{t} \l\langle \l| \nabla m_1(s) \r|_{\mathbb{R}^3}^2 m_1(s) - \l| \nabla m_2(s) \r|_{\mathbb{R}^3}^2 m_2(s), m(s) \r\rangle_{L^2}\, ds \\
			& \leq C(\varepsilon) \int_{0}^{t} \l|m(s)\r|_{L^2}^2 \l[ \l|\nabla m_1(s)\r|_{L^2}^2 + \l|\nabla m_1(s)\r|_{L^2}^4 + \l|\nabla m_2(s)\r|_{L^2}^2 + \l|\nabla m_2(s)\r|_{L^2}^4 \r] \, ds \\
			& \quad + \frac{5\varepsilon}{2} \int_{0}^{t} \l|\nabla m(s)\r|_{L^2}^2 \, ds.
		\end{align*}
		\textbf{Calculation for the terms $z_4$ and $z_5$.}\\
		\dela{
			The terms are obtained as follows.
			\begin{align*}
				&m_1(s) \times \Delta m_1(s) - m_2(s) \times \Delta m_2(s) \\&= m_1(s) \times \Delta m_1(s) - m_2(s) \times \Delta m_1(s) + m_2(s) \times \Delta m_1(s) - m_2(s) \times \Delta m_2(s) \\
				& = m(s) \times \Delta m_1(s) + m_2(s) \times \Delta m(s).
			\end{align*}
			What now follows is the bounds for the above mentioned terms.
		}
		\dela{
			\begin{align*}
				&\l|\l\langle m_1(s) \times \Delta m_1(s) - m_2(s) \times \Delta m_2(s) , m(s) \r\rangle_{L^2}\r| \\
				&\leq \l|\l\langle m(s) \times \Delta m_1(s)\r| + \l|m_2(s) \times \Delta m(s) , m(s) \r\rangle_{L^2}\r| \\
				& = \l| \l\langle m_2(s) \times \Delta m(s) , m(s)  \r\rangle_{L^2}\r| \\
				& = \l| \l\langle \nabla m_2(s) \times m(s) , \nabla m(s) \r\rangle_{L^2}\r| \\
				& \leq \frac{C(\varepsilon)}{2} \l|\nabla m_2(s)\r|_{L^2}^2 \l|m(s)\r|_{L^{\infty}}^2 + \frac{\varepsilon}{2}\l|\nabla m(s)\r|_{L^2}^2 \\
				& \leq \frac{C(\varepsilon)}{2}  \l|\nabla m_2(s)\r|_{L^2}^2 \l| m(s) \r|_{L^2}\l| m(s) \r|_{H^{1}} + \frac{\varepsilon}{2}\l|\nabla m(s)\r|_{L^2}^2 \\
				& \leq \frac{C(\varepsilon)}{2} \l|\nabla m_2(s)\r|_{L^2}^2\l|m(s)\r|_{L^2}\l[\l|m(s)\r|_{L^2} + \l|\nabla m(s)\r|_{L^2}\r] + \frac{\varepsilon}{2}\l|\nabla m(s)\r|_{L^2}^2 \\
				& \leq \frac{C(\varepsilon)}{2} \l|\nabla m_2(s)\r|_{L^2}^2\l|m(s)\r|_{L^2}^2 + \frac{C(\varepsilon)}{2} \l|\nabla m_2(s)\r|_{L^2}^2\l|m(s)\r|_{L^2}\l|\nabla m(s)\r|_{L^2}\\
				& \quad+ \frac{\varepsilon}{2}\l|\nabla m(s)\r|_{L^2}^2 \\
				& \leq \frac{C(\varepsilon)}{2} \l|\nabla m_2(s)\r|_{L^2}^2\l|m(s)\r|_{L^2}^2 + \frac{C(\varepsilon)}{2}\frac{C(\varepsilon)^2}{4}\l|\nabla m_2(s)\r|_{L^2}^4\l|m(s)\r|_{L^2}^2 \\
				& \quad+ \frac{\varepsilon}{2}\l|\nabla m(s)\r|_{L^2}^2 + \frac{\varepsilon}{2}\l|\nabla m(s)\r|_{L^2}^2
			\end{align*}
		}
		\begin{equation*}
			\int_{0}^{t} \l\langle z_4(s) , m(s) \r\rangle_{L^2} \, ds = 0.
		\end{equation*}
		\begin{align*}
			\l|\int_{0}^{t} \l\langle z_5(s) , m(s) \r\rangle_{L^2} \, ds\r| \dela{&\l|\l\langle m_1(s) \times \Delta m_1(s) - m_2(s) \times \Delta m_2(s) , m(s) \r\rangle_{L^2}\r| \\
				&\leq \l|\l\langle m(s) \times \Delta m_1(s)\r| + \l|m_2(s) \times \Delta m(s) , m(s) \r\rangle_{L^2}\r| \\}
			& \leq \int_{0}^{t} \l| \l\langle m_2(s) \times \Delta m(s) , m(s)  \r\rangle_{L^2}\r| \, ds \\
			& = \int_{0}^{t} \l| \l\langle \nabla m_2(s) \times m(s) , \nabla m(s) \r\rangle_{L^2}\r| \, ds \\
			(\text{H\"older's and Young's inequalities})& \leq \frac{C(\varepsilon)}{2} \int_{0}^{t} \l|\nabla m_2(s)\r|_{L^2}^2 \l|m(s)\r|_{L^{\infty}}^2 \, ds  + \frac{\varepsilon}{2} \int_{0}^{t} \l|\nabla m(s)\r|_{L^2}^2 \, ds  \\
			(\text{By Agmon's inequality})& \leq \frac{C(\varepsilon)}{2}  \int_{0}^{t}\l|\nabla m_2(s)\r|_{L^2}^2 \l| m(s) \r|_{L^2}\l| m(s) \r|_{H^{1}} \, ds  \\
			& \quad + \frac{\varepsilon}{2} \int_{0}^{t} \l|\nabla m(s)\r|_{L^2}^2 \, ds  \\
			& \leq \frac{C(\varepsilon)}{2} \int_{0}^{t} \l|\nabla m_2(s)\r|_{L^2}^2\l|m(s)\r|_{L^2}\l[\l|m(s)\r|_{L^2} + \l|\nabla m(s)\r|_{L^2}\r] \, ds  \\
			&   \quad + \frac{\varepsilon}{2} \int_{0}^{t} \l|\nabla m(s)\r|_{L^2}^2 \, ds  \\
			(\text{By Young's inequality})& \leq \frac{C(\varepsilon)}{2} \int_{0}^{t} \l|\nabla m_2(s)\r|_{L^2}^2\l|m(s)\r|_{L^2}^2 \, ds  \\
			&  \quad + \frac{C(\varepsilon)}{2}  \int_{0}^{t} \l|\nabla m_2(s)\r|_{L^2}^2\l|m(s)\r|_{L^2}\l|\nabla m(s)\r|_{L^2} \, ds \\
			& \quad + \frac{\varepsilon}{2} \int_{0}^{t} \l|\nabla m(s)\r|_{L^2}^2 \, ds  \\
			(\text{By Young's inequality})& \leq \frac{C(\varepsilon)}{2} \int_{0}^{t} \l|\nabla m_2(s)\r|_{L^2}^2\l|m(s)\r|_{L^2}^2 \, ds \\
			&  \quad + \frac{C(\varepsilon)}{2} \frac{C(\varepsilon)^2}{4} \int_{0}^{t} \l|\nabla m_2(s)\r|_{L^2}^4\l|m(s)\r|_{L^2}^2 \, ds  \\
			& \quad+ \frac{\varepsilon}{2} \int_{0}^{t} \l|\nabla m(s)\r|_{L^2}^2 \, ds
			+ \frac{\varepsilon}{2} \int_{0}^{t} \l|\nabla m(s)\r|_{L^2}^2 \, ds.
		\end{align*}
		Here $ \varepsilon > 0 $ will be chosen later.
		\dela{One of the terms in the first line is $0$ since $\l\langle a \times b, a \r\rangle = 0$. }The second equality is basically the way $m\times \Delta m$ is interpreted (as an element of $(H^1)^{\prime}$). The fourth inequality comes from the use of Young's $\varepsilon$ inequality.
		
		Combining the constants into one constant $C(\varepsilon)$, we get \dela{From the previous step, we can say that}
		\begin{align}
			\nonumber 
			\bigg|\int_{0}^{t} \l\langle z_4(s) +  z_5(s) , m(s) \r\rangle_{L^2} &\, ds \bigg| \leq C(\varepsilon)\int_{0}^{t} \bigg[ \l| \nabla m_2(s) \r|_{L^2}^2  \\
			& + \l| \nabla m_2(s) \r|_{L^2}^4 \bigg] \l| m(s) \r|_{L^2}^2   \, ds 
			+ \varepsilon \int_{0}^{t} \l| \nabla m(s) \r|_{L^2}^2 \, ds.
		\end{align}
		Here the constants depending on $\varepsilon$ are combined into one constant suitable $C(\varepsilon)$.\\		
		\textbf{Calculation for $z_6$.}\\
		Concerning the first term with the control process $u$, that is $z_6$, we observe that
		
		\begin{align*}
			\int_{0}^{t} \l\langle z_6(s) , m(s) \r\rangle_{L^2} \, ds  = \int_{0}^{t} \l\langle m(s) \times u(s) , m(s) \r\rangle_{L^2} \, ds  = 0.
		\end{align*}
		\textbf{Calculation for $z_7,z_8$.}\\
		For the remaining terms (with the control process $u$), the following estimate can be done.		
		\dela{
			\begin{align*}
				&m_1(s) \times (m_1(s) \times u(s)) - m_2(s) \times (m_2(s) \times u(s))\\
				& = m_1(s) \times (m_1(s) \times u(s)) - m_2(s) \times (m_1(s) \times u(s)) \\
				&+ m_2(s) \times (m_1(s) \times u(s)) - m_2(s) \times (m_2(s) \times u(s)) \\
				& = m(s) \times ( m_1(s) \times u(s) ) - m_2(s) \times (m(s) \times u(s)).
			\end{align*}
			Thus,
			\begin{align*}
				| \l\langle m_1(s) &\times (m_1(s) \times u(s)) - m_2(s) \times (m_2(s) \times u(s)) , m(s) \r\rangle_{L^2} | \\
				&= \l| \l\langle m(s) \times ( m_1(s) \times u(s) ) - m_2(s) \times (m(s) \times u(s)) , m(s) \r\rangle_{L^2} \r| \\
				& = \l| \l\langle m_2(s) \times (m(s) \times u(s)) , m(s) \r\rangle_{L^2} \r| \\
				& \leq \l|m_2(s)\r|_{L^2} \l| m(s) \r|_{L^{\infty}}^2 \l| u(s) \r|_{L^2} \\
				& \leq C \l|m_s(s)\r|_{L^{\infty}} \l| m(s) \r|_{L^2}\l| m(s) \r|_{H^1} \l| u(s) \r|_{L^2} \ \text{Since}\ \l|\cdot\r|_{L^{\infty}}^2 \leq C \l|\cdot\r|_{L^2}\l|\r|_{H^1} \\
				& \leq C  \l| m(s) \r|_{L^2} \l[\l| m(s) \r|_{L^2} + \l| \nabla m(s) \r|_{L^2}\r] \l| u(s) \r|_{L^2} \\
				& \leq C \l| m(s) \r|_{L^2}^2\l| u(s) \r|_{L^2} + C  \l| m(s) \r|_{L^2} \l| \nabla m(s) \r|_{L^2} \l| u(s) \r|_{L^2} \\
				& \leq C \l| m(s) \r|_{L^2}^2 \l| u(s) \r|_{L^2} + \frac{C(\varepsilon)}{2} \l| m(s) \r|_{L^2}^2\l| u(s) \r|_{L^2}^2 + \frac{\varepsilon}{2} \l| \nabla m(s) \r|_{L^2}^2 \\
				& \leq \l(C \l| u(s) \r|_{L^2} + \frac{C(\varepsilon)}{2} \l| u(s) \r|_{L^2}^2 \r) \l| m(s) \r|_{L^2}^2 + \frac{\varepsilon}{2} \l| \nabla m(s) \r|_{L^2}^2 .
			\end{align*}
			The first term from the first equality is zero since $\l\langle a \times b , a \r\rangle_{L^2} = 0$ for $a,b,c\in L^2$.\\
		}
		By H\"older's inequality, followed first by Agmon's inequality and then by Young's inequality implies that for $\varepsilon>0$, there exists constants $C,C(\varepsilon)$ such that for $t\in[0,T]$,
		\begin{align*}
			\int_{0}^{t}& | \l\langle m_1(s) \times \big( m_1(s) \times u(s) \big) - m_2(s) \times \big( m_2(s) \times u(s) \big) , m(s) \r\rangle_{L^2} | \, ds \\
			&\leq C \int_{0}^{t} \l( 1  + \l| u(s) \r|_{L^2}^2 \r) \l| m(s) \r|_{L^2}^2  \, ds + \frac{\varepsilon}{2} \int_{0}^{t}  \l| \nabla m(s) \r|_{L^2}^2  \, ds.
		\end{align*}
		The terms that remain are the terms corresponding to the noise term, that is $G(m)$ ($z_{10},z_{11},z_{12}$), It\^o to Stratonovich correction term $DG(m)(G(m))$, i.e. ($z_9$), along with the last term on the right hand side of \eqref{equation Ito formula Pathwise uniqueness}, i.e. $Z_{13}$.\\
		\textbf{Calculations for the terms $z_{9}$ and $Z_{13}$.}\\
		By Lemma \ref{Lemma G is a polynomial map} and Proposition \ref{prop-derivative}, both $z_{9},Z_{13}$ are locally Lipschitz. Hence it is sufficient to show that the processes $m_1$ and $m_2$ lie in a ball in the space $L^2$. In this direction, by the continuous embedding $L^{\infty}\hookrightarrow L^2$ and the Theorem \ref{Theorem Existence of a weak solution}, there exists a constant $C>0$ such that
		\begin{equation}
			|m_i|_{L^2} \leq C|m_i|_{L^{\infty}} \leq 2C.
		\end{equation}
		for $i=1,2$. The processes $m_1(s)$ and $m_2(s)$ thus take values in a ball in $L^2$. Hence there exists a constant $C>0$ such that for each $s\in[0,T]$,
		\begin{align*}
			|G(m_1(s)) - G(m_2(s))|_{L^2} \leq C_1|m_1(s) - m_2(s)|_{L^2} = C_1|m(s)|_{L^2}.
		\end{align*}
		Similarly, there exists another constant $C_2$ such that for each $s\in[0,T]$,
		\begin{align*}
			\l| DG\big( m_1(s) \big)\l[ G(m_1)(s)\r] - DG\big(m_2(s)\big)\l[G\big(m_2(s)\big)\r] \r|_{L^2} &\leq C_2 	\l|G(m_1(s)) - G\big(m_2(s)\big) \r|_{L^2} \\
			&\leq C_1 C_2|m_1(s) - m_2(s)|_{L^2} \\
			&= C_1 C_2|m(s)|_{L^2}.
		\end{align*}
		Hence by the Cauchy-Schwartz inequality and the above estimate, we have
		\begin{align*}
			\int_{0}^{T} \l\langle G(m_1) - G(m_2) , m(s) \r\rangle_{L^2}\, ds &\leq \int_{0}^{T} \l| G(m_1) - G(m_2) \r|_{L^2}\l| m(s)\r|_{L^2}\, ds \\
			& \leq C_1 \int_{0}^{T} \l| m(s) \r|_{L^2}^2\, ds.
		\end{align*}
		Similarly,
		\begin{align*}
			&\int_{0}^{t} \l\langle DG\big(m_1(s)\big)\l[G\big(m_1(s)\big)\r] - DG\big(m_2(s)\big)\l[G\big(m_2(s)\big)\r] , m(s) \r\rangle_{L^2}\, ds \\
			&\leq \int_{0}^{t} \l| DG\big(m_1(s)\big)\l[G\big(m_1(s)\big)\r] - DG\big(m_2(s)\big)\l[G\big(m_2(s)\big)\r] \r|_{L^2} \l| m(s) \r|_{L^2} \\
			& \leq C_1 C_2\int_{0}^{t} \l| m(s) \r|_{L^2}^2\, ds.
		\end{align*}
		Regarding the correction term that appears after the use of the It\^o formula, by the locally Lipschitz continuity of $G$, there exists a constant $C>0$ such that
		\begin{align*}
			\int_{0}^{t} \l|G(m_1(s)) - G(m_2(s))\r|_{L^2}^2 \, ds \leq C \int_{0}^{t} \l|m(s)\r|_{L^2}^2 \, ds.
		\end{align*}
		
		\dela{Correct this term.
			\begin{align*}
				\l| m(t) \r|_{L^2}^2 &\leq \l| m(0)\r |_{L^2}^2 - \alpha \, \int_{0}^{t} \l| \nabla m(s) \r|_{L^2}^2\, ds + C\int_{0}^{t} \l| m(s) \r|_{L^2}^2  \l[ \l|m_2(s)\r|_{H^1}^4  + \l| m_1 \r|_{H^{1}}^2 + \l| m_2 \r|_{H^{1}}^2\r]\, ds \\
				& + \frac{\varepsilon}{2}\int_{0}^{t} \l|\nabla m(s)\r|_{L^2}^2 \, ds + C \int_{0}^{t} \l| m(s) \r|_{L^2}^2  \l[ \l|m_2(s)\r|_{H^1}^4  + \l| m_1 \r|_{H^{1}}^2 + \l| m_2 \r|_{H^{1}}^2\r]\, ds + \frac{\varepsilon}{2}\int_{0}^{t} \l|\nabla m(s)\r|_{L^2}^2 \, ds \\
				& + \int_{0}^{t} \l(C \l| u(s) \r|_{L^2} + \frac{C(\varepsilon)}{2} \l| u(s) \r|_{L^2}^2 \r) \l| m(s) \r|_{L^2}^2  \, ds + \frac{\varepsilon}{2} \int_{0}^{t}  \l| \nabla m(s) \r|_{L^2}^2  \, ds \\
				& + \frac{\varepsilon}{2} + C \int_{0}^{t} \l|m(s)\r|_{L^2}^2\, ds + C \int_{0}^{t} \l\langle G(m_1) - G(m_2) , m(s) \r\rangle_{L^2} \, dW(s) \\
				& +  C(\varepsilon) \int_{0}^{t} \l|m(s)\r|_{L^2}^2 \l[ \l|\nabla m_1(s)\r|_{L^2}^2 + \l|\nabla m_1(s)\r|_{L^2}^4 + \l|\nabla m_2(s)\r|_{L^2}^2 + \l|\nabla m_2(s)\r|_{L^2}^4 \r] \, ds \\
				& + \frac{5\varepsilon}{2} \int_{0}^{t} \l|\nabla m(s)\r|_{L^2}^2 \, ds \\
				& = \l( - \alpha \, + 4\varepsilon \r)\int_{0}^{t} \l| \nabla m(s) \r|_{L^2}^2\, ds + C\int_{0}^{t} \l| m(s) \r|_{L^2}^2  \l[ \l|m_2(s)\r|_{H^1}^4  + \l| m_1(s) \r|_{H^1}^2 + \l| m_2(s) \r|_{H^1}^2 +  \r]\, ds \\
				& +  C \int_{0}^{t} \l|m(s)\r|_{L^2}^2 \l[ \l|\nabla m_1(s)\r|_{L^2}^2 + \l|\nabla m_1(s)\r|_{L^2}^4 + \l|\nabla m_2(s)\r|_{L^2}^2 + \l|\nabla m_2(s)\r|_{L^2}^4 \r] \, ds \\
				& + C \int_{0}^{t} \l\langle G(m_1) - G(m_2) , m(s) \r\rangle_{L^2} \, dW(s).
			\end{align*}
			\adda{What about the correction term that comes after the It\^o formula?? $\frac{1}{2}\int_{0}^{t} \l|G(m_1(s)) - G(m_2(s))\r|_{L^2}^2 \, ds.$}

		}

		\dela{Recall the equality in \eqref{equation Ito formula Pathwise uniqueness}.} Now we combine \eqref{equation Ito formula Pathwise uniqueness} and the above mentioned estimates. We collect the integrals with similar integrands. While doing this, we also combine the corresponding constants for simplifying the presentation. Thus there exists a constant $C>0$ such that

		\begin{align*}
			\l|m(t)\r|_{L^2}^2 + \l(\alpha \, - 4\varepsilon\r) \int_{0}^{t} \l|\nabla m(s)\r|_{L^2}^2 \, ds \leq& \l|m(0)\r|_{L^2}^2 \\
			& + \int_{0}^{t} \l|m(s)\r|_{L^2}^2 \bigg[ C + C\bigg( \l|\nabla m_1(s)\r|_{L^2}^2 + \l|\nabla m_1(s)\r|_{L^2}^4 \\
			&+ \l|\nabla m_2(s)\r|_{L^2}^2 + \l|\nabla m_2(s)\r|_{L^2}^4  \bigg)
			+ \l|u(s)\r|_{L^2} + \l|u(s)\r|_{L^2}^2 \bigg] \, ds \\
			& + \int_{0}^{t} \l\langle G\big(m_1(s)\big) - G\big(m_2(s)\big) , m(s) \r\rangle_{L^2} \, dW(s).
		\end{align*}

		We choose $\varepsilon > 0$ such that $\l( \alpha \, - 4\varepsilon \r) < 0$.

		We recall that the processes $m_1$ and $m_2$ have the same initial condition $m_0$. Hence $\l|m(0)\r|_{L^2} = 0$. Also by the choice of $\varepsilon $, the term $\l(\alpha \, - 4\varepsilon\r) \int_{0}^{t} \l|\nabla m(s)\r|_{L^2}^2 \, ds$ is non-negative.
		
		Let $C>0$ be a constant. For $t\in[0,T]$, let
		\begin{align}
			\Phi_C(t) = C + C\l( \l|\nabla m_1(s)\r|_{L^2}^2 + \l|\nabla m_1(s)\r|_{L^2}^4 + \l|\nabla m_2(s)\r|_{L^2}^2 + \l|\nabla m_2(s)\r|_{L^2}^4  \r) + \l|u(s)\r|_{L^2}^2.
		\end{align}
		Hence
		\begin{align}
			\l|m(t)\r|_{L^2}^2 &\leq \int_{0}^{t} \Phi_C(t) \l|m(s)\r|_{L^2}^2 \, ds + \int_{0}^{t} \l\langle G\big(m_1(s)\big) - G\big(m_2(s)\big) , m(s) \r\rangle_{L^2} \, dW(s).
		\end{align}

		The application of the It\^o formula gives
		\begin{equation}\label{pathwise uniqueness exp intermediate inequality}
			\l|m(t)\r|_{L^2}^2e^{-\int_{0}^{t}\Phi_C(s) \, ds} \leq \int_{0}^{t} e^{-\int_{0}^{s}\Phi_C(r) \, dr} \l\langle G\big(m_1(s)\big) - G\big(m_2(s)\big) , m(s) \r\rangle_{L^2} \, dW(s).
		\end{equation}
		Some details of this calculation are given in the Appendix \ref{Section Proof for pathwise uniqueness intermediate inequality}. A similar idea has been used in \cite{ZB+BG+TJ_LargeDeviations_LLGE,Schmalfuss_Qualitative_Properties_SNE}.

		By the definition of $\Phi_C$, $\Phi_C(t) \geq 0$ for each $t\in [0,T]$ $\mathbb{P}-$a.s. and the bounds established in Theorem \ref{Theorem Existence of a weak solution} imply that for any $t\in[0,T]$,
		\begin{equation}
			\int_{0}^{t} \Phi_C(s) \, ds < \infty,\ \mathbb{P} -\text{a.s.}
		\end{equation}
		Hence $\mathbb{P}-$a.s.,
		\begin{equation}
			e^{-\int_{0}^{t}\Phi_C(s) \, ds} \leq 1.
		\end{equation}
		
		The mapping $G$ is Lipschitz on balls. The processes $m_1, m_2$ satisfy the constraint condition \eqref{eqn-constraint condition}, and hence are uniformly bounded. Hence the processes $m$ is also uniformly bounded. This implies that the stochastic integral on the right hand side of the inequality \eqref{pathwise uniqueness exp intermediate inequality} is a martingale. Thus taking the expectation on both the sides of the inequality \eqref{pathwise uniqueness exp intermediate inequality}, we get
		\begin{align}
			\mathbb{E} \l|m(t)\r|_{L^2}^2e^{-\int_{0}^{t}\Phi_C(s) \, ds} \leq \mathbb{E} \int_{0}^{t} e^{-\int_{0}^{s}\Phi_C(r) \, dr} \l\langle G\big( m_1(s) \big) - G\big( m_2(s) \big) , m(s) \r\rangle_{L^2} \, dW(s) = 0.
		\end{align}
		Hence
		\begin{equation*}
			\mathbb{E} \l|m(s)\r|_{L^2}^2e^{-\int_{0}^{t}\Phi_C(s) \, ds} \leq 0.
		\end{equation*}
		But for each $t\in [0,T]$, $e^{-\int_{0}^{t}\Phi_C(s) \, ds} \geq 0$.\\
		Hence
		\begin{equation}
			\l|m(t)\r|_{L^2}^2 = 0\ \mathbb{P}-\text{a.s.}
		\end{equation}
		This concludes the proof of Theorem \ref{thm-pathwise uniqueness}.
		
	\end{proof}	
	We now define what we mean by a strong solution to the problem \eqref{problem considered}.
	\begin{definition}[Strong solution]\label{Definition of strong solution}
		The problem \eqref{problem considered} is said to admit a strong solution if the following holds: Let $\l( \Omega , \mathbb{F} , \mathcal{F} , \mathbb{P} \r)$ be a filtered probability space along with initial data $m_0$ and a control process $u$ on the space, satisfying Assumption \ref{assumption on u}. Then there exists an $\mathbb{F}$-adapted process $m$ on the said probability space such that the tuple $\l( \Omega , \mathbb{F} , \mathcal{F} , \mathbb{P} , W , m , u \r)$ is a weak martingale solution to the problem \eqref{problem considered} according to Definition \ref{Definition of Weak martingale solution}.
	\end{definition}
	
	The existence of a strong solution now follows as a consequence, which is stated in the following result.

	\begin{theorem}\label{thm-existence of a strong solution}
		The problem \eqref{problem considered} for a given initial data $m_0$ and a control process $u$, both satisfying the assumptions mentioned in the Theorem \ref{Theorem Existence of a weak solution}, has a pathwise unique strong solution as defined in Definition \ref{Definition of strong solution}. Moreover, the strong solution is unique in law.
	\end{theorem}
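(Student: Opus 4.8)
The plan is to deduce Theorem \ref{thm-existence of a strong solution} from the combination of the weak existence result (Theorem \ref{Theorem Existence of a weak solution}) and the pathwise uniqueness result (Theorem \ref{thm-pathwise uniqueness}) by invoking the theory of Yamada and Watanabe, see \cite{Ikeda+Watanabe}. The guiding principle is the standard one: whenever a stochastic evolution equation admits a weak (martingale) solution and enjoys pathwise uniqueness, it admits a strong solution on any prescribed stochastic basis carrying the driving data, and this solution is unique in law. First I would record the two hypotheses of that principle in the present context. Theorem \ref{Theorem Existence of a weak solution} supplies a weak martingale solution $\pi^\p = (\Omega^\p, \mathcal{F}^\p, \mathbb{F}^\p, \mathbb{P}^\p, W^\p, \m, \u)$ in the sense of Definition \ref{Definition of Weak martingale solution}, while Theorem \ref{thm-pathwise uniqueness} guarantees that any two solutions built on one common filtered space, driven by the same Wiener process and the same control and started from the same $m_0$, coincide $\mathbb{P}$-almost surely for every $t\in[0,T]$.

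Next I would cast the situation into the abstract Yamada--Watanabe framework, treating the control $\u$ as part of the exogenous input rather than as an unknown. Concretely, the driving data is the triple $(W,u,m_0)$, where $u$ satisfies Assumption \ref{Assumptions for existence of weak martingale solution} and $m_0 \in W^{1,2}(\mathcal{O};\mathbb{S}^2)$; the joint law of $(W,u)$ is supported on the space $C([0,T];\mathbb{R}) \times L^2(0,T;L^2)$, and I would use here that the control $\u$ of the weak solution has the same law as $u$. The pathwise uniqueness from Theorem \ref{thm-pathwise uniqueness} then allows one to produce, by the usual construction on a suitably enlarged probability space together with a regular-conditional-probability/measurable-selection argument, a Borel-measurable map
\[
\Phi : C([0,T];\mathbb{R}) \times L^2(0,T;L^2) \times W^{1,2}(\mathcal{O};\mathbb{S}^2) \longrightarrow C([0,T];L^2),
\]
adapted in the appropriate filtration sense, such that every weak martingale solution is representable as $\m = \Phi(W^\p,\u,m_0)$.

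I would then conclude by evaluating this map on the prescribed basis. Given the filtered probability space $(\Omega,\mathcal{F},\mathbb{F},\mathbb{P})$ of Definition \ref{Definition of strong solution}, equipped with the given Wiener process $W$, control $u$ and initial datum $m_0$, the process $m := \Phi(W,u,m_0)$ is $\mathbb{F}$-adapted and, by the defining property of $\Phi$, the tuple $(\Omega,\mathcal{F},\mathbb{F},\mathbb{P},W,m,u)$ is a weak martingale solution according to Definition \ref{Definition of Weak martingale solution}. This is exactly the strong solution demanded by Definition \ref{Definition of strong solution}. Its pathwise uniqueness is inherited verbatim from Theorem \ref{thm-pathwise uniqueness}, and uniqueness in law is the further standard consequence of pathwise uniqueness within the Yamada--Watanabe theory.

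I expect the main obstacle to be the careful adaptation of the classical Yamada--Watanabe theorem, usually formulated for equations driven purely by a finite-dimensional Brownian motion, to the present controlled and infinite-dimensional setting, where $u$ is a second exogenous input taking values in $L^2(0,T;L^2)$. The delicate point is the measure-theoretic bookkeeping: one must ensure that the joint laws, the regular conditional probabilities and the measurable selection underlying $\Phi$ are well defined on the product space involving the control. Since tightness of the controls was obtained on $L^2_w(0,T;L^2)$, I would use that closed balls of $L^2(0,T;L^2)$ are metrizable Polish spaces in the weak topology, which furnishes exactly the separable measurable-space structure the Yamada--Watanabe machinery requires. Once this structure is in place, the construction of two solutions on a common enlarged basis and the invocation of pathwise uniqueness are routine.
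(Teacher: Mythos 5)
Your proposal is correct and takes essentially the same route as the paper: the paper also upgrades weak existence (Theorem \ref{Theorem Existence of a weak solution}) plus pathwise uniqueness (Theorem \ref{thm-pathwise uniqueness}) to strong existence and uniqueness in law via an infinite-dimensional Yamada--Watanabe theorem, citing Theorem 2 (a special case of Theorem 12.1), Theorem 13.2 and Lemma E of \cite{Ondrejat_thesis} to obtain a Borel measurable map $J : C_{0}([0,T];\mathbb{R}) \to C([0,T];L^2) \cap L^2(0,T;H^1)$ with $m = J \circ W$. The only difference is bookkeeping: the paper holds the control $u$ fixed in the background so its solution map depends on the Wiener path alone, whereas you make $(W,u,m_0)$ explicit arguments of the solution map $\Phi$, which is a harmless (and arguably slightly more careful) variant of the same argument.
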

	\begin{proof}[Proof of Theorem \ref{thm-existence of a strong solution}]

		To prove the existence of a strong solution,  we apply Theorem 2 from \cite{Ondrejat_thesis}, which is a special case of Theorem 12.1 in the same reference.
		
		First, Theorem \ref{Theorem Existence of a weak solution} ensures that the problem \eqref{problem considered} admits a weak martingale solution for initial data and control process satisfying Assumption \ref{assumption on u}. Further, Theorem \ref{thm-pathwise uniqueness} ensures that the obtained solution is pathwise unique in the following sense. Let $\l( \Omega , \mathbb{F} , \mathcal{F} , \mathbb{P} , m_1 , u , W\r)$ and $\l( \Omega , \mathbb{F} , \mathcal{F} , \mathbb{P} , m_2 , u , W\r)$ be two weak martingale solutions corresponding to the same initial data $m_0$ and control $u$, on the same probability space. Let $m_1$ and $m_2$ satisfy the bounds in $(5)$ of Definition \ref{Definition of Weak martingale solution}. Then for each $t\in[0,t]$, we have $m_1(t) = m_2(t),\ \mathbb{P}-a.s.$.
		
		Let $C_{0}([0,T];\mathbb{R})$ denote the space
		\begin{equation*}
			\l\{ v \in C([0,T];\mathbb{R}) : v(0) = 0 \r\}.    
		\end{equation*}
		By part $(3)$ of Theorem 12.1, Theorem 13.2 and Lemma E, \cite{Ondrejat_thesis}, there exists a Borel measurable map 
		\begin{equation*}
			J : C_{0}([0,T];\mathbb{R}) \to C([0,T];L^2) \cap L^2(0,T;H^1)
		\end{equation*}
		such that the following holds. Let $\l( \Omega, \mathcal{F}, \mathbb{F}, \mathbb{P} \r)$ be a given filtered probability space along with a control process $u$, all satisfying Assumption \ref{assumption on u}. Let $W = \l(W(t)\r)_{t\in[0,T]}$ be an arbitrary real valued Wiener process on the said space. Let $m = J \circ W$. That is,
		\begin{equation*}
			m : \Omega \ni \omega \mapsto J(W(\omega)) \in C([0,T];L^2) \cap L^2(0,T;H^1).
		\end{equation*}
		Then, the tuple $\l( \Omega, \mathcal{F}, \mathbb{F}, \mathbb{P} , W , m , u\r)$ is a weak martingale solution to the problem \eqref{problem considered} on the space $\l( \Omega, \mathcal{F}, \mathbb{F}, \mathbb{P} \r)$.

		Therefore, given a filtered probability space $\l( \Omega , \mathbb{F} , \mathcal{F} , \mathbb{P} \r)$ along with initial data $m_0$ and a control process $u$ on the space, satisfying Assumption \ref{assumption on u}, we have shown that there exists a $\mathbb{F}$-adapted process $m$ such that the tuple $\l( \Omega , \mathbb{F} , \mathcal{F} , \mathbb{P} , W , m , u \r)$ is a weak martingale solution to the problem \eqref{problem considered}, thus showing the existence of a strong solution according to Definition \ref{Definition of strong solution}.

	\end{proof}

	\section{Further regularity: Proof of Theorem \ref{Theorem Further regularity}}\label{Section Further regularity}
	
	So far we have shown that there exists a strong solution to the problem \eqref{problem considered} with the initial condition and the given control satisfying the assumptions given in Theorem \ref{Theorem Existence of a weak solution}. This section is dedicated to proving further regularity for the above mentioned strong solution.


	Recall that by definition 
	\begin{equation*}
		Av = - \Delta v \ \text{for}\  v\in D(A),
	\end{equation*}
	and
	\begin{equation*}
		D(A) = \l\{ v\in H^2 : \frac{\partial v}{\partial \nu} = 0\ \text{on}\ \partial \mathcal{O} \r\},
	\end{equation*}
	where $\nu$ denotes the outward pointing normal vector and $\partial \mathcal{O}$ denotes the boundary of $\mathcal{O}$. In other words, the domain of $A$ is the subspace of elements of $H^2$ that satisfy the Neumann boundary condition.
	
	We also recall that $$A_1 = I_{L^2} + A.$$
	Here $I_{L^2}$ denotes the identity operator on the space $L^2$. Thus showing the bound for $\Delta m$ should be enough since $m$ is already bounded in $L^2$.
	
	
	
	The existence of the process $m$ is guaranteed by Theorem \ref{thm-existence of a strong solution}. What remains to show is that $m$ satisfies the inequality \eqref{Further regularity}.

	Let $\{ e^{-tA}\}_{t\in[0,T]}$ denote the semigroup generated by the operator $A$. The solution $m$ to the problem \eqref{problem considered} can be written in mild form, see for example, Section 6 in \cite{Prato+Zabczyk}, or the proof of first part of Theorem 9.15 in \cite{Peszat+Zabczyk}, as
	
	\begin{align}\label{Mild formulation 2}
		\nonumber m(t)  =& e^{-\alpha \, tA}m_0 + \alpha \, \int_{0}^{t} e^{-\alpha(t-s)A} (|\nabla m(s)|_{\mathbb{R}^3}^2)m(s)\, ds + \int_{0}^{t} e^{-\alpha(t-s)A} \left( m(s) \times \Delta m(s) \right)\, ds\\
		\nonumber & - \alpha \,  \int_{0}^{t} e^{-\alpha(t-s)A} \left[  m(s) \times \big( m(s) \times u(s) \big)  \right] \, ds  + \int_{0}^{t} e^{-\alpha(t-s)A} \bigl[ \big( m(s) \times u(s) \big) \bigr]\, ds \\
		\nonumber & + \alpha \, \int_{0}^{t}e^{-\alpha(t-s)A} \big(m(s)\times (m(s) \times h) \big) \, dW(s) + \int_{0}^{t} e^{-\alpha(t-s)A} (m(s) \times h)\, dW(s) \\
		& + \frac{1}{2} \int_{0}^{t} e^{-\alpha(t-s)A} \l[DG\big(m(s)\big)\r]G\big(m(s)\big) \, ds.
	\end{align}
	
	%
	
	\dela{The idea for this is to show that a weak solution to the problem \eqref{problem considered} is also a mild solution. Here that argument is not required since we already have the existence of a strong solution and a strong solution is also a mild solution.\adda{  See \cite{Peszat+Zabczyk}, Theorem 9.15, page number 151 (First part of the theorem).}For simplicity, we set $\alpha \, = 1$ in the proof.}
	
	\textbf{Idea of the proof of \eqref{Further regularity}:} The proof will primarily consist of two steps. Step 1 shows the bound on the first term in the inequality \eqref{Further regularity}. We consider the above mentioned mild formulation \eqref{Mild formulation 2}. Instead of showing the bound directly on the process $m$, the bound will be shown on each term on the right hand side of \eqref{Mild formulation 2}.
	Step 2 will use the bound so obtained to show a bound on the second term in the inequality \eqref{Further regularity}.
	
	The following properties of the operators $A,A_1$ will be used throughout the proof.
	
	\begin{enumerate}
		\item $e^{-tA}$ is ultracontractive, see Section 7.2.7 in \cite{Arendt}.
		That is, for $1\leq p\leq q\leq \infty$, there exists a constant $C>0$ such that
		\begin{equation}\label{ultracontractive property}
			\l| e^{-tA} f \r|_{L^q} \leq \frac{C}{t^{\frac{1}{2}\l(\frac{1}{p} - \frac{1}{q}\r)}  } \l| f \r|_{L^p}\ \text{for}\ f\in L^p,\ t>0.
		\end{equation}
		
		\item $A$ has the maximal regularity property. Let $f\in L^2\l( 0,T;L^2 \r)$ and
		\begin{align*}
			v(t) = \int_{0}^{t}e^{-(t-s)A}f(s)\, ds,\,\quad t\in[0,T].
		\end{align*}
		Then we have
		\begin{align}\label{maximal regularity property}
			\int_{0}^{t}\l| A v(t) \r|_{L^2}^2\, dt \leq C \int_{0}^{t}\l| f(t) \r|_{L^2}^2\, dt.
		\end{align}
		
		\item The operator $A_1 = I + A$ generates a semigroup (denoted by $e^{-tA_1}$), see Theorem 1.1 in \cite{Pazy}.
		
		Thus using \eqref{ultracontractive property} for $f\in L^p$ and $t>0$, we get
		\begin{align}\label{Norm of e^-tA_1}
			\nonumber \l| e^{-tA_1} f \r|_{L^q} &= \l| e^{-tA} e^{-tI} f \r|_{L^q} \\
			\nonumber& \leq C\l| e^{-tA} f \r|_{L^q} \\
			& \leq \frac{C}{t^{\frac{1}{2}\l(\frac{1}{p} - \frac{1}{q}\r)}  } \l| f \r|_{L^p}.
		\end{align}
		
		\item The operators $A^{\delta}e^{-tA}$ and $A_1^{\delta}e^{-tA_1}$ are bounded on $L^2$, see Theorem 6.13 in \cite{Pazy}. Moreover, there exists a constant $C>0$ such that
		\begin{align}\label{Norm A delta e to the power A bound}
			\l| A^{\delta} e^{-tA} \r| \leq \frac{C}{t^{\delta}}
		\end{align}
		and
		\begin{align}\label{Norm A1 delta e to the power A1 bound}
			\l| A_1^{\delta} e^{-tA_1} \r| \leq \frac{C}{t^{\delta}}.
		\end{align}
		Here $\l| A^{\delta} e^{-tA} \r|$ and $\l| A_1^{\delta} e^{-tA_1} \r|$ denote the operator norms of $A^{\delta} e^{-tA}$ and $ A_1^{\delta} e^{-tA_1}$ respectively.
	\end{enumerate}

	\textbf{Step 1:} We show that
	\begin{align}\label{Step 1 bound}
		\mathbb{E}  \int_{0}^{T} | \nabla m(t) |_{L^4}^4\, dt < \infty.
	\end{align}
	The following Sobolev embedding holds for $\delta \in \l( \frac{5}{8} , \frac{3}{4} \r)$ , see Lemma \ref{Lemma Sobolev embedding ref Wong Zakai}.
	
	
	\begin{align*}
		X^{\delta} \hookrightarrow W^{1,4}.
	\end{align*}
	\dela{
		Hence there exists some constant $C>0$ such that
		\begin{align*}
			\l| . \r|_{W^{1,4}} \leq C\l| . \r|_{X^{\delta}}.
		\end{align*}
	}
	It is thus sufficient to prove the following stronger estimate to show \eqref{Step 1 bound}.
	
	\begin{align}\label{Step 1 stronger estimate}
		\mathbb{E} \int_{0}^{T} \l| A_1^{\delta} m(t) \r|_{L^2}^4\, dt < \infty.
	\end{align}
	
	We recall that for $v\in X^{\delta} = D(A_1^{\delta})$,
	\begin{equation*}
		\l|v\r|_{X^{\delta}} = \l| A_1^{\delta} v \r|_{L^2}.
	\end{equation*}
	

	The step will be further divided into 3 sub steps. The first dealing with the first two terms appearing in the equality \eqref{Mild formulation 2}. In the second sub step, we consider a function $f$ satisfying certain bounds and show the bounds for this $f$. The idea is that the remaining terms in \eqref{Mild formulation 2} (except the terms with the stochastic integral) fall into this category and hence it suffices to show the calculations for $f$. The third sub step deals with the terms that contain the stochastic integral.
	\textbf{Sub step 1:}\\
	\dela{
		Consider the first term $e^{-\alpha \, t A}m_0$.
		\begin{align*}
			|A_1^{\delta} e^{-\alpha \, t A}m_0|_{L^2}^4 &= |\l( I + A \r)^{\delta} e^{\alpha \, t I_{L^2}}e^{-\alpha \, t \l( A + I \r)}m_0|_{L^2}^4 \ (\text{Since}\ A_1 = I_{L^2} + A)\\
			& \leq Ce^t |A_1^{\delta} e^{-\alpha \, t A_1}m_0|_{L^2}^4 \ (\text{Since}\  \l|e^{\alpha \, t I_{L^2}}\r| \leq C e^{\alpha \, t})\\
			& \leq Ce^T |A_1^{\delta -\frac{1}{2}} e^{-\alpha \, t A_1} A^{\frac{1}{2}} m_0|_{L^2}^4 \ (\text{Since}\  \delta = \delta - \frac{1}{2} + \frac{1}{2})\\
			& \leq \frac{C}{t^{4(\frac{2\delta -1}{2})}}\l| A^{\frac{1}{2}} m_0 \r|_{L^2}^4\ (\text{By}\ \eqref{Norm A1 delta e to the power A1 bound})\\
			& \leq \frac{C}{t^{4\delta - 2}}\l|m_0\r|_{H^1}^4.\ (\text{Since}\ \l|A_1^{\frac{1}{2}}\centerdot\r|_{L^2} = \l|\centerdot\r|_{H^1})
		\end{align*}
	}
	Consider the first term $e^{- t A}m_0$.
	\begin{align*}
		|A_1^{\delta} e^{- t A}m_0|_{L^2}^4 &= |\l( I + A \r)^{\delta} e^{ t I_{L^2}}e^{- t \l( A + I \r)}m_0|_{L^2}^4 \ (\text{Since}\ A_1 = I_{L^2} + A)\\
		& \leq Ce^t |A_1^{\delta} e^{- t A_1}m_0|_{L^2}^4 \ (\text{Since}\  \l|e^{ t I_{L^2}}\r| \leq C e^{ t})\\
		& \leq Ce^T |A_1^{\delta -\frac{1}{2}} e^{- t A_1} A_1^{\frac{1}{2}} m_0|_{L^2}^4 \ (\text{Since}\  \delta = \delta - \frac{1}{2} + \frac{1}{2})\\
		& \leq \frac{C}{t^{4(\frac{2\delta -1}{2})}}\l| A_1^{\frac{1}{2}} m_0 \r|_{L^2}^4\ (\text{By}\ \eqref{Norm A1 delta e to the power A1 bound})\\
		& \leq \frac{C}{t^{4\delta - 2}}\l|m_0\r|_{H^1}^4.\ (\text{Since}\ \l|A_1^{\frac{1}{2}}\cdot\r|_{L^2} = \l|\cdot\r|_{H^1})
	\end{align*}	
	Hence
	\begin{align*}
		\int_{0}^{T} |A_1^{\delta} e^{- t A}m_0|_{L^2}^4 \, dt \leq \l|m_0\r|_{H^1}^4 \int_{0}^{T}\frac{C}{t^{4\delta - 2}} \, dt.
	\end{align*}	
	Since $\delta < \frac{3}{4}$, the integral on the right hand side of the above inequality is finite. Hence there exists a constant $C>0$ such that
	\begin{equation}
		\int_{0}^{T} |A_1^{\delta} e^{- t A}m_0|_{L^2}^4 \, dt \leq C.
	\end{equation}
	And hence
	\begin{equation}
		\mathbb{E}\int_{0}^{T} |A_1^{\delta} e^{- t A}m_0|_{L^2}^4 \, dt \leq C.
	\end{equation}
	For the second term, first we observe the following. Let $t\in [0,T]$.
	\begin{align*}
		\int_{\mathcal{O}} \l|\nabla m(t,x)\r|_{\mathbb{R}^3}^2\l|m(t,x)\r|_{\mathbb{R}^3}\, dx = \int_{\mathcal{O}} \l|\nabla m(t,x)\r|_{\mathbb{R}^3}^2 \, dx \leq \l| m(t) \r|_{H^1}^2.
	\end{align*}
	Hence
	\begin{align*}
		\sup_{t\in[0,T]} \int_{\mathcal{O}} \l|\nabla m(t,x)\r|_{\mathbb{R}^3}^2\l|m(t,x)\r|_{\mathbb{R}^3}\, dx \leq \sup_{t\in[0,T]} \l| m(t) \r|_{H^1}^2.
	\end{align*}
	For simplicity of notation, let $g(s) = \l|\nabla m(s)\r|_{\mathbb{R}}^2 m(s)$.
	\begin{align*}
		\l| A_1^{\delta} e^{-(t-s)A} g (s) \r|_{L^2} &\leq C \l|A_1^{\delta} e^{-(t-s)A_1}g(s)\r|_{L^2} \\
		& = C \l|A_1^{\delta} e^{-\frac{(t-s)}{2}A_1}e^{-\frac{(t-s)}{2}A_1} g(s)\r|_{L^2} \\
		& \leq C \l| A_1^{\delta} e^{-\frac{(t-s)}{2}A_1} \r| \l| e^{-\frac{(t-s)}{2}A_1} g(s) \r|_{L^2} \\
		& \leq C \l| A_1^{\delta} e^{-\frac{(t-s)}{2}A_1} \r| \frac{1}{(t-s)^{\frac{1}{4}}}\l| g(s) \r|_{L^1} \ (\text{By} \ \eqref{Norm of e^-tA_1}\ \text{with}\ p = 1, q = 2)\\
		& \leq \frac{C}{\l(t-s\r)^{\delta + \frac{1}{4}}}\l|g(s)\r|_{L^1}\ \text{By}\ \eqref{Norm A1 delta e to the power A1 bound} \\
		& \leq \frac{C}{\l(t-s\r)^{\delta + \frac{1}{4}}}\l|m(s)\r|_{H^1}^2.
	\end{align*}
	Therefore,
	
	\begin{align*}
		\int_{0}^{T}\l| \int_{0}^{t} A_1^{\delta} e^{-(t-s)A} g(s) \, ds \r|_{L^2}^4 \, dt &\leq C \sup_{t\in[0,T]}\l|m(s)\r|_{H^1}^8\int_{0}^{T}\int_{0}^{t}\l( \frac{1}{\l(t-s\r)^{\delta + \frac{1}{4}}} \, ds \r)^4 \, dt .
	\end{align*}
	Since $\delta < \frac{3}{4}$, that is $\delta + \frac{1}{4} < 1$, the integration on the right hand side is finite.\\
	Hence there exists a constant $ C>0 $ such that
	\begin{equation}
		\mathbb{E} \int_{0}^{T}\l| \int_{0}^{t} A_1^{\delta} e^{-(t-s)A} g(s) \, ds \r|_{L^2}^4 \, dt \leq C.
	\end{equation}
	\textbf{Sub step 2:}\\
	Consider a function $f\in L^4\l( \Omega; L^2\l( 0,T; L^2 \r) \r)$.\\
	There exists constants $C_1,C_2>0$ such that
	\begin{align*}
		\l| A_1^{\delta}e^{-(t-s)A} f(s) \r|_{L^2} &= \l| A_1^{\delta}e^{-(t-s)A_1} e^{(t-s) I_{L^2} } f(s) \r|_{L^2} \\
		& \leq \l| A_1^{\delta}e^{-(t-s)A_1} e^{(t-s) I_{L^2} } \r| \l| f(s) \r|_{L^2} \\
		& \leq C_1 \l| A_1^{\delta}e^{-(t-s)A_1} \r| \l| f(s) \r|_{L^2} \ (\text{Since}\ \l|e^{(t-s)I_{L^2}}\r| \leq C_1)\\
		& \leq \frac{C_2}{(t-s)^{\delta}} \l| f(s) \r|_{L^2}.\ (\text{By}\ \eqref{Norm A1 delta e to the power A1 bound})
	\end{align*}
	Therefore replacing the constants $C_1,C_2$ above by a suitable constant $C$, we get
	\begin{align*}
		\int_{0}^{T} \l( \int_{0}^{t} \l| A_1^{\delta}e^{-(t-s)A} f(s) \r|_{L^2}\, ds \r)^4 \, dt \leq C \int_{0}^{T} \l( \int_{0}^{t} \frac{1}{(t-s)^{\delta}} \l| f(s) \r|_{L^2}\, ds \r)^4 \, dt,
	\end{align*}
	Using Young's convolution inequality for $p=\frac{4}{3}$ and $q=2$, we get
	\begin{align*}
		\int_{0}^{T} \l( \int_{0}^{t} \frac{1}{(t-s)^{\delta}} \l| f(s) \r|_{L^2}\, ds \r)^4 \, dt \leq  \l( \int_{0}^{T} s^{-\frac{4\delta}{3}}\, ds \r)^{\l(\frac{3}{4}\r)\l(4\r)} \l( \int_{0}^{T} \l|f(s)\r|_{L^2}^2\, ds\r)^2.
	\end{align*}
	That $\delta<\frac{3}{4}$ implies $\frac{4\delta}{3}<1$. Hence the first integral on the right hand side of the above inequality is finite. Hence
	\begin{align*}
		\int_{0}^{T} \l( \int_{0}^{t} \frac{1}{(t-s)^{\delta}} \l| f(s) \r|_{L^2}\, ds \r)^4 \, dt \leq C \l( \int_{0}^{T} \l|f(s)\r|_{L^2}^2\, ds\r)^2.
	\end{align*}
	Therefore
	\begin{align*}
		\mathbb{E} \int_{0}^{T} \l( \int_{0}^{t} \frac{1}{(t-s)^{\delta}} \l| f(s) \r|_{L^2}\, ds \r)^4 \, dt  \leq C \mathbb{E} \l( \int_{0}^{T} \l|f(s)\r|_{L^2}^2\, ds\r)^2 < \infty.
	\end{align*}

	Now consider the remaining terms on the right hand side of the equality \eqref{Mild formulation 2}, except for the terms with the It\^o integral.
	\dela{ By Theorem \ref{Theorem Existence of a weak solution}, we can shown that each integrands (except the stochastic term) belong to the space $L^4(\Omega;L^2(0,T;L^4))$.} \\
	By Theorem \ref{Theorem Existence of a weak solution}, the solution $m$ takes values on the unit sphere in $\mathbb{R}^3$.
	By the bounds mentioned in Theorem \ref{Theorem Existence of a weak solution} and the Assumption \ref{assumption on u} on the control process $u$, we have
	\begin{align}
		m\times \Delta m \in L^4\l(\Omega ; L^2\l( 0,T; L^2\r)\r).
	\end{align}
	The constraint condition \eqref{eqn-constraint condition} implies that
	\begin{align}
		m \times \l(m\times \Delta m\r) \in L^4\l(\Omega ; L^2 \l(0,T;L^2\r)\r).
	\end{align}
	The assumption on $u$, viz. \ref{assumption on u} along with the constraint condition \eqref{eqn-constraint condition} and the assumption on the function $h$ implies that
	\begin{align}
		m\times u \in L^4\l(\Omega ; L^2\l( 0,T; L^2\r)\r),
	\end{align}
	
	\begin{align}
		m \times \l(m\times u\r) \in L^4\l(\Omega ; L^2 \l(0,T;L^2\r)\r),
	\end{align}
	and
	\begin{align}
		DG\l(m\r)\bigl(G(m)\bigr) \in L^4\l(\Omega ; L^2\l( 0,T; L^2\r)\r).
	\end{align}
	Note that the Assumption \ref{assumption on u} has been applied here for $p = 2$.\\
	Hence each of the integrands (except for the terms with the It\^o integral) takes values in\\
	$L^4\l( \Omega; L^2\l( 0,T; L^2 \r) \r)$. Hence by replacing $f$ in the above calculations by the integrands, one can show that each of the terms also satisfies the required bounds.\\
	\dela{By the bounds established in Theorem \ref{Theorem Existence of a weak solution}, each of the integrands (except for the terms with the It\^o integral) takes values in $L^4\l( \Omega; L^2\l( 0,T; L^2 \r) \r)$. Hence by replacing $f$ in the above calculations by the integrands, one can show that each of the terms also satisfies the required bounds.\\}
	\textbf{Sub step 3:}\\
	What remains now is the It\^o integral term.
	Recall that by Proposition \ref{prop-derivative} and the bound on the process $m$ in Theorem \ref{Theorem Existence of a weak solution},
	\begin{align}\label{eqn 1 Further regularity theorem}
		\mathbb{E}\int_{0}^{T}\l| G(m(t)) \r|_{H^1}^4\, dt \leq C \mathbb{E}\int_{0}^{T}\l| m(t) \r|_{H^1}^2 < \infty.
	\end{align}
	
	\begin{align*}
		\mathbb{E}\l| \int_{0}^{t} A_1^{\delta} e^{-(t-s)A_1}G(m(s))\,dW(s) \r|_{L^2}^4 &\leq C \mathbb{E} \l( \int_{0}^{t} \l| A_1^{\delta} e^{-(t-s)A_1}G(m(s)) \r|^2_{L^2}\, ds \r)^2 \\
		&\quad(\text{See Proposition 7.3 \cite{Prato+Zabczyk}}) \\
		& \leq C \mathbb{E} \l( \int_{0}^{t} \l| A_1^{\delta- \frac{1}{2}} e^{-(t-s)A_1} A_1^{\frac{1}{2}} G(m(s)) \r|^2_{L^2}\, ds \r)^2 \\
		&\quad(\text{Since}\ \delta = \delta - \frac{1}{2} + \frac{1}{2}) \\
		& \leq C \mathbb{E} \l( \int_{0}^{t} \frac{1}{(t-s)^{2\delta - 1}}\l| A_1^{\frac{1}{2}}G(m(s)) \r|^2_{L^2}\, ds \r)^2\ (\text{By}\ \eqref{Norm A1 delta e to the power A1 bound})\\
		& \leq C \mathbb{E} \l( \int_{0}^{t} \frac{1}{(t-s)^{2\delta - 1}}\l| G(m(s)) \r|^2_{H^1}\, ds \r)^2\\
		& \quad(\text{Since}\ \l|A_1^{\frac{1}{2}}\centerdot\r|_{L^2} = \l|\centerdot\r|_{H^1}) \\
		& \leq C \mathbb{E} \l( \int_{0}^{t} \frac{1}{(t-s)^{4\delta - 2}} \, ds\r) \mathbb{E} \l(\int_{0}^{t} \l| G(m(s)) \r|^4_{H^1}\, ds\r).\\
		&\quad\text{(Cauchy-Schwartz inequality)}
	\end{align*}
	
	Here $\delta < \frac{3}{4}$ implies that $4\delta - 2 < 1$. Hence the first integral is finite. The second integral is finite because of the inequality \eqref{eqn 1 Further regularity theorem}. Hence combining all the inequalities, the bound \eqref{Step 1 stronger estimate}, and hence \eqref{Step 1 bound} is shown.	
	\\
	\textbf{Step 2:}
	This step uses the following identity. Let $a,b\in\mathbb{R}^3$.
	\begin{align}\label{vector identity a times b plus a dot b}
		\l| a \times b \r|_{\mathbb{R}^3}^2 + \l| \l\langle a , b\r\rangle_{\mathbb{R}^3} \r| = \l|a\r|_{\mathbb{R}^3}^2 \l|b\r|_{\mathbb{R}^3}^2.
	\end{align}
	\textbf{Brief proof of the equality \eqref{vector identity a times b plus a dot b}:}
	\begin{align*}
		\l| a \times b \r|_{\mathbb{R}^3}^2 &= \l\langle a \times b , a \times b \r\rangle_{\mathbb{R}^3} 
		= \l\langle a  ,  b \times \l( a \times b \r) \r\rangle_{\mathbb{R}^3}.
	\end{align*}
	
	We expand the right hand side using the triple product formula and simplify to get the identity \eqref{vector identity a times b plus a dot b}.\\
	For Leb. a.a. $x\in\mathcal{O},t\in[0,T]$, the following equality holds $\mathbb{P}$-a.e.
	
	\begin{align*}
		\l| m(t,x) \times \Delta m(t,x) \r|_{\mathbb{R}^3}^2 + \l| \l\langle m(t,x) , \Delta m(t,x) \r\rangle_{\mathbb{R}^3} \r|^2 &= \l| m(t,x) \r|_{\mathbb{R}^3}^2 \l| \Delta m(t,x) \r|_{\mathbb{R}^3}^2 \\
		& = \l| \Delta m(t,x) \r|_{\mathbb{R}^3}^2.
	\end{align*}
	Hence to show the bound on the second term, it is sufficient to show the corresponding bound on the two terms on the left hand side of the above equality.
	
	For the second term,
	\begin{align*}
		\mathbb{E} \int_{0}^{T} \int_{\mathcal{O}} \l| \l\langle m(t,x) , \Delta m(t,x) \r\rangle_{\mathbb{R}^3} \r|^2\, ds\, dt = \mathbb{E} \int_{0}^{T} \int_{\mathcal{O}} \l| \nabla m(t,x) \r|_{\mathbb{R}^3}^4\, ds\, dt.
	\end{align*}
	The right hand side of the above equality is finite because of the bound \eqref{Step 1 bound} in Step 1.	
	This, along with the bound in Theorem \ref{Theorem Existence of a weak solution} (for the first term) concludes the proof of the bound on the second term.
	
	Hence the proof of Theorem \ref{Further regularity} is complete.

	\begin{lemma}\label{Lemma continuous in time with values in H1.}
		The process $m$ lies in the space $C\l(\l[0,T\r] ; H^1\r)$ $\mathbb{P}-$a.s..
	\end{lemma}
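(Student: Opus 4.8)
The plan is to work from the mild formulation \eqref{Mild formulation 2} and to establish, separately, that each of the three structurally different pieces of the right-hand side — the term $e^{-\alpha \, tA}m_0$, the sum of the deterministic convolution integrals, and the stochastic convolution integrals — has paths in $C([0,T];H^1)$ $\mathbb{P}$-a.s. Since $H^1=D(A_1^{\frac{1}{2}})$ with equivalent norms, proving continuity in $H^1$ is the same as proving continuity in $D(A_1^{\frac{1}{2}})$, and since continuity is preserved under finite sums, it suffices to treat the three contributions one at a time.

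First, for the term $e^{-\alpha \, tA}m_0$: because $m_0\in H^1=D(A_1^{\frac{1}{2}})$ and $A_1$ generates an analytic, strongly continuous semigroup on $L^2$ that commutes with $A_1^{\frac{1}{2}}$, the map $t\mapsto A_1^{\frac{1}{2}}e^{-\alpha \, tA}m_0 = e^{\alpha \, t}\,e^{-\alpha \, tA_1}\bigl(A_1^{\frac{1}{2}}m_0\bigr)$ is continuous in $L^2$ on $[0,T]$ (including at $t=0$, by strong continuity applied to $A_1^{\frac{1}{2}}m_0\in L^2$); hence $t\mapsto e^{-\alpha \, tA}m_0$ is continuous in $H^1$. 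Second, for the deterministic convolutions: by Theorem \ref{Theorem Further regularity}, the constraint condition \eqref{eqn-constraint condition}, and the assumption \eqref{assumption on u} on $u$, each of the integrands $m\times\Delta m$, $m\times(m\times\Delta m)$, $|\nabla m|_{\mathbb{R}^3}^2 m$, $m\times u$, $m\times(m\times u)$ and $[DG(m)][G(m)]$ lies in $L^2(0,T;L^2)$ $\mathbb{P}$-a.s. For such an $f$, the maximal regularity property \eqref{maximal regularity property} gives $v:=\int_0^{\cdot}e^{-\alpha(\cdot-s)A}f(s)\,ds\in L^2(0,T;D(A_1))$, and the identity $v'=-\alpha \, A v+f$ then yields $v'\in L^2(0,T;L^2)$; the trace embedding $W^{1,2}(0,T;L^2)\cap L^2(0,T;D(A_1))\hookrightarrow C\bigl([0,T];(L^2,D(A_1))_{\frac{1}{2},2}\bigr)=C([0,T];D(A_1^{\frac{1}{2}}))=C([0,T];H^1)$ gives the desired continuity.

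Third, and this is where the genuine work lies, I would treat the stochastic convolution $Z(t)=\int_0^t e^{-\alpha(t-s)A}\,\Phi_0(s)\,dW(s)$, where $\Phi_0(s)$ is the (scalar-noise) integrand obtained by combining the two It\^o integrals in \eqref{Mild formulation 2}; this $\Phi_0$ is a polynomial in $m(s)$ and $h$ of the same form as $G(m)$ and therefore obeys the $H^1$ bound established in \eqref{eqn 1 Further regularity theorem}. I would use the Da Prato--Kwapie\'n--Zabczyk factorization. Because $A_1^{\frac{1}{2}}$ commutes with the semigroup, $A_1^{\frac{1}{2}}Z(t)=\int_0^t e^{-\alpha(t-s)A}\,\Phi(s)\,dW(s)$ with $\Phi:=A_1^{\frac{1}{2}}\Phi_0$, so continuity of $Z$ in $H^1$ reduces to continuity in $L^2$ of this process, whose integrand satisfies $\mathbb{E}\int_0^T|\Phi(s)|_{L^2}^4\,ds=\mathbb{E}\int_0^T|\Phi_0(s)|_{H^1}^4\,ds<\infty$ by \eqref{eqn 1 Further regularity theorem}. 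Fixing $\beta\in(\tfrac14,\tfrac12)$ and $p=4$ (so that $\beta p>1$ and $2\beta<1$) and writing $A_1^{\frac{1}{2}}Z(t)=\tfrac{\sin\pi\beta}{\pi}\int_0^t (t-s)^{\beta-1}e^{-\alpha(t-s)A}\,Y(s)\,ds$ with $Y(s)=\int_0^s (s-r)^{-\beta}e^{-\alpha(s-r)A}\,\Phi(r)\,dW(r)$, a Burkholder--Davis--Gundy estimate together with the boundedness of the semigroup on $L^2$ bounds $\mathbb{E}\int_0^T|Y(s)|_{L^2}^4\,ds$ by a constant times $\mathbb{E}\bigl(\int_0^T|\Phi|_{L^2}^2\bigr)^2$, which is finite by H\"older and the preceding moment bound; the condition $\beta p>1$ then ensures that the singular deterministic convolution sends $Y\in L^4(0,T;L^2)$ into $C([0,T];L^2)$ $\mathbb{P}$-a.s.

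The main obstacle I expect is exactly this last step: guaranteeing that the noise coefficient is $H^1$-valued with a finite fourth moment in time (which is the precise content of \eqref{eqn 1 Further regularity theorem}) and choosing the factorization exponent $\beta$ compatibly with that moment, so that the BDG bound on $Y$ (requiring $2\beta<1$) and the continuity of the outer convolution (requiring $\beta p>1$) hold simultaneously. Once the three pieces are shown to have continuous $H^1$-valued paths, their sum does as well, which yields $m\in C([0,T];H^1)$ $\mathbb{P}$-a.s. and completes the proof.
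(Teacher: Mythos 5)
Your proof is correct, but it follows a genuinely different route from the paper's. The paper works with the projections $m^k := P_k m$ onto the finite-dimensional spaces $H_k$: it writes the equation satisfied by $m^n - m^k = (P_n - P_k)m$, applies the It\^o formula to $v \mapsto \tfrac{1}{2}|v|_{H^1}^2$, estimates every term via Cauchy--Schwarz and Burkholder--Davis--Gundy using the moment bounds of Theorem \ref{Theorem Existence of a weak solution}, then lets $n\to\infty$ (monotone convergence) and $k\to\infty$ to conclude $\mathbb{E}\sup_{t\in[0,T]}|m(t)-m^k(t)|_{H^1}^2 \to 0$; since each $P_k m$ has paths in $C([0,T];H_k)\subset C([0,T];H^1)$, uniform convergence (a.s.\ along a subsequence) gives the claim. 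You instead exploit the mild formulation \eqref{Mild formulation 2} and treat the three pieces separately: strong continuity of the analytic semigroup on $D(A_1^{1/2})$ for the initial-data term, maximal regularity \eqref{maximal regularity property} plus the Lions--Magenes trace embedding $W^{1,2}(0,T;L^2)\cap L^2(0,T;D(A_1)) \hookrightarrow C([0,T];D(A_1^{1/2}))$ for the deterministic convolutions, and the Da Prato--Kwapie\'n--Zabczyk factorization (with $\beta\in(\tfrac14,\tfrac12)$, $p=4$, fed by \eqref{eqn 1 Further regularity theorem}) for the stochastic convolution. Both proofs rest on the same a priori estimates (in particular Theorem \ref{Theorem Further regularity}, which you need to place $|\nabla m|_{\mathbb{R}^3}^2 m$ in $L^2(0,T;L^2)$ a.s., and which the paper's proof also postdates), so neither argument is circular. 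The paper's proof is more self-contained, using only tools already developed in the text; yours imports two external results (the trace/interpolation theorem and the factorization method, the latter consistent with the paper's citation of \cite{Prato+Zabczyk}), but in exchange it is more modular, makes transparent exactly where each regularity hypothesis enters (the fourth moment of $|G(m)|_{H^1}$ and the compatibility $1/p < \beta < 1/2$), and separates the deterministic smoothing from the stochastic one. Two points you should make explicit in a final write-up: the interchange $A_1^{1/2}\int_0^t e^{-\alpha(t-s)A}\Phi_0\,dW = \int_0^t e^{-\alpha(t-s)A}A_1^{1/2}\Phi_0\,dW$, justified by closedness of $A_1^{1/2}$ together with your $H^1$-moment bound on $\Phi_0$, and the stochastic Fubini theorem underlying the factorization identity; both are standard and valid under your integrability assumptions.
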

	
	We postpone the proof of this lemma to Appendix \ref{Section Proof of Lemma Lemma continuous in time with values in H1}.

	\dela{
		\adda{This lemma is not directly needed. Try to write the proof of the previous theorem in a different way. First prove one bound and then show the other bound.}
		
		\adda{	\begin{lemma}\label{Further regularity intermediate Lemma}
				Let there exist a constant $C_1>0$ such that
				\begin{equation*}
					\mathbb{E}  \int_{0}^{T} |\nabla  m(s)|_{L^4}^4\, ds \leq C_1.
				\end{equation*}
				Then there exists another constant $C_2>0$ such that
				\begin{equation}
					\mathbb{E} \int_{0}^{T} |A_1 m(s)|_{L^2}^2\, ds \leq C_2.
				\end{equation}
				\adda{\begin{equation*}
						\mathbb{E} \int_{0}^{T} |\Delta m(s)|_{L^2}^2\, ds \leq C_2.
					\end{equation*}
					In the proof state that proving this is sufficient to prove a bound on $A_1 m$.}
			\end{lemma}
			\begin{proof}[Proof of Lemma \ref{Further regularity intermediate Lemma}]
				
				By the hypothesis, there exists a constant $C>0$ such that
				\begin{align*}
					\mathbb{E} \int_{0}^{T}\left| \left|\nabla m(t) \right|_{\mathbb{R}^3}^2 m(t) \right| \leq C.
				\end{align*}
				The idea of the proof is to use the It\^o Lemma for the function $$v\mapsto \frac{1}{2}|\nabla v|_{L^2}^2,\ \text{for}\ v\in H^1.$$, as done previously. The difference between the previous and the following calculations is that the negative coefficient of $(\Delta m(t))$ enables us to take the term on the left hand side of the inequality and thus bound it. Note that apart from the $\Delta m(t)$ term, another term, viz. $|\nabla m(t)|_{\mathbb{R}^3}m(t)$ is also present, which was not explicitly present in the previous calculations.
				
				The term $\Delta m(t)$ is understood as an element of $(H^1)^{\prime}$. Thus, as done in the Theorem \ref{thm-pathwise uniqueness}, we apply the It\^o Lemma after applying the projection operator $\mathscr{P}_n$ for $n\in\mathbb{N}$. Then we obtain a bound on the right hand side that is independent of $n$.
				
				First we note the following calculations for $n\in\mathbb{N}$ and $t\in[0,T]$.
				\begin{align*}
					\int_{0}^{t}\l\langle -\Delta \mathscr{P}_n m(s) , \Delta \mathscr{P}_n m(s) \r\rangle_{L^2}\, ds = - \int_{0}^{t} |\Delta \mathscr{P}_n m(s)|_{L^2}^2\,ds
				\end{align*}
				Let $z_i$ denote the integrand on the right hand side of \adda{Equation after using the fact that $|m|_{\mathbb{R}^3} = 1$ and the triple product formula}. The above equality deals with one term. This term will be used to absorb the other $\int_{0}^{t} \Delta \mathscr{P}_n m(s)\,ds$ terms arising from other calculations.
				
				By the Young's inequality, for some $\varepsilon > 0 $,
				\begin{align*}
					\int_{0}^{t} |\l\langle \Delta \mathscr{P}_n m(s) , z_i(s) \r\rangle_{L^2}|\,ds \leq \frac{\varepsilon}{2}\int_{0}^{t}|\Delta \mathscr{P}_n m(s)|_{L^2}^2\, ds + \frac{C(\varepsilon)}{2}\int_{0}^{t} |z_i(s)|_{L^2}^2\, ds.
				\end{align*}
				Hence
				\begin{align*}
					\mathbb{E} \int_{0}^{t} |\l\langle \Delta \mathscr{P}_n m(s) , z_i(s) \r\rangle_{L^2}|\,ds \leq \frac{\varepsilon}{2} \mathbb{E} \int_{0}^{t}|\Delta \mathscr{P}_n m(s)|_{L^2}^2\, ds + \frac{C(\varepsilon)}{2} \mathbb{E} \int_{0}^{t} |z_i(s)|_{L^2}^2\, ds.
				\end{align*}

				The second term on the right hand side of the above inequality can be bounded by a constant (after taking the expectation.) The term that is to be dealt with differently is $\int_{0}^{T}\left| \left|\nabla m(t) \right|_{\mathbb{R}^3}^2 m(t) \right|$. But the hypothesis mentioned in the statement takes care of this term.
			\end{proof}
			Hence using the above lemma, to prove the Theorem \ref{Theorem Further regularity}, it is sufficient to show the bound only on the first term in \eqref{Further regularity}.
			\adda{Mention Lemmata corresponding to other way of writing $m\times (m\times \Delta m)$, existence of a mild solution, other bound Lemmata required, etc.}
			\begin{proof}[Proof of Theorem \ref{Theorem Further regularity}]
				Proof of existence of a unique strong solution is proved in Corollary \ref{thm-existence of a strong solution}. We now show the bound \eqref{Further regularity}.
	\end{proof}}}

	\section{Proof of Theorem \ref{Theorem existence of optimal control} : Optimal control}\label{Section Optimal control}
	The objective of this section is to show that there exists an optimal control to the problem \eqref{problem considered}, with an appropriate\dela{ cost functional and } admissibility criterion. We fix a probability space $(\Omega , \mathcal{F} , \mathbb{P})$ as in Section \ref{Section Statements of the main Results}.
	\begin{proof}[Outline of the section:]
		We start by giving an equivalent equation \eqref{problem considered for optimal control part} to equation \eqref{problem considered}. We follow it up with the definition of a \textit{strong martingale solution} to the problem in Definition \ref{Definition Strong martingale solution}. Assumption \ref{Assumption admisibility criterion} outlines the assumption that is required on the control processes. The class  $\mathcal{U}_{ad}(m_0,T)$ of admissible solutions is then defined. This is followed by a proof for Theorem \ref{Theorem existence of optimal control}.
	\end{proof}
	\dela{\\
		We recall the optimal control problem here for the reader's convenience.\\ We have $\bar{m}\in L^2(\Omega ; L^2(0,T; H^1))$, a given desired state with values on the unit sphere $\mathcal{S}^2$. The terminal cost given by the function $\Psi$, that is assumed to be continuous on $L^2$. For a fixed $0 < T < \infty$, our aim is to minimize the cost functional
		
		\begin{align}
			J(\pi) = \mathbb{E} \l[ \int_{0}^{T} \l( \l|m(t) - \bar{m}(t)\r|_{H^1}^2 + \l| u(t) \r|_{L^2}^2 \r) + \Psi(m(T))\r]
		\end{align}
		over the space of admissible solutions $\pi = \l(\Omega , \mathcal{F} , \mathbb{P} , W , m , u \r)$ to the problem \eqref{problem considered}. 	
	}
	For the remainder of this section, we will consider the following equation. For $t\in[0,T]$
	\begin{align}\label{problem considered for optimal control part}
		\nonumber	m(t) =& \int_{0}^{t}m(s) \times \Delta m(s)\, ds - \alpha \, \int_{0}^{t} m(s)\times(m(s)\times u(s))\, ds
		\\\nonumber
		&+ \alpha \, \int_{0}^{t}  \Delta m(s) \, ds   + \alpha \, \int_{0}^{t}  |\nabla m(s)|_{\mathbb{R}^3}^2 m(s) \, ds \\
		& + \int_{0}^{t} m(s)\times u(s)\, ds  + \frac{1}{2}\int_{0}^{t} \l[DG\l(m(s)\r)\r]\l(G(m\l(s\r))\r)\, ds + \int_{0}^{t} G(m(t))\, dW(t),\  \mathbb{P}-a.s.
	\end{align}
	Recall that by Corollary \ref{Corollary m times m times Delta m equals Delta m plus gradient m squared m}, the equation \eqref{problem considered} and the above equation \eqref{problem considered for optimal control part} are equivalent in $(H^1)^{\prime}$, since $m$ satisfies the constraint condition.

	\begin{definition}[Strong martingale solution]\label{Definition Strong martingale solution}
		Let the initial data $m_0$, the function $h$ and time $T$ be fixed. A strong martingale solution of \eqref{problem considered for optimal control part} is a tuple
		\begin{equation*}
			\pi = (\Omega, \mathcal{F}, \mathbb{F}, \mathbb{P}, W, m, u)
		\end{equation*} such that $\pi$ is a weak martingale solution as in Definition \ref{Definition of Weak martingale solution} and  the process $m$ satisfies the additional  regularity property  \eqref{Further regularity},  i.e. 
		\begin{align}\label{Further regularity bound definition of strong martingale solution}
			\mathbb{E} \left( \int_{0}^{T} | \nabla m(t) |_{L^4}^4\, dt  + \int_{0}^{T} |A_1 m(t)|_{L^2}^2\, dt\right) < \infty.
		\end{align}
	\end{definition}
	
	\begin{remark}\label{remark equivalence for strong martingale solution 1}
		A weak martingale solution is defined for the problem \eqref{problem considered}. By Corollary \ref{Corollary m times m times Delta m equals Delta m plus gradient m squared m} the equations \eqref{problem considered} and \eqref{problem considered for optimal control part} are equivalent in $(H^1)^{\prime}$. Hence the above definition makes sense.\\
		Hence Theorem \ref{thm-existence of a strong solution} implies that the problem \eqref{problem considered for optimal control part}, with the initial data $m_0$ has a strong solution corresponding to any control process satisfying \eqref{assumption on u}.
	\end{remark}

	\dela{\textbf{Regarding the Admissibility criterion:
		} An extra assumption ($p=4$) is required for the bound \eqref{Further regularity}. But this is not required for the optimal control part if the regularity in \eqref{Further regularity} is not used. One way to show the existence of an optimal control is to try and show direct convergence (possibly along a subsequence) of a minimizing sequence. The obtained limit satisfies the equation (limit of right hand sides of the corresponding equations) and is also a solution. Show the convergence in stronger sense, say in $C(0,T;H^1)$ and $L^2(0,T; D(A))$. Hence by uniqueness of solution, the obtained limit is a strong solution of \eqref{problem considered} and also satisfies the required admissibility criterion. The limit is a solution corresponding to the control process obtained as a limit of the control processes in the minimizing sequence.
		
		Another way is to use the Relaxed control setup. From there show the existence of weak martingale solution that minimizes the cost functional. The existence of a strong solution on a given probability space is known. Also, if the solution process have the same initial data and the control processes have the same laws then the solution will also have the same laws. Hence the solution corresponding to the control process having the same law as the obtained process also minimizes the cost functional. Since that is a strong solution, we have the existence of an optimal control. Can we proceed from here to show that the optimal control is unique? This does not seem that trivial because there can be another tuple of a control process and its corresponding solution that minimizes the cost functional.
		
	}
	
	\dela{The admissibility criterion will also change now. The solutions in this class need not be strong solutions. It is sufficient to have Strong martingale solutions as defined previously, along with the stronger assumptions required for the control process ($p$th moments assumed to be finite.)}
	
	\begin{assumption}[Admissibility criterion for the control process]\label{Assumption admisibility criterion}
		We say that a given control process $u$ satisfies the admissibility criterion if for $p\geq 1$ and a given constant $K_p > 0$,
		\begin{equation}\label{admissibility criterion}
			\mathbb{E}\l(\int_{0}^{T} \l| u(t) \r|_{L^2}^{2} \, dt\r)^p \leq K_p.
		\end{equation}	
		\dela{Which $p$ you need?}
		In particular, we assume \eqref{admissibility criterion} for $p=4$.
	\end{assumption}
	We now describe the class of admissible solutions over which the cost function will be minimized.
	Let us fix the law of the initial data $m_0$ such that it\dela{$m_0$} satisfies the assumptions in Theorem \ref{Theorem Existence of a weak solution}. Also fix the function $h\in H^1$. Fix $T<\infty$. Consider a tuple 	$ \pi = (\Omega, \mathcal{F}, \mathbb{F}, \mathbb{P}, W, m, u) $ which is a strong martingale solution to \eqref{problem considered for optimal control part} as defined in Definition \ref{Definition Strong martingale solution}. Let the control process $u$ also satisfy the Assumption \ref{Assumption admisibility criterion} for $p=4$. Hence the process $m$ satisfies the bounds mentioned in Theorem \ref{Theorem Existence of a weak solution}. Such a tuple $\pi$ will be called an \textit{admissible solution} and the space of all such admissible solutions will be denoted by $\mathcal{U}_{ad}(m_0,T)$.\\
	\begin{remark}\label{remark equivalence for strong martingale solution 2} Even if the tuples are strong martingale solutions, the equations still make sense in $(H^1)^{\prime}$, (and even in $L^2$, see Corollary \ref{Strong form of weak martingale solution}) due to the regularity proved in Theorem \ref{Theorem Further regularity}.
	\end{remark}
	
	\dela{	
		\textbf{Note:} Although the Assumption \ref{Assumption admisibility criterion} requires more regularity on the control process $u$, we do not incorporate that in the cost functional. If included, the proof for that will still remain the same as it uses the Fatou Lemma and lower semicontinuity etc.
	}

	We recall the optimal control problem here for the reader's convenience.\\
	The cost functional is defined as follows. Let $\pi = \l(\Omega , \mathcal{F} , \mathbb{F} , \mathbb{P} , W , m , u\r)\in \mathcal{U}_{ad}(m_0,T)$. Assume that the terminal cost $\Psi$ is continuous on $L^2$. For a given process (desired state) \dela{$\bar{m}\in L^2(\Omega ; L^2(0,T; H^1\adda{\mathcal{S}^2}))$ }$\bar{m}\in L^2(\Omega ; L^2(0,T; \mathcal{S}^2))$
	\begin{equation}\label{definition of cost functional}
		J(\pi) = \mathbb{E} \l[ \int_{0}^{T} \l( \l| m(t) - \bar{m}(t) \r|_{H^1}^2 + \l| u(t) \r|_{L^2}^2 \r) \, dt + \Psi\l(m(T)\r) \r].
	\end{equation}
	\dela{The control problem is to minimize the above mentioned cost functional over the space $\mathcal{U}_{ad}(m_0,T)$.}\\
	Our aim is to minimize the above mentioned cost functional over the space $\mathcal{U}_{ad}(m_0,T)$.\\
	Stated formally, the optimal control problem is to find an admissible solution $\pi^* \in \mathcal{U}_{ad}(m_0,T)$ such that
	\begin{equation}\label{Optimal control problem}
		J(\pi^*) = \inf_{\pi \in \mathcal{U}_{ad}(m_0,T)} J(\pi).
	\end{equation}
	Let us denote the infimum of the cost functional by $\Lambda$. That is
	\begin{equation}
		\inf_{\pi \in \mathcal{U}_{ad}(m_0,T)} J(\pi) = \Lambda.
	\end{equation}
	
	\begin{proof}[Idea of the proof  of Theorem \ref{Theorem existence of optimal control}]
		\dela{First we show that the set of admissible solutions is non-empty. Hence the infimum $\Lambda$ is finite. This implies the existence of a minimizing sequence. We then show that this sequence converges (possibly along a subsequence) to a process, which is a strong martingale solution of the problem \eqref{problem considered for optimal control part}. Then we show that the infimum is attained at this process obtained as a limit, thus showing the existence of an optimal control.}

		First, we show that the set of admissible solutions is non-empty. Hence the infimum $\Lambda$ is finite. This implies the existence of a minimizing sequence $\{\pi_n\}_{n\in\mathbb{N}}$. Lemma \ref{bounds lemma 1 minimizing sequence} and Lemma \ref{bounds lemma 2 minimizing sequence} show that the minimizing sequence $\{\pi_n\}_{n\in\mathbb{N}}$ is uniformly bounded. Lemma \ref{Lemma Further regularity minimizing sequence} shows that the minimizing sequence is bounded in the maximal regular space. Further, Lemma \ref{tightness lemma minimizing sequence} shows that the sequence of laws of $\l( m_n , u_n \r)$ are tight on the space $L^2(0,T;H^1) \cap C([0,T]; L^2) \times L^2_w(0,T;L^2)$. In Proposition \ref{Proposition use of Skorokhod theorem minimizing sequence}, we use the Jakubowski's version of the Skorohod Theorem to obtain another sequence $\{\l( \m_n , \u_n \r)\}_{n\in\mathbb{N}}$ of processes, along with random variables $\m,\u, W^\p$, possibly on a different probability space $\l(\Omega^\p  , \mathcal{F}^\p , \mathbb{F}^\p , \mathbb{P}^\p \r)$. As before, we denote the tuple $\{\pi_n^\p\}_{n\in\mathbb{N}} : = \l(\Omega^\p , \mathcal{F}^\p , \mathbb{F}^\p , \mathbb{P}^\p  , \m_n , \u_n , W^\p_n\r)$ and $\{\pi^\p\}_{n\in\mathbb{N}} : = \l(\Omega^\p , \mathcal{F}^\p , \mathbb{F}^\p , \mathbb{P}^\p  , \m , \u , W^\p\r)$. Proposition \ref{Proposition use of Skorokhod theorem minimizing sequence} further gives us pointwise convergence of the processes $\m_n , \u_n$ and $W_n^\p$ to their corresponding limits in $\pi^\p$, in appropriate spaces. Lemma \ref{lem-bounds on mn prime minimizing sequence}, Lemma \ref{lem-Further regularitiy mn prime minimizing sequence} and Lemma \ref{lem-bounds lemma for m prime minimizing sequence} establish uniform bounds on the newly obtained processes $\m_n,n\in\mathbb{N}$ and $\m$. Then arguing similarly to Section \ref{Section Proof of existence of a solution}, we show that the obtained tuple $\pi^\p$ is a strong martingale solution of the problem \eqref{problem considered for optimal control part}.
		A main difference in the calculations is that in Section \ref{Section Proof of existence of a solution} we consider processes that have values in finite dimensional spaces, whereas that cannot be assumed here. One needs to be careful while applying the Kuratowski Theorem.
		Some more details are given in Remark \ref{remark same bounds remark minimizing sequence}. Moreover, we go on to show that the obtained tuple $\pi^\p$ is an admissible solution. Then we show that the infimum for the cost $J$ is attained at $\pi^\p$, thus showing the existence of an optimal control and completing the proof.
	\end{proof}
	
	\begin{remark}\label{Remark Strong optimal control}
		Before we begin with the proof of Theorem \ref{Theorem existence of optimal control}, we make a small comment. Theorem \ref{thm-existence of a strong solution}, combined with Remark \ref{remark equivalence for strong martingale solution 1} gives us the existence of a strong solution for the problem \eqref{problem considered for optimal control part}, which is stated in Theorem \ref{thm-existence of a strong solution}. That is, given a filtered probability space $\l( \Omega, \mathcal{F}, \mathbb{F}, \mathbb{P} \r)$, a Wiener process $W$, an initial data and a control process $u$ on the given space, there exists a process $ m $ which is a solution of the problem \eqref{problem considered for optimal control part}. The optimization problem can then be posed by fixing the given probability space and Wiener process, and then finding a tuple $\l( m^* , u^*\r)$ such that:
		\begin{enumerate}
			\item $m^*$ is a solution of the problem \eqref{problem considered for optimal control part} corresponding to the control process $u^*$.
			\item The tuple $\l( m^* , u^*\r)$ minimizes the cost \eqref{definition of cost functional} on the given probability space.
		\end{enumerate}
		This could be one way of formulating the problem. But, as of now, this does not contribute significantly to the overall progression of the problem and hence has not been considered.
		
	\end{remark}
	\begin{proof}[Proof of Theorem \ref{Theorem existence of optimal control}]

		Theorem \ref{Theorem Existence of a weak solution} along with Theorem \ref{Theorem Further regularity} shows that the space $\mathcal{U}_{ad}(m_0 , T)$ is non-empty. Hence $\Lambda < \infty$. Hence there exists a minimizing sequence $\{\pi_n\}_{n\in\mathbb{N}}$ of strong martingale solutions,
		$$	\pi_n = (\Omega_n, \mathcal{F}_n, \mathbb{F}_n, \mathbb{P}_n, W_n, m_n, u_n).$$
		That is
		\begin{equation}
			\lim_{n\rightarrow\infty} J(\pi_n) = \Lambda.
		\end{equation}
		Since $\pi_n$ is a minimizing sequence, there exists a constant $R>0$ such that for each $n\in\mathbb{N}$,
		\begin{equation}
			J(\pi_n) \leq R.
		\end{equation}
		Hence there exists a constant $C>0$ such that for any $n\in\mathbb{N}$,
		
		\begin{equation}\label{bound on m_n minimizing sequence L2 H1}
			\mathbb{E}^n \int_{0}^{T} \l|m_n(t)\r|_{H^1}^2 \, dt \leq C
		\end{equation}
		and
		\begin{equation}\label{bound on u_n minimizing sequence L2 L2}
			\mathbb{E}^n \int_{0}^{T} \l|u_n(t)\r|_{L^2}^2\, dt \leq K_1.
		\end{equation}
		Here $\mathbb{E}^n$ denotes the expectation with respect to the probability space $\l(\Omega_n , \mathcal{F}_n , \mathbb{P}_n\r)$.
		
		Before we continue with the main line of the proof we formulate and prove some essential auxiliary results. 
		
	\end{proof}
	
	\begin{lemma}\label{bounds lemma 1 minimizing sequence}
		There exists a constant $C>0$ such that for each $n\in\mathbb{N}$, the following bounds hold.
		\begin{align}\label{bound on m_n H1 minimizing sequence}
			\mathbb{E}^n \int_{0}^{T} \l| m_n(t) \r|_{H^1}^2 \, dt \leq C,
		\end{align}
		\dela{What about the sup norm in t?? Should there be a power 4??}
		
		\begin{align}\label{bound on m_n L infty H1 minimizing sequence}
			\mathbb{E}^n \sup_{t\in[0,T]}\l| m_n(t) \r|_{H^1}^4 \leq C,
		\end{align}

		\begin{equation}\label{bound on m_n times Delta m_n minimizing sequence}
			\mathbb{E}^n \int_{0}^{T} \l|m_n(s) \times \Delta m_n(s)\r|_{L^2}^2 \, ds \leq C,
		\end{equation}
		
		\begin{equation}\label{bound on m_n times m_n times Delta m_n minimizing sequence}
			\mathbb{E}^n \int_{0}^{T} \l| m_n(s) \times \l(m_n(s) \times \Delta m_n(s)\r) \r|_{L^2}^2 \, ds \leq C,
		\end{equation}
		
		\begin{equation}\label{bound on m_n times u_n minimizing sequence}
			\mathbb{E}^n \int_{0}^{T} \l|m_n(s) \times u_n(s)\r|_{L^2}^2 \, ds \leq C,
		\end{equation}
		
		\begin{equation}\label{bound on m_n times m_n times u_n minimizing sequence}
			\mathbb{E}^n \int_{0}^{t} \l| m_n(s) \times \l(m_n(s) \times u_n(s)\r)\r|_{L^2}^2 \, ds \leq C.
		\end{equation}
	\end{lemma}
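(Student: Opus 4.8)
The plan is to exploit the fact that each $\pi_n$ in the minimizing sequence is a genuine (strong) martingale solution, and therefore already carries all the structure proved for weak martingale solutions in Definition \ref{Definition of Weak martingale solution} and Theorem \ref{Theorem Existence of a weak solution}. In particular, each $m_n$ satisfies the constraint condition \eqref{eqn-constraint condition}, so that $|m_n(t,x)|_{\mathbb{R}^3} = 1$ for Leb.-a.a. $x\in\mathcal{O}$ and all $t\in[0,T]$, $\mathbb{P}_n$-a.s.; this pointwise bound is the principal tool and replaces the role played by the cut-off $\psi$ (which is identically $1$ on the constraint set) in Section \ref{Section Faedo Galerkin approximation}. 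No fresh It\^o computation should be needed: the energy estimates were carried out once and for all in Lemma \ref{bounds lemma 1 without p} and Lemma \ref{bounds lemma 1}, and here it suffices to transfer them to the solutions $m_n$ and to check uniformity of the constants.

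First I would record that the bound \eqref{bound on m_n H1 minimizing sequence} is nothing but \eqref{bound on m_n minimizing sequence L2 H1}, which was already extracted from the uniform cost bound $J(\pi_n)\leq R$. Next, since every $\pi_n$ is an admissible solution, its control $u_n$ satisfies the admissibility criterion \eqref{admissibility criterion} with the common constants $K_p$ (in particular for $p=4$), and $m_n$ is a weak martingale solution satisfying the estimates \eqref{bound using u 1} and \eqref{bound using u 2} from point $(5)$ of Definition \ref{Definition of Weak martingale solution}, with constants $C_1,C_2$ that do not depend on $n$. Applying \eqref{bound using u 1} with $p=2$ yields \eqref{bound on m_n L infty H1 minimizing sequence}, while \eqref{bound using u 2} with $p=1$ yields \eqref{bound on m_n times Delta m_n minimizing sequence}; the only point requiring care is that the quantity $K$ appearing there be taken uniform in $n$, which follows from the common admissibility constant.

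The remaining three bounds follow from the constraint condition by the elementary pointwise estimates already used in the proofs of \eqref{bound 4}, \eqref{bound 5} and \eqref{bound 6} in Lemma \ref{bounds lemma 1}. Indeed, since $|m_n(s)|_{L^{\infty}}=1$ we have
\begin{align*}
    |m_n(s) \times (m_n(s) \times \Delta m_n(s))|_{L^2} &\leq |m_n(s)|_{L^{\infty}} \, |m_n(s) \times \Delta m_n(s)|_{L^2} = |m_n(s) \times \Delta m_n(s)|_{L^2},
\end{align*}
so that \eqref{bound on m_n times m_n times Delta m_n minimizing sequence} follows at once from \eqref{bound on m_n times Delta m_n minimizing sequence}. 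Likewise $|m_n(s) \times u_n(s)|_{L^2} \leq |u_n(s)|_{L^2}$ and $|m_n(s) \times (m_n(s) \times u_n(s))|_{L^2} \leq |u_n(s)|_{L^2}$, whence \eqref{bound on m_n times u_n minimizing sequence} and \eqref{bound on m_n times m_n times u_n minimizing sequence} follow from the control bound \eqref{bound on u_n minimizing sequence L2 L2}.

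I do not anticipate a serious obstacle, precisely because the constraint condition trivialises the triple-product terms and the substantive estimates are inherited from earlier sections. The one genuinely delicate point is the bookkeeping of uniformity: one must verify that the constants $C_1,C_2$ furnished by Definition \ref{Definition of Weak martingale solution} and the admissibility constants $K_p$ are common to the entire minimizing sequence rather than depending on the individual solution $\pi_n$, so that every estimate above is uniform in $n$ as claimed.
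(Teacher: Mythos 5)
Your handling of the first bound and of the last three bounds matches the paper: \eqref{bound on m_n H1 minimizing sequence} is indeed immediate from \eqref{bound on m_n minimizing sequence L2 H1}, and the paper likewise deduces \eqref{bound on m_n times m_n times Delta m_n minimizing sequence}, \eqref{bound on m_n times u_n minimizing sequence} and \eqref{bound on m_n times m_n times u_n minimizing sequence} from the constraint condition \eqref{eqn-constraint condition} together with the uniform control bound \eqref{bound on u_n minimizing sequence L2 L2}. The problem lies in how you obtain \eqref{bound on m_n L infty H1 minimizing sequence} and \eqref{bound on m_n times Delta m_n minimizing sequence}. You invoke point $(5)$ of Definition \ref{Definition of Weak martingale solution} and assert that the constants $C_1,C_2$ appearing in \eqref{bound using u 1} and \eqref{bound using u 2} ``do not depend on $n$.'' But in that definition the constants are existentially quantified \emph{per solution tuple}: being a weak (or strong) martingale solution only means that \emph{some} pair $C_1,C_2$ works for that particular tuple. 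The minimizing sequence $\{\pi_n\}$ consists of arbitrary admissible solutions, each living on its own probability space $(\Omega_n,\mathcal{F}_n,\mathbb{P}_n)$, and nothing ties their individual constants together. Ensuring a common constant is not ``bookkeeping''; it is exactly the content of the lemma, and you explicitly leave it unverified in your closing paragraph. As written, the argument for these two bounds is circular: uniformity in $n$ is assumed where it must be proved.

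The paper closes this gap by doing what you claim is unnecessary: a fresh It\^o computation. For each $n$ it applies the infinite-dimensional It\^o formula of Pardoux \cite{Pardoux_1979} to the functional $\bar{\phi}(v)=\frac{1}{2}\l|\nabla v\r|_{L^2}^2$ along the equation \eqref{eqn satisfied by minimizing sequence} satisfied by $m_n$, after first verifying (using Theorem \ref{Theorem Existence of a weak solution}, the constraint condition, and \eqref{bound on u_n minimizing sequence L2 L2}) that all integrands, namely $m_n\times\Delta m_n$, $m_n\times(m_n\times\Delta m_n)$, $m_n\times u_n$, $m_n\times(m_n\times u_n)$, $\l[DG(m_n)\r]\l[G(m_n)\r]$ and $G(m_n)$, belong to $M^2(0,T;L^2)$ so the formula is applicable. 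Running the energy estimate of Lemma \ref{bounds lemma 1} in this infinite-dimensional setting (with the constraint condition playing the role of the cut-off $\psi$) produces constants that depend only on $m_0$, $T$, $h$ and the common admissibility constant $K_p$ — quantities shared by the entire sequence — and this is what makes \eqref{bound on m_n L infty H1 minimizing sequence} and \eqref{bound on m_n times Delta m_n minimizing sequence} uniform in $n$. To repair your proof you would need to carry out this derivation (or an equivalent one yielding data-dependent constants); citing the definitional estimates cannot substitute for it.
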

	\begin{proof}[Proof of Lemma \ref{bounds lemma 1 minimizing sequence}]
		The first inequality \eqref{bound on m_n H1 minimizing sequence} follows from the fact that $\pi_n$ is a minimizing sequence and the inequality \eqref{bound on m_n minimizing sequence L2 H1}.

		The following equation is satisfied by the process $m_n$ 
		for all $t\in[0,T]$
		\begin{align}\label{eqn satisfied by minimizing sequence}
			\nonumber m_n(t) &= m_n(0) + \int_{0}^{t} m_n(s) \times \Delta m_n(s)\, ds  - \alpha \, \int_{0}^{t} m_n(s) \times \l( m_n(s) \times \Delta m_n(s) \r) \, ds  \\
			\nonumber & + \int_{0}^{t} m_n(s) \times u_n(s)\, ds - \alpha \, \int_{0}^{t} m_n(s) \times \l( m_n(s) \times u_n(s) \r) \, ds \\
			& + \frac{1}{2}\int_{0}^{t} \l[DG(m_n(s))\r]\l(G(m_n(s))\r)\, ds + \int_{0}^{t} G(m_n(s)) \, dW_n(s),\  \mathbb{P}_n-a.s.
		\end{align}
		\dela{We recall the function $\phi_2$ defined in \eqref{definition of phi 2}.That function was from $H_n$ to $H_n$. Hence better not recall it here. Just consider the function from $H^1\to\mathbb{R}$.}
		Let $\bar{\phi} : H^1 \to \mathbb{R}$ be given by
		\begin{equation}
			\bar{\phi}(v) = \frac{1}{2} \l|\nabla v\r|_{L^2}^2.
		\end{equation}
		We now apply the It\^o Lemma for the above function. The calculations are similar to the proofs of Lemma \ref{bounds lemma 1} and Lemma \ref{bounds lemma}, and hence are skipped. A difference is that the calculations here are in infinite dimensions, for which we apply the It\^o formula from \cite{Pardoux_1979}. It is therefore sufficient to show that the integrands on the right hand side of the equality \eqref{eqn satisfied by minimizing sequence} lie in appropriate spaces, see \cite{Pardoux_1979}, so that the It\^o formula can be applied.		
		Theorem \ref{Theorem Existence of a weak solution} implies that the terms $ m_n\times \Delta m_n , m_n \times \l( m_n \times \Delta m_n  \r) \in M^2(0,T; L^2) $. For the definition of the space, see Section \ref{sec-The constraint condition section}, see also \cite{Pardoux_1979}. \dela{ Avoid this notation $M^2$. Try to write in full.}\\			
		By the constraint condition \eqref{eqn-constraint condition},
		\begin{align*}
			\mathbb{E}^n\int_{0}^{T} \l| m_n(t) \times u_n(t) \r|_{L^2}^2 \, dt &\leq \mathbb{E}^n\int_{0}^{T} \l| u_n(t) \r|_{L^2}^2 \, dt  < \infty.
		\end{align*}
		The last inequality holds by \eqref{bound on u_n minimizing sequence L2 L2}.\\
		Similarly, the constraint condition \eqref{eqn-constraint condition} implies that
		\begin{align*}
			\mathbb{E}^n\int_{0}^{T} \l|m_n(t) \times \bigl( m_n(t) \times u_n(t) \bigr) \r|_{L^2}^2 \, dt &\leq \mathbb{E}^n\int_{0}^{T} \l|m_n(t) \times u_n(t) \r|_{L^2}^2 \, dt  < \infty.
		\end{align*}
		By the assumption $h\in H^1$, the embedding $H^1\hookrightarrow L^{\infty}$ and by the constraint condition \eqref{eqn-constraint condition},we have
		\begin{align*}
			\mathbb{E}^n \int_{0}^{T} \l| \big[ DG(m_n(t)) \big] \bigl[ G\big(m_n(t)\big) \bigr] \r|_{L^2}^2 \, dt < \infty.
		\end{align*}
		Hence $m_n \times u_n ,  \ m_n \times (m_n \times u_n) ,  \ \l[ DG\big(m_n\big) \r] \l[ G\big(m_n\big) \r] \in M^2(0,T;L^2)$.\\	
		Also, by the constraint condition implies that
		\begin{align*}
			\mathbb{E}^n \int_{0}^{T} \l|  G(m_n(t)) \r|_{L^2}^2\, dt < \infty.
		\end{align*}
		Hence $G(m_n)\in M^2(0,T;L^2)$.  The inequalities \eqref{bound on m_n L infty H1 minimizing sequence}, \eqref{bound on m_n times Delta m_n minimizing sequence}  then follow by applying the It\^o formula.
		The inequalities \eqref{bound on m_n times m_n times Delta m_n minimizing sequence}, \eqref{bound on m_n times u_n minimizing sequence} and \eqref{bound on m_n times m_n times u_n minimizing sequence} follow from the assumption on $u_n$ and the constraint condition \eqref{eqn-constraint condition}.		
		\dela{
			We consider the following calculations and then collect them to apply the It\^o Lemma. Fix $s\in[0,T]$ and $n\in\mathbb{N}$.
			
			\adda{Write the equality that arises after applying the It\^o formula. Then write the calculations after numbering each of the terms. This proof is similar to the proof of Lemma \ref{bounds lemma 1}. Are all the details required? A major difference is that the latter is in infinite dimension. But the structure of the proof remains the same.}
			
			We use the following identity for the following calculations. For $a,b\in \mathbb{R}^3$
			\begin{align}
				\l\langle a , a \times b \r\rangle_{\mathbb{R}^3} = 0.
			\end{align}
			
			\begin{align*}
				\l\langle \nabla m_n(s) , \nabla \l( m_n(s) \times \Delta m_n(s) \r) \r\rangle_{L^2} &= - \l\langle \Delta m_n(s) ,  m_n(s) \times \Delta m_n(s) \r\rangle_{L^2} \\
				& = 0.
			\end{align*}
			
			\begin{align*}
				\l\langle \nabla m_n(s) , \nabla \l( m_n(s) \times \l( m_n(s) \times \Delta m_n(s) \r) \r) \r\rangle_{L^2} &= - \l\langle \Delta m_n(s) ,  m_n(s) \times \l( m_n(s) \times \Delta m_n(s) \r)  \r\rangle_{L^2}\\
				& = \l\langle  m_n(s) \times \Delta m_n(s) ,  m_n(s) \times \Delta m_n(s)   \r\rangle_{L^2} \\
				& = \l| m_n(s) \times \Delta m_n(s) \r|_{L^2}^2.
			\end{align*}
			Hence
			\begin{align*}
				\int_{0}^{t} \l\langle \nabla m_n(s) , \nabla \l( m_n(s) \times \l( m_n(s) \times \Delta m_n(s) \r) \r) \r\rangle_{L^2} \, ds = \int_{0}^{t} \l| m_n(s) \times \Delta m_n(s) \r|_{L^2}^2 \, ds
			\end{align*}
			Recall that in \eqref{eqn satisfied by minimizing sequence} the above term has a negative coefficient. Hence this term can be taken on the left hand side once the It\^o Lemma is applied.
			
			The other terms can be handled in a similar way as done in Step 2 of the proof of Lemma \ref{bounds lemma 1}. For the sake of completion, we present the calculations here.
			
			\begin{align*}
				\l\langle \nabla m_n(s) , \nabla \l(m_n(s) \times u_n(s) \r) \r\rangle_{L^2} &= - \l\langle \Delta m_n(s) , m_n(s) \times u_n(s)  \r\rangle_{L^2} \\
				& = \l\langle m_n(s) \times \Delta m_n(s) , u_n(s)  \r\rangle_{L^2}.
			\end{align*}
			Hence
			\begin{align*}
				\int_{0}^{t} \l\langle \nabla m_n(s) , \nabla \l(m_n(s) \times u_n(s) \r) \r\rangle_{L^2}\, ds & = \l\langle m_n(s) \times \Delta m_n(s) , u_n(s)  \r\rangle_{L^2} \\
				& \leq \int_{0}^{t} \l| m_n(s) \times \Delta m_n(s) \r|_{L^2} \l| u_n(s) \r|_{L^2} \, ds \\
				&\quad \text{By Cauchy-Schwarz inequlity} \\
				& \leq \frac{\varepsilon}{2}\int_{0}^{t} \l| m_n(s) \times \Delta m_n(s) \r|_{L^2}^2 \, ds + \frac{C(\varepsilon)}{2}\int_{0}^{t} \l| u_n(s) \r|_{L^2} \, ds. \\
				& ( \text{By Young's inequality} )
			\end{align*}
			Here the constant $\varepsilon$ will be chosen later.

			\begin{align*}
				\l\langle \nabla m_n(s) , \nabla \l( m_n(s) \times \l( m_n(s) \times u_n(s) \r) \r) \r\rangle_{L^2} & = - \l\langle \Delta m_n(s) ,  m_n(s) \times \l( m_n(s) \times u_n(s) \r)  \r\rangle_{L^2} \\
				& = \l\langle  m_n(s) \times \Delta m_n(s) , m_n(s) \times u_n(s) \r\rangle_{L^2}.
			\end{align*}
			
			Hence
			\begin{align*}
				&\int_{0}^{t} \l\langle \nabla m_n(s) , \nabla \l( m_n(s) \times \l( m_n(s) \times u_n(s) \r) \r) \r\rangle_{L^2} \\
				& = - \l\langle \Delta m_n(s) ,  m_n(s) \times \l( m_n(s) \times u_n(s) \r)  \r\rangle_{L^2} \, ds \\
				& = \int_{0}^{t} \l\langle  m_n(s) \times \Delta m_n(s) , m_n(s) \times u_n(s) \r\rangle_{L^2} \, ds \\
				& \leq \int_{0}^{t} \l| m_n(s) \times \Delta m_n(s)  \r|_{L^2} \l| m_n(s) \times u_n(s) \r|_{L^2} \, ds \\
				&\text{(By Cauchy-Schwarz inequlity)} \\
				& \leq \frac{\varepsilon}{2}\int_{0}^{t} \l| m_n(s) \times \Delta m_n(s)  \r|_{L^2}^2 \, ds + \frac{C(\varepsilon)}{2} \int_{0}^{t}  \l| m_n(s) \times u_n(s) \r|_{L^2}^2 \, ds \\
				& \leq \frac{\varepsilon}{2}\int_{0}^{t} \l| m_n(s) \times \Delta m_n(s)  \r|_{L^2}^2 \, ds + \frac{C(\varepsilon)}{2} \int_{0}^{t}  \l|m_n(s)\r|_{L^{\infty}}\l| u_n(s) \r|_{L^2}^2 \, ds \\
				& \leq \frac{\varepsilon}{2}\int_{0}^{t} \l| m_n(s) \times \Delta m_n(s)  \r|_{L^2}^2 \, ds + \frac{C(\varepsilon)}{2} \int_{0}^{t} \l| u_n(s) \r|_{L^2}^2 \, ds .
			\end{align*}
			The last inequality follows by Youngs inequality.
			
			There are two terms now that are yet to be approximated, viz. $\l[DG(m_n(s))\r]\l(G(m_n(s))\r)$ and $G(m_n(s))$.
			
			Recall that for each $n\in\mathbb{N}$, the process $m_n$ satisfies the constraint condition \eqref{eqn-constraint condition}. Also $h\in H^1$ and the continuous embedding $H^1\hookrightarrow L^{\infty}$ implies that there exists constants $C , C_1$ such that
			\begin{equation}
				\l| h \r|_{L^{\infty}} \leq C_1\l| h \r|_{H^1} \leq C.
			\end{equation}
			We use this to show that
			\begin{equation*}
				\int_{0}^{t} \l\langle \nabla m_n(s) , \nabla \l(\l[G(m_n(s))\r]\l(G(m_n(s))\r)\r) \r\rangle \, ds \leq C \int_{0}^{t}\l|  m_n(s) \r|_{H^1}^2 \, ds.
			\end{equation*}
			\dela{The reason for introducing the $H^1$ norm here is that when the gradient goes to the $h$ term, there is just one term $\nabla m_n(s)$. Hence there is no square. But there is $\l|m_n(s)\r|_{L^{\infty}}$ term which can be bounded by $\l|m_n(s)\r|_{H^1}$ and hence the $H^1$ norm. With the usual integral this should not be a major concern since we have the continuous embedding $L^2\hookrightarrow L^1$. Care has to be taken while working with the It\^o integral.}
			
			The main idea here is the following. Expand the derivative on the right side of the inner product (using the product rule for derivatives). Split the inner product over the addition/subtraction signs. Observe that exactly one term will have the derivative at once, along with the $\nabla m_n(s)$ term. Put $L^2$ norm on the terms with the derivative. Whenever there is no derivative, the $L^{\infty}$ norm of that term is bounded, either because the term is $h$ or because of the constraint condition \eqref{eqn-constraint condition}.
			
			We now do the calculations regarding the term $G(m_n(s))$ and the stochastic integral.
			By the Burkholder-Davis-Gundy inequality, there exists a constant $C>0$ such that
			\begin{align}
				\mathbb{E} \sup_{t\in[0,T]} \int_{0}^{t} \l\langle \nabla G(m_n(s)) , \nabla m_n(s) \r\rangle_{L^2} \, dW_n(s) \leq C \mathbb{E} \l( \int_{0}^{T} \l\langle \nabla G(m_n(s)) , \nabla m_n(s) \r\rangle_{L^2}^2 \, ds \r)^{\frac{1}{2}}
			\end{align}

			We now bound the right hand side of the above inequality.
			\begin{align*}
				\l\langle \nabla m_n(s) , \nabla \l( m_n(s) \times h \r) \r\rangle_{L^2} & = \l\langle \nabla m_n(s) , \nabla m_n(s) \times h  \r\rangle_{L^2} + \l\langle \nabla m_n(s) ,  m_n(s) \times \nabla h \r\rangle_{L^2} \\
				& \leq \l| \nabla m_n(s) \r|_{L^2}^2\l|h\r|_{L^{\infty}} + \l|\nabla m_n(s)\r|_{L^2} \l|m_n(s)\r|_{L^{\infty}} \l|\nabla h\r|_{L^2} \\
				& \leq C_1(h) \l| \nabla m_n(s) \r|_{L^2}^2 + C_2(h) \l| \nabla m_n(s) \r|_{L^2}
			\end{align*}

			
			Hence combining the constants $C_1(h)$ and $C_2(h)$ into a suitable constant $C(h)$, we get
			\dela{\begin{align*}
					\int_{0}^{t} \l\langle \nabla m_n(s) , \nabla \l( m_n(s) \times h \r) \r\rangle_{L^2} \, dW_n(s) \leq C(h) \int_{0}^{t} \l(\l| \nabla m_n(s) \r|_{L^2}^2 + \l| \nabla m_n(s) \r|_{L^2} \r) \, dW_n(s).
				\end{align*}
			}
			\begin{align*}
				\int_{0}^{T} \l\langle \nabla m_n(s) , \nabla \l( m_n(s) \times h \r) \r\rangle_{L^2}^2 \, ds \leq C(h) \int_{0}^{T} \l(\l| \nabla m_n(s) \r|_{L^2}^4 + \l| \nabla m_n(s) \r|_{L^2}^2 \r) \, ds.
			\end{align*}		
			
			Similarly, by H\"older's inequality, we have
			\begin{align*}
				\l\langle \nabla m_n(s) , \nabla \l(m_n(s) \times \l(m_n(s) \times h\r)\r) \r\rangle_{L^2}  =& \l\langle \nabla m_n(s) , \nabla m_n(s) \times \l(m_n(s) \times h\r) \r\rangle_{L^2} \\
				&+ \l\langle \nabla m_n(s) ,  m_n(s) \times \l( \nabla m_n(s) \times h\r) \r\rangle_{L^2} \\
				& + \l\langle \nabla m_n(s) ,  m_n(s) \times \l(m_n(s) \times \nabla h\r) \r\rangle_{L^2} \\
				\leq &  2\l|\nabla m_n(s)\r|_{L^2}^2 \l|m_n(s)\r|_{L^{\infty}} \l|h\r|_{L^{\infty}}\\
				&+ \l|\nabla m_n(s)\r|_{L^2} \l|m_n(s)\r|_{L^{\infty}}^2 \l|\nabla h\r|_{L^2} \\
				\leq&  C(h)\l( \l|\nabla m_n(s)\r|_{L^2}^2 + \l|\nabla m_n(s)\r|_{L^2} \r).
			\end{align*}		
			
			Hence
			\dela{\begin{align*}
					\int_{0}^{t} \l\langle \nabla m_n(s) , \nabla \l(m_n(s) \times \l(m_n(s) \times h\r)\r) \r\rangle_{L^2} \, dW_n(s) \leq C(h) \int_{0}^{t} \l( \l|\nabla m_n(s)\r|_{L^2}^2 + \l|\nabla m_n(s)\r|_{L^2} \r) \, dW_n(s) .
				\end{align*}
			}
			
			\begin{align*}
				\int_{0}^{T} \l\langle \nabla m_n(s) , \nabla \l(m_n(s) \times \l(m_n(s) \times h\r)\r) \r\rangle_{L^2}^2 \, ds \leq C(h) \int_{0}^{T} \l( \l|\nabla m_n(s)\r|_{L^2}^4 + \l|\nabla m_n(s)\r|_{L^2}^2 \r) \, ds .
			\end{align*}

			Thus, we combine the above two inequalities to get a constant $C>0$ such that
			\dela{	\begin{align*}
					int_{0}^{t} \l\langle \nabla m_n(s) , \nabla G(m_n(s)) \r\rangle_{L^2} \, dW_n(s) \leq C \int_{0}^{t} \l( \l|\nabla m_n(s)\r|_{L^2}^2 + \l|\nabla m_n(s)\r|_{L^2} \r) \, dW_n(s).		
				\end{align*}
			}
			
			\begin{align*}
				\mathbb{E}^n \sup_{t\in[0,T]}\int_{0}^{T} \l\langle \nabla m_n(s) , \nabla G(m_n(s)) \r\rangle_{L^2}^2 \, ds &\leq C\mathbb{E}^n \int_{0}^{T} \l( \l|\nabla m_n(s)\r|_{L^2}^4 + \l|\nabla m_n(s)\r|_{L^2}^2 \r) \, ds \\
				& \leq C \mathbb{E}^n \int_{0}^{T} \l|\nabla m_n(s)\r|_{L^2}^4  \, ds.		
			\end{align*}
			The last inequality follows from Jensen's inequality.

			We now proceed to apply the It\^o Lemma to the function $\phi_2$. The calculations are similar to the proof of \eqref{bound 2} in Lemma \ref{bounds lemma}.
			
			\begin{align}
				\nonumber  \frac{1}{2}\l|\nabla m_n(s)\r|_{L^2}^2 &- \frac{1}{2}\l|\nabla m_0\r|_{L^2}^2 \\
				\nonumber=& \int_{0}^{t} \l\langle \nabla m_n(s) \times \Delta m_n(s) , \nabla m_n(s) \r\rangle_{L^2}\, ds\\
				\nonumber& - \alpha \, \int_{0}^{t} \l\langle \nabla m_n(s) \times ( m_n(s) \times \Delta m_n(s)) , \nabla m_n(s) \r\rangle_{L^2}\, ds \\
				\nonumber & + \int_{0}^{t} \l\langle \nabla m_n(s) \times u_n(s) , \nabla m_n(s) \r\rangle_{L^2}\, ds\\
				\nonumber&- \alpha\int_{0}^{t} \l\langle \nabla m_n(s) \times (m_n(s) \times u_n(s)) , \nabla m_n(s) \r\rangle_{L^2}\, ds\\
				\nonumber& + \int_{0}^{t} \l\langle \nabla \l(DG(m_n(s))\r)(G(m_n(s))) , \nabla m_n(s) \r\rangle_{L^2}\, ds \\
				&+ \int_{0}^{t} \l\langle G(m_n(s)) , \nabla m_n(s) \r\rangle_{L^2} \, dW(s).
			\end{align}
			
			Hence combining the above equality with the previous calculations gives
			\begin{align*}
				\l|\nabla m_n(t)\r|_{L^2}^2 \leq& \l| \nabla m_n(0)\r|_{L^2}^2 + \l(\varepsilon - \alpha \, \r) \int_{0}^{t} \l|m_n(s) \times \Delta m_n(s)\r|_{L^2}^2 \, ds + \frac{C(\varepsilon)}{2}\int_{0}^{t} \l|u_n(s)\r|_{L^2}^2 \, ds \\
				& + C \int_{0}^{t} \l| \nabla m_n(s)\r|_{L^2}^2 \, ds + C \int_{0}^{t}  \l\langle \nabla G(m_n(s)) , \nabla m_n(s) \r\rangle  \, dW_n(s).
			\end{align*}

			We choose $\varepsilon$ small enough so that $\l(\varepsilon - \alpha\r) < 0$. Let us choose $\varepsilon = \frac{\alpha}{2}$. The resulting inequality is
			\begin{align*}
				&\l|\nabla m_n(t)\r|_{L^2}^2 + \frac{\alpha}{2} \int_{0}^{t} \l|m_n(s) \times \Delta m_n(s)\r|_{L^2}^2 \, ds \\
				\leq& \l| \nabla m_n(0)\r|_{L^2}^2 + \frac{C(\varepsilon)}{2}\int_{0}^{t} \l|u_n(s)\r|_{L^2}^2 \, ds  + C \int_{0}^{t} \l| \nabla m_n(s)\r|_{L^2}^2 \, ds \\
				& + C \int_{0}^{t}  \l\langle \nabla G(m_n(s)) , \nabla m_n(s) \r\rangle  \, dW_n(s).
			\end{align*}
			
			Now, we multiply the above inequality by suitable constants to make the coefficients 1 on the left hand side. Also, the resulting coefficients on the right hand side are condensed into generic constants $C_1,C_2$ and $C_3$.

			\begin{align}\label{Intermediate equation for obtaining H1 bound minimizing sequence}
				\nonumber &\l|\nabla m_n(t)\r|_{L^2}^2 + \int_{0}^{t} \l|m_n(s) \times \Delta m_n(s)\r|_{L^2}^2 \, ds \\
				\nonumber \leq& \l| \nabla m_n(0)\r|_{L^2}^2 + C_1\int_{0}^{t} \l|u_n(s)\r|_{L^2}^2 \, ds + C_2 \int_{0}^{t} \l| \nabla m_n(s)\r|_{L^2}^2 \, ds \\
				& + C_3 \int_{0}^{t} \l\langle \nabla G(m_n(s)) , \nabla m_n(s) \r\rangle \, dW_n(s).
			\end{align}

			Therefore

			\begin{align*}
				\l|\nabla m_n(t)\r|_{L^2}^2 \leq& \l| \nabla m_n(0)\r|_{L^2}^2 + C_1\int_{0}^{t} \l|u_n(s)\r|_{L^2}^2 \, ds + C_2 \int_{0}^{t} \l| \nabla m_n(s)\r|_{L^2}^2 \, ds \\
				& + C_3 \int_{0}^{t} \l\langle \nabla G(m_n(s)) , \nabla m_n(s) \r\rangle \, dW_n(s).
			\end{align*}
			
			We take $\sup_{t\in[0,T]}$ of both sides of the above inequality to get
			\begin{align*}
				\sup_{t\in[0,T]}\l|\nabla m_n(t)\r|_{L^2}^2 \leq& \l| \nabla m_n(0)\r|_{L^2}^2 + C_1\int_{0}^{T} \l|u_n(s)\r|_{L^2}^2 \, ds + C_2 \int_{0}^{T} \l| \nabla m_n(s)\r|_{L^2}^2 \, ds \\
				& + C_3 \sup_{t\in[0,T]}\int_{0}^{t} \l\langle \nabla G(m_n(s)) , \nabla m_n(s) \r\rangle \, dW_n(s).
			\end{align*}
			We now square both sides and use Jensen's inequality to get
			\begin{align*}
				\sup_{t\in[0,T]}\l|\nabla m_n(t)\r|_{L^2}^4 \leq& C\bigg(\l| \nabla m_n(0)\r|_{L^2}^4 + C_1\int_{0}^{T} \l|u_n(s)\r|_{L^2}^4 \, ds + C_2 \int_{0}^{T} \l| \nabla m_n(s)\r|_{L^2}^4 \, ds \\
				& + C_3 \sup_{t\in[0,T]} \l( \int_{0}^{t} \l\langle \nabla G(m_n(s)) , \nabla m_n(s) \r\rangle \, dW_n(s)\r)^2\bigg).
			\end{align*}

			We take the expectation of both sides. Also, replace $\l|\nabla m_n(s)\r|_{L^2}$ by $\sup_{r\in[0,s]}\l| m_n(r)\r|_{H^1}$.
			\dela{	\begin{align*}
					\mathbb{E}\sup_{s\in[0,t]} \l|\nabla m_n(s)\r|_{L^2}^2 \leq& \mathbb{E} \l| \nabla m_n(0)\r|_{L^2}^2 + \mathbb{E} \int_{0}^{t} \sup_{r\in[0,s]}  \l|m_n(s)\r|_{L^2}^2 \, ds + C_1\mathbb{E}\int_{0}^{t} \l|u_n(s)\r|_{L^2}^2 \, ds\\
					&+ C \mathbb{E} \int_{0}^{t} \l( \sup_{r\in[0,s]} \l|\nabla m_n(s)\r|_{L^2}^2 + \sup_{r\in[0,s]} \l|\nabla m_n(s)\r|_{L^2} \r) \, dW_n(s).
				\end{align*}
			}
			
			\begin{align*}
				\mathbb{E}\sup_{s\in[0,t]} \l|\nabla m_n(s)\r|_{L^2}^4 \leq& \mathbb{E} \l| \nabla m_n(0)\r|_{L^2}^4 + \mathbb{E} \int_{0}^{T} \sup_{r\in[0,s]}  \l|m_n(s)\r|_{H^1}^4 \, ds + C_1\mathbb{E}\int_{0}^{T} \l|u_n(s)\r|_{L^2}^4 \, ds\\
				&+ C \mathbb{E} \sup_{t\in[0,T]}\int_{0}^{t} \l\langle \nabla G\l(m_n(s)\r) , \nabla m_n(s) \r\rangle_{L^2} \, dW_n(s) \\
				& \leq  \mathbb{E} \l| \nabla m_n(0)\r|_{L^2}^2 + \mathbb{E} \int_{0}^{t} \sup_{r\in[0,s]}  \l|m_n(s)\r|_{L^2}^4 \, ds + C_1\mathbb{E}\int_{0}^{t} \l|u_n(s)\r|_{L^2}^4 \, ds\\
				&+ C \mathbb{E}^n \int_{0}^{T} \l|\nabla m_n(s)\r|_{L^2}^4  \, ds\ \text{By Burkholder-Davis-Gundy inequality}\\
				& \leq C(m_0 , K) + \mathbb{E} \int_{0}^{t} \sup_{r\in[0,s]}  \l|m_n(s)\r|_{L^2}^4 \, ds + C \mathbb{E}^n \int_{0}^{T} \l|\nabla m_n(s)\r|_{L^2}^4  \, ds.
			\end{align*}

			\dela{Write the It\^o integral properly. Since supremum has been taken, it is not a martingale. BDG should be used here. That might require us to raise the power of both sides by 2. This is same as the proof of Lemma \ref{bounds lemma 1}.}

			By Fubini's theorem,
			\begin{align*}
				\mathbb{E}\sup_{s\in[0,t]} \l|\nabla m_n(s)\r|_{L^2}^4 &\leq C(m_0 , K) +  \int_{0}^{t} \mathbb{E} \sup_{r\in[0,s]}  \l| \nabla m_n(s)\r|_{L^2}^4 \, ds.
			\end{align*}
			The constant $C(m_0 , K)$ is obtained by combining the constants arising from the initial data term and the term with the control process. For a bound on the control process, we recall \eqref{bound on u_n minimizing sequence L2 L2}.
			By the Gronwall Lemma, there exists a constant $C>0$ such that for each $n\in\mathbb{N}$,
			\begin{align}
				\mathbb{E}\sup_{s\in[0,t]} \l|\nabla m_n(s)\r|_{L^2}^4 \leq C.
			\end{align}
			Here $t\in[0,T]$ was fixed initially. The same calculations work for any $t\in[0,T]$.
			
			Combining the above obtained bound with the constraint condition \eqref{eqn-constraint condition}, there exists a constant $C>0$ such that for each $n\in\mathbb{N}$,
			\begin{align}
				\mathbb{E}^n \int_{0}^{T} \l| m_n(t) \r|_{H^1}^2 \, dt \leq C.
			\end{align}
			Hence this shows the bound \eqref{bound on m_n L infty H1 minimizing sequence}.

			We now go back to the inequality in \eqref{Intermediate equation for obtaining H1 bound minimizing sequence}. We take the expectation of both sides to get
			\begin{align*}
				\mathbb{E}^n \int_{0}^{t} \l|m_n(s) \times \Delta m_n(s)\r|_{L^2}^2 \, ds &\leq \mathbb{E}^n \l|\nabla m_n(0)\r|_{L^2}^2 + C_1 \mathbb{E}^n \int_{0}^{t} \l|u_n(t)\r|_{L^2}^2 \, dt + C_2 \mathbb{E}^n \int_{0}^{t} \l| \nabla m_n(t)\r|_{L^2}^2 \, dt \\
				& \leq C(m_0 , K) + C_2 \mathbb{E}^n \int_{0}^{t} \l| \nabla m_n(t)\r|_{L^2}^2 \, dt.
			\end{align*}
			\dela{This uniform bound is not directly required here. This $\{\pi_n\}_{n\in\mathbb{N}}$ is a minimizing sequence. Hence there is a uniform bound on the above norm for the control process. The assumption is required because a uniform bound is also required on the $p-th$ moment for the control process, which is given by the Assumption \eqref{assumption on u}.}
			By the bound obtained in \eqref{bound on m_n H1 minimizing sequence}, the right hand side of the above inequality is uniformly (in $n$) bounded. The argument holds for any $t\in[0,T]$. Hence there exists a constant $C>0$ such that for any $n\in\mathbb{N}$
			
			\begin{equation}
				\mathbb{E}^n \int_{0}^{T} \l|m_n(s) \times \Delta m_n(s)\r|_{L^2}^2 \, ds \leq C.
			\end{equation}

			By the constraint condition \eqref{eqn-constraint condition},
			\begin{align*}
				\mathbb{E}^n \int_{0}^{t} \l| m_n(s) \times \l(m_n(s) \times \Delta m_n(s)\r) \r|_{L^2}^2 \, ds \leq \mathbb{E}^n \int_{0}^{t} \l|m_n(s) \times \Delta m_n(s)\r|_{L^2}^2 \, ds.
			\end{align*}
			Hence there exists a constant $C>0$ such that for any $n\in\mathbb{N}$,
			\begin{equation}
				\mathbb{E}^n \int_{0}^{T} \l| m_n(s) \times \l(m_n(s) \times \Delta m_n(s)\r) \r|_{L^2}^2 \, ds \leq C.
			\end{equation}
			
			Working similarly, by the constraint condition \eqref{eqn-constraint condition} we have
			\begin{equation*}
				\mathbb{E}^n \int_{0}^{t} \l|m_n(s) \times u_n(s)\r|_{L^2}^2 \, ds \leq \mathbb{E} \int_{0}^{t} \l| u_n(s) \r|_{L^2}^2 \, ds.
			\end{equation*}
			Hence by \eqref{bound on u_n minimizing sequence L2 L2}, there exists a constant $C>0$ such that for each $n\in\mathbb{N}$
			\begin{equation}
				\mathbb{E}^n \int_{0}^{T} \l|m_n(s) \times u_n(s)\r|_{L^2}^2 \, ds \leq C.
			\end{equation}
			Also,
			\begin{align*}
				\mathbb{E}^n \int_{0}^{t} \l| m_n(s) \times \l(m_n(s) \times u_n(s)\r)\r|_{L^2}^2 \, ds \leq \mathbb{E} \int_{0}^{t} \l|m_n(s) \times u_n(s)\r|_{L^2}^2 \, ds.
			\end{align*}
			Hence there exists another constant $C>0$ such that for each $n\in\mathbb{N}$,
			\begin{equation}
				\mathbb{E}^n \int_{0}^{t} \l| m_n(s) \times \l(m_n(s) \times u_n(s)\r)\r|_{L^2}^2 \, ds \leq C.
			\end{equation}
		}
	\end{proof}

	\begin{lemma}\label{bounds lemma 2 minimizing sequence}
		Let $\gamma \in \l(0,\frac{1}{2}\r)$ and $p\geq 2$. Then there exists a constant $C>0$ such that for each $\mathbb{N}$, the following bound holds. \dela{specify $p$}
		\begin{equation}
			\mathbb{E}^n\l[\l|m_n\r|^2_{W^{\gamma , p}(0,T ; L^2)}\r] \leq C.
		\end{equation}
	\end{lemma}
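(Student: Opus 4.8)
The plan is to mirror, essentially verbatim, the proof of the analogous estimate for the Faedo--Galerkin approximations given in Lemma \ref{bounds lemma}, but now using the minimizing sequence $m_n$ and the uniform bounds collected in Lemma \ref{bounds lemma 1 minimizing sequence}. The starting point is the equation \eqref{eqn satisfied by minimizing sequence} satisfied by $m_n$, and the strategy is to show that each of the seven terms on its right-hand side is uniformly (in $n$) bounded in $L^2(\Omega^n; W^{\gamma,p}(0,T;L^2))$; the claim then follows by the triangle inequality in $W^{\gamma,p}(0,T;L^2)$. The initial datum $m_n(0)=m_0$ is constant in time, so its $W^{\gamma,p}$-seminorm vanishes and its norm is controlled by $T^{1/p}|m_0|_{L^2}$, which is finite by the assumption $m_0\in W^{1,2}(\mathcal{O},\mathbb{S}^2)$.

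First I would treat the five deterministic (Bochner) integral terms. The key elementary observation, already used in Lemma \ref{bounds lemma}, is that if an integrand $f$ lies in $L^2(\Omega^n; L^2(0,T;L^2))$, then its indefinite time-integral $t\mapsto\int_0^t f(s)\,ds$ lies in $L^2(\Omega^n; W^{1,2}(0,T;L^2))$, with a norm controlled by $\|f\|_{L^2(\Omega^n;L^2(0,T;L^2))}$. Combining this with the continuous embedding \eqref{embedding W 1,2 into W gamma, p}, namely $W^{1,2}(0,T;L^2)\hookrightarrow W^{\gamma,p}(0,T;L^2)$ for $\gamma<\frac12$, it suffices to verify that each integrand is uniformly bounded in $L^2(\Omega^n;L^2(0,T;L^2))$. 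But this is exactly what Lemma \ref{bounds lemma 1 minimizing sequence} provides: the bounds \eqref{bound on m_n times Delta m_n minimizing sequence}, \eqref{bound on m_n times m_n times Delta m_n minimizing sequence}, \eqref{bound on m_n times u_n minimizing sequence}, and \eqref{bound on m_n times m_n times u_n minimizing sequence} handle the four cross-product terms, while the Stratonovich-to-It\^o correction term $[DG(m_n)](G(m_n))$ is controlled using the constraint condition \eqref{eqn-constraint condition} together with $h\in H^1\hookrightarrow L^\infty$, exactly as in the proof of Corollary \ref{Strong form of weak martingale solution}.

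The remaining term is the It\^o integral $\int_0^t G(m_n(s))\,dW_n(s)$, and for this I would invoke Lemma \ref{Lemma reference W alpha p bound for stochastic integral} (the fractional-in-time regularity estimate for stochastic integrals taken from the appendix of \cite{ZB+BG+TJ_Weak_3d_SLLGE}). Its hypotheses require a uniform bound on $G(m_n)$ in $L^p(\Omega^n;L^2(0,T;L^2))$; since the constraint condition \eqref{eqn-constraint condition} forces $|m_n(s,\cdot)|_{L^\infty}=1$ pointwise, Lemma \ref{Lemma G is a polynomial map} gives the pathwise pointwise bound $|G(m_n(s))|_{L^2}\leq C(|h|_{H^1})$, so all moments of $\int_0^T|G(m_n(s))|_{L^2}^2\,ds$ are finite uniformly in $n$, and the hypotheses are met. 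I do not expect any genuine obstacle here: unlike the convergence arguments of Section \ref{Section Proof of existence of a solution}, this estimate is purely term-by-term on the equation and does not rely on finite-dimensionality, so the only point requiring mild care is checking that the integrability exponents in Lemma \ref{Lemma reference W alpha p bound for stochastic integral} match the bounds of Lemma \ref{bounds lemma 1 minimizing sequence} for the chosen $p\geq 2$ and $\gamma<\frac12$. Collecting the contributions of all terms yields the desired uniform bound and completes the proof.
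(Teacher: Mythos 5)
Your proposal is correct and follows essentially the same route as the paper: the paper's proof of this lemma simply refers back to the argument of Lemma \ref{bounds lemma}, i.e.\ bound each deterministic integrand uniformly in $L^2(\Omega;L^2(0,T;L^2))$ using Lemma \ref{bounds lemma 1 minimizing sequence}, pass to $W^{1,2}(0,T;L^2)$ for the indefinite integrals, apply the embedding $W^{1,2}(0,T;L^2)\hookrightarrow W^{\gamma,p}(0,T;L^2)$, and treat the stochastic integral via Lemma \ref{Lemma reference W alpha p bound for stochastic integral}. Your write-up merely makes explicit the term-by-term bookkeeping (including the constraint condition controlling $G(m_n)$ and the correction term) that the paper leaves implicit.
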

	\dela{Change the Lemma. Use the same bound used in Lemma \ref{bounds lemma}.
		Then claim that uniform bound also exists $\mathbb{P}$-a.s.}
	\begin{proof}[Proof of Lemma \ref{bounds lemma 2 minimizing sequence}]
		The proof is similar to the proof of Lemma \ref{bounds lemma}.\\
		The idea of the proof is to show a stronger bound (in $W^{1,2}(0,T;L^2)$) for the terms without the stochastic intergral, as done in the proof of Lemma \ref{bounds lemma}. Then use the \dela{compact} embedding
		\begin{equation}
			W^{1,2}(0,T;L^2)\hookrightarrow W^{\gamma , p}(0,T ; L^2),
		\end{equation}
		to conclude the bound. For the stochastic integral, the proof is similar to the proof in Lemma \ref{bounds lemma}, using Lemma \ref{Lemma reference W alpha p bound for stochastic integral}.\dela{, which follows Lemma 2.1 from \cite{Flandoli_Gatarek}. This can also be done by Lemma A.1 in \cite{ZB+BG+TJ_Weak_3d_SLLGE}.}
	\end{proof}
	Combining the bound \eqref{bound on m_n L infty H1 minimizing sequence} in Lemma \ref{bounds lemma 1 minimizing sequence} along with the Lemma \ref{bounds lemma 2 minimizing sequence}, we have that the sequence $\{m_n\}_{n\in\mathbb{N}}$ is bounded in the space $L^2(\Omega ; L^{\infty}(0,T;H^1))\cap L^2(\Omega ; W^{\gamma , p}(0,T;L^2))$.
	
	That each $m_n$ satisfies \eqref{Further regularity} follows from Theorem \ref{Theorem Further regularity}. The aim here is to show that the bound is uniform in $n\in\mathbb{N}$.
	
	\begin{lemma}\label{Lemma Further regularity minimizing sequence}
		There exists a constant $C>0$ such that for all $n\in\mathbb{N}$,
		\begin{align}\label{Further regularity minimizing sequence}
			\mathbb{E} \left( \int_{0}^{T} | \nabla m_n(t) |_{L^4}^4\, dt  + \int_{0}^{T} |A_1 m_n(t)|_{L^2}^2\, dt\right) \leq C.
		\end{align}
	\end{lemma}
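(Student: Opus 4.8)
The plan is to obtain \eqref{Further regularity minimizing sequence} as a uniform-in-$n$ version of Theorem \ref{Theorem Further regularity}, by repeating its proof for each $m_n$ and checking that the resulting constant depends only on quantities that are bounded uniformly in $n$. Since each $\pi_n$ is a strong martingale solution of \eqref{problem considered for optimal control part}, the process $m_n$ admits the mild representation \eqref{Mild formulation 2} (with $m,u,W$ replaced by $m_n,u_n,W_n$), and the initial datum $m_0$, the function $h$, and the semigroup $\{e^{-tA}\}$ are common to all $n$. First I would note that the analytic-semigroup ingredients used in the proof of Theorem \ref{Theorem Further regularity}, namely ultracontractivity \eqref{ultracontractive property}, the maximal regularity property \eqref{maximal regularity property}, the bounds \eqref{Norm A1 delta e to the power A1 bound} on $A_1^{\delta}e^{-tA_1}$, and the Sobolev embedding $X^{\delta}\hookrightarrow W^{1,4}$ of Lemma \ref{Lemma Sobolev embedding ref Wong Zakai}, are all independent of $n$. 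Consequently, the only place where $n$ can enter the final constant is through the norms of the integrands appearing on the right-hand side of \eqref{Mild formulation 2}.

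The key preliminary step is therefore to upgrade the uniform estimates of Lemma \ref{bounds lemma 1 minimizing sequence} to the higher moments required by Sub step 2 and Sub step 3 of the proof of Theorem \ref{Theorem Further regularity}. Precisely, I would establish that there is a constant $C>0$, independent of $n$, with
\begin{equation*}
\mathbb{E}^n\Bigl(\int_{0}^{T}|m_n(s)\times\Delta m_n(s)|_{L^2}^2\,ds\Bigr)^2 \leq C,\qquad \mathbb{E}^n\sup_{t\in[0,T]}|m_n(t)|_{H^1}^{8}\leq C,
\end{equation*}
together with the analogous $L^4(\Omega^n;L^2(0,T;L^2))$ bounds for $|\nabla m_n|_{\mathbb{R}^3}^2 m_n$, $m_n\times(m_n\times\Delta m_n)$, $m_n\times u_n$, $m_n\times(m_n\times u_n)$ and $DG(m_n)(G(m_n))$. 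These follow by the same It\^o formula, Burkholder--Davis--Gundy and Gronwall scheme used to prove bounds \eqref{bound 3}--\eqref{bound 6} in Lemma \ref{bounds lemma 1}: the constraint condition \eqref{eqn-constraint condition} reduces each nonlinear term to $|m_n\times\Delta m_n|_{L^2}$ or $|u_n|_{L^2}$ up to harmless factors, and the admissibility criterion \eqref{admissibility criterion} with $p=4$ supplies the uniform control $\mathbb{E}^n(\int_0^T|u_n(s)|_{L^2}^2\,ds)^4\leq K_4$ that is exactly what the higher-moment Gronwall argument consumes.

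With these uniform higher-moment bounds in hand, I would run the three sub-steps of the proof of Theorem \ref{Theorem Further regularity} term by term. Sub step 1 controls $e^{-tA}m_0$ and the $|\nabla m_n|^2 m_n$ term using only $|m_0|_{H^1}$ and $\sup_t|m_n|_{H^1}^2$; Sub step 2 treats each of the remaining drift integrands $f_n$ via Young's convolution inequality, producing a bound of the form $C\,\mathbb{E}^n(\int_0^T|f_n|_{L^2}^2\,ds)^2$; and Sub step 3 handles the It\^o integral through $\mathbb{E}^n\int_0^T|G(m_n)|_{H^1}^4\,ds\leq C\,\mathbb{E}^n\int_0^T|m_n|_{H^1}^2\,ds$ as in \eqref{eqn 1 Further regularity theorem}. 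Summing the terms yields the $n$-uniform analogue of \eqref{Step 1 stronger estimate}, and hence $\mathbb{E}^n\int_0^T|\nabla m_n(t)|_{L^4}^4\,dt\leq C$ by $X^{\delta}\hookrightarrow W^{1,4}$. For the $A_1 m_n$ part I would invoke the pointwise vector identity \eqref{vector identity a times b plus a dot b}: together with the constraint condition it gives $|\Delta m_n(t,x)|_{\mathbb{R}^3}^2 = |m_n(t,x)\times\Delta m_n(t,x)|_{\mathbb{R}^3}^2 + |\nabla m_n(t,x)|_{\mathbb{R}^3}^4$ a.e., so that the already-established bound \eqref{bound on m_n times Delta m_n minimizing sequence} on $m_n\times\Delta m_n$ and the uniform $L^4$-gradient bound from Step~1 close the estimate on $\mathbb{E}^n\int_0^T|A_1 m_n|_{L^2}^2\,dt$.

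The main obstacle I anticipate is precisely the higher-moment upgrade of the preliminary bounds: Lemma \ref{bounds lemma 1 minimizing sequence} only furnishes first-moment estimates of the nonlinear quantities, whereas the convolution and It\^o-isometry arguments of Theorem \ref{Theorem Further regularity} require them to power two in the time integral (equivalently, membership of the integrands in $L^4(\Omega^n;L^2(0,T;L^2))$). Making this uniform in $n$ is where the admissibility assumption \eqref{admissibility criterion} for $p=4$ is genuinely used, and care is needed that the $\varepsilon$-Young and Gronwall constants arising there are chosen independently of $n$ (for instance $\varepsilon=\alpha/2$), exactly as in the single-solution computations of Lemma \ref{bounds lemma 1} and Theorem \ref{Theorem Further regularity}.
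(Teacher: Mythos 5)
Your proposal is correct and takes essentially the same route as the paper: re-run the proof of Theorem \ref{Theorem Further regularity} for each $m_n$ via the mild formulation \eqref{Mild formulation 2} and check that every constant depends only on $m_0$, $T$, $h$, the admissibility bound $K_4$ from Assumption \ref{Assumption admisibility criterion}, and the uniform estimates of Lemma \ref{bounds lemma 1 minimizing sequence}. You are in fact more explicit than the paper's own outline, which asserts without detail that the required $L^4(\Omega;L^2(0,T;L^2))$ bounds on the integrands are $n$-independent; your higher-moment upgrade (the uniform bounds on $\mathbb{E}^n\sup_t|m_n|_{H^1}^8$ and $\mathbb{E}^n\bigl(\int_0^T|m_n\times\Delta m_n|_{L^2}^2\,ds\bigr)^2$ via the It\^o--BDG--Gronwall scheme, with the constraint condition absorbing the remaining nonlinearities) is exactly the step the paper leaves implicit, and you correctly locate it as the place where the $p=4$ admissibility assumption is consumed.
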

	
	\begin{proof}[Idea of the proof of Lemma \ref{Lemma Further regularity minimizing sequence}]
		That $m_n$ is a strong martingale solution for each $n\in\mathbb{N}$ implies that the left hand side of the inequality \eqref{Further regularity minimizing sequence} is finite for each $n\in\mathbb{N}$. The aim of this lemma is to show that the constant on the right hand side is independent of $n$. One can verify from the proof of Theorem \ref{Theorem Further regularity} that the bounds on the right hand side depends only on $\mathbb{E}\l| u \r|_{L^2(0,T;L^2)}^{2p}$, the initial data $m_0$ and the fixed time $T$. By the Assumption \ref{assumption on u} and the fact that $\{\pi_n\}_{n\in\mathbb{N}}$ is a minimizing sequence, we can conclude the lemma.
	\end{proof}
	\begin{proof}[An outline of the proof of Lemma \ref{Lemma Further regularity minimizing sequence}]
		\dela{
			
			To prove the lemma, we will follow Step 1 and Step 2 (Section \ref{Section Further regularity}) of the proof of Theorem \ref{Theorem Further regularity} and show that the bound on the right hand side does not depend on $n$. In that direction, first we show that the bound on $\mathbb{E
			}\sup_{t\in[0,T]}\l|m_n\r|_{H^1}^2$ (for the minimizing sequence) is independent of $n$. We recall the inequalities \eqref{bound using u 1}, \eqref{bound using u 2}. Notice that the constant on the right hand side depends only on $\mathbb{E}\l| u \r|_{L^2(0,T;:L^2)}^{2p}$ (i.e. $K_p$), the initial data $m_0$ and the fixed time $T$.\dela{ Hence the constant that appears after the application of Gronwalls Lemma \adda{when??} is also dependent only on the above mentioned quantities.} Similarly, the bound \eqref{bound on m_n L infty H1 minimizing sequence} depends on the initial data $m_0$, the time $T$ and $\mathbb{E}\l| u_n \r|_{L^2(0,T;:L^2)}^{2p}$. By the Assumption \ref{assumption on u}\dela{/by the fact that $\{m_n\}_{n\in\mathbb{N}}$\adda{$\{\pi_n\}_{n\in\mathbb{N}}$} is a minimizing sequence}, the bound on $\mathbb{E
			}\sup_{t\in[0,T]}\l|m_n\r|_{H^1}^2$ is therefore independent of $n$. \dela{write $\sup_{t\in[0,T]}$}
			
		}
		To prove the lemma, we will follow Step 1 and Step 2 (Section \ref{Section Further regularity}) of the proof of Theorem \ref{Theorem Further regularity} and show that the bound on the right hand side does not depend on $n$. In that direction, first we recall that by Lemma \ref{bounds lemma 1 minimizing sequence}\dela{\eqref{bound on m_n L infty H1 minimizing sequence}, the bound on $\mathbb{E
			}\sup_{t\in[0,T]}\l|m_n\r|_{H^1}^2$ (for the minimizing sequence) is independent of $n$.}, the bounds on $m_n,u_n$ are independent of $n$.
		
		\dela{
			Following a similar \dela{same equation \adda{which equation??}}line of argument, the term $m_n \times \Delta m_n$ is bounded by a constant that depends only on the initial data $m_0$, the time $T$ and the bounds on $\mathbb{E}\l| u \r|_{L^2(0,T;L^2)}^{2p}$ and \adda{ Find a better way of writing this $\mathbb{E
				}\l|m_n\r|_{L^{\infty}(0,T;H^1)}^{2p}$}.
			
		}

		\dela{
			
			Note that by the constraint condition \eqref{eqn-constraint condition},
			\begin{equation*}
				\mathbb{E} \int_{0}^{T} \l| m_n(t) \times \l( m_n(t) \times \Delta m_n(t) \r) \r|_{L^2}^2\, dt \leq \mathbb{E} \int_{0}^{T} \l| m_n(t) \times \Delta m_n(t) \r|_{L^2}^2\, dt.
			\end{equation*}
			Hence the bound again does not depend on $n\in\mathbb{N}$ since the right hand side is uniformly bounded by Lemma \ref{bounds lemma 1 minimizing sequence}.\\
			Again by the constraint condition \eqref{eqn-constraint condition},
			\begin{equation*}
				\mathbb{E} \int_{0}^{T} \l| m_n(t) \times u_n(t) \r|_{L^2}^2\, dt \leq \mathbb{E} \int_{0}^{T} \l| u_n(t) \r|_{L^2}^2\, dt
			\end{equation*}
			and
			\begin{align*}
				\mathbb{E} \int_{0}^{T} \l| m_n(t) \times \l( m_n(t) \times u_n(t) \r) \r|_{L^2}^2\, dt & \leq \mathbb{E} \int_{0}^{T} \l| m_n(t) \times u_n(t) \r|_{L^2}^2\, dt \\
				& \leq \mathbb{E} \int_{0}^{T} \l| u_n(t) \r|_{L^2}^2\, dt.
			\end{align*}
			For showing the bound similarly for the minimizing sequence, the other terms should be bounded (uniformly in $n$) in the respective spaces.
		}
		
		We now recall Step 1 in the proof of Theorem \ref{Theorem Further regularity}. The bound on $\mathbb{E} \int_{0}^{T} \l| A_1^{\delta} m_n(t) \r|_{L^2}^2\, dt$ depends only on the choice of $\delta$ and the $L^4\l(\Omega ; L^2\l( 0,T ; L^2 \r)\r)$ norm of the functions on the right hand side of \dela{\eqref{eqn satisfied by minimizing sequence}} \eqref{problem considered for optimal control part}. Following the above arguments, we can show that the required bounds do not depend on $n\in\mathbb{N}$.
		
		For the It\^o integral term, We observe that the bound depends on the time $T$, the choice of $\delta$ and the norm $\mathbb{E} \int_{0}^{T} \l|G(m_n(t))\r|_{H^1}^2\, dt$, which again depends on the norm $\mathbb{E} \int_{0}^{T} \l|m_n(t)\r|_{H^1}^2\, dt$, the constraint condition and the fixed function $h$. Hence, from the above arguments, this bound also does not depend on $n\in\mathbb{N}$.
		
		Going back to Step 2 of the proof of Theorem \ref{Theorem Further regularity}, we observe that it is sufficient to bound the term $m_n \times \Delta m_n$, along with Step 1 to complete the proof of \eqref{Further regularity minimizing sequence}. Hence combining the arguments above, we conclude that the bound \eqref{Further regularity minimizing sequence} is independent of $n\in\mathbb{N}$.		
	\end{proof}
	From the bounds established in Lemma \ref{Lemma Further regularity minimizing sequence}, we can prove that the sequence $\{m_n\}_{n\in\mathbb{N}}$ is bounded in the space
	$L^2(\Omega ; L^{2}(0,T;H^2 )\cap L^2(\Omega ; W^{\gamma , p}(0,T;L^2) )$.

	We use the uniform bounds to show that the sequence of laws of $m_n$ is tight on the space $L^2(0,T;H^1)  \cap C([0,T]; L^2)$. Similarly, we use the uniform bound on the sequence of control processes $u_n$ to talk about tightness of laws on a suitable space. This is outlined in the following lemma.
	
	\begin{lemma}\label{tightness lemma minimizing sequence}
		The sequence of laws of $\l\{ \l(m_n , u_n\r) \r\}_{n\in\mathbb{N}}$ is tight on the space $L^2(0,T;H^1) \cap C([0,T]; L^2) \times L^2_w(0,T;L^2)$.
	\end{lemma}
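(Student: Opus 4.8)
The plan is to mirror the proof of Lemma \ref{tightness lemma}, upgrading the spatial target from $L^p(0,T;L^q)$ to $L^2(0,T;H^1)$ by exploiting the maximal regularity now available for the minimizing sequence. First I would fix $\gamma\in(0,\frac{1}{2})$ and $p\geq 2$ with $\gamma p>1$, and collect the three uniform (in $n$) estimates already established: the bound $\mathbb{E}^n\sup_{t\in[0,T]}|m_n(t)|_{H^1}^4\leq C$ from Lemma \ref{bounds lemma 1 minimizing sequence}, the fractional-in-time bound $\mathbb{E}^n|m_n|_{W^{\gamma,p}(0,T;L^2)}^2\leq C$ from Lemma \ref{bounds lemma 2 minimizing sequence}, and the maximal regularity estimate $\mathbb{E}^n\int_{0}^{T}|A_1 m_n(t)|_{L^2}^2\,dt\leq C$ from Lemma \ref{Lemma Further regularity minimizing sequence}. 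Since $|A_1\cdot|_{L^2}$ controls the $H^2$-norm on $D(A_1)$, these together say that $\{m_n\}_{n\in\mathbb{N}}$ is bounded in $L^2(\Omega;\mathcal{Y})$, where $\mathcal{Y}:=L^{\infty}(0,T;H^1)\cap L^2(0,T;H^2)\cap W^{\gamma,p}(0,T;L^2)$.

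Next, exactly as in Lemma \ref{tightness lemma}, I would apply Chebyshev's inequality to conclude that for each $\varepsilon>0$ there is $R=R(\varepsilon)$ such that the closed ball $B_R\subset\mathcal{Y}$ of radius $R$ satisfies $\inf_{n}(\mathcal{L}(m_n))(B_R)\geq 1-\varepsilon$. It then remains to show that $B_R$ is relatively compact in $L^2(0,T;H^1)\cap C([0,T];L^2)$. For the $C([0,T];L^2)$-component this is precisely the compact embedding $L^{\infty}(0,T;H^1)\cap W^{\gamma,p}(0,T;L^2)\hookrightarrow C([0,T];L^2)$ used already (Lemma \ref{compact embedding of intersection 2}), valid for $\gamma p>1$. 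For the $L^2(0,T;H^1)$-component I would invoke an Aubin--Lions--Simon type compactness result: since $H^2\hookrightarrow H^1$ is compact and $H^1\hookrightarrow L^2$ is continuous, the bound in $L^2(0,T;H^2)$ together with the time regularity in $W^{\gamma,p}(0,T;L^2)$ yields the compact embedding $L^2(0,T;H^2)\cap W^{\gamma,p}(0,T;L^2)\hookrightarrow L^2(0,T;H^1)$. Thus the closure of $B_R$ in the target space is compact, and the tightness of $(\mathcal{L}(m_n))_{n\in\mathbb{N}}$ on $L^2(0,T;H^1)\cap C([0,T];L^2)$ follows.

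For the control component I would argue verbatim as in Lemma \ref{tightness lemma}: by Chebyshev's inequality and the uniform bound $\mathbb{E}^n\int_{0}^{T}|u_n(t)|_{L^2}^2\,dt\leq K_1$ (inequality \eqref{bound on u_n minimizing sequence L2 L2}), the laws of $u_n$ concentrate in balls of $L^2(0,T;L^2)$, which by the Banach--Alaoglu theorem have compact closure in $L^2_w(0,T;L^2)$; hence $(\mathcal{L}(u_n))_{n\in\mathbb{N}}$ is tight on $L^2_w(0,T;L^2)$. Finally, since for each $\varepsilon$ one may take the product of the two corresponding compact sets, whose complement carries probability at most the sum of the two marginal complements, marginal tightness implies joint tightness, so $\{\mathcal{L}(m_n,u_n)\}_{n\in\mathbb{N}}$ is tight on $L^2(0,T;H^1)\cap C([0,T];L^2)\times L^2_w(0,T;L^2)$.

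The main obstacle is the verification of the compact embedding into $L^2(0,T;H^1)$, which is the one genuinely new ingredient relative to Lemma \ref{tightness lemma}: it relies on the $H^2$-in-space control furnished by the maximal regularity of Theorem \ref{Theorem Further regularity}, made uniform in $n$ by Lemma \ref{Lemma Further regularity minimizing sequence}, together with a careful application of the Aubin--Lions--Simon lemma for the compact triple $H^2\hookrightarrow H^1\hookrightarrow L^2$. A further subtlety, to be handled as indicated in Remark \ref{remark same bounds remark minimizing sequence}, is that the processes no longer take values in the finite-dimensional spaces $H_n$, so the finite-dimensional Kuratowski-type arguments of Section \ref{Section Proof of existence of a solution} must be replaced by their separable Banach space analogues; this, however, affects only the later identification of the limit and not the tightness argument itself.
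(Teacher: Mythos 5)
Your proposal is correct and follows essentially the same route as the paper: Chebyshev's inequality applied to the uniform bounds of Lemmas \ref{bounds lemma 1 minimizing sequence}, \ref{bounds lemma 2 minimizing sequence} and \ref{Lemma Further regularity minimizing sequence} concentrates the laws of $m_n$ in balls of $L^{\infty}(0,T;H^1)\cap L^2(0,T;H^2)\cap W^{\gamma,p}(0,T;L^2)$, which are relatively compact in $L^2(0,T;H^1)\cap C([0,T];L^2)$ precisely by the pair of embedding results (Lemma \ref{compact embedding of intersection 1} for the $L^2(0,T;H^1)$ component via the triple $H^2\hookrightarrow H^1\hookrightarrow L^2$, and Lemma \ref{compact embedding of intersection 2} for the $C([0,T];L^2)$ component with $\gamma p>1$) that the paper also invokes, with the control component handled by Chebyshev and Banach--Alaoglu exactly as in Lemma \ref{tightness lemma}. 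Your explicit passage from marginal to joint tightness via products of compact sets is a detail the paper leaves implicit, but it is the same argument.
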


	\begin{proof}[Proof of Lemma \ref{tightness lemma minimizing sequence}]
		The proof will be similar to the proof of Lemma \ref{tightness lemma}. This lemma shows tightness on a smaller (more regular) space than the previous counterpart. For completion, we give some details here. We show calculations for the sequence $\{m_n\}_{n\in\mathbb{N}}$. Tightness for the sequence of laws of $\{u_n\}_{n\in\mathbb{N}}$ follows similar to Lemma \ref{tightness lemma}. The main idea is to show that the laws of $m_n, n\in\mathbb{N}$ are concentrated inside a ball in the space $L^{\infty}(0,T;H^1)\cap L^2(0,T;H^2)\cap W^{\gamma , p}(0,T;L^2)$, which is compactly embedded into the space $L^2(0,T;H^1)\cap C([0,T];L^2)$.

		Towards that, let $r\in\mathbb{R}$ be arbitrary and fixed.
		\begin{align}\label{equation 1 tightness lemma minimizing sequence}
			\nonumber&\mathbb{P}_n\l(\l|m_n\r|_{L^{\infty}(0,T;H^1)\cap L^{2}(0,T;H^2)\cap W^{\gamma , p}(0,T;L^2)} \geq r\r) \\
			\nonumber&\leq \mathbb{P}_n\l(\l|m_n\r|_{L^{\infty}(0,T;H^1)} \geq \frac{r}{3}\r) + \mathbb{P}_n\l(\l|m_n\r|_{L^{2}(0,T;H^2)} \geq \frac{r}{3}\r) + \mathbb{P}_n\l(\l|m_n\r|_{ W^{\gamma , p}(0,T;L^2)} \geq \frac{r}{3}\r)\\
			\nonumber& \leq \frac{9}{r^2}\mathbb{E}^n \l|m_n\r|_{L^{\infty}(0,T;H^1)}^2 + \frac{9}{r^2}\mathbb{E}^n \l|m_n\r|_{L^{2}(0,T;H^2)}^2 + \frac{9}{r^2}\mathbb{E}^n \l|m_n\r|_{W^{\gamma , p}(0,T;L^2)}^2 \\
			& \leq \frac{C}{r^2}.
		\end{align}
		
		The second last inequality follows from the Chebyshev inequality. For the last inequality, Lemma \ref{bounds lemma 2 minimizing sequence} and Lemma \ref{Lemma Further regularity minimizing sequence}  imply the existence of a constant $C>0$ used in the inequality. Observe that the right hand side of the above inequality, and hence the left hand side can be made as small as desired by choosing $r$ large enough.\\
		Let 
		\begin{align}
			\nonumber B_r : = \bigg\{ &v\in L^{\infty}(0,T;H^1)\cap L^2(0,T;H^2)\cap W^{\gamma , p}(0,T;L^2) \\
			&: \l| v \r|_{L^{\infty}(0,T;H^1)\cap L^2(0,T;H^2)\cap W^{\gamma , p}(0,T;L^2)} \geq r \bigg\}.
		\end{align}
		Let $\varepsilon>0$ be given. In order to show tightness of the laws, it suffices to show that there exists a compact set $B^{\varepsilon}\subset L^2(0,T;H^1)\cap C([0,T];L^2)$ such that for each $n\in\mathbb{N}$,
		\begin{equation}
			\mathbb{P}_n \l(B^{\varepsilon}\r) > 1 - \varepsilon.
		\end{equation}
		In \eqref{equation 1 tightness lemma minimizing sequence}, we choose $r$ such that $r^2 > \frac{C}{\varepsilon}$. Therefore
		\begin{equation}
			\mathbb{P}_n \l(  B_r  \r) \leq \frac{C}{r^2} < \varepsilon.
		\end{equation}
		Let $B^{\varepsilon}$ denote the closure of the complement of this $B_r$. Therefore for each $n\in\mathbb{N}$, we have
		\begin{equation}
			\mathbb{P}_n \l( B^{\varepsilon} \r) \geq 1 - \mathbb{P}_n \l( B_r \r) > 1 - \varepsilon.
		\end{equation}
		
		By Lemma \ref{compact embedding of intersection 1} and Lemma \ref{compact embedding of intersection 2}, for $ \gamma p > 1 $, the set $B^{\varepsilon}$ is a compact subset of $L^2(0,T;H^1)\cap C([0,T];L^2)$.
		Hence the sequence of laws $\l\{ \mathcal{L}(m_n)\r\}_{n\in\mathbb{N}}$ is tight on the space $L^2(0,T;H^1)\cap C([0,T];L^2)$.\\
		The proof for the tightness of the sequence of laws of $u_n$ on the space $L^2_w(0,T;L^2)$ is similar to the proof of Lemma \ref{tightness lemma}.
		\dela{
			By Lemma \ref{compact embedding into space of continuous functions} and Lemma \ref{compact embedding of intersection 1}\adda{ This Lemma is not sufficient. Probably This works. Check it once. Lemma \ref{compact embedding of intersection 2}} in Appendix \ref{Section Some embeddings} we have the following compact embedding holds for $\gamma p >1$.
			
			\begin{align}\label{compact embedding minimizing sequence}
				\adda{L^{\infty}(0,T;H^1) \cap }L^{2}(0,T;H^2)\cap W^{\gamma , p}(0,T;L^2) \hookrightarrow L^2(0,T;H^1) \cap C([0,T]; L^2).
			\end{align}
			\adda{The embedding into $C(0,T;L^2)$ may not be accurate.\\
				Include the $L^{\infty}(0,T;H^1)$ norm in the tightness argument with "$r$".}\\		
			Rest of the proof is along the lines of the proof of Lemma \ref{tightness lemma}.\\
			The proof for the tightness of the sequence of laws of $u_n$ on the space $L^2_w(0,T;L^2)$ is similar to the proof of Lemma \ref{tightness lemma}.
			
			\adda{Combining which two arguments? The tightness argument for $u$ has not been given here. Mention at the start that we give the arguments that are different from the aforementioned Lemma. Hence we give only the arguments for $m$. Also, should the tightness of $W$ be written in the same Lemma?? }Combining the above two arguments with the compact embedding in \eqref{compact embedding minimizing sequence} concludes that the sequence of laws of $\l(m_n , u_n\r)$ is tight on the space $L^2(0,T;H^1) \cap C([0,T]; L^2) \times L^2_w(0,T;L^2)$.
		}		
	\end{proof}
	
	Note that each strong martingale solution has its own Wiener process.
	The processes $W_n$ have the same laws on $C([0,T];\mathbb{R})$. Hence it is sufficient to show that the law of $W_n$ is tight on the space $C([0,T] ; \mathbb{R})$ for any $n\in\mathbb{N}$.
	
	Let $n\in\mathbb{N}$. Since the space $C ([0,T] ; \mathbb{R})$ is a Radon space, every probability measure is tight. Hence, given $\varepsilon > 0$  there exists $K_{\varepsilon}\subset C ([0,T] ; \mathbb{R})$ such that
	\begin{align}
		\mathbb{P}_n\l( W_n \in K_{\varepsilon} \r) \geq 1 - \varepsilon.
	\end{align}
	Since $W_n$ and $W_k$ have the same laws on the space $C([0,T] ; \mathbb{R})$, for any $n,k\in\mathbb{N}$,
	\begin{align}
		\mathbb{P}_n\l( W_n \in K_{\varepsilon} \r) = \mathbb{P}_k\l( W_k \in K_{\varepsilon} \r) \geq 1 - \varepsilon.
	\end{align}
	Hence the sequence of laws of $\{W_n\}_{n\in\mathbb{N}}$ is tight on the space $C([0,T] ; \mathbb{R})$.

	Now that we have shown the tightness, we proceed as done in Section \ref{Section Proof of existence of a solution}.

	\dela{	Hence in Proposition \ref{Proposition use of Skorokhod theorem minimizing sequence}, the following convergence can be given.
		\begin{align}
			\m_n \to \m\ \text{in}\ L^2(0,T;H^1)\cap C([0,T] ; (H^1)^\p)\ \mathbb{P}-a.s.
		\end{align}
		
		\textbf{Note:}This convergence is not used anywhere else in the proof except for the last part where we need the approximation of $\Psi(\m(T))$ in terms of $\Psi(\m_n(T))$.}
	
	\begin{proposition}\label{Proposition use of Skorokhod theorem minimizing sequence}
		There exists a probability space $\l(\Omega^\p , \mathcal{F}^\p , \mathbb{P}^\p\r)$ and a sequence $\l(\m_n , \u_n , W_n^\p\r)$ of $L^2(0,T;H^1)\cap C([0,T]; L^2) \times L^2_w(0,T;L^2) \times C([0,T]; \mathbb{R})$-valued random variables, along with random variables $\l(\m , \u \ W^\p\r)$ defined on $\Omega^\p$ such that for each $n\in\mathbb{N}$, the law of $\l(m_n , u_n , W_n\r)$ equals the law of $\l(\m_n , \u_n , W_n^\p\r)$ on $L^2(0,T;H^1)\cap C([0,T]; L^2\dela{(H^1)^\p}) \times L^2_w(0,T;L^2) \times C([0,T]; \mathbb{R})$ and the following convergences hold $\mathbb{P}^\p$-a.s. as $n$ goes to infinity.
		\begin{equation}\label{Equation Convergence of mn prime to m prime Skorohod Theorem minimizing sequence}
			\m_n \to \m\ \text{in}\ L^2(0,T;H^1)\cap C([0,T]; L^2) ,
		\end{equation}
		\begin{equation}\label{eqn-u_n^prime to u^prime}
			\u_n \to \u\ \text{in}\ L^2_w(0,T;L^2),
		\end{equation}
		\begin{equation}
			W_n^\p \to W^\p\ \text{in}\ C([0,T]; \mathbb{R}).
		\end{equation}
	\end{proposition}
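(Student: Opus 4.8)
The plan is to mirror closely the proof of Proposition \ref{Use of Skorohod theorem}, since the structure is identical; only the underlying space has changed. First I would assemble the tightness of the joint laws of the triples $(m_n, u_n, W_n)$ on the product space
\begin{equation*}
	\mathcal{X} = \bigl(L^2(0,T;H^1)\cap C([0,T];L^2)\bigr) \times L^2_w(0,T;L^2) \times C([0,T];\mathbb{R}).
\end{equation*}
This follows by combining Lemma \ref{tightness lemma minimizing sequence}, which gives the tightness of the laws of $(m_n,u_n)$ on the first two factors, with the tightness of the laws of $W_n$ on $C([0,T];\mathbb{R})$ established just above the statement. Since tightness of the marginals on each factor yields tightness of the joint laws on the product (a finite product of compact sets is compact in the product topology, so the product of the compact sets furnished by each marginal serves as the required compact set), the sequence of joint laws is tight on $\mathcal{X}$.

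Next I would verify the hypotheses of Jakubowski's version of the Skorohod Theorem (Theorem 3.11 in \cite{ZB+EM}, or Theorem A.1 in \cite{ZB+MO}): $\mathcal{X}$ must be a topological space admitting a countable family of real-valued continuous functions that separates its points. It suffices to check this on each factor. The spaces $L^2(0,T;H^1)\cap C([0,T];L^2)$ and $C([0,T];\mathbb{R})$ are separable metric spaces and hence satisfy the requirement automatically. For $L^2_w(0,T;L^2)$ I would repeat verbatim the argument from the proof of Proposition \ref{Use of Skorohod theorem}: choosing a countable dense set $\{w_k\}_{k\in\mathbb{N}}$ in $L^2(0,T;L^2)$, the functionals $f_k(v) = \int_{0}^{T} \l\langle v(t) , w_k(t) \r\rangle_{L^2}\, dt$ are continuous on $L^2_w(0,T;L^2)$ and separate its points, by density of $\{w_k\}_{k\in\mathbb{N}}$.

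Applying Jakubowski's theorem then produces a probability space $(\Omega^\p , \mathcal{F}^\p , \mathbb{P}^\p)$, a subsequence (relabelled by the same index), and random variables $(\m_n, \u_n, W_n^\p)$ and $(\m, \u, W^\p)$ with the asserted equality of laws on $\mathcal{X}$ and $\mathbb{P}^\p$-almost sure convergence in $\mathcal{X}$, which unpacks into the three stated convergences. I would emphasise that the essential work has already been carried out in Lemma \ref{tightness lemma minimizing sequence}, which in turn rests on the uniform maximal-regularity bound of Lemma \ref{Lemma Further regularity minimizing sequence}; the present proposition is a comparatively direct invocation of Jakubowski's theorem once the separating-functions condition is confirmed. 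The only genuine point to stress is that the ambient space here is the more regular $L^2(0,T;H^1)\cap C([0,T];L^2)$ rather than $L^4(0,T;L^4)\cap C([0,T];L^2)$ used in the existence proof; this stronger almost-sure convergence \eqref{Equation Convergence of mn prime to m prime Skorohod Theorem minimizing sequence} is precisely what will later permit passing to the limit in the $H^1$-norm term of the cost functional \eqref{definition of cost functional}, and so I would flag it as the feature to carry forward rather than as an obstacle in the proof itself.
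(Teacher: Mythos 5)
Your proposal is correct and follows essentially the same route as the paper, whose proof consists precisely of invoking Jakubowski's version of the Skorohod Theorem (Theorem 3.11 in \cite{ZB+EM}) by analogy with Proposition \ref{Use of Skorohod theorem}, relying on Lemma \ref{tightness lemma minimizing sequence} and the tightness of the laws of $W_n$ for the input. Your additional details — joint tightness from marginal tightness via products of compact sets, and the countable separating family of functionals on $L^2_w(0,T;L^2)$ — are exactly the points the paper delegates to the earlier proposition's proof, so there is no divergence in method.
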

	\begin{proof}[Proof of Proposition \ref{Proposition use of Skorokhod theorem minimizing sequence}]
		The proof, similar to the proof of Proposition \ref{Use of Skorohod theorem}, follows from the Jakubowski version of the Skorohod Theorem, see Theorem 3.11 in \cite{ZB+EM}.  
		
		\dela{\\\adda{How do we use \eqref{eqn-u_n^prime to u^prime}?}}
		
	\end{proof}

	\begin{remark}\label{Remark m prime, u prime progressively measurable minimizing sequence}
		The processes $\m$ and $\u$ obtained in Proposition \ref{Proposition use of Skorokhod theorem minimizing sequence} are Borel measurable. Let the filtration $\mathbb{F}^{\prime} = \mathcal{F}_{t\in[0,T]}^{\prime}$ be defined by
		
		\begin{align*}
			\mathcal{F}_t^{\prime} = \sigma \{ \m(s) , \u(s) , W^{\prime}(s): 0 \leq s \leq t\}.
		\end{align*}
		Hence $\m , \u$ are $\mathbb{F}^{\prime}$-adapted. Thus, the processes $\m$ and $\u$ have progressively measurable modifications, see Proposition 1.12, \cite{Karatzas+Steven_BrownianMotionStochacsticCalculus}. From now on, these progressively measurable modifications will be considered.
	\end{remark}

	\begin{remark}\label{remark same bounds remark minimizing sequence}
		This remark is written in the same spirit as that of Remark \ref{same bounds remark}. The main difference between Remark \ref{same bounds remark} and this Remark \ref{remark same bounds remark minimizing sequence} is that 
		we cannot use the finite dimensionality of the spaces $H_n$ here. Let us show how we need to modify the previous argument. First, we discuss the laws of $m_n,n\in\mathbb{N}$, and next we discuss the laws of $u_n,n\in\mathbb{N}$.

		\begin{enumerate}
			\item  Note that the spaces $C([0,T];L^2)$, $C([0,T];H^1)$, $L^2(0,T;H^1)$, $L^4(0,T;W^{1,4})$, and $L^2(0,T;H^2)$ are Polish spaces. In particular, since the embedding of $C([0,T];H^1)$ into the space $C([0,T];L^2) \cap L^2(0,T ; H^1)$ is continuous and injective, 
			by using the Kuratowski Theorem, Lemma \ref{Lemma Kuratowski}, we infer that the Borel subsets of $C([0,T];H^1)$ are also the Borel subsets of $C([0,T];L^2) \cap L^2(0,T ; H^1)$.
			Now, since by Lemma \ref{Lemma continuous in time with values in H1.} $\mathbb{P}_n\l\{ m_n \in C([0,T];H^1)\r\} = 1$ for each $n$ and $m_n$ and $\m_n$ have the same laws on $C([0,T];L^2) \cap L^2(0,T ; H^1)$ and $C([0,T];H^1)$ is a Borel subset of $C([0,T];L^2) \cap L^2(0,T ; H^1)$, we deduce the following
			\begin{equation*}
				\mathbb{P}^\p\l\{ \m_n \in C([0,T];H^1)\r\} = 1,\ \text{for each}\ n\in\mathbb{N}.
			\end{equation*}
			Arguing similarly (i.e. using the continuous embedding of the spaces $C([0,T];H^1)$, $L^2(0,T;H^1)$, $L^4(0,T;W^{1,4})$, and $L^2(0,T;H^2)$ into the space $C([0,T];L^2) \cap L^2(0,T ; H^1)$),
			we can prove that the processes $\m_n,n\in\mathbb{N}$ satisfy the same bounds as the processes $m_n,n\in\mathbb{N}$, in particular the bounds $(1)$, $(2)$ and $(3)$ in Lemma \ref{bounds lemma 1}.
			\item  
			Regarding the control processes corresponding to the processes $u_n$ and $u^\p_n$, we have the following. Firstly, the space $L^2_w(0,T;L^2)$ is the space $L^2(0,T;L^2)$ endowed with the weak topology, which is weaker than the norm topology. Therefore every open set in $L^2_w(0,T;L^2)$ is also an open set in $L^2(0,T;L^2)$. Therefore, the Borel sigma-algebra corresponding to $L^2_w(0,T;L^2)$ is contained in the Borel sigma-algebra corresponding to $L^2(0,T;L^2)$. In other words, Borel subsets of $L^2_w(0,T;L^2)$ are also Borel subsets of $L^2(0,T;L^2)$.
			Moreover, by Theorem 7.19 in \cite{Zizler_2003_Book_NonseparableBanachSpaces}, see also page number 112 in \cite{ZB+Ferrari_2019_StationarySolutions_DampedSNSE},  we infer that the Borel sigma algebras corresponding to $L^2_w(0,T;L^2)$ and $L^2(0,T;L^2)$ are equal. 
			\dela{

				The processes $u_n$ and $u^\p_n$ have the same laws on $L^2_w(0,T;L^2)$, for each $n\in\mathbb{N}$. Therefore
				we can prove the following for every Borel subset $B$ of $L^2_w(0,T;L^2)$:
				\begin{equation}\label{eqn same laws equality control sequence}
					\mathbb{P}\l\{ u_n \in B \r\} = \mathbb{P}^\p\l\{ u_n^\p \in B \r\}.
				\end{equation}
				\coma{Note the processes $u_n : \Omega \times [0,T] \to L^2$ is measurable. This induces a function $\Phi_{u_n} : \Omega \to L^2_{\text{w}}(0,T;L^2)$ given by
					\begin{equation*}
						\bigl( \Phi_{u_n}(\omega) \bigr) (t) = u_n(\omega , t).
					\end{equation*}
					Similar can be said for the processes $u_n^\p$, giving a function
					\begin{equation*}
						\bigl( \Phi_{u_n^\p}(\omega) \bigr) (t) = u^\p_n(\omega , t).
					\end{equation*}
					Now, the laws of the processes $u_n$ and $u_n^\p$ on the space $L^2_{\text{w}}(0,T;L^2)$ are equal. That is,
					\begin{equation*}
						\mathbb{P}\l( \bigl( \Phi_{u_n}(\omega) \bigr) (t) \in B \r) = \mathbb{P}^\p\l( \bigl( \Phi_{u_n^\p}(\omega) \bigr) (t) \in B \r).
					\end{equation*}
					Therefore, we can prove that
					\begin{equation}
						\mathbb{P}\l\{ u_n \in B \r\} = \mathbb{P}^\p\l\{ u_n^\p \in B \r\}.
					\end{equation}
					Therefore, we can prove in particular that for any constant $K>0$
					\begin{equation*}
						\mathbb{P} \l\{ \l| u_n \r|_{L^2}  \leq K  \r\} = \mathbb{P}^\p \l\{ \l| u^\p_n \r|_{L^2}  \leq K  \r\}.
					\end{equation*}
				}
				
				Note that the norm $\l|\cdot\r|_{L^2}$ is a measurable function (lower semicontinuous as an extended real valued function, and hence also Borel measurable) on the space $L^2_w(0,T;L^2)$.
				In particular, for every closed ball $B$ (in the space $L^2(0,T;L^2)$, the above equality \eqref{eqn same laws equality control sequence} holds.
				In particular, for any constant $K>0$, we deduce the following.
				\begin{equation*}
					\mathbb{P} \l\{ \l| u_n \r|_{L^2}  \leq K  \r\} = \mathbb{P}^\p \l\{ \l| u^\p_n \r|_{L^2}  \leq K  \r\}.
				\end{equation*}

			}
			By Proposition \ref{Proposition use of Skorokhod theorem minimizing sequence}, we infer that for each $n\in\mathbb{N}$, the law of the process $u_n^\p$ is equal to the law of the process $u_n$ on the space $L^2_{\text{w}}(0,T;L^2)$. In particular, the following holds for any constant $K>0$.
			\begin{equation*}
				\mathbb{P} \l\{ \l| u_n \r|_{L^2(0,T;L^2)}  \leq K  \r\} = \mathbb{P}^\p \l\{ \l| u^\p_n \r|_{L^2(0,T;L^2)}  \leq K  \r\}.
			\end{equation*}    
			Hence we infer that the processes $u^\p_n$ satisfy the same bounds as the processes $u_n$.
			
		\end{enumerate}
	\end{remark}
	
	The processes $\m_n$ and $\u_n$, therefore, satisfy the same bounds as the processes $m_n$ and $u_n$ respectively, for each $n\in\mathbb{N}$. We state this in the following two lemmata.
	
	\dela{\adda{Where do you use the Lemma below?}}
	\begin{lemma}\label{lem-bounds on mn prime minimizing sequence}
		There exists a constant $C>0$ such that for all $n\in\mathbb{N}$, the following bounds hold.
		
		\begin{align}\label{bound on m_n prime L infty H1 minimizing sequence}
			\mathbb{E}^\p \sup_{t\in[0,T]}\l| \m_n(t) \r|_{H^1}^2 \leq C,
		\end{align}

		\begin{equation}\label{bound on m_n prime times Delta m_n prime minimizing sequence}
			\mathbb{E}^\p \int_{0}^{T} \l|\m_n(s) \times \Delta \m_n(s)\r|_{L^2}^2 \, ds \leq C,
		\end{equation}

		\begin{equation}\label{bound on m_n prime times m_n prime times Delta m_n prime minimizing sequence}
			\mathbb{E}^\p \int_{0}^{T} \l| \m_n(s) \times \l(\m_n(s) \times \Delta \m_n(s)\r) \r|_{L^2}^2 \, ds \leq C,
		\end{equation}
		
		\begin{equation}\label{bound on m_n prime times u_n prime minimizing sequence}
			\mathbb{E}^\p \int_{0}^{T} \l|\m_n(s) \times \u_n(s)\r|_{L^2}^2 \, ds \leq C,
		\end{equation}
		
		\begin{equation}\label{eqn-bound on m_n prime times m_n prime times u_n prime minimizing sequence}
			\mathbb{E}^\p \int_{0}^{t} \l| \m_n(s) \times \l(\m_n(s) \times \u_n(s)\r)\r|_{L^2}^2 \, ds \leq C.
		\end{equation}
	\end{lemma}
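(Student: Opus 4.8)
The plan is to transfer the bounds of Lemma \ref{bounds lemma 1 minimizing sequence} from the original sequence $(m_n,u_n)$ to the new sequence $(\m_n,\u_n)$ by exploiting the equality of laws furnished by Proposition \ref{Proposition use of Skorokhod theorem minimizing sequence}, together with the measurability considerations recorded in Remark \ref{remark same bounds remark minimizing sequence}. Since each quantity appearing in \eqref{bound on m_n prime L infty H1 minimizing sequence}--\eqref{eqn-bound on m_n prime times m_n prime times u_n prime minimizing sequence} is the $\mathbb{P}^\p$-expectation of a Borel measurable functional evaluated along the paths of $(\m_n,\u_n)$, and since these paths have the same law as those of $(m_n,u_n)$, the expectations coincide with those already bounded in Lemma \ref{bounds lemma 1 minimizing sequence}.

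First I would make precise the functionals whose expectations must be controlled. For \eqref{bound on m_n prime L infty H1 minimizing sequence} consider $F_1:C([0,T];L^2)\to[0,\infty]$ given by $F_1(v)=\sup_{t\in[0,T]}|v(t)|_{H^1}^2$, where the $H^1$-norm is extended to $L^2$ by setting it equal to $+\infty$ off $H^1$; as in the proof of Lemma \ref{Lemma extension of norms and lower semicontinuity} this extension is lower semicontinuous, hence Borel measurable. For \eqref{bound on m_n prime times Delta m_n prime minimizing sequence}--\eqref{bound on m_n prime times m_n prime times Delta m_n prime minimizing sequence} the relevant functionals involve the Laplacian and so are finite only on paths valued in $H^2$; the maximal regularity established in Lemma \ref{Lemma Further regularity minimizing sequence}, combined with the Kuratowski Theorem (Lemma \ref{Lemma Kuratowski}) exactly as in Remark \ref{remark same bounds remark minimizing sequence}, guarantees that $\m_n$ takes values in $L^2(0,T;H^2)$ $\mathbb{P}^\p$-a.s., so that $\m_n\times\Delta\m_n$ and $\m_n\times(\m_n\times\Delta\m_n)$ are well-defined elements of $L^2(0,T;L^2)$ almost surely, and the associated functionals are Borel measurable on $L^2(0,T;H^2)$, which is itself a Borel subset of $L^2(0,T;H^1)\cap C([0,T];L^2)$ by Kuratowski's Theorem.

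For the mixed terms \eqref{bound on m_n prime times u_n prime minimizing sequence}--\eqref{eqn-bound on m_n prime times m_n prime times u_n prime minimizing sequence} involving the control, I would work on the product path space and use the equality of the joint laws of $(m_n,u_n)$ and $(\m_n,\u_n)$ on $L^2(0,T;H^1)\cap C([0,T];L^2)\times L^2_w(0,T;L^2)$. Here the argument of Remark \ref{remark same bounds remark minimizing sequence} is essential: the Borel $\sigma$-algebra of $L^2_w(0,T;L^2)$ coincides with that of $L^2(0,T;L^2)$, so the functionals $(v,w)\mapsto\int_0^T|v(s)\times w(s)|_{L^2}^2\,ds$ and $(v,w)\mapsto\int_0^T|v(s)\times(v(s)\times w(s))|_{L^2}^2\,ds$ are jointly Borel measurable. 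Once measurability is secured, the equality of laws yields, for each $n$,
\[
\mathbb{E}^\p\,\Phi(\m_n,\u_n)=\mathbb{E}^n\,\Phi(m_n,u_n)
\]
for every such functional $\Phi$, and the right-hand side is uniformly bounded in $n$ by Lemma \ref{bounds lemma 1 minimizing sequence}; this closes all five estimates.

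The hard part will be the measurability and well-definedness of the functionals built from the unbounded operator $\Delta$: one must verify that $\m_n$ genuinely inherits the $H^2$-regularity $\mathbb{P}^\p$-a.s. (not merely in expectation), which requires the Kuratowski transfer of the almost sure statement $m_n\in L^2(0,T;H^2)$ rather than of moment bounds alone, and that the nonlinear cross-product maps are Borel on the appropriate regularity space with its natural topology. The weak-topology subtlety on the control space, resolved via the coincidence of Borel $\sigma$-algebras, is the second delicate point, but it is fully addressed by Remark \ref{remark same bounds remark minimizing sequence}.
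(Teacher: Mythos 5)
Your proposal is correct and takes essentially the same route as the paper: the paper's proof consists of one line invoking Lemma \ref{bounds lemma 1 minimizing sequence} together with the law-transfer argument of Remark \ref{remark same bounds remark minimizing sequence} (mirroring the proof of Proposition \ref{Proposition bounds on m_n prime}), which is precisely the equality-of-laws argument you develop. The measurability details you supply (lower semicontinuous extensions of the norms, the Kuratowski transfer of a.s.\ $H^2$-regularity, and the coincidence of the Borel $\sigma$-algebras of $L^2_w(0,T;L^2)$ and $L^2(0,T;L^2)$) simply make explicit what the paper leaves implicit.
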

	
	\begin{proof}[Proof of Lemma \ref{lem-bounds on mn prime minimizing sequence}]
		The proof of this Lemma is similar to the proof of Proposition \ref{Proposition bounds on m_n prime}. It follows from the bounds established in Lemma \ref{bounds lemma 1 minimizing sequence}.
	\end{proof}
	We now use Lemma \ref{Lemma Further regularity minimizing sequence} along with the Remark \ref{remark same bounds remark minimizing sequence} to get the following lemma.
	\begin{lemma}\label{lem-Further regularitiy mn prime minimizing sequence}
		There exists a constant $C>0$ such that for all $n\in\mathbb{N}$,
		\begin{align}
			\mathbb{E}^\p \left( \int_{0}^{T}\l|\m_n(t)\r|_{W^{1,4}}^4\, dt  + \int_{0}^{T} |\m_n(t)|_{H^2}^2\, dt\right) \leq C.
		\end{align}
	\end{lemma}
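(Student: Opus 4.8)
The plan is to deduce this bound directly from Lemma \ref{Lemma Further regularity minimizing sequence} by transferring it from the original minimizing sequence $m_n$ to the Skorohod copies $\m_n$, exactly in the spirit of Remark \ref{remark same bounds remark minimizing sequence}. The first observation is that the two integral quantities appearing in the present statement are, up to constants, the same as those bounded in Lemma \ref{Lemma Further regularity minimizing sequence}. Indeed, since $A_1 = I_{L^2} + A$ with $A$ the Neumann Laplacian and $D(A_1) = D(A) \subset H^2$, the graph norm $\l| A_1 \cdot \r|_{L^2}$ is equivalent to the $H^2$ norm on $D(A_1)$, so there is a constant $C>0$ with $\l| \m_n(t) \r|_{H^2}^2 \leq C \l| A_1 \m_n(t) \r|_{L^2}^2$ and similarly for $m_n$. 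For the first term, the constraint condition \eqref{eqn-constraint condition} gives $\l| m_n(t) \r|_{L^4}^4 = |\mathcal{O}|$ for all $t$, whence
\[
\int_0^T \l| m_n(t) \r|_{W^{1,4}}^4 \, dt = T|\mathcal{O}| + \int_0^T \l| \nabla m_n(t) \r|_{L^4}^4 \, dt,
\]
and likewise for $\m_n$, so controlling $|\nabla m_n|_{L^4}^4$ is equivalent to controlling $|m_n|_{W^{1,4}}^4$. Therefore it suffices to establish a uniform (in $n$) bound on
\[
\mathbb{E}^\p \l( \int_0^T \l| \nabla \m_n(t) \r|_{L^4}^4 \, dt + \int_0^T \l| A_1 \m_n(t) \r|_{L^2}^2 \, dt \r).
\]

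Next I would carry out the law-transfer argument. By Lemma \ref{Lemma Further regularity minimizing sequence} the corresponding expectation for $m_n$ is bounded by a constant independent of $n$; in particular $m_n \in L^4(0,T;W^{1,4}) \cap L^2(0,T;H^2)$ for $\mathbb{P}_n$-a.a. paths. The key point, already recorded in Remark \ref{remark same bounds remark minimizing sequence}, is that the embeddings of $L^4(0,T;W^{1,4})$ and of $L^2(0,T;H^2)$ into the Polish space $C([0,T];L^2) \cap L^2(0,T;H^1)$ are continuous and injective, so by the Kuratowski Theorem (Lemma \ref{Lemma Kuratowski}) the Borel subsets of the former spaces are Borel subsets of the latter. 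Consequently the functionals
\[
v \mapsto \int_0^T \l| v(t) \r|_{W^{1,4}}^4 \, dt, \qquad v \mapsto \int_0^T \l| v(t) \r|_{H^2}^2 \, dt,
\]
extended by $+\infty$ off the respective regular spaces, are (lower semicontinuous, hence) Borel measurable on $C([0,T];L^2) \cap L^2(0,T;H^1)$. Since Proposition \ref{Proposition use of Skorokhod theorem minimizing sequence} guarantees that $\m_n$ and $m_n$ have equal laws on this space, the expectations of these functionals coincide, and the uniform bound passes from $m_n$ to $\m_n$. Combining this with the norm equivalences of the first paragraph yields the claimed inequality with a constant independent of $n$.

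The step that requires the most care — and which I expect to be the main obstacle — is the measurability/law-transfer of the regularity functionals onto the weaker path space. One must verify that the sets $\{v : \int_0^T \l| v(t) \r|_{H^2}^2 \, dt \leq r\}$ and their $W^{1,4}$ analogues are genuinely Borel in $C([0,T];L^2) \cap L^2(0,T;H^1)$ (this is exactly where Kuratowski's theorem, via Lemma \ref{Lemma continuous in time with values in H1.}, is invoked to know the regular path spaces sit inside the ambient space as Borel sets), and that $\m_n$ actually takes values in these regular spaces $\mathbb{P}^\p$-almost surely so that the integrals are finite rather than $+\infty$. Once this measurability bookkeeping is in place, the remainder of the argument is routine, reducing to the equality of laws and the elementary norm comparisons above.
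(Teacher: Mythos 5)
Your proposal is correct and takes essentially the same route as the paper: the paper's proof consists precisely of citing Lemma \ref{Lemma Further regularity minimizing sequence} (the uniform bound for the original minimizing sequence $m_n$) together with Remark \ref{remark same bounds remark minimizing sequence}, whose content is exactly the law-transfer argument you describe — the continuous injective embeddings of $L^4(0,T;W^{1,4})$ and $L^2(0,T;H^2)$ into the Polish space $C([0,T];L^2)\cap L^2(0,T;H^1)$ plus the Kuratowski Theorem (Lemma \ref{Lemma Kuratowski}) to make the regularity functionals Borel and transfer expectations via equality of laws from Proposition \ref{Proposition use of Skorokhod theorem minimizing sequence}. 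Your explicit norm comparisons (equivalence of $|A_1\cdot|_{L^2}$ with the $H^2$ norm on $D(A_1)$, and of the $W^{1,4}$ bound with the $L^4$ gradient bound via the constraint condition \eqref{eqn-constraint condition}) merely spell out what the paper leaves implicit.
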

	\begin{proof}[Proof of Lemma \ref{lem-Further regularitiy mn prime minimizing sequence}]
		The proof follows from the Lemma \ref{Lemma Further regularity minimizing sequence} and Remark \ref{remark same bounds remark minimizing sequence}.
	\end{proof}
	\dela{We now show that the process $\m$ is a strong martingale solution to the problem \eqref{problem considered for optimal control part} corresponding to the control process $\u$ on the probability space $\l(\Omega^\p , \mathcal{F}^\p , \mathbb{P}^\p\r)$.}
	
	Having shown uniform estimates for the sequence $\{\m_n\}_{\mathbb{N}}$, we show similar bounds for the limit process $\m$.\dela{Firstly, we show that the process $\m$ satisfies the required bounds.}
	\begin{lemma}\label{lem-bounds lemma for m prime minimizing sequence}
		The process $\m$ satisfies the following bounds.
		
		\begin{enumerate}
			
			\item		\begin{equation}
				\sup_{0\leq t\leq T}\l|m^{\prime}(t)\r|_{L^2} \leq |m_0|_{L^2},\ \mathbb{P}^{\prime}-\text{a.s.}
			\end{equation}
			
			\item 
			\begin{equation}
				\mathbb{E}^{\prime}\sup_{0\leq t\leq T}\l|m^{\prime}(t)\r|^{4}_{H^1} < \infty,
			\end{equation}
			
			\item \begin{equation}
				\mathbb{E}^\p\int_{0}^{T}\l|\m(t)\r|_{W^{1,4}}^4 \, dt < \infty,
			\end{equation}

			\dela{\item \begin{equation}
					\mathbb{E}^\p\int_{0}^{T}\l|\m(t)\r|_{D(A_1)}^2 \, dt < \infty.
			\end{equation}}
			
			\item \begin{equation}
				\mathbb{E}^\p\int_{0}^{T}\l|\m(t)\r|_{H^2}^2 \, dt < \infty.
			\end{equation}
			
		\end{enumerate}
	\end{lemma}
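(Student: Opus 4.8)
The plan is to prove all four bounds by exactly the lower semicontinuity plus Fatou argument already used in Lemma \ref{Lemma extension of norms and lower semicontinuity}, now applied to the minimizing sequence. The two ingredients I would combine are, first, the uniform-in-$n$ estimates on the sequence $\{\m_n\}_{n\in\mathbb{N}}$ coming from Lemma \ref{lem-bounds on mn prime minimizing sequence}, Lemma \ref{lem-Further regularitiy mn prime minimizing sequence}, and (via Remark \ref{remark same bounds remark minimizing sequence}) the bound \eqref{bound on m_n L infty H1 minimizing sequence} transferred to $\m_n$; and second, the $\mathbb{P}^\p$-almost sure convergence \eqref{Equation Convergence of mn prime to m prime Skorohod Theorem minimizing sequence}, namely $\m_n \to \m$ in $L^2(0,T;H^1)\cap C([0,T];L^2)$, furnished by Proposition \ref{Proposition use of Skorokhod theorem minimizing sequence}.

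For the first two bounds I would work directly with the $C([0,T];L^2)$ convergence. Since $\m_n \to \m$ in $C([0,T];L^2)$ $\mathbb{P}^\p$-a.s., we get $\sup_{0\leq t\leq T}\l|\m(t)\r|_{L^2} \leq \liminf_{n\to\infty}\sup_{0\leq t\leq T}\l|\m_n(t)\r|_{L^2} \leq \l|m_0\r|_{L^2}$, the last inequality being the conservation of the $L^2$ norm transferred to $\m_n$ through Remark \ref{remark same bounds remark minimizing sequence}; this gives bound $(1)$. For bound $(2)$ I would extend $\l|\cdot\r|_{H^1}$ to $L^2$ by setting it equal to $+\infty$ off $H^1$, exactly as in Lemma \ref{Lemma extension of norms and lower semicontinuity}; this extended functional is lower semicontinuous with respect to $L^2$ convergence, so pointwise in $\omega^\p$ we have $\sup_{t}\l|\m(t)\r|_{H^1}\leq \liminf_n \sup_{t}\l|\m_n(t)\r|_{H^1}$, and an application of the Fatou Lemma together with the uniform bound $\mathbb{E}^\p\sup_t\l|\m_n(t)\r|_{H^1}^4\leq C$ yields $\mathbb{E}^\p\sup_{t}\l|\m(t)\r|_{H^1}^4\leq C<\infty$.

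For bounds $(3)$ and $(4)$ I would again extend $\l|\cdot\r|_{W^{1,4}}$ and $\l|\cdot\r|_{H^2}$ to $L^2$ by $+\infty$ and verify that these extensions are lower semicontinuous with respect to $L^2$ convergence. This step uses reflexivity of $W^{1,4}$ and $H^2$ together with the continuous embeddings $W^{1,4}\hookrightarrow L^2$ and $H^2\hookrightarrow L^2$: if $w_n\to w$ in $L^2$ with $\liminf_n\l|w_n\r|_{W^{1,4}}<\infty$, a subsequence converges weakly in $W^{1,4}$ to a limit which must coincide with $w$, and weak lower semicontinuity of the norm gives $\l|w\r|_{W^{1,4}}\leq \liminf_n\l|w_n\r|_{W^{1,4}}$, and similarly for $H^2$. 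Since the strong convergence $\m_n\to\m$ in $L^2(0,T;H^1)$ implies, along a subsequence, $\m_n(t)\to\m(t)$ in $H^1$ (hence in $L^2$) for almost every $t$, $\mathbb{P}^\p$-a.s., the pointwise-in-time inequalities $\l|\m(t)\r|_{W^{1,4}}\leq \liminf_n\l|\m_n(t)\r|_{W^{1,4}}$ and $\l|\m(t)\r|_{H^2}\leq \liminf_n\l|\m_n(t)\r|_{H^2}$ hold. Applying the Fatou Lemma over $\Omega^\p\times(0,T)$ and invoking the uniform bound of Lemma \ref{lem-Further regularitiy mn prime minimizing sequence} then gives $\mathbb{E}^\p\int_0^T\l|\m(t)\r|_{W^{1,4}}^4\,dt\leq C$ and $\mathbb{E}^\p\int_0^T\l|\m(t)\r|_{H^2}^2\,dt\leq C$, which are $(3)$ and $(4)$.

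I do not expect a serious obstacle here, as the argument is a direct transcription of Lemma \ref{Lemma extension of norms and lower semicontinuity}; the only genuinely technical point is the careful verification that the extended $W^{1,4}$ and $H^2$ norms are lower semicontinuous with respect to the weaker topology in which convergence is actually available (pointwise in $t$ in $L^2$, or $C([0,T];L^2)$), which is where reflexivity of the target Sobolev spaces and the compatibility of weak limits through the embeddings into $L^2$ must be used. Once that is in place, bookkeeping with Fatou's Lemma and the uniform estimates closes all four bounds.
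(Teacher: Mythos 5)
Your proposal follows the same basic strategy as the paper's proof: extend the stronger norms as lower semicontinuous functionals on a weaker space, use the $\mathbb{P}^\p$-a.s.\ convergence from Proposition \ref{Proposition use of Skorokhod theorem minimizing sequence}, and close with the Fatou Lemma against the uniform bounds of Lemma \ref{lem-bounds on mn prime minimizing sequence} and Lemma \ref{lem-Further regularitiy mn prime minimizing sequence}; bounds $(1)$ and $(2)$ are handled identically in both arguments, exactly as in Lemma \ref{Lemma extension of norms and lower semicontinuity}. Where you diverge is in the bookkeeping for bounds $(3)$ and $(4)$: you extend the spatial norms $|\cdot|_{W^{1,4}}$ and $|\cdot|_{H^2}$ to $L^2(\mathcal{O})$ and apply Fatou over the product $\Omega^\p\times(0,T)$, which forces you to extract a.e.-in-$t$ convergence of $\m_n(t)$ from the $L^2(0,T;H^1)$ convergence. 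As written this step has a wrinkle: the subsequence along which $\m_n(t)\to\m(t)$ for a.e.\ $t$ may depend on $\omega^\p$, whereas Fatou on the product space needs a single deterministic subsequence and a pointwise liminf inequality holding $\mathbb{P}^\p\otimes dt$-a.e. This is fixable rather than fatal: a.s.\ convergence in $L^2(0,T;H^1)$ implies convergence in $\mathbb{P}^\p\otimes dt$-measure (Chebyshev in $t$, then dominated convergence in $\omega^\p$), so a fixed subsequence converging a.e.\ on the product exists, and since the uniform bounds hold for every $n$, Fatou along that subsequence already gives finiteness. The paper sidesteps this issue for bound $(3)$ by instead extending the Bochner norm $|\cdot|_{L^4(0,T;W^{1,4})}$ to $L^2(0,T;L^2)$, so that lower semicontinuity is applied to the whole path and Fatou is taken over $\Omega^\p$ alone; for bound $(4)$ it argues pointwise in $t$ essentially as you do. Your route gives pointwise-in-time information at the cost of this measurability/subsequence care; the paper's path-norm route for $(3)$ is the cleaner of the two, and you may simply adopt it to shorten your argument.
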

	\begin{proof}
		The proof is essentially similar to the proof of Lemma \ref{Lemma extension of norms and lower semicontinuity}.\dela{ We give here a proof for the last two inequalities.} A sketch for the proofs of the last two inequalities is given here.

		For the last inequality, we first extend the norm $\l|\cdot\r|_{H^2}$ to the space $H^1$ as follows.
		\begin{align*}
			\l|v\r|_{H^2} = \begin{cases}
				&\l|v\r|_{H^2},\ \mbox{ if }\ v\in H^2, \\
				& \infty, \mbox{ if }\ v\in H^1\ \text{and}\ v \notin H^2.
			\end{cases}
		\end{align*}
		This extended norm is lower semicontinuous.
		\dela{
			\textbf{A brief proof of lower semicontinuity:}\adda{This proof can be referred to some other work. No need of giving details here.}
			This can be shown following the proof of Lemma \ref{Lemma extension of norms and lower semicontinuity}. The proof can be as follows.
			Let us fix some $r>0$. Consider the set $B = \{v \in H^1 : \l|v\r|_{H^2} \leq r\}$. To show lower semicontiuity, it is sufficient to show that $B$ is a closed subset of $H^1$. To this end, we consider a sequence $\l(v_n\r)_{n\in\mathbb{N}}$ such that $v_n$ converges to $v$ in $H^1$ for some $v\in H^1$. Our aim is to show that $v\in H^2$. By the definition of the set $B$, the sequence $\l(v_n\r)_{n\in\mathbb{N}}$ is bounded in $H^2$ and hence has a weakly convergent subsequence in $H^2$. Let this sequence be denoted by $v_{n_{k}}$ and let it converge weakly to some element $v^\p\in H^2$. Since the space $H^2$ is compactly embedded into the space $H^1$, the sequence $v_{n_{k}}$ has a further subsequence that converges to $v^\p$ strongly in $H^1$. Hence by uniqueness of limit, $v=v^\p$ and hence $ v\in H^2 $. It remains to show that $\l|v\r|_{H^2} \leq r$. The norm $\l|\centerdot\r|_{H^2}$ is weakly lower semicontinuous on the space $H^2$.
			Hence
			\begin{align*}
				\l|v\r|_{H^2} \leq \liminf_{k\rightarrow\infty}\l|v_{n_k}\r|_{H^2} \leq r.
			\end{align*}
			This concludes the proof for lower semicontinuity of the extended norm.
		}		
		Therefore the following holds for each $t\in[0,T]$.
		\begin{align*}
			\l|\m(t)\r|_{H^2}^2 \leq \liminf_{n\rightarrow\infty} \l|\m_n(t)\r|_{H^2}^2.
		\end{align*}
		Hence by the Fatou Lemma,
		\begin{align*}
			\mathbb{E}^\p \int_{0}^{T} 	\l|\m(t)\r|_{H^2}^2  \, dt \leq \liminf_{n\to\infty}\mathbb{E}^\p \int_{0}^{T} 	\l|\m_n(t)\r|_{H^2}^2  \, dt.
		\end{align*}
		The bound in Lemma \ref{lem-Further regularitiy mn prime minimizing sequence} implies that the right hand side of the above inequality is finite. This concludes the proof.
		
		For the second last inequality,we extend the norm $\l|\centerdot\r|_{L^4(0,T;W^{1,4})}$ to the space $L^2(0,T;L^2)$ as follows
		\begin{align*}
			\l|v\r|_{L^4(0,T;W^{1,4})} = \begin{cases}
				& \l|v\r|_{L^4(0,T;W^{1,4})},\ \text{if}\ v\in L^4(0,T;W^{1,4}), \\
				& \infty,\ \text{if}\ v\in L^2(0,T;L^2)\ \text{and}\ v\notin L^4(0,T;W^{1,4}).
			\end{cases}
		\end{align*}
		
		\dela{Another way to go about this is to first show that the obtained process $\m$ is a weak martingale solution to the problem \eqref{problem considered}. Then show that the process also satisfies the "Further/ Maximal Regularity" condition, and hence is a strong martingale solution.}
		\dela{Or Replace the convergence ($\mathbb{P}$-a.s.) in $L^2(0,T;H^1)$ by $L^2(0,T;L^2)$. The space $W^{1,4}$ is continuously embedded into the space $H^1$, which is compactly embedded into the space $L^2$. Hence the norm $\l|\cdot\r|_{W^{1,4}}$ is lower semicontinuous when extended to the space $L^2$.}
		The above defined map is lower semicontinuous.
			%
		Therefore the following holds for each $t\in[0,T]$ $\mathbb{P}^\p$-a.s.
		\begin{align*}
			\l|\m(t)\r|_{L^4(0,T;H^1)} \leq \liminf_{n\rightarrow\infty} \l|\m_n(t)\r|_{L^4(0,T;H^1)}.
		\end{align*}
		Hence by the Fatou Lemma,
		\dela{
			\begin{align*}
				\mathbb{E}^\p \int_{0}^{T} \l|\m(t)\r|^4_{L^4(0,T;H^1)} \, dt \leq \liminf_{n\rightarrow\infty} \mathbb{E}^\p \int_{0}^{T} \l|\m_n(t)\r|^4_{L^4(0,T;H^1)} < \infty.
			\end{align*}
		}
		\begin{align}
			\mathbb{E}^\p \int_{0}^{T} \l|\m(t)\r|^4_{L^4(0,T;W^{1,4})} \, dt \leq \liminf_{n\rightarrow\infty} \mathbb{E}^\p \int_{0}^{T} \l|\m_n(t)\r|^4_{L^4(0,T;W^{1,4})} < \infty.
		\end{align}
		This concludes the proof of the Lemma \ref{lem-bounds lemma for m prime minimizing sequence}.
	\end{proof}
	This concludes the Auxilliary results. We now use them to prove Theorem \ref{Theorem existence of optimal control}.
	
	\begin{proof}[Continuation of the proof of Theorem \ref{Theorem existence of optimal control}]
		
		We now show that the obtained limit is a strong martingale solution to the problem \eqref{problem considered for optimal control part}. For this aim, we first show that it is a weak martingale solution, for which we need to show that the process $\m$ satisfies \eqref{problem considered for optimal control part} with the corresponding probability space.
		
		The main difference between this proof and the proof of Theorem \ref{Theorem Existence of a weak solution} is that now the solutions no more have values in finite dimensional spaces. Previously, we employed results for the finite dimensional spaces $H_n$. Now the solutions are in infinite dimensional spaces. 
		The core of the arguments and the overall structure remains the same. Moreover, for the convergence arguments, the projection operators and the cut-off are absent. But this is more of a simplification.
		
		We have pointwise convergence of $\m_n$ to $\m$ from Proposition \ref{Proposition use of Skorokhod theorem minimizing sequence}. We now show that the convergence is in stronger sense. The bound in \eqref{bound on m_n prime L infty H1 minimizing sequence}, Lemma \ref{lem-bounds on mn prime minimizing sequence} gives us that the sequence $\{\m_n\}_{n\in\mathbb{N}}$ is uniformly integrable. Hence by the Vitali convergence theorem, see Theorem 4.5.4 in \cite{BogachevBook_MeasureTheor_2007}, we have the following convergence as $n$ goes to infinity.
		\begin{equation}\label{Strong convergence of mn prime to m prime minimizing sequence}
			\m_n \to \m\ \text{in}\ L^2(\Omega^\p ; L^2(0,T;H^1)).
		\end{equation}

		We give here some details of the convergence arguments for the terms containing $\u_n$. The arguments for other terms are similar to the previous arguments, see Section \ref{Section Proof of existence of a solution}.
		
		From \eqref{eqn-u_n^prime to u^prime}, we get the pointwise convergence of $\u_n$ to $\u$. To show that $\m$ satisfies the required equation, it suffices now to show the following assertions for each $t\in[0,T]$ and $\phi\in L^2(\Omega^\p:H^1)$.
		\begin{equation}\label{eqn-1001}
			\lim_{n\rightarrow\infty}\mathbb{E^{\prime}}\l[ \int_{0}^{t} \l\langle \bigl( \m_n(s) \times \u_n(s) - \m(s) \times \u(s) \bigr) , \phi \r\rangle_{L^2} \, ds \r]=0
		\end{equation}
		and 
		\begin{equation}\label{eqn-1002}
			\lim_{n\rightarrow\infty}\mathbb{E^{\prime}}\l[ \int_{0}^{t} \l\langle \bigl( \m_n(s) \times \bigl( \m_n(s) \times \u_n(s) \bigr) - \m(s) \times \bigl( \m(s) \times \u(s) \bigr) \bigr) , \phi \r\rangle_{L^2} \, ds \r]=0,
		\end{equation}
		
		We will give details of the proof of \eqref{eqn-1001}.  The proof of \eqref{eqn-1002} follows the suite.\\
		First, we have the convergence given in \eqref{eqn-u_n^prime to u^prime} in Proposition \ref{Proposition use of Skorokhod theorem minimizing sequence}. Fix $t\in[0,T]$ and $\psi\in L^2\l(\Omega^\p ; L^2\l( 0,T; L^2 \r)\r)$. For $n\in\mathbb{N}$, let us define auxilliary functions $f_n: \Omega^\p\to\mathbb{R}$ as follows.
		\begin{equation}
			f_n(\omega^\p) := \int_{0}^{t} \l\langle \u_n(s,\omega^\p) - \u(s,\omega^\p) , \psi(s,\omega^\p) \r\rangle_{L^2} \, ds,\ \omega^\p\in\Omega^\p.
		\end{equation}
		By Assumption \ref{Assumption admisibility criterion} \dela{\eqref{bound on u_n prime}} and \eqref{intermediate liminf bound for u prime for control part} (established in the subsequent calculations), each $f_n$ is well defined. Moreover by \eqref{eqn-u_n^prime to u^prime}, we have
		\begin{equation}
			f_n \to 0,\ \mathbb{P}^\p-a.s.
		\end{equation}
		Note that \eqref{eqn-u_n^prime to u^prime} gives weak convergence for the processes on the space $L^2(0,T;L^2)$. This result can be used for $t\in[0,T]$ instead of $T$ by considering $\chi_{[0,t]}\psi$ instead of $\psi$.
		The idea now is to use the Vitali convergence theorem \cite{BogachevBook_MeasureTheor_2007} to show the convergence (of $\{f_n\}_{n\in\mathbb{N}}$) in $L^{1}(\Omega^\p:\mathbb{R})$. To use the Vitali convergence theorem, it suffices to show that the sequence $\{f_n\}_{n\in\mathbb{N}}$ is uniformly bounded in $L^{\frac{4}{3}}(\Omega^\p:\mathbb{R})$.\dela{ This is one of the places where the assumption on $u$ for $p=2$ is used. COmment no more needed. Also, some details here can be skipped.}
		\begin{align*}
			\mathbb{E^{\prime}} \l| f_n(\omega^\p) \r|^\frac{4}{3} = & \mathbb{E^{\prime}} \l| \int_{0}^{t} \l\langle \u_n(s,\omega^\p) - \u(s,\omega^\p) , \psi(s,\omega^\p) \r\rangle_{L^2} \, ds \r|^{\frac{4}{3}} \\
			\leq & \mathbb{E^{\prime}} \l( \int_{0}^{t} \l| \l\langle \u_n(s,\omega^\p) - \u(s,\omega^\p) , \psi(s,\omega^\p) \r\rangle_{L^2}\r| \, ds \r)^{\frac{4}{3}} \\
			\leq & \mathbb{E^{\prime}} \l( \int_{0}^{t} \l|  \u_n(s,\omega^\p) - \u(s,\omega^\p) \r|_{L^2}^2 \, ds \r)^\frac{2}{3} \l( \int_{0}^{t} \l| \psi(s,\omega^\p) \r|_{L^2}^2 \, ds \r)^\frac{2}{3} \\
			\leq & \l[\mathbb{E^{\prime}} \Bigl[  \l( \int_{0}^{t} \l|  \u_n(s,\omega^\p) - \u(s,\omega^\p) \r|_{L^2}^2 \, ds \r)^{2} \Bigr] \r]^{\frac{1}{3}} \l[\mathbb{E^{\prime}}  \l(\int_{0}^{t} \l| \psi(s,\omega^\p) \r|_{L^2}^2 \, ds \r) \r]^{\frac{2}{3}} \leq C_{p=2},
		\end{align*}
		for some constant $C_{p=2}$ independent of $n\in\mathbb{N}$. The existence of such a constant $C_{p=2}$ is guaranteed by Assumption \ref{Assumption admisibility criterion} \dela{\eqref{bound on u_n prime}} and \eqref{intermediate liminf bound for u prime for control part}.
		Therefore the sequence of processes $\{f_n\}_{n\in\mathbb{N}}$ is uniformly bounded in $L^{\frac{4}{3}}(\Omega^\p:\mathbb{R})$, and hence uniformly integrable on $L^{1}(\Omega^\p:\mathbb{R})$. Therefore, by the Vitali convergence theorem, we have
		\dela{\begin{equation}
				\lim_{n\rightarrow\infty} \mathbb{E^{\prime}} \l| f_n \r|^{\frac{4}{3}} = 0.
			\end{equation}
			Moreover by the continuous embedding $L^q(\Omega^\p:\mathbb{R})\in L^{\frac{4}{3}}(\Omega^\p:\mathbb{R}),1\leq q < \frac{4}{3}$, we have
			\begin{equation}
				\lim_{n\rightarrow\infty} \mathbb{E^{\prime}} \l| f_n \r|^{q} = 0.
			\end{equation}
			In particular for $q = 1$,\dela{ for $\psi\in L^2$,}}
		\begin{align}\label{Equation Weak convergence un prime to u prime minimizing sequence}
			\lim_{n\rightarrow\infty} \mathbb{E^{\prime}} \l| f_n \r| = & \lim_{n\rightarrow\infty} \mathbb{E^{\prime}} \l| \int_{0}^{t} \l\langle \u_n(s,\omega^\p) - \u(s,\omega^\p) , \phi \r\rangle_{L^2} \, ds \r| = 0,
		\end{align}
		thereby giving
		\begin{equation}\label{eqn-1003}
			\u_n\to\u \text{ weakly in }L^2\l(\Omega^\p:L^2\l(0,T;L^2\r)\r)
		\end{equation}
		Let us choose and fix $\phi\in L^2(\Omega^\p:L^4(0,T;H^1))$. Then we have the following equality
		\begin{align}\label{Equation Intermediate for convergence of mn prime times un prime minimizing sequence}
			\nonumber \mathbb{E} \int_{0}^{t} \l\langle \m_n(s) \times \u_n(s) - \m(s)\times \u(s) , \phi \r\rangle_{L^2} \, ds = &
			\mathbb{E} \int_{0}^{t} \l\langle \bigl( \m_n(s) - \m(s) \bigr) \times \u_n(s) , \phi \r\rangle_{L^2} \, ds \\
			& + \mathbb{E} \int_{0}^{t} \l\langle \m(s) \times \bigl( \u_n(s) - \u(s) \bigr) , \phi \r\rangle_{L^2} \, ds.
		\end{align}
		In what follows we are going to prove that  the first term on the right hand side of equality \eqref{Equation Intermediate for convergence of mn prime times un prime minimizing sequence} converges to $0$. 
		For this aim  we have the following sequence of inequalities.
		\dela{\begin{align*}
				& \l| \mathbb{E} \int_{0}^{t} \l\langle \l( \m_n(s) - \m(s) \r) \times\u_n(s) , \phi \r\rangle_{L^2} \, ds \r| 
				\leq  \mathbb{E} \int_{0}^{t} \l| \l\langle \l( \m_n(s) - \m(s) \r) \times\u_n(s) , \phi(s) \r\rangle_{L^2} \r| \, ds \\
				& \leq  \mathbb{E} \int_{0}^{t} \l| \m_n(s) - \m(s) \r|_{L^4} \l| \u_n(s) \r|_{L^2}  \l| \phi(s) \r|_{L^4}  \, ds \\
				& \leq  \mathbb{E} \l[ \l(\int_{0}^{t} \l| \m_n(s) - \m(s) \r|_{L^4}^4 \, ds \r)^{\frac{1}{4}}
				\l(\int_{0}^{t} \l| \u_n(s) \r|_{L^2}^2 \, ds \r)^{\frac{1}{2}}
				\l( \int_{0}^{t} \l|\phi(s)\r|_{L^4}^4 \, ds \r)^{\frac{1}{4}} \r] \\
				\leq &  C \l[\mathbb{E} \l( \int_{0}^{t} \l| \m_n(s) - \m(s) \r|_{L^4}^4   \, ds \r)^2 \r]^{\frac{1}{4}}
				\l[\mathbb{E} \l(\int_{0}^{t}  \l| \u_n(s) \r|_{L^2}^2 \, ds \r)^2 \r]^{\frac{1}{4}}
				\l[\mathbb{E} \l( \int_{0}^{t} \l|\phi(s)\r|_{H^1}^4 \, ds \r)^{\frac{1}{2}} \r]^{\frac{1}{2}}\\
				\leq &  C(\phi) \l[\mathbb{E} \int_{0}^{t} \l| \m_n(s) - \m(s) \r|_{L^4}^2   \, ds  \r]^{\frac{1}{4}}
				\l[\mathbb{E} \l(\int_{0}^{t}  \l| \u_n(s) \r|_{L^2}^2 \, ds \r)^2 \r]^{\frac{1}{4}} \\
				\leq &  C(\phi) \l[\mathbb{E} \int_{0}^{t} \l| \m_n(s) - \m(s) \r|_{H^1}^2   \, ds  \r]^{\frac{1}{4}}
				\l[\mathbb{E} \l(\int_{0}^{t}  \l| \u_n(s) \r|_{L^2}^2 \, ds \r)^2 \r]^{\frac{1}{4}} \\
				\leq & C(K_2) C(\phi) \l[\mathbb{E} \int_{0}^{t} \l| \m_n(s) - \m(s) \r|_{H^1}^2   \, ds  \r]^{\frac{1}{4}}.
		\end{align*}}

		\begin{align*}
			& \l| \mathbb{E} \int_{0}^{t} \l\langle \l( \m_n(s) - \m(s) \r) \times\u_n(s) , \phi \r\rangle_{L^2} \, ds \r| 
			\leq  \mathbb{E} \int_{0}^{t} \l| \l\langle \l( \m_n(s) - \m(s) \r) \times\u_n(s) , \phi(s) \r\rangle_{L^2} \r| \, ds \\
			& \leq  \mathbb{E} \int_{0}^{t} \l| \m_n(s) - \m(s) \r|_{L^4} \l| \u_n(s) \r|_{L^2}  \l| \phi(s) \r|_{L^4}  \, ds \\
			& \leq  \mathbb{E} \l[ \l(\int_{0}^{t} \l| \m_n(s) - \m(s) \r|_{L^4}^4 \, ds \r)^{\frac{1}{4}}
			\l(\int_{0}^{t} \l| \u_n(s) \r|_{L^2}^2 \, ds \r)^{\frac{1}{2}}
			\l( \int_{0}^{t} \l|\phi(s)\r|_{L^4}^4 \, ds \r)^{\frac{1}{4}} \r] \\
			& \leq  C \l[\mathbb{E}  \int_{0}^{t} \l| \m_n(s) - \m(s) \r|_{L^4}^4   \, ds  \r]^{\frac{1}{4}}
			\l[\mathbb{E} \l(\int_{0}^{t}  \l| \u_n(s) \r|_{L^2}^2 \, ds \r) \r]^{\frac{1}{4}}
			\l[\mathbb{E}  \int_{0}^{t} \l|\phi(s)\r|_{H^1}^4 \, ds  \r]^{\frac{1}{2}}\\
			& \leq C \l[\mathbb{E}  \int_{0}^{t} \l| \m_n(s) - \m(s) \r|_{L^4}^4   \, ds  \r]^{\frac{1}{4}}.
		\end{align*}
		Hence our claim follows by applying earlier proven assertion \eqref{Strong convergence of mn prime to m prime minimizing sequence}.
		The last inequality is a consequence of our assumption \ref{Assumption admisibility criterion}\dela{\ref{???}}.		
		By \eqref{Strong convergence of mn prime to m prime minimizing sequence}, the right hand side, and hence the left hand side of the above inequality goes to 0 as $n$ goes to infinity.\\
		For the second term in \eqref{Equation Intermediate for convergence of mn prime times un prime minimizing sequence},
		\begin{align*}
			\mathbb{E} \int_{0}^{t} \l\langle \m(s) \times \l( \u_n(s) - \u(s) \r) , \phi \r\rangle_{L^2} \, ds = \mathbb{E} \int_{0}^{t} \l\langle  \l( \u_n(s) - \u(s) \r) , \m(s) \times \phi \r\rangle_{L^2} \, ds.
		\end{align*}
		By the constraint condition \eqref{eqn-constraint condition},
		\begin{equation*}
			\m \times \phi \in L^2(\Omega^\p ; L^2(0,T;L^2)).
		\end{equation*}
		Hence by \eqref{eqn-1003} we infer  the right hand side above converges  to $0$ as $n$ goes to infinity.\\
		In particular, for $\psi\in L^2(\Omega^\p:H^1)$,
		\begin{align*}
			\lim_{n\in\mathbb{N}}\mathbb{E}  \l[\int_{0}^{t} \l\langle \m_n(s) \times \u_n(s) - \m(s) \times \u(s) , \psi \r\rangle_{L^2} \r] = 0.
		\end{align*}	
		Following the arguments in Section \ref{Section Proof of existence of a solution}, we can show that the process $\m$ is a weak martingale solution to the problem \eqref{problem considered for optimal control part}. A couple of key differences are as follows. First of all, the solutions are no longer taking values in finite dimensional spaces. But the arguments can still be repeated with appropriate infinite dimensional spaces replacing the corresponding finite dimensional spaces. Secondly, the projection operator and the cut-off function are absent. But this is actually a simplification.  That $\m$ is a weak martingale solution to the problem \eqref{problem considered for optimal control part}, along with Lemma \ref{lem-bounds lemma for m prime minimizing sequence} implies that the process $\m$ is a strong martingale solution to the problem \eqref{problem considered for optimal control part} on the probability space $\l(\Omega^\p , \mathcal{F}^\p , \mathbb{P}^\p\r)$ corresponding to the control process $\u$ and Wiener process $W^\p$.
		We also need to show that the process $\u$ satisfies the Assumption \ref{Assumption admisibility criterion}. To show this, let $p\geq 1$.\\
		The sequence $\u_n$ converges to $\u$ in $L^2_w(0,T;L^2)$ $\mathbb{P}^{\prime}$-a.s.
		Hence $\mathbb{P}^{\prime}$-a.s.
		\begin{equation}
			\l|\u\r|_{L^2(0,T;L^2)} \leq \liminf_{n\rightarrow\infty} \l|\u_n\r|_{L^2(0,T;L^2)}
		\end{equation}
		Therefore
		\begin{align*}
			\l|\u\r|_{L^2(0,T;L^2)}^{2p} &\leq \l(\liminf_{n\rightarrow\infty} \l|\u_n\r|_{L^2(0,T;L^2)}\r)^{2p} \\
			& \leq \liminf_{n\rightarrow\infty} \l|\u_n\r|_{L^2(0,T;L^2)}^{2p}.
		\end{align*}
		Taking the expectation of both the sides gives
		\begin{equation*}
			\mathbb{E}^\p \l|\u\r|_{L^2(0,T;L^2)}^{2p} \leq \mathbb{E}^\p \l[ \liminf_{n\rightarrow\infty} \l|\u_n\r|_{L^2(0,T;L^2)}^{2p} \r].
		\end{equation*}
		Hence by the Fatou Lemma,
		\begin{align}\label{intermediate liminf bound for u prime for control part}
			\mathbb{E}^{\prime} \l|\u\r|_{L^2(0,T;L^2)}^{2p} \leq \liminf_{n\rightarrow\infty} \mathbb{E}^{\prime} \l|\u_n\r|_{L^2(0,T;L^2)}^{2p} \leq K_p.
		\end{align}
		Therefore the control process $\u$ satisfies the Assumption \ref{Assumption admisibility criterion}.\\
		What remains to show is that this solution minimizes the cost functional.
		We show that in the following steps.\\
		Using the strong convergence in \eqref{Strong convergence of mn prime to m prime minimizing sequence}, we can show that
		\begin{equation}\label{Cost functional convergence eqn 1}
			\mathbb{E}^\p \int_{0}^{T} \l|\m(t) - \bar{m}(t)\r|^2_{L^2(0,T ; H^1)} \, dt = \lim_{n\rightarrow\infty} \mathbb{E}^\p \int_{0}^{T} \l|\m_n(t) - \bar{m}(t)\r|^2_{L^2(0,T ; H^1)} \, dt.
		\end{equation}
		Similarly, \eqref{intermediate liminf bound for u prime for control part} implies that
		\begin{equation}\label{Cost functional convergence eqn 2}
			\mathbb{E}^\p \int_{0}^{T} \l|\u(t)\r|^2_{L^2(0,T ; L^2)} \, dt \leq \liminf_{n\rightarrow\infty} \mathbb{E}^\p \int_{0}^{T} \l|\u_n(t)\r|^2_{L^2(0,T ; L^2)} \, dt.
		\end{equation}
		Recall that the terminal cost function $\Psi$ is assumed to be continuous on $L^2$. We have, by the convergence in \eqref{Equation Convergence of mn prime to m prime Skorohod Theorem minimizing sequence} in Proposition \ref{Proposition use of Skorokhod theorem minimizing sequence}, that $\m_n \to \m\in C([0,T];L^2)$ $\mathbb{P}^\p$-a.s. Therefore we have $\Psi(\m_n(T))\to\Psi(\m(T))$ $\mathbb{P}^\p$-a.s. as $n$ goes to infinity. In particular, we have
		\begin{equation}\label{Cost functional convergence for psi}
			\mathbb{E^{\prime}}\Psi(\m(T)) \leq \liminf_{n\rightarrow\infty} \mathbb{E^{\prime}}\Psi(\m_n(T)).
		\end{equation}
		Combining \eqref{Cost functional convergence eqn 1}, \eqref{Cost functional convergence eqn 2} and \eqref{Cost functional convergence for psi}, we have
		\begin{equation}\label{Cost functional convergence J pi prime bounded by liminf J pin n prime}
			J(\pi^\p) \leq \liminf_{n\rightarrow\infty} J(\pi_n^\p)
		\end{equation}		
		\dela{Note Not Needed\textbf{Note:} When we say that $\Psi(\m_n(T)) \to \Psi(\m(T))$ in some sense, it is required that $\m_n$ takes values in \dela{say $C([0,T] ; (H^1)^\p)\adda{L^2}$}$C([0,T] ; L^2)$.
			Hence convergence in this space is also required. \\}
		Recall that by Proposition \ref{Proposition use of Skorokhod theorem minimizing sequence}, the laws of $m_n$ and $\m_n$ are equal on the space $L^2(0,T ; H^1)\cap C([0,T];L^2)$. Similarly, the laws of $u_n$ and $\u_n$ are equal on the space $L^2(0,T ; L^2)$.\dela{This equality can be skippedHence for each $n\in\mathbb{N}$,
			\begin{align}
				J(\pi_n^\p) = J(\pi_n).
			\end{align}
		}
		Hence \dela{from \eqref{Cost functional convergence eqn 1}, \eqref{Cost functional convergence eqn 2},}
		\begin{align}\label{Cost functional convergence eqn 3}
			\liminf_{n\to\infty} J(\pi^\p_n)
			\leq \liminf_{n\to\infty} J(\pi_n).
		\end{align}
		Therefore combining \eqref{Cost functional convergence J pi prime bounded by liminf J pin n prime} with \eqref{Cost functional convergence eqn 3}, we have
		\begin{align}
			\nonumber J(\pi^\p) & \leq \liminf_{n\to\infty} J(\pi^\p_n) \\
			& \leq \liminf_{n\to\infty} J(\pi_n) = \Lambda.
		\end{align}
		Since $\Lambda$ is the infimum, $J(\pi^\p) = \Lambda$. Hence $\pi^\p$ is an optimal control as defined in Definition \ref{definition of optimal control}. This concludes the proof of Theorem \ref{Theorem existence of optimal control}.		
	\end{proof}

	\noindent	
		\textbf{Acknowledgement}:
			   The authors would like to thank Dr. Manil T. Mohan, Indian Institute of Technology, Roorkee, for his rigorous reading of the manuscript and useful comments and suggestions.

	\appendix

	\section{Proof of Lemma \ref{Lemma continuous in time with values in H1.}} \label{Section Proof of Lemma Lemma continuous in time with values in H1}	
	In this section we give a proof for the Lemma \ref{Lemma continuous in time with values in H1.}
	
	\begin{proof}[Proof of Lemma \ref{Lemma continuous in time with values in H1.}]
		The idea of the proof is as follows. We show that there exists a sequence of functions in $C\l([0,T];H^1\r)$ that converges uniformly in $C\l([0,T];H^1\r)$ to the process $m$, hence showing that $m\in C\l([0,T];H^1\r)$.
		
		Let $P_n: L^2 \to H_n$ be the projection operator in Section \ref{Section Faedo Galerkin approximation}. Let us fix the following notation for this lemma. For $n\in\mathbb{N}$, $m^n := P_n(m)$.
		Fix two numbers $n,k\in\mathbb{N}$ such that $n \geq k$.
		The following equation is satisfied $\mathbb{P}$-a.s. by the projections $m^i$ for $i=n,k$.
		
		\begin{align}\label{eqn_mn mk-strong}
			\nonumber	m^i(t) =& P_i(m_0) + \int_{0}^{t} P_i \bigl(  m(s) \times \Delta m(s) \bigr) \, ds  - \alpha \, \int_{0}^{t}  P_i \bigl[ m(s) \times \bigl( m(s) \times \Delta m(s) \bigr) \bigr]  \, ds
			\\\nonumber
			&  + \int_{0}^{t} P_i \bigl( m(s)\times u(s) \bigr) \, ds - \alpha \, \int_{0}^{t} P_i \l[ m(s)\times \bigl( m(s)\times u(s) \bigr) \r] \, ds  \\
			&  + \frac{1}{2}\int_{0}^{t} P_i \l[DG\bigl(m(s)\bigr)\r] \l[G\bigl( m\l(s\r) \bigr) \r] \,  ds + \int_{0}^{t} P_i G\big(m(t)\big) \, dW(t).
		\end{align}
		Also,
		\begin{equation*}
			m^n - m^k = \l(P_n - P_k\r)m.
		\end{equation*}		
		\dela{
			\begin{align}
				\nonumber	m^i(t) &= P_i(m_0) + \int_{0}^{t}m^i(s) \times \Delta m^i(s)\, ds - \alpha \, \int_{0}^{t} m^i(s)\times(m^i(s)\times u(s))\, ds
				\\\nonumber
				&+ \alpha \, \int_{0}^{t}  \Delta m^i(s) \, ds   + \alpha \, \int_{0}^{t}  |\nabla m^i(s)|_{\mathbb{R}^3}^2 m^i(s) \, ds \\
				& + \int_{0}^{t} m^i(s)\times u(s)\, ds  + \frac{1}{2}\int_{0}^{t} \l[DG\l(m^i(s)\r)\r]\l(G(m^i\l(s\r))\r)\,  ds + \int_{0}^{t} G(m^i(t))\, dW(t).
			\end{align}
		}
		Hence, the difference $m^n - m^k$ satisfies the following equation $\mathbb{P}$-a.s.
		
		\begin{align}\label{Difference eqn_mn-mk-strong}
			\nonumber	m^n(t) - m^k(t) =& \l( P_n - P_k \r)m_0 + \int_{0}^{t} \l[ P_n - P_k \r] \l[  m(s) \times \Delta m(s) \r] \, ds\\
			& \nonumber - \alpha \, \int_{0}^{t}  \l( P_n - P_k \r) \bigl[ m(s) \times \bigl( m(s) \times \Delta m(s) \bigr) \bigr]  \, ds
			\\\nonumber
			&  + \int_{0}^{t} \l( P_n - P_k \r) \bigl( m(s)\times u(s) \bigr) \, ds 
			- \alpha \, \int_{0}^{t} \l( P_n - P_k \r) \l[ m(s)\times \bigl( m(s)\times u(s) \bigr) \r] \, ds  \\
			&  + \frac{1}{2}\int_{0}^{t} \l( P_n - P_k \r) \l[DG\bigl(m(s)\bigr)\r]\l[G\bigl(m\l(s\r)\bigr)\r] \,  ds 
			+ \int_{0}^{t} \l( P_n - P_k \r) G\bigl(m(t)\bigr) \, dW(t).
		\end{align}
		
		Consider a function $\phi_5: H^1 \to \mathbb{R}$ defined by
		\begin{equation}\label{definition of phi_5}
			\phi_5(v) = \frac{1}{2}\l|v\r|_{H^1}^2,\ \text{for}\ v\in H^1.
		\end{equation}
		Let $v , w_1 , w_2\in H^1.$ We observe that $\phi_5(v) = \frac{1}{2}\l|v\r|_{L^2}^2 + \frac{1}{2}\l|\nabla v\r|_{L^2}^2 $. We also recall that $A_1 = I_{L^2} + A$. Hence
		\begin{equation*}
			\phi_5^{\prime}(v)(w_1) = \l\langle A_1 v , w_1 \r\rangle_{L^2}.
		\end{equation*}
		We observe the following. Let $v , w\in H^1$.
		\begin{align*}
			\phi_5^\p(v)(w) &= \l\langle v , w \r\rangle_{L^2} + \l\langle \nabla v , \nabla w \r\rangle_{L^2} \\
			& =  \l\langle v , w \r\rangle_{L^2} + \l\langle -\Delta v , w \r\rangle_{L^2} \\
			& = \l\langle v + (-\Delta)v, w \r\rangle_{L^2} \\
			& = \l\langle A_1 v, w \r\rangle_{L^2}
		\end{align*}

		\begin{align*}
			\phi_5^{\prime \prime}(v)(w_1,w_2) = \l\langle w_1 , w_2 \r\rangle_{L^2} + \l\langle \nabla w_1 , \nabla w_2 \r\rangle_{L^2}.
		\end{align*}		
		We apply the It\^o formula to $\phi_5$ for the process $m^n - m^k$. Thus we get the following. For $t\in[0,T]$
		\begin{align}\label{uniform continuity intermediate equation 1}
			\nonumber \frac{1}{2} \l| m^n(t) - m^k(t) \r|_{H^1}^2  =& \frac{1}{2} \l|A_1 \l( P_n - P_k \r) m_0 \r|_{L^2}^2 \\
			\nonumber&+ \int_{0}^{t} \l\langle A_1\l( P_n - P_k \r) m(s) , \bigl(  m(s) \times \Delta m(s) \bigr) \r\rangle_{L^2}\, ds \\
			\nonumber& - \alpha \, \int_{0}^{t}  \l\langle A_1 \l( P_n - P_k \r) m(s)  , \bigl[ m(s) \times \bigl( m(s) \times \Delta m(s) \bigr) \bigr] \r\rangle_{L^2}  \, ds \\
			\nonumber& + \int_{0}^{t} \l\langle A_1\l( P_n - P_k \r) m(s)  , \bigl( m(s)\times u(s) \bigr) \r\rangle_{L^2}\, ds \\
			\nonumber& - \alpha \, \int_{0}^{t} \l\langle A_1 \l( P_n - P_k \r) m(s)  , \l[ m(s) \times \big( m(s) \times u(s) \big) \r] \r\rangle_{L^2}\, ds\\
			\nonumber& + \frac{1}{2}\int_{0}^{t} \l\langle A_1 \l( P_n - P_k \r) m(s)  , \l[DG\l(m(s)\r)\r]\l(G\big(m\l(s\r)\big)\r)\r\rangle_{L^2} \,  ds \\
			& \nonumber   + \frac{1}{2} \int_{0}^{t} \l| \l( P_n - P_k \r) G\big(m(t)\big) \r|_{H^1}^2 \, ds \\
			& + \int_{0}^{t} \l\langle A_1 \l( P_n - P_k \r) G\big(m(t)\big) m(s)  , G\big(m(t)\big) \r\rangle_{L^2} \, dW(s),\ \mathbb{P}-a.s.
		\end{align}
		
		\dela{
			\adda{This inequality may not be required.} Hence
			\begin{align}\label{uniform continuity intermediate equation 2}
				\nonumber\frac{1}{2}\l|m^n(t) - m^k(t)\r|_{H^1}^2  \leq & \frac{1}{2}\l|A_1\l( P_n - P_k \r) m_0\r|_{L^2}^2 \\
				\nonumber &+ \int_{0}^{t} \l|\l\langle A_1\l( P_n - P_k \r) m(s) , \l(  m(s) \times \Delta m(s) \r)\r\rangle_{L^2}\r|\, ds \\
				\nonumber& - \alpha \, \int_{0}^{t} \l|\l\langle A_1\l( P_n - P_k \r) m(s)  , \l( m(s)\times(m(s)\times u(s)) \r)\r\rangle_{L^2}\r|\, ds
				\\
				\nonumber& - \alpha \, \int_{0}^{t}  \l|\l\langle A_1\l( P_n - P_k \r) m(s)  , \l( m(s) \times \l( m(s) \times \Delta m(s) \r) \r)\r\rangle_{L^2}\r|  \, ds \\
				\nonumber& + \int_{0}^{t} \l|\l\langle A_1\l( P_n - P_k \r) m(s)  , \l( m(s)\times u(s) \r)\r\rangle_{L^2}\r|\, ds \\
				\nonumber& + \frac{1}{2}\int_{0}^{t} \l|\l\langle A_1\l( P_n - P_k \r) m(s)  , \l[DG\l(m(s)\r)\r]\l(G(m\l(s\r))\r)\r\rangle_{L^2}\r| \,  ds \\
				\nonumber &   + \frac{1}{2}\int_{0}^{t} \l| \l( P_n - P_k \r)G(m(s)) \r|_{H^1}^2 \, ds \\
				&+ \int_{0}^{t} \l|\l\langle A_1\l( P_n - P_k \r) m(s)  , G(m(s))\r\rangle_{L^2}\r|\, dW(s)
			\end{align}
		}
		We take $\sup_{t\in[0,T]}$ of both sides of the above inequality to get
		
		\dela{
			\begin{align}
				\nonumber\sup_{t\in[0,T]}\frac{1}{2}\l|m^n(t) - m^k(t)\r|_{H^1}^2 & \leq \frac{1}{2}\l|A_1\l( P_n - P_k \r) m_0\r|_{L^2}^2 \\
				\nonumber&+ \int_{0}^{T} \l|\l\langle A_1\l( P_n - P_k \r) m(s) , \l(  m(s) \times \Delta m(s) \r)\r\rangle_{L^2}\r|\, ds \\
				\nonumber& - \alpha \, \int_{0}^{T} \l|\l\langle A_1\l( P_n - P_k \r) m(s)  , \l( m(s)\times(m(s)\times u(s)) \r)\r\rangle_{L^2}\r|\, ds
				\\
				\nonumber& - \alpha \, \int_{0}^{T}  \l|\l\langle A_1\l( P_n - P_k \r) m(s)  , \l( m(s) \times \l( m(s) \times \Delta m(s) \r) \r)\r\rangle_{L^2}\r|  \, ds \\
				\nonumber& + \int_{0}^{T} \l|\l\langle A_1\l( P_n - P_k \r) m(s)  , \l( m(s)\times u(s) \r)\r\rangle_{L^2}\r|\, ds \\
				\nonumber& + \frac{1}{2}\int_{0}^{T} \l|\l\langle A_1\l( P_n - P_k \r) m(s)  , \l[DG\l(m(s)\r)\r]\l(G(m\l(s\r))\r)\r\rangle_{L^2}\r| \,  ds \\
				\nonumber&   + \frac{1}{2}\int_{0}^{T} \l| \l( P_n - P_k \r)G(m(s)) \r|_{H^1}^2 \, ds \\
				\nonumber&+ \sup_{t\in[0,T]}\int_{0}^{t} \l|\l\langle A_1\l( P_n - P_k \r) m(s)  , G(m(s))\r\rangle_{L^2}\r|\, dW(s)\\
				=& \frac{1}{2}\l|\l( P_n - P_k \r) m_0\r|_{H^1}^2 + \sum_{i=1}^{7}c_iI_i(T).
			\end{align}
		}
		
		\begin{align}\label{uniform continuity intermediate equation 3}
			\nonumber\sup_{t\in[0,T]}\frac{1}{2}\l|m^n(t) - m^k(t)\r|_{H^1}^2 & \leq \frac{1}{2}\l|A_1\l( P_n - P_k \r) m_0\r|_{L^2}^2 \\
			\nonumber&+ \int_{0}^{T} \l|\l\langle A_1\l( P_n - P_k \r) m(s) , \bigl(  m(s) \times \Delta m(s) \bigr) \r\rangle_{L^2}\r|\, ds \\
			\nonumber& - \alpha \, \int_{0}^{T}  \l|\l\langle A_1\l( P_n - P_k \r) m(s)  , \l[ m(s) \times \bigl( m(s) \times \Delta m(s) \bigr) \r] \r\rangle_{L^2}\r|  \, ds \\
			\nonumber& + \int_{0}^{T} \l|\l\langle A_1\l( P_n - P_k \r) m(s)  , \bigl( m(s)\times u(s) \bigr) \r\rangle_{L^2}\r|\, ds \\
			\nonumber& - \alpha \, \int_{0}^{T} \l|\l\langle A_1\l( P_n - P_k \r) m(s)  , \l[ m(s) \times \big( m(s)\times u(s) \big) \r] \r\rangle_{L^2}\r|\, ds
			\\
			\nonumber& + \frac{1}{2}\int_{0}^{T} \l|\l\langle A_1\l( P_n - P_k \r) m(s)  , \l[DG\bigl(m(s)\bigr)\r]\l[G\big(m\l(s\r)\big)\r] \r\rangle_{L^2}\r| \,  ds \\
			\nonumber&   + \frac{1}{2}\int_{0}^{T} \l| \l( P_n - P_k \r)G\big(m(s)\big) \r|_{H^1}^2 \, ds \\
			\nonumber&+ \sup_{t\in[0,T]}\int_{0}^{t} \l|\l\langle A_1\l( P_n - P_k \r) m(s)  , G\big(m(s)\big) \r\rangle_{L^2}\r|\, dW(s)\\
			=& \frac{1}{2}\l| A_1 \l( P_n - P_k \r) m_0\r|_{H^1}^2 + \sum_{i=1}^{7}c_iI_i(T).
		\end{align}
		We now take the expectation of both sides of the above inequality. We then fix $k$ and let $n$ go to infinity. We claim convergence of the right hand side by the monotone convergence theorem. After that we show that each term on the right hand side of the resulting inequality goes to $ 0 $ as $k$ goes to infinity.
		
		To simplify the presentation, we write the bounds on each term individually and combine them to get the desired result.
		We have the following inequalities by the Cauchy-Schwartz inequality.\\
		\textbf{Calculation for $I_1$}
		\begin{align*}
			&\mathbb{E}\int_{0}^{T}\l|\l\langle A_1\l( P_n - P_k \r) m(s) , \bigl(  m(s) \times \Delta m(s) \bigr) \r\rangle_{L^2}\r|\, ds \\
			&\leq \l(\mathbb{E} \int_{0}^{T} \l|A_1\l( P_n - P_k \r) m(s)\r|_{L^2}^2 \, ds\r)^{\frac{1}{2}} \l(\mathbb{E} \int_{0}^{T} \l| m(s) \times \Delta m(s) \r|_{L^2}^2 \, ds\r)^{\frac{1}{2}}.
		\end{align*}		
		\textbf{Calculation for $I_2$}
		\begin{align*}
			&\mathbb{E}\int_{0}^{T}  \l|\l\langle A_1\l( P_n - P_k \r) m(s) , m(s) \times \bigl( m(s) \times \Delta m(s) \bigr) \r\rangle_{L^2}\r| \, ds\\
			& \leq  \l( \mathbb{E}\int_{0}^{T}\l|A_1\l( P_n - P_k \r) m(s)\r|_{L^2}^2 \, ds \r)^{\frac{1}{2}} \l( \mathbb{E}\int_{0}^{T}\l| m(s) \times \l(  m(s) \times \Delta m(s) \r) \r|_{L^2}^2 \, ds \r)^{\frac{1}{2}} .
		\end{align*}
		\textbf{Calculation for $I_3$}
		\begin{align*}
			&\mathbb{E}\int_{0}^{T} \l|\l\langle A_1\l( P_n - P_k \r) m(s) , m(s)\times u(s) \r\rangle_{L^2}\r| \, ds \\ & \leq    \l( \mathbb{E}\int_{0}^{T} \l|A_1\l( P_n - P_k \r) m(s)\r|_{L^2}^2 \, ds \r)^{\frac{1}{2}} \l( \mathbb{E}\int_{0}^{T} \l| m(s)\times u(s) \r|_{L^2}^2 \, ds \r)^{\frac{1}{2}} .
		\end{align*}
		\textbf{Calculation for $I_4$}
		\begin{align*}
			&\mathbb{E}\int_{0}^{T}  \l|\l\langle A_1\l( P_n - P_k \r) m(s) , m(s) \times \big( m(s)\times u(s) \big) \r\rangle_{L^2}\r| \, ds \\ & \leq   \l( \mathbb{E}\int_{0}^{T}\l|A_1\l( P_n - P_k \r) m(s)\r|_{L^2}^2 \, ds \r)^{\frac{1}{2}} \l( \mathbb{E}\int_{0}^{T}\l| m(s) \times \big( m(s)\times u(s) \big) \r|_{L^2}^2 \, ds \r)^{\frac{1}{2}}.
		\end{align*}		
		\textbf{Calculation for $I_5$}\\
		Similarly,
		\begin{align*}
			&\mathbb{E}\int_{0}^{T}\l|\l\langle A_1\l( P_n - P_k \r) m(s) , \l[ DG \bigl(m(s)\bigr)\r]\l[G\big(m\l(s\r)\big)\r] \r\rangle_{L^2}\r| \, ds \\
			&\leq \l( \mathbb{E} \int_{0}^{T} \l|A_1\l( P_n - P_k \r) m(s)\r|_{L^2}^2 \, ds\r)^{\frac{1}{2}} \l( \mathbb{E} \int_{0}^{T} \l| \l[ DG\bigl( m(s) \bigr) \r] \l[ G\big(m\l(s\r)\big) \r] \r|_{L^2}^2 \, ds \r)^{\frac{1}{2}}.
		\end{align*}
		\textbf{Calculation for $I_7$}\\
		By the Burkholder-Davis-Gundy inequality, followed by the Cauchy-Schwartz inequality, there exists a constant $C_1>0$ such that
		
		\begin{align*}
			&\mathbb{E}\sup_{0\leq t\leq T}\l| \int_{0}^{t} \l|\l\langle A_1\l( P_n - P_k \r) m(s)  , G\big(m(s)\big)\r\rangle_{L^2}\r|\, dW(s) \r| \\
			&\leq C_1 \mathbb{E} \l[ \l( \int_{0}^{T} \l|\l\langle A_1\l( P_n - P_k \r) m(s)  , G\big(m(s)\big)\r\rangle_{L^2}\r|^2 \, ds \r)^{\frac{1}{2}} \r] \\
			& \leq C_1 \mathbb{E}  \l[ \l( \int_{0}^{T} \l| A_1\l( P_n - P_k \r) m(s) \r|_{L^2}^2 \l|G\big(m(s)\big)\r|_{L^2}^2 \, ds \r)^{\frac{1}{2}} \r].
		\end{align*}
		By the constraint condition and the assumption on the function $h$, there exists another constant $C>0$ such that
		\begin{align*}
			\mathbb{E}\sup_{0\leq t\leq T}\l| \int_{0}^{t} \l|\l\langle A_1\l( P_n - P_k \r) m(s)  , G\big(m(s)\big)\r\rangle_{L^2}\r|\, dW(t) \r| \leq C\l(\mathbb{E} \int_{0}^{T}\l|A_1\l( P_n - P_k \r) m(s)\r|_{L^2}^2 \, ds \r)^{\frac{1}{2}}.
		\end{align*}
		Thus combining the above mentioned inequalities gives
		\begin{align}\label{uniform continuity intermediate equation 4}
			\nonumber &\mathbb{E} \sup_{t\in[0,T]} \l|m^n(t) - m^k(t)\r|_{H^1}^2 \\
			\nonumber &\leq \frac{1}{2}\l|A_1\l( P_n - P_k \r) m_0\r|_{H^1}^2 \\
			\nonumber& + \l(\mathbb{E} \int_{0}^{T} \l|A_1\l( P_n - P_k \r) m(s)\r|_{L^2}^2 \, ds\r)^{\frac{1}{2}} \Bigg[ \l(\mathbb{E} \int_{0}^{T} \l| m(s) \times \Delta m(s) \r|_{L^2}^2 \, ds\r)^{\frac{1}{2}} \\
			\nonumber& + \l( \mathbb{E}\int_{0}^{T}\l| m(s) \times \l(  m(s) \times \Delta m(s) \r) \r|_{L^2}^2 \, ds \r)^{\frac{1}{2}} + \l( \mathbb{E} \int_{0}^{T} \l| \l[DG\l(m(s)\r)\r]\l(G(m\l(s\r))\r) \r|_{L^2}^2 \, ds \r)^{\frac{1}{2}} \\
			\nonumber& + \l( \mathbb{E}\int_{0}^{T} \l| m(s)\times u(s) \r|_{L^2}^2 \, ds \r)^{\frac{1}{2}} + \l( \mathbb{E}\int_{0}^{T}\l| m(s)\times(m(s)\times u(s)) \r|_{L^2}^2 \, ds \r)^{\frac{1}{2}} \Bigg] \\
			& + \mathbb{E} \int_{0}^{T} \l| \l( P_n - P_k \r)G(m(s)) \r|_{H^1}^2 \, ds  + C\l(\mathbb{E} \int_{0}^{T}\l|A_1\l( P_n - P_k \r) m(s)\r|_{L^2}^2 \, ds \r)^{\frac{1}{2}}.
		\end{align}
		In the above inequality, we fix $k$ and let $n$ go to infinity.
		For any $v\in L^2$, $$P_n v \rightarrow v$$ as $n$ goes to infinity. Recall that $n,k$ were chosen such that $n\geq k$.
		Since $P_n$ is an orthonormal projection on for each $n\in\mathbb{N}$,
		\begin{equation*}
			\l|P_{n_1 - k}v\r|_{L^2} \leq \l|P_{n_2 - k}v\r|_{L^2}
		\end{equation*}
		for any $n_1,n_2\in\mathbb{N}$ such that $n_1 \geq n_2 \geq k$.
		Hence by the monotone convergence theorem, we have the following inequality as $n$ goes to infinity in \eqref{uniform continuity intermediate equation 4}.
		
		\dela{
			\begin{align}
				\nonumber \mathbb{E} &\sup_{t\in[0,T]} \l|m(t) - m^k(t)\r|_{H^1}^2 \leq \frac{1}{2}\l|A_1\l( I_{L^2} - P_k \r) m_0\r|_{H^1}^2 \\
				\nonumber& + \l(\mathbb{E} \int_{0}^{T} \l|A_1\l( I_{L^2} - P_k \r) m(s)\r|_{L^2}^2 \, ds\r)^{\frac{1}{2}} \dela{\centerdot \\
					\nonumber&\quad }\Bigg[ \l(\mathbb{E} \int_{0}^{T} \l| m(s) \times \Delta m(s) \r|_{L^2}^2 \, ds\r)^{\frac{1}{2}} \\
				\nonumber& + \l( \mathbb{E}\int_{0}^{T}\l| m(s) \times \l(  m(s) \times \Delta m(s) \r) \r|_{L^2}^2 \, ds \r)^{\frac{1}{2}} \dela{\\
					\nonumber&} + \l( \mathbb{E} \int_{0}^{T} \l| \l[DG\l(m(s)\r)\r]\l(G(m\l(s\r))\r) \r|_{L^2}^2 \, ds \r)^{\frac{1}{2}} \\
				\nonumber& + \l( \mathbb{E}\int_{0}^{T} \l| m(s)\times u(s) \r|_{L^2}^2 \, ds \r)^{\frac{1}{2}} \dela{\\
					\nonumber&} + \l( \mathbb{E}\int_{0}^{T}\l| m(s)\times(m(s)\times u(s)) \r|_{L^2}^2 \, ds \r)^{\frac{1}{2}} \Bigg] \\
				& + \mathbb{E} \int_{0}^{T} \l| \l( I_{L^2} - P_k \r)G(m(s)) \r|_{H^1}^2 \, ds \dela{\\
					&} + C\l(\mathbb{E} \int_{0}^{T}\l|A_1\l( P_n - P_k \r) m(s)\r|_{L^2}^2 \, ds \r)^{\frac{1}{2}}.
			\end{align}
		}
		
		\begin{align}\label{uniform continuity intermediate equation 5}
			\nonumber \mathbb{E} &\sup_{t\in[0,T]} \l|m(t) - m^k(t)\r|_{H^1}^2 \leq \frac{1}{2}\l|A_1\l( I_{L^2} - P_k \r) m_0\r|_{H^1}^2 \\
			\nonumber& + \l(\mathbb{E} \int_{0}^{T} \l|A_1\l( I_{L^2} - P_k \r) m(s)\r|_{L^2}^2 \, ds\r)^{\frac{1}{2}} \dela{\centerdot \\
				\nonumber&\quad }\Bigg[ \l(\mathbb{E} \int_{0}^{T} \l| m(s) \times \Delta m(s) \r|_{L^2}^2 \, ds\r)^{\frac{1}{2}} \\
			\nonumber& + \l( \mathbb{E}\int_{0}^{T}\l| m(s) \times \l(  m(s) \times \Delta m(s) \r) \r|_{L^2}^2 \, ds \r)^{\frac{1}{2}} \dela{\\
				\nonumber&} + \l( \mathbb{E} \int_{0}^{T} \l| \l[DG\bigl(m(s)\bigr)\r] \l[G\big(m\l(s\r)\big)\r] \r|_{L^2}^2 \, ds \r)^{\frac{1}{2}} \\
			\nonumber& + \l( \mathbb{E}\int_{0}^{T} \l| m(s)\times u(s) \r|_{L^2}^2 \, ds \r)^{\frac{1}{2}} 
			+ \l( \mathbb{E}\int_{0}^{T}\l| m(s) \times \big( m(s)\times u(s) \big) \r|_{L^2}^2 \, ds \r)^{\frac{1}{2}} \Bigg] \\
			& + \mathbb{E} \int_{0}^{T} \l| \l( I_{L^2} - P_k \r) G\big(m(s)\big) \r|_{H^1}^2 \, ds \dela{\\
				&} + C\l(\mathbb{E} \int_{0}^{T}\l|A_1\l( P_n - P_k \r) m(s)\r|_{L^2}^2 \, ds \r)^{\frac{1}{2}}.
		\end{align}
		\dela{By Lemma \ref{bounds lemma}, the second terms on the right hand side of the above 5 inequalities are bounded by a constant.
			
			\begin{align*}
				\mathbb{E}\int_{0}^{T} &\l|\l\langle A_1\l( P_n - P_k \r) m(s) , m(s)\times u(s) \r\rangle_{L^2}\r| \, ds = \mathbb{E}\int_{0}^{T} \l|\l\langle \l( P_n - P_k \r)A_1 m(s) , m(s)\times u(s) \r\rangle_{L^2}\r| \, ds  .
		\end{align*}}
		By Lemma \ref{bounds lemma 1}, the terms on the right hand side of the above inequality are bounded. Hence
		The right hand side of the above inequality goes to $0$ as $k$ goes to infinity.
		Hence
		
		\begin{align}
			\lim_{k\rightarrow\infty}\mathbb{E} \sup_{t\in[0,T]} \l|m(t) - m^k(t)\r|_{H^1}^2 = 0.
		\end{align}
		Thus there exists a subsequence of $m^k$, again denoted by $m^k$ such that $\mathbb{P}$-a.s.
		\begin{align}
			\lim_{k\rightarrow\infty} \sup_{t\in[0,T]} \l|m(t) - m^k(t)\r|_{H^1}^2 = 0.
		\end{align}
		The paths of the process $m^k$ lie in $C([0,T];H^1)$. Hence the above convergence implies that $\mathbb{P}$-a.s. the process $m$ takes values in $C(0,T;H^1)$. This concludes the proof of Lemma \ref{Lemma continuous in time with values in H1.}.
		
	\end{proof}

	\section{Proof for \eqref{pathwise uniqueness exp intermediate inequality}}\label{Section Proof for pathwise uniqueness intermediate inequality}	
	Consider a real valued stochastic process $\{X(t)\}_{t\in[0,T]}$ given by
	\begin{equation}
		X(t) = \int_{0}^{t} a(s) \, ds + \int_{0}^{t} b(s) \, dW(s)\ \mathbb{P}\ \text{-a.s.}
	\end{equation}
	Consider the function $F$ given by
	\begin{equation}
		F(t,x) = x e^{-\int_{0}^{t}\Phi_C(s) \, ds},
	\end{equation}
	Let $F_t^\p, F_x^\p$ and $F_{xx}^{\p\p}$ denote its partial derivatives.
	Then
	\begin{equation*}
		F_t^\p(t,x) = -\Phi_C(t) x e^{-\int_{0}^{t}\Phi_C(s) \, ds}
	\end{equation*}
	and
	\begin{equation*}
		F_x^\p(t,x) = e^{-\int_{0}^{t}\Phi_C(s) \, ds}.
	\end{equation*}
	Also,
	\begin{equation*}
		F_{xx}^{\p\p}(t,x) = 0.
	\end{equation*}
	Applying the It\^o formula for the function $F$, we get
	\begin{align*}
		F(t,X(t)) &= \int_{0}^{t} \l[ F_t^\p\big(s,X(s)\big) + F_x^\p\big(s,X(s)\big) a(s) + \frac{1}{2}F_{xx}^{\p\p}\big(s,X(s)\big) b^2(s) \r] \, ds \\
		& + \int_{0}^{t} F_x^\p\big(s,X(s)\big) b(s) \, dW(s).
	\end{align*}
	Therefore,
	\begin{align*}
		X(t) e^{-\int_{0}^{t}\Phi_C(s) \, ds} &= \int_{0}^{t} -\Phi_C(t) X(s) e^{-\int_{0}^{s}\Phi_C(r) \, dr} +  e^{-\int_{0}^{s}\Phi_C(r) \, dr} a(s) \, ds \\
		& + \int_{0}^{t} e^{-\int_{0}^{s}\Phi_C(r) \, dr} b(s) \, dW(s).
	\end{align*}
	Going back to Section \ref{Section Pathwise uniqueness}, the application of the It\^o Lemma for the function
	\begin{equation*}
		v \mapsto \frac{1}{2}\l| v \r|_{L^2}^2,
	\end{equation*}
	gives a representation of the process $X(t) = \l|m(t)\r|_{L^2}^2$. Applying the It\^o forumla for the function $F$ as done above gives
	\begin{align*}
		\l|m(t)\r|_{L^2}^2 e^{-\int_{0}^{t}\Phi_C(s) \, ds} &= \int_{0}^{t} -\Phi_C(t) \l|m(s)\r|_{L^2}^2 e^{ -\int_{0}^{s}\Phi_C(r) \, dr } +  e^{-\int_{0}^{s}\Phi_C(r) \, dr} a(s) \, ds \\
		& + \int_{0}^{t} e^{-\int_{0}^{s}\Phi_C(r) \, dr} b(s) \, dW(s).
	\end{align*}
	The non-negative function $\Phi_C$ is chosen such that for each $t\in [0,T]$,
	\begin{align*}
		\int_{0}^{t} a(s) \, ds \leq \int_{0}^{t} \l|m(s)\r|_{L^2}^2 \Phi_C(s) \, ds.
	\end{align*}
	Moreover, if $\Phi_C$ is $\mathbb{P}$-a.s. integrable over $[0,T]$, then
	\begin{equation*}
		0 \leq e^{ - \int_{0}^{t} \Phi_C(s) \, ds} \leq 1,
	\end{equation*}
	for each $t\in[0,T]$.	
	Hence
	\begin{align*}
		\int_{0}^{t} e^{-\int_{0}^{s}\Phi_C(r) \, dr} a(s)  \, ds -\Phi_C(t) \l|m(s)\r|_{L^2}^2 e^{ -\int_{0}^{s}\Phi_C(r) \, dr } \leq 0.
	\end{align*}
	Therefore,
	\begin{align*}
		\l|m(t)\r|_{L^2}^2 e^{-\int_{0}^{t}\Phi_C(s) \, ds} = \int_{0}^{t} e^{-\int_{0}^{s}\Phi_C(r) \, dr} b(s) \, dW(s).
	\end{align*}
	This concludes the proof of the inequality \eqref{pathwise uniqueness exp intermediate inequality}.

	\dela{\section{Some Auxiliary Results}\label{Section Auxiliary Results}
		\begin{proposition}\label{Proposition Global solution for FG approximation}
			\adda{Omit this Section} Consider the SDE in $H_n$ given by
			\begin{equation}\label{SDE}
				X(t) = X_0 + \int_{0}^{t} a(X(s)) \, ds + \int_{0}^{t} b(X(s)) \, dW(s),\ t\in[0,T].
			\end{equation}
			Suppose that the following hold.
			\begin{enumerate}
				\item The coefficients $a,b$ are locally Lipschitz. (These coefficients correspond to the coefficients from equation \eqref{definition of solution Faedo Galerkin approximation}).
				\item There exists a constant $C>0$ such that
				\begin{equation}
					\mathbb{E} \int_{0}^{T} \l| X(t) \r|_{L^2}^2 \, dt \leq C.
				\end{equation}
			\end{enumerate}
			Then the SDE admits a unique global solution.
		\end{proposition}
		\begin{proof}[Proof of Proposition \ref{Proposition Global solution for FG approximation}]
			We give a sketch of the proof. $a,b$ are locally Lipschitz. That is whenever $v_1,v_2\in H_n$ such that $\l|v_1\r|_{L^2},\l|v_2\r|_{L^2} \leq R$, there exists a constant $K_R$ such that
			\begin{equation}
				\l| a(v_1) - a(v_2) \r|_{L^2} \leq K_R \l| v_1 - v_2 \r|_{L^2}
			\end{equation}
			and
			\begin{equation}
				\l| b(v_1) - b(v_2) \r|_{L^2} \leq K_R \l| v_1 - v_2 \r|_{L^2}.
			\end{equation}
			Let $0<R<\infty$.\dela{ and $\theta:\mathbb{R}\to[0,1]$ denote a smooth cut-off function that truncates at $R$. That is
				\begin{align}
					\theta(x)=
					\begin{cases}
						1&\ \text{if}\ \l|x\r| \leq R,\\
						0&\ \text{if}\ \l|x\r| \geq 2R.
					\end{cases}
				\end{align}
				Now we consider the following truncated SDE.
				\begin{equation}\label{SDE truncated}
					X_R(t) = X_0 + \int_{0}^{t} \theta(\l|X_R(s)\r|_{L^2})a(X_R(s)) \, ds + \int_{0}^{t} \theta(\l|X_R(s)\r|_{L^2})b(X(s)) \, dW(s),\ t\in[0,T].
				\end{equation}
			}
			Define a "time" $\tau_R$ by
			\begin{equation}
				\tau_R = \inf_{t\in[0,T]}\{ \l|X_R\r|_{L^2(0,T;L^2)} \geq R\}\wedge T.
			\end{equation}
			Now we consider the stopped equation
			\begin{equation}\label{SDE stopped}
				X_R(t\wedge \tau_R) = X_0 + \int_{0}^{t\wedge \tau_R} a(X_R(s)) \, ds + \int_{0}^{t\wedge \tau_R} \theta(\l|X_R(s)\r|_{L^2})b(X(s)) \, dW(s),\ t\in[0,T].
			\end{equation}
			Whenever $\l|X_R\r|_{L^2(0,t)}\leq R$, equation \eqref{SDE stopped} is same as \eqref{SDE}. Note that for $t\in[0,\tau_R]$, the coefficients of \eqref{SDE stopped} are globally Lipschitz. Therefore \eqref{SDE stopped} admits a unique strong solution $X_R$ on the interval $t\in[0,\tau_R]$.\\
			It can be shown that $\tau_R$ is a stopping time for each $R\in\mathbb{R}$. The solution $X_R$ is adapted to the given $\sigma$-algebra. In our case, the coefficients $a,b$ are continuous, and hence measurable functions of $X_R$. Also, the norm $\l|\cdot\r|_{L^2(0,T;L^2)}$ is continuous on the space $L^2(0,T;L^2)$. Therefore the set $\{\tau_R \leq t\}\in\mathcal{F}_t$, for each $t\in[0,T]$, showing that $\tau_R$ is a stopping time.\\
			In particular, the sequence $\{\tau_R\}_{R\in\mathbb{N}}$ is a non-decreasing sequence of stopping times \adda{proof}, bounded above by $T$. Therefore, the sequence converges. Let $\lim_{R\to\infty} = \tau_{\infty}$. Let $X_{\infty}$ denote the solution corresponding to $\tau_{\infty}$. Let $t\in[0,T]$.
			\begin{align*}
				\mathbb{P}\l( \tau_R < t \r) = & \mathbb{E} \l[\chi_{\tau_R < t} \r] \\
				= &  \frac{1}{R} \mathbb{E} \l[\chi_{\tau_R < t} R \r] \\
				\leq &  \frac{1}{R} \mathbb{E} \l[\chi_{\tau_R < t} \l|X_R\r|_{L^2(0,\tau_R:L^2)} \r].
			\end{align*}
			Assuming that $a,b$ are now the coefficients considered in \eqref{definition of solution Faedo Galerkin approximation}, the It\^o formula applied for $v\mapsto \frac{1}{2}\l| v \r|_{L^2}^2$ implies that there exists a constant $C>0$ (independent of $R$) such that
			\begin{equation}
				\mathbb{E} \l[ \sup_{t\in[0,\tau_R]}\l|X_R\r|_{L^2(0,\tau_R:L^2)}^2 \r] \leq C.
			\end{equation}
			Therefore
			\begin{equation}
				\mathbb{P}\l( \tau_R < t \r) \leq \frac{C}{R}.
			\end{equation}
			The right hand side, and hence the left hand side of the above inequality goes to 0 as $R$ goes to infinity. Since $\tau_R\uparrow\tau_{\infty}$,
			\begin{equation}
				\mathbb{P}\l( \tau_{\infty} < t \r) = 0.
			\end{equation}
			In particular,
			\begin{equation}
				\mathbb{P}\l( \tau_{\infty} = T \r) = 1.
			\end{equation}
			
		\end{proof}
	}

	\section{Section Some embeddings}\label{Section Some embeddings}

	\begin{lemma}[Result 1, Appendix A, \cite{ZB+UM+DM_WongZakai}]\label{Lemma Sobolev embedding ref Wong Zakai}
		We have the following continuous embedding for $\delta \in \l(\frac{5}{8} , \frac{3}{4}\r)$.
		\begin{equation}
			W^{2\delta , 2} \hookrightarrow W^{1,4}.
		\end{equation}
	\end{lemma}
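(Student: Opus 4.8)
The statement to prove is the Sobolev embedding $W^{2\delta,2} \hookrightarrow W^{1,4}$ for $\delta \in \left(\frac{5}{8}, \frac{3}{4}\right)$ on the one-dimensional bounded domain $\mathcal{O} \subset \mathbb{R}$.

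The plan is to reduce the claim to standard fractional Sobolev embedding theorems by carefully tracking the differentiability and integrability indices in dimension $d = 1$. First I would recall the general Sobolev embedding theorem for fractional-order spaces $W^{s,p}(\mathcal{O})$ on a bounded domain in $\mathbb{R}^d$: if $s_1 \geq s_2$ and $s_1 - \frac{d}{p_1} \geq s_2 - \frac{d}{p_2}$ (with appropriate strictness where needed), then $W^{s_1,p_1} \hookrightarrow W^{s_2,p_2}$ continuously. Here I want to take $s_1 = 2\delta$, $p_1 = 2$, $s_2 = 1$, $p_2 = 4$, and $d = 1$. The point of the hypothesis $\delta \in \left(\frac{5}{8}, \frac{3}{4}\right)$ is precisely to make these inequalities hold.

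The key computation is the scaling (Sobolev number) inequality. With $d = 1$, the source space has Sobolev number $2\delta - \frac{1}{2}$ and the target space has Sobolev number $1 - \frac{1}{4} = \frac{3}{4}$. The embedding requires
\begin{equation*}
2\delta - \frac{1}{2} \geq \frac{3}{4},
\end{equation*}
which rearranges to $\delta \geq \frac{5}{8}$; and one also needs the differentiability order to not increase, i.e.\ $2\delta \geq 1$, equivalently $\delta \geq \frac{1}{2}$, which is implied. The strict upper bound $\delta < \frac{3}{4}$ guarantees $2\delta < \frac{3}{2}$, which keeps $W^{2\delta,2}$ within the fractional (non-integer, sub-$H^2$) regime where the one-dimensional Gagliardo--Sobolev embedding applies cleanly without boundary-trace complications; since the lower endpoint $\delta > \frac{5}{8}$ makes the Sobolev-number inequality strict, the embedding is continuous (and in fact compact, though only continuity is asserted). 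I would then simply invoke a standard reference — for instance the fractional Sobolev embedding results in the references already cited in the paper, or the companion statement in \cite{ZB+UM+DM_WongZakai} from which this lemma is quoted — to conclude the continuous inclusion with an explicit embedding constant $C>0$ depending only on $\mathcal{O}$ and $\delta$.

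I do not expect a genuine obstacle here, since this is a direct application of classical embedding theory; the only point requiring care is verifying the index arithmetic in dimension one and confirming that the endpoints $\frac{5}{8}$ and $\frac{3}{4}$ are chosen so that the Sobolev-number inequality is satisfied strictly at the lower end (ensuring continuity) while staying below the $H^2$ threshold at the upper end. Given that the lemma is explicitly attributed to Result 1 of Appendix A in \cite{ZB+UM+DM_WongZakai}, the cleanest proof is to cite that result directly, and the above index check serves only to confirm that the stated range of $\delta$ is the correct one for the embedding to hold.
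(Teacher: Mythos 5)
Your proposal is correct and takes essentially the same route as the paper, which offers no proof of this lemma at all and simply quotes it as Result 1, Appendix A of \cite{ZB+UM+DM_WongZakai}; your index arithmetic in dimension one ($2\delta - \tfrac{1}{2} \geq 1 - \tfrac{1}{4}$, i.e.\ $\delta \geq \tfrac{5}{8}$, together with $2\delta \geq 1$) is exactly the right verification of the standard fractional Sobolev embedding condition. One caveat: your explanation of the upper bound is spurious --- the embedding $W^{2\delta,2} \hookrightarrow W^{1,4}$ holds for every $\delta \geq \tfrac{5}{8}$ with no ``boundary-trace complications'' above $\tfrac{3}{4}$, and the restriction $\delta < \tfrac{3}{4}$ is imposed only because of how the lemma is used later in the paper (e.g.\ to ensure finiteness of integrals such as $\int_0^T t^{-(4\delta-2)}\,dt$ in the proof of Theorem \ref{Theorem Further regularity}), not because the embedding itself would fail.
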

	\dela{For a proof of this lemma, we refer to Appendix A, \cite{ZB+UM+DM_WongZakai}.\\
		The following is Lemma A.1 from \cite{ZB+BG+TJ_Weak_3d_SLLGE}.}
	\begin{lemma}[Lemma A.1 \cite{ZB+BG+TJ_Weak_3d_SLLGE}]\label{Lemma reference W alpha p bound for stochastic integral}
		Let $E$ be a separable Hilbert space. Let $2\leq p < \infty$ and $0 < \alpha \, < \frac{1}{2}$.
		
		Let $\zeta$ be a process $\zeta: [0,T] \times \Omega \to E$ such that
		\begin{equation}
			\mathbb{E} \int_{0}^{T}\l|\zeta(t)\r|_{E}^p \, dt < \infty.
		\end{equation}
		Define a process $I(\zeta)$ by
		\begin{equation}
			I(\zeta) = \int_{0}^{t} \zeta (s) \, dW(s) , \quad t\geq 0.
		\end{equation}
		Then for all such processes $\zeta$, there exists a constant $C$ that depends on $T,\alpha$ such that
		\begin{equation}
			\mathbb{E} \l|I(\zeta)\r|_{W^{\alpha \, , p}(0,T; E)}^p \leq C\mathbb{E} \int_{0}^{T}\l|\zeta(t)\r|_{E}^p \, dt.
		\end{equation}
		In particular, $\mathbb{P}-a.s.$the paths of $I(\zeta)$ belong to the space $W^{\alpha \, , 2}(0,T; E)$.
	\end{lemma}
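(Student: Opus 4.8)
The plan is to estimate directly the Sobolev--Slobodeckij norm of $I(\zeta)$, whose $p$-th power is
\[
|I(\zeta)|_{W^{\alpha,p}(0,T;E)}^p = \int_0^T |I(\zeta)(t)|_E^p\,dt + \int_0^T\!\!\int_0^T \frac{|I(\zeta)(t) - I(\zeta)(s)|_E^p}{|t-s|^{1+\alpha p}}\,dt\,ds,
\]
so that it suffices to bound the expectation of each of the two summands by $C\,\mathbb{E}\int_0^T |\zeta(r)|_E^p\,dr$. The two workhorses will be the Burkholder--Davis--Gundy inequality (Lemma \ref{BDG inequality upper bound}) and Hölder's inequality in the time variable, the latter being available precisely because $p\geq 2$.

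For the first summand I would write $I(\zeta)(t) = \int_0^t \zeta(r)\,dW(r)$ and apply BDG to obtain $\mathbb{E}|I(\zeta)(t)|_E^p \leq C_p\,\mathbb{E}\bigl(\int_0^t |\zeta(r)|_E^2\,dr\bigr)^{p/2}$; then Hölder with exponents $\tfrac{p}{2}$ and $\tfrac{p}{p-2}$ gives $\bigl(\int_0^t |\zeta(r)|_E^2\,dr\bigr)^{p/2} \leq t^{\frac p2 -1}\int_0^t |\zeta(r)|_E^p\,dr$. Integrating in $t$ over $[0,T]$ and using Fubini yields the bound with a constant depending only on $T$ and $p$.

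The core of the argument is the second (Gagliardo) summand. For $s<t$ one has $I(\zeta)(t)-I(\zeta)(s) = \int_s^t \zeta(r)\,dW(r)$, so BDG followed by Hölder gives $\mathbb{E}|I(\zeta)(t)-I(\zeta)(s)|_E^p \leq C_p\,|t-s|^{\frac p2 -1}\,\mathbb{E}\int_{s\wedge t}^{s\vee t} |\zeta(r)|_E^p\,dr$. Writing $g(r):=\mathbb{E}|\zeta(r)|_E^p$ and inserting this into the double integral, I would apply Fubini to integrate out the pair $(s,t)$ for each fixed $r$: by symmetry it suffices to treat $s<r<t$, which reduces everything to showing
\[
\sup_{r\in[0,T]} \int_0^r\!\!\int_r^T |t-s|^{\frac p2 - 2 - \alpha p}\,dt\,ds < \infty.
\]
After the substitution $u=r-s$, $v=t-r$ this becomes $\int_0^r\int_0^{T-r}(u+v)^{\beta}\,dv\,du$ with $\beta = p(\tfrac12-\alpha)-2$, which is bounded above by $\int_0^T\int_0^T (u+v)^\beta\,du\,dv$, a quantity independent of $r$. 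Near the corner $u=v=0$ this two-dimensional integral converges if and only if $\beta>-2$, that is, if and only if $\alpha<\tfrac12$; this is exactly the standing hypothesis, and it is the single place where the restriction $\alpha<\tfrac12$ is genuinely used. Combining the two estimates yields $\mathbb{E}|I(\zeta)|_{W^{\alpha,p}(0,T;E)}^p \leq C\,\mathbb{E}\int_0^T|\zeta(r)|_E^p\,dr$, and the final ``in particular'' assertion follows at once, since finiteness of this $p$-th moment forces $|I(\zeta)|_{W^{\alpha,p}(0,T;E)}<\infty$ and hence $|I(\zeta)|_{W^{\alpha,2}(0,T;E)}<\infty$, $\mathbb{P}$-a.s.

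The main obstacle is thus not probabilistic at all: the BDG input is entirely routine, and the real content is the deterministic kernel computation in the Gagliardo term together with the careful Fubini interchange, whose convergence threshold $\beta>-2$ reproduces the sharp constraint $\alpha<\tfrac12$.
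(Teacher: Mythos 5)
The paper itself offers no proof of this lemma: it is quoted from [Lemma A.1, Brze\'zniak--Goldys--Jegaraj], and ultimately goes back to Lemma 2.1 of Flandoli--G\c{a}tarek. Your proof of the main displayed estimate is correct and is essentially that standard argument: Burkholder--Davis--Gundy plus H\"older in time gives
$\mathbb{E}\,|I(\zeta)(t)-I(\zeta)(s)|_E^p \leq C_p\,|t-s|^{\frac p2-1}\,\mathbb{E}\int_{s\wedge t}^{s\vee t}|\zeta(r)|_E^p\,dr$,
and your Fubini/kernel computation is right: the exponent $\beta=p(\tfrac12-\alpha)-2$ satisfies $\beta>-2$ exactly when $\alpha<\tfrac12$, which is where the hypothesis enters. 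One small citation point: the paper's BDG lemma (Lemma \ref{BDG inequality upper bound}) is stated for real-valued integrands, so for $E$-valued $\zeta$ you should invoke the Hilbert-space-valued version (e.g.\ Da Prato--Zabczyk); this is standard and not a real obstruction.

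There is, however, a genuine gap in your last sentence. You deduce the ``in particular'' assertion from the implication $|I(\zeta)|_{W^{\alpha,p}(0,T;E)}<\infty \Rightarrow |I(\zeta)|_{W^{\alpha,2}(0,T;E)}<\infty$, i.e.\ from the embedding $W^{\alpha,p}(0,T;E)\hookrightarrow W^{\alpha,2}(0,T;E)$. This embedding is \emph{false} for $p>2$: at fixed fractional smoothness one cannot lower the integrability index on a bounded interval. Indeed $W^{\alpha,p}=B^{\alpha}_{p,p}$, and $B^{\alpha}_{p,p}(0,T)\not\subset B^{\alpha}_{2,2}(0,T)$; for a concrete counterexample take dyadic blocks with $\|\Delta_j f\|_{L^2}^2=2^{-2j\alpha}/j$ and randomized signs, so that by Khintchine's inequality $\sum_j 2^{j\alpha p}\|\Delta_j f\|_{L^p}^p\approx\sum_j j^{-p/2}<\infty$ while $\sum_j 2^{2j\alpha}\|\Delta_j f\|_{L^2}^2=\sum_j j^{-1}=\infty$. (The naive H\"older attempt to prove the embedding produces the divergent factor $\int_0^T\!\int_0^T|t-s|^{-1}\,dt\,ds$.) The fix is one line: since $\Omega\times(0,T)$ has finite measure, the hypothesis $\mathbb{E}\int_0^T|\zeta(t)|_E^p\,dt<\infty$ implies $\mathbb{E}\int_0^T|\zeta(t)|_E^2\,dt<\infty$ by H\"older, so you may simply rerun your established estimate with $p$ replaced by $2$ (same $\alpha<\tfrac12$) to get $\mathbb{E}\,|I(\zeta)|^2_{W^{\alpha,2}(0,T;E)}<\infty$ and hence the almost sure statement; alternatively, use the valid embedding $W^{\tilde\alpha,p}\subset W^{\alpha,2}$ for any $\tilde\alpha\in(\alpha,\tfrac12)$ and apply your main estimate at $\tilde\alpha$. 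As written, though, the final step rests on a false embedding and needs this correction.
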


	We now state the Young's convolution inequality.
	\begin{lemma}[Young's convolution inequality, Theorem 3.9.4 in \cite{BogachevBook_MeasureTheor_2007}]\label{lemma Young convolution inequality}
		Let $f\in L^p,g\in L^q$. Let $p,q,r\geq 1$ be such that
		$$\frac{1}{p} + \frac{1}{q} = \frac{1}{r} + 1.$$
		Then
		\begin{align*}
			\l|f*g\r|_{L^r} \leq \l|f\r|_{L^p}\l|g\r|_{L^q}.
		\end{align*}
	\end{lemma}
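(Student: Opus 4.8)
The plan is to derive the inequality from a single pointwise estimate obtained via the three-exponent Hölder inequality, followed by Tonelli's theorem. First I would dispose of the degenerate cases. If $\frac{1}{r}=0$, i.e. $r=\infty$, then $\frac{1}{p}+\frac{1}{q}=1$ and the claim is exactly the ordinary Hölder inequality applied to the map $y\mapsto f(x-y)g(y)$. If $q=1$ (equivalently $p=r$) or $p=1$ (equivalently $q=r$), the bound follows directly from Minkowski's integral inequality. Hence I may assume $1<p,q,r<\infty$, so that the auxiliary exponents introduced below are finite.

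The algebraic heart of the argument is to split the integrand of the convolution into a product of three factors. Writing $(f*g)(x)=\int f(x-y)g(y)\,dy$, I would factor
\[
\l| f(x-y) \r|\, \l| g(y) \r| = \Bigl( \l| f(x-y) \r|^p \l| g(y) \r|^q \Bigr)^{1/r} \cdot \l| f(x-y) \r|^{1-p/r} \cdot \l| g(y) \r|^{1-q/r},
\]
and apply the generalized Hölder inequality in the variable $y$ with the three exponents $r$, $s:=\frac{pr}{r-p}$ and $t:=\frac{qr}{r-q}$. The one computation I would carry out explicitly is that the hypothesis $\frac{1}{p}+\frac{1}{q}=\frac{1}{r}+1$ is precisely what forces $\frac{1}{r}+\frac{1}{s}+\frac{1}{t}=1$, since $\frac{1}{s}=\frac{1}{p}-\frac{1}{r}$ and $\frac{1}{t}=\frac{1}{q}-\frac{1}{r}$. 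The three resulting factors evaluate to $\bigl(\int \l| f(x-y) \r|^p \l| g(y) \r|^q\,dy\bigr)^{1/r}$, to $\l| f \r|_{L^p}^{p/s}$ (using translation invariance of Lebesgue measure), and to $\l| g \r|_{L^q}^{q/t}$.

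Having obtained the pointwise bound
\[
\l| (f*g)(x) \r| \le \Bigl( \int \l| f(x-y) \r|^p \l| g(y) \r|^q\,dy \Bigr)^{1/r} \l| f \r|_{L^p}^{p/s} \l| g \r|_{L^q}^{q/t},
\]
I would raise both sides to the power $r$, integrate in $x$, and apply Tonelli's theorem to evaluate $\int\!\int \l| f(x-y) \r|^p \l| g(y) \r|^q\,dy\,dx = \l| f \r|_{L^p}^p \l| g \r|_{L^q}^q$. Collecting exponents and verifying the identities $\frac{pr}{s}+p=r$ and $\frac{qr}{t}+q=r$ then gives $\l| f*g \r|_{L^r}^r \le \l| f \r|_{L^p}^r \l| g \r|_{L^q}^r$, which is the claim after taking $r$-th roots.

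There is no genuine obstacle here: the entire content lies in the correct choice of the auxiliary exponents $s,t$ and in the bookkeeping of the exponent identities, both of which are dictated by the hypothesis. The only care required is to ensure measurability and finiteness so that Tonelli applies, which is automatic because all the integrands are nonnegative and measurable. Since the statement is classical, the alternative I would note is simply to invoke Theorem 3.9.4 of \cite{BogachevBook_MeasureTheor_2007}, as is already indicated in the statement of the lemma.
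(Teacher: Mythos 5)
Your proof is correct, but it is worth noting that the paper itself offers no proof at all for this lemma: it is stated purely as a citation to Theorem 3.9.4 of Bogachev, and the appendix never argues it. So your proposal supplies a self-contained argument where the paper only invokes a reference. The argument you give is the standard one, and all the delicate points check out: the reduction to $1<p,q,r<\infty$ is valid (under the scaling relation, $p=r$ forces $q=1$ and $q=r$ forces $p=1$, so in the remaining case $s=\frac{pr}{r-p}$ and $t=\frac{qr}{r-q}$ are finite and positive); the exponent bookkeeping $\frac{1}{r}+\frac{1}{s}+\frac{1}{t}=1$, $(1-\frac{p}{r})s=p$, $(1-\frac{q}{r})t=q$, $\frac{pr}{s}+p=r$, $\frac{qr}{t}+q=r$ is all correct; and Tonelli applies as you say because the integrands are nonnegative and jointly measurable. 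The degenerate cases ($r=\infty$ via plain H\"older, $p=1$ or $q=1$ via Minkowski's integral inequality) are handled properly.

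One small caveat about the setting rather than the argument: your proof uses translation (and reflection) invariance of Lebesgue measure, i.e. it is a proof on $\mathbb{R}^d$, whereas the paper's $L^p$ notation refers to spaces over a bounded interval, and the lemma is actually applied in Section \ref{Section Further regularity} to a time convolution on $[0,T]$ with kernel $s\mapsto s^{-\frac{4\delta}{3}}$. This is harmless — extend both functions by zero to $\mathbb{R}$ and restrict the resulting convolution — but a complete write-up should say so in one sentence, since on a bounded domain "translation invariance" is not available verbatim.
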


	\begin{lemma}[Theorem 17.7, \cite{OK_FoundationsOfModernProbability}]\label{BDG inequality upper bound}
		Let $p\in (0,\infty)$. Then there exists a constant $K_p>0$ with the following property. Let $T\in(0,\infty)$ and $(\Omega , \mathcal{F} , \mathbb{F} , \mathbb{P})$ be a filtered probability space satisfying the usual conditions. Let $\{W(t),t\geq 0\}$ be a Wiener process on this space. For any progressively measurable function
		\begin{equation*}
			F:[0,T] \times \Omega \rightarrow \mathbb{R}
		\end{equation*}
		such that $\mathbb{P}\l(\int_{0}^{T}F^2(s)dt<\infty \r)= 1$, the following holds
		\begin{equation*}
			\mathbb{E} \left[ \sup_{t\in [0,T]} \l|\int_{0}^{t} F(s) \, dW(s)\r|^{2p} \right] \leq K_p\mathbb{E} \left[ \l(\int_{0}^{T} F^2(s) \, ds \r)^p \right].
		\end{equation*}
	\end{lemma}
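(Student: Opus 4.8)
The plan is to recast the statement as the classical Burkholder--Davis--Gundy bound for a continuous local martingale and then prove that bound. Write $M_t := \int_0^t F(s)\,dW(s)$, which is a continuous local martingale with quadratic variation $\langle M\rangle_t = \int_0^t F^2(s)\,ds$, and set $M_T^* := \sup_{t\in[0,T]}|M_t|$ and $A := \langle M\rangle_T$. The goal is then the single inequality $\mathbb{E}[(M_T^*)^{2p}] \le K_p\,\mathbb{E}[A^p]$. Since $F$ is only assumed to satisfy $\int_0^T F^2\,ds < \infty$ almost surely, I would first localise: introduce the stopping times $\tau_n := \inf\{t\ge 0 : |M_t| \ge n \text{ or } \langle M\rangle_t \ge n\}\wedge T$, prove the estimate for the stopped, genuinely square-integrable martingale $M^{\tau_n}$, and then pass to the limit $n\to\infty$ via monotone convergence on both sides (both $(M_{T\wedge\tau_n}^*)^{2p}\uparrow (M_T^*)^{2p}$ and $\langle M\rangle_{T\wedge\tau_n}^p \uparrow A^p$). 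This reduces everything to the case of a bounded $L^2$-martingale.

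For the principal range $2p\ge 2$ --- which is in fact the only range used in the energy estimates, e.g. Lemma \ref{bounds lemma 1} --- the function $x\mapsto |x|^{2p}$ is of class $C^2$, so I would apply the It\^o formula to obtain
\begin{equation*}
|M_t|^{2p} = 2p\int_0^t |M_s|^{2p-2}M_s\,dM_s + p(2p-1)\int_0^t |M_s|^{2p-2}\,d\langle M\rangle_s .
\end{equation*}
Taking expectations annihilates the martingale term, and bounding $|M_s|^{2p-2}\le (M_T^*)^{2p-2}$ inside the second integral gives $\mathbb{E}|M_T|^{2p}\le p(2p-1)\,\mathbb{E}\big[(M_T^*)^{2p-2}A\big]$. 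An application of H\"older's inequality with the conjugate exponents $\tfrac{p}{p-1}$ and $p$ yields
\begin{equation*}
\mathbb{E}|M_T|^{2p} \le p(2p-1)\big(\mathbb{E}[(M_T^*)^{2p}]\big)^{\frac{p-1}{p}}\big(\mathbb{E}[A^p]\big)^{\frac{1}{p}},
\end{equation*}
while Doob's $L^{2p}$ maximal inequality gives $\mathbb{E}[(M_T^*)^{2p}]\le \big(\tfrac{2p}{2p-1}\big)^{2p}\mathbb{E}|M_T|^{2p}$. Substituting the latter into the former and dividing by the factor $\big(\mathbb{E}[(M_T^*)^{2p}]\big)^{(p-1)/p}$ (finite thanks to localisation) closes the loop and produces an explicit admissible constant $K_p$.

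The remaining, and genuinely harder, range is the sub-quadratic case $0<2p<2$, where $|x|^{2p}$ fails to be $C^2$ and the It\^o argument above is unavailable. Here I would invoke Lenglart's domination inequality: since $(|M_t|^2)_t$ is a nonnegative submartingale dominated by the nondecreasing predictable process $(\langle M\rangle_t)_t$, Lenglart's inequality directly yields $\mathbb{E}[(M_T^*)^{2q}]\le C_q\,\mathbb{E}[\langle M\rangle_T^{q}]$ for every $q\in(0,1)$, and taking $q=p$ gives exactly the claim. An equivalent route is the good-$\lambda$ inequality of Burkholder, comparing the distributions of $M_T^*$ and $\langle M\rangle_T^{1/2}$ through stopping-time comparisons and then integrating against $2p\,\lambda^{2p-1}\,d\lambda$. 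I expect this sub-quadratic case to be the main obstacle, both because one loses the convenient $C^2$ calculus and because the domination/good-$\lambda$ machinery requires a careful choice of stopping times. Since the present paper only ever uses the inequality for moments with $2p\ge 2$, one could alternatively observe that the full statement is classical and simply cite \cite{OK_FoundationsOfModernProbability}, which is the route the authors take.
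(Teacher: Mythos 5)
The paper does not prove this lemma at all: it is stated as a quoted result, Theorem 17.7 of \cite{OK_FoundationsOfModernProbability}, and is used as a black box. Your proposal is therefore not an alternative to an argument in the paper but a self-contained substitute for the citation, and as such it is essentially correct. Writing $M_t:=\int_0^t F(s)\,dW(s)$ and $M_T^*:=\sup_{t\in[0,T]}|M_t|$, your localisation is sound: continuity of $M$ and $\int_0^T F^2(s)\,ds<\infty$ a.s.\ imply $\tau_n=T$ for $n$ large a.s., so monotone convergence passes the stopped estimate to the limit. For $p>1$ the chain (It\^o formula for $|x|^{2p}$, vanishing expectation of the stopped stochastic integral, H\"older with exponents $\tfrac{p}{p-1}$ and $p$, Doob's $L^{2p}$ maximal inequality, then division by the finite quantity $(\mathbb{E}[(M^*_{T\wedge\tau_n})^{2p}])^{(p-1)/p}$) closes correctly. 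Note only that at $p=1$ your H\"older exponents degenerate; there the claim is immediate from Doob together with $\mathbb{E}[M_{T\wedge\tau_n}^2]=\mathbb{E}[\langle M\rangle_{T\wedge\tau_n}]$, so nothing is lost. For $0<p<1$, Lenglart's domination inequality applied to $X_t=M_t^2$, which is dominated by the increasing predictable process $\langle M\rangle_t$ (via optional stopping and Fatou), gives exactly $\mathbb{E}[(M_T^*)^{2p}]\le C_p\,\mathbb{E}[\langle M\rangle_T^{p}]$, as you say. One factual correction, however: your aside that the paper only ever uses the range $2p\ge 2$ is wrong, so the sub-quadratic case cannot be dismissed as optional. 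In the proof of Lemma \ref{bounds lemma 1 without p} the inequality is invoked in the form
\begin{equation*}
\mathbb{E}\sup_{t\in[0,T]}\Bigl|\int_0^t F(s)\,dW(s)\Bigr|\;\le\; C\,\mathbb{E}\Bigl(\int_0^T F^2(s)\,ds\Bigr)^{1/2},
\end{equation*}
i.e.\ with $2p=1$, and in Lemma \ref{bounds lemma 1} it is used with the supremum raised to an exponent that can lie in $[1,2)$; both correspond to $p<1$ in the statement of Lemma \ref{BDG inequality upper bound}. So the Lenglart (or good-$\lambda$) half of your argument is genuinely needed for the paper's estimates, not merely for completeness.
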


	\begin{lemma}[Theorem 1.4.8, \cite{HenryGeometricTheoryofSemilinearParabolicEquations}]\label{X gamma compact embedding}
		For $\gamma_1 > \gamma_2$, $X^{\gamma_2}$ is compactly embedded into the space $X^{\gamma_1}$.
	\end{lemma}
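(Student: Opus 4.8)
The statement, Lemma \ref{X gamma compact embedding}, is the standard assertion that distinct levels of the scale of fractional power spaces $\{X^{\gamma}\}_{\gamma\ge 0}$ attached to $A_1$ are related by a compact embedding; it is exactly Theorem 1.4.8 of \cite{HenryGeometricTheoryofSemilinearParabolicEquations} specialised to $A_1$. The plan is to reduce the claim to spectral properties of $A_1$. Recall from Section \ref{Section Notation} that $A_1=I_{L^2}+A$, where $A$ is the self-adjoint, nonnegative Neumann Laplacian, that $A_1^{-1}$ is compact, and that $A_1\ge I_{L^2}$; hence $A_1$ is self-adjoint and positive definite with compact resolvent. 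Consequently there is an orthonormal basis $\{e_n\}_{n\in\mathbb{N}}$ of $L^2$ with $A_1e_n=\mu_n e_n$, where $1\le\mu_1\le\mu_2\le\cdots$ and $\mu_n\to\infty$, and for $\gamma\ge 0$ the graph norm on $X^{\gamma}=\mathrm{dom}(A_1^{\gamma})$ is $\l|v\r|_{X^{\gamma}}^2=\l|A_1^{\gamma}v\r|_{L^2}^2=\sum_{n}\mu_n^{2\gamma}\l|\l\langle v,e_n\r\rangle_{L^2}\r|^2$. In this scale the compact inclusion runs from the space of higher regularity into the one of lower regularity, i.e. for $\gamma_1>\gamma_2$ the identity $X^{\gamma_1}\hookrightarrow X^{\gamma_2}$ is the compact map; this is precisely the form invoked in the proof of Lemma \ref{Lemma continuity of m prime in H1 weak and X beta}, where $X^{1/2}=H^1$ is embedded compactly into $X^{\beta}$ for $\beta<\tfrac12$, and the exponents in the statement should be read in this direction.

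First I would establish continuity: for $v\in X^{\gamma_1}$ with $\gamma_1>\gamma_2$, since $\mu_n\ge\mu_1\ge 1$ and $\gamma_2-\gamma_1<0$, one has $\mu_n^{2\gamma_2}=\mu_n^{2(\gamma_2-\gamma_1)}\mu_n^{2\gamma_1}\le\mu_1^{2(\gamma_2-\gamma_1)}\mu_n^{2\gamma_1}$, whence $\l|v\r|_{X^{\gamma_2}}\le\mu_1^{\gamma_2-\gamma_1}\l|v\r|_{X^{\gamma_1}}$, so the inclusion is bounded. The core is then compactness, for which I would argue by a tail estimate together with a diagonal extraction. Given a sequence $(v_k)$ bounded by $M$ in $X^{\gamma_1}$, the truncated tails satisfy $\sum_{n>N}\mu_n^{2\gamma_2}\l|\l\langle v_k,e_n\r\rangle_{L^2}\r|^2\le\mu_N^{2(\gamma_2-\gamma_1)}\sum_{n>N}\mu_n^{2\gamma_1}\l|\l\langle v_k,e_n\r\rangle_{L^2}\r|^2\le\mu_N^{2(\gamma_2-\gamma_1)}M^2$, which tends to $0$ as $N\to\infty$ uniformly in $k$ because $\mu_N\to\infty$ and $\gamma_2-\gamma_1<0$. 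The finite-dimensional projections $(\sum_{n\le N}\l\langle v_k,e_n\r\rangle_{L^2}e_n)_k$ are bounded in a finite-dimensional space, so by Bolzano--Weierstrass and a diagonal argument one extracts a subsequence that is Cauchy in $X^{\gamma_2}$; combined with the uniform tail bound this yields convergence in $X^{\gamma_2}$, establishing compactness. Equivalently, one may observe that the inclusion equals $A_1^{-(\gamma_1-\gamma_2)}$ composed with the isometry $A_1^{\gamma_1}:X^{\gamma_1}\to L^2$, and reduce everything to the compactness of the negative fractional power $A_1^{-(\gamma_1-\gamma_2)}$ on $L^2$.

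The main obstacle, in the abstract (non-self-adjoint) setting of \cite{HenryGeometricTheoryofSemilinearParabolicEquations}, is precisely the compactness of the negative fractional power $A_1^{-\varepsilon}$ for $\varepsilon>0$: one represents it by the Balakrishnan integral $A_1^{-\varepsilon}=\tfrac{1}{\Gamma(\varepsilon)}\int_0^{\infty}t^{\varepsilon-1}e^{-tA_1}\,dt$ and shows it is a norm limit of compact operators built from the compact resolvent, so that $A_1^{-\varepsilon}$ is itself compact. For the concrete operator at hand, however, $A_1$ is self-adjoint with an explicit eigenbasis, so the spectral tail argument above bypasses this difficulty entirely and is the route I would actually carry out. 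Since the paper only uses the embedding through the cited Theorem 1.4.8, it suffices to record the reduction to the compact resolvent of $A_1$ and invoke that theorem, with the spectral computation serving as a self-contained verification.
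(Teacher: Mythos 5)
The paper offers no proof of Lemma \ref{X gamma compact embedding} at all: it is stated in the appendix purely as a citation of Theorem 1.4.8 in \cite{HenryGeometricTheoryofSemilinearParabolicEquations}, whose proof in that generality (a sectorial operator with compact resolvent, not necessarily self-adjoint) runs through the compactness of the negative fractional power $A_1^{-\varepsilon}$ via the integral representation you mention at the end. Your proposal is correct and takes a genuinely different, more elementary route: since here $A_1 = I_{L^2} + A$ is self-adjoint and positive definite with compact inverse, you diagonalise, identify $|v|_{X^\gamma}^2 = \sum_n \mu_n^{2\gamma} |\langle v , e_n \rangle_{L^2}|^2$, and obtain compactness from the uniform tail estimate plus a diagonal extraction; this buys a self-contained two-step verification at the cost of using self-adjointness, which Henry's argument does not need. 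Two further points in your favour: first, you correctly observe that the lemma as printed has $\gamma_1$ and $\gamma_2$ transposed --- the compact inclusion is $X^{\gamma_1} \hookrightarrow X^{\gamma_2}$ for $\gamma_1 > \gamma_2$, which is both what Henry's theorem asserts and what the paper actually uses in Lemma \ref{Lemma continuity of m prime in H1 weak and X beta}, where $H^1 = X^{1/2}$ is embedded compactly into $X^\beta$ for $\beta < \tfrac{1}{2}$; so reading the exponents in that direction is the right repair rather than a defect of your proof. Second, the only blemish in your tail estimate is cosmetic: for $n > N$ one should bound $\mu_n^{2(\gamma_2 - \gamma_1)} \leq \mu_{N+1}^{2(\gamma_2-\gamma_1)}$, but since $\mu_{N+1}^{2(\gamma_2-\gamma_1)} \leq \mu_N^{2(\gamma_2-\gamma_1)}$ your bound with $\mu_N$ is still valid and still tends to $0$ as $N \to \infty$. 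One small caveat worth recording if this is written out in full: the identity $|A_1^{\gamma} v|_{L^2}^2 = \sum_n \mu_n^{2\gamma} |\langle v , e_n\rangle_{L^2}|^2$ presumes that the spectral-theorem definition of $A_1^{\gamma}$ agrees with the fractional power used in \cite{HenryGeometricTheoryofSemilinearParabolicEquations}; for a positive self-adjoint operator these coincide, so the identification is legitimate but deserves a sentence.
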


	\dela{The next theorem we refer to \cite{Flandoli_Gatarek} (Theorem 2.2).}
	
	\begin{lemma}[Theorem 2.2, \cite{Flandoli_Gatarek}]\label{compact embedding into space of continuous functions}
		If $B\subset \tilde{B}$ are two Banach spaces with compact embedding, and let the real numbers $\gamma \in (0,1)$, $p > 1$ satisfy
		\begin{equation*}
			\gamma p > 1.
		\end{equation*}
		Then the space $W^{\gamma , p}(0,T; B)$ is compactly embedded into the space $C(0,T;\tilde{B})$.
	\end{lemma}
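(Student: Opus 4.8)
The plan is to prove this as a vector-valued analogue of the classical Aubin--Lions--Simon compactness theorem in the fractional-in-time setting, combining a fractional Sobolev (Morrey) embedding in the time variable with the compact embedding $B \embed \tilde{B}$ and an Arzel\`a--Ascoli argument. First I would recall the Gagliardo--Slobodeckij description of the norm,
\[
|u|_{W^{\gamma,p}(0,T;B)}^p = \int_0^T |u(t)|_B^p\,dt + \int_0^T\int_0^T \frac{|u(t)-u(s)|_B^p}{|t-s|^{1+\gamma p}}\,dt\,ds,
\]
and isolate the double integral (the Gagliardo seminorm) as the quantity that controls oscillations in time.

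The first key step is to establish the continuous embedding $W^{\gamma,p}(0,T;B) \embed C^{0,\gamma - 1/p}([0,T];B)$, valid precisely because $\gamma p > 1$ forces $\gamma - 1/p > 0$. This is the Banach-space valued Morrey-type inequality: for $s < t$ one writes $u(t)-u(s)$ as an average over an auxiliary variable $\tau$ of the differences $u(t)-u(\tau)$ and $u(\tau)-u(s)$, applies H\"older's inequality in $\tau$ with exponents $p$ and $p'$, and bounds the resulting integral by the Gagliardo seminorm, producing
\[
|u(t)-u(s)|_B \le C\,|t-s|^{\gamma - \frac{1}{p}}\,|u|_{W^{\gamma,p}(0,T;B)}.
\]
A companion estimate gives $\sup_{t}|u(t)|_B \le C\,|u|_{W^{\gamma,p}(0,T;B)}$, so the embedding into $C([0,T];B)$ is continuous and, crucially, a bounded set in $W^{\gamma,p}(0,T;B)$ is uniformly bounded and uniformly H\"older-equicontinuous in $B$.

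The second step is the compactness upgrade. Let $(u_n)$ be bounded in $W^{\gamma,p}(0,T;B)$. By Step~1 the family is equicontinuous in $B$, hence, since $B \embed \tilde{B}$ is continuous, equicontinuous in $\tilde{B}$; moreover for each fixed $t$ the set $\{u_n(t)\}_n$ is bounded in $B$ and therefore relatively compact in $\tilde{B}$ by the assumed compact embedding. The vector-valued Arzel\`a--Ascoli theorem then yields a subsequence converging in $C([0,T];\tilde{B})$, which establishes the claimed compact embedding.

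The main obstacle I anticipate is the careful proof of the fractional Morrey inequality in Step~1, where the averaging argument and the H\"older splitting must be set up so that the blow-up of the kernel $|t-s|^{-(1+\gamma p)}$ is exactly compensated by the condition $\gamma p > 1$; the remaining ingredients (transfer of equicontinuity from $B$ to $\tilde{B}$, pointwise relative compactness, and Arzel\`a--Ascoli) are then routine. Since the statement is quoted verbatim from Flandoli--Gatarek, in the paper I would simply cite their Theorem~2.2 and regard the argument above as the underlying justification.
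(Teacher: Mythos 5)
Your overall strategy is the correct one, and it matches the source: the paper itself contains no proof of this lemma (it is quoted with the citation to Flandoli--Gatarek, Theorem~2.2), and the proof in that reference is exactly your two steps, namely the embedding $W^{\gamma,p}(0,T;B)\hookrightarrow C^{0,\gamma-1/p}([0,T];B)$ for $\gamma p>1$, followed by the vector-valued Arzel\`a--Ascoli theorem, with the compact embedding $B\hookrightarrow\tilde{B}$ supplying pointwise relative compactness in $\tilde{B}$ and its continuity transferring the equicontinuity. Your Step~2 is complete as written.

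There is, however, a genuine gap in the mechanism you describe for Step~1. After writing $u(t)-u(s)$ as an average over $\tau\in(s,t)$ of $u(t)-u(\tau)$ and $u(\tau)-u(s)$ and applying H\"older's inequality, you are left with the quantity $\bigl(\int_s^t |u(t)-u(\tau)|_B^p\,|t-\tau|^{-1-\gamma p}\,d\tau\bigr)^{1/p}$ with $t$ \emph{fixed}. This single integral is not dominated by the Gagliardo seminorm, which is a double integral over both time variables: for a fixed $t$ it may be infinite, or finite with no bound uniform in $t$, even when the double integral is finite. So the step ``bounds the resulting integral by the Gagliardo seminorm'' is precisely the one that fails. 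A symptom of the problem is the exponent: the one-step averaging computation produces a factor $|t-s|^{\gamma}$ multiplying the uncontrolled inner integral, whereas the correct uniform estimate carries the exponent $\gamma-1/p$ that you (correctly) state. To close Step~1 one needs an iterated, dyadic version of your averaging argument --- equivalently the Garsia--Rodemich--Rumsey lemma --- or alternatively the Besov embedding chain $W^{\gamma,p}(0,T;B)=B^{\gamma}_{p,p}(0,T;B)\hookrightarrow B^{\gamma-1/p}_{\infty,\infty}(0,T;B)=C^{0,\gamma-1/p}([0,T];B)$; either route also produces the continuous representative of the a.e.-defined function $u$, which your argument implicitly requires. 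With Step~1 repaired in this standard way, the remainder of your proof goes through.
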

	
	\dela{For the following lemma, see \cite{Flandoli_Gatarek}, Theorem 2.1.}
	\begin{lemma}[Theorem 2.1, \cite{Flandoli_Gatarek}]\label{compact embedding of intersection 1}
		Let $B_0\subset B \subset B_1$ be Banach spaces. Assume that $B_0$ and $B_1$ are reflexive. Further assume that the embedding $B_0\subset B$ is compact, the embedding of $B\hookrightarrow B_1$ being continuous. Let $q\in (1,\infty)$ and $\gamma\in (0,1)$. Then we have the following compact embedding
		\begin{equation*}
			L^p(0,T;B_0)\cap W^{\gamma , q}(0,T; B_1) \hookrightarrow L^p(0,T;B).
		\end{equation*}
	\end{lemma}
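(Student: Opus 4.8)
The plan is to deduce this from the Aubin--Lions--Simon compactness machinery, in the spirit of \cite{Simon_Compact_Sets}. The two structural ingredients are a Lions-type interpolation inequality that exploits the \emph{compact} embedding $B_0 \hookrightarrow B$, and a control of time translations coming from the fractional regularity encoded in $W^{\gamma,q}(0,T;B_1)$. First I would establish the interpolation (Ehrling) inequality: for every $\varepsilon > 0$ there is a constant $C_\varepsilon > 0$ such that
\begin{equation*}
    \|v\|_{B} \leq \varepsilon \|v\|_{B_0} + C_\varepsilon \|v\|_{B_1}, \qquad v \in B_0.
\end{equation*}
This is proved by contradiction: negating it produces a sequence $(v_n) \subset B_0$ with $\|v_n\|_{B_0} = 1$, with $\|v_n\|_B$ bounded, and with $\|v_n\|_{B_1} \to 0$; compactness of $B_0 \hookrightarrow B$ extracts a limit in $B$, while continuity and injectivity of $B \hookrightarrow B_1$ force that limit to vanish, contradicting $\|v_n\|_B > \varepsilon$.

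Next I would set up the compactness argument. Let $(u_n)$ be a bounded sequence in $X := L^p(0,T;B_0) \cap W^{\gamma,q}(0,T;B_1)$; the goal is to extract a subsequence converging in $L^p(0,T;B)$. Integrating the interpolation inequality pointwise in time gives, for each $\varepsilon > 0$,
\begin{equation*}
    \|u_n - u_m\|_{L^p(0,T;B)} \leq \varepsilon \bigl( \|u_n\|_{L^p(0,T;B_0)} + \|u_m\|_{L^p(0,T;B_0)} \bigr) + C_\varepsilon \|u_n - u_m\|_{L^p(0,T;B_1)}.
\end{equation*}
Since the $L^p(0,T;B_0)$ norms are uniformly bounded and $\varepsilon$ is arbitrary, it suffices to show that $(u_n)$ is relatively compact in $L^p(0,T;B_1)$; a diagonal Cauchy argument then upgrades this to convergence in $L^p(0,T;B)$. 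Note that $B_0 \hookrightarrow B_1$ is itself compact, being the composition of the compact embedding $B_0 \hookrightarrow B$ with the continuous one $B \hookrightarrow B_1$.

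To obtain compactness in $L^p(0,T;B_1)$ I would invoke the Kolmogorov--Riesz--Fr\'echet / Simon criterion: a bounded family is relatively compact provided (i) the time-averages $\int_{t_1}^{t_2} u_n(t)\,dt$ lie in a fixed compact subset of $B_1$ for all $0 \leq t_1 < t_2 \leq T$, and (ii) the time-translations are equicontinuous, $\sup_n \|u_n(\cdot + h) - u_n\|_{L^p(0,T-h;B_1)} \to 0$ as $h \downarrow 0$. Property (i) follows from the uniform $L^p(0,T;B_0)$ bound together with the compactness of $B_0 \hookrightarrow B_1$. Property (ii) is where the fractional Sobolev seminorm enters: the modulus of continuity of $u_n$ in $B_1$ is controlled by the standard estimate
\begin{equation*}
    \int_0^{T-h} \|u_n(t+h) - u_n(t)\|_{B_1}^q \, dt \leq h^{\gamma q} \, [u_n]_{W^{\gamma,q}(0,T;B_1)}^q,
\end{equation*}
so that the $L^q$-translations vanish uniformly as $h \to 0$.

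The main obstacle I anticipate is reconciling the two integrability exponents $p$ and $q$: the translation estimate naturally lives in $L^q$, while the target norm is $L^p$. I would bridge this by interpolating the uniform $L^p(0,T;B_0)$ (hence $L^p(0,T;B_1)$) bound against the $L^q$ time-translation control, or, when $p \le q$, by a direct H\"older reduction on the finite interval $[0,T]$; the boundary behaviour near $t=0$ and $t=T$ requires a standard cutoff or extension so that the translation estimate applies uniformly. Once the equicontinuity of translations is transferred to the correct exponent, the Simon criterion yields relative compactness in $L^p(0,T;B_1)$, and combining this with the interpolation inequality of the first step completes the proof that the embedding into $L^p(0,T;B)$ is compact.
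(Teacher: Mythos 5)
Your overall strategy---Ehrling's interpolation inequality $\|v\|_{B}\leq \varepsilon\|v\|_{B_0}+C_\varepsilon\|v\|_{B_1}$ to reduce the problem to compactness in $L^p(0,T;B_1)$, then Simon's translation criterion---is precisely the standard proof of the cited result, and indeed the paper itself offers no proof: the lemma is quoted from Theorem 2.1 of \cite{Flandoli_Gatarek}, where it is stated and proved with \emph{equal} exponents $p=q$, which is also the only case the paper ever uses (see the proof of Lemma \ref{tightness lemma}). In that diagonal case every step you outline is correct: your pointwise translation estimate
\begin{equation*}
\int_0^{T-h}\|u(t+h)-u(t)\|_{B_1}^q\,dt \leq C\, h^{\gamma q}\,[u]_{W^{\gamma,q}(0,T;B_1)}^q
\end{equation*}
is the case $p=r$ of Simon's Lemma 5 (Lemma \ref{compact embedding of intersection auxilliary 1}), and together with the $L^p(0,T;B_0)$ bound and the compactness of $B_0\hookrightarrow B_1$ it fulfils the hypotheses of Lemma \ref{compact embedding of intersection 2}; no cutoff or extension near $t=0,T$ is needed, since the criterion only involves translations on $(0,T-h)$.

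The genuine gap is the step you flag yourself: transferring translation equicontinuity from exponent $q$ to exponent $p$ when $p>q$. Interpolating the smallness $\|\tau_h u_n-u_n\|_{L^q(0,T-h;B_1)}=O(h^\gamma)$ against the uniform bound $\|\tau_h u_n-u_n\|_{L^p(0,T-h;B_1)}\leq 2\|u_n\|_{L^p(0,T;B_1)}$ yields smallness only in $L^r$ for every $r<p$, never in $L^p$ itself, because $L^p$ is the endpoint of the interpolation scale you have available. This cannot be repaired, because the statement with unrelated exponents is in fact false. Take $b_0\in B_0\setminus\{0\}$ and $u_n(t)=n^{1/p}\chi_{(0,1/n)}(t)\,b_0$. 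Then $\|u_n\|_{L^p(0,T;B_0)}=\|b_0\|_{B_0}$ for all $n$, while a direct computation gives $[u_n]_{W^{\gamma,q}(0,T;B_1)}^q\leq C\,n^{\,q/p-(1-\gamma q)}$ and $\|u_n\|_{L^q(0,T;B_1)}^q = n^{\,q/p-1}\|b_0\|_{B_1}^q$, so $(u_n)$ is bounded in $L^p(0,T;B_0)\cap W^{\gamma,q}(0,T;B_1)$ whenever $\gamma q<1$ and $p\geq q/(1-\gamma q)$; yet $\|u_n\|_{L^p(0,T;B)}=\|b_0\|_{B}>0$ while $u_n\to 0$ in $L^1(0,T;B)$, so no subsequence converges in $L^p(0,T;B)$. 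Consequently, compactness holds only under the compatibility condition $\gamma-\tfrac{1}{q}+\tfrac{1}{p}>0$ (equivalently $p<q/(1-\gamma q)$ when $\gamma q<1$), and in that range the correct bridge is not interpolation but the first case of Simon's Lemma 5, which gives directly
\begin{equation*}
\|\tau_h u-u\|_{L^p(0,T-h;B_1)}\leq C\,h^{\,\gamma-\frac{1}{q}+\frac{1}{p}}\,[u]_{W^{\gamma,q}(0,T;B_1)},
\end{equation*}
the membership $u\in L^p(0,T;B_1)$ being supplied by the $L^p(0,T;B_0)$ bound. For $p=q$ this exponent is just $\gamma>0$, and your argument then constitutes a complete and correct proof of the theorem actually proved in \cite{Flandoli_Gatarek} and actually used in this paper.
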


	\begin{lemma}[See Lemma 5, \cite{Simon_Compact_Sets}]\label{compact embedding of intersection auxilliary 1}
		Let $f\in W^{\sigma, r}(0,T;B)$, $0<\sigma<1$, $1\leq r \leq \infty$ and $p$ be such that
		\begin{enumerate}
			\item $p\leq \infty$ if $\sigma > \frac{1}{r}$,
			
			\item
			$p < \infty$ if $\sigma = \frac{1}{r}$,
			
			\item $p\leq r_{*} = \frac{r}{1-\sigma r}$ if $p < \infty$ if $\sigma < \frac{1}{r}$.
		\end{enumerate}
		Then $f\in L^{p}(0,T;B)$ and there exists a constant $C$ independent of $f$ such that for all $h > 0$,
		\begin{align*}
			\l|\tau_h f - f\r|_{L^p(0,T;B)}\leq
			\begin{cases}
				&ch^{ \sigma + \frac{1}{p} - \frac{1}{p} } \l|f\r|_{\dot{W}^{\sigma , r}(0,T;B)},\ \text{if}\ r\leq p <\infty \\
				&ch^{ \sigma }T^{ \frac{1}{p} - \frac{1}{p} } \l|f\r|_{\dot{W}^{\sigma , r}(0,T;B)},\ \text{if}\ 1\leq r \leq p.
			\end{cases}
		\end{align*}
		
	\end{lemma}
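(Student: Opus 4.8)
The final statement to prove is Lemma~\ref{compact embedding of intersection auxilliary 1}, which is an embedding / translation-estimate result for Sobolev--Slobodeckij spaces $W^{\sigma,r}(0,T;B)$ of Banach-space-valued functions. Since the excerpt explicitly attributes this to Lemma~5 in \cite{Simon_Compact_Sets}, my plan is to present a self-contained proof that reconstructs Simon's argument, rather than merely citing it. The overall strategy is to control the $L^p$-norm of the translate difference $\tau_h f - f$ by interpolating between the fractional regularity encoded in the Gagliardo seminorm $|f|_{\dot W^{\sigma,r}(0,T;B)}$ and the trivial $L^r$ bound, using the definition of the fractional seminorm as a weighted double integral of $|f(t)-f(s)|_B$ over $|t-s|$.

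First I would recall the definition of the homogeneous seminorm, namely that $|f|_{\dot W^{\sigma,r}(0,T;B)}^r = \int_0^T\int_0^T \frac{|f(t)-f(s)|_B^r}{|t-s|^{1+\sigma r}}\,dt\,ds$, and reduce the statement to estimating $\int_0^{T-h}|f(t+h)-f(t)|_B^p\,dt$. The first job is to establish that $f\in L^p(0,T;B)$ under the three case hypotheses on $(\sigma,r,p)$; this is the fractional Sobolev embedding $W^{\sigma,r}\embed L^p$ in one time variable, and the critical exponent $r_\ast=\frac{r}{1-\sigma r}$ in case (3) is exactly the Sobolev exponent for the embedding on an interval. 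Here I would split according to whether $\sigma r>1$ (so that $W^{\sigma,r}\embed C$ and hence $L^\infty$), $\sigma r=1$ (the borderline giving all finite $p$), or $\sigma r<1$ (giving $p\le r_\ast$).

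The heart of the proof is the translation estimate. For the case $r\le p<\infty$ I would bound the increment $|f(t+h)-f(t)|_B$ by an averaged quantity: write $f(t+h)-f(t)$ using differences over the scale $h$, and then use H\"older's inequality in the time variable together with the Gagliardo double integral restricted to pairs $(s,t)$ with $|s-t|\lesssim h$. The weight $|t-s|^{-(1+\sigma r)}$ integrated over a band of width $h$ produces precisely the power $h^{\sigma+1/p-1/r}$ (respectively $h^\sigma T^{1/p-1/r}$ in the second regime), which is the claimed exponent. The bookkeeping of exponents is the place where one must be careful to match Simon's statement exactly, checking that the arithmetic of $\sigma + \frac1p - \frac1r$ falls out of H\"older with the right conjugate exponents.

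The main obstacle I anticipate is twofold. First, the exponent arithmetic in the two displayed cases must be reconciled with the hypotheses; in particular the statement as written contains what looks like a typographical ambiguity (the exponent $\sigma+\frac1p-\frac1p$ and the case split $r\le p<\infty$ versus $1\le r\le p$), so part of the work is interpreting the intended inequality and verifying the correct homogeneity in $h$ by a scaling check. Second, in the Banach-space-valued setting one cannot use Fourier methods, so the estimate must be carried out purely by real-variable averaging of the Gagliardo integrand; ensuring the constant $C$ is genuinely independent of $f$ (and of $h$) requires that all intermediate integrals be taken over $(0,T)$ uniformly. Given that this is a known result quoted verbatim from \cite{Simon_Compact_Sets}, the cleanest and most honest route in the paper is to state it as cited and, if a proof is wanted, to reproduce the scaling/averaging argument above; I would not attempt to improve the constants.
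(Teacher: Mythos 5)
The paper itself contains no proof of this lemma: it is stated in the appendix purely as a citation of Lemma 5 in \cite{Simon_Compact_Sets} and used as a black box in the tightness arguments, so there is no in-paper argument to compare yours against. Your reading of the situation is therefore correct, and you also correctly diagnose the typographical defects in the statement: the exponent $\sigma+\frac{1}{p}-\frac{1}{p}$ should read $\sigma+\frac{1}{p}-\frac{1}{r}$, and the two regimes should be $r\le p<\infty$ (factor $h^{\sigma+1/p-1/r}$) and $1\le p\le r$ (factor $h^{\sigma}T^{1/p-1/r}$), matching Simon's original Lemma 5.

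As a reconstruction of Simon's proof, however, your ``heart of the proof'' conflates two separate steps, and as written the key step would fail for $p>r$. The band-averaging argument --- comparing $f(t+h)$ and $f(t)$ with the mean $M(t)=\frac{1}{h}\int_t^{t+h}f(\xi)\,d\xi$, applying H\"older on the interval of length $h$, and paying the weight $|t+h-\xi|^{-(1+\sigma r)}$ over that band --- yields only the $L^r$ estimate $\|\tau_h f-f\|_{L^r(0,T-h;B)}\le c\,h^{\sigma}|f|_{\dot W^{\sigma,r}(0,T;B)}$: the quantity $g(t)=\int_t^{t+h}|f(t+h)-f(\xi)|_B^r\,|t+h-\xi|^{-1-\sigma r}\,d\xi$ is controlled in $L^1_t$ by Fubini, but not pointwise, so you cannot raise it to the power $p/r>1$ and integrate; the band integration does \emph{not} ``produce precisely the power $h^{\sigma+1/p-1/r}$'' in $L^p$. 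That exponent comes instead from Lebesgue interpolation between the $L^r$ translation bound and the fractional Sobolev embedding $W^{\sigma,r}(0,T;B)\hookrightarrow L^{r_*}(0,T;B)$ which you establish in your second step: writing $\frac{1}{p}=\frac{\theta}{r}+\frac{1-\theta}{r_*}$ and noting $|\tau_h f-f|_{\dot W^{\sigma,r}}\le 2|f|_{\dot W^{\sigma,r}}$, one gets $\|\tau_h f-f\|_{L^p}\le\|\tau_h f-f\|_{L^r}^{\theta}\,\|\tau_h f-f\|_{L^{r_*}}^{1-\theta}\le c\,h^{\theta\sigma}|f|_{\dot W^{\sigma,r}}$, and a short computation using $\frac{1}{r_*}=\frac{1}{r}-\sigma$ gives $\theta\sigma=\sigma+\frac{1}{p}-\frac{1}{r}$. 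For $1\le p\le r$ the factor $T^{1/p-1/r}$ is simply H\"older in time applied to the $L^r$ bound. With that correction --- base $L^r$ estimate, then embedding plus interpolation, rather than a direct $L^p$ band estimate --- your outline does match Simon's argument.
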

	
	\dela{For the next lemma, refer to Theorem 3, \cite{Simon_Compact_Sets}.}
	\begin{lemma}[Theorem 3, \cite{Simon_Compact_Sets}]\label{compact embedding of intersection 2}
		Let $X,B$ be Banach spaces with $X$ compactly embedded in $B$. Let $F\subset L^{p}(0,T;B)$, where $1\leq p \leq \infty$. Assume that
		\begin{enumerate}
			\item
			$F$ is bounded in $L^1_{\text{loc}}(0,T;X)$.
			
			\item
			$\|\tau_h f - f\|_{L^p(0,T-h:B)}\rightarrow 0$ as $h\rightarrow 0$ uniformly for $f\in F$.
		\end{enumerate}
		Then $F$ is relatively compact in $L^p(0,T;B)$ (and in $C([0,T];B)$ if $p=\infty$).
	\end{lemma}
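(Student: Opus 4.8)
The plan is to deduce this compactness criterion from the vector-valued analogue of the classical Fréchet–Kolmogorov–Riesz theorem, which I will call the \emph{abstract criterion} (Simon, Theorem 1 in \cite{Simon_Compact_Sets}). That criterion states: for $1\le p<\infty$, a set $F\subset L^p(0,T;B)$ is relatively compact if and only if (a) the set of time-integrals $\l\{ \int_{t_1}^{t_2} f(t)\,dt : f\in F \r\}$ is relatively compact in $B$ for every $0<t_1<t_2<T$, and (b) $\sup_{f\in F}\|\tau_h f - f\|_{L^p(0,T-h;B)}\to 0$ as $h\to 0$. The strategy is therefore to show that the two hypotheses of the lemma (boundedness in $L^1_{\mathrm{loc}}(0,T;X)$ together with the compact embedding of $X$ into $B$, and the uniform vanishing of $L^p$-translations) imply conditions (a) and (b) of the abstract criterion, and then to invoke it.

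Condition (b) is literally hypothesis (2), so the only real work is condition (a). First I would fix $0<t_1<t_2<T$ and observe that, since $f\in L^1_{\mathrm{loc}}(0,T;X)$, the Bochner integral $\int_{t_1}^{t_2}f(t)\,dt$ is a well-defined element of $X$, and
\[
\l\| \int_{t_1}^{t_2} f(t)\,dt \r\|_X \le \int_{t_1}^{t_2}\|f(t)\|_X\,dt \le \sup_{g\in F}\|g\|_{L^1(t_1,t_2;X)} =: C_{t_1,t_2}<\infty,
\]
the bound being uniform over $f\in F$ by hypothesis (1). Hence the set $\l\{\int_{t_1}^{t_2}f\,dt : f\in F\r\}$ is bounded in $X$; since $X$ is compactly embedded in $B$, a bounded subset of $X$ is relatively compact in $B$, which is exactly condition (a). The abstract criterion then yields that $F$ is relatively compact in $L^p(0,T;B)$ for $1\le p<\infty$.

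For the endpoint $p=\infty$ I would argue by Arzelà–Ascoli. Hypothesis (2) with $p=\infty$ forces the family $F$, after passing to continuous representatives, to be uniformly equicontinuous as maps $[0,T]\to B$. Pointwise relative compactness of $\{f(t):f\in F\}$ in $B$ then follows by approximating $f(t)$ by the averages $\tfrac1h\int_t^{t+h}f(s)\,ds$, which lie in a fixed relatively compact subset of $B$ by the $X$-bound above, the approximation being uniform over $F$ by (2); thus $\{f(t):f\in F\}$ is totally bounded in $B$. The Arzelà–Ascoli theorem for $B$-valued continuous functions then gives relative compactness of $F$ in $C([0,T];B)$.

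The genuinely hard part is the abstract criterion itself, i.e. the vector-valued Fréchet–Kolmogorov theorem asserting that (a) and (b) together are \emph{sufficient} for relative compactness in $L^p(0,T;B)$. Proving it requires a regularization argument: one convolves in time against a mollifier to replace $F$ by a family that is smooth in time and whose values lie in the relatively compact sets furnished by (a), controls the resulting error uniformly over $F$ using (b), and then extracts total boundedness of the mollified family from its essentially finite-dimensional-in-time structure via Arzelà–Ascoli. This mollification-and-error-control step, rather than the two short deductions above, is where the essential analytic content lies; everything specific to the present lemma is a direct consequence of the $L^1_{\mathrm{loc}}(0,T;X)$ bound combined with the compactness of the embedding $X\hookrightarrow B$.
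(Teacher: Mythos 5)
Your proposal is correct: the paper gives no proof of this lemma at all — it is quoted verbatim as Theorem 3 of Simon's compactness paper \cite{Simon_Compact_Sets} — and your argument (hypothesis (1) plus the compact embedding $X \hookrightarrow B$ makes the time-integrals $\int_{t_1}^{t_2} f\,dt$ relatively compact in $B$, hypothesis (2) is the translation condition, then invoke Simon's Theorem 1, with an Arzel\`a--Ascoli argument for $p=\infty$) is precisely how Simon himself derives Theorem 3 from Theorem 1 in that reference. So the proposal matches the intended (cited) proof, and no gap remains beyond the abstract criterion you correctly isolate as the cited black box.
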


	\dela{Kuratowski's theorem (see Theorem 1.1, \cite{Vakhania_Probability_distributions_on_Banach_spaces})}
	\begin{lemma}[Kuratowski, Theorem 1.1, \cite{Vakhania_Probability_distributions_on_Banach_spaces}]\label{Lemma Kuratowski}
		Let $X$ be a Polish space, $Y$ be a separable metric space and let $f:X\rightarrow Y$ be an injective  Borel mapping. Then $f(B)\in \mathcal{B}(Y)$ for any $B\in \mathcal{B}(X)$.
	\end{lemma}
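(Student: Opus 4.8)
The statement is the classical Lusin--Souslin theorem, and the plan is to reconstruct its descriptive--set--theoretic proof. First I would carry out two harmless reductions. Since $f(B)\subseteq Y$ and $Y$ carries the subspace topology inherited from its completion $\widehat{Y}$, which is Polish, it suffices to prove that $f(B)$ is Borel as a subset of $\widehat{Y}$; hence I may assume the target is Polish. Next, using the standard fact that a Borel map out of a Polish space becomes continuous after refining the domain topology to a finer Polish topology generating the same Borel $\sigma$-algebra (and that a Borel subset of a Polish space is itself Polish in a suitable finer topology with the same Borel structure), I would reduce to the following clean statement: if $X,Y$ are Polish and $f:X\to Y$ is a continuous injection, then $f(X)$ is Borel. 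Applying this to $B$ equipped with its finer Polish topology then yields the general claim.

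The second ingredient is the theory of analytic (Suslin) subsets of a Polish space, i.e. the continuous images of Polish spaces, equivalently the projections of Borel sets. This class contains every Borel set and is closed under countable unions, countable intersections, and continuous images. The decisive tool is the Lusin Separation Theorem: any two disjoint analytic subsets of a Polish space can be separated by a Borel set, and consequently countably many pairwise disjoint analytic sets can be enclosed in pairwise disjoint Borel sets. I would either invoke this as a known result or prove it by the usual separation-rank argument; it is the genuinely deep input.

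With these in hand, the core of the argument is a Suslin scheme construction. Fixing complete compatible metrics $d_X$ on $X$ and $d_Y$ on $Y$, I would build Borel sets $(C_s)_{s\in\mathbb{N}^{<\mathbb{N}}}$ with $C_\emptyset=X$, with $C_s=\bigcup_n C_{s\frown n}$ a partition into pairwise disjoint pieces, with $\overline{C_{s\frown n}}\subseteq C_s$, and with $d_X$-diameter of $C_s$ at most $2^{-|s|}$. Because $f$ is injective, for each node $s$ the analytic sets $\{f(C_{s\frown n})\}_n$ are pairwise disjoint, so the corollary of the separation theorem furnishes pairwise disjoint Borel sets $V_{s\frown n}\subseteq V_s$ with $f(C_s)\subseteq V_s$, and I would shrink these so that the $d_Y$-diameter of $V_s$ tends to $0$ along every branch. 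The concluding step is to verify the identity
\[
f(X)=\bigcap_{k\ge 0}\ \bigcup_{|s|=k} V_s
\]
by double inclusion: a point on the right-hand side determines, through the vanishing diameters and completeness, a unique branch whose nested closures $\overline{C_s}$ meet in a single point of $X$ mapping onto it, while the reverse inclusion is immediate from $f(C_s)\subseteq V_s$. The right-hand side is a countable intersection of countable unions of Borel sets, hence Borel, so $f(X)$ is Borel; restricting back to $Y$ finishes the proof.

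The main obstacle will be the simultaneous coordination of all the scheme requirements --- the disjointness and nesting of the $V_s$, the containments $f(C_s)\subseteq V_s$, and the vanishing $d_Y$-diameters --- which rests entirely on the Lusin Separation Theorem, together with the careful verification of the displayed identity via completeness. Everything else, namely the two topological reductions and the closure properties of analytic sets, is routine once the separation theorem is available.
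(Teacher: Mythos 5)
The paper offers no proof of this lemma: it is stated in the appendix purely as a quotation of Theorem 1.1 of \cite{Vakhania_Probability_distributions_on_Banach_spaces} and is used as a black box (in Remark \ref{same bounds remark} and Remark \ref{remark same bounds remark minimizing sequence}), so there is nothing in the paper to compare your argument against. What you have written is a reconstruction of the classical Lusin--Souslin theorem along the standard descriptive-set-theoretic route (change of topology, Lusin separation, Suslin/Lusin scheme), and the architecture is correct: the reduction to a Polish target via the completion of $Y$, the reduction to a continuous injection on a Polish domain via refinement of the topology on $X$ (and on the Borel set $B$), the use of the separation theorem for countably many pairwise disjoint analytic sets as the deep input, and the identity $f(X)=\bigcap_{k}\bigcup_{|s|=k}V_s$ verified by completeness are exactly the classical proof.

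One step, however, is phrased in a way that cannot be carried out literally: you propose to ``shrink'' the separating Borel sets $V_s$ so that their $d_Y$-diameters tend to $0$ along every branch. This cannot be imposed during the construction, because $V_s$ is constrained to contain $f(C_s)$, and at any finite stage you have no control over $\mathrm{diam}\,f(C_s)$: continuity of $f$ is not uniform, so a small $d_X$-diameter of $C_s$ gives no bound whatsoever on the diameter of its image. The correct formulation is to replace $V_s$ by $V_s\cap\overline{f(C_s)}$; the vanishing of diameters along a branch is then not an imposed property of the scheme but a consequence of the verification itself. Indeed, by completeness of $X$ and your conditions $\overline{C_{s_{k+1}}}\subseteq C_{s_k}$ with $\mathrm{diam}\,C_{s_k}\leq 2^{-k}$, the nested closures along a branch intersect in a single point $x$, each $C_{s_k}$ contains $x$ and lies in the ball of radius $2^{-k}$ around it, and continuity of $f$ at the single point $x$ then forces $\mathrm{diam}\,f(C_{s_k})\to 0$, so that $\overline{f(C_{s_k})}$ shrinks to $\{f(x)\}$ and the point $y$ captured by the branch equals $f(x)$. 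With this reformulation your double-inclusion argument goes through verbatim and the proof is complete.
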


	\dela{A compactness result. See Theorem 2.1, page number 184, \cite{Temam}.}
	\begin{lemma}[Theorem 2.1, \cite{Temam}]\label{Theorem 2.1 from Temam}
		Let $X_0,X,X_1$ be Banach spaces such that
		\begin{equation*}
			X_0 \hookrightarrow X \hookrightarrow X_1
		\end{equation*}
		where the injection is continuous. Further assume that $X_0,X_1$ are reflexive spaces and the injection
		\begin{equation*}
			X_0\hookrightarrow X
		\end{equation*}
		is compact.
		Let $\alpha_0,\alpha_1>0$. Consider the space
		\begin{equation*}
			\mathcal{Y} = \l\{ v\in L^{\alpha_0}\l(0,T;X_0\r): \frac{dv}{dt} \in L^{\alpha_1}\l(0,T; X_1\r) \r\}
		\end{equation*}
		The space is provided with the norm
		\begin{equation*}
			\l| v \r|_{\mathcal{Y}} = \l| v \r|_{L^{\alpha_0}\l(0,T;X_0\r)} + \l| \frac{dv}{dt}\r|_{L^{\alpha_1}\l(0,T; X_1\r)}.
		\end{equation*}
		Then the space $\mathcal{Y}$ is compactly embedded in the space $L^{\alpha_0}\l(0,T;X\r)$.
	\end{lemma}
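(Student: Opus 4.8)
The plan is to prove this as the classical Aubin–Lions–Simon compactness lemma, reducing compactness in $L^{\alpha_0}(0,T;X)$ to two ingredients: an interpolation (Ehrling-type) inequality relating the three spaces, and a compactness criterion in the weaker space $L^{\alpha_0}(0,T;X_1)$ that exploits the control on the time derivative. First I would establish the interpolation inequality: for every $\eta>0$ there is a constant $C_\eta>0$ such that
$$\|v\|_X \le \eta\,\|v\|_{X_0} + C_\eta\,\|v\|_{X_1}, \qquad v\in X_0.$$
This is proved by contradiction. If it failed for some $\eta_0>0$, there would be a sequence $(v_n)\subset X_0$ with $\|v_n\|_X=1$ but $\eta_0\|v_n\|_{X_0}+n\|v_n\|_{X_1}<1$; then $(v_n)$ is bounded in $X_0$ and $\|v_n\|_{X_1}\to0$, so by compactness of $X_0\hookrightarrow X$ a subsequence converges in $X$ to some $v$ with $\|v\|_X=1$, while the continuous embedding $X\hookrightarrow X_1$ forces $v=0$, a contradiction.

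Next, take an arbitrary bounded sequence $(v_n)$ in $\mathcal{Y}$. Using the reflexivity of $X_0$ and $X_1$ (so that $L^{\alpha_0}(0,T;X_0)$ and $L^{\alpha_1}(0,T;X_1)$ are reflexive for $1<\alpha_i<\infty$), I would extract a subsequence, still labelled $(v_n)$, with $v_n\rightharpoonup v$ in $L^{\alpha_0}(0,T;X_0)$ and $dv_n/dt\rightharpoonup dv/dt$ in $L^{\alpha_1}(0,T;X_1)$; in particular $v\in\mathcal{Y}$. Setting $w_n:=v_n-v$, it suffices to prove $w_n\to0$ strongly in $L^{\alpha_0}(0,T;X)$. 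Applying the interpolation inequality pointwise in $t$, raising to the power $\alpha_0$ and integrating gives
$$\|w_n\|_{L^{\alpha_0}(0,T;X)}^{\alpha_0} \le C\,\eta^{\alpha_0}\,\|w_n\|_{L^{\alpha_0}(0,T;X_0)}^{\alpha_0} + C\,C_\eta^{\alpha_0}\,\|w_n\|_{L^{\alpha_0}(0,T;X_1)}^{\alpha_0}.$$
Since $(w_n)$ stays bounded in $L^{\alpha_0}(0,T;X_0)$, the first term is uniformly small once $\eta$ is chosen small; hence the whole problem reduces to showing $\|w_n\|_{L^{\alpha_0}(0,T;X_1)}\to0$.

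Finally I would establish relative compactness of $(w_n)$ in $L^{\alpha_0}(0,T;X_1)$ via Simon's criterion, Lemma \ref{compact embedding of intersection 2}, applied with the compactly embedded pair $X_0\hookrightarrow\hookrightarrow X_1$ (obtained by composing $X_0\hookrightarrow\hookrightarrow X\hookrightarrow X_1$). Boundedness of $(w_n)$ in $L^{\alpha_0}(0,T;X_0)$ supplies the first hypothesis, and the uniform time-translation estimate is the crux: writing $w_n(t+h)-w_n(t)=\int_t^{t+h}\frac{dw_n}{ds}(s)\,ds$ and using Hölder's inequality in time yields
$$\|\tau_h w_n - w_n\|_{L^{\alpha_0}(0,T-h;X_1)} \le C\,h^{\theta}\,\Big\|\frac{dw_n}{ds}\Big\|_{L^{\alpha_1}(0,T;X_1)}$$
for a suitable exponent $\theta>0$, uniformly in $n$ (alternatively one may invoke Lemma \ref{compact embedding of intersection auxilliary 1}). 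This gives the uniform equicontinuity of translations, so Simon's theorem yields a subsequence converging strongly in $L^{\alpha_0}(0,T;X_1)$; since $w_n\rightharpoonup0$ weakly there (through the continuous embedding $X_0\hookrightarrow X_1$), uniqueness of limits forces the strong limit to be $0$, and a standard subsequence argument upgrades this to $w_n\to0$ for the full sequence. Combining with the previous display and letting $\eta\to0$ gives $w_n\to0$ in $L^{\alpha_0}(0,T;X)$, establishing the compact embedding. I expect the main obstacle to be this last compactness step in $L^{\alpha_0}(0,T;X_1)$, specifically obtaining the time-translation estimate uniformly in $n$ with the correct interplay of the exponents $\alpha_0,\alpha_1$ (and the borderline handling when $\alpha_1$ is small), together with matching the strong limit against the weak limit $0$; the interpolation inequality itself, while essential, is comparatively routine.
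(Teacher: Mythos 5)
Your proposal is correct, but note that the paper itself offers no proof of this lemma: it is stated verbatim as a citation of Theorem~2.1 in Temam's book, so there is no internal argument to compare against. What you have written is essentially the classical Aubin--Lions proof from that reference: the Ehrling-type interpolation inequality (your contradiction argument for it is sound), the weak-compactness extraction and reduction to showing $w_n\to 0$ in $L^{\alpha_0}(0,T;X_1)$, and a translation-based compactness step. The one place you depart from Temam is the final step: Temam deduces $w_n\to 0$ in $L^{\alpha_0}(0,T;X_1)$ directly, by proving $w_n(t)\to 0$ in $X_1$ pointwise (via the fundamental theorem of calculus and the weak convergence) together with a uniform bound and dominated convergence, whereas you route it through Simon's translation criterion (the paper's Lemma~\ref{compact embedding of intersection 2}) plus identification of the strong limit with the weak limit $0$. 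Both routes work; yours has the small advantage of reusing a lemma already quoted in the paper, while Temam's avoids the exponent bookkeeping in the translation estimate that you correctly flag as the delicate point. One caveat you should make explicit: your argument genuinely requires $1<\alpha_0,\alpha_1<\infty$ --- reflexivity of $L^{\alpha_i}(0,T;X_i)$ fails for $\alpha_i=1$, and your translation exponent $\theta=1-1/\alpha_1$ degenerates there --- and these are in fact the hypotheses of Temam's theorem; the lemma as transcribed in the paper, with ``$\alpha_0,\alpha_1>0$,'' is stated too generously, and your proof (rightly) does not cover that extra range.
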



	
	\dela{	The section enlists some results about the Young measures.

		\begin{lemma}
			Let $(\Omega, \mathcal{F}, \mathbb{P})$ be a probability space, and let $M$ be a Radon space. Let $\lambda:\Omega \rightarrow\mathcal{Y}(0,T;M)$ be such that for every $J\in\mathcal{B}([0,T]\times M)$, the mapping
			\begin{equation*}
				\Omega\ni\omega\mapsto \lambda(\omega)(J)\in[0,T]
			\end{equation*}
			is measurable. Then there exists a stochastic relaxed control $\{ q_t \}_{t\in[0,T]}$ such that for $\mathbb{P}$-a.s. $\omega\in\Omega$ we have
			\begin{equation*}
				\lambda(\omega,C\times B) = \int_{B}q_t(\omega,C)dt \qquad \forall C\in\mathcal{B}(M),B\in\mathcal{B}([0,T]).
			\end{equation*}
		\end{lemma}
		For proof see \cite{ZB+RS}(Lemma 2.3).

		\begin{lemma}\label{liminf for Young measures}
			Let $\mathcal{M}$ be a metrizable Suslin space. Let $F: H\times \mathcal{M}\rightarrow[0,\infty]$ be measurable in $t$ with respect to $(X,u)\in H\times \mathcal
			{M}$ and satisfies
			\begin{equation*}
				F\geq 0.
			\end{equation*}
			Then we have
			\begin{enumerate}
				\item If $\mu_n\rightarrow\mu$ stably in $\mathcal{Y}(0,T;\mathcal{M})$ then
				\begin{equation*}
					\int_{0}^{T}\int_{\mathcal{M}}F(t,X,u)\mu(du,dt) \leq \liminf_{n\rightarrow\infty} 	\int_{0}^{T}\int_{\mathcal{M}}F(t,X,u)\mu_n(du,dt)
				\end{equation*}
				
			\end{enumerate}
			
		\end{lemma}
		See \cite{UM+DM} Lemma A.14 Appendix A.
		
		Also see Proposition 2.1.12(Semicontinuity theorem) \cite{Castaing_Book}, page number 36.
		
	}

	\par\medskip\noindent
	\bibliographystyle{plain}
	\bibliography{References_GokhaleSoham}

\begin{thebibliography}{10}

\bibitem{Adams+Fournier}
R.~A. Adams and J.~J.~F. Fournier.
\newblock {\em Sobolev spaces}, volume 140 of {\em Pure and Applied Mathematics
  (Amsterdam)}.
\newblock Elsevier/Academic Press, Amsterdam, second edition, 2003.

\bibitem{Agarwal+Carbou+Labbe+Prieur}
S.~Agarwal, G.~Carbou, S.~Labb\'{e}, and C.~Prieur.
\newblock Control of a network of magnetic ellipsoidal samples.
\newblock {\em Math. Control Relat. Fields}, 1(2):129--147, 2011.

\bibitem{Alouges+Beauchard}
F.~Alouges and K.~Beauchard.
\newblock Magnetization switching on small ferromagnetic ellipsoidal samples.
\newblock {\em ESAIM Control Optim. Calc. Var.}, 15(3):676--711, 2009.

\bibitem{Arendt}
W.~Arendt.
\newblock Semigroups and evolution equations: functional calculus, regularity
  and kernel estimates.
\newblock In {\em Evolutionary equations. {V}ol. {I}}, Handb. Differ. Equ.,
  pages 1--85. North-Holland, Amsterdam, 2004.

\bibitem{banas2013stochastic}
L.~Ba\v{n}as, Z.~Brzezniak, M.~Neklyudov, and A.~Prohl.
\newblock {\em Stochastic ferromagnetism: analysis and numerics}, volume~58.
\newblock Walter de Gruyter, 2013.

\bibitem{Banas+ZB+Neklyudov+Prohl_ConergentFiniteElementSLLGE}
{L}. Ba\v{n}as, Z.~Brze\'{z}niak, M.~Neklyudov, and A.~Prohl.
\newblock A convergent finite-element-based discretization of the stochastic
  {L}andau-{L}ifshitz-{G}ilbert equation.
\newblock {\em IMA J. Numer. Anal.}, 34(2):502--549, 2014.

\bibitem{Banas+ZB+Prohl_2013_ComputationalStudiesSLLGE}
{L}. Ba\v{n}as, Z.~Brze\'{z}niak, and A.~Prohl.
\newblock Computational studies for the stochastic
  {L}andau-{L}ifshitz-{G}ilbert equation.
\newblock {\em SIAM J. Sci. Comput.}, 35(1):B62--B81, 2013.

\bibitem{Michiel_EtAl_2001}
M.~Bertsch, Paolo Podio~G., and V.~Valente.
\newblock On the dynamics of deformable ferromagnets. {I}. {G}lobal weak
  solutions for soft ferromagnets at rest.
\newblock {\em Ann. Mat. Pura Appl. (4)}, 179:331--360, 2001.

\bibitem{BogachevBook_MeasureTheor_2007}
V.~I. Bogachev.
\newblock {\em Measure theory. {V}ol. {I}, {II}}.
\newblock Springer-Verlag, Berlin, 2007.

\bibitem{brown1963micromagnetics}
W.~F. Brown.
\newblock {\em Micromagnetics}.
\newblock Number~18. Interscience publishers, 1963.

\bibitem{Brown_Thermal_Fluctuations}
W.~F. Brown.
\newblock Thermal fluctuations of a single-domain particle.
\newblock {\em Phys. Rev.}, 130:1677--1686, Jun 1963.

\bibitem{ZB+Ferrari_2019_StationarySolutions_DampedSNSE}
Z.~Brze\'{z}niak and B.~Ferrario.
\newblock Stationary solutions for stochastic damped {N}avier-{S}tokes
  equations in {$\Bbb R^d$}.
\newblock {\em Indiana Univ. Math. J.}, 68(1):105--138, 2019.

\bibitem{ZB+BG+TJ_Weak_3d_SLLGE}
Z.~Brze\'{z}niak, B.~Goldys, and T.~Jegaraj.
\newblock Weak solutions of a stochastic {L}andau-{L}ifshitz-{G}ilbert
  equation.
\newblock {\em Appl. Math. Res. Express. AMRX}, (1):1--33, 2013.

\bibitem{ZB+BG+TJ_LargeDeviations_LLGE}
Z.~Brze\'zniak, B.~Goldys, and T.~Jegaraj.
\newblock Large deviations and transitions between equilibria for stochastic
  {L}andau-{L}ifshitz-{G}ilbert equation.
\newblock {\em Arch. Ration. Mech. Anal.}, 226(2):497--558, 2017.

\bibitem{ZB+BG+Le_SLLBE}
Z.~Brze\'{z}niak, B.~Goldys, and K.~N. Le.
\newblock Existence of a unique solution and invariant measures for the
  stochastic {L}andau-{L}ifshitz-{B}loch equation.
\newblock {\em J. Differential Equations}, 269(11):9471--9507, 2020.

\bibitem{ZB+Li_Weak_Solution_SLLGE_Anisotropy}
Z.~Brze\'{z}niak and L.~Li.
\newblock Weak solutions of the stochastic {L}andau-{L}ifshitz-{G}ilbert
  equations with nonzero anisotrophy energy.
\newblock {\em Appl. Math. Res. Express. AMRX}, (2):334--375, 2016.

\bibitem{ZB+UM_SLLGE_JumpNoise}
Z.~Brze\'{z}niak and U.~Manna.
\newblock Stochastic {L}andau-{L}ifshitz-{G}ilbert equation with anisotropy
  energy driven by pure jump noise.
\newblock {\em Comput. Math. Appl.}, 77(6):1503--1512, 2019.

\bibitem{ZB+UM+DM_WongZakai}
Z.~Brze\'{z}niak, U.~Manna, and D.~Mukherjee.
\newblock Wong-{Z}akai approximation for the stochastic
  {L}andau-{L}ifshitz-{G}ilbert equations.
\newblock {\em J. Differential Equations}, 267(2):776--825, 2019.

\bibitem{ZB+EM}
Z.~Brze\'zniak and E.~Motyl.
\newblock Existence of a martingale solution of the stochastic
  {N}avier-{S}tokes equations in unbounded 2{D} and 3{D} domains.
\newblock {\em J. Differential Equations}, 254(4):1627--1685, 2013.

\bibitem{ZB+MO}
Z.~Brze\'zniak and M.~Ondrej\'{a}t.
\newblock Stochastic geometric wave equations with values in compact
  {R}iemannian homogeneous spaces.
\newblock {\em Ann. Probab.}, 41(3B):1938--1977, 2013.

\bibitem{ZB+RS}
Z.~Brze\'zniak and R.~Serrano.
\newblock Optimal relaxed control of dissipative stochastic partial
  differential equations in {B}anach spaces.
\newblock {\em SIAM J. Control Optim.}, 51(3):2664--2703, 2013.

\bibitem{Cartan_Calculus}
H.~Cartan.
\newblock {\em Differential calculus}.
\newblock Hermann, Paris; Houghton Mifflin Co., Boston, Mass., 1971.
\newblock Exercises by C. Buttin, F. Rideau and J. L. Verley, Translated from
  the French.

\bibitem{Chung+Williams_Introduction_To_Stochastic_Integration}
K.~L. Chung and R.~J. Williams.
\newblock {\em Introduction to stochastic integration}.
\newblock Modern Birkh\"{a}user Classics. Birkh\"{a}user/Springer, New York,
  second edition, 2014.

\bibitem{Cimrak_2008}
I.~Cimrak.
\newblock A survey on the numerics and computations for the {L}andau-{L}ifshitz
  equation of micromagnetism.
\newblock {\em Arch. Comput. Methods Eng.}, 15(3):277--309, 2008.

\bibitem{Prato+Zabczyk}
G.~Da~Prato and J.~Zabczyk.
\newblock {\em Stochastic equations in infinite dimensions}, volume 152 of {\em
  Encyclopedia of Mathematics and its Applications}.
\newblock Cambridge University Press, Cambridge, second edition, 2014.

\bibitem{DeSimone_HysterisisSmallFerromagneticParticles}
A.~De~Simone.
\newblock Hysteresis and imperfection sensitivity in small ferromagnetic
  particles.
\newblock volume~30, pages 591--603. 1995.
\newblock Microstructure and phase transitions in solids (Udine, 1994).

\bibitem{Dieudonne_1969-vol-1}
J.~Dieudonn\'{e}.
\newblock {\em Foundations of modern analysis}.
\newblock Pure and Applied Mathematics, Vol. 10-I. Academic Press, New
  York-London, 1969.
\newblock Enlarged and corrected printing.

\bibitem{Dunst+Klein}
T.~Dunst, M.~Klein, A.~Prohl, and A.~Sch\"{a}fer.
\newblock Optimal control in evolutionary micromagnetism.
\newblock {\em IMA J. Numer. Anal.}, 35(3):1342--1380, 2015.

\bibitem{D+M+P+V}
T.~Dunst, A.~K. Majee, A.~Prohl, and G.~Vallet.
\newblock On stochastic optimal control in ferromagnetism.
\newblock {\em Arch. Ration. Mech. Anal.}, 233(3):1383--1440, 2019.

\bibitem{Evans}
L.~C. Evans.
\newblock {\em Partial differential equations}, volume~19 of {\em Graduate
  Studies in Mathematics}.
\newblock American Mathematical Society, Providence, RI, 1998.

\bibitem{Flandoli_Gatarek}
F.~Flandoli and D.~Gatarek.
\newblock Martingale and stationary solutions for stochastic {N}avier-{S}tokes
  equations.
\newblock {\em Probab. Theory Related Fields}, 102(3):367--391, 1995.

\bibitem{garanin1997fokker}
D.~A. Garanin.
\newblock Fokker-planck and {L}andau-{L}ifshitz-{B}loch equations for classical
  ferromagnets.
\newblock {\em Physical Review B}, 55(5):3050, 1997.

\bibitem{Gilbert}
T.~L. Gilbert.
\newblock A {L}agrangian formulation of the gyromagnetic equation of the
  magnetization field.
\newblock {\em Phys. Rev.}, 100:1243, 1955.

\bibitem{Gioia1997micromagnetics}
G.~Gioia and R.~D. James.
\newblock Micromagnetics of very thin films.
\newblock {\em Proceedings of the Royal Society of London. Series A:
  Mathematical, Physical and Engineering Sciences}, 453(1956):213--223, 1997.

\bibitem{BG+Grotowski+Le_Weak_martingale_SLLGE_multidimensional_noise}
B.~Goldys, J.~F. Grotowski, and K.~N. Le.
\newblock Weak martingale solutions to the stochastic
  {L}andau-{L}ifshitz-{G}ilbert equation with multi-dimensional noise via a
  convergent finite-element scheme.
\newblock {\em Stochastic Process. Appl.}, 130(1):232--261, 2020.

\bibitem{GB+Le+Tran_Finite_Element_Scheme_SLLGE}
B.~Goldys, K.~N. Le, and T.~Tran.
\newblock A finite element approximation for the stochastic
  {L}andau-{L}ifshitz-{G}ilbert equation.
\newblock {\em J. Differential Equations}, 260(2):937--970, 2016.

\bibitem{Guo+Xueke_SLLGE_Global}
B.~Guo and X.~Pu.
\newblock Stochastic {L}andau-{L}ifshitz equation.
\newblock {\em Differential Integral Equations}, 22(3-4):251--274, 2009.

\bibitem{HenryGeometricTheoryofSemilinearParabolicEquations}
D.~Henry.
\newblock {\em Geometric theory of semilinear parabolic equations}, volume 840
  of {\em Lecture Notes in Mathematics}.
\newblock Springer-Verlag, Berlin-New York, 1981.

\bibitem{Ikeda+Watanabe}
N.~Ikeda and S.~Watanabe.
\newblock {\em Stochastic differential equations and diffusion processes},
  volume~24 of {\em North-Holland Mathematical Library}.
\newblock North-Holland Publishing Co., Amsterdam; Kodansha, Ltd., Tokyo,
  second edition, 1989.

\bibitem{Jiang+Ju+Wang_MartingaleWeakSolnSLLBE}
S.~Jiang, Q.~Ju, and H.~Wang.
\newblock Martingale weak solutions of the stochastic
  {L}andau-{L}ifshitz-{B}loch equation.
\newblock {\em J. Differential Equations}, 266(5):2542--2574, 2019.

\bibitem{OK_FoundationsOfModernProbability}
O.~Kallenberg.
\newblock {\em Foundations of modern probability}.
\newblock Probability and its Applications (New York). Springer-Verlag, New
  York, second edition, 2002.

\bibitem{Kamppeter+FranzEtAl_StochasticVortexDynamics}
T.~Kamppeter, F.~Mertens, E.~Moro, A.~S\'anchez, and A.~Bishop.
\newblock Stochastic vortex dynamics in two-dimensional easy-plane
  ferromagnets: Multiplicative versus additive noise.
\newblock {\em Phys. Rev. B}, 59:11349--11357, May 1999.

\bibitem{Karatzas+Steven_BrownianMotionStochacsticCalculus}
I.~Karatzas and S.~E. Shreve.
\newblock {\em Brownian motion and stochastic calculus}, volume 113 of {\em
  Graduate Texts in Mathematics}.
\newblock Springer-Verlag, New York, 1988.

\bibitem{Kohn+Reznikoff+Vanden_MagneticElementsLargeDeviation}
R.~V. Kohn, M.~G. Reznikoff, and E.~Vanden-Eijnden.
\newblock Magnetic elements at finite temperature and large deviation theory.
\newblock {\em J. Nonlinear Sci.}, 15(4):223--253, 2005.

\bibitem{Kruzik+Prohl}
M.~Kru\v{z}\'{\i}k and A.~Prohl.
\newblock Recent developments in the modeling, analysis, and numerics of
  ferromagnetism.
\newblock {\em SIAM Rev.}, 48(3):439--483, 2006.

\bibitem{landau1992theory}
L.~Landau and E.~Lifshitz.
\newblock On the theory of the dispersion of magnetic permeability in
  ferromagnetic bodies.
\newblock In {\em Perspectives in Theoretical Physics}, pages 51--65. Elsevier,
  1992.

\bibitem{LE_Deterministic_LLBE}
K.~N. Le.
\newblock Weak solutions of the {L}andau-{L}ifshitz-{B}loch equation.
\newblock {\em J. Differential Equations}, 261(12):6699--6717, 2016.

\bibitem{Li_Thesis}
L.~Li.
\newblock {\em A study of stochastic {L}andau-{L}ifschitz equations}.
\newblock ProQuest LLC, Ann Arbor, MI, 2013.
\newblock Thesis (Ph.D.)--The University of York (United Kingdom).

\bibitem{UM+AAP+DM_SLLGE_WongZakai_AnisotropyEnergy}
U.~Manna, D.~Mukherjee, and A.~A. Panda.
\newblock Wong-{Z}akai approximation for the stochastic
  {L}andau-{L}ifshitz-{G}ilbert equations with anisotropy energy.
\newblock {\em J. Math. Anal. Appl.}, 480(1):123384, 13, 2019.

\bibitem{Neel_1946}
L.~N\'eel.
\newblock Bases d'une nouvelle th\'eorie g\'en\'erale du champ coercitif.
\newblock {\em Annales de l'universit\'e de Grenoble. Nouvelle s\'erie. Section
  sciences math\'ematiques et physiques}, 22:299--343, 1946.

\bibitem{Oksendal_Book}
B.~Oksendal.
\newblock {\em Stochastic differential equations}.
\newblock Universitext. Springer-Verlag, Berlin, third edition, 1992.
\newblock An introduction with applications.

\bibitem{Ondrejat_thesis}
M.~Ondrej\'{a}t.
\newblock Uniqueness for stochastic evolution equations in {B}anach spaces.
\newblock {\em Dissertationes Math. (Rozprawy Mat.)}, 426:63, 2004.

\bibitem{Pardoux_1979}
\'E. Pardoux.
\newblock Stochastic partial differential equations and filtering of diffusion
  processes.
\newblock {\em Stochastics}, 3(2):127--167, 1979.

\bibitem{Parthasarathy_1967_Book_ProbabilityMeasuresOnMetricSpaces_Book}
K.~R. Parthasarathy.
\newblock {\em Probability measures on metric spaces}.
\newblock Probability and Mathematical Statistics, No. 3. Academic Press, Inc.,
  New York-London, 1967.

\bibitem{Pazy}
A.~Pazy.
\newblock {\em Semi-groups of linear operators and applications to partial
  differential equations}.
\newblock Department of Mathematics, University of Maryland, College Park, Md.,
  1974.
\newblock Department of Mathematics, University of Maryland, Lecture Note, No.
  10.

\bibitem{Peszat+Zabczyk}
S.~Peszat and J.~Zabczyk.
\newblock {\em Stochastic partial differential equations with {L}\'{e}vy
  noise}, volume 113 of {\em Encyclopedia of Mathematics and its Applications}.
\newblock Cambridge University Press, Cambridge, 2007.
\newblock An evolution equation approach.

\bibitem{Pu+Guo_2010}
X.~Pu and B.~Guo.
\newblock Regular solutions for multiplicative stochastic
  {L}andau-{L}ifshitz-{G}ilbert equation and blow-up phenomena.
\newblock {\em Sci. China Math.}, 53(12):3115--3130, 2010.

\bibitem{Qiu+Tang_Wang_AsymptoticBehaviourSLLBE}
Z.~Qiu, Y.~Tang, and H.~Wang.
\newblock Asymptotic behavior for the 1{D} stochastic
  {L}andau-{L}ifshitz-{B}loch equation.
\newblock {\em J. Math. Phys.}, 61(10):101506, 25, 2020.

\bibitem{Rudin_RCA}
W.~Rudin.
\newblock {\em Real and complex analysis}.
\newblock McGraw-Hill Book Co., New York-Toronto, Ont.-London, 1966.

\bibitem{Schmalfuss_Qualitative_Properties_SNE}
B.~Schmalfuss.
\newblock Qualitative properties for the stochastic {N}avier-{S}tokes equation.
\newblock {\em Nonlinear Anal.}, 28(9):1545--1563, 1997.

\bibitem{Simon_Compact_Sets}
J.~Simon.
\newblock Compact sets in the space {$L^p(0,T;B)$}.
\newblock {\em Ann. Mat. Pura Appl. (4)}, 146:65--96, 1987.

\bibitem{Simons}
J.~Simon.
\newblock Sobolev, {B}esov and {N}ikolskii fractional spaces: Imbeddings and
  comparisons for vector valued spaces on an interval.
\newblock {\em Ann. Mat. Pura Appl. (4)}, 157:117--148, 1990.

\bibitem{Temam}
R.~Temam.
\newblock {\em Navier-{S}tokes equations}.
\newblock AMS Chelsea Publishing, Providence, RI, 2001.
\newblock Theory and numerical analysis, Reprint of the 1984 edition.

\bibitem{Vakhania_Probability_distributions_on_Banach_spaces}
N.~Vakhania, V.~Tarieladze, and S~Chobanyan.
\newblock {\em Probability distributions on {B}anach spaces}, volume~14 of {\em
  Mathematics and its Applications (Soviet Series)}.
\newblock D. Reidel Publishing Co., Dordrecht, 1987.

\bibitem{Visintin_1985}
A.~Visintin.
\newblock On {L}andau-{L}ifshitz' equations for ferromagnetism.
\newblock {\em Japan J. Appl. Math.}, 2(1):69--84, 1985.

\bibitem{Zizler_2003_Book_NonseparableBanachSpaces}
V.~Zizler.
\newblock Nonseparable {B}anach spaces.
\newblock In {\em Handbook of the geometry of {B}anach spaces, {V}ol. 2}, pages
  1743--1816. North-Holland, Amsterdam, 2003.

\end{thebibliography}
\end{document}